\documentclass[10pt]{ociamthesis}  

\usepackage{silence}
\WarningFilter[pdftoc]{hyperref}{Token not allowed in a PDF string}

    \usepackage[utf8]{inputenc}
    \usepackage[T1]{fontenc}
    \usepackage{siunitx}
    \usepackage{colonequals}
    \usepackage[super]{nth}
    \usepackage{algorithm}
    \usepackage{amsmath}
    \usepackage{amsthm}
    \usepackage{amssymb}
    \usepackage{tikz}
    \usepackage{url}
    \usepackage{float}
    \usepackage{tabularx}
    \usepackage{pbox}
    \usepackage{tabulary}
    \usepackage{breqn}
    \usepackage{longtable}
    \usepackage{mdframed}
    \usepackage{booktabs}

    \usepackage{graphicx}
    \graphicspath{{./images/}{./}}

    \usepackage{wrapfig}
    \usepackage[font={small}]{caption}
    \usepackage{enumitem}
    \usepackage{physics}
    \usepackage{lipsum}
    \usepackage{mwe}
    \usepackage{breqn}
    \usepackage[noend]{algpseudocode}
    \usepackage{subcaption}
    \usepackage{graphicx}
    \usepackage{enumitem}
    \usepackage{bm}
    \usepackage{textcmds}

    \usepackage{apptools}
    \AtAppendix{\counterwithin{lemma}{section}}
    \AtAppendix{\counterwithin{remark}{section}}
    
    \usepackage{hyperref}
    \hypersetup{
        colorlinks,
            citecolor=black,
        filecolor=black,
        linkcolor=black,
        urlcolor=black
    }



\newtheorem{corollary}{Corollary}

\newtheorem{theorem}{Theorem}
\newtheorem{lemma}{Lemma}
\newtheorem{remark}{Remark}

\newtheorem{definition}{Definition}
\newtheorem{assumption}{Assumption}
\newtheorem{example}{Example}

\numberwithin{equation}{section} 
\numberwithin{lemma}{section} 
\numberwithin{theorem}{section} 
\numberwithin{definition}{section} 
\numberwithin{corollary}{section} 


\DeclareMathOperator*{\argmin}{arg\,min}

\newcommand{\LH}{L_H}
\newcommand{\kappaS}{\kappa_S}
\newcommand{\Bepsilon}{\bar{\epsilon}}
\newcommand{\logOneOverDeltaS}{\log \bracket{\frac{1}{\deltaS}}}

\newcommand{\nPreFactorTRCubic}{\left[\frac{7}{8}(1-\delta_1) - 1 + \frac{c}{(c+1)^2} \right]^{-1}}
\newcommand{\deltaSThreeExpression}{\pow{e}{-\frac{l(\epSTwo)^2}{C_l} + r +1 }}
\newcommand{\newL}{\tau_{\alpha}}

\newcommand{\epsOne}{\epS}
\newcommand{\nThreeEpH}{N_{\epH}^{(3)}}

\newcommand{\nTwoEpsH}{N_{\epH}^{(2)}}

\newcommand{\epH}{\epsilon_{H}}
\newcommand{\epsHalf}{\epsilon^{\frac{1}{2} }  }
\newcommand{\oneMinusEpSHalf}{(1-\epS)^{\frac{1}{2}}}
\newcommand{\NsKTSKHat}{\normTwo{S_k^T \sKHat}}
\newcommand{\deltaSThree}{\deltaS^{(3)}}

\newcommand{\fZeroMinusfStarOverH}{\frac{f(x_0) - f^*}{h(\epsilon, \alphaZero\gammaOne^{c+\newL}
)}}
\newcommand{\ntsAlphaZeroGammaOneCL}{N_{TS, \underline{\alphaZero \gammaOne^{c+\newL}}}}
\newcommand{\fun}[2]{#1\bracket{#2}}
\newcommand{\pow}[2]{#1^{#2}}
\newcommand{\impliesSince}[1]{\overset{#1}{\implies}}
\newcommand{\hK}{\grad^2 f(x_k)}

\newcommand{\qKHat}[1]{\hat{q}_k \bracket{#1}}
\newcommand{\epSOne}{\epsilon_S^{(1)}}
\newcommand{\epSTwo}{\epsilon_S^{(2)}}
\newcommand{\MK}{M_k}
\newcommand{\hessFK}{\grad^2 f(x_k)}
\newcommand{\SK}{S_k}

\newcommand{\oneMinusEpSTwo}{1 - \epSTwo}

\newcommand{\sqrtFrac}[2]{\sqrt{\frac{#1}{#2}}}
\newcommand{\gDeltaSDeltaOne}{g(\deltaS, \deltaOne)}
\newcommand{\mathOmega}[1]{\Omega \left( #1 \right) }
\newcommand{\mathO}[1]{\mathcal{O}\left( #1\right)}
\newcommand{\ra}{\rangle}
\newcommand{\la}{\langle}
\newcommand{\norms}[1]{\|#1\|_2^2}
\newcommand{\Z}{\mathbb{Z}} 
\newcommand{\N}{\mathbb{N}}
\newcommand{\R}{\mathbb{R}}
\newcommand{\E}{\mathbb{E}}
\renewcommand{\P}{\mathbb{P}}
\newcommand{\dotp}[2]{\langle #1, #2 \rangle}
\newcommand{\alphaLow}{\alpha_{low}}
\newcommand{\nEps}{N_{\epsilon}}
\newcommand{\oneOverGammaOneC}{\frac{1}{\gamma_1^c}}
\newcommand{\nt}{N_T}
\newcommand{\nts}{N_{TS}}
\newcommand{\ns}{N_S}
\newcommand{\nuMe}{N_U}
\newcommand{\ntu}{N_{TU}}
\newcommand{\ntAlphaUpper}{N_{T, \overline{\alphaLowOne}}}
\newcommand{\nsAlphaUpper}{N_{S, \overline{\alphaLowOne}}}
\newcommand{\ntAlphaLower}{N_{T, \underline{\alphaLowOne}}}
\newcommand{\ntsAlphaLower}{N_{TS, \underline{\alphaLowOne}}}
\newcommand{\ntuAlphaLower}{N_{TU, \underline{\alphaLowOne}}}
\newcommand{\nuAlphaLower}{N_{U, \underline{\alphaLowOne}}}
\newcommand{\nsGammaAlpha}{N_{S, \underline{ \gamma_1^{c}\alphaLowOne}}}
\newcommand{\ntsGammaCAlpha}{N_{TS, \underline{ \gamma_1^c \alphaLowOne}}}
\newcommand{\nfsGammaAlpha}{N_{FS, \underline{ \gamma_1^c \alphaLowOne}}}
\newcommand{\nf}{N_F}

\newcommand{\logOneOverGammaOne}{\log(1/\gamma_1)}

\newcommand{\logGammaTwoOverGammaOne}{\log(\gamma_2/\gamma_1)}
\newcommand{\tCOne}{\newL}
\newcommand{\tCTwo}{c}

\newcommand{\expectation}[1]{\E \left[ #1 \right]}
\newcommand{\sumTk}{\sum_{k=0}^{N-1} T_k}
\newcommand{\eToMinusLambdaMinusOne}{e^{-\lambda}-1}
\newcommand{\oneMinusDeltaS}{1-\delta_S}
\newcommand{\conditionalE}[2]{\E \left[ #1 | #2 \right]}
\newcommand{\eToMinusLambdaSumTk}{e^{-\lambda \sumTk }}
\newcommand{\eToMinusLambdaSumTkNMinusTwo}{e^{-\lambda \sumTkToNMinusTwo }}
\newcommand{\exponentialMomentUpperTotal}{\left[ e^{( \eToMinusLambdaMinusOne)(\oneMinusDeltaS)}\right]^N}
\newcommand{\exponentialMomentUpper}{ e^{( \eToMinusLambdaMinusOne)(\oneMinusDeltaS)}}
\newcommand{\eToMinusLambda}[1]{e^{-\lambda #1}}
\newcommand{\conditionalP}[2]{\P \left[ #1 | #2 \right]}
\newcommand{\sumTkToNMinusTwo}{\sum_{k=0}^{N-2}T_k}
\newcommand{\SZeroDotsToXNMinusOne}{T_0, T_1, \dots, T_{N-2}, x_{N-1}}
\newcommand{\deltaOneComplicated}{-\delta_1 - (1-\delta_1) \log(1-\delta_1) }
\newcommand{\chernoffLowerExponential}{e^{-\frac{\delta_1^2}{2} (1-\delta_S) N}}

\newcommand{\nPreFactorTR}{\left[(1-\delta_S)(1-\delta_1) - 1 + \frac{c}{(c+1)^2} \right]^{-1}}
\newcommand{\betaK}{\beta_k}
\newcommand{\kStartOne}{k_{\small{start}}^{(1)}}
\newcommand{\kEndOne}{k_{\small{end}}^{(1)}}
\newcommand{\aOne}{A^{(1)}}
\newcommand{\closedInterval}[2]{\left[ #1, #2 \right]}
\newcommand{\twoCases}[4]{         \begin{cases}
             #1, & #2 \\
             #3, & #4.
        \end{cases} }
\newcommand{\closedOpenInterval}[2]{\left[ #1, #2 \right)}
\newcommand{\openClosedInterval}[2]{\left( #1, #2\right]}
\newcommand{\openInterval}[2]{\left( #1, #2\right)}
\newcommand{\mOneOne}{M_1^{(1)}}
\newcommand{\mTwoOne}{M_2^{(1)}}
\newcommand{\nOneOne}{n_1^{(1)}}
\newcommand{\nTwoOne}{n_2^{(1)}}
\newcommand{\kInStartEndOne}{k \in \openInterval{\kStartOne}{\kEndOne}}
\newcommand{\bOne}{B^{(1)}}
\newcommand{\kStartTwo}{k_{\small{start}}^{(2)}}
\newcommand{\kEndTwo}{k_{\small{end}}^{(2)}}
\newcommand{\kStartThree}{k_{\small{start}}^{(3)}}
\newcommand{\kStartI}{k_{\small{start}}^{(i)}}
\newcommand{\kStartIPlusOne}{k_{\small{start}}^{(i+1)}}
\newcommand{\kEndI}{k_{\small{end}}^{(i)}}
\newcommand{\aTwo}{A^{(2)}}
\newcommand{\mOneTwo}{M_1^{(2)}}
\newcommand{\mTwoTwo}{M_2^{(2)}}
\newcommand{\nOneTwo}{n_1^{(2)}}
\newcommand{\nTwoTwo}{n_2^{(2)}}

\newcommand{\bTwo}{B^{(2)}}

\newcommand{\kBar}{\overline{k}}
\newcommand{\onePlusTCTwo}{1+ \tCTwo}

\newcommand{\sHat}{\hat{s}}
\newcommand{\mKHat}[1]{\hat{m}_k\left( #1\right)}
\newcommand{\xK}{x_k}
\newcommand{\xKPlusOne}{x_{k+1}}
\newcommand{\sK}{s_k}
\newcommand{\normTwo}[1]{\left\lVert#1\right\rVert_2}
\newcommand{\gradFK}{\grad f(\xK)}
\newcommand{\DOneOverNMinusDTwo}{\frac{D_1}{N-D_2}}
\newcommand{\qInverseOfDOneOverNMinusDTwo}{q^{-1} \left( \DOneOverNMinusDTwo\right)}
\newcommand{\minGradient}[1]{\min_{k\leq #1} \normTwo{\gradFK}}
\newcommand{\minGradientN}{\minGradient{N}}

\newcommand{\DOneOverQPlusDTwo}{\frac{D_1}{q(\epsilon)}+D_2}

\newcommand{\logOneOverDelta}{\log\left( \frac{1}{\delta}\right)}
\newcommand{\eToMinusDThreeN}{e^{-D_3 N}}
\newcommand{\gammaOneC}{\gamma_1^c}
\newcommand{\hatNOneOne}{\hat{n}_1^{(1)}}
\newcommand{\hatNOneTwo}{\hat{n}_2^{(1)}}
\newcommand{\fK}{f(x_k)}
\newcommand{\innerProduct}[2]{\langle #1, #2 \rangle}
\newcommand{\sKGradFK}{S_k \gradFK}
\newcommand{\sKBKSKT}{S_k B_k S_k^T}
\newcommand{\rLTimesD}{\R^{l\times d}}
\newcommand{\fKPlusOne}{f(x_k + s_k)}
\newcommand{\epS}{\epsilon_S}
\newcommand{\sMax}{S_{\small{max}}}
\newcommand{\alphaMax}{\alpha_{\small{max}}}
\newcommand{\alphaK}{\alpha_k}
\newcommand{\hBar}{\bar{h}}
\newcommand{\deltaSOne}{\delta_S^{(1)}}
\newcommand{\deltaSTwo}{\delta_S^{(2)}}
\newcommand{\aKOne}{A_k^{(1)}}
\newcommand{\aKTwo}{A_k^{(2)}}
\newcommand{\intersect}{\cap}
\newcommand{\barXK}{\bar{x}_k}
\newcommand{\probabilityGivenXK}[1]{\probability{#1 | x_k = \barXK}}
\newcommand{\aKOneIntersectaKTwo}{\aKOne \intersect \aKTwo}
\renewcommand{\complement}[1]{\left( #1\right)^c}
\newcommand{\oneMiusEpsSToHalf}{\left( 1-\epS\right)^{1/2}}

\renewcommand{\abs}[1]{\lvert #1\rvert}

\newcommand{\alphaKPlusOne}{\alpha_{k+1}}
\newcommand{\gammaOne}{\gamma_1}
\newcommand{\gammaTwo}{\gamma_2}

\newcommand{\alphaZero}{\alpha_0}
\newcommand{\alphaMin}{\alpha_{\small{min}}}

\newcommand{\ceil}[1]{\left \lceil{#1}\right \rceil}
\newcommand{\logBaseGammaOne}[1]{\log_{\gammaOne}\left( #1\right)}
\newcommand{\minMe}[2]{\min \set{#1, #2}}
\newcommand{\alphaLowOne}{\alphaMin}
\newcommand{\pOneDef}{\frac{\log(\gammaTwo\alphaMin/\alphaZero)}{\logGammaTwoOverGammaOne}}

\newcommand{\gammaTwoExpression}{ \frac{1}{\gamma_1^c}}
\newcommand{\betaKPlusOne}{\beta_{k+1}}
\newcommand\muAOneHashing{C_1 \sqrt{C_2} \sqrt{\epsilon}  \min \left\{\frac{\log(E/ (C_2 \epsilon))}{4r + \log(1/\delta)}, \sqrt{\frac{\log(E)}{4r + \log(1/\delta)}} \right\} }
\newcommand\logDeltaI{\log(1/\delta)}
\newcommand\fourRplusLog{\left[ 4r + \logDeltaI\right]}
\newcommand{\nuOneHashing}{C_1 \sqrt{\epsilon} \min \left\{ \frac{\log(E/\epsilon)}{\log(1/\delta)}, \sqrt{\frac{\log(E)}{\log(1/\delta)}} \right\}}
 
\newcommand{\muASHashing}{\sqrt{s} C_{\nu} C_1^{-1} \muBarEpsDelta}
\newcommand{\muASHashingVariant}{\sqrt{s}\muBarEpsDelta}
\newcommand{\EUpperSHashing}{C_2^2 \epsilon^2 s[4r + \logDeltaI]^{-1} e^{C_s (C_2 \epsilon s)^{-1} \left[ 4r+\logDeltaI \right]}}
\newcommand{\muLower}{ \sqrt{r/n} + \sqrt{8 \log(n/\delta_1)/n } }
\newcommand{\muLowerNoN}{ \sqrt{r} + \sqrt{8 \log(n/\delta_1) } }
\newcommand{\nLower}{ \frac{ \left( \muLowerNoN  \right)^2}{ 
sC_{\nu}^2C_1^{-2}\bar{\mu}(\epsilon,\delta)^2 }}
\newcommand{\nLowerSVariant}{ \frac{ \left( \muLowerNoN  \right)^2}{ s\bar{\mu}(\epsilon,\delta)^2 } }
\newcommand{\mLower}{E C_2^{-2} \epsilon^{-2} \fourRplusLog}

\newcommand{\epsilonPrimeFactor}{\frac{(1-\gamma)(1-\gamma^2)}{1+2\gamma-\gamma^2}}
\renewcommand{\k}{p}
\newcommand{\set}[1]{\left\{ #1 \right\}}
\newcommand{\yPlus}{Y_+}
\newcommand{\yMinus}{Y_-}
\newcommand{\nPlus}{N_+}
\newcommand{\nMinus}{N_-}
\newcommand{\yPlusExpression}{\set{y_1 + y_2: y_1,y_2 \in Y}}
\newcommand{\yMinusExpression}{\set{y_1 - y_2: y_1,y_2 \in Y}}
\newcommand{\nPlusExpression}{\set{y_i + y_j: i,j \in \left[1,|N|\right]}}
\newcommand{\nMinusExpression}{\set{y_i - y_j: i,j  \in \left[1,|N|\right]}}
\newcommand{\union}[2]{#1\cup#2}
\newcommand{\yExpression}{\set{y_1, y_2, \dots y_{|Y|}}}
\newcommand{\probability}[1]{\P \left( #1 \right)}
\renewcommand{\complement}[1]{\left( #1 \right)^c}
\newcommand{\squareBracket}[1]{\left[ #1 \right]}

\newcommand{\nMinusOneExpression}{\set{y_i-y_j: 1\leq i <j \leq |N|}}
\newcommand{\nMinusTwoExpression}{\set{y_i - y_j: 1\leq j <i \leq |N|}}
\newcommand{\nMinusOne}{N_-^{(1)}}
\newcommand{\nMinusTwo}{N_-^{(2)}}
\newcommand{\muBarEpsDelta}{\bar{\mu}(\epsilon,\delta)}
\newcommand{\EUpperHashing}{\frac{2 e^{4r}}{\left[ 4r + \logDeltaI \right]\delta }}
\newcommand{\muHatSEpsDelta}{\hat{\mu}(s,\epsilon,\delta)}

\newcommand{\bracket}[1]{\left( #1\right)}

\newcommand{\lkHatSHat}{\hat{l}_k \bracket{\sHat}}
\newcommand{\mkHatSHat}{\mKHat{\sHat}}
\newcommand{\oneOverTwoAlphaK}{\frac{1}{2\alphaK}}
\newcommand{\oneOverAlphaK}{\frac{1}{\alphaK}}
\newcommand{\SKTransposed}{S_k^T}
\newcommand{\lkHat}[1]{\hat{l}_k \bracket{#1}}
\newcommand{\BMax}{B_{max}}
\newcommand{\sKHat}{\hat{s}_k}
\newcommand{\kappaT}{\kappa_T}
\newcommand{\barH}[2]{\bar{h} \bracket{#1,#2}}

\newcommand{\halfBMax}{\frac{1}{2}\BMax}

\newcommand{\SKTransposedsKHat}{S_k^T \sKHat}
\newcommand{\lPlusHalfBmax}{L+\halfBMax}

\newcommand{\deltaOne}{\delta_1}

\newcommand{\eToQRySqaured}{e^{q \normTwo{Ry}^2}}
\newcommand{\eToQLOneMinusEps}{e^{ql(1-\epS)}}
\newcommand{\eToMinusQLOneMinusEps}{e^{-ql(1-\epS)}}
\newcommand{\texteq}[1]{\text{\quad #1}}
\newcommand{\lambdaMin}[1]{\lambda_{min} \bracket{#1}}

\newcommand{\zK}{z^{k}}
\newcommand{\aZeroOne}{A_0^{(1)}}
\newcommand{\aZeroTwo}{A_0^{(2)}}

\newcommand{\normInf}[1]{\|#1\|_{\infty}}

\newcommand{\logOneOverDeltaSTwo}{\log\bracket{1/\deltaSTwo}}
\newcommand{\mkHatSkHat}{\hat{m}_k\bracket{\hat{s}_k}}
\newcommand{\gK}{g_k}
\newcommand{\mK}[1]{m_k \bracket{#1}}
\newcommand{\deltaS}{\delta_S}
\newcommand{\integral}[3]{\int_{#1}^{#2} #3}
\newcommand{\probNEpsGrN}{\probability{\nEps >M}dM}

\newcommand{\reply}[1]{#1}

\newcommand{\CorTwoDeltaOne}{Suppose \eqref{eqn::deltaSConditionThmTwo} hold and let $\delta_1 \in (0,1)$ satisfy \eqref{eqn:tmp32}}
\newcommand{\IterKTrueandSuccssfulWithAlphaKGeqAlphaMin}{\substack{ \text{Iteration $k$ is true and successful} \\ \text{with $\alphaK \geq \alphaZero \gammaOne^{c+\newL}$}}}

\newcommand{\constantsDescription}{problem-independent constants}
\newcommand{\whereMuBarIsDefined}{where $\muBarEpsDelta$ is defined in \eqref{A:mu-1}}
\newcommand{\theoremFiveFirstSentence}{Suppose that $\epsilon,\delta \in (0,1)$, $r \leq d \leq n, m\leq n \in \N$, $E >0$ satisfy }
\newcommand{\theoremThreeFirstSentence}{Let  $C_1, C_2, C_3, C_M, C_{\nu}, C_s >0$ be \constantsDescription. Suppose that $\epsilon,\delta \in (0, C_3)$, 
$m,\,s\in \N^+$ and $E>0$ satisfy }

\newcommand{\anSHashingMat}{an $s$-hashing matrix}
\newcommand{\anSHashingVariantMat}{an $s$-hashing variant matrix}
\newcommand{\refAlgOne}{Algorithm \ref{alg1}}
\newcommand{\solverName}{Ski-LLS}
\newcommand{\solverNameDense}{Ski-LLS-dense}
\newcommand{\solverNameSparse}{Ski-LLS-sparse}
\newcommand{\calibrationDenseCaptionSentenceOne}[1]{Runtime of #1 on dense matrices $A \in \R^{n \times d}$ from Test Set 1
with $n=50000, d=4000$ and $n=50000, d=7000$ and different values of $\gamma=m/d$} 
\newcommand{\calibrationDenseCaptionSentenceTwo}[1]{For each plot, #1 is run three times on (the same) randomly generated $A$. We see that the runtime has low variance despite the randomness in the solver} 
\newcommand{\calibrationDenseCaptionSentenceThree}[1]{We choose #1 to approximately minimize the runtime across the above plots}
\newcommand{\performanceProfileCaption}[1]{Performance profile comparison of \solverName{} with LSRN, LS\_HSL and LS\_SPQR for all matrices $A\in\R^{n\times d}$ in the Florida matrix collection with #1}
\newcommand{\defDenseTestMatrices}{\eqref{eq::A_co_dense}, \eqref{eq::A_semi_dense}, \eqref{eq::A_inco_dense}}

\newcommand{\calibrationSixFigures}[8]{
\begin{figure}[H]
    \centering
    \begin{minipage}{\mysize\textwidth}
        \centering
        \includegraphics[width=\textwidth]{#1} 
    \end{minipage}
    \begin{minipage}{\mysize\textwidth}
        \centering
        \includegraphics[width=\textwidth]{#2} 
    \end{minipage}
    \begin{minipage}{\mysize\textwidth}
        \centering
        \includegraphics[width=\textwidth]{#3} 
    \end{minipage}    
    \centering
    \begin{minipage}{\mysize\textwidth}
        \centering
        \includegraphics[width=\textwidth]{#4} 
    \end{minipage}
    \begin{minipage}{\mysize\textwidth}
        \centering
        \includegraphics[width=\textwidth]{#5} 
    \end{minipage}
    \begin{minipage}{\mysize\textwidth}
        \centering
        \includegraphics[width=\textwidth]{#6} 
    \end{minipage}        
    \caption{#7} \label{#8}
\end{figure}
}
\newcommand{\threeFigures}[9]{
\begin{figure}
    \begin{minipage}{\mysize\textwidth}
    \centering
        \includegraphics[width=\textwidth]{#1} 
        \caption{#2}
        \label{#3}
    \end{minipage}\hfill
    \begin{minipage}{\mysize\textwidth}
    \centering
        \includegraphics[width=\textwidth]{#4} 
        \caption{#5}
        \label{#6}
    \end{minipage}
    \begin{minipage}{\mysize\textwidth}
    \centering
        \includegraphics[width=\textwidth]{#7} 
        \caption{#8}
        \label{#9}
    \end{minipage}    
\end{figure}
}

\newcommand{\mathInTitle}[1]{\texorpdfstring{#1}{TEXT}}

\newcommand{\singleQuote}[1]{\lq #1\rq}

\title{On Random Embeddings and Their Application to Optimisation}   

\author{Zhen Shao}             
\college{St Anne's College}  

\degree{Doctor of Philosophy}     
\degreedate{Michaelmas 2021}         

\begin{document}
\ActivateWarningFilters[pdftoc] 

\baselineskip=18pt plus1pt

\setcounter{secnumdepth}{3}
\setcounter{tocdepth}{3}

\maketitle                  
\begin{acknowledgements}
I would like to thank my supervisor, Prof Coralia Cartis, for her patience, support and teaching over the last four years, without which this thesis would not be possible. 

I would also like to thank Chris Breward and Colin Please for making Industrially Focused Mathematical Modelling CDT possible, and the collaborations I had with Numerical Algorithm Group Ltd., in particular Dr Jan Fiala. 

Throughout my DPhil, I have been generously supported by my office mates, my friends in the Mathematical Institute and St Anne's College. Thank you all for being with me in this journey.

I am also grateful for all the teachings and support I received during my undergraduate years at Oxford. In particular, my tutors at Pembroke College. 

Finally, I would like to thank my parents, for raising me up and giving me the best environment for my education.
\end{acknowledgements}
\begin{abstract}
    
Random embeddings project high-dimensional spaces to low-dimensional ones; they are careful constructions which allow the approximate preservation of key properties, such as the pair-wise distances between points. Often in the field of optimisation, one needs to explore high-dimensional spaces representing the problem data or its parameters and thus the computational cost of solving an optimisation problem is connected to the size of the data/variables. This thesis studies the theoretical properties of norm-preserving random embeddings, and their application to several classes of optimisation problems. 

Our investigations into random projections present subspace embedding properties for $s$-hashing ensembles --- sparse random matrices with $s$ non-zero entries per column --- that are optimal in the projection dimension $m$ of the sketch, namely, $m = \mathcal{O}(d)$ where $d$ is the dimension of the subspace. A diverse set of results are presented that address the case when the input matrix has sufficiently low coherence; how the acceptable coherence changes with the number $s$ of non-zeros per column in the $s$-hashing matrices, or is reduced through suitable transformations. In particular, we propose a new random embedding, the Hashed Randomised Hadamard Transform,  that improves upon the Subsampled Randomised Hadamard Transform by replacing sub-sampling with hashing. 

We apply these sketching techniques to linear least squares problems,  as part of a Blendenpik-type algorithm, that uses a sketch of the data matrix to build a high quality preconditioner and then solves a preconditioned formulation of the original problem. We also include suitable linear algebra tools for rank-deficient and for sparse problems that lead to our implementation, Ski-LLS,  outperforming not only sketching-based routines on randomly-generated input, but also state of the art direct solver SPQR and iterative code HSL on certain subsets of the sparse Florida matrix collection; namely, on least squares problems that are significantly over-determined, or moderately sparse, or difficult. 

Instead of sketching in the data/observational space as in the linear least squares case above, we then consider sketching in the variable/parameter domain for a more generic problem and algorithm.
We propose a general random-subspace first-order framework for unconstrained non-convex optimisation that requires a weak probabilistic assumption on the subspace gradient, which we show to be satisfied by various random matrix ensembles, such as Gaussian and hashing sketching. We show that, when safeguarded with trust region or quadratic regularisation techniques, this random subspace approach satisfies, with high probability, a complexity bound of order $\mathO{\epsilon^{-2}}$ to drive the (full) gradient norm below $\epsilon$; matching in the accuracy order, deterministic counterparts of these methods and securing almost sure convergence. 
We then particularise this framework to random subspace Gauss-Newton  methods for nonlinear least squares problems, that only require the calculation of the Jacobian matrix in a subspace, with similar complexity guarantees.

We further investigate  second-order methods for non-convex optimisation, and propose a Random Subspace Adaptive Regularised Cubic (R-ARC) method, which we analyse under various assumptions on the objective function and the sketching matrices. We show that, when the sketching matrix achieves a subspace embedding of the augmented matrix of the gradient and the Hessian with sufficiently high probability, then the R-ARC method satisfies, with high probability, a complexity bound of order $\mathO{\epsilon^{-3/2}}$ to drive the (full) gradient norm below $\epsilon$; matching in the accuracy order  the deterministic counterpart (ARC). We also show that the same complexity bound is obtained when the Hessian matrix has sparse rows and appropriate sketching matrices are chosen. We also investigate R-ARC's convergence to  second order critical points. We show that the R-ARC method also drives the Hessian in the subspace to be approximately positive semi-definite with high probability, for a variety of sketching matrices; and furthermore if the Hessian matrix has low rank and scaled Gaussian sketching matrices are used, the R-ARC drives the (full) Hessian to be approximately positive semi-definite, with high probability, at the rate $\mathO{\epsilon^{-3}}$, again matching in the accuracy order its deterministic counterpart. 
\end{abstract}

\begin{romanpages}          
\tableofcontents            
\listoffigures              
\end{romanpages}            

\chapter{Introduction}\label{Ch1}

    \section{Background}
    This thesis is about random embeddings and their application to improving the efficiency of optimisation algorithms for different problem classes. 
In particular, regarding random embeddings, the novelty of our work is in the analysis of sparse projections and in proposing a new general random embedding with attractive theoretical properties and numerical performance. Then we transfer these results and existing random embeddings to improve optimisation algorithms for linear and non-linear least squares problems, as well as for general objectives using first and second order information.

Numerical optimisation designs algorithms that 
find an extreme value of a given function.
Such computational routines find numerous applications 
in data science, finance and machine learning. 
The computational cost of an optimisation algorithm typically 
grows with the dimension of the function being optimised, 
which in many applications increases as the data set 
becomes larger or the model for the data becomes more complex. 
For example, the classical computation of the solution of 
fitting a linear model to a data set (linear least squares) 
grows linearly with the size of the data set and quadratically 
with the number of variables in the model. 
Given the ever-increasing amount of data and complexity of models, 
recent research trends attempt to make  classical optimisation algorithms 
faster and more scalable, \cite{10.5555/1109557.1109682, Meng:2014ib, doi:10.1137/090767911, Cartis:2017fa, 10.1561/2200000035, MR3839333}. 
This work explores two topics in this context: 
algorithms for linear least squares that compute an accurate solution (up to the machine precision), and with computational complexities lower than classical methods; and algorithms for 
general unconstrained objective functions that 
compute an approximate solution with first and second order guarantees of optimality, with probability arbitrarily 
close to one and matching, in the order of desired accuracy of the solution, the complexity of classical optimization methods.

We begin with a simple example of how random embeddings can help solve linear least squares
\begin{align} 
\min_{x \in \R^d}\|Ax-b\|_2^2 \label{tmp-2022-1-8}
\end{align}
faster. Consider the problem $min_x f(x) = (x-6)^2 + (2x-5)^2 + (3x-7)^2 + (4x-10)^2$, corresponding to $A = \begin{pmatrix}
	1 & 2 & 3 & 4
\end{pmatrix}^T $ and $b = \begin{pmatrix}
	6 & 5 & 7 & 10
\end{pmatrix}^T $. Solving $f'(x)=0$ or equivalently $A^T Ax = A^Tb$, we obtain $x = \frac{77}{30} \approx 2.567$. Sketching with random embedding $S$ transforms the problem (\ref{tmp-2022-1-8}) to 

\begin{align} 
\min_{x \in \R^d}\|SAx-Sb\|_2^2, \label{Sketched-LLS-statement}
\end{align}
where $S \in \R^{m\times n}$ is some matrix we choose. We give two examples for $S$.

\begin{example}[Sampling]
Let\footnote{Namely, $S$ has one non-zero per row.} $S = \begin{pmatrix}
	1 & 0 & 0 & 0 \\ 0 & 0 & 1 & 0
\end{pmatrix}$. Then $SA = \begin{pmatrix}
	1  & 3
\end{pmatrix}^T$ gives the 1st and 3rd row of the matrix $A$. $Sb = \begin{pmatrix}
	6 & 7
\end{pmatrix}^T$ gives the 1st and 3rd entry of the vector $b$. Solving (\ref{Sketched-LLS-statement}) gives us $x = \frac{81}{30} = 2.700$.
\end{example}

\begin{example}[Hashing]
Let\footnote{Namely, $S$ has one non-zero per column.} $S = \begin{pmatrix}
	1 & 1 & 0 & 0 \\ 0 & 0 & 1 & 1
\end{pmatrix}$. Then $SA = \begin{pmatrix}
	3  & 7
\end{pmatrix}^T$ where the 1st row of $SA$ is the sum of the 1st and 2nd rows of $A$; the 2nd row of $SA$ is the sum of the 3rd and 4th rows of $A$. $Sb = \begin{pmatrix}
	11 & 17
\end{pmatrix}^T$ where the 1st entry of $Sb$ is the sum of the 1st and 2nd entries of $b$; the 2nd entry of $Sb$ is the sum of the 3rd and 4th entries of $b$. Solving (\ref{Sketched-LLS-statement}) gives us $x = \frac{152}{58} \approx 2.621$.
\end{example}

In both examples we reduce the number of rows of $A, b$ from four to two, but using hashing gives a more accurate result because it uses each row of $A$ and  entry in $b$. In later sections of this thesis we show that computing the solution of problem $\eqref{LLS-statement}$ with hashing sketching leads to improved performance. We also show how to use random embeddings to compute an accurate instead of just an approximate solution of linear least squares.

For the remainder of this chapter, we first review key concepts about random embeddings, and compare and contrast some  well-known random embeddings. Then we introduce the problem of linear least squares, classical techniques for its solution, and random embedding-based approaches known as sketching. We then introduce general non-convex optimisation problems; classical first and second order methods to solve them; and  existing theoretical results on these \singleQuote{full space} methods. We also introduce non-linear least squares as a particular type of non-convex optimisation problems. This chapter ends with a  summary of the structure and contributions of this thesis to random embeddings, their applications to linear least squares, and  to general non-convex optimisations. Our detailed contributions and relevant literature reviews can be found in individual chapters.
    
    \section{Random embedding}
    \paragraph{JL Lemma}
Dimensionality reduction techniques using random embeddings rely crucially  on the Johnson-Lindenstrauss lemma which first appeared in 1984 \cite{Johnson:1984aa}. It states that to calculate the approximate 2-norm\footnote{By 2-norm of a vector, we mean its usual Euclidean norm.} of a set of vectors, it suffices to randomly project them to lower dimensions, calculate their length in the projected space.
This is equivalent to multiplying the vectors representing the high-dimensional points (on the left) by an under-determined matrix with entries following some probabilistic distributions. We call such a matrix, a random embedding or projection. In particular, random embeddings for a set of points that approximately preserve their 2-norms are called Johnson-Lindenstrauss (JL)-embeddings (formally defined in \autoref{def::JL_embedding}). More specifically, we may choose scaled Gaussian matrices as the embedding \footnote{In the original paper \cite{Johnson:1984aa}, the lemma appears as an existence result concerning Lipschitz mappings. Here we state the `modern' form that is proved in \cite{MR1943859} and that is more relevant to this thesis.}.
\begin{lemma}[JL Lemma \cite{Johnson:1984aa, MR1943859}]
\label{def::JL_lemma}
Given a fixed, finite set $Y\subseteq \R^{n}$, $\epsilon, \delta > 0$, let $S \in\R^{m\times n}$ 
have entries independently distributed as the normal $N(0, n^{-1})$, with 
$m = \mathO{\epsilon^{-2}\log(\frac{|Y|}{\delta})}$ and where $|Y|$ refers to the cardinality of the set $Y$. Then we have, with probability at least $1-\delta$, that
\begin{equation}
(1-\epsilon)\|y\|_2^2 \leq \|Sy\|_2^2 \leq (1+\epsilon)\|y\|_2^2 \quad \text{for all}\,\, y \in Y.
\end{equation}
\end{lemma}

Intuitively, we are able to use the above dimensionality reduction technique because we are only concerned with  Euclidean distances, expressed as sum of squares. If a vector $x$ has $n$ entries with similar magnitudes, to calculate its 2-norm, we only need to sample some of its entries, say $m$ entries, then calculate the sum of squares of those entries, and rescale by $n/m$ to obtain the approximate 2-norm of $x$. This is illustrated in \autoref{fig:intro_1}, where we set $x$ to be a random Gaussian vector with independent identically distributed entries. We see that the error in the norm estimation is within 5\%. 

\begin{figure}
    \centering
    \includegraphics[width=0.5\textwidth]{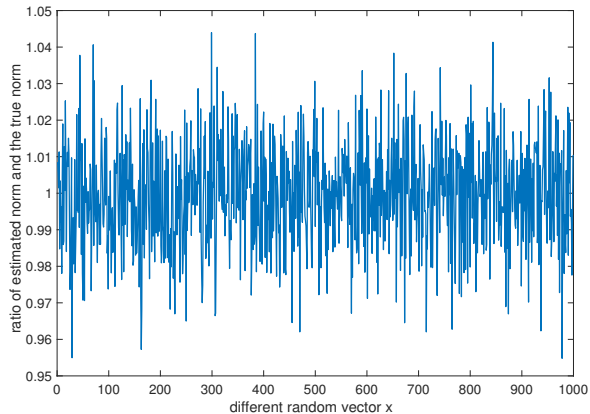}
    \caption{Randomly sampling and then re-scaling gives a good estimate of the norm when the vector components have similar magnitude. }
    \label{fig:intro_1}
\end{figure}

In general, the magnitudes of the entries are dissimilar. However, we can preprocess $x$ by applying a random, norm-preserving transformation, before sampling and re-scaling. In \autoref{fig:intro_2}, we apply a (randomised) Fourier transform to a vector $x$ with a non-uniform distribution of the  magnitude of entries. We observe that the square of the entries of $x$ are more uniformly distributed after the transform. 

\begin{figure}
    \centering
    \includegraphics[width=0.8\textwidth]{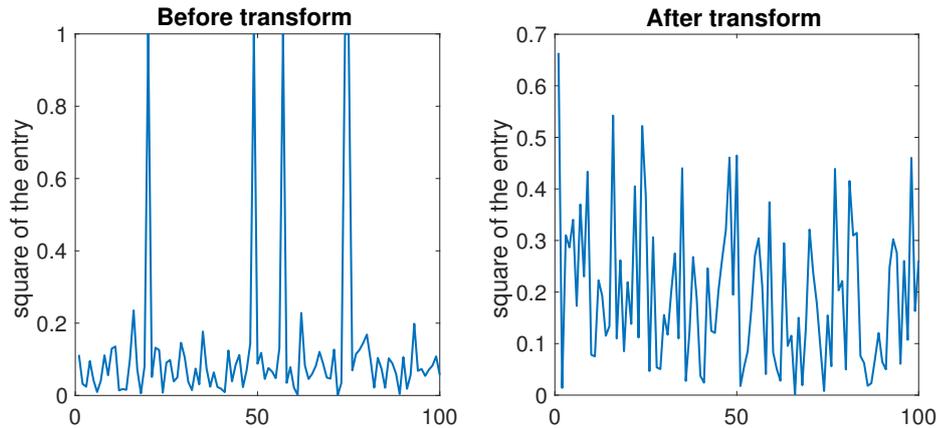}
    \caption{(Randomised) Fourier transform makes the magnitude of  the entries of a vector more similar}
    \label{fig:intro_2}
\end{figure}

Multiplying by a square Gaussian matrix has a similar effect in making the magnitude of the entries of a vector more similar. While
multiplying by an under-determined Gaussian matrix is a composition of multiplying by a square Gaussian matrix and then an under-determined sampling matrix (with one non-zero entry per row in a random column, whose value is one). This is the intuition behind the JL Lemma. For more  details on random matrices, see \cite{MR3837109}.

\paragraph{Subspace embedding}
Instead of a discrete group of points, Subspace embeddings (formally defined in \autoref{subspace_embedding_def1_statement}) also aim to approximately preserve the 2-norm of each point in a column subspace of some given matrix $A$. Subspace embeddings are useful when the point whose 2-norm is to be approximately preserved is unknown but lies in a given subspace; such as in the application of using random embeddings to solve linear least squares faster, where the optimal $Ax^* - b$ is unknown but lies in the subspace generated by the columns of $A$ and the vector $b$. Subspace embeddings also find applications in computing a high quality preconditioner of a linear system, and solving the low-rank matrix approximation problem \cite{MR3839333}. Often, a random matrix distribution can be both an (oblivious)\footnote{Data independent, see \autoref{Oblivious_embedding}.} JL-embedding and an (oblivious) subspace embedding, see \cite{10.1561/0400000060} where the author derives the oblivious subspace embedding property of the scaled Gaussian matrices from its oblivious JL-embedding property.

\paragraph{Oblivious subspace embedding}
A crucial advantage of random embeddings comparing to deterministic ones is that their embedding properties are data independent. For example, it is well known that the singular value decomposition (SVD) gives the most efficient low-dimensional embedding of a column subspace (Eckart--Young theorem). However, for each given matrix, its  SVD needs to be computed before the embedding can be applied; which is computationally expensive and the cost scales with the data size. 
By contrast, random embeddings are independent of the data matrix and hence no data-specific computation is required (aside from constructing the random embedding by sampling from the given distribution and applying the random embedding to the data matrix). 
Therefore, due to this property, random embeddings are oblivious embeddings (formally defined in \autoref{Oblivious_embedding}). 
A consequence of the embedding being oblivious to the data is that there is, in general, a positive probability that the randomly drawn embedding fails to embed the data (in the sense of providing a JL-embedding or a subspace-embedding). However the failure probability is exponentially small and can be bounded above by appropriately setting the dimension of the embedded space. Moreover, the iterative nature of our subspace algorithms in later chapters takes into account that in some iterations, the embedding may fail. But the probability that those algorithms fail to converge at the expected theoretical rate approaches zero exponentially fast with the total number of iterations. 

Next, we briefly review a list of commonly used random embeddings, which are represented by random matrices.

\paragraph{Popular random matrices and their properties}

Sampling matrices have one non-zero entry per row in a random column. 

\begin{definition}\label{def:sampling}
We define $S \in \R^{m \times n}$ to be a scaled sampling matrix if, independently for each $i \in [m]$, we sample $j \in [n]$ uniformly at random and let $S_{ij}=\sqrt{\frac{n}{m}}$. 
\end{definition}

The scaling factor is included so that given $x \in \R^n$, we have $\expectation{\|Sx\|_2} = \|x\|_2$ for any scaled sampling matrix $S$. 
Sampling matrices are computationally inexpensive to apply to vectors/matrices so that embeddings based on them can be computed efficiently. However, the success of sampling matrices is highly dependent on the data. Even if we have $\expectation{\|Sx\|_2} = \|x\|_2$, $\|Sx\|_2$ may have high variance, such as  when $x$ has a single non-zero entry in its first row.

Non-uniformity of a vector, formally defined in \autoref{def:non-uniform-vector}, provides a measure of how different the magnitudes of the entries are; and the success of sampling matrices as an oblivious JL embedding depends on this. Similarly, the success of the sampling matrices as an oblivious subspace embedding depends on the coherence of a matrix (formally defined in \autoref{def::coherence}), which provides a measure of the non-uniformity of vectors in the matrix column subspace (\autoref{non_uniformity_col_subspace_coherence}). 

There are broadly two types of approaches to tackle the high variance challenge of using sampling matrices. The first type is based on transforming the vector/matrix to one with the same norm/column subspace but with higher uniformity. For example, it is well known that for any fixed vector $x \in \R^n$, pre-multiplication by a square Gaussian matrix (with each entry following $N(0, n^{-1})$) transforms the vector into one with independent normally distributed entries while preserving $\|x\|_2$ in expectation. In high dimensions, the resulting vector has high uniformity (due to entries having the same distribution and the scaling factor) and is thus suitable for applying sampling. A scaled Gaussian matrix can be thought as the product of a scaled sampling matrix (with the scaling being $\sqrt{\frac{n}{m}}$) and a square Gaussian matrix (with each entry following $N(0, n^{-1})$).

\begin{definition}\label{def:Gaussian}
We say $S \in \R^{ m \times n }$ is a scaled Gaussian matrix if $S_{ij}$ are independently distributed as $N (0, {m}^{-1})$. 
\end{definition}

(Scaled) Gaussian matrices with appropriate dimensions have been shown to be an oblivious JL/subspace embeddings \cite{10.1561/0400000060}. However, Gaussian matrices are computationally expensive to apply, especially when embedding a linear subspace represented by a dense basis due to the cost of dense matrix-matrix multiplication.

Subsampled-Randomised-Hadamard-Transform (SRHT) \cite{10.1145/1132516.1132597, Tropp:wr} uses an alternative non-uniformity reduction technique based on the Hadamard transform, which is similar to the Fourier transform. A Fast-Fourier-type algorithm exists that allows applying SRHT in $O(n \log(n))$ time for $x\in \R^n$ \cite{10.1145/1132516.1132597}, thus having a better complexity than the naive matrix-matrix multiplication, while still achieving comparable theoretical properties as the scaled Gaussian matrix. For subspace embedding of matrices in $\R^{n \times d}$, the embedding dimension of SRHT has a $\log(d)$ multiplicative factor compared to that of scaled Gaussian matrices \cite{Tropp:wr}. We have SRHT formally defined as below.

\begin{definition}\label{def::SRHT}
A Subsampled-Randomised-Hadamard-Transform (SRHT) \cite{10.1145/1132516.1132597, Tropp:wr} is an $m\times n$ matrix of the form $S =  S_s HD$ with $m\leq n$, where
\begin{itemize}[topsep=0pt,itemsep=-1ex,partopsep=1ex,parsep=1ex]
    \item $D$ is a random $n \times n$ diagonal matrix with $\pm 1$ independent entries.
    \item $H$ is an $n\times n$ Walsh-Hadamard matrix defined by
        \begin{equation}
            H_{ij} = n^{-1/2}(-1)^{\la (i-1)_2, (j-1)_2\ra},
        \end{equation}
        where $(i-1)_2$, $(j-1)_2$ are binary representation vectors of the numbers $(i-1), (j-1)$ respectively\footnote{For example, $(3)_2 = (1,1)$.}.
    \item $S_s$ is a random scaled $m\times n$ sampling matrix (defined in \autoref{def:sampling}), independent of $D$.
\end{itemize}
\end{definition}

A crucial drawback of SRHT is that if the column space is represented by a sparse matrix,  the embedded matrix, although of smaller dimensions, is generally dense. 
Though sampling matrices preserve sparsity, we have mentioned above their downsides concerning high variance. 

The second way to tackle the disadvantages of sampling is to use another sparse embedding ensemble instead. The 1-hashing matrices have one non-zero per column instead of one non-zero per row as in the sampling matrix; moreover, the value of the non-zero is $\pm 1$ with equal probability so that \reply{$\expectation{\|Sx\|_2}^2 = \|x\|_2^2$}.  We have
the following formal definition.

\begin{definition}[1-hashing \cite{10.1145/3019134, MR3167920}]\label{1-hashing}
$S \in \R^{m \times n}$ is a $1$-hashing matrix if independently for each $j \in [n]$, we sample $i$ uniformly at random and let $S_{i j} = \pm 1$ with equal probability.
\end{definition}

Conceptually, unlike sampling, which discards a number of rows of the vector/matrix to be embedded, hashing uses every single row. The dimensionality reduction is achieved by hashing those rows into $m$ slots, and adding them with sign-randomisation if multiple rows are hashed into a single slot. Therefore intuitively, hashing is more robust than sampling because it uses all the rows, and theoretical results have been established to show 1-hashing matrices with appropriate dimensions are oblivious JL/subspace embeddings without any requirement on the non-uniformity of the input \cite{10.1145/3019134, Nelson:te}.

Finally, $1$-hashing can be generalised to $s$-hashing, that is, matrices with $s$ non-zeros per column, defined below.

\begin{definition}\cite{10.1145/3019134} \label{def::sampling_and_hashing}
$S \in \R^{m \times n}$ is a $s$-hashing matrix if independently for each $j \in [n]$, we sample without replacement $i_1, i_2, \dots, i_s \in [m]$ uniformly at random and let $S_{i_k j} = \pm 1/\sqrt{s}$ for $k = 1, 2, \dots, s$.
\end{definition}

Conceptually, $s$-hashing sketches each row of the input $s$ times (into $s$ different rows of the output), with each row being scaled by $\frac{1}{\sqrt{s}}$, and has better theoretical properties when used as a JL/subspace embedding than $1$-hashing \cite{10.5555/2884435.2884456}. However, we note that while $1$-hashing does not increase the number of non-zeros in the vector/matrix to be embedded, $s$-hashing may increase it by up to $s$ times.

    \section{Linear least squares}
    Linear Least Squares (LLS) problems arising from fitting observational data to a linear model are mathematically formulated as the optimisation problem,
\begin{align} 
\min_{x \in \R^d}\|Ax-b\|_2^2, \label{LLS-statement}
\end{align}
where  $A \in \R^{n\times d}$ is the data matrix that has (potentially unknown) rank $r$,  $b\in \R^n$ is the vector of observations, and
$n \geq d \geq r$. Thus \eqref{LLS-statement} represents an optimisation problem where we have $n$ data points and a model of $d$ variables.

Problem (\ref{LLS-statement}) is equivalent to solving the normal equations
\begin{align}
 A^TA x= A^T b. \label{NE}
 \end{align}

Numerous techniques have been proposed for the solution of (\ref{NE}), and they traditionally involve the factorization of $A^TA$, just $A$, or iterative methods. The ensuing cost in the worst case is $\mathcal{O}(nd^2)$, which is prohibitive for large $n$ and $d$ \cite{10.5555/248979}.  
We briefly survey here the main classical techniques for solving LLS (\ref{LLS-statement})/(\ref{NE}) following \cite{Nocedal:2006uv}. For iterative methods and preconditioning, see \cite{Bjorck:1996uz}, while for sparse input matrices, see also \cite{MR2270673}.

\subsection{Dense linear least squares}
We say  problem \eqref{LLS-statement} is a dense Linear Least Squares (dense LLS) if the matrix $A$ is a dense matrix. Namely, the matrix $A$ does not have sufficiently many zero entries for specialised sparse linear algebra routines to be advantageous. To solve dense LLS, we may employ direct methods based on factorizations, or iterative methods typically based on conjugate gradient techniques.

\paragraph{Direct methods for dense LLS}

Cholesky factorization computes $A^TA = LL^T$, where $L \in \R^{d \times d}$ is a lower triangular matrix. Then the normal equations \eqref{NE} are solved by forward-backward substitutions involving the  matrix $L$. The main costs are computing $A^TA$ and factorizing it, both taking $\mathcal{O}(nd^2)$ though many practical algorithms compute the factorization without forming $A^TA$ explicitly.\footnote{Factorising $A^TA$ takes $\mathO{d^3}$ only but given that $n\geq d$, it is still $\mathO{nd^2}$.} This method is affected by the potential ill-conditioning of $A^TA$ (since the condition number of  $A^T A$ is the square of the condition number of $A$) and so may not solve (\ref{NE}) accurately.

Employing the QR factorization aims to solve \eqref{LLS-statement} directly without using \eqref{NE}.
 Computing $A = QR$, where $Q\in \R^{n\times n}$ is orthogonal and $R\in \R^{n\times d}$ is upper triangular, we have that $\normTwo{Ax-b}^2 = \normTwo{Rx - Q^Tb}^2$. As $R$ is both over-determined and upper triangular, its last $n-d$ rows are zeros. Therefore, $\normTwo{Rx - Q^Tb}^2$ is minimised by making the first $d$ rows of $Rx$ equal to the first $d$ rows of $Q^Tb$ which involves solving a linear system of equations involving the upper triangular matrix $R$. 
 Hence the dominant cost is the QR factorization, which is $\mathcal{O}(nd^2)$.

When $A$ is rank-deficient or approximately rank-deficient, Cholesky factorization may break down and (un-pivoted) $QR$ factorization may give a rank-deficient $R$, introducing numerical instabilities in solving systems involving $R$. Singular Value Decomposition(SVD)-based methods are the most robust in dealing with rank-deficient problems, as an SVD reveals  the spectrum, and therefore the extent of rank-deficiency, of the matrix $A$.

A (full) SVD of $A$ is computed as $A = U_f \Sigma_f V_f^T$, where $U_f \in \R^{n\times d}, V_f \in \R^{d \times d}$ have orthonormal columns, and $\Sigma_f$ is diagonal with non-negative and decreasing diagonal entries. The rank deficiency in $A$ is then controlled by carefully selecting a cut-off point in the diagonal of $\Sigma_f$. After which the method proceeds similarly to QR-based approach by replacing $A$ in \eqref{LLS-statement} with its factorization and using the fact that left multiplying matrices with orthonormal columns/right multiplying orthogonal matrices does not change the 2-norm. However SVD-based methods are more computationally expensive than QR-based ones \cite{10.5555/248979}. 

\paragraph{Iterative methods for dense LLS}
LSQR \cite{10.1145/355984.355989} and related algorithms such as LSMR \cite{LSMR_2011} apply conjugate gradient method to solve the normal equations (\ref{NE}), only requiring matrix-vector multiplications involving $A$ or $A^T$. In the worst case, $\mathcal{O}(d)$ iterations with $\mathcal{O}(nd)$ floating-point arithmetic operations per iteration are required. Therefore the worst case cost of iterative methods for dense LLS is $\mathcal{O}(nd^2)$ \footnote{We note that iterative methods may not converge in $\mathcal{O}(d)$ iterations if $A$ has a large condition number due to the effect of floating-point arithmetic.}.
But if the spectrum of A (the distribution of the singular values of $A$) is favorable these methods may take less iterations \cite{10.5555/248979}.

Preconditioning techniques lower the condition number of the system by transforming the problem \eqref{LLS-statement} into an equivalent form before applying iterative methods. For example, a sophisticated preconditioner for \eqref{LLS-statement} is the incomplete Cholesky preconditioner \cite{10.1145/2617555}, that uses an incomplete Cholesky factor of $A$ to transform the problem. In general, some preconditioning should be used together with iterative methods \cite{10.1145/3014057}.

\subsection{Sparse linear least squares}
When the matrix $A$ is sparse (that is, there is a significant number of zeros in $A$ such that specialised sparse linear algebra algorithms could be advantageous), we refer to the problem as sparse Linear Least Squares (sparse LLS). 

\paragraph{Direct methods for sparse LLS}

We refer the reader to \cite{MR2270673}, where sparse Cholesky and QR factorizations are described. The main difference compared to the dense counterparts is that when the positions of a large number of zero entries of $A$ are given, it is possible to use that information alone to predict positions of some zero entries in the resulting factors so that no computation is required to compute their values. Therefore, sparse factorizations could be faster than their dense counterparts on sparse $A$. 

\paragraph{Iterative methods for sparse LLS}
The LSQR and LSMR algorithms mentioned in solving dense LLS automatically take advantage of the sparsity of $A$, as the matrix-vector multiplications involving $A$ or $A^T$ are faster when $A$ is sparse.

\subsection{Sketching for linear least squares}

Over the past fifteen years, sketching techniques 
 have been investigated for improving the computational efficiency and scalability of methods
 for the solution of \eqref{LLS-statement}; see, for example, the survey papers  \cite{10.1561/2200000035, 10.1561/0400000060}. Sketching  uses a carefully-chosen random  matrix $S\in\R^{m\times n}$, $m\ll n$, to sample/project the measurement matrix $A$ to lower dimensions, while approximately preserving the geometry of the entire column space of $A$; this quality of $S$ (and of its associated distribution) is captured by the (oblivious) subspace embedding property \cite{10.1561/0400000060} in Definition \ref{subspace_embedding_def1_statement}. 
The sketched  matrix $SA$ is then used to either directly compute an approximate solution to  problem (\ref{LLS-statement}) or to generate a high-quality preconditioner for the iterative solution of   (\ref{LLS-statement}); the latter has been the basis of state-of-the-art randomized linear algebra codes such as Blendenpik \cite{doi:10.1137/090767911} and LSRN \cite{Meng:2014ib}, where the latter improves on the former by exploiting sparsity  and allowing rank-deficiency of the input matrix\footnote{LSRN also allows and exploits parallelism but this is beyond our scope here.}. 

Using sketching to compute an approximate solution of \eqref{LLS-statement} in a computationally efficient way is proposed by Sarlos \cite{10.1109/FOCS.2006.37}. 
Using sketching to compute a preconditioner for the iterative solution of \eqref{LLS-statement} via a QR factorization of the sketched matrix is proposed by Rokhlin \cite{Rokhlin:2008wb}. 
The choice of the sketching matrix is very important as it needs to approximately preserves the geometry of the column space of a given matrix (a subspace embedding, see Definition \ref{subspace_embedding_def1}) with high-probability, while allowing efficient computation of the matrix-matrix product $SA$. 
Sarlos \cite{10.1109/FOCS.2006.37} proposed using the fast Johnson-Lindenstrauss transform (FJLT) discovered by Ailon and Chazelle \cite{10.1145/1132516.1132597}. 
The FJLT (and similar embeddings such as the SRHT) is a structured matrix that takes $O \left( nd \log(d) \right) $ flops to apply to $A$, while requiring the matrix $S$ to have about $m = O \left( d \log(d) \right) $ rows to be a subspace embedding (also see Tropp \cite{Tropp:wr}, Ailon and Liberty \cite{10.1145/2483699.2483701}). 
More recently, Clarkson and Woodruff \cite{10.1145/3019134} proposed the hashing matrix that has one non-zero per column in random rows, with the value of the non-zero being $\pm1$. This matrix takes $O \left( nnz(A) \right) $ flops to apply to $A$, but needs $m=\Theta \left( d^2 \right) $ rows to be a subspace embedding (also see Meng et al \cite{10.1145/2488608.2488621}, Nelson at al \cite{Nelson:2014uu,10.1145/2488608.2488622}). Recent works have also shown that increasing number of non-zeros per column of the hashing matrix leads to reduced requirement of number of rows (Cohen \cite{10.5555/2884435.2884456}, Nelson \cite{Nelson:te}). 
Further work on hashing by Bourgain \cite{Bourgain:2015tc} showed that if the coherence\footnote{Maximum row norm of the left singular matrix $U$ from the compact SVD of the matrix $A=U \Sigma V^T$. Formally defined in \autoref{def::coherence}.} is low, hashing requires fewer rows. 
These sketching matrices have found applications in practical implementations of sketching algorithms, namely, Blendenpik \cite{doi:10.1137/090767911} used a variant of FJLT; LSRN \cite{Meng:2014ib} used Gaussian sketching; Iyer \cite{10.5555/3019094.3019103} and Dahiya\cite{10.1145/3219819.3220098} experimented with 1-hashing\footnote{hashing matrices with 1 non-zero per column}. 
The state-of-the-art sketching solvers Blendenpik \cite{doi:10.1137/090767911} and LSRN \cite{Meng:2014ib} demonstrated several times speed-ups comparing to solvers based on QR/SVD factorizations in LAPACK \cite{laug}, and LSRN additionally showed significant speed-up comparing to the solver based on sparse QR in SuiteSparse \cite{10.1145/2049662.2049670} when the measurement matrix $A$ is sparse. However, Blendenpik and LSRN have not fully exploited the power of sketching, namely, Blendenpik only solves problem \eqref{LLS-statement} when the measurement matrix $A$ has full column rank, and LSRN uses Gaussian sketching matrices with dense SVD even for a sparse measurement matrix $A$. We propose a new solver in Chapter 3. 


\subsection{Alternative approaches for preconditioning and solving large-scale LLS problems}

 On the preconditioning side for linear least squares, \cite{CERDAN2020112621} considered alternative regularization strategies to compute an Incomplete Cholesky preconditioner for rank-deficient least squares. LU factorization may alternatively be used for preconditioning. After a factorization $PAQ=LU$ where $P,Q$ are permutation matrices, the normal equation $A^TAx=A^Tb$ is transformed as $L^TL y = L^Tc$ with $y = UQ^Tx$ and $c=Pb$. In \cite{inproceedings}, $L$ is further preconditioned with $L_1^{-1}$ where $L_1$ is the upper square part of $L$. On the other hand, \cite{gnanasekaran2021hierarchical} proposed and implemented a new sparse QR factorization method, with C++ code and encouraging performance on  Inverse Poisson Least Squares problems. For a survey, see \cite{10.1145/3014057, Gould:2016vg}.

In addition to the sketch-precondition-solve methodology we use, large-scale linear least squares may alternatively be solved by first-order methods, zeroth-order methods (including Stochastic Gradient Descent (SGD)) and classical sketching (namely, solve the randomly embedded linear least square problem directly, as in see \cite{10.1109/FOCS.2006.37}). First order methods construct iterates $x_{t+1} = x_t - \mu_t H_t^{-1} g(x_t) + \beta_t(x_t - x_{t-1})$, where $H_t = A^T S_t^T S_t A$, $g(x_t) = A^TAx_t - A^T b$ and the last term represents the momentum. This is proposed in  $\cite{lacotte2020optimal,lacotte2019faster}$, deriving optimal sequences $\mu_t,\beta_t$ for Gaussian and subsampled randomised Hadamard transforms, for $S_t$ fixed or refreshed at each iteration. See also \cite{RichtarikAndGowerLinearSystem} for a randomised method for consistent linear least squares (namely, the residual at the optimal solution is zero). 
On the other hand, because linear least squares is a convex problem, SGD can be used, with 
\cite{LoizouEtAl} investigating using SGD with heavy ball momentum 
and \cite{kahale2020leastsquares} investigating using SGD with sketched Hessian.  Using gradient-based sampling instead of uniform or leverage-scores-based sampling is explored in \cite{zhu2018gradientbased}. Finally, \cite{lopes2018error} provides techniques for a posteriori error estimates for classical/explicit sketching.

    
    \section{Minimising a general unconstrained objective function}
    We consider the following problem
\begin{equation}
\min_{x \in \R^d} f(x),  \label{general_objective_statement}
\end{equation}
where $f$ is smooth and non-convex. 
We will be satisfied if our algorithm returns an (approximate)
local minimum of the objective function $f$ -- a point at which, if $f$ is continuously differentiable, its gradient $\grad f(x)$ is (approximately) zero; if $f$ is twice continuously differentiable, then in addition to its gradient being zero, its Hessian $\grad^2 f(x)$ is (approximately) positive semi-definite. 
This may not be the global minimum -- namely, the smallest value of $f$ over the entire $\R^d$. Finding the latter is much more computationally challenging and the remit of the field of global optimization \cite{GlobalOptimisationReview}. Though global optimization is beyond our scope here, local optimization algorithms are often key ingredients in the development of techniques for the former.

Starting from a(ny) initial guess $x_0\in \R^d$, classical (local) algorithms for solving \eqref{general_objective_statement} are iterative approaches that generate iterates $x_k$, $k\geq 0$, recursively, based on an update of the form
\begin{equation*}
    \xKPlusOne = \xK - \sK,
\end{equation*}
where the step $\sK$ is chosen so that 
the objective $f$ typically decreases at each iteration. Depending, for example, on the problem information used in the computation of $\sK$, algorithms
can be classified into those that use only up to first order information
(i.e. $f(x), \grad f(x)$); and those that also use  second order information
(i.e, $\grad^2 f(x)$).

\subsection{First order methods}
For a(ny) user-provided tolerance $\epsilon>0$, first order methods find an iterate $x_k$ such 
that $\normTwo{\gradFK} < \epsilon$. 
This ensures the approximate achievement of the necessary optimality condition 
$\nabla f(x) = 0$ that holds at any local minimiser 
of problem \eqref{general_objective_statement}. 
However, note that this condition
is not sufficient, e.g. $x$ with $\nabla f(x) = 0$ could be a 
saddle point or even a local maximiser. 
However, as the iterates also progressively decrease $f$, we increase the chance that we find an approximate local minimiser. 
Indeed, several recent works have shown that for a diverse set of landscapes, first order methods such as the gradient descent methods escape/do not get trapped in saddle points and approach local minimisers; see for example, \cite{ChiJin, BenRecht}.
We briefly review the three main classical first order methods: steepest descent with line search, the trust region method and the adaptive regularisation method. 

\paragraph{Steepest descent with linesearch}
Steepest descent method seek the update
\begin{equation}
    \xKPlusOne = \xK - \alphaK \gradFK,
\end{equation}
where the step $\alphaK$ is determined by a line-search detailed below.

Given a constant $0< \beta < 1$, the linesearch algorithm starts with some initial guess of $\alpha_k > 0$, and repeatedly decreases $\alpha_k$ until the following Armijo condition is satisfied:

\begin{equation}
f(x_k) - f(x_k - \alphaK \gradFK) \geq \beta \alphaK \normTwo{\gradFK}^2. 
\label{eq:armijo}
\end{equation}

It can be shown that assuming $f(x)$ is continuously differentiable, one can always find $\alphaK > 0 $ satisfying \eqref{eq:armijo}; moreover, provided the gradient of $f$ is Lipschitz continuous\footnote{For some $L>0$, we have that $\normTwo{\grad f(x) - \grad f(y)} \leq L \normTwo{x-y}$ for all $x, y \in \R^d$.} and $f$ is bounded below, the steepest descent algorithm with  linesearch requires at most $\mathO{\epsilon^{-2}}$ evaluations of the objective function and its gradient  to converge to an iterate $x_k$ such that $\normTwo{\gradFK} \leq \epsilon$ \cite{CoraBook, NesterovTextBook}.
This complexity bound is sharp, as shown, for example, in Theorem 2.2.3 of \cite{CoraBook}.

\paragraph{The trust region method} \label{Intro_trust_region}
In the trust region method, the step $\sK$ is calculated by minimizing a local model $m_k(s) = f(x_k) + \gradFK^T s + \frac{1}{2}s^T B_k s$, where $B_k$ is a  symmetric matrix (that is required to be uniformly bounded above with $k$). The matrix $B_k$ could be some approximation of the Hessian/curvature information, if possible.

We then compute $s_k$ by approximately minimising $m_k(s)$ within a trust region $\|s\|_2 \leq \Delta_k$ so that the decrease in $m_k$ achieved by taking the step $\sK$ is at least as much as that can be achieved by considering the steepest descent direction in the trust region. The trust region radius $\Delta_k$ is initialised at some value $\Delta_0$ and subsequently dynamically adjusted: for the computed step $s_k$, if we have sufficient decrease in $f$, $f(x_k) - f(x_k + s_k)\geq \eta [m_k(0) - m_k(s_k)]$, for some (iteration-independent) constant $0 < \eta < 1$ and $0 < \gamma < 1 $, then $\Delta_{k+1} = \gamma^{-1} \Delta_k$ and $x_{k+1}=x_k+s_k$. Otherwise, $\Delta_{k+1} = \gamma \Delta_k $ and we do not take the step $s_k$ ($x_{k+1}=x_k$).

It has been shown that the first order trust region method also has a global complexity of $\mathO{\epsilon^{-2}}$ in terms of gradient and objective function evaluations \cite{gratton2008recursive}.
This complexity bound is also sharp \cite{CoraBook}.

\paragraph{The adaptive regularisation method}
\label{Intro_QR}
Like the trust region method,  adaptive regularisation uses a local model around the current iterate $x_k$ to compute a suitable step $s_k$. Unlike the trust region method, which explicitly restricts the size of the potential step, a regularisation term  is imposed to a first-order Taylor model to implicitly restrict the size of the step:
\begin{equation}
    m_k(s) = f(x_k) + \gradFK^T s + \frac{1}{2}\sigma_k \|s\|_2^2 = T_{f,1}(x_k,s) + \frac{1}{2}\sigma_k \normTwo{s}^2, 
\end{equation}
where $T_{f,1}(x_k,s)$ is the first-order Taylor series expansion of $f$ around $x_k$. We minimise the local regularised model to compute a trial step $s_k$. Then, as in the trust region method, we evaluate the objective at the trial step $x_k+s_k$ and dynamically adjust the regularisation parameter by computing $\rho_k = 
\frac{f(x_k) - f(x_k+s_k)}{T_{f,1}(x_k,0) - T_{f,1}(x_k,s_k)}$; and, if $\rho_k \geq \eta$, we set $x_{k+1}=x_k+s_k$ and set $\sigma_{k+1} = \max(\gamma \sigma_k, \sigma_{min})$, otherwise we do not take the step ($x_{k+1}=x_k$) and increase the regularisation by setting $\sigma_{k+1} = \frac{1}{\gamma}\sigma_k$, where $\gamma, \sigma_{min}\in (0,1)$ are constants. 

The first order adaptive regularisation method also has a (sharp) complexity of $\mathO{\epsilon^{-2}}$ for both gradient and objective function evaluations, under the same assumptions on $f$ as for the trust region methods \cite{CoraBook}.

\paragraph{Subspace first order methods}
The coordinate descent method \cite{Wright2015} iteratively computes the next iterate by fixing most components of the variable $x$ at their values from the current iterate, while approximately minimising the objective with respect to the remaining components; thus effectively searches a potential step in a restricted subspace of the full variable space. The coordinate descent method is convergent for some convex problems \cite{Wright2015}, but fails to converge for nonconvex problems \cite{MR321541}. 
The coordinate descent has found many applications in solving large-scale problems \cite{Bach2011, Richtarik2015}.
Randomised coordinate descent methods have been an intense topic of recent investigations due to the demands of large scale problems; see
\cite{MR2968857_Nesterov, MR3179953_Richtarik, Lacotte2019, Grishchenko2021, Yang2020, snobfit_URL, Facchinei2015, Xu2015, Xu2017, Lu2018, Patrascu2015}. For a survey see \cite{Wright2015}.
In Chapter 4 of this
thesis, we will study a probabilistic subspace first order algorithm for general non-convex problems, that
only needs directional gradient evaluations $S_k \gradFK$ (so that the algorithm only searches the step in a subspace), where 
$S_k \in \rLTimesD$ is some random matrix to be specified with $l$ being a
user-chosen constant. Our work builds on the framework in \cite{Cartis:2017fa}. However, here we establish more general results and use subspaces explicitly to save gradient evaluation/computation cost. 
We show that the appropriate replacement of the full gradient
with subspace gradients $S_k \gradFK$ does not harm the worst-case
complexity; although in our specific algorithm, since $S_k$ is a random
matrix, there is a probabilistic component in our convergence result. That
is, we have $\normTwo{\gradFK}< \epsilon$ with $k = \mathO{\epsilon^{-2}}$
with probability proportional to $1 - e^{-\mathO{k}}$. The failure probability
is very small for any reasonable value of $k$, and we show some numerical
examples illustrating the effectiveness of our algorithm compared to 
the classical first order based methods when applied to
non-linear least squares. 

\newcommand{\epsTwo}{\epsilon_2}

\subsection{Second order methods}
In order to improve both the performance and the optimality guarantees of first order methods, we add curvature information both in the construction and in the termination of algorithms, when this is available. 
Given accuracy tolerances $\epsOne, \epsTwo >0$, we may strengthen our goal to try to find
a point where simultaneously, 
\begin{equation}
    \normTwo{\gradFK} < \epsOne, 
\lambdaMin{\hessFK}> -\epsTwo \label{eqn:secondOrderCriPoint}
\end{equation}
where $\lambdaMin{.}$ denotes the left-most
eigenvalue of a matrix. These conditions secure approximate second order criticality conditions and strengthen the guarantee that we are close to a local minimiser, where $\nabla^2 f(x)$ is positive semidefinite. Clearly, 
in order to achieve this aim, the algorithm needs to be provided
with second order information, the Hessian $\hessFK$, at each iteration. 
Let us review the classical optimisation methods for 
smooth, non-convex objective where both first and second order
information is available. 

\paragraph{Newton's method}
In Newton's method, the iterates 
are constructed according to

\begin{equation}
    \xKPlusOne = \xK - \hessFK^{-1} \gradFK,
\end{equation}
that is, the step $s_k$ satisfies the linear system $\hessFK s_k = - \gradFK$. Note that here one assumes that the matrix $\hessFK$ is positive definite for each $k$. For general functions, one may add regularisation terms, or use a linesearch or trust region, which we will discuss later. 

Newton's method is attractive
because it has a quadratic convergence property once $x_k$ gets into
a neighbourhood of a nondegenerate solution. However such a neighbourhood is typically not
known a priori, before the run of the algorithm. It turns out that starting from 
an arbitrary starting point, the complexity of Newton's method
can be the same as the steepest descent method, $\mathO{\epsilon^{-2}}$, even if we assume the Newton direction is always well-defined \cite{CoraBook}.

\paragraph{The second order trust region method}

Trust-region methods could also use  additional second order information,
and the local model at each iteration becomes
\begin{equation}
    \mK{s} = f(x_k) + \gradFK^T{s} + 
    \frac{1}{2}s^T \hessFK {s}. 
\end{equation}
We then compute an approximate minimiser of the model, subject to $s$ being smaller than the trust-region radius. As before, it suffices to compute an approximate solution for convergence. In addition to requiring the model decrease to be at least as much as that of in the steepest descent direction, for second order criticality, we also require that the model decrease is at least as much as that obtained in the direction of the eigenvector of $\hessFK$ corresponding to the smallest eigenvalue (if such an eigenvalue is negative, otherwise  this second condition is not required). Then, the objective at the trial step is again evaluated, and the ratio of the function decrease with the model decrease is compared to a pre-defined constant, and steps are taken/not taken; trust region radius is increased/decreased accordingly. 

The second order trust-region algorithm has been shown to converge to a first order critical point $\normTwo{\gradFK} < \epsOne$ in $\mathO{\epsOne^{-2}}$ iterations; moreover, this bound is sharp. Thus  the first order complexity of the (first order) trust region method is not improved by upgrading the trust region model to include accurate second order information. However, one can further show that the second order trust-region algorithm converges to a second order critical point $\lambdaMin{\hessFK} > -\epsTwo$ and $\normTwo{\gradFK} \leq \epsOne$ in $\mathO{\max(\epsOne^{-2}, \epsTwo^{-3})}$ iterations. We see that the main advantage of the second order trust region over the first order one is that it allows one to quantifiably compute an approximate second order critical point \cite{CoraBook}.

\paragraph{The second order adaptive regularisation method}
\label{page:second:order:cubic:classical}
The second order adaptive regularisation method, however, is able to 
not only allow convergence to a second order critical point, but also allow an improved speed of convergence to a first order critical point. 
The algorithm is the following: at each iteration, we build the model as
\begin{equation}
    \mK{s} = f(x_k) + \gradFK^T{s} + 
    \frac{1}{2}s^T \hessFK {s} + \frac{1}{3}
    \sigma_k \normTwo{s}^3, \label{tmp-2022-01-09}
\end{equation}
where $\sigma_k$ is an adaptive parameter whose value increases or 
decreases depending on the amount of objective function decrease
achieved by a step calculated from (approximately) minimising such a model. 
Compared to the first order regularisation method, the approximate minimisation here requires that $\normTwo{\nabla_s m_k(s_k)} \leq \frac{1}{2}\theta_1 \normTwo{s_k}^2$ and $\lambdaMin{\grad^2_s m_k(s_k)} \geq \theta_2 \normTwo{s_k}$ (for two iteration independent constants $\theta_1, \theta_2>0$). Assuming that $f$ is bounded below and that its Hessian is Lipschitz continuous\footnote{For some $L>0$, we have that $\normTwo{\grad^2 f(x) - \grad^2 f(y)} \leq L \normTwo{x-y}$ for all $x, y \in \R^d$.}, it can be shown  that this algorithm converges to a point where $\normTwo{\gradFK} < \epsOne$ in $\mathO{\epsOne^{-3/2}}$ evaluations of the objective function, gradient and Hessian; and  to a point where $\normTwo{\gradFK} < \epsOne$ and $\lambdaMin{\hessFK} > - \epsTwo$ in $\mathO{\max (\epsOne^{-3/2}, \epsTwo^{-3})}$ evaluations of the objective function, gradient and Hessian. Moreover, both of these bounds are sharp and the first-order one is provably optimal for second order methods  \cite{CoraBook, MR4163541_Carmon}.
Minimising (a slight variant of) \eqref{tmp-2022-01-09} to compute a step was first suggested in \cite{Griewank}. Independently, 
\cite{NesterovAndPolyak} considered using \eqref{tmp-2022-01-09} from a different perspective. The above mentioned method was first proposed in \cite{Cartis:2009fq} that improves on previous attempts, namely, allowing inexact model minimisation and without requiring the knowledge of problem-specific constants.


\paragraph{Subspace second order adaptive regularisation methods}
Methods that only use the gradient/Hessian in a subspace have been studied in the recent years.
The sketched Newton algorithm \cite{pilanci2017newton} requires a sketching matrix that is proportional to the rank of the Hessian. Sketched online Newton \cite{luo2016efficient}  uses streaming sketches to scale up a second-order method, comparable to Gauss–Newton, for solving online learning problems.
The randomised subspace Newton \cite{gower2019rsn} efficiently sketches the full Newton direction for a family of generalised linear models, such as logistic regression. Other randomised versions of Newton's method include \cite{Gower2020, Gower2016a, Berahas2020}.
The global convergence of the above methods, however, require the objective function $f$ to be convex (or even strongly convex).
For general non-convex optimisation, \cite{Cartis:2017fa, Gratton:2017kz} give generic frameworks that apply to the first order/second order general optimisation methods. Our main focus is non-convex functions and so we build on these works.
Chapter 5 of this thesis proposes a second order adaptive regularisation method when
operating in a random subspace. Specifically, both the gradient and the Hessian
will be replaced by their subspace equivalent. We are able to show that
under suitable assumptions on the subspace sketching matrix $S_k \in 
\rLTimesD$ (that could be a scaled Gaussian matrix with $l = \mathO{r}$,
where $r$ is the rank of the Hessian $\hessFK$), both the fast convergence
rate $\mathO{\epsOne^{-3/2}}$ to the first order critical point, 
and the convergence to the second order critical point with a rate
$\mathO{\epsTwo^{-3}}$ can be retained.

\subsection{Applications to non-linear least squares problems}

Non-linear least squares are a subclass of general unconstrained optimisation problems. We aim to solve

\begin{equation}
\min_{x \in \R^d} f(x) = \frac{1}{2}\sum_{i=1}^n (r_i(x) )^2=\frac{1}{2}\norm{r(x)}_2^2, \label{NLS}
\end{equation}
where $r(x)=(r_1,r_2,\ldots,r_n)(x)$ is a vector-valued smooth residual function $r:\R^d\rightarrow\R^n$.
This formulation has a wide range of applications in weather forecasting, finance and machine learning problems. We briefly overview some classical solution methods here, following \cite{Nocedal:2006uv}.

\paragraph{The Gauss--Newton method}
The Gauss--Newton method is a simplification of Newton's method that exploits the structure of non-linear least squares problems. In particular, we can approximate the Hessian as $\hessFK=J(x_k)^T J(x_k)+\sum_{i=1}^n r_i(x_k)\nabla^2 r_i(x_k) \approx J(x_k)^T J(x_k)$ where 
\begin{align*}
J(x) = \left(\pdv{r_i(x)}{x_j}\right)_{ij} \in \R^{n \times d}.
\end{align*}
This approximation is justified in the case when $r(x)\approx 0$ at a solution $x$ or when $r$ is approximately linear in the variables.
Despite using only first-derivative information about $r$, the Gauss-Newton method has been shown to enjoy a local super-linear convergence rate to a first order critical point. When the Gauss-Newton direction is safeguarded with 
a trust region or regularization technique (which is often referred to as a Levenberg-Marquardt method), it can be shown to have global convergence  provided for example, that $J$ is Lipschitz continuous. To ensure global convergence of linesearch variants of Gauss-Newton, we additionally need to require that the Jacobian's singular values are uniformly bounded away from zero -- a very strong assumption.


\paragraph{Subspace Gauss Newton method}
Expanding on our work in \cite{zhen:icml_BCGN}, in Chapter 4, we present such an algorithmic variant and its numerical performance when compared to the full  Gauss Newton method.
Subspace Gauss-Newton variants can also be found in \cite{gower2019rsn}.

    \section{Contributions of this thesis}
    \subsection{New theoretical analysis of hashing matrices and development of new random embeddings}
In Chapter 2, we study theoretical properties of hashing matrices and propose a new oblivious subspace embedding based on hashing.
We show that hashing matrices --- with one nonzero entry per column and  of size proportional to the rank of the data matrix --- generate a subspace embedding with high probability, provided the given data matrix has low coherence. We then show that $s$-hashing matrices, with $s>1$ nonzero entries per column, satisfy similarly good sketching properties for a larger class of low coherence data matrices. 

More specifically, we show  that a hashing matrix $S\in\R^{m\times n}$ with $m=\mathcal{O}(r)$ is an oblivious subspace embedding for matrices $A$ with low coherence. Hashing matrices have been shown empirically to be almost as good as Gaussian matrices \cite{10.1145/3219819.3220098} in terms of their embedding properties, but the theoretical results show that they need at least $m = \mathcal{O}(r^2)$ rows \cite{Nelson:2014uu}. Our result explains the  phenomenon observed in \cite{10.1145/3219819.3220098}. In addition, it was observed empirically that one needs at least 2 non-zeros per hashing column for the projection to be  accurate. We show that using $s$ non-zeros per column instead of $1$ non-zero per column relaxes the coherence requirement by $\sqrt{s}$. Thus we expect more performance improvement if we increase $s=1$ to $s=2$ than from $s=2$ to $s=3$. Previous work on $s$-hashing has independently discovered the $\sqrt{s}$ factor \cite{Bourgain:2015tc}, but our result relies on a single, intuitive proof, and is not tied to any particular proof for the case of $1-$hashing. So if the coherence requirement bound for $1$-hashing is subsequently improved, our work allows the result on $s$-hashing to improve automatically. 

Cascading this result, we also introduce and analyse a new random embedding: Hashed-Randomised-Hadamard-Transform (HRHT), that combines the coherence reduction properties of randomised Hadamard Transform with the coherence-based theory of hashing embeddings. Compared to Subsampled-Randomised-Hadamard-Transform, HRHT is able to achieve subspace embedding with the embedding dimension $\mathO{r}$ where $r$ is the rank of the matrix to be embedded, matching  the optimal bound known for the Gaussian embedding. Experiments using random embeddings for preconditioning  linear least squares show the improved performance of HRHT over SRHT. 

\subsection{Analysis, state-of-the-art implementation and benchmarking of new large-scale linear least squares solver using random embeddings}

Chapter 3 concerns the solution of large-scale Linear Least Squares (LLS) problems, by applying random embeddings to reduce the dimensionality of the observation/sample space. The sketched matrix $SA$ is used to generate a high-quality preconditioner for the iterative solution of (\ref{LLS-statement}) and has been the basis of state-of-the-art randomized linear algebra codes such as Blendenpik \cite{doi:10.1137/090767911} and LSRN \cite{Meng:2014ib}, where the latter improves on the former by exploiting input sparsity, parallelization and allowing rank-deficiency of the input matrix.

We propose and analyse a  sketching framework for potentially rank-deficient LLS. Our framework includes the algorithm used by Blendenpik \cite{doi:10.1137/090767911} and LSRN \cite{Meng:2014ib}; and additionally it allows one to use a wide range of rank-revealing factorizations to build a preconditioner using the sketched matrix $SA$. Our analysis shows that one can recover a minimal residual solution with a rank-revealing QR factorization with sketching, or the minimal norm solution with a total orthogonal factorization with sketching. This framework allows us to use (randomised) column pivoted QR factorization for dense LLS so that our solver solves rank-deficient LLS satisfactorily without using the expensive SVD. We are also able to use a sparse rank-revealing factorization for sparse LLS, obtaining a significant speed-up over LSRN \cite{Meng:2014ib}, state-of-the-art sparse solvers LS\_SPQR \cite{10.1145/2049662.2049670} and incomplete Cholesky factorization preconditioned Krylov subspace method LS\_HSL \cite{10.1145/2617555}.

Numerically, we developed a solver SKi-LLS (SKetchIng-Linear-Least-Square) combining our theoretical and algorithmic ideas and state-of-the-art C++ implementations. For dense inputs, the solver is more robust than Blendenpik (as it solves rank-deficient or approximately rank-deficient problems); while being quicker than Blendenpik for matrices with high coherence and comparable in speed with Blendenpik for other matrices. In order to overcome the speed deterioration of the column-pivoted QR comparing to the un-pivoted QR, we used a recent development of randomised column pivoted QR that exploits randomisation and the importance of memory usage and cache in modern computing architecture \cite{Martinsson:2017eh}. For sparse inputs, by using a sparse QR factorization code developed by Davis \cite{10.1145/2049662.2049670}, our solver is more than 10 times faster than LSRN, LS\_SPQR and LS\_HSL for sparse Gaussian inputs. We extensively compared our solver with LSRN, LS\_SPQR and LS\_HSL on the Florida Matrix Collection \cite{10.1145/2049662.2049663}, and our solver is extremely competitive on strongly-over-determined inputs or ill-conditioned inputs.

\subsection{First-order subspace methods and their application to non-linear least squares}

In Chapter 4, we analyse a general randomised algorithmic framework for minimizing a general objective function \eqref{general_objective_statement}, that improves upon the one introduced in \cite{Cartis:2017fa}, 
so that an arbitrarily high probability convergence/complexity result can be derived. We formulate more specific conditions on the reduced local models that are based on random embeddings of the variable space (in contrast to embedding the observational space in the linear least squares case). Compared to \cite{Gratton:2017kz}, our algorithm applies more generally\footnote{Also, in the case of trust region methods, our framework does not need to compare the norm of the model gradient with the trust region radius at each iteration in order to decide if the trust region radius should be increased (see \cite{Gratton:2017kz}, Algorithm 2.5).}, also to quadratic regularisation (see later sections). 
  
  Compared to \cite{Cartis:2017fa, Gratton:2017kz}, we use a weaker/different definition of a `true' iteration, when the approximate problem information is sufficiently accurate; this definition is  based on the random embedding satisfying a (one-sided) JL-embedding property (see \eqref{Intro:one-side} below), which is novel.
 Using the latter property and typical smoothness assumptions on the problem, we show that our framework of random subspace methods  has complexity $\mathO{\epsilon^{-2}}$ to generate an approximate 
first-order stationary point, with exponentially high probability. To ensure this, the random subspace needs to sketch the gradient, replacing $\gradFK$ with $S_k \gradFK$ in the algorithm, where 
$S_k \in \rLTimesD$ satisfies, with positive probability,
\begin{equation}\label{Intro:one-side}
    \normTwo{S_k \gradFK} \geq (1-\epS) \normTwo{\gradFK}.
\end{equation}
We show that the above is achieved when $S_k$ is a scaled Gaussian matrix, a hashing (sparse-embedding) 
matrix, a sampling matrix, and many more.

    We note again that this framework marks a significant departure from probabilistic
    model assumptions \cite{Cartis:2017fa, Gratton:2017kz}, since our model gradient, $S_k \gradFK$, does not even have 
    the same dimension as the true gradient\footnote{Hence the probabilistic model condition
    which bounds $\normTwo{\grad m(x_k) - \gradFK}$ is not applicable here.}.
    The intuition behind this requirement is that in classical 
     trust-region or adaptive regularization, the norm of the 
    gradient is a key ingredient in the recipe for ensuring a convergence result, and hence
    by preserving the norm of the gradient with sketching, we are able to 
    obtain a similar worst case complexity. 
    Another interesting observation is that in the case of $S_k$ being the sampling
    matrix, for which our method reduces to a randomised block-coordinate approach,
    we show how the success of the algorithm on non-convex smooth problems is 
    connected with the 'non-uniformity' of the gradient; thus leading to almost sure convergence under some strong  assumptions related to the objective's gradient.
We then particularize this framework to Gauss-Newton techniques for nonlinear least squares problems, where the Jacobian is computed in a subspace.

\subsection{Random subspace cubic regularisation algorithm, R-ARC}
In Chapter 5,
we further analyse the random subspace  framework when second order information is added and applied to  general non-convex optimisation. We propose and analyse an algorithm that is a random subspace version of the second order adaptive cubic regularisation method. We show that the subspace variant matches the optimal convergence rate $\mathO{\epsilon_1^{-3/2}}$ of the full-dimensional variant to generate $\|\nabla f(x_k)\|_2 \leq \epsilon_1$ under suitable assumptions: either the embedding matrix $S_k$ provides a subspace embedding of the Hessian $\grad^2 f(x_k)$, or the Hessian is sparse in the sense that only few rows are non-zero.

We further analyse convergence to  second order critical points of the second order adaptive regularisation method. We first show that in general, the algorithm converges to a point where the subspace Hessian $S_k \hessFK S_k^T$ is approximately positive semi-definite. Then we prove that if scaled Gaussian matrices are used as  random embeddings, the algorithm converges to a point where the full Hessian is approximately positive semi-definite, at a rate $\mathO{\epsilon_2^{-3}}$ that  matches the full-dimensional second order cubic regularisation method.

    
    \section{Structure of thesis}
    In Chapter 2, we first give the necessary technical background on random embeddings, which will be used throughout this thesis. We then state and prove our theorem on the coherence requirement needed to use 1-hashing with $m=\mathcal{O}(d)$ as an oblivious subspace embedding (defined in \autoref{Oblivious_embedding}). Cascading this result, we show how increasing the number of non-zeros per column from 1 to $s$ relaxes the coherence requirement for $s$-hashing matrices by a factor of $\sqrt{s}$, and propose a new random matrix distribution for subspace embeddings that has at most $s$ non-zeros per column. Then, we propose a carefully constructed random matrix distribution that uses hashing matrices, achieving $m=\mathcal{O}(d)$ as a subspace embedding with high probability for any sufficiently over-determined matrix $A$.

In Chapter 3, we propose and analyse an algorithmic framework that uses random embedding (sketching) for potentially rank-deficient linear least squares. Then we introduce our linear least squares solver \solverName{} which implements the framework and discuss its key features and implementation details. We test and benchmark \solverName{} against state of the art algorithms and test problems in the remainder of Chapter 3. 

In Chapter 4, we move onto problem \eqref{general_objective_statement}. We first propose and analyse an algorithmic framework that relies on stochastic reduced local models. We then show how sketching-based subspace methods fit into this framework, and derive results for quadratic-regularisation  and trust-region  algorithms for general unconstrained objective optimisation. We then apply this framework to Gauss-Newton method and nonlinear least squares, obtaining a subspace Gauss-Newton method and illustrating its performance numerically. 

In Chapter 5, we propose and analyse the subspace cubic-regularisation based approach for solving \eqref{general_objective_statement}. We first show how subspace embedding of the Hessian of the objective function allows the same convergence rate as the (full-space) cubic-regularisation methods. We then show how the sparsity of the Hessian allows a similar convergence result to be derived. We then go on to analyse the convergence to second-order critical points using the subspace cubic-regularisation based approach. We show that using scaled Gaussian embeddings allows convergence of R-ARC to a second order critical point with a rate essentially the same as the full-space method. 

Finally in Chapter 6, we summarise the main results in this thesis and set some future directions. 

{\bf Notation.} Throughout the thesis, we let $\langle\cdot,\cdot\rangle$ and $\|\cdot\|_{2}$ denote the usual Euclidean inner product and norm, 
respectively, and  $\|\cdot\|_{\infty}$, the $l_{\infty}$ norm. Also, for some $n\in \N$, $[n]=\{1,2,\ldots,n\}$. For a(ny) symmetric positive definite matrix $\overline{W}$,
we define the norm $\|x\|_{\overline{W}}:=x^T\overline{W}x$, for all $x$,  as the norm induced  by $\overline{W}$.
The notation $\Theta \left( \cdot\right)$ denotes both lower and upper bounds of the respective order. $\Omega \left( \cdot \right)$ denotes a lower bound of the respective order. $\mathO{ \cdot}$ denotes an upper bound of the respective order.

\chapter{Random embedding}

    \section{Introduction and relevant literature}
    This chapter is based and expands materials in \cite{zhen:icml_LLS, 2021arXiv210511815C}.
\paragraph{Main contributions}
This chapter aims to explore the theoretical properties of sparse sketching matrices $S$ for improved efficiency and scalability of methods for solving the LLS problem \eqref{LLS-statement}, when $A$ is sparse or dense. After introducing the necessary technical background in Section 2, we firstly investigate projection and computational properties of $1$-hashing matrices, random sparse matrices with $1$ non-zero per column, that were first proposed in the randomised linear algebra context by Clarkson and Woodruff \cite{10.1145/3019134}. Sparse matrices allow faster computation of the matrix-matrix product $SA$, leading to faster embeddings than their dense counterparts. Moreover, sparse matrices preserve the sparsity of the data matrix $A$, allowing the sparsity of the embedded matrix $SA$ to be exploited by specialized numerical linear algebra routines for faster computation.

It has been observed numerically in \cite{10.1145/3219819.3220098} that $1$-hashing matrices, with the same projection dimensions as Gaussian matrices (matrices with i.i.d. Gaussian entries) are as efficient in projecting and solving the LLS problem \eqref{LLS-statement}. However, it was shown in \cite{Nelson:2014uu, 10.1145/2488608.2488621} that $1$-hashing matrices require at least an $\mathO{r^2}$ projection dimension to work effectively (as an oblivious subspace embedding, defined in \autoref{Oblivious_embedding}) comparing to an $\mathO{r}$ projection dimension required by Gaussian matrices \cite{10.1561/0400000060}, where $r$ is the rank of $A$ in \eqref{LLS-statement}. Thus a gap exists between the theory and the empirical observation. Our main result on $1$-hashing matrices shows that $1$-hashing matrices can have the same theoretical properties as the Gaussian matrices, namely, being an oblivious subspace embedding for matrices of rank $r$ with the projection dimension being $\mathO{r}$, given that the matrix $A$ has low coherence (defined in \autoref{def::coherence}). 

Cascading on this result, we then show in Section \ref{s-hashing} , firstly that $s$-hashing matrices, which are sparse matrices with (fixed) $s$ non-zeros per column first proposed as a candidate matrix distribution for oblivious subspace embeddings in \cite{Nelson:te}, achieves being an oblivious subspace embedding with the projection dimension of $\mathO{r}$ for matrices $A$ of rank $r$, but with the coherence requirement on $A$ being relaxed by $\sqrt{s}$ comparing to $1$-hashing matrices. Our numerical illustration (\autoref{fig::1-2-3-semi-co}) shows that $2$-hashing is more effective as a sketching matrix in solving \eqref{LLS-statement}. 

Secondly in Section \ref{s-hashing}, we propose a new matrix distribution called $s$-hashing variant matrices that has at most $s$ non-zeros per column for oblivious subspace embeddings. Using a novel result that allows us to connect any coherence-restricted embedding result for $1$-hashing matrices to $s$-hashing matrices, we show that $s$-hashing variant matrices have a similar coherence restricted embedding property as $s$-hashing matrices. 

At the end of Section \ref{s-hashing}, we combine $s$-hashing (variant) matrices with coherence reduction transformations that were first proposed in \cite{10.1145/1132516.1132597} to derive a new random matrix distribution that will be an oblivious subspace embedding with the projection dimension $\mathO{r}$ for any matrix $A$ sufficiently over-determined; and will take $\mathO{nd\log(n)}$ flops to apply. This so-called Hashed-Randomised-Hadamard-Transform (HRHT) improves upon the previously proposed Subsampled-Randomized-Hadamard-Transform (SRHT) by lowering the projection dimension from $\mathO{r \log(r)}$ to $\mathO{r}$ while maintaining the complexity of embedding time up to a $\log(n)/\log(d)$ multiplicative factor. 

\paragraph{Related literature}

After the Johnson-Lindenstrauss Lemma appeared in 1984, Indyk and Motawani proved scaled Gaussian matrices is a JL-embedding \cite{MR1715608} for which an elementary proof was provided by Dasgupta and Gupta \cite{MR1943859}. Achlioptas \cite{MR2005771} showed that matrices with all entries being $\pm1$ with equal probability, or indeed matrices with all entries being $+1, -1, 0$ with equal probability are JL-embeddings. However these random ensembles take $\mathO{nd^2}$ to apply to an $n \times d$ matrix in general. The Fast-Johnson-Lindenstrauss-Transform (FJLT) as a JL-embedding was proposed in \cite{10.1145/1132516.1132597} and as a subspace-embedding was proposed in \cite{10.1109/FOCS.2006.37}. The FJLT is based on Fast Fourier Transform-like algorithms and is faster to apply to matrices and vectors. The construction and analysis of FJLT are subsequently improved in \cite{Tropp:wr,10.1145/2483699.2483701,doi:10.1137/090767911,Rokhlin:2008wb}, ending with \cite{Tropp:wr} analysing a variant of FJLT called Subsampled Randomised Hadamard Transform (SRHT) using matrix concentration inequalities. SRHT takes $\mathO{nd \log(d)}$ flops to apply to $A$, while requiring $S$ to have about $ m = \mathO{r \log(r)}$ rows to be an oblivious subspace embedding, where $r$ is the rank of $A$. Clarkson and Woodruff \cite{10.1145/3019134} proposed and analysed using the $1$-hashing matrices as a candidate distribution for subspace embeddings, and Nelson and Nguyen \cite{Nelson:te} proposed and analysed using the $s$-hashing matrices. These sparse matrices are subsequently analysed in \cite{10.1145/2488608.2488621,Nelson:2014uu,10.1145/2488608.2488622,10.5555/2884435.2884456}, showing that for $1$-hashing matrices, $m = \mathOmega{r^2}$ is required for being an oblivious subspace embedding (see also Example \ref{ex:1}); and $m = \mathO{r^2}$ is sufficient. And for $s$-hashing matrices, $m = \mathO{r\log(r)}$ is sufficient for being an oblivious subspace embedding with $s = \mathO{\log(r)}$. Recently, Bourgain, Dirksen and Nelson \cite{Bourgain:2015tc} showed a coherence dependent result of $s$-hashing matrices (see \autoref{Bourgain}). 

Comparing to the existing results, our results on $1$ and $s$-hashing matrices are the first oblivious subspace embedding results on $1$ and $s$-hashing matrices with $m = \mathO{r}$; though our result does have a strict coherence requirement. Our result on Hashed-Randomised-Hadamard-Transform has a lower embedding dimension than the SRHT. (Note that it has also been shown in \cite{Tropp:wr} that the embedding dimension of SHRT could not be further lowered due to the Coupon Collector's Problem.)

Finally, we mention some recent works on random embeddings.
Recent results concerning oblivious (tensor) subspace embeddings \cite{iwen2020lower} could be particularized to  oblivious (vector) subspace embeddings, leading to a matrix distribution  $S\in\mathbb{R}^{m\times n}$ with $m = \mathcal{O} (r \log^4 r \log n )$ (where $r$ is the rank of the matrix to be embedded) that requires $\mathcal{O}(n\log n)$ operations to apply to any vector. This has slightly worse space and time complexity than sub-sampled randomized Hadamard transform. Regarding sparse embeddings, \cite{CHEN2020105639} proposed a 'stable' 1-hashing matrix that has the $(\epsilon,\delta)$-oblivious JL embedding property with the optimal $m = \mathcal{O}\bracket{\epsilon^{-2} \log(1/\delta)}$ (same as scaled Gaussian matrices) while each row also has approximately the same number of non-zeros. The algorithm samples $n$ non-zero row indices for $n$ columns of $S$ by sampling without replacement from the set $\{[m], [m], \dots, [m]\}$ where $[m]=\{ 1,2, \dots, m\}$ is repeated $\lceil \frac{n}{m} \rceil$ times. \cite{liu2021extending} proposed learning the positions and values of non-zero entries in 1-hashing matrices by assuming the data comes from a fixed distribution. 
    
    \section{Technical Background}

In this section, we review some important concepts and their properties that we then use throughout \reply{the thesis}. 
We employ several variants of the notion of random embeddings for finite or infinite sets, as we define next.

\subsection{Random embeddings}

We start with a very general concept of embedding a (finite or infinite) number of points; throughout, we let $\epsilon \in (0,1)$ be the user-chosen/arbitrary error
tolerance in the embeddings and $n, k\in \N$. \footnote{\reply{Note that here $\epsilon$ is not the error tolerance of the algorithms that we will discuss later in this thesis, e.g. linear least squares, general non-convex optimisations. While the error tolerance in the embedding influences the performance of embedding-based algorithms, it is not necessary to have a small error in the embedding in order to achieve a small error tolerance in the actual algorithm. Because the inaccuracy of the embedding may be mitigated by repeated iterations of the algorithm, or an indirect use of the embedding. In particular, although $\epsilon \in (0,1)$, we do not require the embedding accuracy $\epsilon$ to be close to zero in this thesis.}}

\begin{definition}[Generalised JL\footnote{Note that `JL' stands for Johnson-Lindenstrauss, recalling their pioneering lemma \cite{Johnson:1984aa}.} embedding \cite{10.1561/0400000060}]
\label{genJL}
A generalised $\epsilon$-JL embedding for a set $Y\subseteq \R^n$
is a matrix $S \in\R^{m\times n}$ such that
\begin{equation}\label{JL-plus}
-\epsilon \|y_i\|_2\cdot \|y_j\|_2 \leq \langle Sy_i, Sy_j \rangle - \langle y_i, y_j \rangle \leq \epsilon \|y_i\|_2 \cdot \|y_j\|_2, \quad \text{for all }\,\, y_i, y_j \in Y.
\end{equation}
\end{definition}

If we let $y_i=y_j$ in (\ref{JL-plus}),  we recover the common notion of an $\epsilon$-JL embedding, that approximately preserves the length of vectors in a given set. 

\begin{definition}[JL embedding \cite{10.1561/0400000060}]
\label{def::JL_embedding}
An $\epsilon$-JL embedding for a set $Y\subseteq \R^{n}$ 
is a matrix $S \in\R^{m\times n}$ such that
\begin{equation}\label{JL}
(1-\epsilon)\|y\|_2^2 \leq \|Sy\|_2^2 \leq (1+\epsilon)\|y\|_2^2 \quad \text{for all}\,\, y \in Y.
\end{equation}
\end{definition}

Often, in the above definitions, the set $Y=\{y_1,\ldots,y_k\}$ is a finite collection of vectors in $\R^n$. But an infinite number of points may also be embedded, such as in the
case when $Y$ is an entire subspace. Then, an embedding approximately preserves pairwise distances between any points in the column 
space of a  matrix $B\in \R^{n\times k}$.
\begin{definition}[$\epsilon$-subspace embedding \cite{10.1561/0400000060}]\label{subspace_embedding_def1_statement}
An $\epsilon$-subspace embedding for a matrix $B \in \R^{n\times k}$ is a matrix $S\in\R^{m\times n}$ such that
\begin{equation}\label{subspace_embedding_def1}
(1-\epsilon)\|y\|_2^2 \leq \|Sy\|_2^2 \leq (1+\epsilon)\|y\|_2^2 \quad \text{ for all $y\in Y=\{y: y=Bz, z\in \R^k\}$}. 
\end{equation}
\end{definition}
In other words, $S$ is an $\epsilon$-subspace embedding for $B$ if and only if $S$ is 
an $\epsilon$-JL embedding for  the column subspace $Y$ of $B$.

 Oblivious embeddings are matrix distributions such that given a(ny)  subset/column subspace of vectors in $\R^n$, a random matrix drawn 
 from such a distribution is an embedding for these vectors with high probability. We let $1-\delta\in [0,1]$ denote a(ny) success probability of an embedding.
 
\begin{definition}[Oblivious embedding \cite{10.1561/0400000060,10.1109/FOCS.2006.37}] \label{Oblivious_embedding}
A distribution $\cal{S}$ on $S \in \R^{m \times n}$  is an $(\epsilon,\delta)$-oblivious embedding if given a fixed/arbitrary set of vectors, we have that, with probability at least $1-\delta$, a matrix $S$ from the distribution is an $\epsilon$-embedding for these vectors.
\end{definition}
Using the above definitions of embeddings, we have distributions that are {\it oblivious JL-embeddings} for a(ny) given/fixed set $Y$ of some vectors $y\in \R^n$, and distributions that are   {\it oblivious subspace embeddings} 
 for a(ny) given/fixed matrix $B\in \R^{n\times k}$ (and for the corresponding subspace $Y$ of its columns).  We note that depending on the quantities being embedded, in addition to $\epsilon$ and $\delta$ dependencies, the size $m$
 of $S$ may depend on $n$ and  the `dimension' of the embedded sets; for example, in the case of a finite set $Y$ of $k$ vectors in $\R^n$, $m$ additionally may depend on $k$ while in the subspace embedding case, $m$ may depend on  the rank $r$ of $B$.

\subsection{Generic properties of subspace embeddings}


A necessary condition for a matrix $S$ to be an $\epsilon$-subspace embedding for a given matrix is that the sketched matrix  has the same rank.  

\begin{lemma}\label{rank-of-sketched-equal-to-unsketched}
If the matrix $S$ is an $\epsilon$-subspace embedding for a given matrix $B$ for some $\epsilon \in (0,1)$, then  $rank(SB) = rank(B)$, where $rank(\cdot)$ denotes the rank of the argument matrix.
\end{lemma}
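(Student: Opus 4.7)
The plan is to exploit injectivity of $S$ on the column space of $B$, which follows from the lower bound in the subspace embedding inequality. Since $\epsilon \in (0,1)$, the factor $1-\epsilon$ is strictly positive, so for every $y$ in the column space $Y = \{Bz : z \in \R^k\}$ the inequality $(1-\epsilon)\|y\|_2^2 \leq \|Sy\|_2^2$ forces $Sy = 0$ only when $y = 0$. Hence the linear map $y \mapsto Sy$, restricted to $Y$, has trivial kernel and is therefore injective.

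Next, I would translate this injectivity into an equality of ranks. Let $r = \text{rank}(B)$ and pick a basis $b_1, \ldots, b_r$ of the column space of $B$ (for instance, $r$ linearly independent columns of $B$). I would show that $Sb_1, \ldots, Sb_r$ are linearly independent: any relation $\sum_{i=1}^r c_i Sb_i = 0$ rewrites as $S\bigl(\sum_i c_i b_i\bigr) = 0$, and since $\sum_i c_i b_i \in Y$, injectivity on $Y$ gives $\sum_i c_i b_i = 0$, whence $c_i = 0$ for all $i$ by linear independence of the $b_i$. Therefore the column space of $SB$ has dimension at least $r$.

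For the reverse inequality, the column space of $SB$ is precisely $\{Sy : y \in Y\}$, which is the image of an $r$-dimensional space under a linear map and so has dimension at most $r$. Combining the two bounds yields $\text{rank}(SB) = r = \text{rank}(B)$. I do not anticipate a genuine obstacle here; the only subtle point is to be explicit that the embedding inequality (\ref{subspace_embedding_def1}) applies to \emph{every} element of the column space $Y$ (not just to the columns of $B$ themselves), which is exactly what licences the injectivity argument on arbitrary linear combinations $\sum_i c_i b_i$.
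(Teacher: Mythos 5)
Your proof is correct and rests on the same key fact as the paper's: since $1-\epsilon>0$, the lower bound in \eqref{subspace_embedding_def1} forces $Sy\neq 0$ for every nonzero $y$ in the column space of $B$, so $S$ is injective on that subspace. The paper packages this via the rank--nullity theorem (comparing $\dim\ker(B)$ and $\dim\ker(SB)$ as maps on $\R^k$), whereas you argue via a basis of the column space and linear independence of its image under $S$; these are interchangeable bookkeeping around the same underlying injectivity argument.
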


\begin{proof}
Let $B\in \R^{n \times k}.$
By rank-nullity theorem, $\rank(B) + \dim \ker(B) = \rank(SB) + \dim \ker(SB) =k$. Clearly, $\dim \ker(SB) \geq \dim \ker(B)$. If the previous inequality is strict, then there exists $z\in \R^k$ such that $\|SBz\|_2 = 0$ and $\|Bz\|_2 >0$, contradicting the assumption that $S$ is an $\epsilon$-subspace embedding for $B$ according to \eqref{subspace_embedding_def1}.
\end{proof}

Given any matrix $A \in \R^{n\times d}$ of rank $r$, the compact singular value decomposition (SVD) of $A$ provides a perfect subspace embedding.
In particular, let
\begin{align}
A = U \Sigma V^T,\label{thin-SVD}
\end{align}
where $U \in \R^{n \times r}$ with orthonormal columns, $\Sigma \in \R^{r \times r}$ is diagonal matrix with strictly positive diagonal entries, and $V \in \R^{d \times r}$ with orthonormal columns  \cite{10.5555/248979}.  Then the matrix $U^T$ is a $\epsilon$-subspace embedding for $A$ for any $\epsilon\in (0,1)$.

Next, we connect the embedding properties of $S$ for $A$ with those for $U$ in \eqref{thin-SVD}, using a proof technique in Woodruff \cite{10.1561/0400000060}.

\begin{lemma} \label{subspace-embedding-def-2}
Let $A \in \R^{n \times d}$ with rank $r$ and SVD-decomposition factor 
$U \in \R^{n \times r}$ defined in \eqref{thin-SVD}, and let
$\epsilon \in (0,1)$. Then the following equivalences hold:
\begin{itemize}
\item[(i)] 
a matrix $S$ is an $\epsilon$-subspace embedding for  $A$ if and only if $S$ is an $\epsilon$-subspace embedding for $U$, namely,
\begin{align}
(1-\epsilon)\|Uz\|_2^2 \leq \|SUz\|_2^2 \leq (1+\epsilon)\|Uz\|_2^2, \quad \text{for all $z \in \R^{r}$}.\label{U-condition}
\end{align}
\item[(ii)] A matrix $S$ is an $\epsilon$-subspace embedding for  $A$ if and only if for all $z\in \R^{r}$ with $\|z\|_2=1$, we have\footnote{We note that since
$\|z\|_2=1$ and $U$ has orthonormal columns, $\|Uz\|_2=\|z\|_2=1$ in \eqref{U-condition-2}.}
\begin{align}
(1-\epsilon)\|Uz\|_2^2 \leq \|SUz\|_2^2 \leq (1+\epsilon)\|Uz\|_2^2. \label{U-condition-2}
\end{align}
\end{itemize}
\end{lemma}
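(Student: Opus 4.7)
The plan is to prove both parts by reducing to the column-space description of the subspace embedding property (Definition 2.3), together with standard linear algebra observations about the compact SVD.

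For part (i), the key fact is that the column space of $A$ coincides with the column space of $U$. First I would check the inclusion $\mathrm{col}(A) \subseteq \mathrm{col}(U)$: any $y = Ax$ equals $U(\Sigma V^T x) = Uz$ with $z = \Sigma V^T x \in \R^r$. For the reverse inclusion, since $\Sigma$ is invertible and $V$ has orthonormal columns, any $Uz$ can be written as $U \Sigma V^T (V \Sigma^{-1} z) = A(V\Sigma^{-1} z)$, so $Uz \in \mathrm{col}(A)$. Hence $\{Bz: z \in \R^k\}$ in Definition 2.3 is the same set whether we take $B = A$ (with $k=d$) or $B = U$ (with $k=r$), and so the defining inequality $(1-\epsilon)\|y\|_2^2 \leq \|Sy\|_2^2 \leq (1+\epsilon)\|y\|_2^2$ holds for all $y$ in one set if and only if it holds for all $y$ in the other.

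For part (ii), I would use the fact that the inequality in \eqref{U-condition} is homogeneous of degree two in $z$. The forward direction (embedding for $A$ implies \eqref{U-condition-2}) follows immediately from part (i) by restricting to unit-norm $z$. For the converse, given any nonzero $z \in \R^r$, write $z = \|z\|_2 \hat{z}$ with $\|\hat z\|_2 = 1$; then
\begin{equation*}
\|SUz\|_2^2 = \|z\|_2^2 \, \|SU\hat{z}\|_2^2, \qquad \|Uz\|_2^2 = \|z\|_2^2 \, \|U\hat z\|_2^2,
\end{equation*}
so multiplying the assumed inequality \eqref{U-condition-2} applied at $\hat z$ by $\|z\|_2^2$ yields \eqref{U-condition} for $z$; the case $z = 0$ is trivial. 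Combined with part (i), this gives the equivalence with $S$ being an $\epsilon$-subspace embedding for $A$.

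I do not anticipate any genuine obstacle; the whole lemma is essentially a bookkeeping statement about rewriting the subspace embedding condition using the SVD and degree-two homogeneity. The only minor care is to note that $\Sigma$ is invertible (which is guaranteed since it is the compact SVD factor and contains only strictly positive singular values by \eqref{thin-SVD}), so that the two column spaces really do coincide.
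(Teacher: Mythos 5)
Your proposal is correct and follows essentially the same route as the paper: part (i) via the correspondence $x = V\Sigma^{-1}z$ and $z = \Sigma V^T x$ showing the column spaces of $A$ and $U$ coincide, and part (ii) via degree-two homogeneity, applying the unit-norm case to $z/\|z\|_2$. No gaps.
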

\begin{proof}
\begin{itemize}
    \item[(i)]
Let $A = U \Sigma V^T$ be defined as in (\ref{thin-SVD}). If $S$ is an $\epsilon$-subspace embedding for $A$, let $z \in \R^{r}$ and define $x = V\Sigma^{-1}z \in \R^{d}$. Then we have $Uz = Ax$ and

\begin{equation}
\|SUz\|_2^2 = \|SAx\|_2^2 
			\leq (1+\epsilon) \|Ax\|_2^2 \\
			= (1+\epsilon) \|Uz\|_2^2,
\end{equation}
where we have used $Uz = Ax$ and (\ref{subspace_embedding_def1}). Similarly, we have $\|SUz\|_2^2 \geq (1-\epsilon) \|Uz\|_2^2$. Hence $S$ is an $\epsilon$-subspace embedding for $U$.

Conversely, given $S$ is an $\epsilon$-subspace embedding for $U$, let $x \in \R^{d}$ and $z = \Sigma V^T x \in \R^{r}$. Then we have $Ax = Uz$, and $\|SAx\|_2^2 = \|SUz\|_2^2 \leq (1+\epsilon)\|Uz\|_2^2 = (1+\epsilon) \|Ax\|_2^2$. Similarly $\|SAx\|_2^2 \geq (1-\epsilon) \|Ax\|_2^2$. Hence $S$ is an $\epsilon$-subspace embedding for $A$.
\item[(ii)] Since the equivalence in (i) holds, note that \eqref{U-condition} clearly implies \eqref{U-condition-2}. The latter also implies the former if 
\eqref{U-condition-2} is applied to $z/\|z\|_2$ for any nonzero $z\in \R^r$.
\end{itemize}
\end{proof}

\begin{remark}\label{rem_rd}
Lemma \ref{subspace-embedding-def-2}  shows that to obtain a subspace embedding for an $n\times d$ matrix $A$ it is sufficient (and necessary) to embed correctly its left-singular matrix that has rank $r$. Thus, the dependence on $d$ in subspace embedding results can be replaced by dependence on $r$, the rank of the input matrix $A$. As rank deficient matrices $A$ are important in this \reply{thesis}, we opt to state our results in terms of their $r$ dependency (instead of $d$).
\end{remark}

The matrix $U$ in \eqref{thin-SVD} can be seen as the ideal `sketching' matrix for $A$; however,  there is not much computational gain in doing this as computing the compact SVD has similar complexity as computing a minimal residual solution to (\ref{LLS-statement}) directly.

\subsection{Sparse matrix distributions and their embeddings properties}

In terms of optimal embedding properties, it is well known that (dense)
scaled Gaussian matrices  $S$ with $m = \mathcal{O} \left( \epsilon^{-2} (r + \log(1/\delta) )\right)$ provide an $(\epsilon, \delta)$-oblivious subspace embedding for $n\times d$ matrices $A$ of rank $r$ \cite{10.1561/0400000060}. However, the computational cost of the matrix-matrix product $SA$  is  $\mathcal{O}(nd^2)$, which is similar 
to the complexity of solving the original LLS problem \eqref{LLS-statement}; thus it seems difficult to achieve computational gains by calculating a sketched solution of 
\eqref{LLS-statement} in this case.
In order to improve the computational cost of using sketching for solving LLS problems, and to help preserve input sparsity (when $A$ is sparse), sparse random matrices have been proposed, 
namely, such as random matrices with one non-zero per row.  However, uniformly sampling rows of $A$ (and entries of $b$ in  \eqref{LLS-statement}) may miss choosing some (possibly important) row/entry.  A more robust proposal, both theoretically and numerically, is to use hashing matrices, with one (or more) nonzero entries per column, which when applied to $A$ (and $b$), captures all rows of $A$  (and entries of $b$) by adding two (or more) rows/entries with randomised signs. The definition of $s$-hashing matrices and was given in \autoref{def::sampling_and_hashing} and when $s=1$, we have $1$-hashing matrices defined in \autoref{1-hashing}.



Still, in general, the optimal dimension dependence present in Gaussian subspace embeddings cannot be replicated even for hashing distributions, as our next example illustrates.

\begin{example}[The $1$-hashing matrix distributions fails to yield an oblivious subspace embedding with $m = \mathcal{O}(r)$]
\label{ex:1}
	Consider the matrix
	\begin{align}
	A = \begin{pmatrix}
		I_{r\times r} & 0 \\ 0 & 0
	\end{pmatrix} \in \R^{n \times d}. \label{eqn::problematic_matrix}
	\end{align}
	If $S$ is a 1-hashing matrix with $m=\mathcal{O}(r)$, then 
	$\displaystyle SA = \bigl( S_1 \quad 0 \bigr)$, where the $S_1$ block contains the first $r$ columns of  $S$. 
	To ensure that the rank of $A$ is preserved (cf. Lemma \ref{rank-of-sketched-equal-to-unsketched}), a necessary condition for $S_1$ to have rank $r$ is that the $r$ non-zeros of $S_1$ are in different rows. Since, by definition, the respective  row is chosen independently and uniformly at random for each column of $S$, the probability of $S_1$ having rank $r$ is no greater than

	\begin{align}\label{hashing-fails}
	\left(1 - \frac{1}{m}\right) \cdot \left(1 - \frac{2}{m}\right)\cdot \ldots \cdot\left(1-\frac{r-1}{m}\right) 
	\leq e^{-\frac{1}{m}-\frac{2}{m}-\ldots -\frac{r-1}{m}}  
	= e^{-\frac{r(r-1)}{2m}},
	\end{align}
For the probability\footnote{The argument in the example relating  to 1-hashing sketching is related to the birthday paradox, as mentioned (but not proved) in Nelson and Nguyen \cite{10.1145/2488608.2488622}.}  \eqref{hashing-fails}  to be at least $1/2$, we must have
 $m \geq \frac{r(r-1)}{2 \log (2)}$.
\end{example}

The above example  improves upon the lower bound in Nelson et al.  \cite{10.1145/2488608.2488622} by slightly relaxing the requirements on $m$ and $n$\footnote{We note that
in fact,  \cite{Nelson:te} considers a more general set up, namely, any matrix distribution with column sparsity one.}.
We note that in the order of $r$ (or equivalently\footnote{See Remark \ref{rem_rd}.}, $d$), the lower bound $m=\mathcal{O}(r^2)$ for $1$-hashing matches the upper bound given in Nelson and Nguyen \cite{Nelson:te}, Meng and Mahoney \cite{10.1145/2488608.2488621}. 


When $S$ is an $s$-hashing matrix, with $s>1$, the tight bound $m=\Theta(r^2)$ can be improved to 
$m=\Theta (r\log r)$ for $s$ sufficiently large. In particular, Cohen \cite{10.5555/2884435.2884456} derived a general upper bound  that implies, for example, subspace embedding properties of 
$s$-hashing matrices provided $m=\mathcal{O}(r\log r)$ and $s=\mathcal{O}(\log r)$; the value of $s$ may be further reduced to a constant (that is not equal to $1$) at the expense of increasing $m$ and worsening its dependence of $d$. 
A lower bound for guaranteeing oblivious embedding properties of $s$-hashing matrices is given in \cite{Nelson:2014uu}.
Thus we can see that for $s$-hashing (and especially for $1$-hashing) matrices, their general subspace embedding properties are suboptimal in terms of the dependence of $m$ on $d$ when compared to the Gaussian sketching results. To improve the embedding properties of hashing matrices, we must focus on special structure input matrices.

\subsubsection{Coherence-dependent embedding properties of sparse random matrices}

A  feature of the problematic matrix (\ref{eqn::problematic_matrix}) is that its rows are separated into two groups, with the first $r$ rows containing all the information. If the rows  of $A$ were more `uniform' in the sense of equally important in terms of relevant information content, hashing may perform better as a sketching matrix. Interestingly, it is not the uniformity of the rows of $A$ but the uniformity of the rows of $U$, the left singular matrix from the compact SVD of $A$, that plays an important role. The concept of coherence is a useful proxy for the uniformity of the rows of $U$ and $A$\footnote{We note that sampling matrices were shown to have good subspace embedding properties for input matrices with low coherence \cite{10.1145/1132516.1132597, Tropp:wr}.
Even if the coherence is minimal, the size of the sampling matrix has a $d\log d$ dependence where the $\log d $ term cannot be removed due to the coupon collector problem \cite{Tropp:wr}.}.

\begin{definition} (Matrix coherence \cite{10.1561/2200000035})
\label{def::coherence}
The coherence of a matrix $A \in \R^{n\times d}$, denoted $\mu(A)$, is the largest Euclidean norm of the rows of $U$ defined in (\ref{thin-SVD}). Namely,
\begin{align}
\mu(A) = \max_{i\in [n]} \|U_i\|_2,
\end{align}
where  $U_i$ denotes the $i$th row of $U$.\footnote{\reply{Note that the concept of coherence is different to the (in)coherence used in compressed sensing literature \cite{DonohoMutualCoherence}, in particular our notion of coherence is not invariant under a different coordinate representation. }}
\end{definition}

Some useful properties follow.

\begin{lemma}
Let $A \in \R^{n\times d}$ have rank $r\leq d\leq n$. Then
\begin{equation} \label{mu_bound}
\sqrt{\frac{r}{n}} \leq \mu(A) \leq 1.
\end{equation}
Furthermore, if $\mu(A) = \sqrt{\frac{r}{n}}$, then $\|U_i\|_2= \sqrt{\frac{r}{n}}$ for all $i\in [n]$ where $U$ is defined in \eqref{thin-SVD}.
\end{lemma}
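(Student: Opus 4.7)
The plan is to work directly from the defining identity $U^T U = I_r$ (which holds because $U$ has orthonormal columns by construction of the compact SVD in \eqref{thin-SVD}) and exploit the Frobenius norm of $U$ to pin down both bounds.

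For the upper bound $\mu(A)\leq 1$, I would extend $U \in \R^{n\times r}$ to a full orthogonal matrix $\widetilde{U} = [U,\, U'] \in \R^{n\times n}$ by appending an orthonormal basis for the orthogonal complement of the column space. Since $\widetilde{U}$ is orthogonal, each of its rows has Euclidean norm exactly $1$, and the $i$th row of $U$ is a sub-vector of the $i$th row of $\widetilde{U}$. Hence $\|U_i\|_2 \leq \|\widetilde{U}_i\|_2 = 1$ for every $i\in[n]$, which gives $\mu(A) = \max_i \|U_i\|_2 \leq 1$. Alternatively, I could note that $UU^T$ is the orthogonal projector onto the column space and therefore its diagonal entries $(UU^T)_{ii} = \|U_i\|_2^2$ lie in $[0,1]$.

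For the lower bound $\mu(A) \geq \sqrt{r/n}$, I would compute the Frobenius norm of $U$ in two ways. On one hand, $\|U\|_F^2 = \mathrm{tr}(U^T U) = \mathrm{tr}(I_r) = r$. On the other hand, $\|U\|_F^2 = \sum_{i=1}^n \|U_i\|_2^2 \leq n \cdot \max_{i\in[n]} \|U_i\|_2^2 = n\,\mu(A)^2$. Combining the two gives $\mu(A)^2 \geq r/n$, and hence $\mu(A) \geq \sqrt{r/n}$.

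For the equality case, suppose $\mu(A) = \sqrt{r/n}$. Then the chain $r = \sum_{i=1}^n \|U_i\|_2^2 \leq n\,\mu(A)^2 = r$ must hold with equality. The middle inequality is an equality if and only if every summand $\|U_i\|_2^2$ attains the maximum $\mu(A)^2 = r/n$, which is exactly the conclusion $\|U_i\|_2 = \sqrt{r/n}$ for all $i\in[n]$. No step here looks like a real obstacle; the only mild care is in justifying the orthogonal extension for the upper bound, which is routine linear algebra.
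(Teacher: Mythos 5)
Your proposal is correct and follows essentially the same route as the paper: the lower bound and equality case come from the identity $\sum_{i=1}^n \|U_i\|_2^2 = r$ (your Frobenius-norm/trace phrasing is the same fact), and the upper bound comes from extending $U$ to an orthogonal matrix whose rows have unit norm. The only cosmetic difference is your optional remark that $UU^T$ is an orthogonal projector, which the paper does not use.
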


\begin{proof}
Since the matrix $U \in \R^{n\times r}$ has orthonormal columns, we have that
\begin{equation}
    \sum_{i=1}^{n} \|U_i\|_2^2 = r. \label{eqn:sum_of_U_i}
\end{equation} Therefore the maximum 2-norm of $U$ must not be less than $\sqrt{\frac{r}{n}}$, and thus $\mu(A) \geq \sqrt{\frac{r}{n}}$. Furthermore, if $\mu(A) = \sqrt{\frac{r}{n}}$, then \eqref{eqn:sum_of_U_i} implies $\|U_i\|_2 = \sqrt{\frac{r}{n}}$ for all $i \in [n]$.

Next, by expanding the set of columns of $U$ to a basis of $R^n$, there exists $U_f \in \R^{n \times n}$ such that $U_f = \bigl(  U \; \hat{U}  \bigr)$ orthogonal where $\hat{U} \in \R^{n \times (n-d)}$ has orthonormal columns. The 2-norm of $i$th row of $U$ is bounded above by the 2-norm of $i$th row of $U_f$, which is one. Hence $\mu(A) \leq 1$. 
\end{proof}

We note that for $A$ in (\ref{eqn::problematic_matrix}), we have $\mu(A) = 1$. The  maximal coherence of this matrix sheds some light on the ensuing poor embedding properties we noticed in Example 1.

Bourgain et al \cite{Bourgain:2015tc}  gives a general result that captures the coherence-restricted subspace embedding properties 
of $s$-hashing matrices.

\begin{theorem}[Bourgain et al \cite{Bourgain:2015tc}] \label{Bourgain}
Let $A \in \R^{n \times d}$ with coherence $\mu(A)$  and rank $r$; and  let $0 < \epsilon, \delta <1$.
Assume also that
\begin{align}
m \geq c_1\max\left\{\delta^{-1}, \left[ (r + \log m )  \min \left\{ \log^2(r/\epsilon), \log^2(m)
\right\} + r\log(1/\delta) \right]\epsilon^{-2}\right\} \label{Bourgain-m}\\
{\rm and}\quad s \geq c_2 \left[ \log(m) \log(1/\delta) \min \left\{ \log^2(r/\epsilon), \log^2(m)
\right\} + \log^2(1/\delta) \right] \mu(A)^2 \epsilon^{-2},\label{Bourgain_s_eqn} 
\end{align}
where $c_1$ and $c_2$ are  positive constants.
Then  a(ny) s-hashing matrix $S \in \R^{m\times n}$ is an $\epsilon$-subspace embedding for  $A$  with probability at least $1-\delta$.

\end{theorem}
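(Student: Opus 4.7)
The plan is to reduce the embedding statement to a uniform concentration bound for a random quadratic chaos, and then control that chaos by a chaining argument in which the coherence hypothesis governs the entrywise magnitudes of the chaos coefficients. Specifically, by \autoref{subspace-embedding-def-2}(ii), it suffices to show that with probability at least $1-\delta$,
\[
\sup_{z\in\R^r,\,\|z\|_2=1}\bigl|\|SUz\|_2^2 - 1\bigr|\leq \epsilon,
\]
where $U\in\R^{n\times r}$ is the left singular factor from~\eqref{thin-SVD}. Writing $y = Uz$, one has $\|y\|_2 = 1$ while $\|y\|_\infty \leq \mu(A)$ by \autoref{def::coherence}, so the task becomes uniform concentration of $\|Sy\|_2^2$ around~$1$ over the compact set $\{Uz : z\in\R^r,\ \|z\|_2 = 1\}$.

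Next, I would decompose the $s$-hashing matrix as $S = s^{-1/2}\sum_{k=1}^{s}\Phi_k$, where each $\Phi_k$ independently places a single $\pm 1$ entry in a uniformly chosen row of each column. Expanding $\|Sy\|_2^2$ yields a diagonal contribution whose expectation equals $\|y\|_2^2 = 1$, plus an off-diagonal Rademacher chaos of order two whose coefficients are products $y_i y_j$ weighted by indicators of row-collisions between columns across the $\Phi_k$. The bound $\|y\|_\infty \leq \mu(A)$ controls the entrywise size of these coefficients, while the hashing structure implies that a given pair of columns collides in expectation $1/m$ times per $\Phi_k$, giving an overall chaos scale of order $\mu(A)^2/s$.

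The heart of the argument is the \emph{uniform} control of this chaos over the unit sphere of $\R^r$. I would couple a Hanson--Wright style high-moment inequality for the chaos at fixed $y$ with a generic-chaining / Dudley entropy argument on the sphere. The moment method combined with $\epsilon$-net discretisation is what produces the $\min\{\log^2(r/\epsilon),\log^2 m\}$ factors in~\eqref{Bourgain-m}--\eqref{Bourgain_s_eqn}; the cardinality of an $\epsilon$-net on the unit sphere in $\R^r$ accounts for the $r + r\log(1/\delta)$ contribution to the lower bound on $m$; and the entrywise bound $|y_i|\leq \mu(A)$ funnels through the high moments to give the $\mu(A)^2\epsilon^{-2}$ dependence in the lower bound on $s$. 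A final application of Markov's inequality to the top moment then delivers the tail probability $\delta$.

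The main technical obstacle is obtaining the sharp $\mu(A)^2/s$ scaling uniformly over the sphere: a naive net-plus-independence argument is too lossy in both $m$ and $s$. Recovering the optimal scaling requires a careful decoupling of the row-location randomness from the sign randomness of the $\Phi_k$, together with a multi-scale (generic chaining) treatment of the sphere that balances the chaos moments against the net entropy. Once this moment bound is in hand, the concluding discretisation and union bound are routine.
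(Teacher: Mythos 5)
You should note first that the paper itself does not prove this statement: Theorem \ref{Bourgain} is imported verbatim from Bourgain, Dirksen and Nelson \cite{Bourgain:2015tc} and is used later only as a point of comparison for Theorem \ref{thm1}, so there is no paper-internal proof to measure your argument against. Judged on its own, your outline does point in the direction of the cited work: reducing via Lemma \ref{subspace-embedding-def-2}(ii) to uniform concentration of $\|Sy\|_2^2$ over $\{Uz:\|z\|_2=1\}$, using $\|y\|_\infty\leq\mu(A)$ from Lemma \ref{non_uniformity_col_subspace_coherence}, and then controlling a decoupled order-two chaos by generic chaining is the right family of ideas, and the way you attribute the $\min\{\log^2(r/\epsilon),\log^2 m\}$, $r\log(1/\delta)$ and $\mu(A)^2\epsilon^{-2}$ factors to the entropy, net and entrywise bounds respectively is plausible.

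However, as a proof there are two genuine gaps. First, the entire analytic core --- the uniform Hanson--Wright/chaining bound with the sharp $\mu(A)^2/s$ scaling --- is asserted rather than carried out; you yourself flag that a naive net-plus-moment argument is too lossy, and bridging that loss is precisely the difficult content of \cite{Bourgain:2015tc}, so nothing in the sketch certifies that the stated exponents and logarithmic factors in \eqref{Bourgain-m}--\eqref{Bourgain_s_eqn} actually come out. Second, your decomposition $S=s^{-1/2}\sum_{k=1}^{s}\Phi_k$ with each $\Phi_k$ placing one $\pm1$ per column in an independently, uniformly chosen row describes the $s$-hashing \emph{variant} of Definition \ref{def::s-hashing-variant} (sampling with replacement, at most $s$ nonzeros per column), not the $s$-hashing matrices of the theorem, which have exactly $s$ nonzeros per column; the without-replacement row choices within a column are negatively correlated across the layers, so the independence you invoke when splitting the chaos and computing collision probabilities does not hold literally, and an additional comparison or conditioning argument is needed to transfer the bound between the two models.
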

Substituting $s=1$ in (\ref{Bourgain_s_eqn}), we can use the above Theorem to deduce an upper bound $\mu$ of acceptable  coherence values of the input matrix $A$, namely,
\begin{align}
\mu(A)\leq c_2^{-1/2} \epsilon \left[ \log(m) \log(1/\delta) \min \left\{ \log^2(r/\epsilon), \log^2(m)
\right\} + \log^2(1/\delta) \right]^{-1/2} :=\mu.\label{Bourgain-1-hashing-mu}
\end{align}
Thus Theorem \ref{Bourgain} implies that the distribution of $1$-hashing matrices with
$m$ satisfying (\ref{Bourgain-m})  is 
 an oblivious subspace embedding for any input matrix $A$ with 
 $\mu(A)\leq \mu$, where  $\mu$ is defined in \eqref{Bourgain-1-hashing-mu}.

\subsubsection{Non-uniformity of vectors and their relation to embedding properties of sparse random matrices}

In order to prove some of our main results, we need a corresponding notion of coherence of vectors, to be able to measure the `importance' of their respective entries; this is captured by the so-called non-uniformity of a vector.

\begin{definition}[Non-uniformity of a vector]\label{def:non-uniform-vector}
Given $x \in \R^{n}$, the non-uniformity of $x$, $\nu(x)$, is defined as
\begin{align}
\nu(x) = \frac{\|x\|_{\infty}}{\|x\|_2}. 
\end{align}
\end{definition}

We note that for any vector $x \in \R^{n}$, we have $\frac{1}{\sqrt{n}} \leq \nu(x) \leq 1$.

\begin{lemma}\label{non_uniformity_col_subspace_coherence}
Given $A \in \R^{n\times d}$, let $y=Ax$ for some $x \in \R^{d}$. Then
\begin{align}
\nu(y) \leq \mu(A).
\end{align}
\end{lemma}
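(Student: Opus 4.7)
The plan is to use the compact SVD of $A$ to rewrite $y$ in terms of the left-singular matrix $U$, and then bound each component of $y$ via Cauchy--Schwarz. Concretely, I would first invoke the decomposition $A = U\Sigma V^T$ from \eqref{thin-SVD} and set $z = \Sigma V^T x \in \R^r$, so that $y = Ax = Uz$. Since $U$ has orthonormal columns, $\|y\|_2 = \|Uz\|_2 = \|z\|_2$; this handles the denominator of $\nu(y)$.

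Next I would tackle the numerator $\|y\|_\infty$. For each index $i \in [n]$, the $i$-th entry satisfies $y_i = U_i z$, where $U_i$ denotes the $i$-th row of $U$ (viewed as a row vector). A direct application of the Cauchy--Schwarz inequality gives $|y_i| \leq \|U_i\|_2 \, \|z\|_2 \leq \mu(A)\,\|z\|_2$, using the definition of $\mu(A)$ as the maximum of $\|U_i\|_2$ over $i \in [n]$. Taking the maximum over $i$ yields $\|y\|_\infty \leq \mu(A)\,\|z\|_2$.

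Combining the two ingredients, $\nu(y) = \|y\|_\infty / \|y\|_2 \leq \mu(A) \|z\|_2 / \|z\|_2 = \mu(A)$, provided $y \neq 0$. The edge case $y = 0$ needs a brief mention (the inequality is vacuous or can be interpreted via convention), but it is not a substantive obstacle. In fact, I do not expect any genuine obstacle here: the lemma is essentially a restatement of the fact that the coherence controls how concentrated entries of any vector in the column space of $A$ can be, and the proof is a two-line consequence of the SVD and Cauchy--Schwarz.
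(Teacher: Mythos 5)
Your proposal is correct and follows essentially the same route as the paper's proof: the compact SVD with $z = \Sigma V^T x$, the identity $\|y\|_2 = \|z\|_2$ from orthonormality of the columns of $U$, and entrywise Cauchy--Schwarz against the rows of $U$ to bound $\|y\|_\infty$ by $\mu(A)\|z\|_2$. The only addition is your remark on the $y=0$ edge case, which the paper leaves implicit.
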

\begin{proof}
Let $A = U\Sigma V^T$ be defined as in \eqref{thin-SVD}, and let $z = \Sigma V^T x \in \R^{r}$. Then $y = Ax = Uz$.
Therefore 
\begin{align}
\|y\|_{\infty} = \|Uz\|_{\infty} = \max_{1\leq i \leq n} | \langle U_i, z \rangle| \leq \max_{1 \leq i \leq n} \|U_i\|_2 \|z\|_2 \leq  \mu(A) \|z\|_2,
\end{align}
where $U_i$ denotes the $i^{th}$ row of $U$.
Furthermore, $\|y\|_2 = \|Uz\|_2 = \|z\|_2$ which then implies $\nu(y) = \|y\|_{\infty} / \|y\|_2 \leq \mu(A)$.
\end{proof}

The next lemmas are crucial to our results in the next section; the proof of the first lemma can be found in the paper \cite{10.5555/3327345.3327444}.

We also note, in subsequent results, the presence of {\it problem-independent constants}, also called absolute constants that will be implicitly or explicitly defined, depending on the context. Our convention here is as expected, that the same notation denotes the same constant across all results in this chapter. 

The following expression will be needed in our results,
\begin{equation}
\bar{\nu}(\epsilon,\delta):= \nuOneHashing,
      \label{L5:nu}
\end{equation}
where $\epsilon, \delta \in (0,1)$ and $E, C_1>0$.

\begin{lemma}[\cite{10.5555/3327345.3327444}, Theorem 2] \label{Freksen}
Suppose that $\epsilon, \delta \in (0,1)$, and $E$ satisfies $C \leq E < \frac{2}{\delta \log(1/\delta)}$, where $C>0$ and $C_1$ are  problem-independent constants. Let 
$m \leq n \in \N$ with
$m\geq E \epsilon^{-2} \log(1/\delta)$.

Then,  for any $x\in \R^n$ with 
\begin{equation}
    \nu(x) \leq\bar{\nu}(\epsilon,\delta),
    \label{L5:nu-1}
\end{equation}
where $\bar{\nu}(\epsilon,\delta)$ is defined in \eqref{L5:nu},
a randomly generated 1-hashing matrix $S\in \R^{m\times n}$ is an $\epsilon$-JL embedding for $\{x\}$ with probability at least $1-\delta$.

\end{lemma}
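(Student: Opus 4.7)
The plan is to follow a Rademacher-chaos / Hanson-Wright style argument, as in the cited paper. Write the 1-hashing matrix explicitly as $S_{j,i}=\sigma_i\,\mathbf{1}[h(i)=j]$ with independent Rademachers $\sigma_i\in\{\pm 1\}$ and uniform $h(i)\in[m]$. After normalising $\|x\|_2=1$, a direct expansion gives
\[
\|Sx\|_2^2 \;=\; \sum_{j=1}^m\Bigl(\sum_{i:\,h(i)=j}\sigma_i x_i\Bigr)^2 \;=\; 1 + \sum_{i_1\neq i_2}\mathbf{1}[h(i_1)=h(i_2)]\,\sigma_{i_1}\sigma_{i_2}\,x_{i_1}x_{i_2} \;=:\; 1 + Z,
\]
so the task reduces to showing $\mathbb{P}[|Z|>\epsilon]\leq\delta$ under the stated hypotheses on $m$ and $\nu(x)$.

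The first step is to condition on the hashing $h$ and apply the Hanson--Wright inequality to the Rademacher quadratic form $Z=\sigma^T A\sigma$ with $A_{i_1 i_2}=\mathbf{1}[i_1\neq i_2,\,h(i_1)=h(i_2)]\,x_{i_1}x_{i_2}$. Setting the bucket masses $\alpha_j:=\sum_{i:h(i)=j}x_i^2$ (so that $\sum_j\alpha_j=1$), one obtains the conditional Frobenius and operator bounds $\|A\|_F^2\leq\sum_j\alpha_j^2$ and $\|A\|_{op}\leq\max_j\alpha_j$, leading to
\[
\mathbb{P}\bigl[|Z|>\epsilon \,\big|\, h\bigr] \;\leq\; 2\exp\Bigl(-c\,\min\Bigl\{\tfrac{\epsilon^2}{\sum_j\alpha_j^2},\,\tfrac{\epsilon}{\max_j\alpha_j}\Bigr\}\Bigr).
\]
This bifurcates the analysis into a sub-Gaussian regime (where the quadratic exponent is active) and a sub-exponential regime (where the linear one is active); the two regimes correspond precisely to the two terms inside the minimum in the definition of $\bar{\nu}(\epsilon,\delta)$ in \eqref{L5:nu}.

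The second and harder step is the outer concentration over $h$: one must show that $\sum_j\alpha_j^2$ and $\max_j\alpha_j$ are simultaneously small with failure probability well below $\delta$ whenever $\nu(x)\leq\bar{\nu}(\epsilon,\delta)$. Since $\sum_j\alpha_j^2=\sum_{i,k}x_i^2 x_k^2\,\mathbf{1}[h(i)=h(k)]$ has expectation $\tfrac{1}{m}+(1-\tfrac{1}{m})\|x\|_4^4\leq \tfrac{1}{m}+\nu(x)^2$, its tail can be controlled by Bernstein concentration applied to the collision indicators $\mathbf{1}[h(i)=h(k)]$; whereas $\max_j\alpha_j$ is bounded by combining $\alpha_j\leq\nu(x)^2\,|\{i:h(i)=j\}|$ with a balls-into-bins maximum-load estimate. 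Substituting the hypotheses $m\geq E\epsilon^{-2}\log(1/\delta)$ and $\nu(x)\leq\bar{\nu}(\epsilon,\delta)$ ensures that each of the conditional Hanson--Wright failure probability and the outer bucket-mass failure probability is at most $\delta/2$, so a final union bound gives the claim. The main obstacle is calibrating constants so that the non-uniformity threshold does not pick up an additional $\log m$ factor; this forces one to balance the two Hanson--Wright regimes carefully against the second-moment and max-bucket controls, which is exactly what produces the intricate min-of-two-terms form of $\bar{\nu}(\epsilon,\delta)$ stated in \eqref{L5:nu}.
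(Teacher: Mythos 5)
First, a point of reference: the thesis does not prove this lemma at all --- it is imported verbatim as Theorem 2 of \cite{10.5555/3327345.3327444} (Freksen, Kamma and Larsen), and the surrounding text explicitly defers the proof to that paper. The proof there is a high-moment argument: one bounds $\mathbb{E}[Z^r]$ for $r$ of order $\log(1/\delta)$ via a delicate combinatorial analysis over collision patterns, and the two terms inside the minimum in \eqref{L5:nu} emerge from balancing those moment contributions; they do \emph{not} correspond, as you assert, to the sub-Gaussian versus sub-exponential regimes of a Hanson--Wright bound. Your opening reduction ($\|Sx\|_2^2=1+Z$) and the conditional Hanson--Wright bound in terms of the bucket masses are correct as stated; the gap is in your second step and the final $\delta/2$--$\delta/2$ union bound.

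Concretely, to make the conditional Hanson--Wright tail at most $\delta/2$ you must insist, on the ``good'' event for $h$, that $\|A\|_{op}\leq c\,\epsilon/\log(1/\delta)$ (and similarly for $\|A\|_F^2$). Take $\|x\|_2=1$ with $k=1/\nu^2$ equal coordinates of size $\nu=\bar{\nu}(\epsilon,\delta)$ in a regime where the second term of the minimum in \eqref{L5:nu} is active (e.g.\ $\epsilon=\delta$ small and $E$ a large constant), so that $\nu^2=\Theta\bigl(\epsilon\log(E)/\log(1/\delta)\bigr)$. A single collision between two support coordinates already forces $\|A\|_{op}\geq\nu^2$, which exceeds the required level $\epsilon/\log(1/\delta)$ by a factor $\Theta(\log E)$, yet at least one collision occurs with probability $\Theta\bigl(\min\{1,\,k^2/m\}\bigr)=\Theta\bigl(\min\{1,\,\log(1/\delta)/(E\log^2 E)\}\bigr)$, which is vastly larger than $\delta$. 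So the event you need with probability $1-\delta/2$ fails with probability far exceeding $\delta$, and no decomposition of the form $\mathbb{P}[\text{bad } h]+\sup_{\text{good } h}\mathbb{P}[|Z|>\epsilon\mid h]$ can certify failure probability $\delta$ at the stated threshold. The loss is intrinsic to conditioning: given exactly one collision, $|Z|=2\nu^2<\epsilon$ deterministically, whereas Hanson--Wright applied to that $h$ only reports a tail $\exp(-c\,\epsilon/\nu^2)=\delta^{\Theta(1/\log E)}\gg\delta$; the cancellation across the randomness of $h$ and $\sigma$ jointly is exactly what the moment analysis of \cite{10.5555/3327345.3327444} exploits. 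As written, your route establishes the lemma only for $\nu^2$ of order $\epsilon/\log(1/\delta)$ (equivalently, it forfeits the $\sqrt{\log E}$, respectively $\log(E/\epsilon)$, gains in \eqref{L5:nu}, or demands extra logarithmic factors in $m$), so it does not prove the statement at the claimed non-uniformity level.
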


\begin{lemma} \label{point to JL plus}
Let $\epsilon, \delta \in (0,1)$, $\nu \in (0,1]$ and $m,n \in \N$. Let $\cal{S}$ be a distribution of $m\times n$ random matrices. Suppose that for any given 
y with $\nu(y) \leq \nu$, a matrix $S \in \R^{m\times n}$ randomly drawn from $\cal{S}$ is an $\epsilon$-JL embedding for $\{y\}$ with probability at least $1-\delta$.
Then for any given set $Y\subseteq \R^n$ with $\max_{y \in Y} \nu(y) \leq \nu$ and cardinality $|Y|\leq 1/\delta$, a matrix $S$ randomly drawn from $\cal{S}$ is an $\epsilon$-JL embedding for Y with probability at least $1-|Y|\delta$.
\end{lemma}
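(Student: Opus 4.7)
The plan is to prove this by a direct union bound over the finitely many vectors in $Y$, since the assumed single-vector JL property already holds for each element of $Y$ individually under the non-uniformity hypothesis.

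First I would fix an arbitrary enumeration $Y = \{y_1, y_2, \dots, y_{|Y|}\}$ and, for each $i$, define the bad event $E_i$ to be the event that the randomly drawn $S \sim \mathcal{S}$ fails to be an $\epsilon$-JL embedding for the singleton $\{y_i\}$, i.e. that $\|Sy_i\|_2^2 \notin [(1-\epsilon)\|y_i\|_2^2, (1+\epsilon)\|y_i\|_2^2]$. By hypothesis $\nu(y_i) \leq \nu$ for every $i$, so the single-vector assumption in the lemma applies and gives $\P(E_i) \leq \delta$.

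Next I would apply the union bound to conclude
\begin{equation*}
\P\left( \bigcup_{i=1}^{|Y|} E_i \right) \leq \sum_{i=1}^{|Y|} \P(E_i) \leq |Y|\delta,
\end{equation*}
and observe that the complement of $\bigcup_i E_i$ is exactly the event that $S$ satisfies the JL inequality \eqref{JL} simultaneously for every $y \in Y$, which is by Definition \ref{def::JL_embedding} the event that $S$ is an $\epsilon$-JL embedding for $Y$. Hence this event occurs with probability at least $1 - |Y|\delta$, as claimed. The role of the cardinality assumption $|Y| \leq 1/\delta$ is simply to make the bound $1 - |Y|\delta$ non-negative and therefore informative.

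There is no real obstacle here: everything is a textbook union bound once one notes that being a JL embedding for a finite set is the intersection of the JL events for each of its elements, and that every element of $Y$ satisfies the non-uniformity precondition on which the hypothesised single-vector guarantee is conditioned. The only thing worth double-checking is simply that the single-vector hypothesis in the statement is an unconditional probability bound for each \emph{fixed} $y$ (which it is), so that the union bound can be applied without any measurability or conditioning subtleties.
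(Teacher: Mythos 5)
Your proposal is correct and follows essentially the same route as the paper: the paper defines, for each $y_i \in Y$, the good event that $S$ is an $\epsilon$-JL embedding for $\{y_i\}$ and applies the union bound to their complements, which is exactly your argument with $E_i$ being the bad events. Nothing is missing.
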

\begin{proof}
Let $Y = \yExpression$. Let $B_i$ be the event that $S$ is an $\epsilon$-JL embedding for $y_i\in Y$. Then $\probability{B_i} \geq 1-\delta$ by  assumption. We have
\begin{align}
    \probability{\text{S is an $\epsilon$-JL embedding for Y}}=
    \probability{\cap_i B_i} & = 1 - \probability{\complement{\cap_i B_i}} \\
    & = 1 - \probability{\cup_i B_i^c} \\
    & \geq 1 - \sum_i \probability{B_i^c} \\
    & = 1 - \sum_i \squareBracket{1 - \probability{B_i}}\\
    & \geq 1 - \sum_i \squareBracket{1 - (1-\delta)} = 1-|Y|\delta.
\end{align}
\end{proof}

\begin{lemma}\label{lem::lemma7}
Let $\epsilon\in (0,1)$, and $Y \subseteq \R^n$ be a finite set such that
$\|y\|_2=1$ for each $y\in Y$. Define 
\begin{align}
    & \yPlus= \yPlusExpression \\
    & \yMinus = \yMinusExpression.
\end{align} 
If $S \in \R^{m\times n}$ is an $\epsilon$-JL embedding for $\set{ \union{\yPlus}{\yMinus} }$, then $S$ is a generalised $\epsilon$-JL embedding for $Y$.
\end{lemma}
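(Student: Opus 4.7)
The plan is to use the polarisation identity, which expresses inner products in terms of squared norms of sums and differences, so that the JL-embedding hypothesis on the set $\yPlus \cup \yMinus$ can be applied directly. Concretely, for any $y_i, y_j \in Y$, I would write
\begin{equation}
\langle Sy_i, Sy_j\rangle = \tfrac{1}{4}\bracket{\normTwo{S(y_i+y_j)}^2 - \normTwo{S(y_i-y_j)}^2},
\end{equation}
and similarly for $\langle y_i, y_j\rangle$ without $S$.

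Next, since $y_i+y_j \in \yPlus$ and $y_i-y_j \in \yMinus$, the JL-embedding assumption on $\yPlus \cup \yMinus$ yields
\begin{equation}
\sabs{\normTwo{S(y_i+y_j)}^2 - \normTwo{y_i+y_j}^2} \leq \epsilon \normTwo{y_i+y_j}^2,
\end{equation}
and the analogous inequality for $y_i - y_j$. Subtracting the two polarisation identities and using the triangle inequality gives
\begin{equation}
\sabs{\langle Sy_i, Sy_j\rangle - \langle y_i, y_j\rangle} \leq \tfrac{\epsilon}{4}\bracket{\normTwo{y_i+y_j}^2 + \normTwo{y_i-y_j}^2}.
\end{equation}

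Finally, I would invoke the parallelogram identity $\normTwo{y_i+y_j}^2 + \normTwo{y_i-y_j}^2 = 2(\normTwo{y_i}^2 + \normTwo{y_j}^2)$ and use the hypothesis $\normTwo{y_i} = \normTwo{y_j} = 1$ to conclude
\begin{equation}
\sabs{\langle Sy_i, Sy_j\rangle - \langle y_i, y_j\rangle} \leq \tfrac{\epsilon}{2}\bracket{\normTwo{y_i}^2 + \normTwo{y_j}^2} = \epsilon = \epsilon \normTwo{y_i}\normTwo{y_j},
\end{equation}
which is exactly condition \eqref{JL-plus}, establishing the generalised $\epsilon$-JL embedding property. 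There is no real obstacle here; the proof is a direct algebraic manipulation, and the only small subtlety is noting that the unit-norm hypothesis on $Y$ is precisely what converts the upper bound from $\frac{\epsilon}{2}(\normTwo{y_i}^2 + \normTwo{y_j}^2)$ into the required $\epsilon \normTwo{y_i} \normTwo{y_j}$ form.
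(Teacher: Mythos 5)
Your proposal is correct and follows essentially the same route as the paper's proof: the polarisation identity combined with the $\epsilon$-JL property on $y_i+y_j \in \yPlus$ and $y_i-y_j \in \yMinus$, then the parallelogram identity and the unit-norm hypothesis to reach the bound $\epsilon \normTwo{y_i}\normTwo{y_j}$. No gaps to report.
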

\begin{proof}
Let $y_1, y_2 \in Y$. We have that
\begin{align}
\left| \la Sy_1, Sy_2 \ra - \la y_1, y_2 \ra \right| &=  | ( \|S(y_1+y_2)\|^2 - \|S(y_1-y_2)\|^2 ) \\\nonumber
&\quad\quad - ( \|(y_1+y_2)\|^2 - \|(y_1-y_2)\|^2 ) | /4 \\\nonumber
&\leq \epsilon( \|(y_1+y_2)\|^2 + \|(y_1 -y_2)\|^2)/4 \\\nonumber
& = \epsilon( \|y_1\|^2 + \|y_2\|^2)/2 \\\nonumber
& = \epsilon,
\end{align}
where to obtain the inequality, we use that $S$ is an $\epsilon$-JL embedding for $\set{\union{\yPlus}{\yMinus}}$;  the last equality follows from $\|y_1\|_2=\|y_2\|_2=1$.
\end{proof}

    \section{Hashing sketching with \mathInTitle{$m=\mathcal{O}(r)$}}

Our first result shows that if the coherence of the input matrix is sufficiently low, the distribution of $1$-hashing matrices with $m = \mathcal{O}(r)$ is an $(\epsilon,\delta)$-oblivious subspace embedding.

The following expression will be useful later,
\begin{equation}\label{A:mu-1}
\bar{\mu}(\epsilon,\delta):=  \muAOneHashing,
\end{equation}
where $\epsilon, \delta \in (0,1)$,  $r, E>0$ are to be chosen/defined depending on the context, and $C_1, C_2>0$ are problem-independent constants.

\begin{theorem} \label{thm1}
Suppose that $\epsilon,\delta \in (0,1)$, $r \leq d \leq n, m\leq n \in \N$, $E >0$ satisfy 
\begin{align}
&C \leq E \leq \EUpperHashing,\label{eqn::theoremTwoOne}\\[0.5ex]
&m \geq \mLower,\label{eqn::theoremTwoTwo}
\end{align}
where $C>0$ and $C_1, C_2>0$  are problem-independent constants. Then for any matrix $A\in\R^{n\times d}$ with rank $r$ and 
\begin{equation}\label{A:mu}
    \mu(A) \leq  \bar{\mu}(\epsilon,\delta),
\end{equation}
where $\bar{\mu}(\epsilon,\delta)$ is defined in \eqref{A:mu-1}, a randomly generated 1-hashing matrix $S\in\R^{m\times n}$ is an $\epsilon$-subspace embedding for $A$ with probability at least $1-\delta$.
\end{theorem}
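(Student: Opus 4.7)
The plan is a net argument on the column space of $U$, where $A = U\Sigma V^\top$ denotes the compact SVD \eqref{thin-SVD} of $A$. By Lemma~\ref{subspace-embedding-def-2}, it suffices to show that, with probability at least $1-\delta$, one has $\|SUz\|_2^2 \in [1-\epsilon,\,1+\epsilon]$ for every unit vector $z\in\R^r$. The key structural observation is that every $y=Uz$ with $\|z\|_2=1$ automatically satisfies $\|y\|_2=1$ and, by Lemma~\ref{non_uniformity_col_subspace_coherence}, $\nu(y)\leq \mu(A)\leq \bar\mu(\epsilon,\delta)$. This is precisely the regime in which Lemma~\ref{Freksen} yields a $1$-hashing JL embedding for an individual vector.

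Concretely, I would fix a $(1/4)$-net $\mathcal{N}$ of the unit sphere $\mathbb{S}^{r-1}\subset\R^r$ with $|\mathcal{N}|\leq 9^r$ via a standard volume bound, set $Y=\{Uz : z\in\mathcal{N}\}\subset\R^n$, and form the sum/difference set $\yPlus\cup\yMinus$, which has cardinality at most $|Y|^2 \leq e^{4r+O(1)}$ and whose elements also lie in the column space of $U$, hence still satisfy $\nu\leq\mu(A)$ by Lemma~\ref{non_uniformity_col_subspace_coherence}. I would then apply Lemma~\ref{Freksen} to each element of $\yPlus\cup\yMinus$ individually with error $\epsilon'=C_2\,\epsilon$ (with the constant $C_2$ from \eqref{A:mu-1}) and per-vector failure probability $\delta'=\delta/|\yPlus\cup\yMinus|$, so that $\log(1/\delta')=4r+\log(1/\delta)+O(1)$. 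A union bound through Lemma~\ref{point to JL plus} then produces an $\epsilon'$-JL embedding for all of $\yPlus\cup\yMinus$ with probability at least $1-\delta$, and Lemma~\ref{lem::lemma7} upgrades this to a generalised $\epsilon'$-JL embedding on $Y$; equivalently, $|z_1^\top(U^\top S^\top S U - I)z_2|\leq\epsilon'$ for all $z_1,z_2\in\mathcal{N}$. A standard bilinear $(1/4)$-net extension (operator $2$-norm of a symmetric $M$ is at most $(1-2\eta)^{-1}\sup_{x_0,y_0\in\mathcal{N}}x_0^\top My_0$ with $\eta=1/4$) gives $\|U^\top S^\top S U - I\|_2\leq 2\epsilon'$, which, after absorbing the factor $2$ into the definition of $C_2$, is $\leq \epsilon$; this is exactly the subspace embedding condition for $U$.

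The crux is parameter bookkeeping. The intricate form of $\bar\mu(\epsilon,\delta)$ in \eqref{A:mu-1} is precisely what emerges upon substituting $\epsilon'=C_2\epsilon$ and $\log(1/\delta')=4r+\log(1/\delta)$ into the non-uniformity threshold $\bar\nu(\epsilon',\delta')$ of Lemma~\ref{Freksen}: the factor $\sqrt{C_2}$ out front comes from $\sqrt{\epsilon'}=\sqrt{C_2}\sqrt{\epsilon}$, and the replacement $\log(1/\delta)\mapsto 4r+\log(1/\delta)$ matches term by term. Likewise, the $e^{4r}$ appearing in the upper bound on $E$ in \eqref{eqn::theoremTwoOne} tracks $|\yPlus\cup\yMinus|$ through Lemma~\ref{Freksen}'s admissibility constraint $E<2/(\delta'\log(1/\delta'))$, and the lower bound \eqref{eqn::theoremTwoTwo} on $m$ comes from combining $m\geq E(\epsilon')^{-2}\log(1/\delta')$ with these same substitutions. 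No new concentration inequality is required beyond Lemma~\ref{Freksen}; the only delicate step is keeping the absolute constants $C, C_1, C_2$ consistent across the three embedding lemmas and the net-extension argument.
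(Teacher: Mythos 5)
Your proposal is correct and follows the paper's skeleton almost exactly: reduce to the left singular factor $U$ via Lemma~\ref{subspace-embedding-def-2}, use Lemma~\ref{non_uniformity_col_subspace_coherence} so that every vector in the column space (including sums and differences of net points) inherits non-uniformity at most $\mu(A)\leq\bar\mu(\epsilon,\delta)$, apply Lemma~\ref{Freksen} vector-by-vector with rescaled $(\epsilon',\delta')$, union-bound via Lemma~\ref{point to JL plus}, and upgrade to a generalised JL embedding on the net via Lemma~\ref{lem::lemma7}. Where you genuinely diverge is the final net-to-sphere extension: the paper uses its chaining result (Lemma~\ref{approximation-of-net}, built on the geometric-series representation of Lemma~\ref{approximation_unit_ball}) with the specific radius $\gamma=2/(e^2-1)$, whereas you bound the operator norm of the symmetric matrix $U^TS^TSU-I$ by $(1-2\eta)^{-1}$ times its maximum bilinear form over a $(1/4)$-net. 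Your route is shorter and uses a standard inequality (not proved in the paper, but classical), at the cost of a factor $2$ and a net of size $9^r$ rather than $e^{2r}$; the paper's choice of $\gamma$ and $\delta_1=e^{-4r}\delta$ in \eqref{th2:eps} is engineered precisely so that $\bar\nu(\epsilon_1,\delta_1)=\bar\mu(\epsilon,\delta)$ and the hard-coded quantities $e^{4r}$ and $4r+\log(1/\delta)$ in \eqref{eqn::theoremTwoOne}, \eqref{eqn::theoremTwoTwo} and \eqref{A:mu-1} match the proof verbatim. With your parameters the corresponding quantities become roughly $81^r$ and $r\log 81+\log(1/\delta)$, so the stated conditions do not imply your requirements literally (e.g.\ the $m$ lower bound is short by a factor of about $1.1$, and the $\sqrt{C_2}$ and $\log(E/(C_2\epsilon))$ terms shift); this is repairable by re-tuning the unspecified constants $C, C_1, C_2$ (shrink $C_2$ and $C_1$ by fixed factors and check the three implications), which is legitimate since the theorem only asserts existence of problem-independent constants, but it is slightly more than "absorbing a factor of 2" and is worth writing out if you adopt this route.
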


The proof of Theorem \ref{thm1} 
relies on the fact that the coherence of the input matrix gives a bound on the non-uniformity of the entries for all vectors in its column space (Lemma \ref{non_uniformity_col_subspace_coherence}),
adapting standard arguments in \cite{10.1561/0400000060} involving set covers.

\begin{definition}
A $\gamma$-cover of a set $M$ is a subset $N\subseteq M$ with the property that given any point $y \in M$, there exists $w \in N$ such that
 $\|y-w\|_2 \leq \gamma$. 
\end{definition}

Consider a given real  matrix $U\in\R^{n\times r}$ with orthonormal columns, and let
\begin{equation}\label{MU}
M:= \{Uz \in \R^n: z \in \R^r, \|z\|_2=1\}.
\end{equation} 

The next two lemmas show the existence of a $\gamma$-net $N$ for $M$, and connect generalised JL embeddings for $N$ with JL embeddings for $M$.

\begin{lemma} \label{Gamma-cover-existance}

Let $0< \gamma <1$, $U\in\R^{n\times r}$ have orthonormal columns and $M$ be defined in \eqref{MU}. Then there exists a $\gamma$-cover $N$ of $M$ such that $|N| \leq (1 + \frac{2}{\gamma})^r$.
\end{lemma}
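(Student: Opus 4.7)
The plan is to reduce the covering problem on $M$ to the standard covering problem for the Euclidean unit sphere in $\R^r$, and then apply the classical volume argument.

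First, I would observe that because $U\in\R^{n\times r}$ has orthonormal columns, it is an isometry from $\R^r$ into $\R^n$: for any $z,z'\in\R^r$, $\|Uz-Uz'\|_2 = \|U(z-z')\|_2 = \|z-z'\|_2$. Consequently the set $M$ is isometric to the unit sphere $S^{r-1} = \{z\in\R^r : \|z\|_2=1\}$. Therefore it suffices to construct a $\gamma$-cover $N'$ of $S^{r-1}$ (with respect to the Euclidean distance) of cardinality at most $(1+2/\gamma)^r$ and then take $N := \{Uz : z\in N'\}$; the isometry property transfers the covering property to $M$ and preserves cardinality.

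Next I would build $N'$ by the standard maximal-packing/volume argument. Let $N'$ be a maximal $\gamma$-separated subset of $S^{r-1}$, i.e.\ a set such that any two distinct points in $N'$ are at Euclidean distance greater than $\gamma$, and $N'$ is maximal with respect to this property. Maximality guarantees that $N'$ is a $\gamma$-cover of $S^{r-1}$: otherwise one could adjoin any uncovered sphere point and still retain $\gamma$-separation, contradicting maximality. For the cardinality bound, the open Euclidean balls of radius $\gamma/2$ around the points of $N'$ are pairwise disjoint (by the $\gamma$-separation), and all lie inside the ball of radius $1+\gamma/2$ centred at the origin (since their centres are on $S^{r-1}$). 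Comparing $r$-dimensional Lebesgue volumes gives
\begin{equation*}
|N'|\cdot (\gamma/2)^r \leq (1+\gamma/2)^r,
\end{equation*}
so $|N'|\leq \bigl((1+\gamma/2)/(\gamma/2)\bigr)^r = (1+2/\gamma)^r$, as required.

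There is no real obstacle here: the argument is essentially a textbook covering-number estimate for spheres, the only mild care being to phrase it on $\R^r$ (via the isometry $U$) rather than directly in $\R^n$, since the ambient space $\R^n$ in which $M$ sits has much larger dimension and the naive volume argument there would not give the $r$-dependent bound we want.
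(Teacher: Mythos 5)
Your proof is correct and follows essentially the same route as the paper: a maximal $\gamma$-separated subset of the unit sphere in $\R^r$, the standard volume comparison of disjoint radius-$\gamma/2$ balls inside the radius-$(1+\gamma/2)$ ball to get the $(1+2/\gamma)^r$ bound, and then transferring the cover to $M$ via the isometry defined by the orthonormal columns of $U$.
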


\begin{proof}
Let $\tilde{M} = \{z \in \R^r: \|z\|_2=1\}$. Let $\tilde{N} \subseteq \tilde{M}$ be the maximal set such that no two points in $\tilde{N}$ are within distance $\gamma$ from each other. Then it follows that the r-dimensional balls centred at points in $\tilde{N}$ with radius $\gamma/2$ are all disjoint and contained in the r-dimensional ball centred at the origin with radius $(1+\gamma/2)$. Hence
\begin{align}
\frac{\text{Volume of the r-dimensional ball centred at the origin with radius $(1+\gamma/2)$}}{\text{Total volume of the r-dimensional balls centred at points in $\tilde{N}$ with radius $\gamma/2$ }} \\ 
= \frac{1}{|\tilde{N}|} \frac{(1+\frac{\gamma}{2})^{r}}{(\frac{\gamma}{2})^{r}} \geq 1,
\end{align}
which implies $|\tilde{N}| \leq (1+\frac{2}{\gamma})^r$.

Let $N = \{Uz \in \R^n: z \in \tilde{N}\}$. Then $|N| \leq |\tilde{N}| \leq (1+\frac{2}{\gamma})^r$ and we show $N$ is a $\gamma$-cover for $M$. Given $y_M \in M$, there exists $z_M \in \tilde{M}$ such that $y_M = Uz_M$. By definition of $\tilde{N}$, there must be $z_N \in \tilde{N}$ such that $\|z_M - z_N\|_2 \leq \gamma$ as otherwise $\tilde{N}$ would not be maximal. 
Let $y_N = Uz_N \in N$. Since $U$ has orthonormal columns, we have $\|y_M -y_N\|_2 = \|z_M-z_N\|_2 \leq \gamma$.

\end{proof}

\begin{lemma} \label{approximation-of-net}
Let $\epsilon, \gamma \in (0,1), U\in \R^{n\times d}, M \subseteq \R^n$ associated with $U$ be defined in \eqref{MU}. Suppose $N \subseteq M$ is a $\gamma$-cover of $M$ and $S \in \R^{m\times n}$ is a generalised $\epsilon_1$-JL embedding for $N$, where $\epsilon_1 = \epsilonPrimeFactor \epsilon$. Then $S$ is an $\epsilon$-JL embedding for $M$. 
\end{lemma}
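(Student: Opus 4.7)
The plan is a classical iterative net-approximation argument, exploiting the fact that $M$ is the unit sphere inside the column subspace $T := \mathrm{range}(U) \subseteq \R^n$. First, I would show that every $y\in M$ admits a decomposition $y = \sum_{i\geq 0} \alpha_i w_i$ with $w_i \in N$, $\alpha_0 = 1$, and $\alpha_i \leq \gamma^i$. Construct this recursively: set $y_0 := y$; having obtained $y_k \in T$ with $\|y_k\|_2 \leq \gamma^k$ and $y_k \neq 0$, observe that $y_k/\|y_k\|_2$ lies in $M$, so the $\gamma$-cover property of $N$ supplies some $w_k \in N$ with $\|y_k/\|y_k\|_2 - w_k\|_2 \leq \gamma$. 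Set $\alpha_k := \|y_k\|_2$ and $y_{k+1} := y_k - \alpha_k w_k = \alpha_k(y_k/\|y_k\|_2 - w_k)$, which remains in $T$ and satisfies $\|y_{k+1}\|_2 \leq \gamma\|y_k\|_2 \leq \gamma^{k+1}$. Either the recursion terminates (when some $y_k$ vanishes) or the telescoping identity $\sum_{i=0}^{K}\alpha_i w_i = y - y_{K+1}$ with $\|y_{K+1}\|_2 \to 0$ yields $y = \sum_{i\geq 0} \alpha_i w_i$ absolutely; by linearity of $S$, also $Sy = \sum_{i\geq 0} \alpha_i Sw_i$.

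The second step is to expand bilinearly and estimate. Using $\|y\|_2^2 = 1$, one gets
\begin{equation*}
    \|Sy\|_2^2 - \|y\|_2^2 = \sum_{i,j\geq 0} \alpha_i \alpha_j \bracket{\la Sw_i, Sw_j \ra - \la w_i, w_j \ra}.
\end{equation*}
Because every $w_i \in N \subseteq M$ is a unit vector, the generalised $\epsilon_1$-JL hypothesis (applied to diagonal and off-diagonal pairs alike) bounds each inner-product difference in absolute value by $\epsilon_1 \|w_i\|_2 \|w_j\|_2 = \epsilon_1$. Inserting $\alpha_i \leq \gamma^i$ and summing the diagonal part ($\sum_i \alpha_i^2 \leq 1/(1-\gamma^2)$) and the off-diagonal part ($2\sum_{i<j}\alpha_i\alpha_j \leq 2\gamma/[(1-\gamma)(1-\gamma^2)]$) as geometric series gives
\begin{equation*}
    \bigl| \|Sy\|_2^2 - 1 \bigr| \;\leq\; \epsilon_1 \cdot \frac{1+\gamma}{(1-\gamma)(1-\gamma^2)} \;=\; \frac{(1+\gamma)\epsilon}{1+2\gamma-\gamma^2} \;\leq\; \epsilon,
\end{equation*}
where the equality substitutes the statement's choice of $\epsilon_1$ and the final inequality uses $1+\gamma \leq 1+2\gamma-\gamma^2$ (equivalent to $\gamma(1-\gamma) \geq 0$) on $\gamma \in (0,1)$. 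Since $\|y\|_2 = 1$, this is exactly the $\epsilon$-JL condition \eqref{JL} for $y$, and $y \in M$ was arbitrary.

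The only real subtlety is setting up the recursion cleanly: one must check that each residual $y_{k+1}$ stays inside $T$ (automatic, since $T$ is a subspace and $w_k \in M \subseteq T$) and that normalising $y_k$ to unit norm before invoking the cover is precisely what converts the raw $\gamma$-approximation into a multiplicative contraction $\|y_{k+1}\|_2 \leq \gamma \|y_k\|_2$ at every step. Once that is in place, the remainder is bilinear bookkeeping and two geometric series; the specific prefactor in $\epsilon_1$ is engineered so that the telescoped bound lands exactly at $\epsilon$.
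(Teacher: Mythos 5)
Your proposal is correct and follows essentially the same route as the paper: the recursive construction of $y=\sum_i \alpha_i w_i$ with $\alpha_i\leq\gamma^i$ is exactly the paper's auxiliary Lemma \ref{approximation_unit_ball}, and the bilinear expansion controlled by the generalised $\epsilon_1$-JL property plus geometric series is the paper's main argument. The only (harmless) difference is that you bound the diagonal sum by $\sum_i\gamma^{2i}=1/(1-\gamma^2)$ rather than the paper's looser $\sum_i\gamma^{i}$, so your final bound lands at $\frac{(1+\gamma)\epsilon}{1+2\gamma-\gamma^2}\leq\epsilon$ instead of exactly $\epsilon$, which still yields the claimed embedding.
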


To prove Lemma \ref{approximation-of-net}, we need the following Lemma. 

\begin{lemma} \label{approximation_unit_ball}
Let $\gamma \in (0,1)$, $U\in\R^{n\times r}$ having orthonormal columns and $M\subseteq \R^{n}$ associated with $U$ be defined in \eqref{MU}. Let $N$ be a $\gamma$-cover of $M$, $y\in M$. Then for any $k \in \N$, there exists 
 $\alpha_0, \alpha_1, \dots, \alpha_k \in \R$, $y_0, y_1, y_2, \dots, y_k \in N$ such that
\begin{align}
 \|y - \sum_{i=0}^k \alpha_i y_i \|_2 \leq \gamma^{k+1}, \label{A-1-1}\\
 |\alpha_i| \leq \gamma^i, i=0,1,\dots, k. \label{A-1-2}
 \end{align}

\end{lemma}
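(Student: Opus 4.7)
The natural approach is induction on $k$, iteratively peeling off the best cover-approximation of the current residual, rescaled to lie in $M$. The key observation is that at each step the residual $r_k := y - \sum_{i=0}^k \alpha_i y_i$ stays in the column space of $U$, so after rescaling to unit norm it lies in $M$ and can be approximated within $\gamma$ by some element of $N$.

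\textbf{Base case ($k=0$).} Since $y \in M$ and $N$ is a $\gamma$-cover of $M$, pick $y_0 \in N$ with $\|y - y_0\|_2 \le \gamma$ and set $\alpha_0 = 1$. Then $|\alpha_0| = 1 = \gamma^0$ and $\|y - \alpha_0 y_0\|_2 \le \gamma = \gamma^{0+1}$, as required.

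\textbf{Inductive step.} Assume $\alpha_0,\dots,\alpha_k$ and $y_0,\dots,y_k \in N$ satisfy \eqref{A-1-1}--\eqref{A-1-2}. Set $r_k := y - \sum_{i=0}^k \alpha_i y_i$; by the inductive hypothesis $\|r_k\|_2 \le \gamma^{k+1}$. If $r_k = 0$, take $\alpha_{k+1}=0$ and any $y_{k+1}\in N$ and we are done. Otherwise, note that $y = Uz$ for some unit $z$ and each $y_i = Uz_i$ for some unit $z_i$, so $r_k = U\bracket{z - \sum_{i=0}^k \alpha_i z_i}$ lies in the column space of $U$. Because $U$ has orthonormal columns, $\|r_k\|_2$ equals the Euclidean norm of its coefficient vector, so $r_k/\|r_k\|_2 \in M$. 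Applying the $\gamma$-cover property to this vector yields $y_{k+1}\in N$ with $\|r_k/\|r_k\|_2 - y_{k+1}\|_2 \le \gamma$. Setting $\alpha_{k+1} := \|r_k\|_2$ and multiplying through by $\|r_k\|_2$ gives
\[
\|r_k - \alpha_{k+1} y_{k+1}\|_2 \le \gamma \|r_k\|_2 \le \gamma \cdot \gamma^{k+1} = \gamma^{k+2},
\]
while $|\alpha_{k+1}| = \|r_k\|_2 \le \gamma^{k+1}$. This is exactly \eqref{A-1-1}--\eqref{A-1-2} at level $k+1$, completing the induction.

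\textbf{Obstacle.} There is no real obstacle; the only subtlety is justifying that the rescaled residual $r_k/\|r_k\|_2$ belongs to $M$, which requires tracking that every intermediate vector sits in the column space of $U$ and then exploiting orthonormality of the columns of $U$ to preserve norms when passing between $\R^r$ and $\R^n$. With that in hand, the geometric bookkeeping $|\alpha_i|\le \gamma^i$ falls out automatically from choosing $\alpha_{k+1}$ to be the norm of the current residual.
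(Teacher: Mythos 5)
Your proof is correct and follows essentially the same route as the paper's: induction on $k$, rescaling the residual to lie in $M$ via the orthonormality of $U$, applying the cover property, and taking $\alpha_{k+1}$ to be the residual norm. The only difference is your explicit handling of the case $r_k=0$, which the paper's argument glosses over; this is a harmless refinement rather than a different approach.
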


\begin{proof}
We use induction. Let $k=0$. Then by definition of a $\gamma$-cover, there exists $y_0 \in N$ such that $\|y - y_0\| < \gamma$. Letting $\alpha_0=1$, we have covered the $k=0$ case.

Now assume \eqref{A-1-1} and \eqref{A-1-2} are true for $k = K \in \N$. Namely there exists $\alpha_0, \alpha_1, \dots, \alpha_K \in \R$, $y_0, y_1, y_2, \dots, y_K \in N$ such that
\begin{align}
 \|y - \sum_{i=0}^K \alpha_i y_i \|_2 \leq \gamma^{K+1} \\
 |\alpha_i| \leq \gamma^i, i=0,1,\dots, K.
 \end{align}
Because $y, y_0, y_1 \dots, y_K \in N \subseteq M$, there exists $z, z_0, z_1, \dots, z_K \in \R^r$ such that $y=Uz, y_0=Uz_0, y_1=Uz_1, \dots y_K=Uz_K$ with $\|z\| = \|z_0\| = \dots = \|z_K\|=1$. Therefore

\begin{align}
 &\frac{y - \sum_{i=0}^K \alpha_i y_i}{ \|y - \sum_{i=0}^K \alpha_i y_i\|_2} =
 \frac{ U \left( z - \sum_{i=0}^K \alpha_i z_i \right)  }{\|U \left( z - \sum_{i=0}^K \alpha_i z_i \right) \|_2} = 
 U \frac{z - \sum_{i=0}^K \alpha_i z_i}{ \|z - \sum_{i=0}^K \alpha_i z_i\|_2} \in M,
\end{align}
where we have used that the columns of $U$ are orthonormal. 

Since $N$ is a $\gamma$-cover for $M$,  there exists $y_{K+1} \in N$ such that

\begin{align}
 \left\| \frac{y - \sum_{i=0}^K \alpha_i y_i}{ \|y - \sum_{i=0}^K \alpha_i y_i\|_2} - y_{K+1} \right\|_2 \leq \gamma.
\end{align}

Multiplying both sides by $\alpha_{K+1}:=\|y - \sum_{i=0}^K \alpha_i y_i\|_2 \leq \gamma^{K+1}$, we have

\begin{align}
  \|y - \sum_{i=0}^{K+1} \alpha_i y_i \|_2 \leq \gamma^{K+2}, \\
 |\alpha_i| \leq \gamma^i, i=0,1, \dots, K+1.
\end{align}
 
\end{proof}

\begin{proof}[Proof of Lemma \ref{approximation-of-net}]
Let $y \in M$ and $k\in \N$, and consider the approximate representation of $y$ provided in Lemma \ref{approximation_unit_ball}, namely, assume that 
\eqref{A-1-1} and \eqref{A-1-2} hold. Then we have

\begin{align*}
 \|S \sum_{i=0}^k \alpha_i y_i\|_2^2 
		   & = \sum_{i=0}^k \|S\alpha_i y_i\|_2^2 + \sum_{0 \leq i < j \leq k} 2 \langle S\alpha_i y_i, S\alpha_j y_j \rangle \\
		   & = \sum_{i=0}^k \|S\alpha_i y_i\|_2^2 + \sum_{0 \leq i < j \leq k} 2 \langle \alpha_i y_i, \alpha_j y_j \rangle +\\
		   &\hspace*{2.5cm}
		   		+\left[  \sum_{0 \leq i < j \leq k} 2 \langle S\alpha_i y_i, S\alpha_j y_j \rangle - \sum_{0 \leq i < j \leq k} 2 \langle \alpha_i y_i, \alpha_j y_j \rangle\right] \\
		   & \leq (1+\epsilon_1) \sum_{i=0}^k \|\alpha_i y_i\|_2^2 +\sum_{0 \leq i < j \leq k} 2 \langle \alpha_i y_i, \alpha_j y_j \rangle +
 				2 \sum_{0 \leq i <j \leq k} \epsilon_1 \|\alpha_i y_i\|_2 \|\alpha_j y_j\|_2 \\
 		   & = \|\sum_{i=0}^k \alpha_i y_i\|^2_2 + \epsilon_1 \left[
 		   \sum_{i=0}^k \|\alpha_i y_i\|_2^2 + 2 \sum_{0 \leq i <j \leq k}  \|\alpha_i y_i\|_2 \|\alpha_j y_j\|_2
 		   \right],
\end{align*}
where to deduce the inequality, we use that $S$ is a generalised $\epsilon_1$-JL embedding for $N$. Using $\|y_i\|_2=1$ and $|\alpha_i| \leq \gamma^i$, we have
\begin{align}\label{L9:gamma_i}
\frac{1}{\epsilon_1}\left\{\|S \sum_{i=0}^k \alpha_i y_i\|_2^2 - \|\sum_{i=0}^k \alpha_i y_i\|^2_2\right\} &=
    \sum_i^k \|\alpha_i y_i\|_2^2 + 2 \sum_{0 \leq i <j \leq k} \|\alpha_i y_i\|_2 \|\alpha_j y_j\|_2 \\\nonumber
   & \leq \sum_{i=0}^{k} \gamma^i + 2 \sum_{0 \leq i <j \leq k} \gamma^i \gamma^j\\\nonumber
   &\leq \frac{1-\gamma^{k+1}}{1-\gamma}+\frac{2\gamma (1-\gamma^{k-i})(1-\gamma^{2k})}{\left( 1-\gamma \right) \left( 1-\gamma^2 \right)},
\end{align}
where we have used 
\begin{align*}
    \sum_{0 \leq i <j \leq k} \gamma^i \gamma^j = 
\sum_{i=0}^{k-1} \gamma^i \sum_{j=i+1}^{k} \gamma^j &=
\sum_{i=0}^{k-1} \gamma^{2i+1} \sum_{j=0}^{k-i-1} \gamma^j \\
&=
\frac{\gamma(1-\gamma^{k-i})}{1-\gamma} \sum_{i=0}^{k-1} \gamma^{2i} = 
\frac{\gamma (1-\gamma^{k-i})(1-\gamma^{2k})}{ \left( 1-\gamma \right) \left( 1-\gamma^2 \right) }.
\end{align*} 
Letting $k\rightarrow \infty$ in \eqref{L9:gamma_i}, we deduce
\[
\frac{1}{\epsilon_1}\left\{\|S \sum_{i=0}^{\infty} \alpha_i y_i\|_2^2 - \|\sum_{i=0}^{\infty} \alpha_i y_i\|^2_2\right\}\leq 
\frac{1}{1-\gamma}+\frac{2\gamma}{\left( 1-\gamma \right) \left( 1-\gamma^2 \right)}=\frac{1+2\gamma-\gamma^2}{\left( 1-\gamma \right) \left( 1-\gamma^2 \right)},
\]
Letting $k\rightarrow \infty$ in  \eqref{A-1-1} implies 
 $y = \sum_{i=0}^\infty \alpha_i y_i$, and so the above gives
 \[
\|S y\|_2^2 - \|y\|^2_2\leq 
\epsilon_1\frac{1+2\gamma-\gamma^2}{\left( 1-\gamma \right) \left( 1-\gamma^2 \right)}=\epsilon \|y\|_2^2,
\]
where to get the equality, we used
 $\|y\|_2=1$ and the definition of $\epsilon_1$.
The lower bound in the $\epsilon_1$-JL embedding follows similarly.
\end{proof}

We are ready to prove Theorem \ref{thm1}.
\begin{proof}[Proof of Theorem \ref{thm1}]
Let $A\in \R^{n\times d}$ with rank $r$ and satisfying \eqref{A:mu}.
Let  $U\in \R^{n\times r}$ be an SVD factor of $A$ as defined in \eqref{thin-SVD}, which by definition of coherence, implies
\begin{equation}
    \mu(U)=\mu (A)\leq \bar{\mu}(\epsilon,\delta),
    \label{th2:barmu}
\end{equation}
where $\bar{\mu}(\epsilon,\delta)$ is defined in \eqref{A:mu-1}. 
We let $\gamma, \epsilon_1, \delta_1 \in (0,1)$ be defined as
  \begin{equation}
        \gamma=\frac{2}{e^2-1}, \quad C_2 =  \epsilonPrimeFactor,\quad 
        \epsilon_1 = C_2 \epsilon \quad {\text{and}}\quad \delta_1 = e^{-4r}\delta,
    \label{th2:eps}
  \end{equation}
and note that $C_2\in (0,1)$ and 
\begin{equation}
\bar{\nu}(\epsilon_1,\delta_1) = \bar{\mu} (\epsilon,\delta),
\label{th2:numu}
\end{equation}
where $\bar{\nu}(\cdot,\cdot)$ is defined in \eqref{L5:nu}.
Let $M \in \R^n$ be associated to $U$ as in \eqref{MU}
and let $N\subseteq M$ be the $\gamma$-cover of $M$  as guaranteed by Lemma \ref{Gamma-cover-existance}, with $\gamma$ defined in \eqref{th2:eps} which implies that $|N| \leq e^{2r}$.

Let $S \in \R^{m\times n}$ be a randomly generated 1-hashing matrix with $m\geq E \epsilon_1^{-2}\log (1/\delta_1)=E C_2^{-2}\epsilon^{-2}[4r+\log(1/\delta)]$, where to obtain the last equality, we used \eqref{th2:eps}.

To show that the sketching matrix $S$ is an $\epsilon$-subspace embedding for $A$ (with probability at least $1-\delta$),
it is sufficient to show that $S$ is an $\epsilon_1$-generalised JL embedding for $N\subseteq M$ (with probability at least $1-\delta$). To see this, recall \eqref{MU} and Lemma \ref{subspace-embedding-def-2}(ii) which show that $S$ is an $\epsilon$-subspace embedding for $A$ if and only if 
$S$ is an $\epsilon-$JL embedding for $M$. Our sufficiency claim now follows by invoking Lemma \ref{approximation-of-net} for $S$, $N$ and $M$.

We are left with considering in detail the cover set $N = \set{y_1,y_2, \dots, y_{|N|}}$ and the following useful ensuing  sets
\begin{align*}
    &\nPlus = \nPlusExpression \quad
    \text{and}\quad \nMinus = \nMinusExpression, \\\nonumber
    &\nMinusOne = \nMinusOneExpression \quad \text{and}\quad
    \nMinusTwo = \nMinusTwoExpression.
\end{align*}
Now let  $Y := \nPlus \cup \nMinusOne$  and show that 
\begin{equation}
    \nu(y)\leq \bar{\nu}(\epsilon_1,\delta_1) \quad\text{for all}\quad y\in Y.
  \label{th2:nu-barnu}
\end{equation}
 To see this, assume first that $y=y_i+y_j\in \nPlus$, with 
$y_i, y_j \in N\subseteq M$. Thus there exist $z_i, z_j\in R^r$ such that 
$y_i=Uz_i$ and $y_j=Uz_j$, and so $y=U(z_i+z_j)$. Using Lemma \ref{non_uniformity_col_subspace_coherence}, $\nu(y)\leq \mu(U)=\mu(A)$, which together with \eqref{th2:barmu}
and \eqref{th2:numu}, gives \eqref{th2:nu-barnu} for points $y\in\nPlus$;
the proof for $y\in \nMinusOne$ follows similarly. 

Lemma \ref{Freksen} with $(\epsilon,\delta):= (\epsilon_1,\delta_1)$ provides that for any $x\in\R^n$ with $\nu(x) \leq \bar{\nu}(\epsilon_1,\delta_1)$, $S$ is an $\epsilon_1$-JL embedding for $\{x\}$ with probability at least $1-\delta_1$.  This and  \eqref{th2:nu-barnu} imply that the conditions of Lemma \ref{point to JL plus} are satisfied for $Y = \nPlus \cup \nMinusOne$, from which we conclude that 
$S$ is an $\epsilon_1$-JL embedding for $Y$ with probability at least $1-|Y|\delta_1$. Note that 
\[
|Y|\leq |\nPlus|+|\nMinusOne|\leq \frac{1}{2} |N|(|N|+1) +  \frac{1}{2}
 |N|(|N|-1)=|N|^2.
 \]
 This, the definition of $\delta_1$ in \eqref{th2:eps} and $|N|\leq e^{2r}$ imply that $1-|Y|\delta_1\geq 1-\delta$. 
Therefore $S$ is an $\epsilon_1$-JL embedding for $\nPlus \cup \nMinusOne$ with probability at least $1-\delta$.

Finally, Definition \ref{def::JL_embedding} of JL-embeddings implies that the sign of the embedded vector is irrelevant and that $\{0\}$ is always embedded, and so if
$S$ is an $\epsilon_1$-JL embedding for $\nPlus \cup \nMinusOne$, it is also an $\epsilon_1$-JL embedding for $\union{\nPlus}{\nMinus}$.
Lemma \ref{lem::lemma7} now provides us with the desired result that then, 
$S$ is a generalised $\epsilon_1$-JL embedding for $N$. 
\end{proof}

Next we discuss the results in Theorem \ref{thm1}.

\paragraph{Conditions for a well-defined coherence requirement}

While our result guarantees optimal dimensionality reduction for the sketched matrix, using a very sparse 1-hashing matrix for the sketch, it imposes  implicit restrictions on the number $n$ of rows of $A$. Recalling 
\eqref{mu_bound}, we note that condition \eqref{A:mu} is well-defined
when 
\begin{equation}\label{th2:nr-mu}
\sqrt{\frac{r}{n}}\leq \bar{\mu}(\epsilon,\delta).
\end{equation}
Using the definition of $\bar{\mu}(\epsilon,\delta)$ in \eqref{A:mu-1} and
assuming reasonably that $\logDeltaI=\mathcal{O}(r)$, 
we have the lower bound

\begin{equation*}
\bar{\mu}(\epsilon,\delta) \geq C_1\sqrt{C_2}\sqrt{\epsilon} \frac{\min\left\{ \log(E/(C_2\epsilon)), \sqrt{\log E}\right\}}{4r+\log(1/\delta)},
\end{equation*}
and so
\eqref{th2:nr-mu} is satisfied if
\begin{align}
n \geq \frac{r (4r+\log(1/\delta)^2}{C_1^2C_2\epsilon\min\left\{ \log^2(E/(C_2\epsilon)), \log E\right\}} = \mathcal{O} \left(  \frac{r^3}{\epsilon \log^2(\epsilon)} \right).
\end{align}

\paragraph{Comparison with data-independent bounds}
    Existing results show that $m = \Theta \left( r^2\right)$ is both necessary and sufficient in order to secure an oblivious subspace embedding property for 1-hashing matrices with no restriction on the coherence of the input matrix \cite{10.1145/2488608.2488622,Nelson:te, 10.1145/2488608.2488621}. Aside from requiring more projected rows than in Theorem \ref{thm1}, these results implicitly impose $n \geq \mathcal{O}(r^2)$ for the size/rank of data matrix in order to secure  meaningful dimensionality reduction.

    \paragraph{Comparison with data-dependent bounds}
    To the best of our knowledge, the only data-dependent result for hashing matrices is   \cite{Bourgain:2015tc} (see Theorem \ref{Bourgain}). From \eqref{Bourgain-m}, we have that $m\geq c_1 r \min\left\{ \log^2(r/\epsilon), \log^2(m) \right\}\epsilon^{-2}$ and hence $m = \Omega(r \log^2 r)$;  while Theorem \ref{thm1} only needs $m = \mathcal{O}(r)$. However, the coherence requirement on $A$ in Theorem \ref{Bourgain} is weaker than \eqref{A:mu} and so \cite{Bourgain:2015tc}  applies to a wider range of inputs at the expense of a larger value of $m$ required for the sketching matrix.
    
    \paragraph{Summary and look ahead}
Table \ref{tab::m_and_mu_1_hashing} summarises existing results and we see stricter coherence assumptions lead to improved dimensionality reduction properties. In the next section, we investigate relaxing coherence requirements by using hashing matrices with increased column sparsity ($s$-hashing) and coherence reduction transformations. 

    \begin{table}
        \caption{Summary of results for $1$-hashing}
        \label{tab::m_and_mu_1_hashing}
        \centering
\begin{tabular}{|c|c|c|}
\hline
Result                     & \mbox{$\mu$ (coherence of $A$)}                         & \mbox{$m$ (size of sketching $S$)}                        \\ \hline
\cite{10.1145/2488608.2488621} & --                              & $\Theta(r^2)$                \\ \hline
\cite{Bourgain:2015tc}         & $\mathcal{O} \left( \log^{-3/2} (r)\right)$ & $\mathcal{O}\left(r \log^2(r) \right)$ \\ \hline
Theorem \ref{thm1}             & $\mathcal{O} \left( r^{-1}\right)$        & $\mathcal{O}(r)$                       \\ \hline
\end{tabular}
    \end{table}

    \section{Relaxing the coherence requirement using
    \mathInTitle{$s$}-hashing matrices}\label{s-hashing}

This section investigates the embedding properties of  $s$-hashing matrices when $s\geq 1$. 
Indeed, \cite{Bourgain:2015tc} shows that $s$-hashing relaxes their particular coherence requirement by $\sqrt{s}$. 
Theorem \ref{thm::s-hashing} presents a similar result for our particular coherence requirement  \eqref{A:mu} that again guarantees embedding properties for $m=\mathcal{O}(r)$. Then we present a new $s$-hashing variant that allows us to give a general result showing that (any) subspace embedding properties of $1$-hashing matrices immediately translate into similar properties for these $s$-hashing matrices when applied to a larger class of data matrices, with larger coherence.  
A simplified embedding result with $m=\mathcal{O}(r)$ is then deduced for this $s$-hashing variant.
Finally, $s$-hashing or $s$-hashing variant is combined with the randomised Hadamard transform with
Theorem \ref{thm::HRHT} and \ref{thm::HRHT_variant} guaranteeing embedding properties for $m=\mathcal{O}(r)$ given that the data matrix $A$ has $n = \mathO{r^3}$.

Numerical benefits of $s$-hashing (for improved preconditioning) are investigated in later sections; see Figures \ref{fig::1-2-3-inco} and \ref{fig::1-2-3-semi-co} for example.

\subsection{The embedding properties of \mathInTitle{$s$}-hashing matrices}
Our next result shows that using $s$-hashing (Definition \ref{def::sampling_and_hashing}) relaxes the particular coherence requirement in Theorem \ref{thm1} by $\sqrt{s}$. 

\begin{theorem} \label{thm::s-hashing} 
Let $ r \leq d\leq n \in \N^+$. Let  $C_1, C_2, C_3, C_M, C_{\nu}, C_s >0$ be \constantsDescription. Suppose that $\epsilon,\delta \in (0, C_3)$, 
$m,\,s\in \N^+$ and $E>0$ satisfy\footnote{Note that the expressions
of the lower bounds in \eqref{eqn::theoremTwoTwo} and \eqref{s:m_lower}
are identical apart from  the choice of $E$ and the condition $m \geq se$.}
\begin{align}
&1\leq s \leq C_s C_2^{-1} \epsilon^{-1} \fourRplusLog,\label{eqn::theoremThreeOne}\\[0.5ex]
& C_M \leq E \leq \EUpperSHashing,\label{eqn::theoremThreeTwo}\\[0.5ex]
&m \geq \set{\mLower, se}.\label{s:m_lower}
\end{align}
Then for any matrix $A\in \R^{n\times d}$ with rank $r$ and $\mu(A) \leq \sqrt{s}C_{\nu} C_1^{-1} \bar{\mu}(\epsilon,\delta)$, where $\bar{\mu}(\epsilon,\delta)$ is defined in \eqref{A:mu-1}, a randomly generated $s$-hashing matrix $S \in \R^{m\times n}$ is an $\epsilon$-subspace embedding for $A$ with probability at least $1-\delta$.
\end{theorem}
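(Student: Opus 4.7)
The proof strategy mirrors that of Theorem \ref{thm1}, with the essential modification being the use of an $s$-hashing analog of Lemma \ref{Freksen} in place of the original 1-hashing version. The plan is to first establish (or invoke) the following fact: an $s$-hashing matrix $S\in\R^{m\times n}$ with $m \geq E\epsilon^{-2}\log(1/\delta)$ and $m \geq se$ is an $\epsilon$-JL embedding for any fixed vector $x\in\R^n$ with probability at least $1-\delta$, provided $\nu(x) \leq \sqrt{s}\,C_\nu\,C_1^{-1}\bar{\nu}(\epsilon,\delta)$. The $\sqrt{s}$ improvement in the non-uniformity tolerance is the heart of the theorem: intuitively, spreading the mass of each column across $s$ rows (rather than one) averages out the effect of heavy entries, and a Bernstein/Khintchine-type moment calculation on $\|Sx\|_2^2$ yields the $\sqrt{s}$ relaxation. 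The conditions \eqref{eqn::theoremThreeOne}--\eqref{s:m_lower} on $s$, $E$, and $m$ ensure this single-vector JL embedding property holds with the stated probability.

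Given this $s$-hashing JL embedding lemma, the rest of the argument proceeds as in Theorem \ref{thm1}. First, I invoke Lemma \ref{subspace-embedding-def-2}(ii) to reduce to showing $S$ is an $\epsilon$-subspace embedding for the SVD factor $U\in\R^{n\times r}$ of $A$, i.e.\ an $\epsilon$-JL embedding for $M = \{Uz : z\in\R^r,\,\|z\|_2=1\}$. I then pick $\gamma = 2/(e^2-1)$, $\epsilon_1 = \epsilonPrimeFactor \epsilon = C_2\epsilon$, and $\delta_1 = e^{-4r}\delta$, and construct a $\gamma$-cover $N$ of $M$ with $|N|\leq e^{2r}$ via Lemma \ref{Gamma-cover-existance}. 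By Lemma \ref{approximation-of-net} and Lemma \ref{lem::lemma7}, it suffices to show that $S$ is an $\epsilon_1$-JL embedding for $Y := \nPlus \cup \nMinusOne$ with probability at least $1-\delta$.

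Next I bound the non-uniformity of every $y\in Y$. Any such $y$ is of the form $U(z_i\pm z_j)$ for some $z_i,z_j\in\R^r$, so by Lemma \ref{non_uniformity_col_subspace_coherence}, $\nu(y) \leq \mu(U) = \mu(A) \leq \sqrt{s}\,C_\nu\,C_1^{-1}\bar{\mu}(\epsilon,\delta)$. Using the identity $\bar{\nu}(\epsilon_1,\delta_1) = \bar{\mu}(\epsilon,\delta)$ established (analogously to \eqref{th2:numu}) through the definitions \eqref{L5:nu} and \eqref{A:mu-1}, this non-uniformity lies below the threshold required by the $s$-hashing JL embedding lemma with parameters $(\epsilon_1,\delta_1)$. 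Applying Lemma \ref{point to JL plus}, $S$ is an $\epsilon_1$-JL embedding for all of $Y$ with probability at least $1-|Y|\delta_1$. Since $|Y| \leq |N|^2 \leq e^{4r}$ and $\delta_1 = e^{-4r}\delta$, this probability is at least $1-\delta$, completing the proof.

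The main technical obstacle is the single-vector $s$-hashing JL embedding bound in Step 1: one must verify that the non-uniformity threshold indeed relaxes by $\sqrt{s}$ under the stated lower bounds on $m,s,E$, which requires a moment analysis of $\|Sx\|_2^2-\|x\|_2^2$ that tracks how the sparsity parameter $s$ interacts with the $\ell_\infty/\ell_2$ ratio of $x$. The remaining ingredients — the covering argument, union bound over $Y$, and net-to-subspace transfer — are essentially the same as in Theorem \ref{thm1}, with only bookkeeping changes to accommodate the rescaled coherence bound.
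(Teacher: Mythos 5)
Your proposal takes essentially the same route as the paper: the paper invokes exactly the single-vector $s$-hashing JL lemma you describe (Lemma~\ref{Jaga-lemma}, cited from the literature rather than re-derived by a moment calculation) and then repeats the covering-net argument of Theorem~\ref{thm1} with $(\epsilon_1,\delta_1)=(C_2\epsilon,\,e^{-4r}\delta)$, precisely as you outline. The parameter bookkeeping you defer does work out, since \eqref{eqn::theoremThreeOne}--\eqref{s:m_lower} are just that lemma's hypotheses evaluated at $(\epsilon_1,\delta_1)$, so there is no gap.
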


Theorem \ref{thm::s-hashing} parallels Theorem \ref{thm1}; and its proof relies on the following lemma which parallels Lemma \ref{Freksen}.

\begin{lemma}[\cite{NIPS2019_9656}, Theorem 1.5] \label{Jaga-lemma}
Let $C_1, C_3, C_M, C_{\nu}, C_s>0$ be \constantsDescription. Suppose that $\epsilon,\delta \in (0, C_3), m,s\in \N^+, E\in \R$ satisfy 
\begin{align}\nonumber
&1\leq s \leq C_s \epsilon^{-1} \logDeltaI,\\\nonumber
& C_M \leq E < \epsilon^2 s\log^{-1}(1/\delta) e^{C_s (\epsilon s)^{-1}\logDeltaI },\\\nonumber
& m\geq \max \set{ E \epsilon^{-2} \log(1/\delta), se}.
\end{align}
Then for any $x\in \R^n$ with $\nu(x) \leq \sqrt{s}C_{\nu}C_1^{-1}\bar{\nu}(\epsilon,\delta)$, where $\bar{\nu}(\epsilon,\delta)$ is defined in \eqref{L5:nu},
a randomly generated $s$-hashing matrix $S\in \R^{m\times n}$ is an $\epsilon$-JL embedding for $\{x\}$ with probability at least $1-\delta$.
\end{lemma}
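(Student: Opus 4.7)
The plan is to follow the same template that underlies Lemma \ref{Freksen} for $1$-hashing while exploiting the variance reduction that comes from spreading each column of $S$ across $s$ independent rows. Rescale so that $\|x\|_2=1$ and write each column of $S$ as $s$ entries $\pm 1/\sqrt{s}$ placed in uniformly chosen distinct rows of $[m]$. Expanding $\|Sx\|_2^2$ and collecting the diagonal sum, which equals $\|x\|_2^2$ exactly, reduces the lemma to controlling
\[
Z \;:=\; \frac{1}{s}\sum_{(j,k)\neq (j',k')} \sigma_{j,k}\,\sigma_{j',k'}\,x_j\,x_{j'}\,\mathbf{1}[\,h_j(k)=h_{j'}(k')\,],
\]
where $h_j$ picks the $s$ rows used by column $j$ and the $\sigma$'s are independent $\pm 1$ signs. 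Since $\E[Z]=0$, it suffices to show $\P\bracket{|Z|>\epsilon}\leq \delta$.

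The core step is a high-moment estimate. Conditionally on the hashes, $Z$ is a Rademacher chaos of order two in the $\sigma$'s, so Khintchine--Kahane yields
\[
\bracket{\E_\sigma |Z|^p}^{1/p} \;\leq\; C\sqrt{p}\,\|M_h\|_F \;+\; Cp\,\|M_h\|_{\mathrm{op}},
\]
where $M_h$ is the coefficient matrix with entries $s^{-1}x_j x_{j'}$ weighted by the number of collisions between the hash images of columns $j$ and $j'$. Averaging over $h$ and using that each of the $s$ rows per column is uniform on $[m]$, the collision probability is of order $1/m$ and the $s$ independent placements contribute a $1/s$ gain on the $\ell_\infty$ scale of $x$, so $\E_h\|M_h\|_F^2\lesssim 1/(sm)$ and $\E_h\|M_h\|_{\mathrm{op}}\lesssim \nu(x)^2/s$. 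Optimising Markov in $p$ yields a Bernstein-type bound
\[
\P\bracket{|Z|>\epsilon}\;\leq\;2\exp\!\bracket{-c\,\min\set{\epsilon^2 s m,\;\tfrac{\epsilon s}{\nu(x)^2}}}.
\]

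The final step is to match the hypotheses of the lemma to this tail. The condition $m\geq E\epsilon^{-2}\log(1/\delta)$ pushes the sub-Gaussian exponent above $\log(1/\delta)$; the non-uniformity assumption $\nu(x)\leq \sqrt{s}\,C_{\nu}C_1^{-1}\bar\nu(\epsilon,\delta)$ together with the upper bound on $s$ guarantees that the sub-exponential exponent exceeds the Gaussian one, so the minimum is governed by the Gaussian part; and the requirement $m\geq se$ ensures each column's $s$ non-zeros land in distinct rows with constant probability, validating the collision-count estimate above. The main obstacle is the chaos moment calculation itself: a naive Cauchy--Schwarz on $\E_h\|M_h\|_F^p$ loses the $1/s$ gain and destroys the exponential factor $e^{C_s(\epsilon s)^{-1}\log(1/\delta)}$ appearing in the admissible range for $E$. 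Preserving it requires a careful recursive decoupling that tracks leading collision patterns among the $sp$ hashed rows, exactly as in the proof of Theorem 1.5 of \cite{NIPS2019_9656}; everything else is bookkeeping in the constants.
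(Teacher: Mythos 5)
The thesis does not prove this lemma at all: it is imported verbatim as Theorem~1.5 of \cite{NIPS2019_9656} and used as a black box (exactly as Lemma \ref{Freksen} is imported from \cite{10.5555/3327345.3327444}), so there is no in-paper argument to compare your sketch against. Your outline does follow the standard route of the cited work --- split $\normTwo{Sx}^2$ into the exact diagonal $\normTwo{x}^2$ plus an order-two Rademacher chaos $Z$ over hash collisions, bound conditional moments of $Z$ via a Hanson--Wright/Khintchine-type inequality in terms of $\|M_h\|_F$ and $\|M_h\|_{\mathrm{op}}$, average over the hashes, and optimise Markov in $p$ --- and the way you match the three hypotheses to the sub-Gaussian and sub-exponential branches of the tail is the right bookkeeping.

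Two caveats. First, a quantitative slip: your claim $\E_h\|M_h\|_F^2\lesssim 1/(sm)$, and hence the sub-Gaussian exponent $\epsilon^2 s m$, is too strong by a factor of $s$. With $(M_h)_{jj'}=s^{-1}x_jx_{j'}\eta_{jj'}$ and $\E[\eta_{jj'}^2]\approx s^2/m+s^4/m^2$, one gets $\E_h\|M_h\|_F^2\lesssim 1/m$ (up to lower-order terms); the $1/s$ gain from spreading each column over $s$ rows appears only in the operator-norm (sub-exponential) branch, which is precisely why the lemma relaxes the non-uniformity requirement by $\sqrt{s}$ while leaving the requirement on $m$ unchanged relative to the $1$-hashing case. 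A tail of the form $e^{-c\epsilon^2 sm}$ would let $m$ shrink by a factor of $s$, contradicting both the stated hypothesis $m\geq E\epsilon^{-2}\log(1/\delta)$ and the known lower bound for any JL distribution. Second, your sketch explicitly defers the central technical step --- the recursive decoupling over collision patterns among the $sp$ hashed rows that produces the exponential factor in the admissible range of $E$ --- back to the proof in \cite{NIPS2019_9656}. That is the entire difficulty of the result, so as written the proposal is an annotated outline of the external proof rather than an independent argument; given that the thesis itself treats the lemma as a citation, this is acceptable, but it should not be mistaken for a self-contained proof.
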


The proof of Theorem \ref{thm::s-hashing} follows the same argument as Theorem \ref{thm1}, replacing $1$-hashing with $s$-hashing and using Lemma \ref{Jaga-lemma} instead of Lemma \ref{Freksen}. We omit the details.

\subsection{A general embedding property for an \mathInTitle{$s$}-hashing variant}

Note that in both Theorem \ref{Bourgain} and Theorem \ref{thm::s-hashing}, allowing column sparsity of hashing matrices to increase from $1$ to $s$ results in coherence requirements being relaxed by $\sqrt{s}$. We introduce an
$s$-hashing variant that allows us to generalise this result.

\begin{definition} 
\label{def::s-hashing-variant}
 We say $T \in \R^{m \times n}$ is an s-hashing variant matrix if independently for each $j \in [n]$, we sample with replacement $i_1, i_2, \dots, i_s \in [m]$ uniformly at random and add $\pm 1/\sqrt{s}$ to $T_{i_k j}$, where $k = 1, 2, \dots, s$. \footnote{We add $\pm 1/\sqrt{s}$ to $T_{i_k j}$ because we may have $i_k = i_l$ for some $l <k$, as we have sampled with replacement.  }
\end{definition}

Both $s$-hashing and $s$-hashing variant matrices reduce to $1$-hashing matrices when $s=1$. For $s\geq 1$, the $s$-hashing variant has at most $s$ non-zeros per column, while the usual $s$-hashing matrix has precisely $s$ nonzero entries per same column.

The next lemma connects $s$-hashing variant matrices to $1$-hashing matrices. 

\begin{lemma}
\label{decompose_s_hashing_varaint}
An $s$-hashing variant matrix $T\in \R^{m\times n}$ (as  in Definition \ref{def::s-hashing-variant}) could alternatively be generated by calculating $T = \frac{1}{\sqrt{s}} \left[ S^{(1)} + S^{(2)} + \dots + S^{(s)} \right]$, where $S^{(k)} \in \R^{m\times n}$ are independent $1$-hashing matrices for $k = 1, 2, \dots, s$.
\end{lemma}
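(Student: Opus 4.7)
My plan is to establish the result by verifying that the two generation procedures produce matrices with identical joint distributions, exploiting the fact that both constructions are built column-by-column with independence across columns.

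Since the columns of $T$ in Definition \ref{def::s-hashing-variant} are generated independently (the loop over $j \in [n]$ is independent), and the columns of each $S^{(k)}$ are also generated independently, so that the columns of $\sum_k S^{(k)}$ are independent across $j$, it suffices to check that for a single fixed column index $j$, the distribution of the $j$-th column of $T$ matches the distribution of the $j$-th column of $\frac{1}{\sqrt{s}}\sum_{k=1}^s S^{(k)}$. I would unpack both constructions column-wise into their underlying random ingredients: a tuple of row indices in $[m]^s$ and a tuple of signs in $\{\pm 1\}^s$, with the row indices sampled i.i.d.\ uniformly and the signs sampled i.i.d.\ Rademacher (note that the ``$\pm$'' in Definition \ref{def::s-hashing-variant} is to be read as an independent Rademacher sign for each $k$, which is also how the non-zero entries of each $1$-hashing matrix $S^{(k)}$ are defined).

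With this unpacking, the $j$-th column in the direct construction has $i$-th entry $\frac{1}{\sqrt{s}}\sum_{k=1}^s \sigma_k \mathbf{1}_{\{i_k = i\}}$, because each of the $s$ successive increments adds $\pm 1/\sqrt{s}$ to $T_{i_k j}$ and those at the same row simply accumulate. In the summed construction, the $j$-th column of $S^{(k)}$ contributes a single non-zero $\Sigma_k$ in row $I_k$, so the $i$-th entry of the $j$-th column of $\frac{1}{\sqrt{s}}\sum_k S^{(k)}$ is $\frac{1}{\sqrt{s}}\sum_{k=1}^s \Sigma_k \mathbf{1}_{\{I_k = i\}}$. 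The joint law of $(I_1,\ldots,I_s,\Sigma_1,\ldots,\Sigma_s)$ matches that of $(i_1,\ldots,i_s,\sigma_1,\ldots,\sigma_s)$, hence the two column vectors are equal in distribution, and the full matrices therefore share the same distribution.

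There is no serious obstacle: the result is a distributional identity, and the only care needed is to make explicit the (implicit) independent Rademacher signs in Definition \ref{def::s-hashing-variant} and to verify that repeated sampling of the same row index $i_k = i_l$ in the direct construction produces exactly the same accumulation pattern as the corresponding overlap of supports among the independent $S^{(k)}$'s in the summed construction. Once these bookkeeping steps are made, the argument is a one-line identification of the random ingredients on both sides.
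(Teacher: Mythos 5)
Your proposal is correct and follows essentially the same route as the paper: both arguments amount to unpacking the construction into independent (row-index, sign) pairs indexed by column $j$ and repetition $k$, and regrouping them by $k$ so that each group forms an independent scaled $1$-hashing matrix -- the paper phrases this as swapping the two `for' loops, while you phrase it as a column-wise equality in distribution. Your explicit handling of the Rademacher signs and of repeated row indices is a slightly more formal rendering of the same bookkeeping.
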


\begin{proof}
In Definition \ref{def::s-hashing-variant}, an s-hashing variant matrix $T$ is generated by the following procedure:
\begin{algorithmic}
      \For{\texttt{$j = 1, 2, \dots n$}}
        \For{ \texttt{$k = 1, 2, \dots, s$}}
            \State \texttt{Sample $i_k \in [m]$ uniformly at random and add $\pm 1/\sqrt{s}$ to $T_{i_k,j}$. }
        \EndFor
      \EndFor
\end{algorithmic}
Due to the independence of the entries, the 'for' loops in the above routine can be swapped, leading to the equivalent formulation,
\begin{algorithmic}
      \For{\texttt{$k = 1, 2, \dots s$}}
        \For{ \texttt{$j = 1, 2, \dots, n$}}
            \State \texttt{Sample $i_k \in [m]$ uniformly at random and add $\pm 1/\sqrt{s}$ to $T_{i_k,j}$}.
        \EndFor
      \EndFor
\end{algorithmic}
For each $k\leq s$, the 'for' loop over $j$ in the above routine generates an independent random $1$-hashing matrix $S^{(k)}$ and adds $\left( 1/\sqrt{s} \right)S^{(k)}$ to $T$.

\end{proof}

We are ready to state and prove the main result in this section. 
\begin{theorem} \label{1-hashing-and-s-hashing}
Let $s, r \leq d\leq n \in \N^+$, $\epsilon, \delta\in (0, 1)$. Suppose that $m \in \N^+$ is chosen such that the distribution of $1$-hashing matrices $S \in \R^{m\times ns}$ is an $(\epsilon,\delta)$-oblivious subspace embedding for any matrix $B\in \R^{ns\times d}$ with rank $r$ and $\mu(B)\leq \mu$ for some $\mu>0$. Then the distribution of $s$-hashing variant matrices $T\in\R^{m\times n}$ is an $(\epsilon, \delta)$-oblivious subspace embedding for any matrix $A\in \R^{n\times d}$ with rank $r$ and  $\mu(A) \leq \mu\sqrt{s}$.

\end{theorem}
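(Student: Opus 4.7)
The plan is to reduce the $s$-hashing variant case directly to the $1$-hashing case by means of the decomposition in Lemma \ref{decompose_s_hashing_varaint}. Specifically, draw $s$ independent $1$-hashing matrices $S^{(1)},\dots,S^{(s)}\in\R^{m\times n}$, set $T=\frac{1}{\sqrt{s}}\bracket{S^{(1)}+\dots+S^{(s)}}$ (so $T$ is a random $s$-hashing variant matrix), and introduce the horizontal concatenation $S\deq\bigl[S^{(1)}\ S^{(2)}\ \dots\ S^{(s)}\bigr]\in\R^{m\times ns}$ together with the vertical stacking
\[
B\deq \frac{1}{\sqrt{s}}\begin{pmatrix}A\\ A\\ \vdots\\ A\end{pmatrix}\in\R^{ns\times d}.
\]
A direct block computation gives the key identity $TA=SB$.

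Next I would verify three structural facts. First, because each $S^{(k)}$ has independent columns with one uniformly chosen signed nonzero, their horizontal concatenation $S$ is itself distributed as a $1$-hashing matrix in $\R^{m\times ns}$, so the hypothesised oblivious embedding property applies to $S$. Second, writing the compact SVD of $A$ as $A=U\Sigma V^T$ yields $B=\tilde U\Sigma V^T$ with $\tilde U=\frac{1}{\sqrt{s}}\bigl(U^T,U^T,\dots,U^T\bigr)^T\in\R^{ns\times r}$, and a quick check shows $\tilde U^T\tilde U=\tfrac{1}{s}\cdot s\cdot U^TU=I_r$; hence $\rank(B)=r$ and each row of $\tilde U$ is $\tfrac{1}{\sqrt{s}}U_i$ for some $i\in[n]$, giving $\mu(B)=\mu(A)/\sqrt{s}$. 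Third, the replication structure yields $\|By\|_2^2=\frac{1}{s}\sum_{k=1}^s\|Ay\|_2^2=\|Ay\|_2^2$ for all $y\in\R^d$.

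Combining these, if $\mu(A)\leq \mu\sqrt{s}$, then $B$ has rank $r$ and $\mu(B)\leq\mu$, so the hypothesis guarantees that with probability at least $1-\delta$, $S$ is an $\epsilon$-subspace embedding for $B$, i.e.\
\[
(1-\epsilon)\|By\|_2^2\leq \|SBy\|_2^2\leq (1+\epsilon)\|By\|_2^2\quad\text{for all }y\in\R^d.
\]
Using $TA=SB$ on the middle term and $\|By\|_2=\|Ay\|_2$ on the outer terms converts this exactly into the $\epsilon$-subspace embedding inequality for $T$ and $A$, finishing the proof.

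The only subtle step, and the one I would be careful to justify, is the distributional claim that the concatenation $S$ is genuinely a $1$-hashing matrix on $\R^{m\times ns}$; this relies on the independence of the $S^{(k)}$'s and the fact that a $1$-hashing distribution is defined column-wise with i.i.d.\ columns, so concatenating blocks of i.i.d.\ columns preserves the distribution. Everything else is essentially block algebra and the orthonormality computation for $\tilde U$, neither of which I expect to present any real difficulty.
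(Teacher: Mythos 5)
Your proposal is correct and follows essentially the same route as the paper's proof: decompose $T$ via Lemma \ref{decompose_s_hashing_varaint}, concatenate the $1$-hashing blocks into $S\in\R^{m\times ns}$, stack $s$ scaled copies of the input (so the coherence drops by $\sqrt{s}$), and transfer the embedding through the block identity. The only cosmetic difference is that you stack $A$ itself (so $SB=TA$ and the hypothesis applies verbatim to an $ns\times d$ matrix), whereas the paper stacks the SVD factor $U$ (so $SW=TU$) and invokes Lemma \ref{subspace-embedding-def-2} at the end — both are fine.
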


\begin{proof}
Applying Lemma \ref{decompose_s_hashing_varaint}, we let 
\begin{equation}
T = \frac{1}{\sqrt{s}} \left[ S^{(1)} + S^{(2)} + \ldots + S^{(s)} \right]
\end{equation}
be a randomly generated $s$-hashing variant matrix where $S^{(k)} \in \R^{m\times n}$ are independent $1$-hashing matrices, $k\in \{1,\ldots, s\}$. Let $A\in \R^{n\times d}$ with rank $r$ and  with $\mu(A)\leq \mu\sqrt{s}$; let
$U \in \R^{n \times r}$ be an SVD-factor of $A$ as defined in \eqref{thin-SVD}. Let 
\begin{equation}
W = \frac{1}{\sqrt{s}} \begin{pmatrix} U\\ \vdots \\ U \end{pmatrix} \in \R^{ns \times r}.
\end{equation}
As $U$ has orthonormal columns, the matrix $W$ also has orthonormal columns and hence the coherence of $W$ coincides with the largest Euclidean norm of its rows
	\begin{equation}
	\mu(W) = \frac{1}{\sqrt{s}} \mu(U)=\frac{1}{\sqrt{s}} \mu(A) \leq \mu.
	\end{equation}
Let $S = \begin{pmatrix} S^{(1)} \hdots  S^{(s)} \end{pmatrix} \in \R^{m \times ns}$.
We note that the $j$-th column of $S$ is generated by sampling $i \in [m]$ and setting $S_{ij} = \pm 1$. Moreover, as $S^{(k)}$,   $k\in \{1,\ldots, s\}$, are independent, the sampled entries are independent. Therefore, $S$ is distributed as a $1$-hashing matrix. Furthermore, due to our assumption on the distribution of $1$-hashing matrices, 
$m$ is chosen such that  $S \in \R^{m \times ns}$ is an $(\epsilon, \delta)$-oblivious subspace embedding for $(ns)\times r$ matrices of coherence at most $\mu$. Applying this to input matrix $W$, we have that 
 with probability at least $1-\delta$, 
	\begin{equation}\label{s-hash:SW}
	(1-\epsilon)\|z\|^2_2=	(1-\epsilon)\|Wz\|^2_2 \leq \|SWz\|^2_2 \leq (1+\epsilon)\|Wz\|^2_2=	(1+\epsilon)\|z\|^2_2,
	\end{equation}
	for all $z\in \R^r$, where in the equality signs, we used that  $W$ has orthonormal columns.
On the other hand, we have that 
\begin{equation*}
SW = \frac{1}{\sqrt{s}} \begin{pmatrix} S^{(1)} \hdots  S^{(s)} \end{pmatrix}  \begin{pmatrix} U\\ \vdots \\ U \end{pmatrix}  
   = \frac{1}{\sqrt{s}} \left[ S^{(1)} U + S^{(2)} U + \dots + S^{(s)} U \right]
   = TU. 
\end{equation*}
This and \eqref{s-hash:SW} provide that, with probability 
at least $1-\delta$, 
	\begin{equation}
	(1-\epsilon)\|z\|^2_2 \leq \|TUz\|^2_2 \leq (1+\epsilon)\|z\|^2_2,
	\end{equation}
	which implies that $T$ is an $\epsilon$-subspace embedding for $A$ by Lemma \ref{subspace-embedding-def-2}.
\end{proof}

 Theorem \ref{thm1} and Theorem \ref{1-hashing-and-s-hashing} imply an $s$-hashing variant version of Theorem \ref{thm::s-hashing}.

\begin{theorem}\label{thm::s-hashing-variant}
Suppose that $\epsilon,\delta \in (0,1)$, $s, r \leq d \leq n, m\leq n \in \N^+$, $E >0$ satisfy \eqref{eqn::theoremTwoOne} and \eqref{eqn::theoremTwoTwo}. Then for any matrix $A\in\R^{n\times d}$ with rank $r$ and 
$\mu(A) \leq \bar{\mu}(\epsilon,\delta)\sqrt{s}$, \whereMuBarIsDefined,
a randomly generated $s$-hashing variant matrix $S\in\R^{m\times n}$ is an $\epsilon$-subspace embedding for $A$ with probability at least $1-\delta$.
\end{theorem}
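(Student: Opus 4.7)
The plan is to obtain this as an immediate corollary of combining Theorem \ref{thm1} with Theorem \ref{1-hashing-and-s-hashing}, essentially by ``padding'' the row dimension from $n$ to $ns$ and then invoking the decomposition of $s$-hashing variant matrices into sums of independent $1$-hashing matrices.

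First I would verify that Theorem \ref{thm1} can be re-applied with the input row-dimension $ns$ in place of $n$. The point is that neither the lower bound \eqref{eqn::theoremTwoTwo} on $m$ nor the coherence bound $\bar{\mu}(\epsilon,\delta)$ defined in \eqref{A:mu-1} depends on the row dimension of the input matrix: both are expressed purely in terms of $\epsilon, \delta, r, E$ and problem-independent constants. The only compatibility condition to check is $m \leq ns$, which is automatic since we have assumed $m \leq n$ and $s \geq 1$. Hence Theorem \ref{thm1} yields that the distribution of $1$-hashing matrices in $\R^{m \times ns}$ is an $(\epsilon, \delta)$-oblivious subspace embedding for any matrix $B \in \R^{ns \times d}$ of rank $r$ with $\mu(B) \leq \bar{\mu}(\epsilon, \delta)$.

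Next I would apply Theorem \ref{1-hashing-and-s-hashing} with $\mu := \bar{\mu}(\epsilon, \delta)$. The hypothesis of that theorem, namely that $m$ is large enough for $1$-hashing matrices in $\R^{m \times ns}$ to give an $(\epsilon, \delta)$-oblivious subspace embedding for rank-$r$ matrices with coherence at most $\mu$, is exactly what was established in the previous step. The conclusion of Theorem \ref{1-hashing-and-s-hashing} then states that the distribution of $s$-hashing variant matrices $S \in \R^{m \times n}$ is an $(\epsilon, \delta)$-oblivious subspace embedding for any $A \in \R^{n \times d}$ of rank $r$ satisfying $\mu(A) \leq \mu \sqrt{s} = \bar{\mu}(\epsilon, \delta)\sqrt{s}$. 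This is precisely the claim.

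I do not expect any real obstacle here: the nontrivial work was already done, once in proving Theorem \ref{thm1} and once in establishing the decomposition-based transfer Lemma \ref{decompose_s_hashing_varaint} and Theorem \ref{1-hashing-and-s-hashing}. The only thing to be careful about is to note explicitly that $\bar{\mu}(\epsilon,\delta)$ and the lower bound on $m$ are invariant under the change of row dimension from $n$ to $ns$, so that Theorem \ref{thm1} may indeed be invoked on $(ns) \times d$ inputs.
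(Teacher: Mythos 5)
Your proposal is correct and is essentially identical to the paper's own proof: both apply Theorem \ref{thm1} to $(ns)\times d$ inputs, noting that the bound on $m$ and the expression for $\bar{\mu}(\epsilon,\delta)$ are invariant under the change of row dimension, and then invoke Theorem \ref{1-hashing-and-s-hashing} with $\mu = \bar{\mu}(\epsilon,\delta)$. Your explicit check that $m \leq ns$ is a small additional detail the paper leaves implicit, but otherwise the arguments coincide.
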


\begin{proof}
Theorem \ref{thm1} implies that the distribution of $1$-hashing matrices $S\in \R^{m\times ns}$ is an $(\epsilon,\delta)$-oblivious subspace embedding
for any matrix $B \in \R^{ns \times d}$ with rank $r$ and $\mu(B) \leq \bar{\mu}(\epsilon,\delta)$. We also note that this result is invariant to the number of rows in $B$ (as long as the column size of $S$ matches the row count of $B$), and so the expressions for $m$, $\bar{\mu}(\epsilon,\delta)$ and the constants therein remain unchanged. 
 
 Theorem \ref{1-hashing-and-s-hashing} then provides that the distribution of $s$-hashing variant matrices $S \in \R^{m\times n}$ is an $(\epsilon,\delta)$-oblivious subspace embedding for any matrix $A \in \R^{n\times d}$ with rank $r$ and $\mu(A) \leq \bar{\mu}(\epsilon,\delta) \sqrt{s}$; the desired result follows. 
\end{proof}

Theorem \ref{thm::s-hashing} and Theorem \ref{thm::s-hashing-variant} provide similar results, and we find that the latter provides simpler constant expressions (such as for $E$).

\subsection{The Hashed-Randomised-Hadamard-Transform sketching}

Here we consider the Randomised-Hadamard-Transform \cite{10.1145/1132516.1132597}, to be applied to the input matrix $A$ before sketching, as
another approach that allows reducing the coherence requirements under which  good subspace embedding properties can be guaranteed. It is common to use the Subsampled-RHT (SHRT) \cite{10.1145/1132516.1132597}, but the size of the sketch needs to be at least 
$\mathcal{O}(r\log r)$; this prompts us to consider using hashing instead of subsampling in this context (as well), and obtain an optimal order sketching bound. Figure \ref{fig::1-2-3-inco} illustrates numerically the benefit of HRHT sketching for preconditioning compared to SRHT.

\begin{definition}\label{def::HRHT}
A Hashed-Randomised-Hadamard-Transform (HRHT) is an $m\times n$ matrix of the form $S =  S_h HD$ with $m\leq n$, where
\begin{itemize}[topsep=0pt,itemsep=-1ex,partopsep=1ex,parsep=1ex]
    \item $D$ is a random $n \times n$ diagonal matrix with $\pm 1$ independent entries.
    \item $H$ is an $n\times n$ Walsh-Hadamard matrix defined by
        \begin{equation}
            H_{ij} = n^{-1/2}(-1)^{\la (i-1)_2, (j-1)_2\ra},
        \end{equation}
        where $(i-1)_2$, $(j-1)_2$ are binary representation vectors of the numbers $(i-1), (j-1)$ respectively\footnote{For example, $(3)_2 = (1,1)$.}.
    \item $S_h$ is a random $m\times n$ $s$-hashing or $s$-hashing variant matrix, independent of $D$.
\end{itemize}
\end{definition}

Our next results show 
that if the input matrix is sufficiently over-determined, the distribution of $HRHT$ matrices with optimal sketching size and either choice of $S_h$, is an $(\epsilon, \delta)$-oblivious subspace embedding.

\begin{theorem}[$s$-hashing version] \label{thm::HRHT}
$ r \leq d\leq n \in \N^+$. \theoremThreeFirstSentence \eqref{eqn::theoremThreeOne}, \eqref{eqn::theoremThreeTwo} and \eqref{s:m_lower}. Let $\delta_1 \in (0,1)$ and suppose further that
\begin{equation}
    n \geq \nLower \label{eqn::theoremSixN},
\end{equation}
\whereMuBarIsDefined. Then for any matrix $A \in \R^{n \times d}$ with rank $r$, an $HRHT$ matrix $S \in \R^{m\times n}$ with an $s$-hashing matrix $S_h$, is an $\epsilon$-subspace embedding for $A$ with probability at least $(1-\delta)(1-\delta_1)$. 

\end{theorem}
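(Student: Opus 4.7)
The plan is to decompose the HRHT sketch $S = S_h H D$ into two stages and analyse them separately: the randomised Hadamard stage $HD$ whose purpose is to push the coherence of $A$ below the threshold required by Theorem \ref{thm::s-hashing}, followed by the $s$-hashing stage $S_h$ which supplies the actual subspace embedding for the coherence-reduced matrix $HDA$. Since $H$ and $D$ are both orthogonal, multiplication by $HD$ preserves $2$-norms, so once $S_h$ is shown to be an $\epsilon$-subspace embedding for $HDA$ it will immediately be an $\epsilon$-subspace embedding for $A$ via the identity $\|S_h HD A x\|_2$ compared to $\|HDAx\|_2 = \|Ax\|_2$. Let $U \in \R^{n\times r}$ be the left singular factor of $A$ from \eqref{thin-SVD}, so that $\mu(HDA) = \mu(HDU)$ since $HD$ is orthogonal and the singular vectors of $HDA$ are obtained from those of $A$ by left-multiplication by $HD$.

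First I would invoke (a standard form of) the coherence-reduction property of the randomised Hadamard transform: for $U$ with orthonormal columns, with probability at least $1 - \delta_1$ over the random signs in $D$, one has the row-norm bound
\begin{equation*}
\mu(HDU) \leq \frac{1}{\sqrt{n}}\bigl( \muLowerNoN \bigr).
\end{equation*}
This follows from a Hoeffding/union bound argument applied to $\|(HDU)_i\|_2^2 = \|D H_i^T \text{(acting as)} U\|_2^2$ by writing each row of $HDU$ as a sum of $n$ bounded independent random vectors indexed by the sign variables; the $\sqrt{r}$ term is the expected row norm ($U$ has orthonormal columns so row norms average to $\sqrt{r/n}$) and the $\sqrt{8\log(n/\delta_1)}$ term comes from a union bound over the $n$ rows after a scalar Hoeffding/Bernstein step.

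Second, I would use assumption \eqref{eqn::theoremSixN}, which rearranges exactly to
\begin{equation*}
\frac{1}{\sqrt{n}}\bigl(\muLowerNoN\bigr) \leq \sqrt{s}\, C_{\nu} C_1^{-1} \bar{\mu}(\epsilon,\delta),
\end{equation*}
so that on the $(1-\delta_1)$-probability event above, $HDA$ has rank $r$ (same as $A$, because $HD$ is orthogonal) and coherence at most $\sqrt{s} C_\nu C_1^{-1}\bar{\mu}(\epsilon,\delta)$. Theorem \ref{thm::s-hashing}, whose hypotheses \eqref{eqn::theoremThreeOne}, \eqref{eqn::theoremThreeTwo} and \eqref{s:m_lower} are precisely those we have assumed, then guarantees that, conditional on $HD$, the $s$-hashing matrix $S_h$ (which is independent of $HD$ by Definition \ref{def::HRHT}) is an $\epsilon$-subspace embedding for the fixed matrix $HDA$ with (conditional) probability at least $1 - \delta$.

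Finally I would combine the two events. Let $\mathcal{E}_1$ be the event that $\mu(HDA) \leq \sqrt{s}C_\nu C_1^{-1}\bar{\mu}(\epsilon,\delta)$ and $\mathcal{E}_2$ the event that $S_h$ is an $\epsilon$-subspace embedding for $HDA$. By independence of $S_h$ from $(H,D)$, $\P(\mathcal{E}_1 \cap \mathcal{E}_2) \geq (1-\delta_1)(1-\delta)$. On this intersection, for every $x \in \R^d$,
\begin{equation*}
(1-\epsilon)\|Ax\|_2^2 = (1-\epsilon)\|HDAx\|_2^2 \leq \|S_h HD A x\|_2^2 \leq (1+\epsilon)\|HDAx\|_2^2 = (1+\epsilon)\|Ax\|_2^2,
\end{equation*}
so $S = S_h H D$ is an $\epsilon$-subspace embedding for $A$ as required. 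The only nontrivial piece is the coherence-reduction bound for $HDU$; the rest is a direct assembly of Theorem \ref{thm::s-hashing} with orthogonal invariance and a product-of-independent-events probability estimate.
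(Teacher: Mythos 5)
Your proposal is correct and follows essentially the same route as the paper's proof: reduce to showing $S_h$ embeds $HDA$ via orthogonal invariance of $HD$, note $\mu(HDA)=\mu(HDU)$, bound this coherence by the randomised Hadamard transform result (the paper's Lemma \ref{thm::Tropp_HD}), observe that condition \eqref{eqn::theoremSixN} puts it below the threshold of Theorem \ref{thm::s-hashing}, and multiply the two probabilities using the independence of $S_h$ from $HD$. The only cosmetic difference is that you sketch a Hoeffding/union-bound justification of the coherence-reduction step, whereas the paper simply cites it from Tropp.
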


\begin{theorem}[$s$-hashing variant distribution] \label{thm::HRHT_variant}
\theoremFiveFirstSentence \eqref{eqn::theoremTwoOne} and \eqref{eqn::theoremTwoTwo}. Let $\delta_1 \in (0,1)$ and suppose further that 
\begin{equation}
    n \geq \nLowerSVariant \label{eqn::theoremSevenN},
\end{equation}
\whereMuBarIsDefined.
Then for any matrix $A \in \R^{n \times d}$ with rank $r$, an $HRHT$ matrix $S \in \R^{m\times n}$ with an $s$-hashing variant matrix  $S_h$,  is an $\epsilon$-subspace embedding for $A$ with probability at least $(1-\delta)(1-\delta_1)$. 
\end{theorem}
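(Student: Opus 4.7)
The plan is to decompose the argument into two essentially independent pieces that mirror the two components of the HRHT sketch $S = S_h H D$: first, showing that the randomised Hadamard pre-multiplication $HD$ drives the coherence of $A$ below the threshold needed by Theorem \ref{thm::s-hashing-variant}; and second, conditionally applying Theorem \ref{thm::s-hashing-variant} to the pre-conditioned matrix $HDA$ using the $s$-hashing variant matrix $S_h$. The final probability is obtained from independence of $D$ and $S_h$.

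First, note that $HD$ is an orthogonal matrix, so $(HD)^T(HD) = I_n$ and in particular $\|HDAx\|_2 = \|Ax\|_2$ for all $x\in\R^d$. Consequently, $S = S_h HD$ is an $\epsilon$-subspace embedding for $A$ if and only if $S_h$ is an $\epsilon$-subspace embedding for $HDA$. Moreover, $HDA$ has the same rank $r$ as $A$, and its left singular matrix is $HDU$ where $U$ is from the compact SVD \eqref{thin-SVD} of $A$.

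The first key step, and the main obstacle, is to show that with probability at least $1-\delta_1$ over the random signs in $D$,
\[
\mu(HDA)=\max_{i\in[n]}\|(HDU)_i\|_2 \leq \frac{\muLowerNoN}{\sqrt n} = \muLower.
\]
This is a standard coherence-reduction estimate for the randomised Hadamard transform: each row of $HDU$ is of the form $(HD)_i U$, where $(HD)_i$ has entries $\pm n^{-1/2}$ with independent signs, and a Hoeffding/Khintchine bound together with a union bound over the $n$ rows yields the claimed deviation $\sqrt{r/n}+\sqrt{8\log(n/\delta_1)/n}$. I would cite (or reproduce briefly) the argument from Tropp \cite{Tropp:wr} or Ailon--Chazelle \cite{10.1145/1132516.1132597}. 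The hypothesis \eqref{eqn::theoremSevenN} on $n$ is exactly what is needed to translate this into $\mu(HDA) \leq \sqrt{s}\,\bar{\mu}(\epsilon,\delta)$; indeed, rearranging \eqref{eqn::theoremSevenN} gives $\muLowerNoN/\sqrt n \leq \sqrt{s}\,\bar{\mu}(\epsilon,\delta)$.

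Next, condition on the event $\mathcal E_1 = \{\mu(HDA) \leq \sqrt{s}\,\bar{\mu}(\epsilon,\delta)\}$, which by the previous step holds with probability at least $1-\delta_1$. Because $S_h$ is drawn independently of $D$, conditional on $\mathcal E_1$ the matrix $HDA$ is a fixed (but data-dependent) matrix of rank $r$ with coherence bounded by $\sqrt{s}\,\bar{\mu}(\epsilon,\delta)$. The assumptions \eqref{eqn::theoremTwoOne} and \eqref{eqn::theoremTwoTwo} on $E$ and $m$ are precisely those of Theorem \ref{thm::s-hashing-variant}; applying that theorem to the input matrix $HDA$ yields that the $s$-hashing variant matrix $S_h$ is an $\epsilon$-subspace embedding for $HDA$ with probability at least $1-\delta$.

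Finally, combining the two events by independence gives that $S_h$ is an $\epsilon$-subspace embedding for $HDA$ with probability at least $(1-\delta)(1-\delta_1)$, which by the isometry observation above is equivalent to $S = S_h HD$ being an $\epsilon$-subspace embedding for $A$ with the same probability. The proof of Theorem \ref{thm::HRHT} proceeds identically, substituting Theorem \ref{thm::s-hashing} for Theorem \ref{thm::s-hashing-variant} and using the slightly different constants arising there.
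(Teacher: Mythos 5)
Your proposal is correct and follows essentially the same route as the paper's proof: it uses the orthogonality of $HD$ to reduce the claim to $S_h$ embedding $HDA$, invokes the randomised Hadamard coherence bound (Lemma \ref{thm::Tropp_HD}) together with condition \eqref{eqn::theoremSevenN} to guarantee $\mu(HDA)\leq \sqrt{s}\,\bar{\mu}(\epsilon,\delta)$ with probability $1-\delta_1$, and then conditionally applies Theorem \ref{thm::s-hashing-variant}, multiplying the probabilities via independence of $S_h$ and $D$ exactly as the paper does with its chain of events $B_1,\dots,B_5$.
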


The proof of Theorem \ref{thm::HRHT} and Theorem \ref{thm::HRHT_variant} relies on the analysis in \cite{Tropp:wr} of Randomised-Hadamard-Transforms, which are shown to reduce the coherence of any given matrix with high probability.

\begin{lemma} \cite{Tropp:wr} \label{thm::Tropp_HD}
Let $r\leq n \in \N^+$ and $U \in \R^{n \times r}$ have orthonormal columns. Suppose that $H, D$ are defined in Definition \ref{def::HRHT} and $\delta_1 \in (0,1)$. Then $$\mu(HDU) \leq \sqrt{\frac{r}{n}} + \sqrt{\frac{8\log(n/\delta_1)}{n}} $$ with probability at least $1-\delta_1$.
\end{lemma}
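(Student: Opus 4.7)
The plan is to reduce the coherence bound to controlling the $i$-th row norm $Y_i := \|(HDU)_{i,:}\|_2$ for each $i\in [n]$ and then union-bound over $i$. Since $H$ is Walsh--Hadamard and $D$ is diagonal $\pm 1$, the product $HD$ is orthogonal, so $HDU$ has orthonormal columns and, by \autoref{def::coherence}, its coherence equals $\max_i Y_i$. Fixing $i$, write $(HDU)_{i,:}^T = U^T D h_i = U^T \mathrm{diag}(h_i)\, d$, where $h_i := H^T e_i$ has all entries of modulus $1/\sqrt{n}$ and $d \in \{\pm 1\}^n$ collects the diagonal signs of $D$. Setting $B_i := U^T \mathrm{diag}(h_i) \in \R^{r\times n}$, we have $Y_i = \|B_i d\|_2$, a convex function of $d$.

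Two key quantitative ingredients follow from a single identity: $B_i B_i^T = U^T \mathrm{diag}(h_i)^2 U = n^{-1} U^T U = n^{-1} I_r$. First, this yields $\E[Y_i^2] = \mathrm{tr}(B_i^T B_i) = r/n$ and hence $\E[Y_i] \leq \sqrt{r/n}$ by Jensen. Second, it shows $\|B_i\|_{\mathrm{op}} = 1/\sqrt{n}$, so the map $d \mapsto \|B_i d\|_2$ is $(1/\sqrt{n})$-Lipschitz in the Euclidean norm. I would then invoke Talagrand's concentration inequality for convex Lipschitz functions of Rademacher variables, which gives a Gaussian tail $\P\bracket{Y_i \geq \E[Y_i] + t} \leq 2 e^{-t^2 n/8}$. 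Choosing $t := \sqrt{8\log(n/\delta_1)/n}$ makes this probability at most $\delta_1/n$, and a union bound over $i\in [n]$ yields $\max_i Y_i \leq \sqrt{r/n} + \sqrt{8\log(n/\delta_1)/n}$ with probability at least $1-\delta_1$, which is exactly the claimed bound.

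The main obstacle is obtaining the additive deviation $\sqrt{\log(n/\delta_1)/n}$ (independent of $r$) rather than the weaker $\sqrt{r\log(n/\delta_1)/n}$ that a naive McDiarmid bounded-differences estimate would produce: the latter sees only the coordinatewise flip sizes $\|B_i e_j\|_2$ and loses the Euclidean structure encoded in $\|B_i\|_{\mathrm{op}}=1/\sqrt{n}$. Matching the precise constant $8$ in the exponent also requires care with which version of the convex concentration bound one uses. An alternative route leading to the same scaling is to apply a Hanson--Wright-type bound to the quadratic form $Y_i^2 = d^T A_i d$ with $A_i := \mathrm{diag}(h_i) UU^T \mathrm{diag}(h_i)$; one checks $\E[Y_i^2] = r/n$, $\|A_i\|_F^2 = r/n^2$, and $\|A_i\|_{\mathrm{op}} = 1/n$, so that the subexponential regime controls $Y_i^2 - r/n$ by $O(\log(n/\delta_1)/n)$, and taking square roots recovers the stated inequality.
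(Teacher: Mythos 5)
The paper does not actually prove this lemma --- it is quoted directly from Tropp \cite{Tropp:wr} --- and your argument is essentially a faithful reconstruction of Tropp's original proof: reduce the coherence to the maximum row norm of $HDU$ (legitimate since $HD$ is orthogonal, so $HDU$ has orthonormal columns), view each row norm $Y_i$ as a convex $(1/\sqrt{n})$-Lipschitz function of the Rademacher signs with $\E Y_i \leq \sqrt{r/n}$, apply the concentration inequality for convex Lipschitz functions of independent signs, and union bound over the $n$ rows. The only caveat is the prefactor you carry: with the two-sided constant $2$ in the tail $\P\bracket{Y_i \geq \E Y_i + t} \leq 2e^{-t^2 n/8}$ the union bound yields $1-2\delta_1$, so to land exactly on $1-\delta_1$ with the constant $8$ one should use the one-sided, mean-centred version (as Tropp does), $\P\bracket{Y_i \geq \E Y_i + t} \leq e^{-t^2 n/8}$, or absorb the factor by a trivial adjustment of constants.
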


We are ready to prove Theorem \ref{thm::HRHT}.

\begin{proof}[Proof of Theorem \ref{thm::HRHT} and Theorem \ref{thm::HRHT_variant}]
Let $A = U\Sigma V$ be defined in \eqref{thin-SVD}, $S = S_h HD$ be an HRHT matrix. Define the following events:
\begin{itemize}[topsep=0pt,itemsep=-1ex,partopsep=1ex,parsep=1ex]
    \item $B_1 = \left\{ \mu(HDU) \leq \muLower \right\}$,
    \item $B_2 = \left\{ \mu(HDA) \leq \muLower \right\}$, 
    \item $B_3 = \left\{ \mu(HDA) \leq \muHatSEpsDelta \right\}$,
    \item $B_4 = \left\{ \text{$S_h$ is an $\epsilon$-subspace embedding for $HDA$ } \right\}$,
    \item $B_5 = \left\{ \text{$S_h HD$ is an $\epsilon$-subspace embedding for $A$} \right\}$,
\end{itemize}
where $\muHatSEpsDelta = \muASHashing$ if $S_h$ is an $s$-hashing matrix and $\muHatSEpsDelta = \muASHashingVariant$ if $S_h$ is an $s$-hashing variant matrix, and \whereMuBarIsDefined.

Observe that $B_4$ implies $B_5$ because $B_4$ gives 
\begin{equation}
    (1-\epsilon) \|Ax\|^2 \leq (1-\epsilon)\|HDA x\|^2 \leq \| S_h HDA x\|^2 \leq (1+\epsilon) \|HDA x\|^2 \leq (1+\epsilon) \|Ax\|^2,
\end{equation}
where the first and the last equality follows from $HD$ being orthogonal. Moreover, observe that $B_1 = B_2$ because $\mu(HDA) = \max_i \| (HDU)_i \|_2 = \mu(HDU)$, where the first equality follows from $HDA = (HDU) \Sigma V^T$ being an $SVD$ of $HDA$. Furthermore, $B_2$ implies $B_3$ due to \eqref{eqn::theoremSixN} in the $s$-hashing case; and \eqref{eqn::theoremSevenN} in the $s$-hashing variant case.

Thus $\P(B_5) \geq \P(B_4) = \P(B_4 | B_3) \P(B_3) \geq P(B_4|B_3) \P(B_2) = \P(B_4 |B_3) \P(B_1)$. If $S_h$ is \anSHashingMat, Theorem \ref{thm::s-hashing} gives $\P(B_4 | B_3) \geq 1-\delta$. If $S_h$ is \anSHashingVariantMat, Theorem \ref{thm::s-hashing-variant} gives $\P(B_4 | B_3) \geq 1-\delta$. Therefore in both cases, we have
\begin{equation}
     P(B_5) \geq \P(B_4 |B_3) \P(B_1) \geq (1-\delta) \probability{B_1} \geq (1-\delta)(1-\delta_1),
\end{equation}
where the third inequality uses Lemma \ref{thm::Tropp_HD}.
\end{proof}

\chapter{Sketching for linear least squares}

    \section{Introduction and relevant literature}
    This chapter is based and expands materials in \cite{zhen:icml_LLS, 2021arXiv210511815C}.

\paragraph{Main contribution}
This chapter builds on the insight from the theoretical results in the last chapter to propose, analyse and benchmark a sketching based solver of \eqref{LLS-statement}. We first propose and analyse a rank-deficient generic sketching framework for \eqref{LLS-statement}, which includes the algorithm used by two previous sketching-based solvers, Blendenpik \cite{doi:10.1137/090767911} and LSRN \cite{Meng:2014ib} but additionally allows more flexibility of the choice of factorizations of the sketched matrix $SA$ for building a preconditioner for \eqref{LLS-statement}. Our analysis shows that under this framework, one can compute a minimal residual solution of \eqref{LLS-statement} with sketching if a rank-revealing factorization is used; or the minimal norm solution of \eqref{LLS-statement} if a total orthogonal factorization is used. Next, based on this algorithmic framework, we propose \solverName{}, a software package for solving \eqref{LLS-statement} where we carefully distinguish whether $A$ is dense or sparse. If $A$ is dense, \solverName{} combines our novel hashed coherence reduction transformation \footnote{For better numerical performance we use DHT as in Blendenpik instead of the Hadamard Transform analysed in the theory for \solverName{}.} analysed in \autoref{thm::HRHT} with a recently proposed randomized column pivoted QR factorization \cite{Martinsson:2017eh}, achieving better robustness and faster speed than Blendenpik and LSRN. If $A$ is sparse, \solverName{} combines $s$-hashing analysed in \autoref{thm::s-hashing} with the state-of-the-art sparse QR factorization in \cite{10.1145/2049662.2049670}, achieving 10 times faster speed on random sparse ill-conditioned problems and competitive performance on a test set of 181 matrices from the Florida Matrix Collection \cite{10.1145/2049662.2049663} comparing to the state-of-the-art direct and iterative solvers for sparse \eqref{LLS-statement}, which are based on sparse QR factorization \cite{10.1145/2049662.2049670} and incomplete Cholesky factorization preconditioned LSQR \cite{10.1145/2617555} respectively.

\paragraph{Relevant literature}
Classically, dense \eqref{LLS-statement} is solved by LAPACK \cite{laug}, and sparse \eqref{LLS-statement} is either solved by a sparse direct method implemented, say in \cite{10.1145/2049662.2049670} or a preconditioned LSQR (see a comparison of different preconditioners in \cite{10.1145/3014057}).
Sarlo \cite{10.1109/FOCS.2006.37} first proposed using sketching matrices that are oblivious subspace embeddings to solve \eqref{LLS-statement} by solving $\min_{x\in\R^d} \normTwo{SAx-Sb}$. This approach requires the row of $S$ to grow proportionally to the inverse square of the residual accuracy; hence is impractical for obtaining high accuracy solutions. Instead, Rokhlin \cite{Rokhlin:2008wb} proposed using the sketch $SA$ to compute a preconditioner of \eqref{LLS-statement}; and then solve \eqref{LLS-statement} using preconditioned LSQR. This approach allows machine precision solutions to be computed in a small number of LSQR iterations if the matrix $S$ is a subspace embedding of $A$. This algorithmic idea was carefully implemented in Blendenpik \cite{doi:10.1137/090767911}, achieving four times speed-up against LAPACK. Noting that Blendenpik only solves full rank \eqref{LLS-statement} and does not take advantage of sparse $A$, Meng, Saunders and Mahoney \cite{Meng:2014ib} proposed LSRN, which takes advantage of sparse $A$ and computes an accurate solution even when $A$ is rank-deficient by using Gaussian matrices to sketch and the SVD to compute a preconditioner. However, the run-time comparisons are conducted in a multi-core parallel environment, unlike Blendenpik, which uses the serial environment. 

Recently, the numerical performance of using $1$-hashing matrices as the sketching matrix to solve \eqref{LLS-statement} was explored in \cite{10.1145/3219819.3220098}. \cite{10.5555/3019094.3019103,IYER2019100547} further explored using Blendenpik-like solvers in a distributed computing environment. \cite{Lacotte2020} explores using random embedding to solve $L2$-regularised least squares. 

To the best of our knowledge, \solverName{} is the first solver that uses $s$-hashing (with $s>1$); uses a sparse factorization when solving sparse \eqref{LLS-statement}; and uses the hashing combined with coherence reduction transformations for dense problems. This work also presents the first large scale comparison of sketching-based LLS solvers with the state-of-the-art classical sparse solvers on the Florida Matrix Collection.

    \section{Algorithmic framework and a_alysis}
    
\label{sec:algo_analysis}
We now turn our attention to the LLS problem \eqref{LLS-statement} we are interested in solving. 
Building on the Blendenpik \cite{doi:10.1137/090767911} and LSRN \cite{Meng:2014ib} techniques, we introduce a generic algorithmic framework 
for \eqref{LLS-statement} that can employ any rank-revealing factorization of $SA$, where $S$ is a(ny) sketching matrix; we then analyse its convergence.

\subsection{A generic algorithmic framework for solving linear least squares with sketching}

\begin{algorithm}[H]
\begin{description}
\item[Initialization] \ \\
Given $A \in \R^{n\times d}$ and $b \in \R^n$, set positive integers $m$ and $it_{max}$, and accuracy tolerances $\tau_a$ and $ \tau_r $, and an $m\times n$ random matrix distribution $\cal{S}$.

	\item[1.] Randomly draw a sketching matrix $S \in \R^{m\times n}$ from $\cal{S}$, compute the matrix-matrix product $SA \in \R^{m\times d}$ and the matrix-vector product $Sb \in \R^{m}$.

	\item[2.] Compute a factorization of $SA$ of the form, 
		\begin{align}
		SA= QR\hat{V}^T, \label{SA-QRV-fac}
		\end{align}
		where 
			\begin{itemize}
				\item $R = 
					\left( \begin{matrix} R_{11} & R_{12} \\ 0 & 0 	\end{matrix} 
					 \right) \in \R^{d \times d}$, where $R_{11} \in \R^{\k \times \k}$ is nonsingular.

				\item $Q =\left( \begin{matrix} Q_{1} & Q_{2} \end{matrix} \right) \in\R^{m \times d} $, where $Q_1 \in \R^{m \times \k}$ and $Q_2 \in \R^{m \times (d-\k)}$ have orthonormal columns.

				\item $\hat{V}= \left(  \begin{matrix} V_{1} \quad  V_{2}\end{matrix} \right)\in \R^{d \times d}$ is an orthogonal matrix with $V_1\in \R^{d \times \k}$.
			\end{itemize}

	\item[3.] Compute $x_{s} = V_1 R_{11}^{-1} Q_1^TSb$. If $\|Ax_{s}-b\|_2 \leq \tau_a$, terminate with solution $x_s$.

	\item[4.]
	Else, iteratively, compute 
    \begin{equation}\label{ytau}	 
	 y_{\tau} \approx \argmin_{ y \in \R^{\k}} \|Wy - b\|_2,
	 \end{equation}
	 where 
	  \begin{equation}
	      W = A V_1 R_{11}^{-1},\label{def::W}
	  \end{equation}
	  using LSQR \cite{10.1145/355984.355989} with (relative) 
	 tolerance $\tau_r$ and maximum iteration count $it_{max}$. 	Return  $x_{\tau} = V_1 R_{11}^{-1} y_{\tau}$. 

\caption{\bf{Generic Sketching Algorithm for Linear Least Squares}} \label{alg1} 

\end{description}
\end{algorithm}

\begin{remark}
\begin{itemize}
\item[(i)]
The factorization $SA = QR\hat{V}^T$ allows column-pivoted QR, or other rank-revealing factorization, complete orthogonal decomposition ($R_{12}=0$) and the SVD ($R_{12} = 0$, $R_{11}$ diagonal). It also includes the usual QR factorisation if $SA$ is full rank; then the $R_{12}$ block is absent. 

\item[(ii)]						Often in  implementations, the factorization \eqref{SA-QRV-fac} has $R=\left( \begin{matrix} R_{11} & R_{12} \\ 0 & R_{22}  \end{matrix} \right)$, where $R_{22} \approx 0$ and is treated as the zero matrix.

\item[(iii)] For computing $x_s$ in Step 3, we note that in practical implementations, $R_{11}$ in \eqref{SA-QRV-fac}  is upper triangular, enabling efficient calculation
	 of matrix-vector products involving $R_{11}^{-1}$; then, there is no need to form/calculate $R_{11}^{-1}$ explicitly.
	 \item[(iv)] For the solution of \eqref{ytau},
	 we use the termination criterion \reply{ $\|y_\tau-y_*\|_{W^TW} \leq \tau_r \|y_\tau-y_* \|_{W^T W}$} in the theoretical analysis, 
	 where $y_*$ is defined in \eqref{def-ystar}. In practical implementations different termination criteria need to be employed (see Section \ref{subsection:alg_implementation_discussion}).
\end{itemize} 
\end{remark}

	\subsection{Analysis of Algorithm \ref{alg1}}
	Given problem (\ref{LLS-statement}), we denote its minimal Euclidean norm solution as follows
	\begin{equation}\label{def-xstar}
	x_{*,2} = \argmin_{x^*\in \R^d} \|x_*\|_2\quad{\rm subject\,\, to}\quad \|Ax_*-b\|_2 =\min_x \|Ax-b\|_2.
	\end{equation}
	and let
	\begin{equation}\label{def-ystar}
	 y_* = \argmin_{y\in \R^\k} \| Wy-b \|_2, \quad \text{where $W$ is defined in \eqref{def::W}}.
	\end{equation}

The following two lemmas provide basic properties of Algorithm \ref{alg1}. 
\begin{lemma}
\label{Lemma::w}
$W\in\R^{n\times \k}$ defined in \eqref{def::W} has full rank $\k$.
\end{lemma}
\begin{proof}
Note $SW$ has rank $\k$ because $SW = Q_1$, where $Q_1$ is defined in \eqref{SA-QRV-fac}. By rank-nullity theorem in $\R^\k$, $\rank(W) + \dim \ker(W) = \rank(SW) + \dim \ker(SW)$ where $\ker(W)$ denotes the null space of $W$; and since $\text{dim} \ker(SW) \geq \dim \ker(W)$, we have that $\rank(SW) \leq \rank(W)$. So $\rank(W) \geq \k$. It follows that $\rank(W)=\k$ because $W\in\R^{n\times \k}$ can have at most rank $\k$.
\end{proof}

\begin{lemma}
\label{Lemma::p_equals_r}
In Algorithm \ref{alg1}, if $S$ is an $\epsilon$-subspace embedding for $A$ for some $\epsilon\in (0,1)$, then $\k = r$ where $r$ is the rank of $A$. 
\end{lemma}
\begin{proof}
Lemma \ref{rank-of-sketched-equal-to-unsketched} gives $r= \rank(A) = \rank(SA)= \k$.
\end{proof}

	  If the LLS problem (\ref{LLS-statement}) has a sufficiently small optimal residual, then
	Algorithm \ref{alg1} terminates early in Step 3 with the solution $x_s$ of the sketched problem $\min \|SAx-Sb\|_2$; then,
	 no LSQR iterations are required.
	

\begin{lemma}[Explicit Sketching Guarantee] \label{explicit-sketching-guarantee}
Given problem \eqref{LLS-statement},
suppose that the matrix $S \in \R^{m\times n}$ in Algorithm \ref{alg1} is an $\epsilon$-subspace embedding for the augmented matrix $\left(A\; \;b \right)$ for some $0<\epsilon<1$. Then 
\begin{align}\label{explicit-sketch}
\|Ax_s - b\|_2 \leq \frac{1+\epsilon}{1-\epsilon} \|Ax_*-b\|_2,
\end{align}
where $x_s$ is defined in Step 3 of Algorithm \ref{alg1} and $x_*$ is a(ny) solution of \eqref{LLS-statement}.
\end{lemma}

The proof is similar to the result in \cite{10.1561/0400000060} that shows that any solution of the sketched problem $\min_x \|SAx-Sb\|_2$ satisfies \eqref{explicit-sketch}.  For completeness, the proof is included here.

\begin{proof}
We have that $x_s \in \argmin \|SAx-Sb\|_2$ by checking the optimality condition $(SA)^T SA x_s = (SA)^T Sb$. Hence we have that
\begin{align}
\norms{Ax_s-b} \leq \frac{1}{1-\epsilon} \norms{SAx_s-Sb} \leq \frac{1}{1-\epsilon} \norms{SAx^*-Sb}
 \leq \frac{1+\epsilon}{1-\epsilon} \norms{Ax^*-b},
\end{align}
where the first and the last inequality follow from $S$ being a subspace embedding for $\left(A \,\,b \right)$, while the second inequality is due to $x_s$ minimizing $\|SAx-Sb\|$.
\end{proof}
		
The following technical lemma is needed in the proof of our next theorem.
		\begin{lemma}\label{VA-cap}
Let $A \in \R^{n\times d}$ and $V_1 \in \R^{d \times \k}$ be defined in Algorithm \ref{alg1}. Then
$\ker(V_1^T) \cap {\rm range}(A^T) = \{0\}$, where $\ker(V_1^T)$ and ${\rm{range}}(A^T)$ denote the null space of $V_1^T$ and range subspace 
generated by the rows of $A$, respectively.
\end{lemma}

\begin{proof}
Let $z \in \ker(V_1^T) \cap {\rm range}(A^T)$. Then $V_1^T z = 0$ and $z = A^T w$ for some $w \in \R^n$. Let $U, \Sigma, V$ be the SVD factors of $A$ as defined in \eqref{thin-SVD}. Since $S$ is an $\epsilon$-subspace embedding for $A$, $\rank \left( (SU)^T \right) = \rank(SU) = \rank(SA) = r$, where $r$ is the rank of $A$ and hence there exists $\hat{w} \in \R^m$ such that $(SU)^T \hat{w} = U^T w$. Note that 
\begin{equation}
    0 = V_1^T z = V_1 A^T w = V_1^T V \Sigma U^T w = V_1^T V \Sigma U^T S^T \hat{w} = V_1^T A^T S^T \hat{w} = R_{11}^T Q_1^T \hat{w},
\end{equation}
which implies $Q_1^T \hat{w} = 0$ because $R_{11}^T$ is nonsingular.
It follows that
\begin{equation}
    z = A^T w = V \Sigma U^T w = V\Sigma U^T S^T \hat{w} = (SA)^T \hat{w} = V 
\left( \begin{smallmatrix} R_{11}^T Q_1^T \\ R_{12}^T Q_1^T \end{smallmatrix} \right) \hat{w} = 0, 
\end{equation}
where we have used $Q_1^T \hat{w} = 0$ for the last equality.
\end{proof}

Theorem \ref{Implicit-sketching-guarantee} shows that when the LSQR algorithm in Step 4 converges, Algorithm \ref{alg1} returns a minimal residual solution of
\eqref{LLS-statement}.

\begin{theorem}[Implicit Sketching Guarantee]\label{Implicit-sketching-guarantee}
Given problem \eqref{LLS-statement},
suppose that the matrix $S \in \R^{m\times n}$ in Algorithm \ref{alg1} is an $\epsilon$-subspace embedding  for the augmented matrix $\left(A\; \;b \right)$ for some $0<\epsilon<1$. 
If  $y_{\tau} = y_*$ in Step 4 of Algorithm \ref{alg1} (by setting $\tau_r := 0$), where $y_*$ is defined in \eqref{def-ystar},  then $x_{\tau}$ in Step 5 satisfies
$x_{\tau}=x_*$, where $x_*$ is a solution of \eqref{LLS-statement}.
\end{theorem}

\begin{proof}
Using the optimality conditions (normal equations) for the LLS in \eqref{def-ystar}, and $y_{\tau} = y_*$, we deduce 
$W^T Wy_{\tau} = W^T b$, 
where $W$ is defined in \eqref{def-ystar}. 
Substituting the definition of $x_{\tau}$ from Step 5 of Algorithm \ref{alg1}, 
we deduce
$$
(R_{11}^{-1})^TV_1^TA^TAx_{\tau}=(R_{11}^{-1})^TV_1^TA^Tb.
$$
Multiplying the last displayed equation by $R_{11}^T$, we obtain
\begin{align}
V_1^T \left( A^TA x_{\tau} - A^T b \right)  = 0. \label{tmp8}
\end{align}
It follows from
(\ref{tmp8}) that $A^T Ax - A^T b \in \ker(V_1^T) \cap {\rm range}(A^T)$. But Lemma \ref{VA-cap} implies that the latter set intersection only contains the origin, and so
 $A^T Ax_{\tau} - A^T b =0$; this and the normal equations for \eqref{LLS-statement} imply that $x_{\tau}$ is an optimal solution of \eqref{LLS-statement}.
\end{proof}

The following technical lemma is needed for our next result; it re-states Theorem 3.2 from  \cite{Meng:2014ib} in the context of Algorithm \ref{alg1}.

		\begin{lemma} \cite{Meng:2014ib} \label{Meng-min-norm}
		Given problem \eqref{LLS-statement}, let $x_{*,2}$ be its minimal Euclidean norm solution defined in \eqref{def-xstar} and $P\in\R^{d\times \k}$, a nonsingular matrix. 	Let $x_{\tau}:=Py_{\tau}$, where $y_{\tau}$ is assumed to be the minimal Euclidean norm solution  of $\min_{y\in \R^\k}\|APy-b\|_2$.
		Then $x_{\tau}=x_{*,2}$ if ${\rm range}(P)={\rm range}(A^T)$.	
\end{lemma}

Theorem \ref{tmp9} further guarantees that if $R_{12}=0$ in \eqref{SA-QRV-fac} such as when a complete orthogonal factorization is used, then the minimal Euclidean norm solution of \eqref{LLS-statement} is obtained. 

\begin{theorem}[Minimal-Euclidean Norm Solution Guarantee] \label{tmp9}
Given problem \eqref{LLS-statement},
suppose that the matrix $S \in \R^{m\times n}$ in Algorithm \ref{alg1} is an $\epsilon$-subspace embedding  for the augmented matrix $\left(A\; \;b \right)$ for some $0<\epsilon<1$. 
If  $R_{12}=0$ in \eqref{SA-QRV-fac} and $y_{\tau} = y_*$ in Step 4 of Algorithm \ref{alg1} (by setting $\tau_r := 0$), where $y_*$ is defined in \eqref{def-ystar}, 
then $x_{\tau}$ in Step 5 satisfies
$x_{\tau}=x_{*,2}$, where $x_{*,2}$ is the minimal Euclidean norm solution \eqref{def-xstar} of \eqref{LLS-statement}.

\end{theorem}

\begin{proof}
The result follows from Lemma \ref{Meng-min-norm} with $P:=V_1R_{11}^{-1}$, provided  ${\rm range}(V_1 R_{11}^{-1}) = {\rm range}(A^T)$. To see this, note that 
\[
{\rm range}(V_1 R_{11}^{-1}) = {\rm range}(V_1) = {\rm range}( (SA)^T),
\]
where the last equality follows from $(SA)^T=V_1R_{11}^TQ_1^T+V_2R_{12}Q_1^T$ and $R_{12}=0$. Using the SVD decomposition \eqref{thin-SVD} of $A$, we further have
\[
{\rm range}(V_1 R_{11}^{-1})  = {\rm range}( A^T S^T) ={\rm range}( V\Sigma U^T S^T) ={\rm  range}(V\Sigma (SU)^T).
\]
Since $S$ is an $\epsilon-$subspace embedding for $A$, it is also an $\epsilon$-subspace embedding for $U$ by Lemma \ref{subspace-embedding-def-2} and therefore by Lemma \ref{rank-of-sketched-equal-to-unsketched}, $\rank(SU) = \rank(U) = r$. Since $S U \in \R^{m\times r}$ has full column rank, we have that ${\rm range}(V\Sigma (S U)^T) = {\rm range}(V) = {\rm range}(A^T)$. 
\end{proof}

Theorem \ref{Speed-of-convergence} gives an iteration complexity bound for the inner solver in Step 4  of Algorithm \ref{alg1}, as well as particularising this result for a special starting point for which  an optimality guarantee can be given. It relies crucially on the quality of the preconditioner provided by the sketched factorization in \eqref{SA-QRV-fac}, and its proof uses standard LSQR results.

\begin{theorem}[Rate of convergence] \label{Speed-of-convergence}
Given problem \eqref{LLS-statement},
suppose that the matrix $S \in \R^{m\times n}$ in Algorithm \ref{alg1} is an $\epsilon$-subspace embedding  for the augmented matrix $\left(A\; \;b \right)$ for some $0<\epsilon<1$. 
Then: 
\begin{itemize}
\item[(i)]
Step 4 of Algorithm \ref{alg1} takes at most
\begin{align}\label{LSQR_iterations}
\tau \leq O\left( \frac{|\log\tau_r|}{|\log\epsilon|}\right)
\end{align}
LSQR iterations to return a solution $y_{\tau}$ such that 
\begin{equation}\label{LSQR-tc}
\|y_{\tau}- y_*\|_{ W^T W} \leq \tau_r \|y_0 - y_*\|_{ W^T W}, 
\end{equation}
where $y_*$ and $W$ are defined in \eqref{def-ystar}.
\item[(ii)] 
If we initialize $y_0 : = Q^TSb$ for the LSQR method in Step 4, then at termination of Algorithm \ref{alg1}, we can further guarantee that
\begin{align}
		\|Ax_{\tau} - b\|_2 \leq \left( 1 + \frac{2\epsilon \tau_r}{1-\epsilon} \right) \|Ax_*-b\|_2,
		\end{align}
		where $x_{\tau}$ is computed in \reply{Step 4} of Algorithm \ref{alg1} and $x_*$ is a solution of \eqref{LLS-statement}.

\end{itemize}
\end{theorem}

\begin{proof}
(i) Using results in \cite{10.5555/248979},  LSQR applied to \eqref{ytau} converges as follows 
\begin{align}
\frac{\|y_j - y_*\|_{ W^T W}}{ \|y_0 - y_*\|_{ W^T W}} \leq 
2 \left( \frac{ \sqrt{\kappa \left[ W^T W \right]} -1 } 
{\sqrt{\kappa \left[ W^T W \right]} +1 } \right)^j, \label{CG_guarantee}
\end{align}
where $y_j$ denotes the $j$th iterate of LSQR and $\kappa(W^TW)$ refers to the condition number of $W^TW$.
Since $S$ is an $\epsilon$-subspace embedding for $A$, we have that the largest singular value of $W$ satisfies
\begin{align*}
\sigma_{\max}(W) = \max_{\|y\|=1} \|AV_1R_{11}^{-1}y\| \leq \bracket{1-\epsilon}^{-1/2} \max_{\|y\|=1} \|SAV_1R_{11}^{-1}y\| = \bracket{1-\epsilon}^{-1/2} \max_{\|y\|=1} \|Q_1y\| = \bracket{1-\epsilon}^{-1/2} ,
\end{align*}
where we have used that $SAV_1R_{11}^{-1} = Q_1$ from (\ref{SA-QRV-fac}). Similarly, it can be shown that the smallest singular
value of $W$ satisfies $\sigma_{\min}(W) \geq \bracket{1+\epsilon}^{-1/2}$. Hence
\begin{align}
\kappa(W^T W) \leq  \frac{1+\epsilon}{1-\epsilon} . \label{Low condition number}
\end{align}
Hence we have
\begin{align*}
    \frac{ \sqrt{\kappa \left[ W^T W \right]} -1 } 
{\sqrt{\kappa \left[ W^T W \right]} +1 } \leq \frac{\sqrt{1+\epsilon} - \sqrt{1-\epsilon}}{\sqrt{1+\epsilon} + \sqrt{1-\epsilon}} =  \frac{\bracket{\sqrt{1+\epsilon} - \sqrt{1-\epsilon}} \bracket{\sqrt{1+\epsilon} + \sqrt{1-\epsilon}} }{\bracket{\sqrt{1+\epsilon} + \sqrt{1-\epsilon}}^2} \leq \epsilon.
\end{align*}
Thus (\ref{CG_guarantee}) implies $\|y_{\tau} - y_*\|_{ W^T W} \leq \tau_r \|y_0 - y_*\|_{ W^T W}$ whenever $\tau \geq \frac{\log(2) + |\log\tau_r|}{|\log\epsilon|}$.

(ii) If we initialize $y_0 : = Q^TSb$ for the LSQR method in Step 4, then we have
\begin{align*}
\|y_0 - y_*\|_{ W^T W} &= \|Ax_s - Ax_*\|_2 = \|Ax_s-b - (Ax_*-b)\|_2 
					                 \leq \|Ax_s -b\| - \|Ax_* -b\| \\
					                 & \leq \left( \sqrt{\frac{1+\epsilon}{1-\epsilon}} - 1 \right) \|Ax_*-b\| 
					                  \leq \frac{2\epsilon}{1-\epsilon} \|Ax_*-b\|,
\end{align*}
where we have used $\sqrt{\frac{1+\epsilon}{1-\epsilon}} \leq \frac{1+\epsilon}{1-\epsilon}$ to get the last inequality.
Using part (i), after at most $\frac{\log(2) + |\log \tau_r|}{|\log\epsilon|}$  LSQR iterations, we have that 
\begin{align}
\|y_{\tau} - y_*\|_{ W^T W} \leq \frac{2\epsilon \tau_r}{1-\epsilon} \|Ax_*-b\|_2. 
\end{align}
Note that $\|Ax_{\tau} - Ax_*\|_2 = \|y_{\tau} - y^*\|_{W^T W}$. Using the triangle inequality, we deduce 
\begin{align}
\|Ax_{\tau} - b\|_2 = \| A x_{\tau} - Ax_* + Ax_* -b  \|_2 \leq \left( 1 + \frac{2\epsilon \tau_r}{1-\epsilon} \right) \|Ax_*-b\|_2.
\end{align}
\end{proof}
    
    \section{Implementation details}

\subsection{Ski-LLS, an implementation of Algorithm \ref{alg1}}
\label{subsec::implemntation_of_alg1}
\textbf{Sk}etch\textbf{i}ng-for-\textbf{L}inear-\textbf{L}east-\textbf{S}quares (\solverName{}) implements \autoref{alg1} for solving \eqref{LLS-statement}. We distinguish two cases based on whether the data matrix $A$ is stored as a dense matrix or a sparse matrix. 

\paragraph{Dense $A$}
When $A$ is stored as a dense matrix \footnote{This does not necessarily imply that every entry of $A$ is non-zero, however, we presume a large number of the entries are zero such that specialized sparse numerical linear algebras are ineffective.}, we employ the following implementation of \refAlgOne. The resulting solver is called \solverNameDense{}.
\begin{enumerate}
    \item In Step 1 of \refAlgOne, we let 
    \begin{equation}
        S = S_h F D, \label{eqn::HR-DHT}
    \end{equation}
    where
        \begin{enumerate}
            \item $D$ is a random $n \times n$ diagonal matrix with $\pm1$ independent entries, as in \autoref{def::HRHT}.
            \item F is a matrix representing the normalized Discrete Hartley Transform (DHT), defined as $F_{ij} = \sqrt{1/n} \squareBracket{ \cos{ \bracket{ 2\pi (i-1)(j-1)/n}} + \sin{ \bracket{2 \pi (i-1)(j-1)/n}}}$ \footnote{Here we use the same transform (DHT) as that in Blendenpik for comparison of other components of \solverNameDense{}, instead of the Walsh-Hadamard transform defined in \autoref{def::HRHT}.}. We use the (DHT) implementation in FFTW 3.3.8 \footnote{Available at http://www.fftw.org.}.  
            \item $S_h$ is an $s$-hashing matrix, defined in \autoref{def::sampling_and_hashing}. We use the sparse matrix-matrix multiplication routine in SuiteSparse 5.3.0 \footnote{Available at https://people.engr.tamu.edu/davis/suitesparse.html.} to compute $S_h  \times (FDA)$. 
        \end{enumerate}
    
    \item In Step 2 of \refAlgOne, we use the randomized column pivoted QR (R-CPQR) proposed in \cite{Martinsson:2017eh,martinsson2015blocked} \footnote{The implementation can be found at https://github.com/flame/hqrrp/. The original code only has a 32-bit integer interface. We wrote a 64-bit integer wrapper as our code has 64-bit integers. }. 
    
    \item In Step 3 of \refAlgOne, since $R_{11}$ from R-CPQR is upper triangular, we do not explicitly compute its inverse, but instead, use back-solve from the LAPACK provided by Intel MKL 2019 \footnote{See https://software.intel.com/content/www/us/en/develop/tools/oneapi/components/onemkl.html.}. 
    
    \item In Step 4 of \refAlgOne, we use the LSQR routine implemented in LSRN \cite{Meng:2014ib}\footnote{Available at https://web.stanford.edu/group/SOL/software/lsrn/. We fixed some very minor bugs in the code. }.
\end{enumerate}

The user can choose the value of the following parameters: $m$ (default is $1.7d$), $s$ (default is $1$), ${\tau_a}$ (default is $10^{-8}$), ${it_{max}}$ (default value is $10^4$). ${rcond}$ (default value is $10^{-12})$, which is a parameter used in Step 2 of \refAlgOne. The R-CPQR we use computes
            $SA = Q \tilde{R} \hat{V}^T$, which is then used to compute $R_{11}$ by 
        letting $p = \max \set{q: \tilde{R}_{qq}\geq rcond}$, $R_{11}$ be the upper left $p \times p$ block of $\tilde{{R}}$. 
$wisdom$ (default value is $1$). The DHT we use is faster with pre-tuning, see Blendenpik \cite{Avron:2009aa} for a detailed discussion. If the DHT has been pre-tuned, the user needs to set $wisdom=1$, otherwise set $wisdom=0$.  In all our experiment, the default is to tune the DHT using the crudest tuning mechanism offered by FFTW, which typically takes less than one minute.

We also offer an implementation without using R-CPQR for dense full-rank problems. The only difference  is that
 in Step 2 of \refAlgOne, we assume that the matrix $A$ has full-rank $r=d$. Hence we use DGEQRF from LAPACK to compute a QR factorization of $SA$ (the same routine is used in Blendenpik) instead of R-CPQR. It has the same list of parameters with the same default values, except the parameter ${rcond}$ is absent because it does not use R-CPQR.

\paragraph{Sparse $A$}
When $A$ is stored as a sparse matrix \footnote{Here we assume that the user stored the matrix in a sparse matrix format because a large number of entries are zero. Throughout computations, we maintain the sparse matrix format for effective numerical linear algebras.}, we employ the following implementation of \refAlgOne. The resulting solver is called \solverNameSparse{}.
\begin{enumerate}
    \item In Step 1 of \refAlgOne, we let $S$ be an $s$-hashing matrix, defined in \autoref{def::sampling_and_hashing}.
    \item In Step 2 of \refAlgOne, we use the sparse QR factorization (SPQR) proposed in \cite{10.1145/2049662.2049670} and implemented in SuiteSparse.
    \item In Step 3 of \refAlgOne, since $R_{11}$ from SPQR is upper triangular, we do not explicitly compute its inverse, but instead, use the sparse back-substitution routine from SuiteSparse.
    \item In Step 4 of \refAlgOne, we use the LSQR routine implemented in LSRN, extended to include the use of sparse preconditioner and sparse numerical linear algebras from SuiteSparse. 
\end{enumerate}

The user can choose the value of the following parameters: $m$ (default value is $1.4d$), $s$ (default value is $2$), $\tau_a$ (default value is $10^{-8}$), $\tau_r$ (default value is $10^{-6}$), $it_{max}$ (default value is $10^4$). And $rcond_{thres}$ (default value $10^{-10}$), which checks the conditioning of $R_{11}$ computed by SPQR. If $\kappa(R_{11}) \geq 1/rcond_{thres}$, we use the perturbed back-solve for upper triangular linear systems involving $R_{11}$ (see the next point).  $perturb$ (default value $10^{-10}$). When $\kappa(R_{11}) \geq 1/rcond_{thres}$, any back-solve involving $R_{11}$ or its transpose will be modified in the following way: When divisions by a diagonal entry $r_{ii}$ of $R_{11}$ is required where $1\leq i \leq p$, we divide by $r_{ii} + perturb$ instead. \footnote{This is a safe-guard when SPQR fails to detect the rank of $A$. This happens infrequently \cite{10.1145/2049662.2049670}.} $ordering$ (default value $2$) which is a parameter to the SPQR routine that influences the permutation matrix $\hat{V}$ and the sparsity of $R$. \footnote{Note that this is slightly different from the SPQR default, which is to use to use COLAMD if m2<=2*n2; otherwise try AMD. Let f be the flops for chol((S*P)’*(S*P)) with the ordering P found by AMD. Then if f/nnz(R) $\geq$ 500 and nnz(R)/nnz(S) $\geq$ 5 then try METIS, and take the best ordering found (AMD or METIS), where typically $m_2 = m$, $n_2 = n$ for $SA \in \R^{m \times n}$. In contrast, \solverName{} by default always use the AMD ordering.}.
    
\subsection{Discussion of our implementation}
\label{subsection:alg_implementation_discussion}

\paragraph{Subspace embedding properties achieved via numerical calibration}

Our analysis of Algorithm \ref{alg1} in Section $\ref{sec:algo_analysis}$ relies crucially on $S$
being an $\epsilon$-subspace embedding of $A$. For dense matrices, Blendenpik previously used SR-DHT, defined in \eqref{eq::SR-DHT} with theoretical guarantees of the oblivious $\epsilon$-subspace embedding property for full rank $A$ if $m = \mathcal{O} \bracket{d \log(d)}$. Theorem \ref{thm::HRHT} shows when using hashing instead of sampling with randomised Walsh-Hadamard transform, hashing achieves being an oblivious $\epsilon$-subspace embedding with $m = \mathcal{O} \bracket{d}$ (note that $r=d$ for full rank $A$) under the addition dimensional assumption of $A$. In \solverNameDense{}, HR-DHT is used instead of HRHT analyzed in Theorem \ref{def::HRHT} because as mentioned in Blendenpik paper \cite{doi:10.1137/090767911}, DHT is more flexible (Walsh-Hadamard transform only allows n to be an integer power of $2$ so that padding is needed); and SR-DHT based sketching solver has stabler and shorter running time comparing to when DHT is replaced by Walsh-Hadamard transform. Moreover, we aim to show in additional to the theoretical advantage of hashing (Theorem \ref{thm::HRHT}), numerically using hashing instead of sampling combined with coherence-reduction transformations yields a more effective solver for \eqref{LLS-statement} in terms of running time. 

Therefore to compare to Blendenpik, we chose to use HR-DHT instead of HRHT. We then use numerical calibration as used in Blendenpik to determine the default value of $m$ for \solverNameDense{} such that $\epsilon$-subspace embedding of $A$ is achieved with sufficiently high (all the matrices in the calibration set) probability. (See the next section and Appendices). Note that the U-shaped curve appears in Figure \ref{fig::new_blen_engineering}, because as $\gamma:= m/d$ grows, we have better subspace embeddings so that $\epsilon$ decreases, resulting in fewer LSQR iterations according to \eqref{LSQR_iterations}. However the factorization cost in Step 2 and the sketching cost in Step 1 will grow as $m$ grows. Thus a trade-off is achieved when $m$ is neither too big nor too small. 

For sparse matrices, Theorem \ref{thm::s-hashing} guarantees the oblivious $\epsilon$-subspace embedding property $s$-hashing matrices for matrices $A$ with low coherence. However as Figure \ref{fig::Ls_qr_engineering_time}, \ref{fig::Ls_qr_engineering_residual} suggest, $s$-hashing with $s>1$ and $m=\mathcal{O} \bracket{d}$ tends to embed higher coherence $A$ as well. The specific default values of $m,s$ are again chosen using numerical calibration; and the characteristic U-shape is because of a similar trade-off as in the dense case. 

\paragraph{What if $S$ is not an $\epsilon$-subspace embedding of $A$}
Note that even $\cal{S}$ is an oblivious subspace embedding for matrices $A \in \R^{n\times d}$, for a given $A \in \R^{n\times d}$, there is a chance that a randomly generated matrix $S$ from $\cal{S}$ fails to embed $A$. However, in this case, \solverName{} will still compute an accurate solution of $\eqref{LLS-statement}$ given that $A$ has full rank and $SA$ has the same rank as $A$. Because then the preconditioner $V_1 R_{11}^{-1}$ is an invertible square matrix. The situation is less clear when $A$ is rank-deficient and $S$ fails to embed $A$. However, with the default parameters chosen from numerical calibrations, the accuracy of \solverName{} is excellent for $A$ being both random dense/sparse matrices and for $A$ in the Florida matrix collection. 

\paragraph{Approximate numerical factorization in Step 2}
In both our dense and sparse solvers, Step 2 $SA=QR\hat{V}^T$ is not guaranteed to be accurate when $A$ is rank-deficient. This is because R-CPQR, like CPQR, does not guarantee detection of rank although in almost all cases the numerical rank is correctly determined (in the sense that if one follows the procedure described in the definition of the parameter $rcond$, the factorization $SA=QR\hat{V}^T$ will be accurate up to approximately $rcond$ error). Similarly, SPQR performs heuristic rank-detection for speed efficiency and therefore rank-detection and the resulting accuracy is not guaranteed. Also, we have not analysed the implication of floating-point arithmetic for \solverName{}. The accuracy of \solverName{}, however, is demonstrated in a range of dense and sparse test problems, see later sections. 

\paragraph{Practical LSQR termination criterion}
The termination criterion proposed in Step 4 of the algorithm is not practical as we do not know $y^*$. In practice, we terminate Step 4 of Algorithm \ref{alg1} if $\frac{\|W^T (Wy_k - b)\|}
			{\|W\|\|Wy_k-b\|} \leq \tau_r$ where $W$ is defined in \eqref{def::W} similarly to what is used in LSRN \cite{Meng:2014ib}. See the original LSQR paper \cite{10.1145/355984.355989}, Section 6 for a justification.

    \section{Numerical study}

\subsection{Test Set}

\paragraph{The matrix $A$}
\begin{enumerate} \label{List_of_test_set}
	\item The following are three different types of random dense matrices that are the same type of test matrices used by Avron et.al. \cite{doi:10.1137/090767911} for comparing Blendenpik with LAPACK least square solvers. They have different \singleQuote{non-uniformity} of rows.
		\begin{enumerate}
			\item Incoherent dense, defined by
			    \begin{equation}
			        A = U\Sigma V^T \in \R^{n \times d}, \label{eq::A_inco_dense}
			    \end{equation}
			where $U \in \R^{n \times d}$, $V \in \R^{d \times d}$ are matrices generated by orthogonalising columns of two independent matrices with i.i.d. N(0,1) entries. $\Sigma \in \R^{d \times d}$ is a diagonal matrix with diagonal entries equally spaced from $1$ to $10^6$ (inclusive).

			\item Semi-coherent dense, defined by
			    \begin{equation}
			      A = \bigl( \begin{smallmatrix} B & 0 \\ 0 & I_{d/2} \end{smallmatrix} \bigr) + 
			10^{-8}  J_{n,d} \in \R^{n \times d} , \label{eq::A_semi_dense}  
			    \end{equation}
			where $B$ is an incoherent dense matrix defined in \eqref{eq::A_inco_dense}, $J_{n,d} \in \R^{n \times d}$ is a matrix of all ones.

			\item Coherent dense, defined by
			    \begin{equation}
			        A = \bigl(\begin{smallmatrix} I_{d\times d} \\ 0  \end{smallmatrix}\bigr) + 10^{-8}J_{n,d} \in \R^{n\times d} , \label{eq::A_co_dense}
			    \end{equation}
			where $J_{n,d}$ is a matrix of all ones. 
			
		\end{enumerate}
	
	\item The following are three different types of random sparse matrices with different \singleQuote{non-uniformity} of rows.
	    \begin{enumerate}
	        \item Incoherent sparse, defined by 
                    \begin{equation}
                        A = \text{sprandn}(n, d, 0.01, 1e-6) \in \R^{n \times d}, \label{eq:A_inco_sparse}
                    \end{equation}
                    where \singleQuote{sprandn} is a command in MATLAB that generates a matrix with approximately $0.01nd$ normally distributed non-zero entries and a condition number approximately equals to $10^6$.
            \item Semi-coherent sparse, defined by
                \newcommand{\dTemp}{\hat{D}}
                \begin{equation}
                    A = \dTemp^5 B, \label{eq:A_semi-co_sparse} 
                \end{equation}
                where $B \in \R^{n\times d}$ is an incoherent sparse matrix defined in \eqref{eq:A_inco_sparse} and $\dTemp$ is a diagonal matrix with independent $N(0,1)$ entries on the diagonal. 
            \item Coherent sparse, defined by
                \begin{equation}
                    A = \dTemp^{20} B, \label{eq:A_co_sparse} 
                \end{equation}
                where $B \in \R^{n\times d}, \dTemp$ are the same as in \eqref{eq:A_semi-co_sparse}.
	    \end{enumerate}
	
	\item (Florida matrix collection) A total of 181 matrices in the Florida (SuiteSparse) matrix collection \cite{10.1145/2049662.2049663} satisfying:
		\begin{enumerate}
			\item If the matrix is under-determined, we transpose it to make it over-determined.
			\item We only take a matrix $A \in \R^{n \times d}$ if $n \geq 30000$ and $n \geq 2 d$.
		\end{enumerate}
		
\end{enumerate}

\begin{remark}
Note that here \singleQuote{coherence} is a more general concept indicating the non-uniformity of the rows of $A$. Although \singleQuote{coherent dense/sparse} $A$ tends to have higher values of $\mu(A)$ than that of \singleQuote{incoherent dense/sparse} $A$, the value of $\mu(A)$ may be similar for semi-coherent and coherent test matrices. The difference is that for \singleQuote{coherent} test matrices, the row norms of $A$ (and $U$ from the SVD of $A$) tend to be more non-uniform. \reply{In the dense matrix cases, the rows of $A$ are progressively less uniform due to the presence of the identity blocks. In the sparse matrix cases, the rows of $A$ are progressively less uniform due to the scalings from the increasing powers of a diagonal Gaussian matrix. }
\end{remark}

\paragraph{The vector $b$} In all the test, the vector $b \in \R^n$ in \eqref{LLS-statement} is chosen to be a vector of all ones.

\subsection{Numerical illustrations}

\paragraph{The case for using hashing instead of sampling}
In \autoref{fig::blen_motivation}, we generate a random coherent dense matrix $A \in \R^{4000 \times 400}$ defined in \eqref{eq::A_co_dense}, and for each $m/d$ (where $d=400$), we sketch the matrix using $S \in \R^{m \times n}$ being a HR-DHT defined in \eqref{eqn::HR-DHT} and using SR-DHT defined by 
    \begin{equation}
        S = S_s FD, \label{eq::SR-DHT}
    \end{equation}
    where $S_s \in \R^{m\times n} $ is a scaled sampling matrix, whose individual rows contain a single non-zero entry at a random column with value $\sqrt{\frac{n}{m}}$; $F, D$ are defined the same as in \eqref{eqn::HR-DHT}. This is the sketching used in Blendenpik. We then compute an (non-pivoted) QR factorization of each sketch $SA=QR$, and the condition number of $AR^{-1}$. 

    We see that using hashing instead of sampling allows the use of smaller $m$ to reach a given preconditioning quality. 

\begin{figure}
\centering
\includegraphics[width=0.6\textwidth]{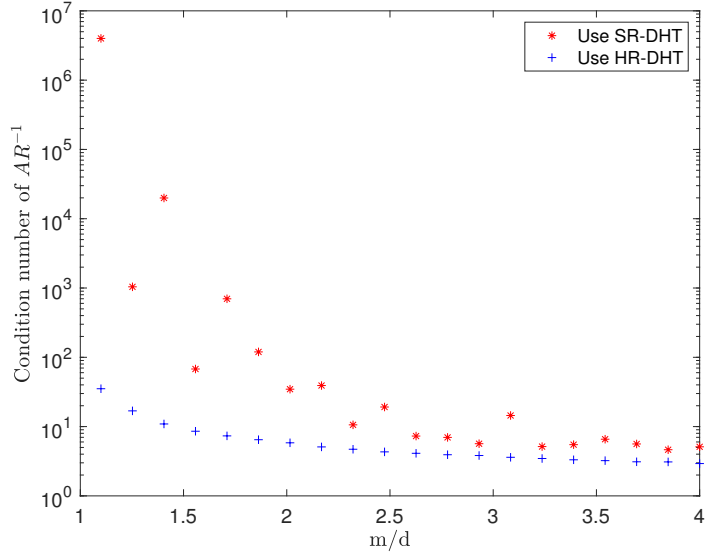}
\caption{Hashing combined with a randomised Discrete Hartley Transform produces more accurate sketched matrix $SA$ for a given $m/d$ ratio comparing to sampling combined with a randomised Discrete Hartley Transform; the accuracy of the sketch is reflected in the quality of the preconditioner $R$ constructed from the matrix $SA$, see \eqref{Low condition number}.}
\label{fig::blen_motivation}
\end{figure}

\paragraph{The case for using $s$-hashing with $s>1$}
In Figure \ref{fig::1-2-3-inco} , we let $A \in \R^{4000\times 400}$ be a random incoherent sparse matrix defined in \eqref{eq:A_inco_sparse}, while in Figure
\ref{fig::1-2-3-semi-co}, $A$ be defined as 
\begin{equation}
    A = \bigl( \begin{smallmatrix} B & 0 \\ 0 & I_{d/2} \end{smallmatrix} \bigr) + 
			10^{-8}  J_{n,d} \in \R^{n \times d},
\end{equation}
where $ B\in \R^{n\times d}$ is a random incoherent sparse matrix, and $J_{n \times d}$ is a matrix of all ones \footnote{We use this type of random sparse matrix instead of one of the types defined in \eqref{eq:A_semi-co_sparse} because this matrix better showcases the failure of 1-hashing.}. Comparing \autoref{fig::1-2-3-inco} with \autoref{fig::1-2-3-semi-co}, we see that using $s$-hashing matrices with $s>1$ is essential to produce a good preconditioner.

		\begin{figure}[H]
		    \centering
		    \begin{minipage}{0.48\textwidth}
		        \centering
		        \includegraphics[width=\textwidth]{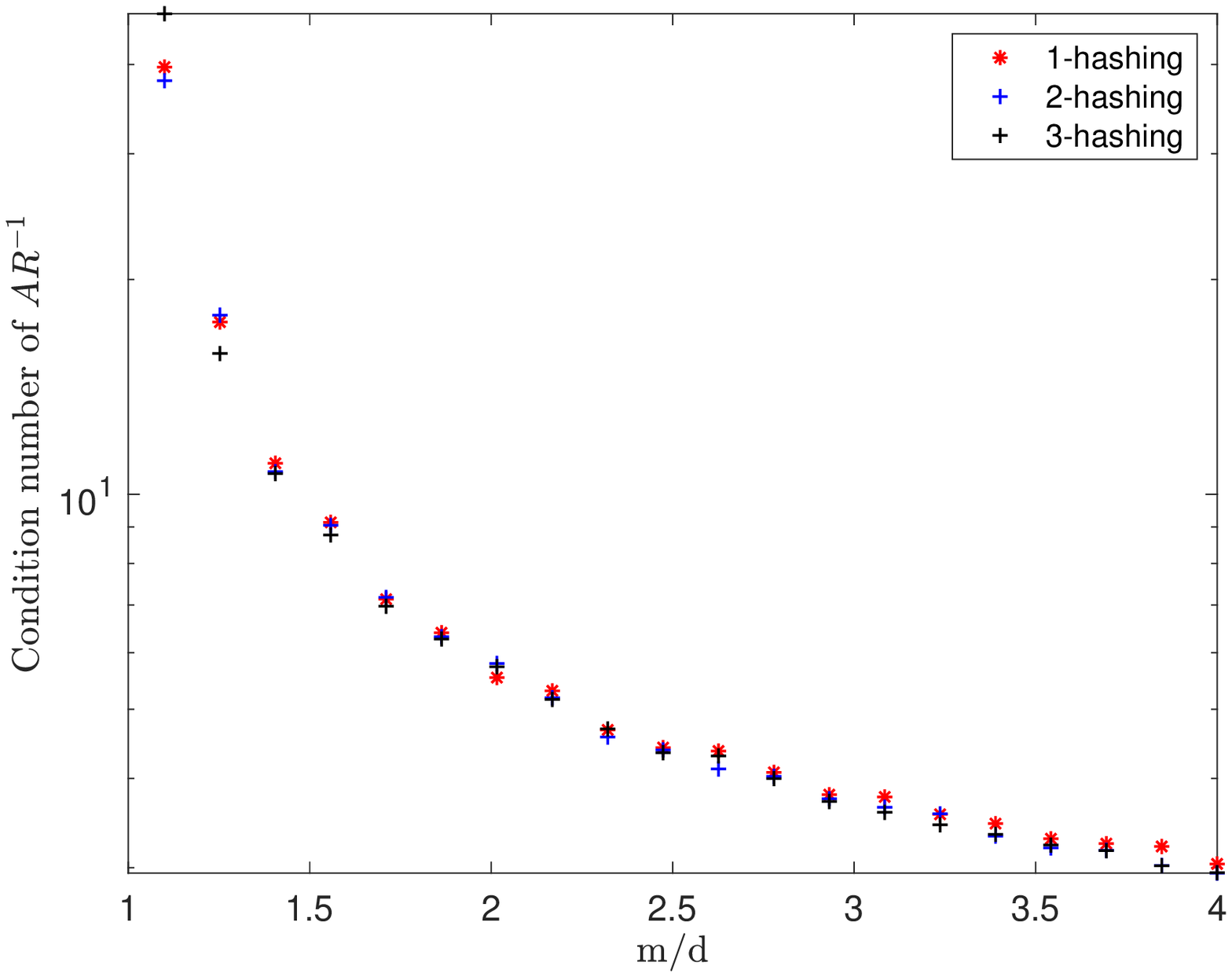} 
		        \caption{When the data matrix $A$ is an ill-conditioned sparse Gaussian matrix, using $1,2,3-$hashing produces similarly good preconditioners.}
		        \label{fig::1-2-3-inco}
		    \end{minipage}\hfill
		    \begin{minipage}{0.48\textwidth}
		        \centering
		        \includegraphics[width=\textwidth]{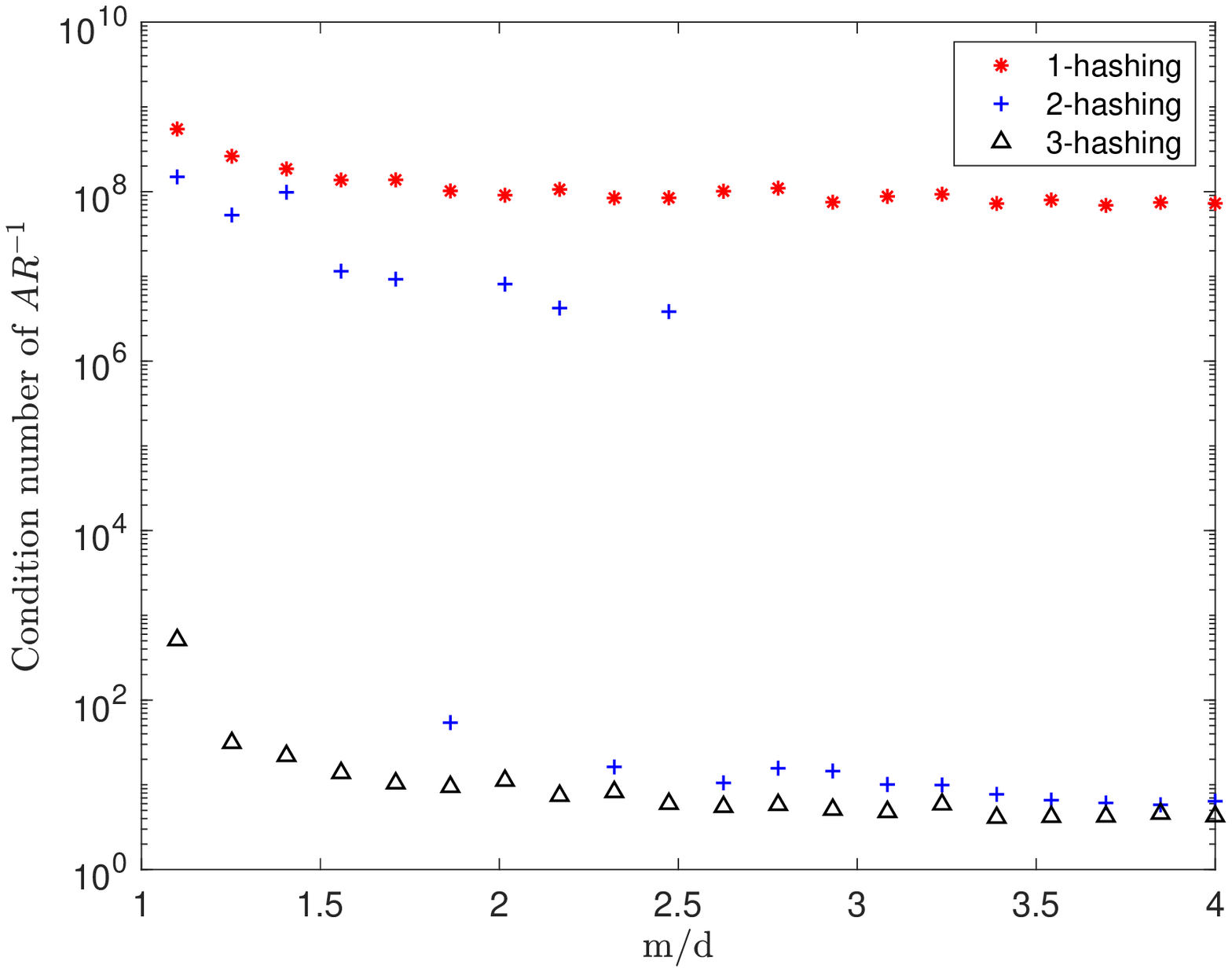} 
		        \caption{When the data matrix $A$ has higher coherence, using $s-$hashing with $s> 1$ is crucial to produce an acceptable preconditioner.}
		        \label{fig::1-2-3-semi-co}
		    \end{minipage}   
		\end{figure}

\subsection{Compilation and running environment for timed experiments}
\label{subsec::compilation_and_running_env}
The above numerical illustrations are done in MATLAB as it does not involve running time. For all the other studies,  unless otherwise mentioned, we use Intel C compiler icc with optimisation flag -O3 to compile all the C code, and Intel Fortran compiler ifort with -O3 to compile Fortran-based code. All code has been compiled in sequential mode and linked with sequential dense/sparse linear algebra libraries provided by Intel MKL, 2019 and Suitesparse 5.3.0. 
The machine used has Intel(R) Xeon(R) CPU E5-2667 v2 @ 3.30GHz with 8GB RAM.

\subsection{Tuning to set the default parameters}

The default parameter values $m$ for \solverNameDense{} (both with and without R-CPQR) solvers and $m,s$ for \solverNameSparse{} are chosen using a calibrating random matrix set. See the below graphs.

\paragraph{Calibration for Dense Solvers}
In \autoref{fig::new_blen_engineering}, 
\autoref{fig::new_blen_noCPQR_engineering},
\autoref{fig::blen_engineering},
\autoref{fig::LSRN_engineering}
we tested \solverNameDense{}, \solverNameDense{} without R-CPQR, Blendenpik and LSRN on the same calibration set and chose the optimal parameters for them for fair comparison. The default parameters chosen are $m = 1.7d$, $m=1.7d$, $m=2.2d$ and $m=1.1d$ for \solverNameDense{}, \solverNameDense{} without R-CPQR, Blendenpik and LSRN respectively.

\newcommand{\mysize}{0.3}

\calibrationSixFigures{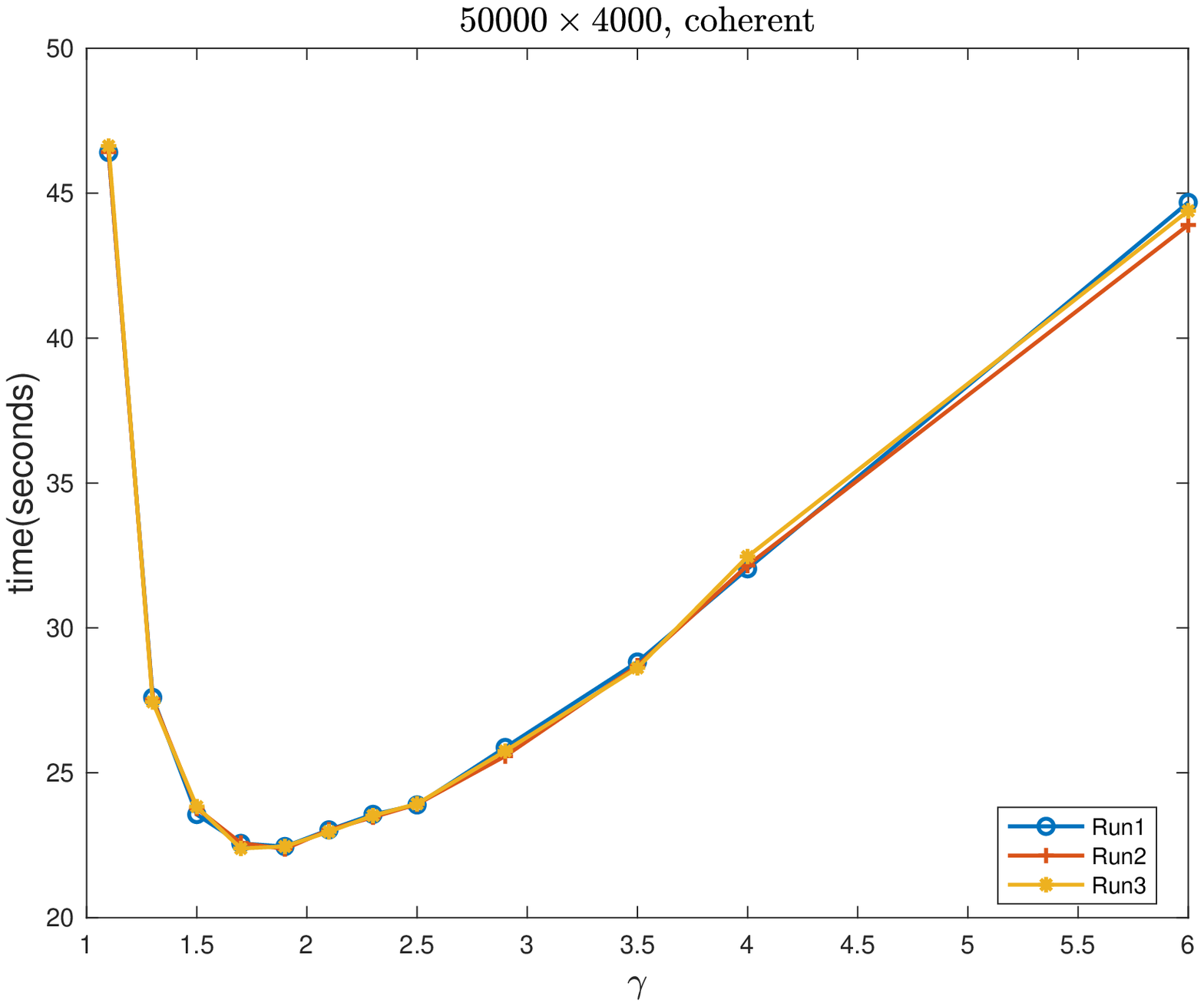}
{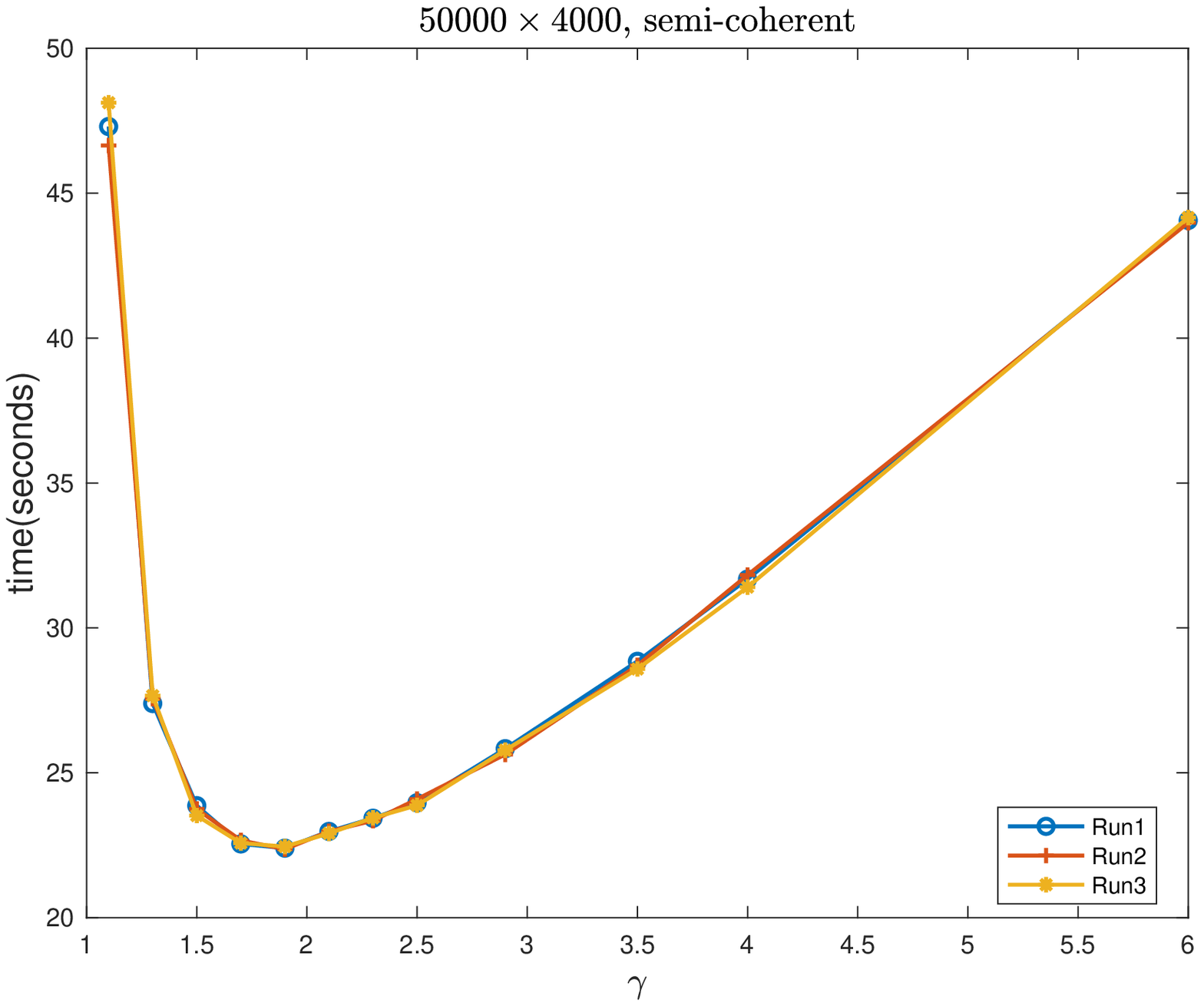}
{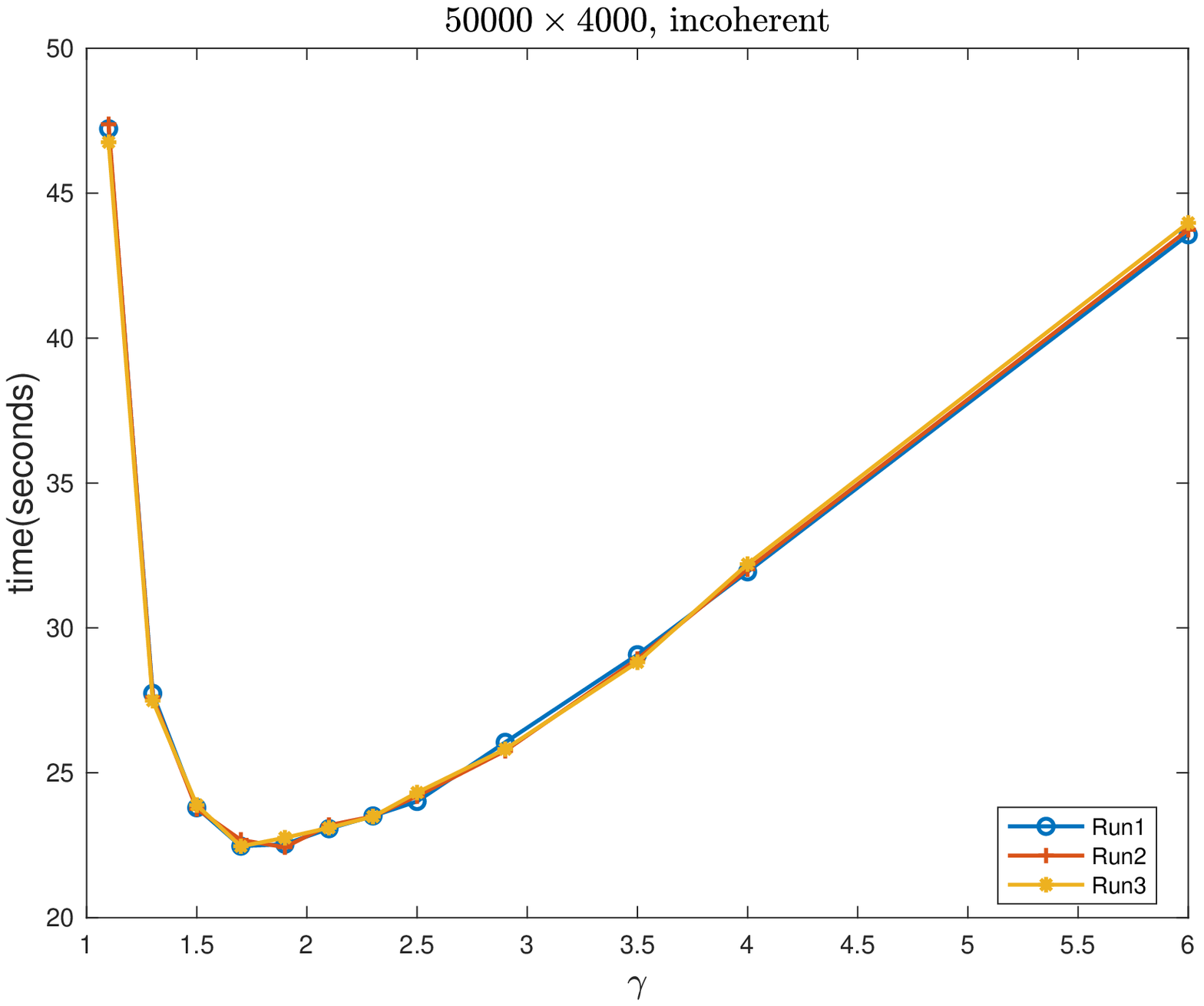}
{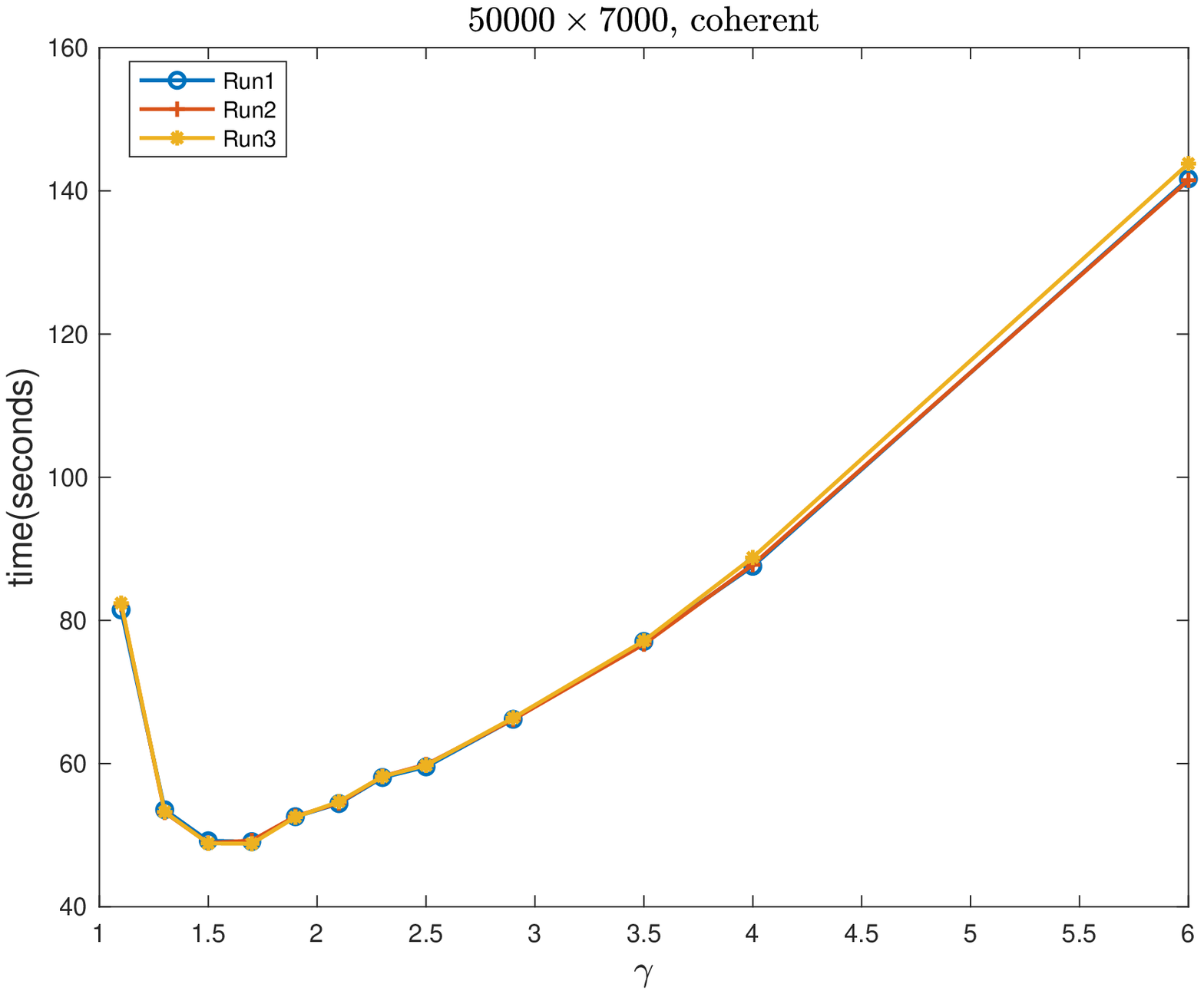}
{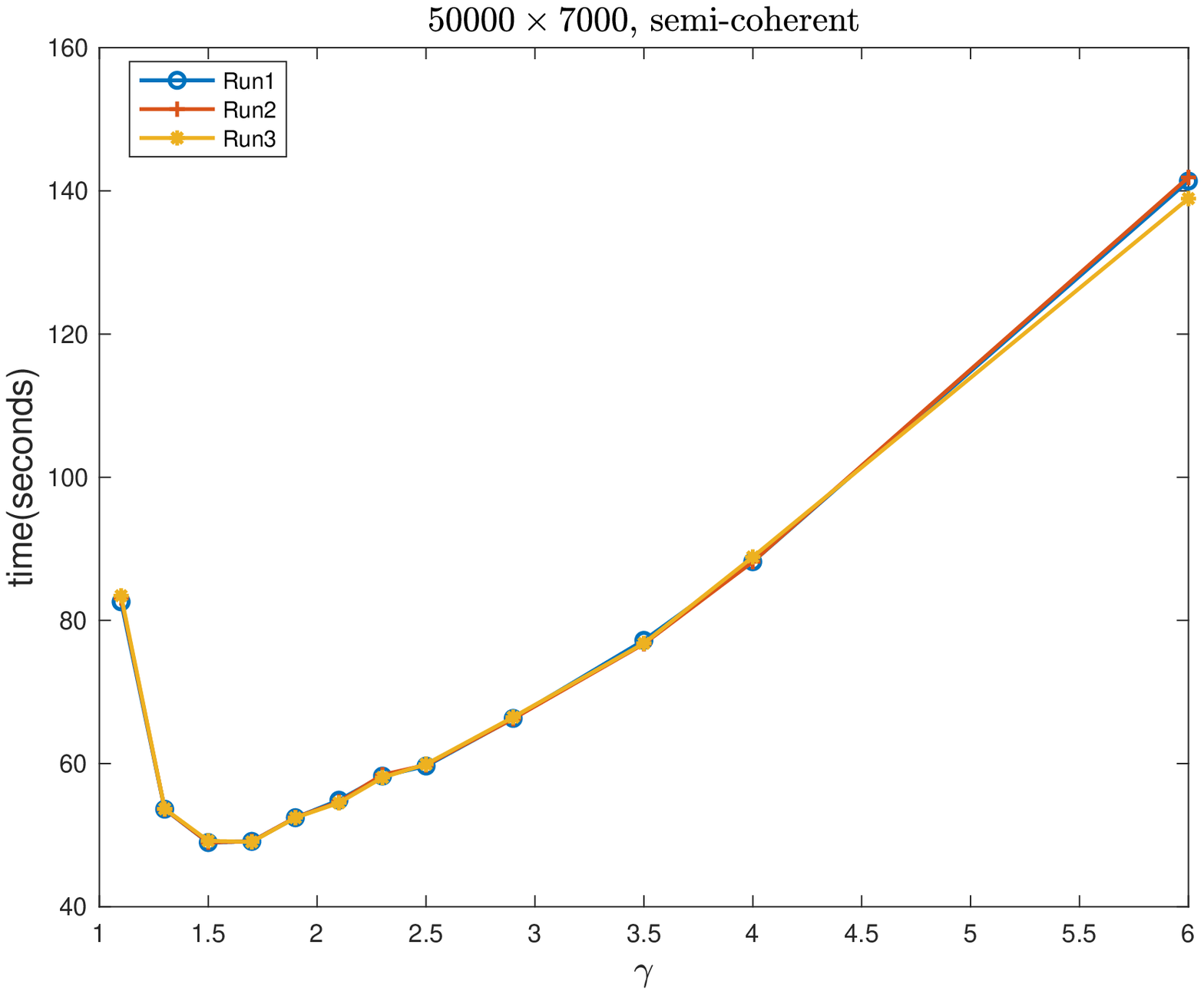}
{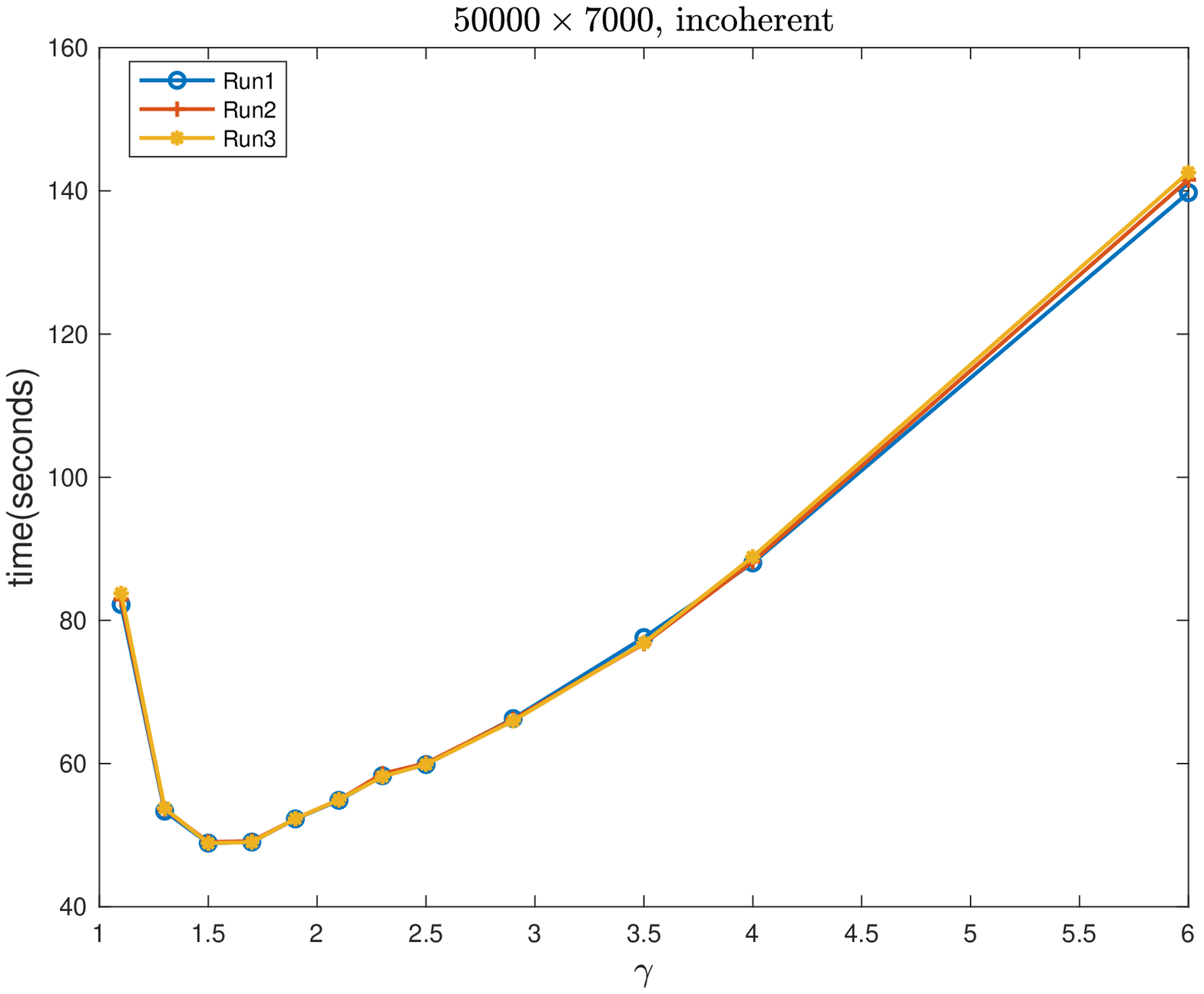}
{\calibrationDenseCaptionSentenceOne{\solverNameDense{}}. \calibrationDenseCaptionSentenceTwo{\solverNameDense{}}. \calibrationDenseCaptionSentenceThree{$
\gamma=1.7$}.}
{fig::new_blen_engineering}

\calibrationSixFigures{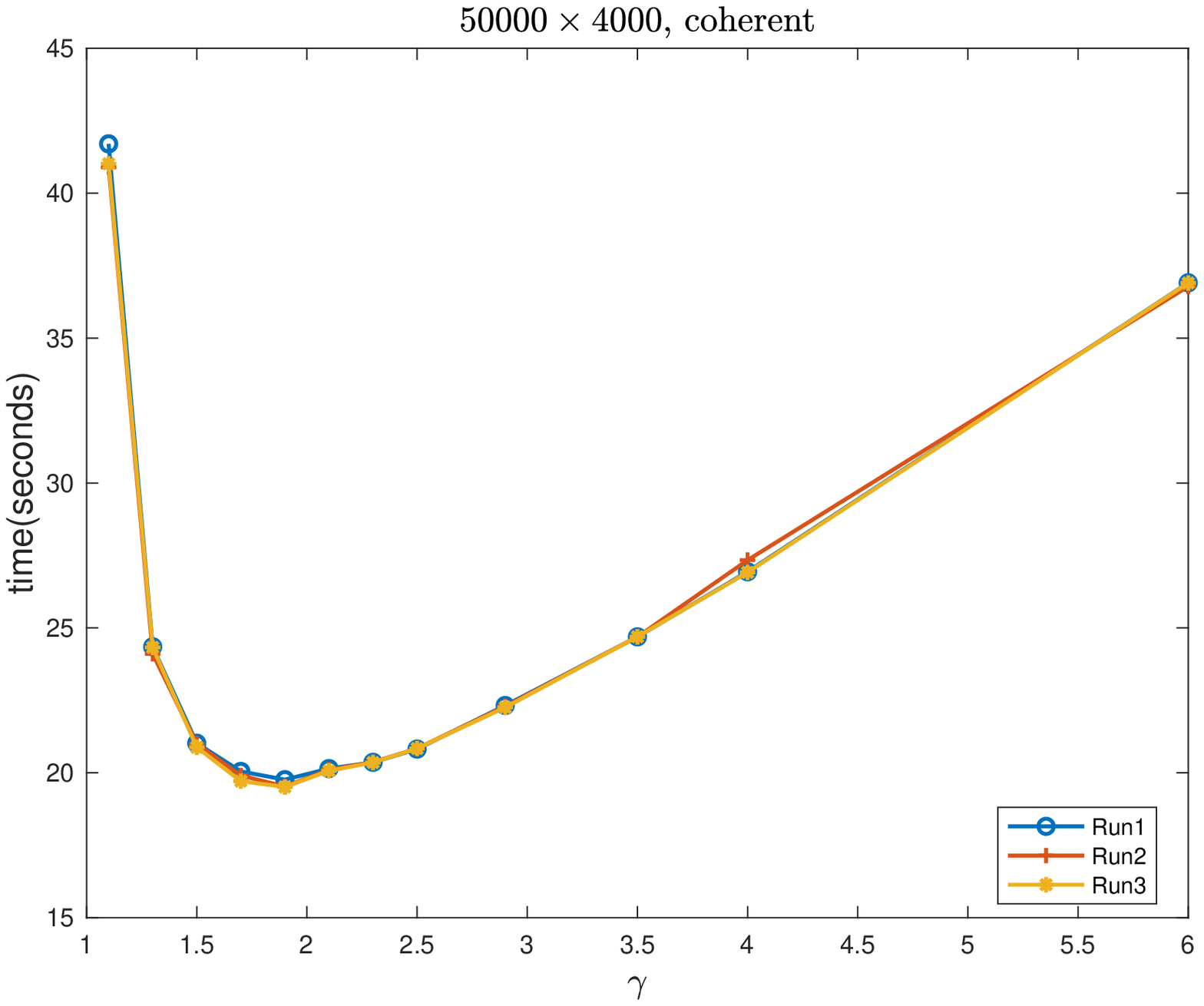}
{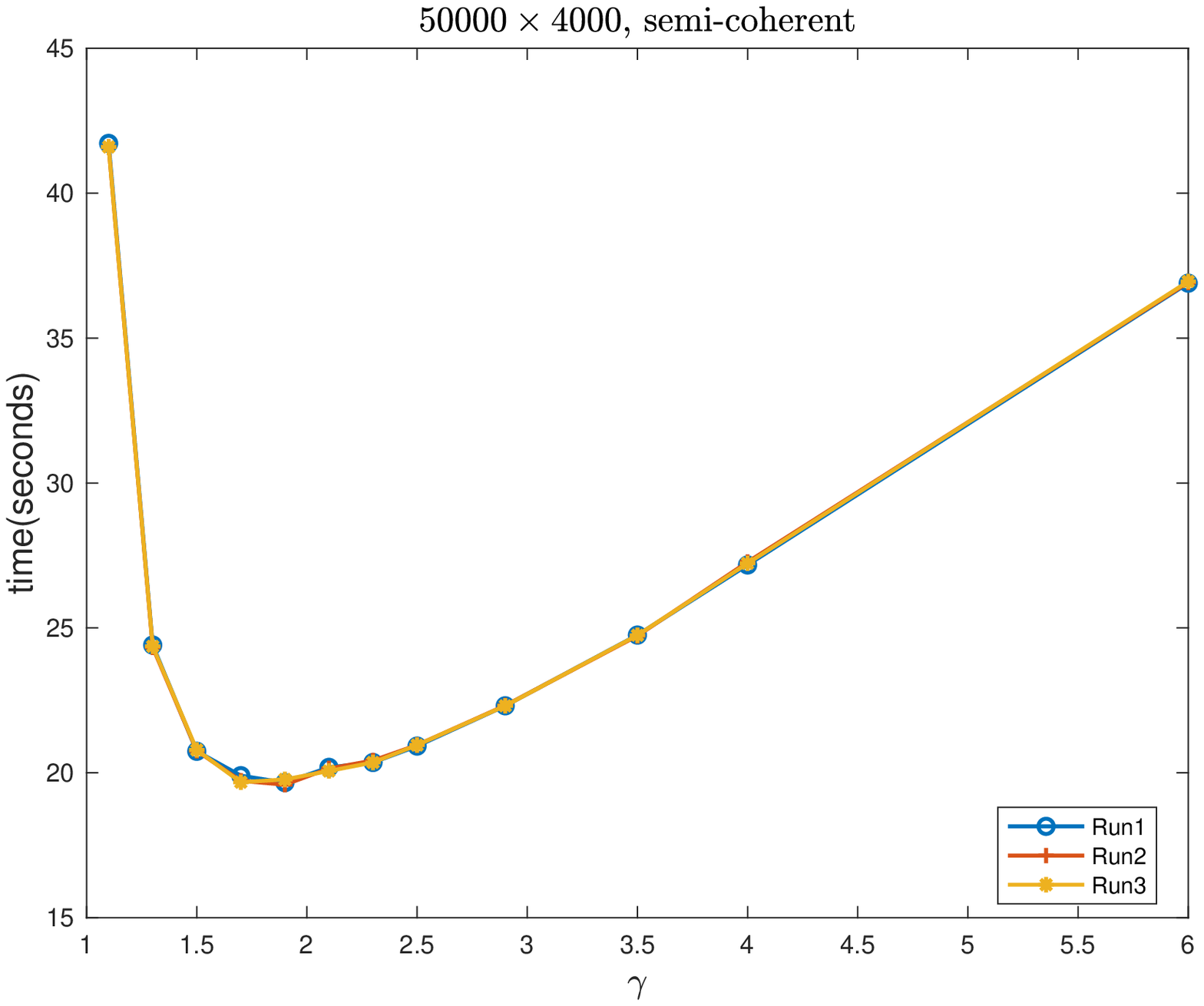}
{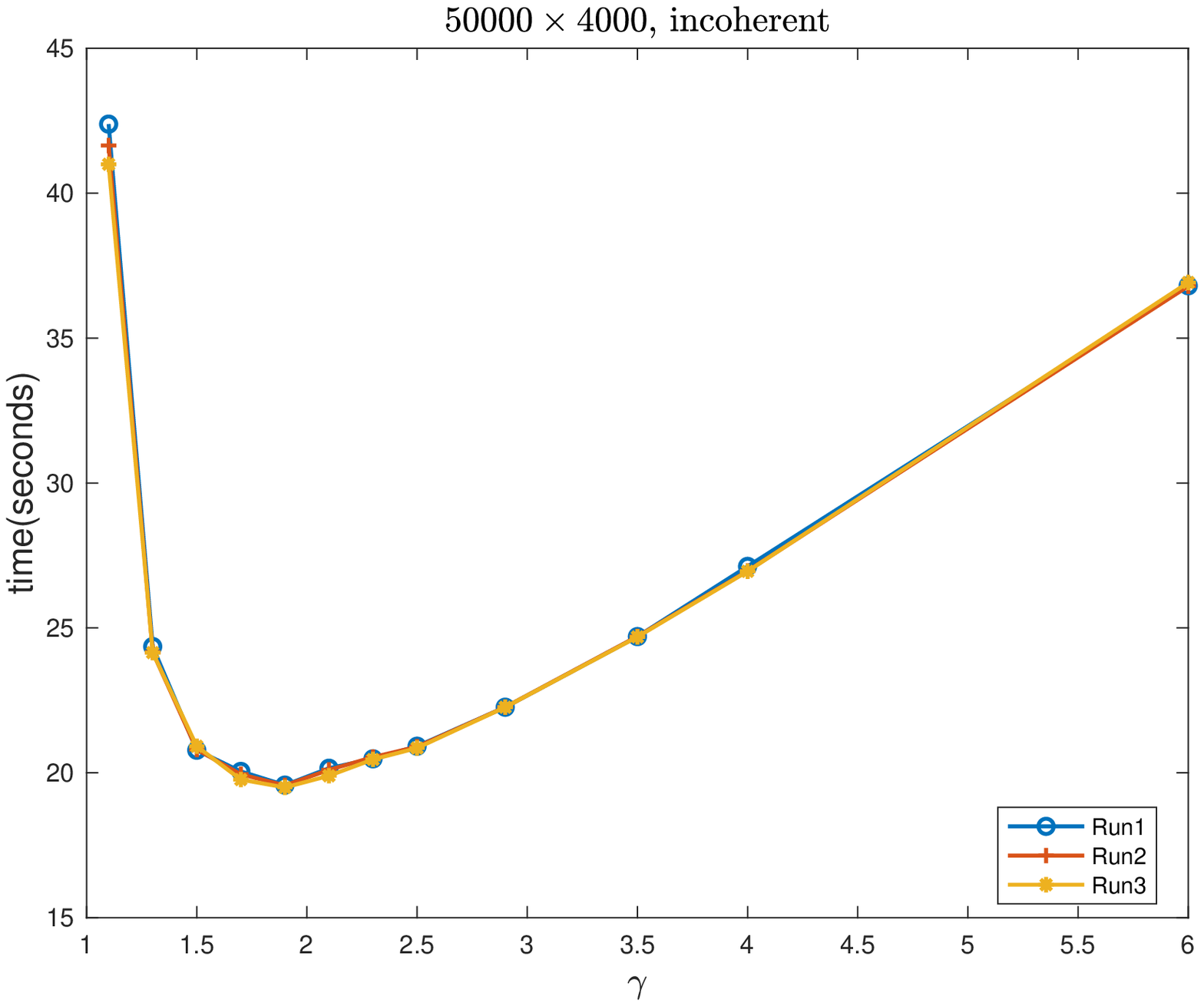}
{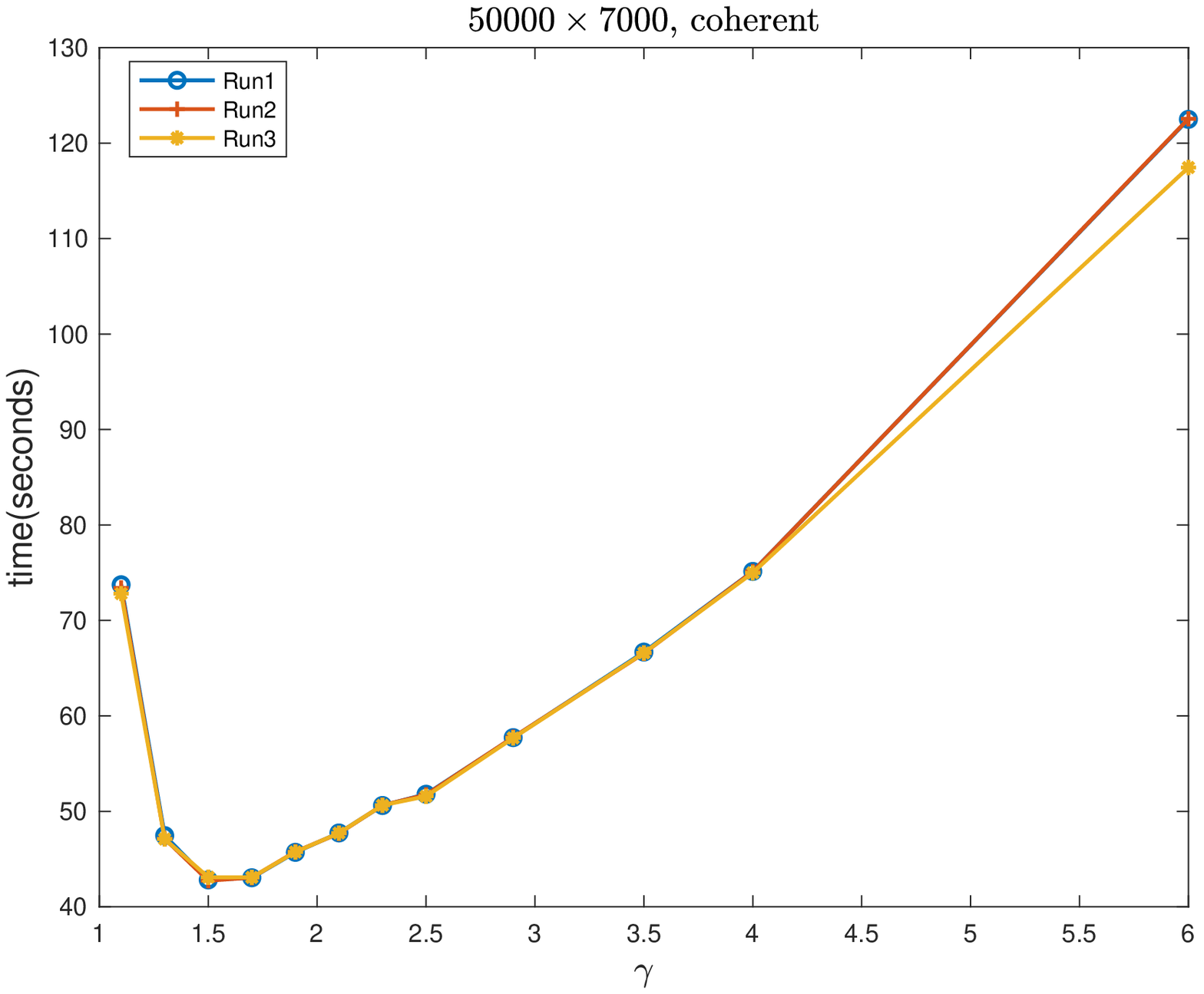}
{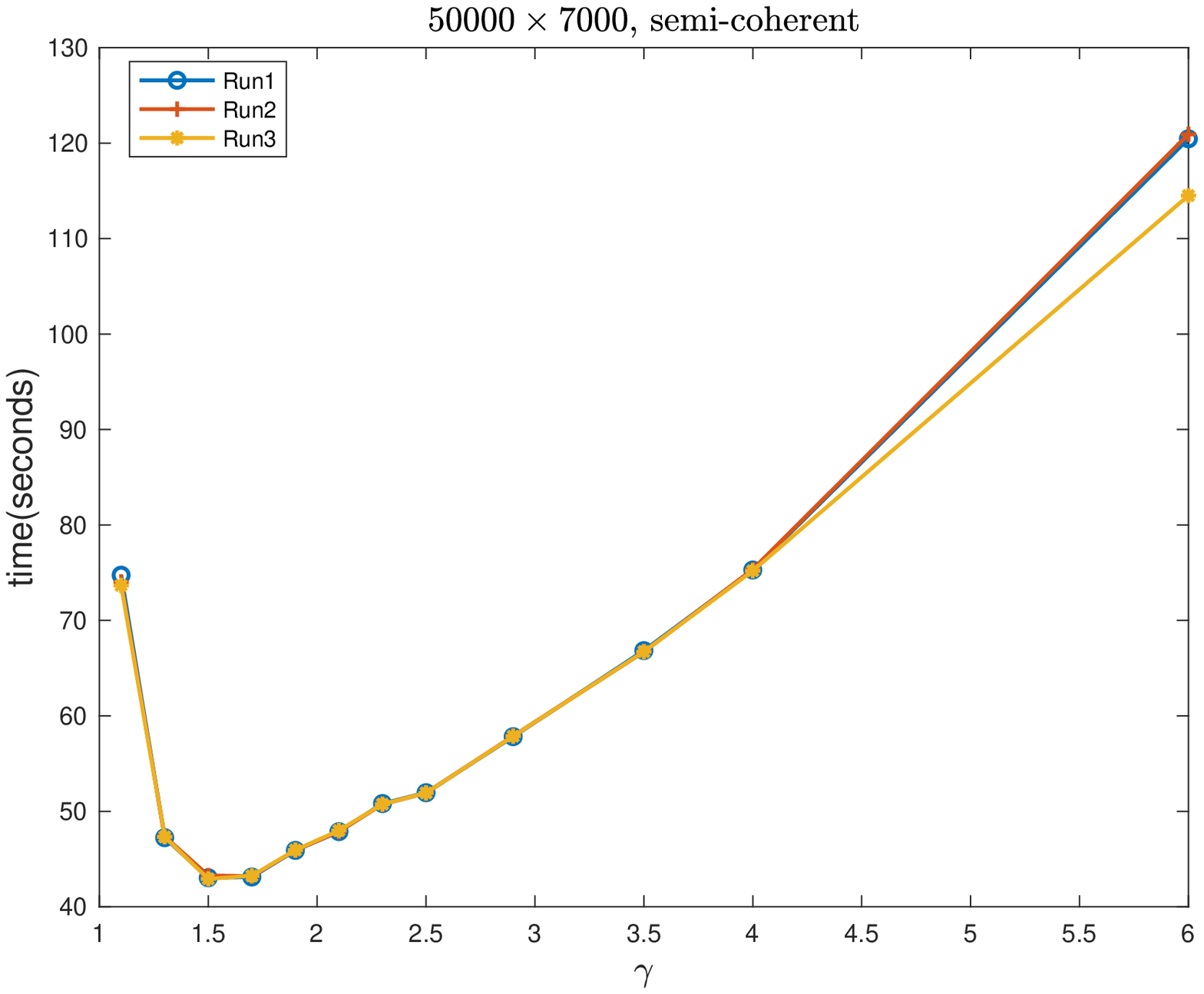}
{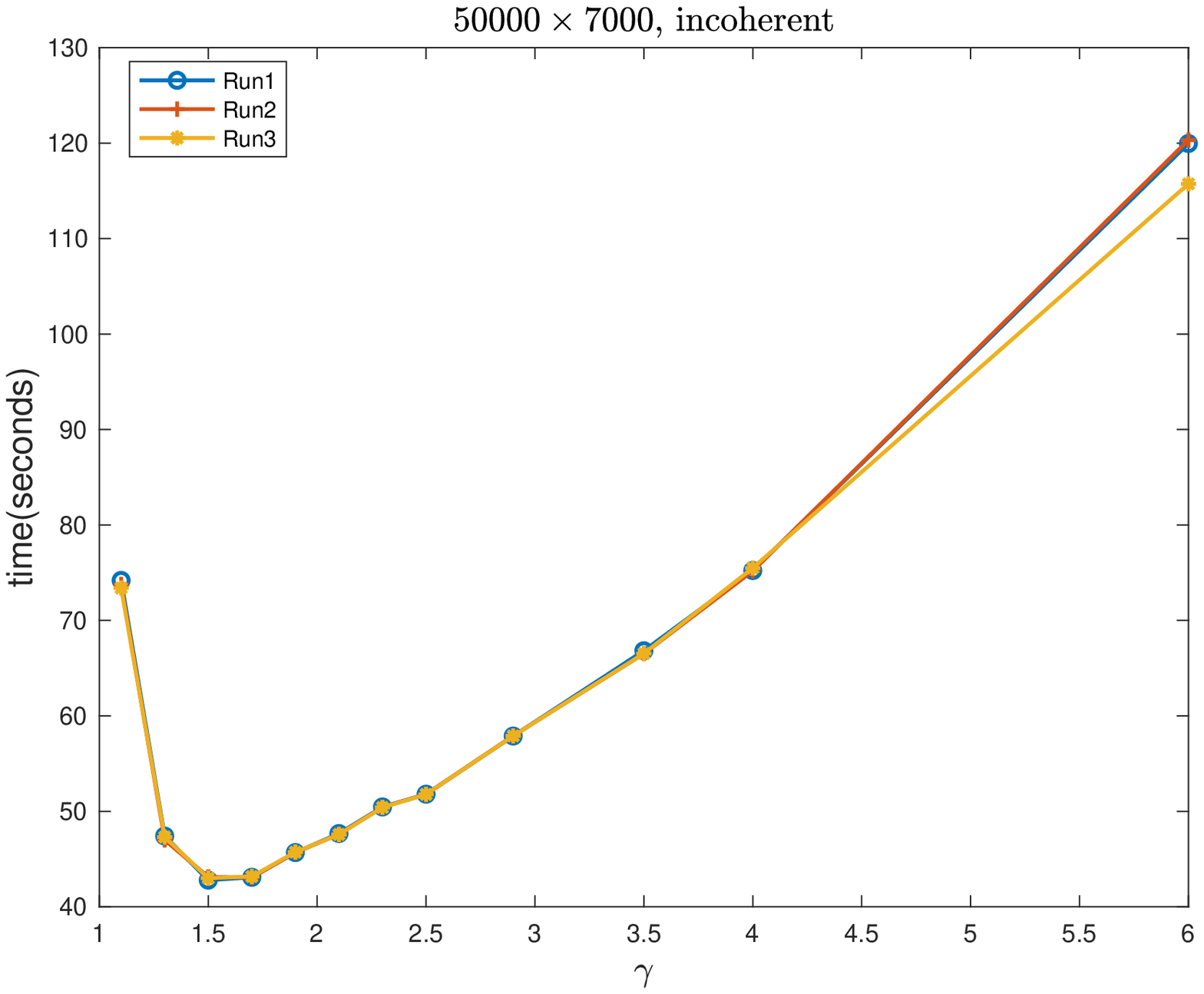}
{\calibrationDenseCaptionSentenceOne{\solverNameDense{} without R-CPQR}. \calibrationDenseCaptionSentenceTwo{\solverNameDense{} without R-CPQR}. Note that using LAPACK QR instead of R-CPQR results in slightly shorter running time (c.f. Figure \ref{fig::new_blen_engineering} ). \calibrationDenseCaptionSentenceThree{$\gamma=1.7$}.}
{fig::new_blen_noCPQR_engineering}

\calibrationSixFigures{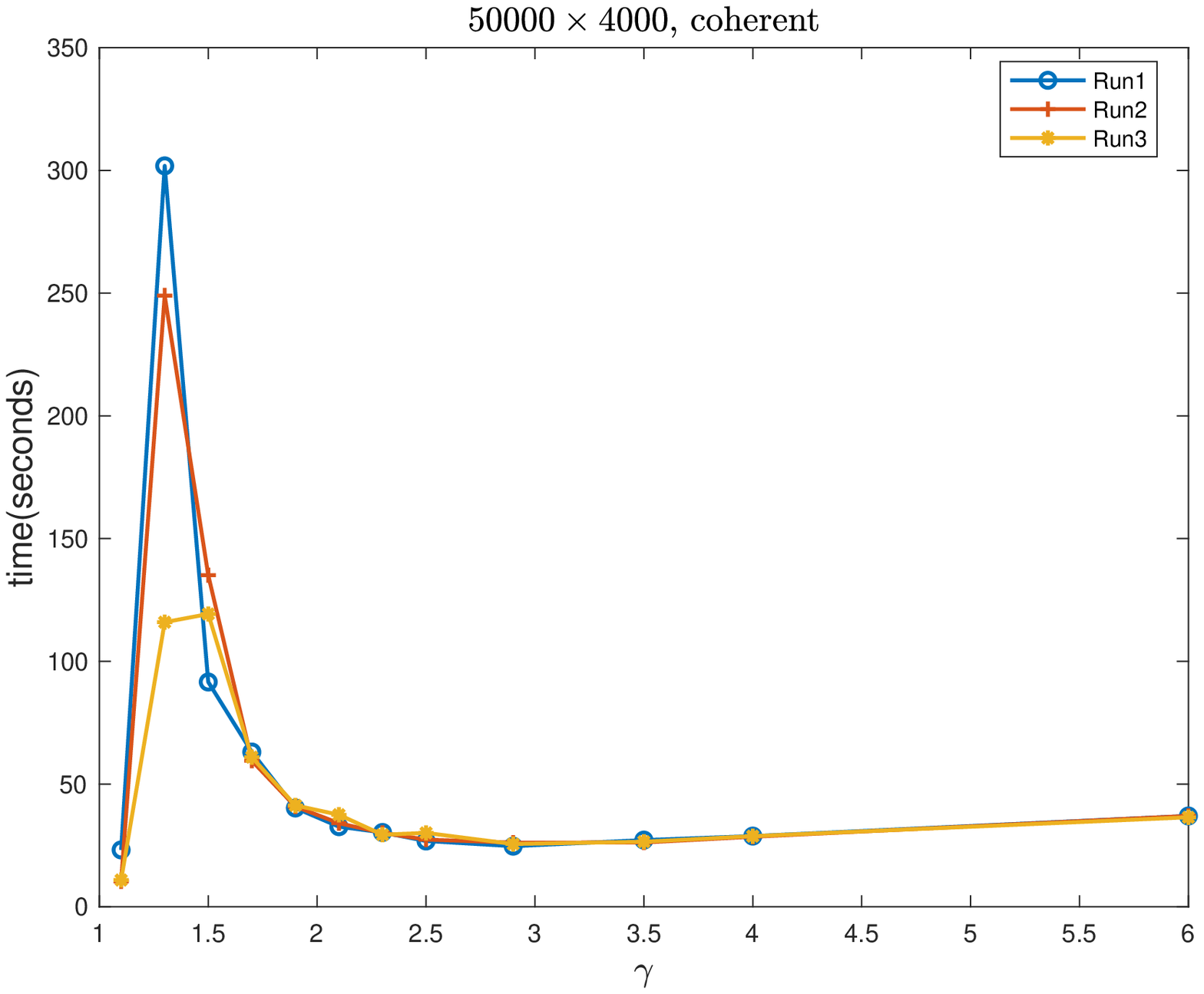}
{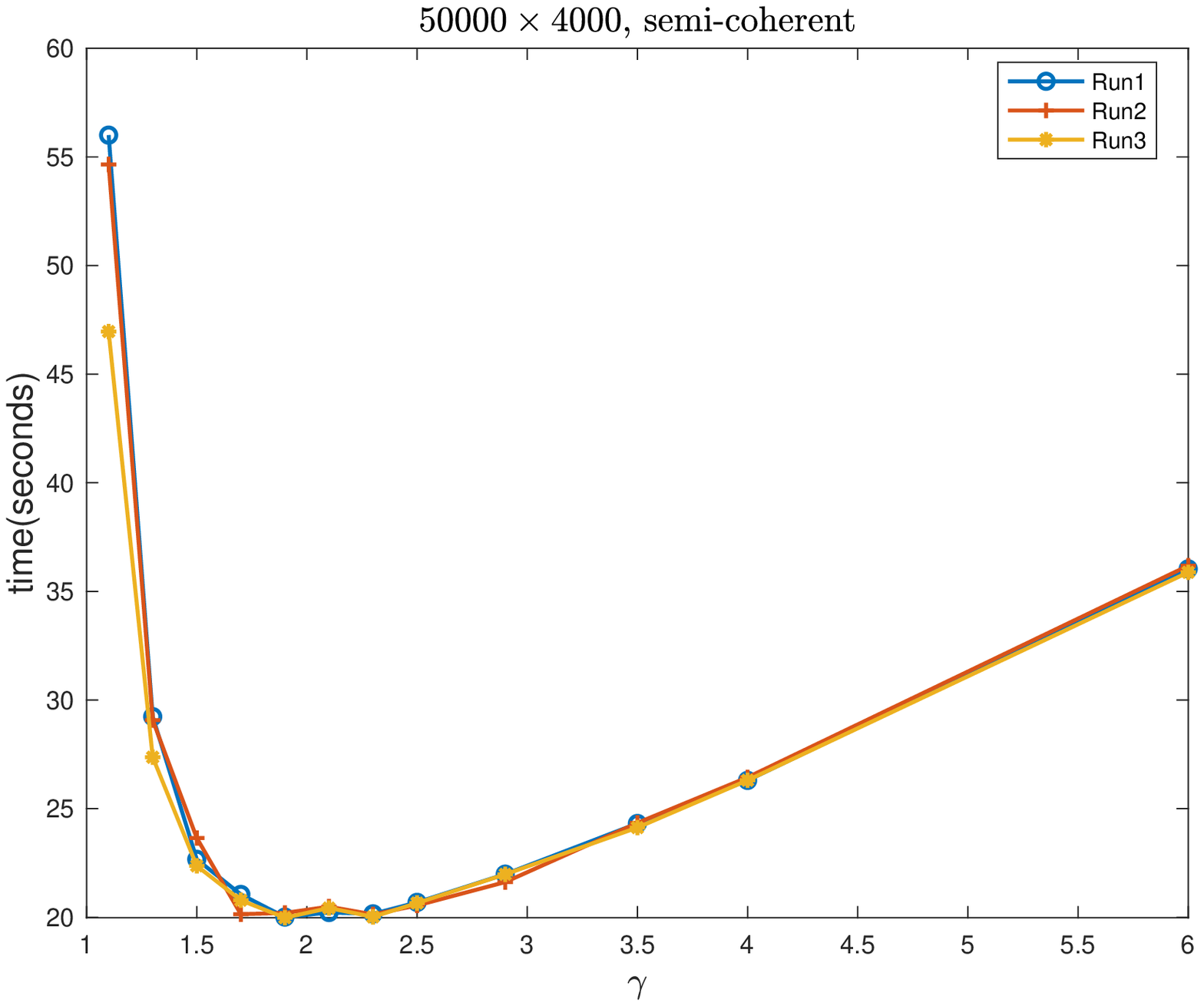}
{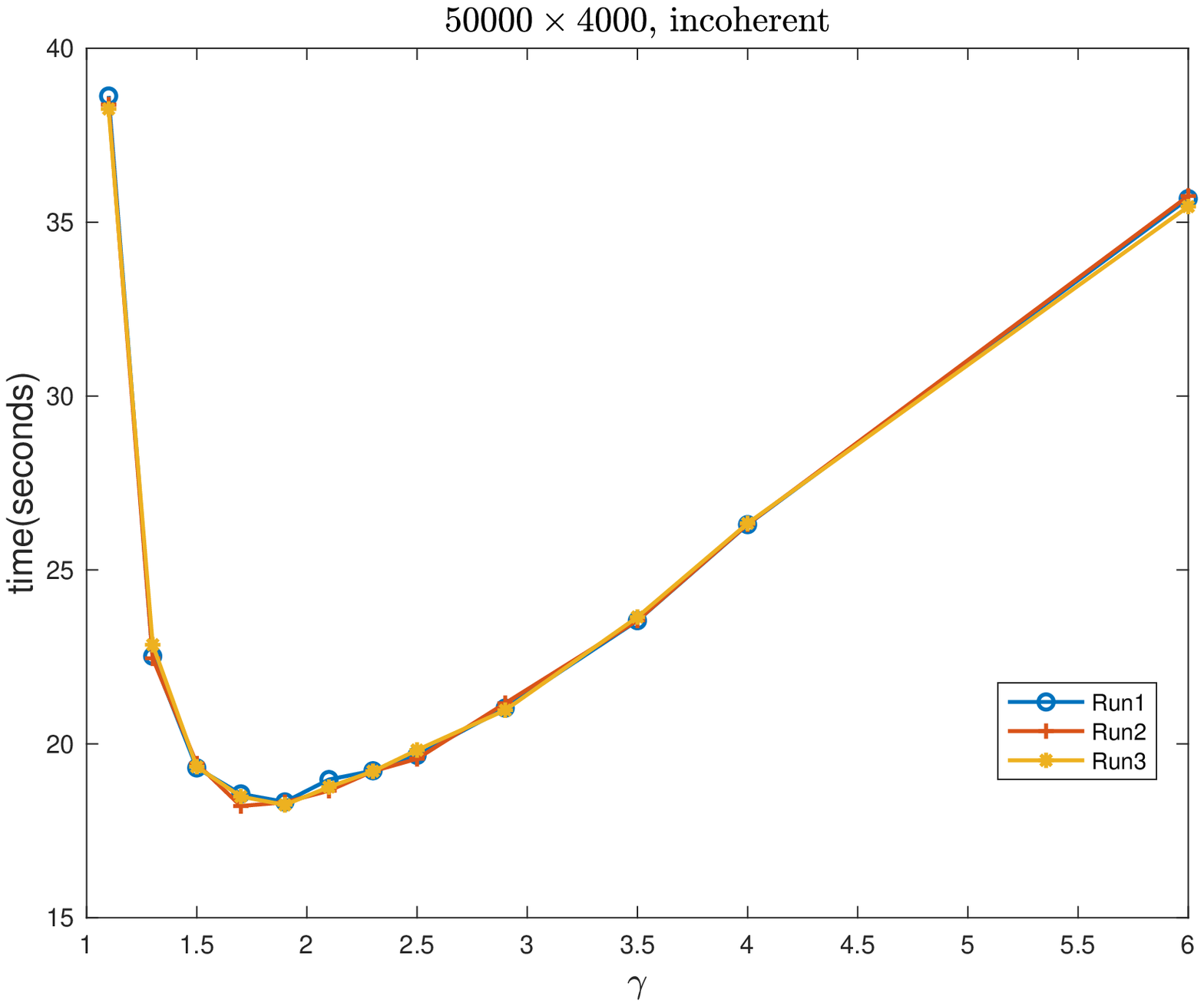}
{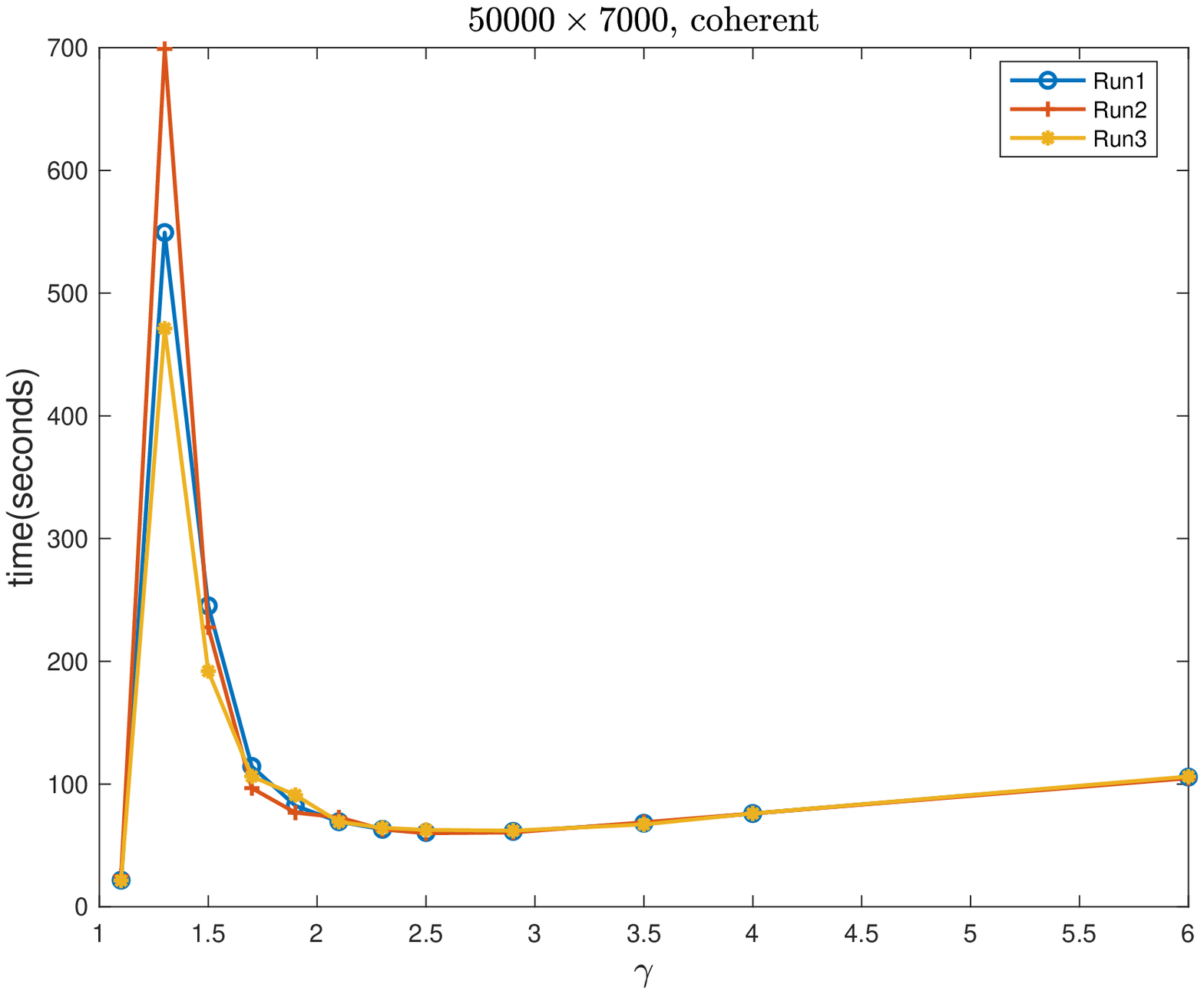}
{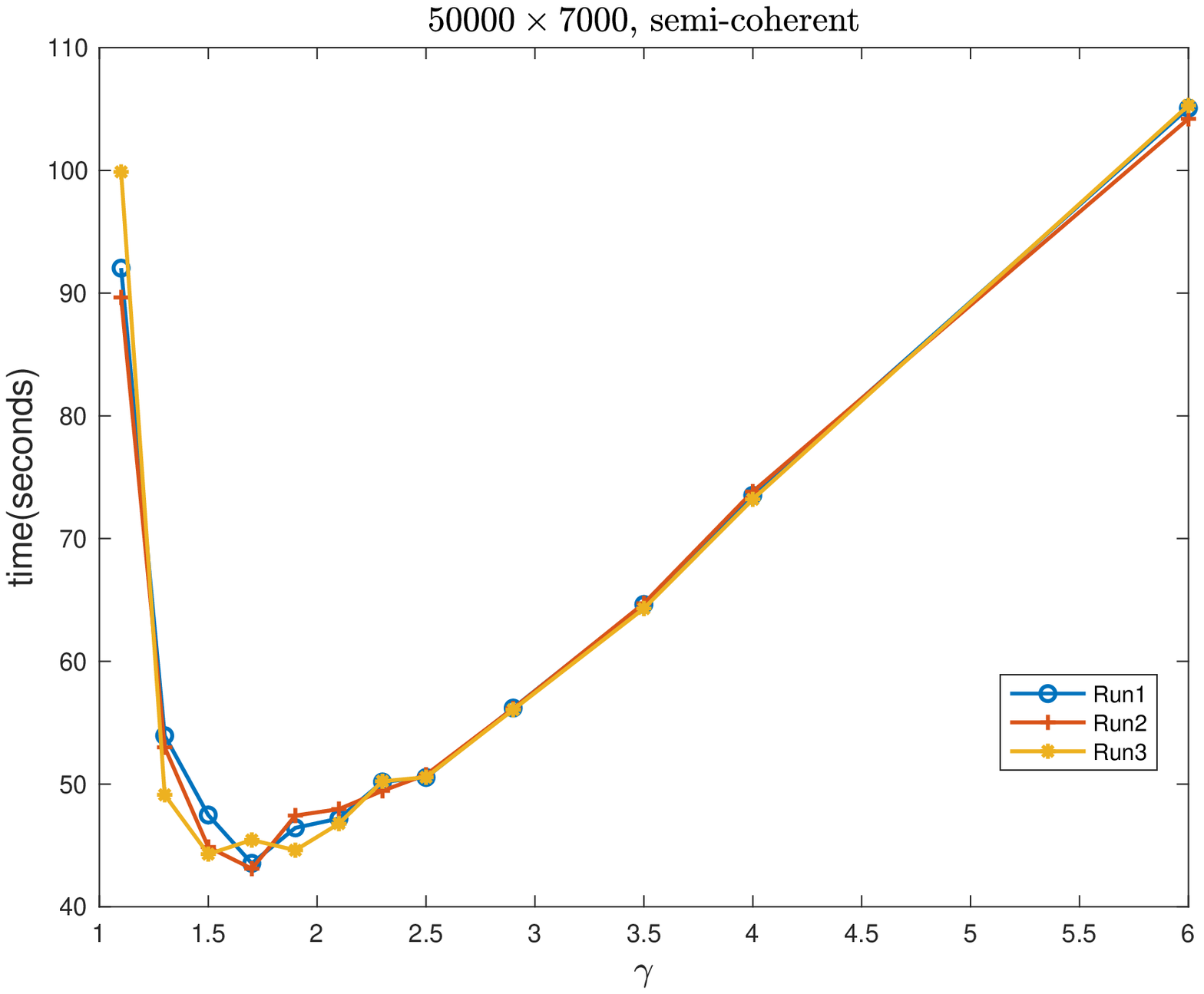}
{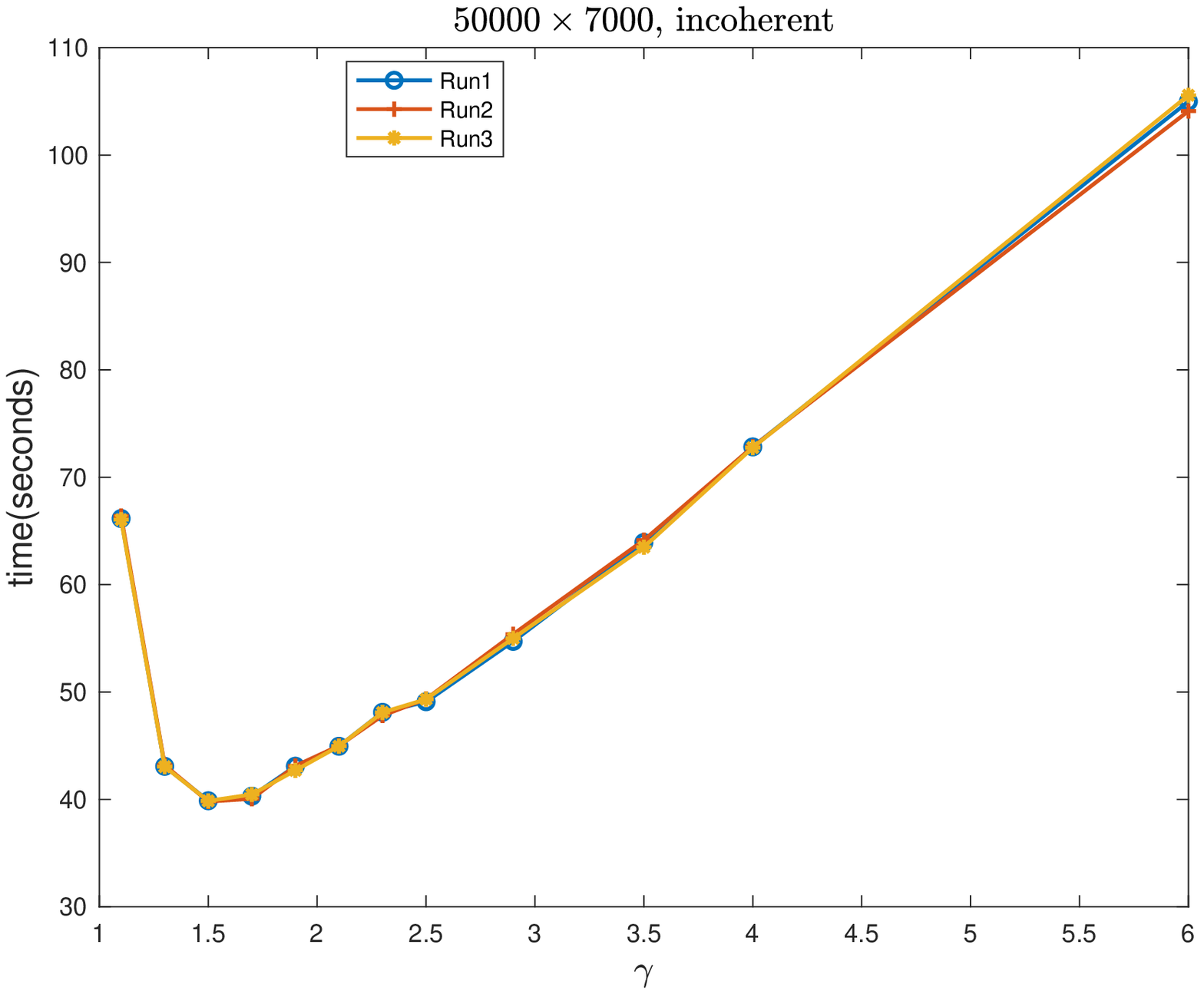}
{\calibrationDenseCaptionSentenceOne{Blendenpik}. \calibrationDenseCaptionSentenceTwo{Blendenpik}. Note that Blendenpik handles coherent dense $A$ significantly less well than our dense solvers. \calibrationDenseCaptionSentenceThree{$\gamma=2.2$}.}
{fig::blen_engineering}

\calibrationSixFigures{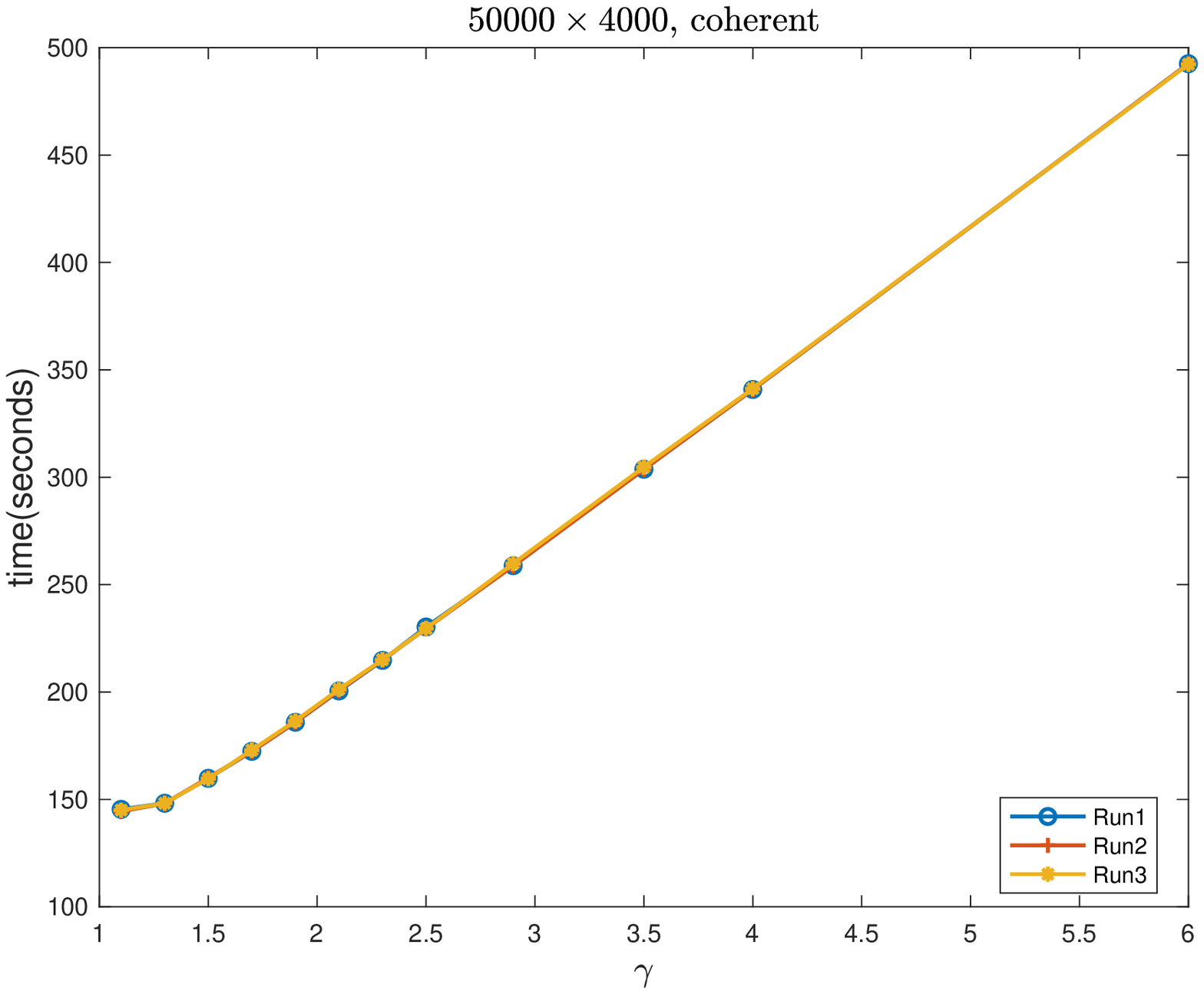}
{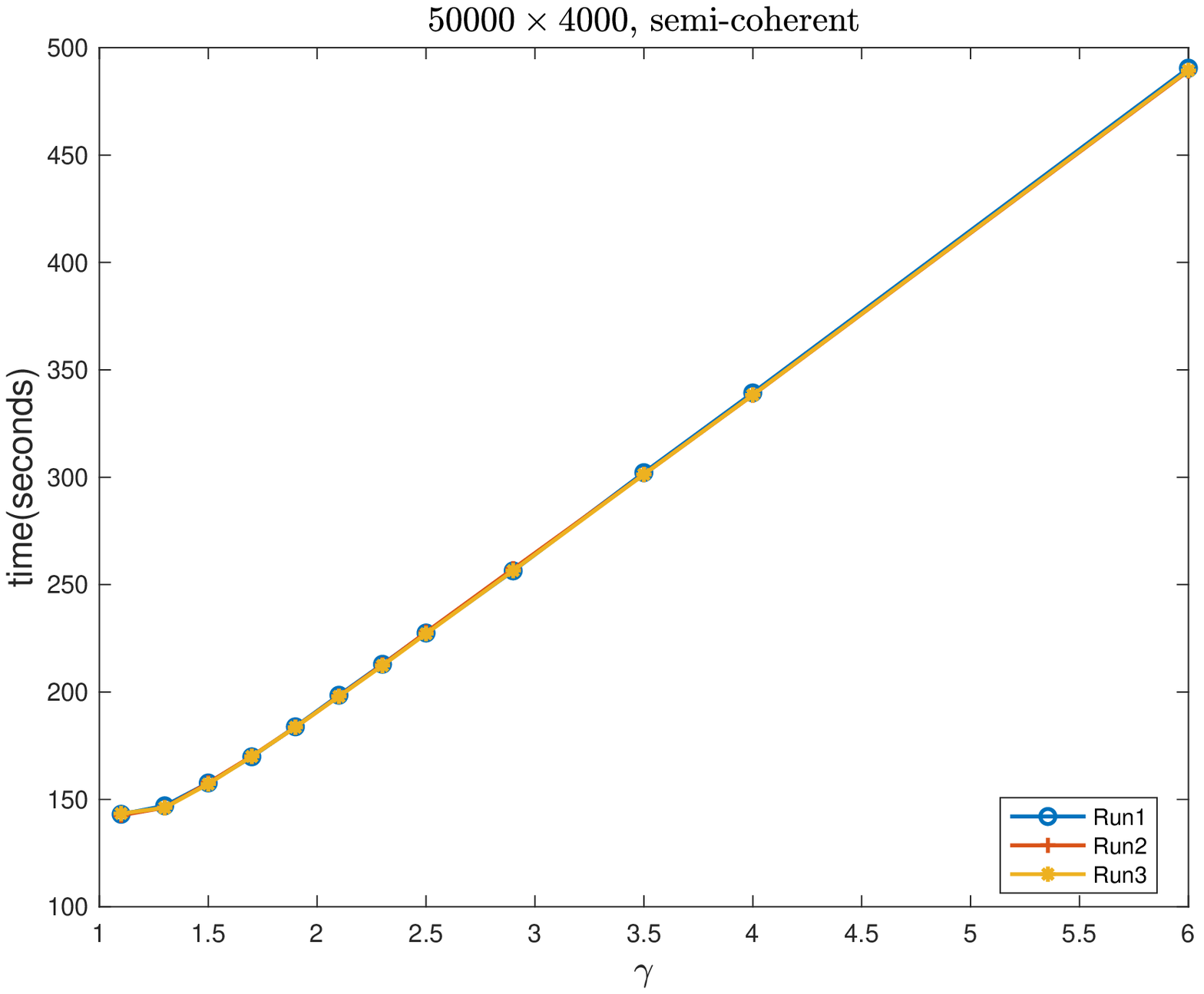}
{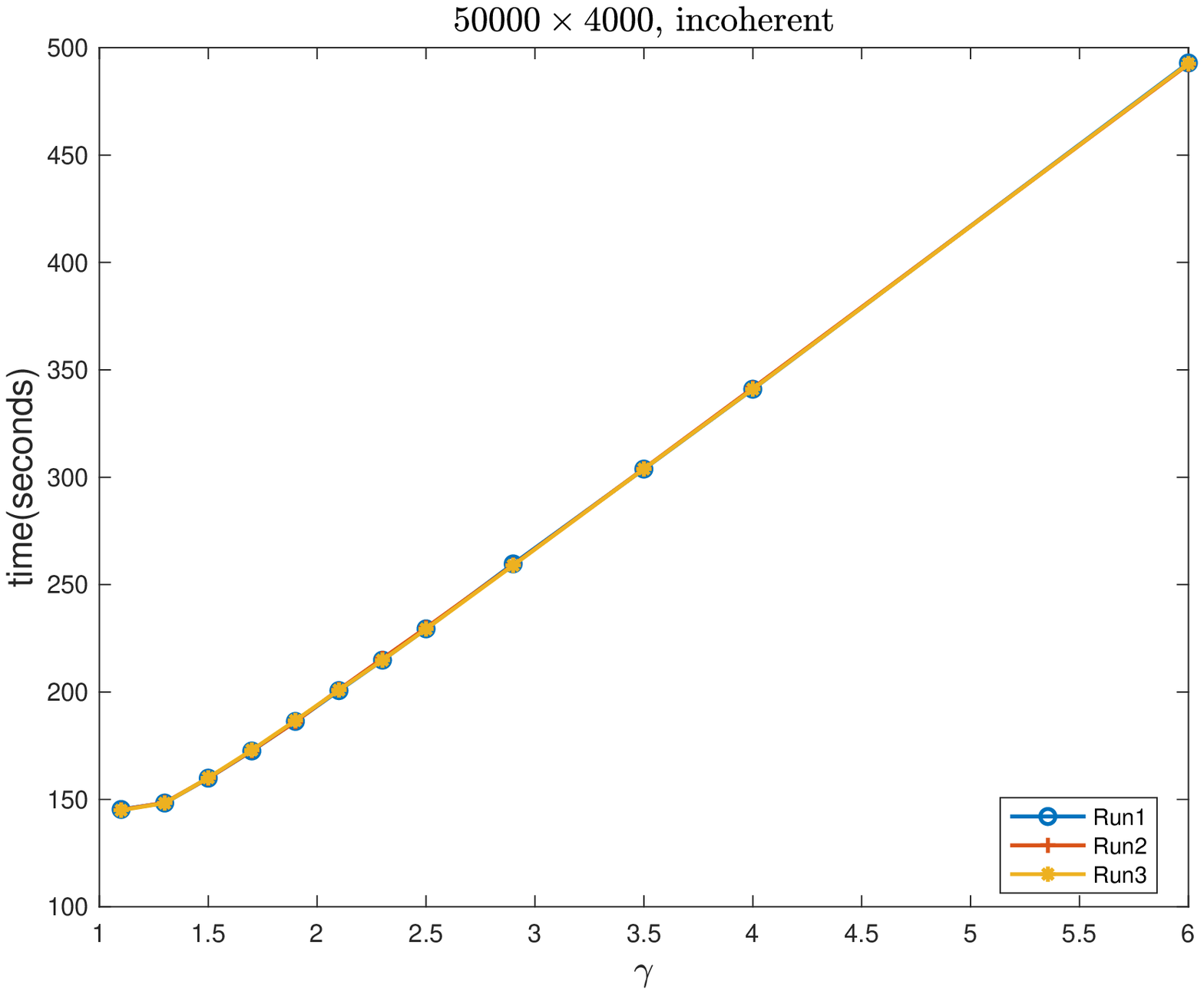}
{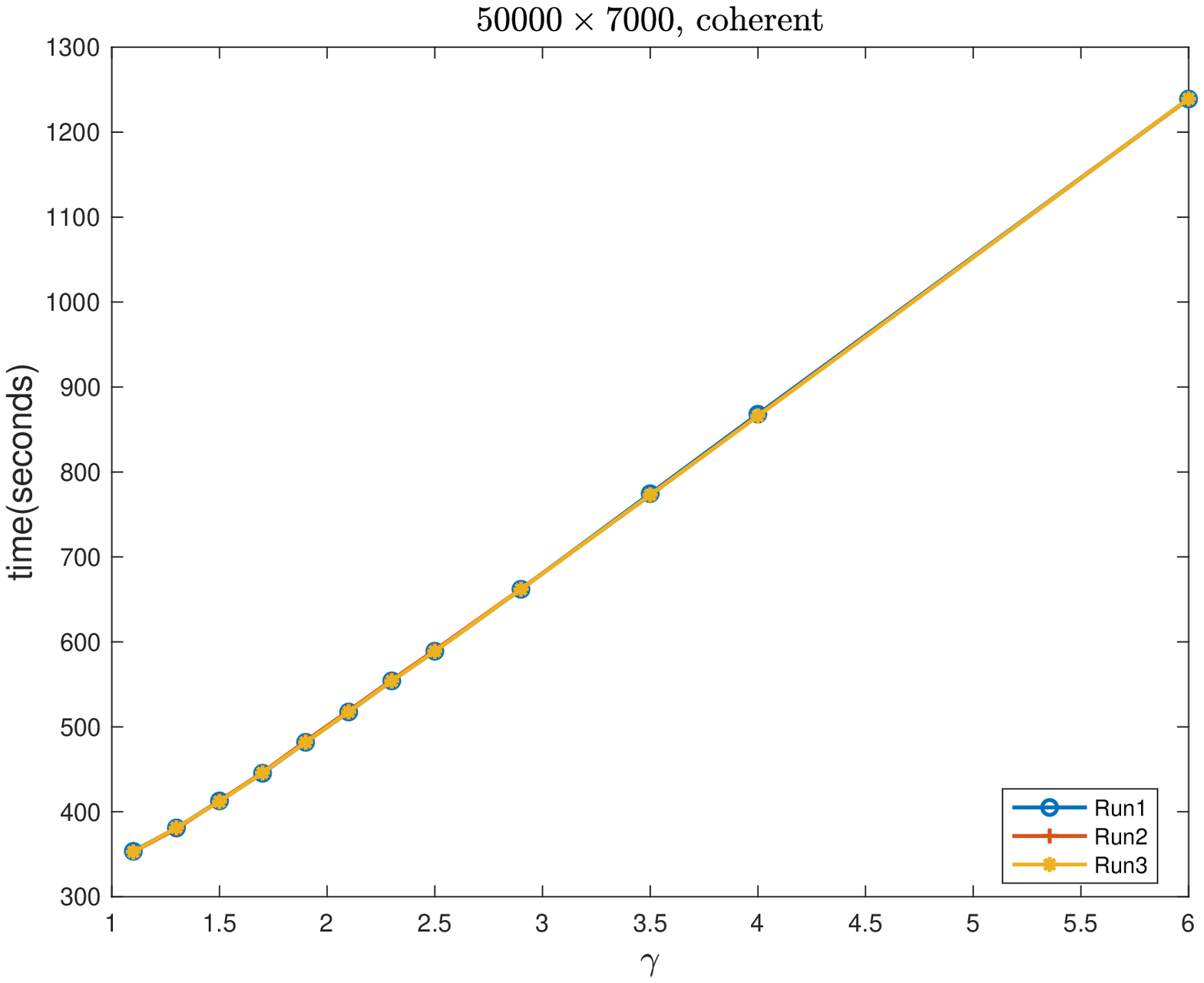}
{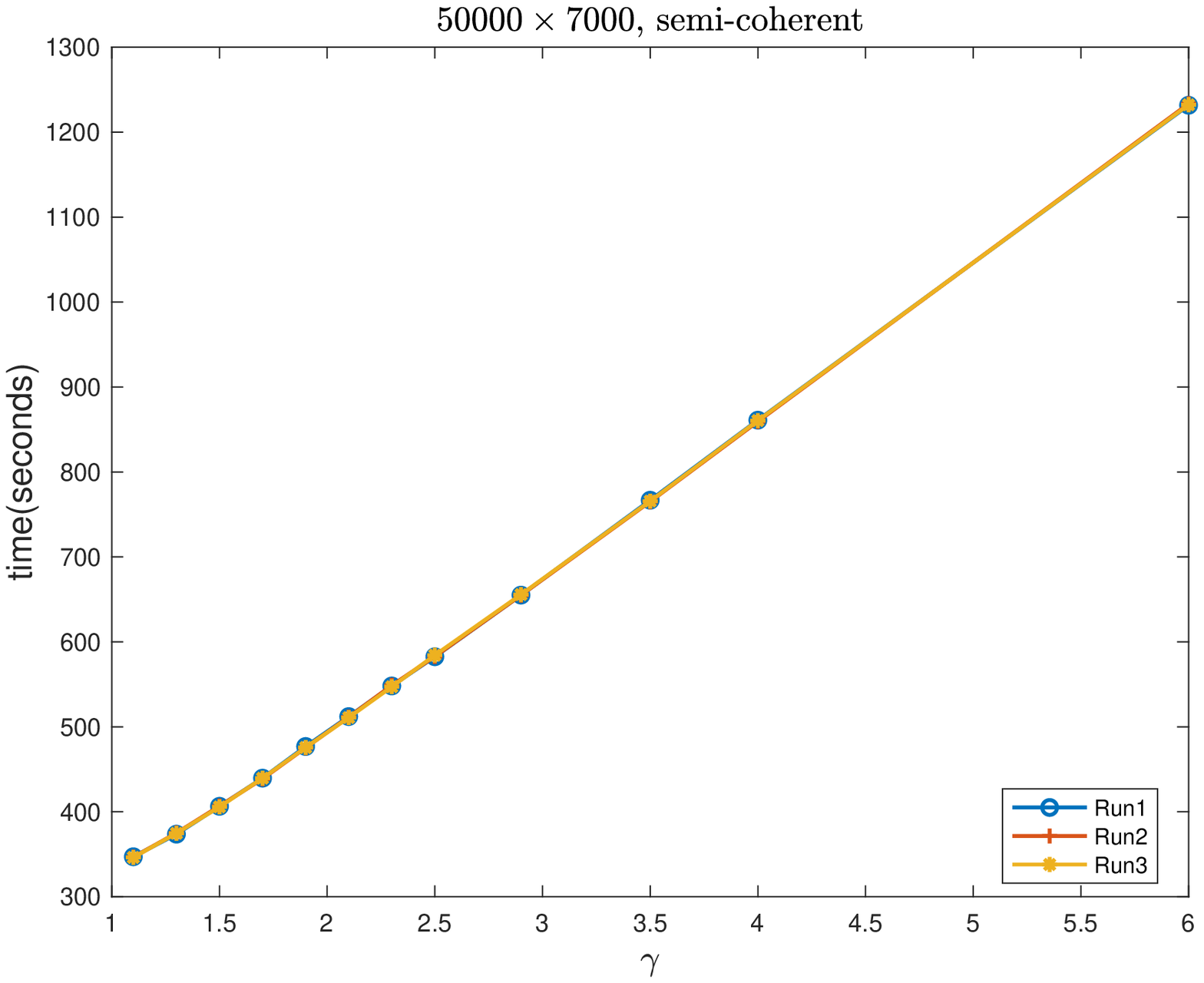}
{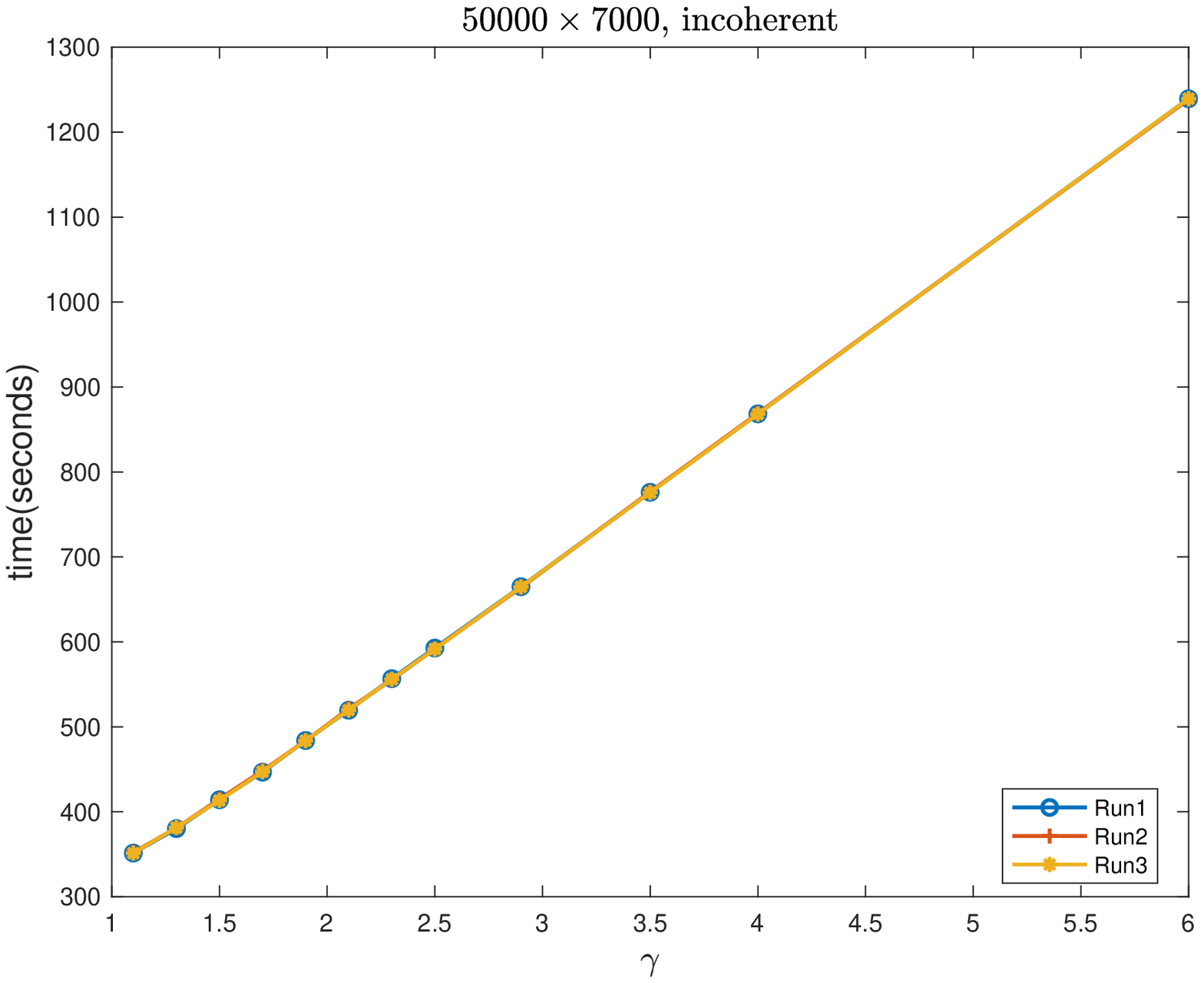}
{\calibrationDenseCaptionSentenceOne{LSRN}. \calibrationDenseCaptionSentenceTwo{LSRN}. Note that LSRN runs more than 5 times slower comparing to Blendenpik or \solverName{} in the serial testing environment, due to the use of SVD and Gaussian sketching. \calibrationDenseCaptionSentenceThree{$\gamma=1.1$}.} 
{fig::LSRN_engineering}

\paragraph{Calibration for \solverNameSparse{}}

In In Figures \ref{fig::Ls_qr_engineering_time} and \autoref{fig::Ls_qr_engineering_residual} 
\autoref{fig::Ls_lsrn_engineering_time} and \autoref{fig::Ls_lsrn_engineering_residual}, we tested \solverNameSparse{} and LSRN on the same calibration set and choose the optimal parameters from them for fair comparison.
Note that in the below calibration, $\tau_r = 10^{-4}$ is used instead of the default value of \solverNameSparse{}. There is no $\tau_a$ because the solver at that time has not implemented Step 3 of \refAlgOne. The SPQR ordering used is the SuiteSparse default instead of \solverName{} default (AMD). The other parameters, $it_{max}, rcond_{thres}, perturb$ are the same as the default. 

\calibrationSixFigures{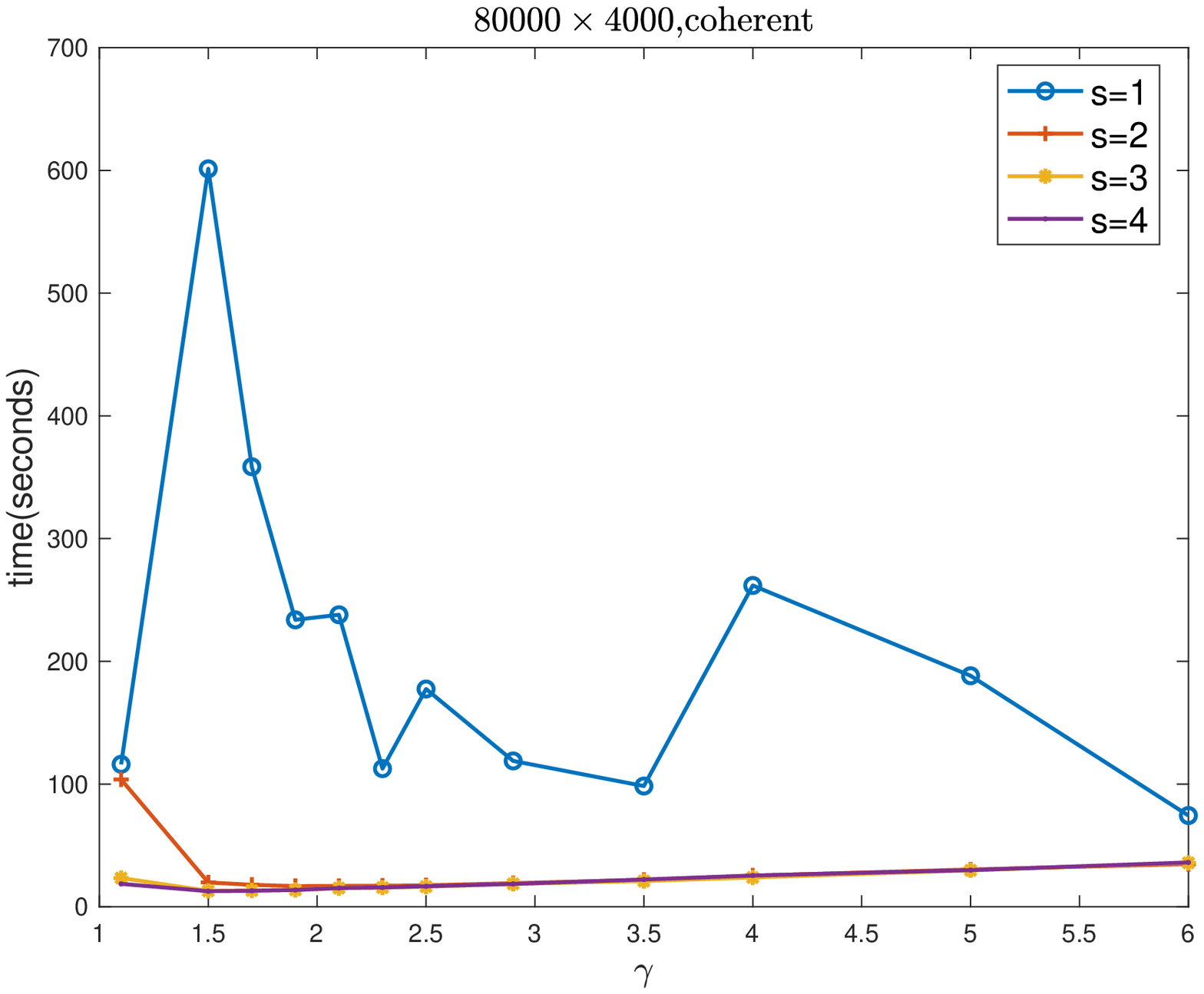}
{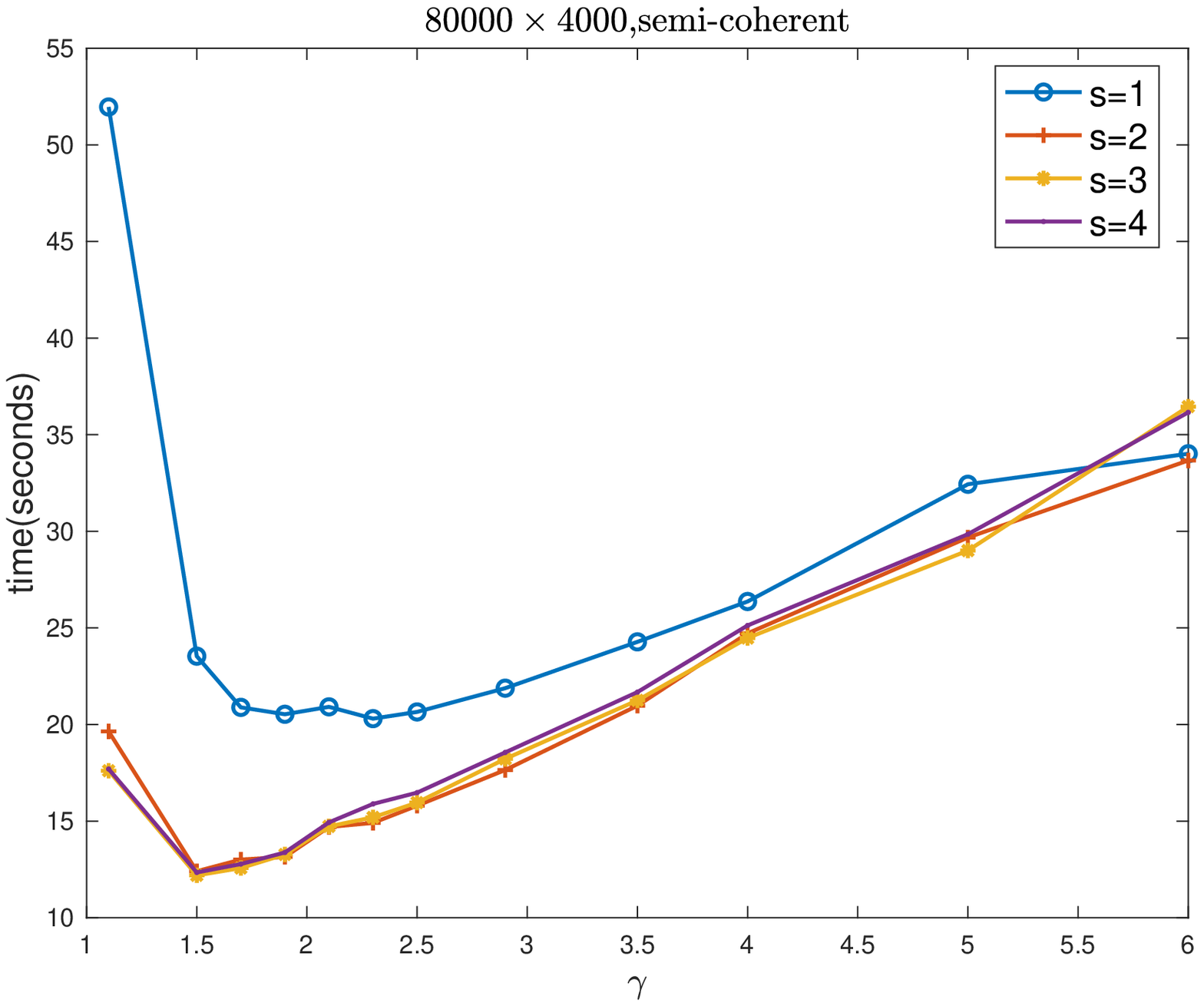}
{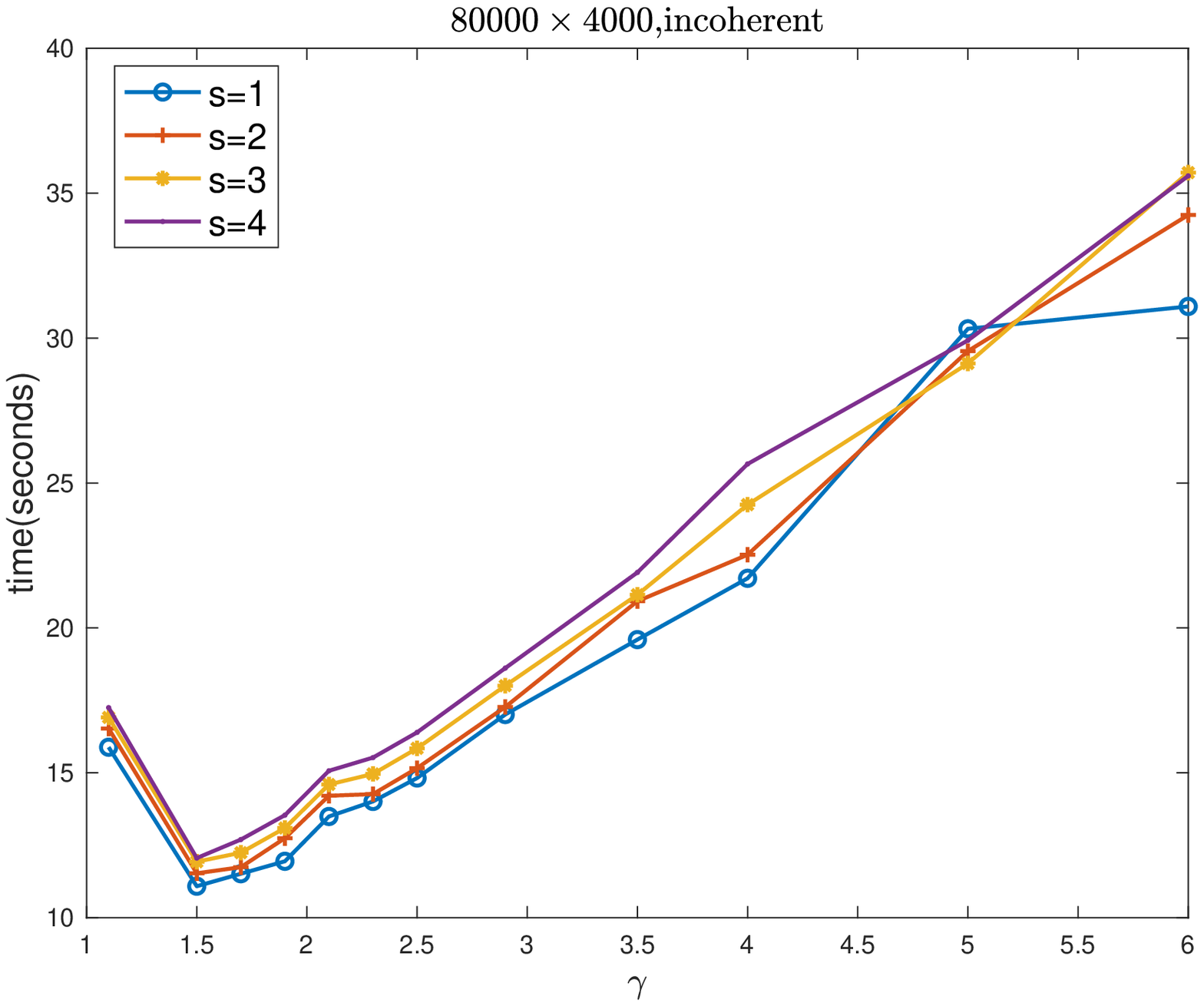}
{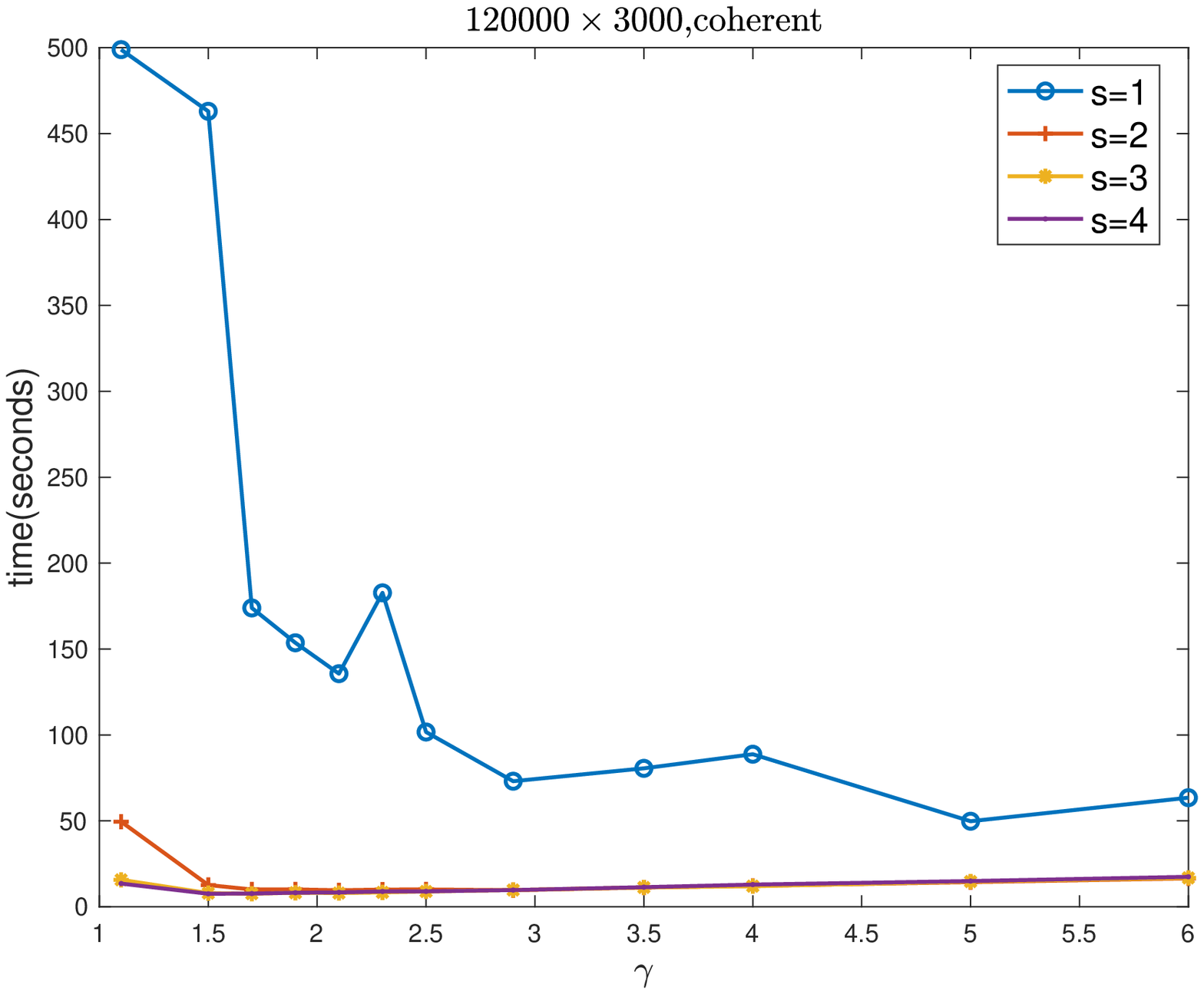}
{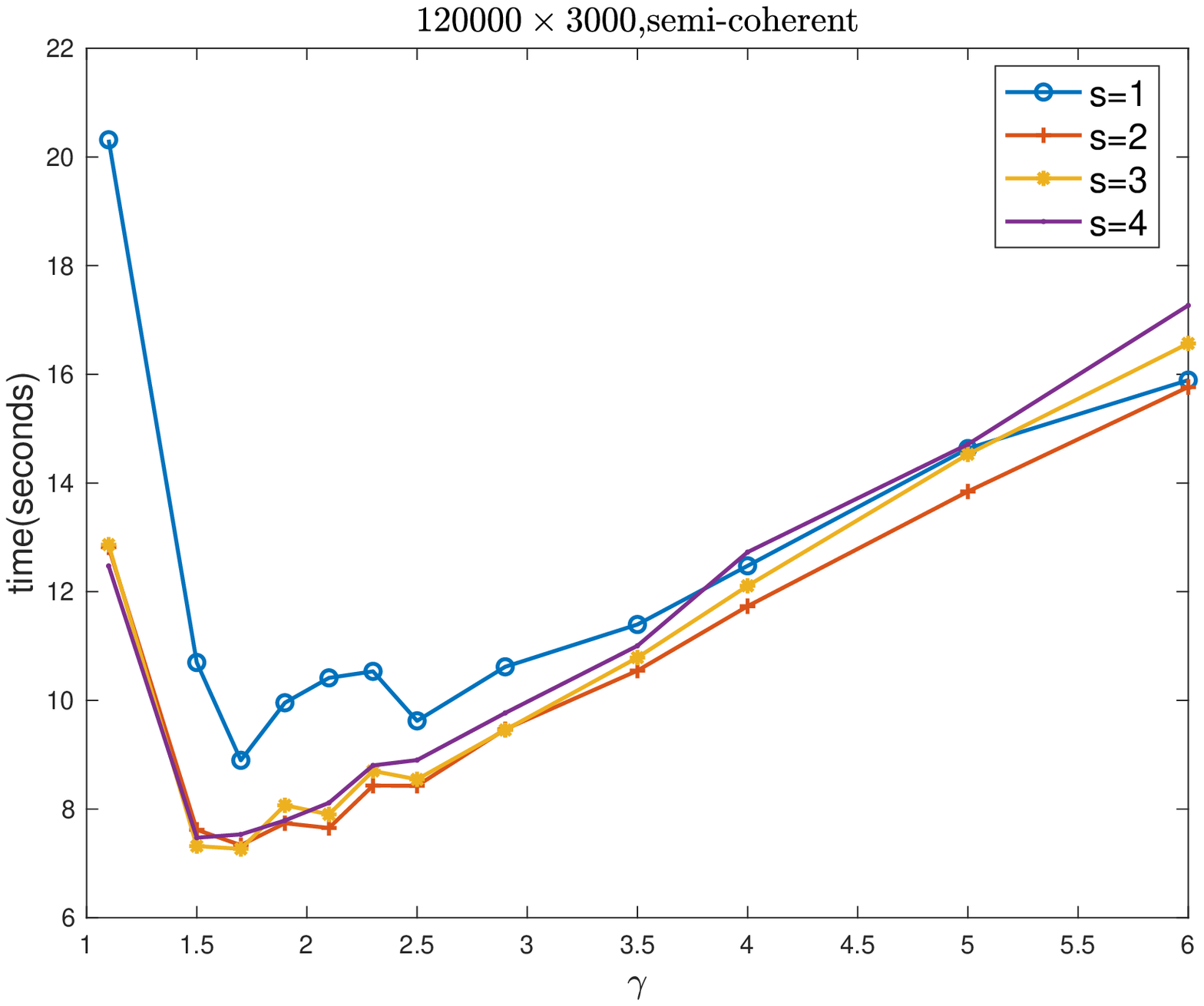}
{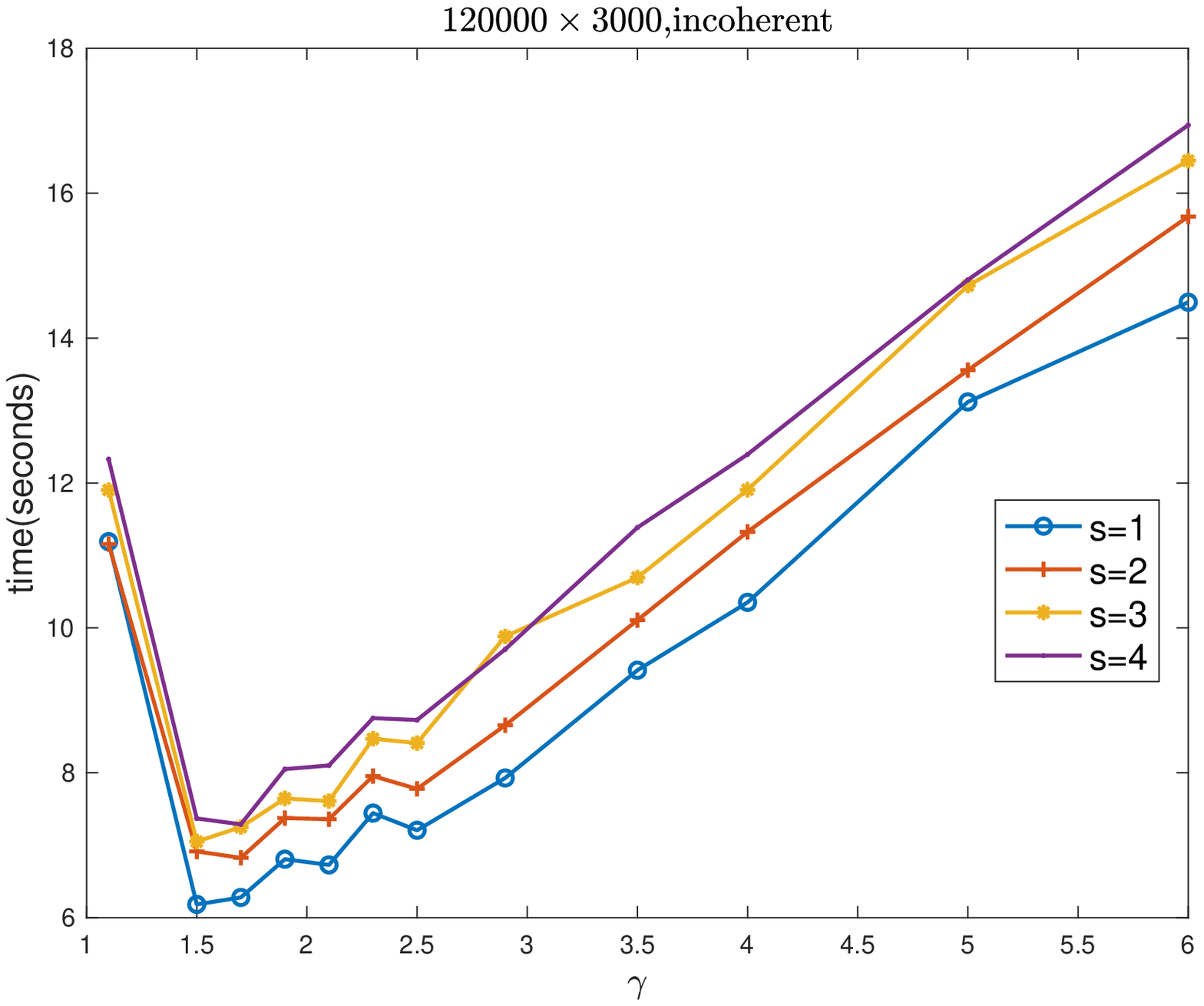}
{Running time of \solverNameSparse{} on sparse matrices $A\in \R^{m \times d}$ from Test Set 2 with $n = 80000, d=4000$ and $n = 120000, d=3000$ using different values of $s$ and $\gamma = m/d$. We choose $m = 1.4d, s=2$ in consideration of the above plot and the residual accuracy in \autoref{fig::Ls_qr_engineering_residual} but also taking into account some experiments of \solverNameSparse{} we have done on the Florida matrix collection.} 
{fig::Ls_qr_engineering_time}

\calibrationSixFigures{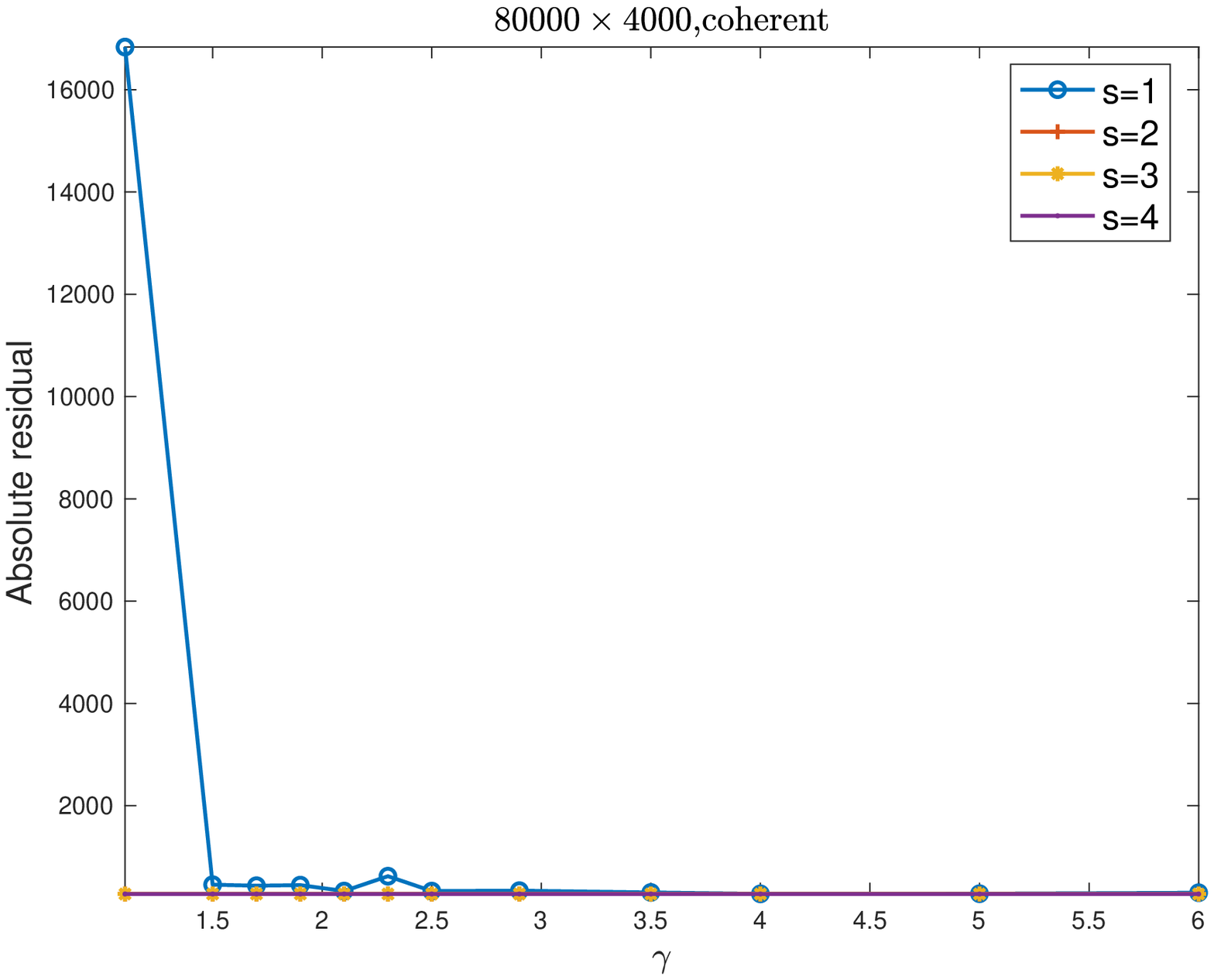}
{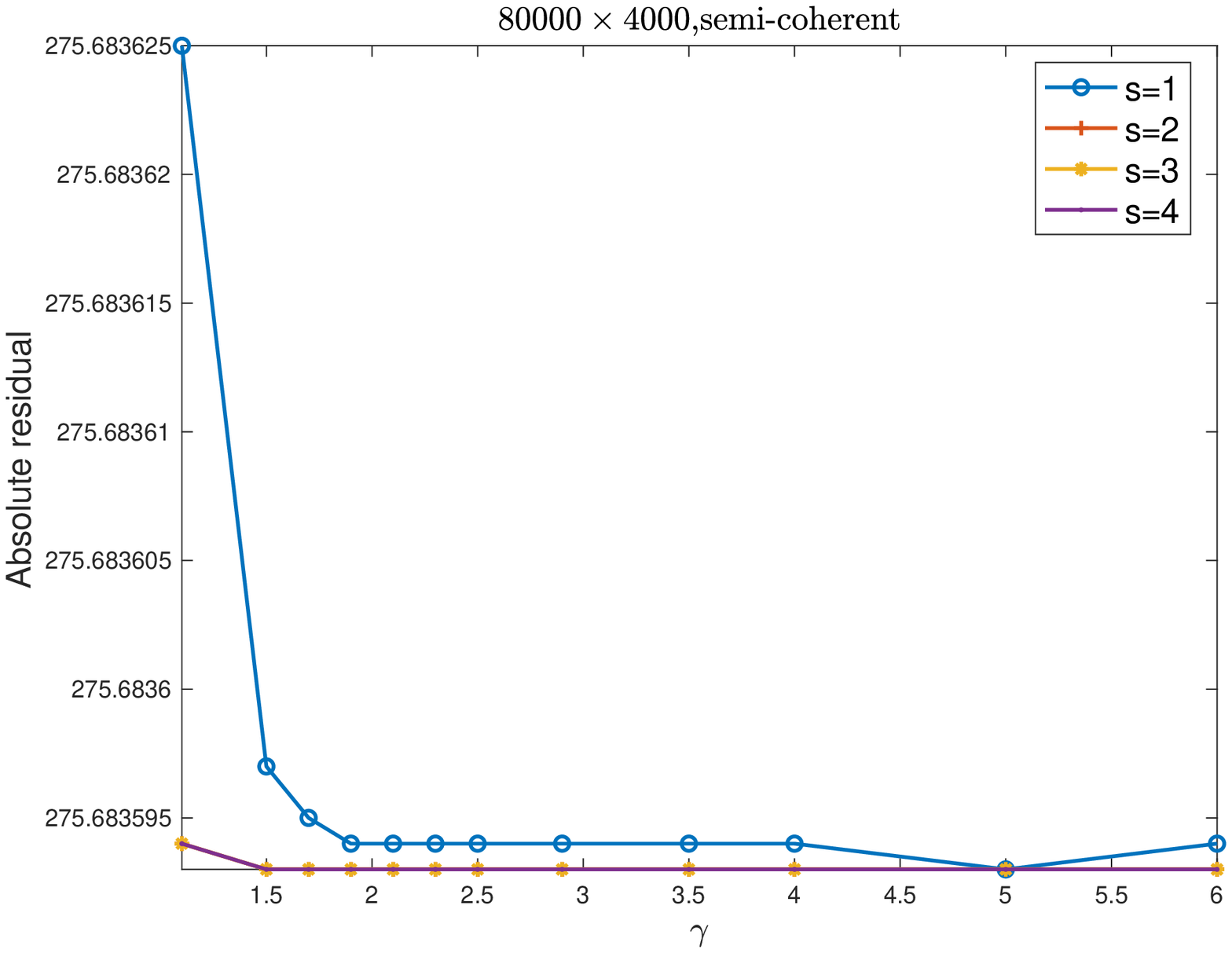}
{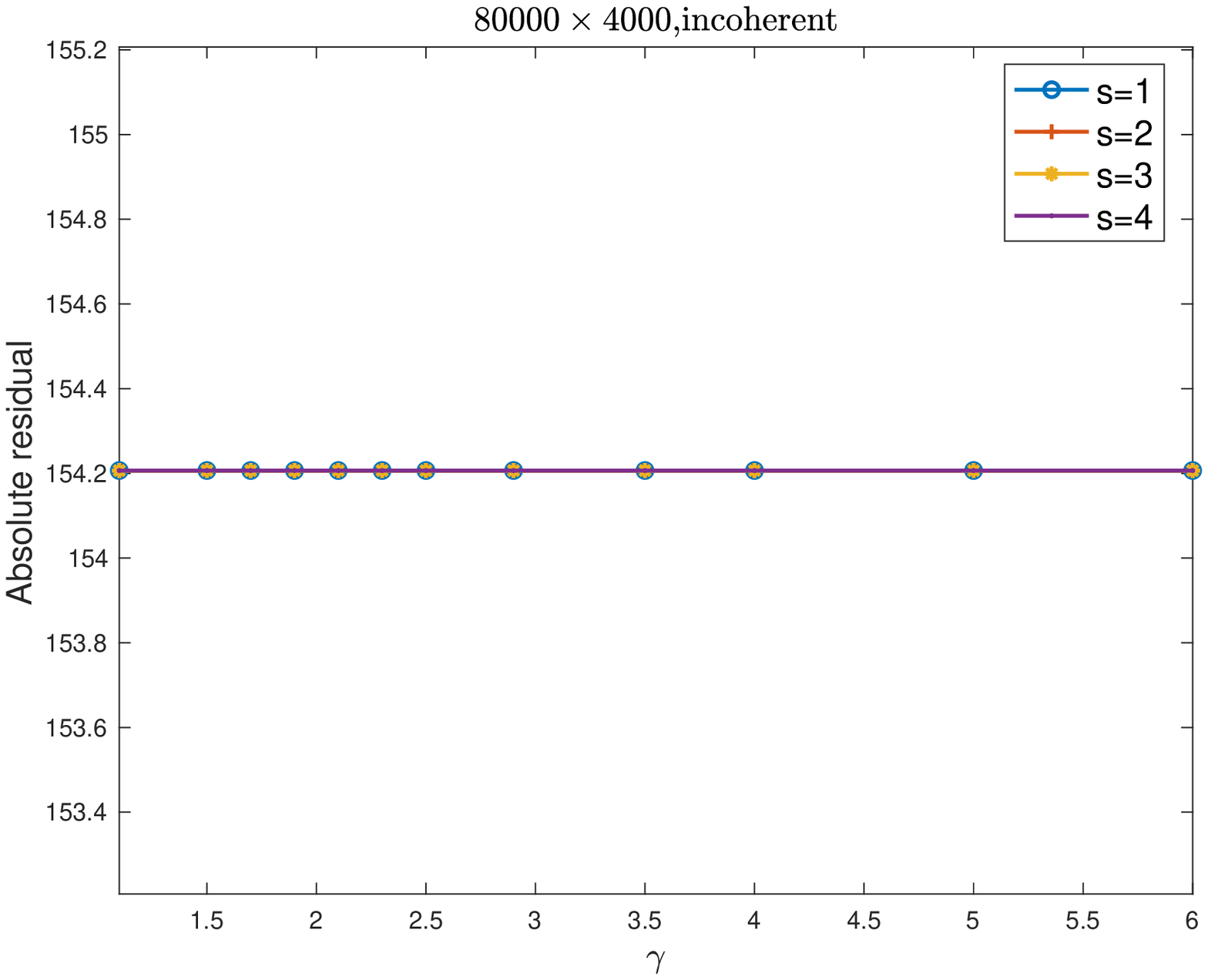}
{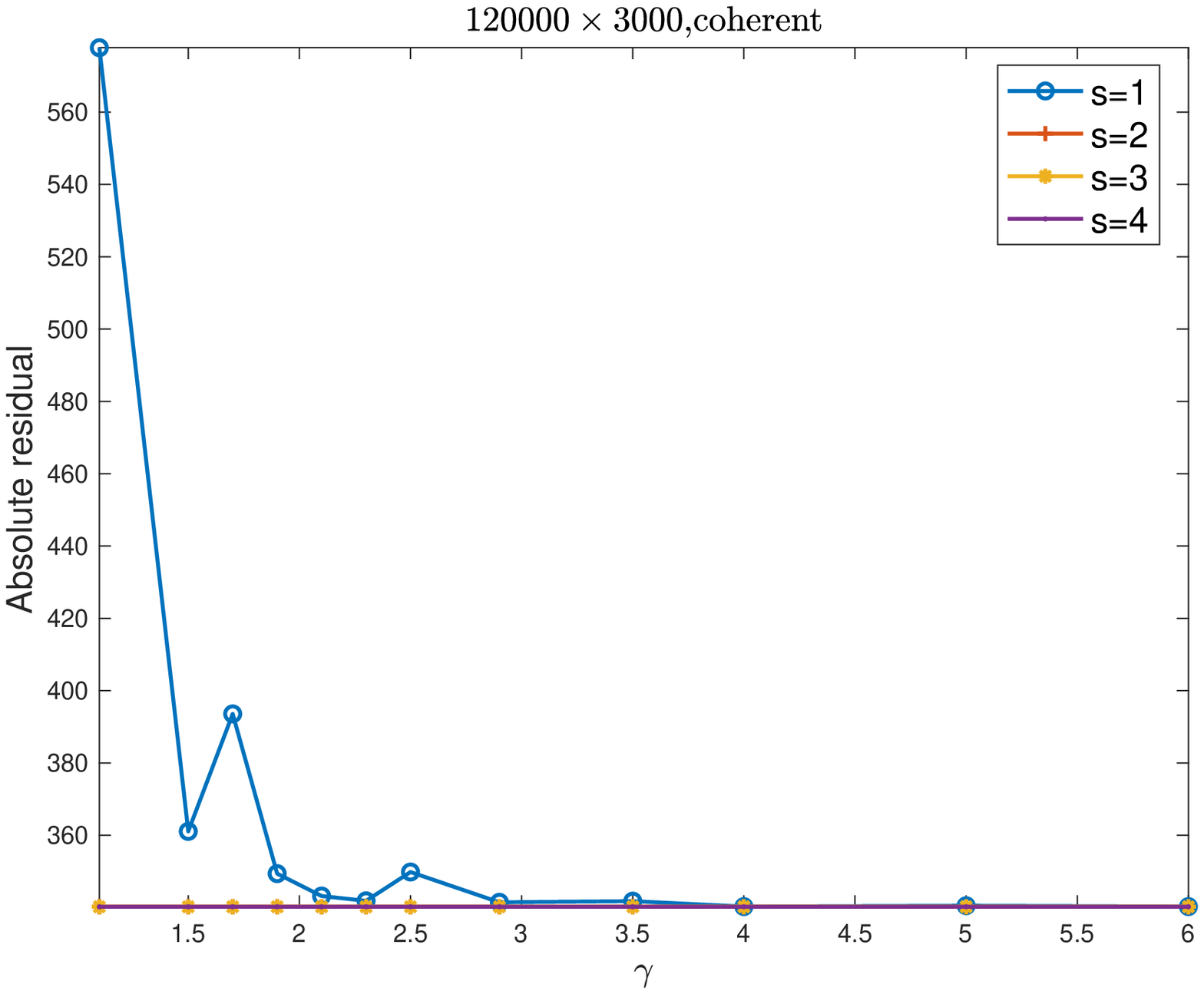}
{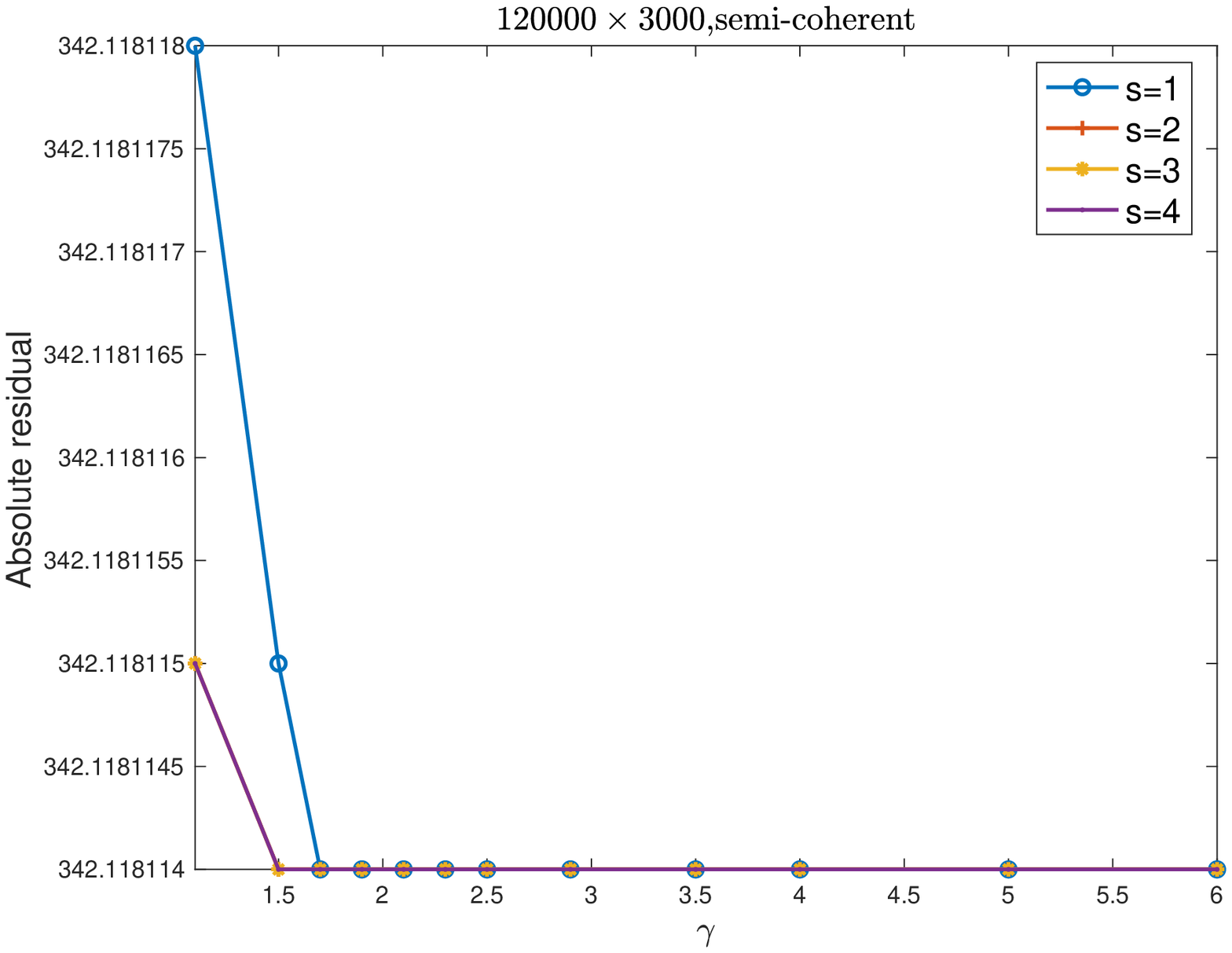}
{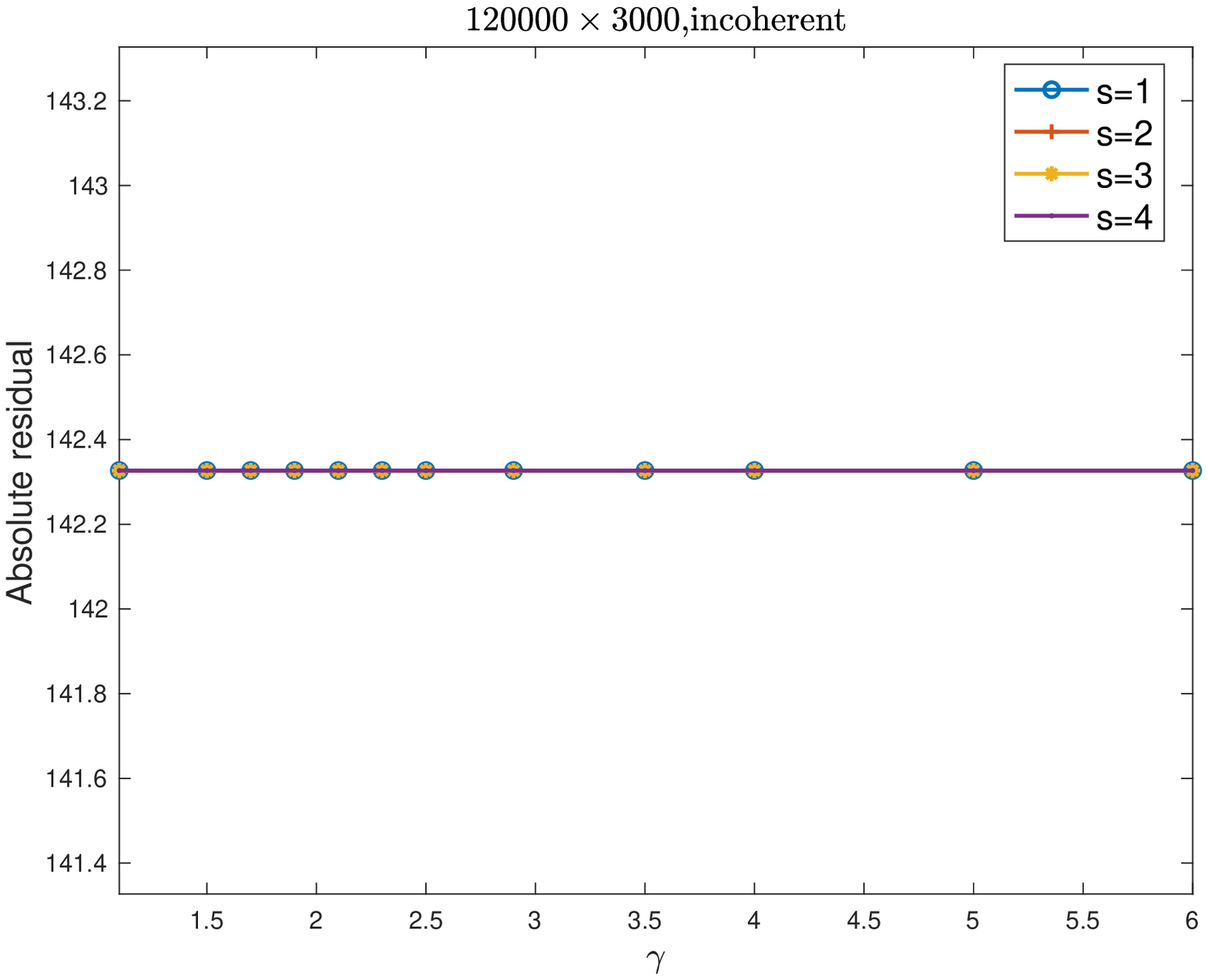}
{Corresponding residual on sparse matrices $A\in \R^{m \times d}$ from Test Set 2 with $n = 80000, d=4000$ and $n = 120000, d=3000$ using different values of $s$ and $\gamma = m/d$. Note that using $1$-hashing ($s=1$) results in inaccurate solutions. } 
{fig::Ls_qr_engineering_residual}

\calibrationSixFigures{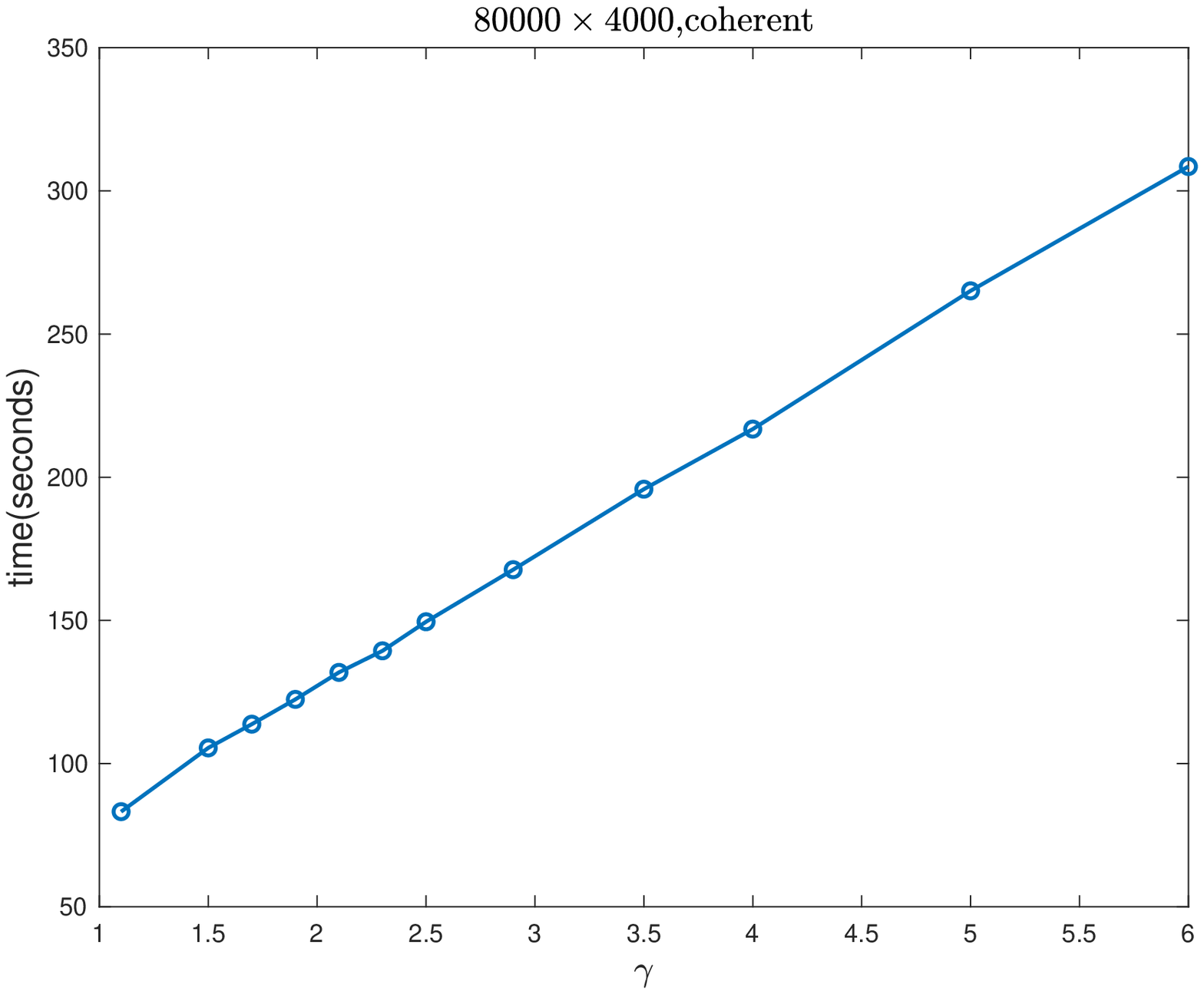}
{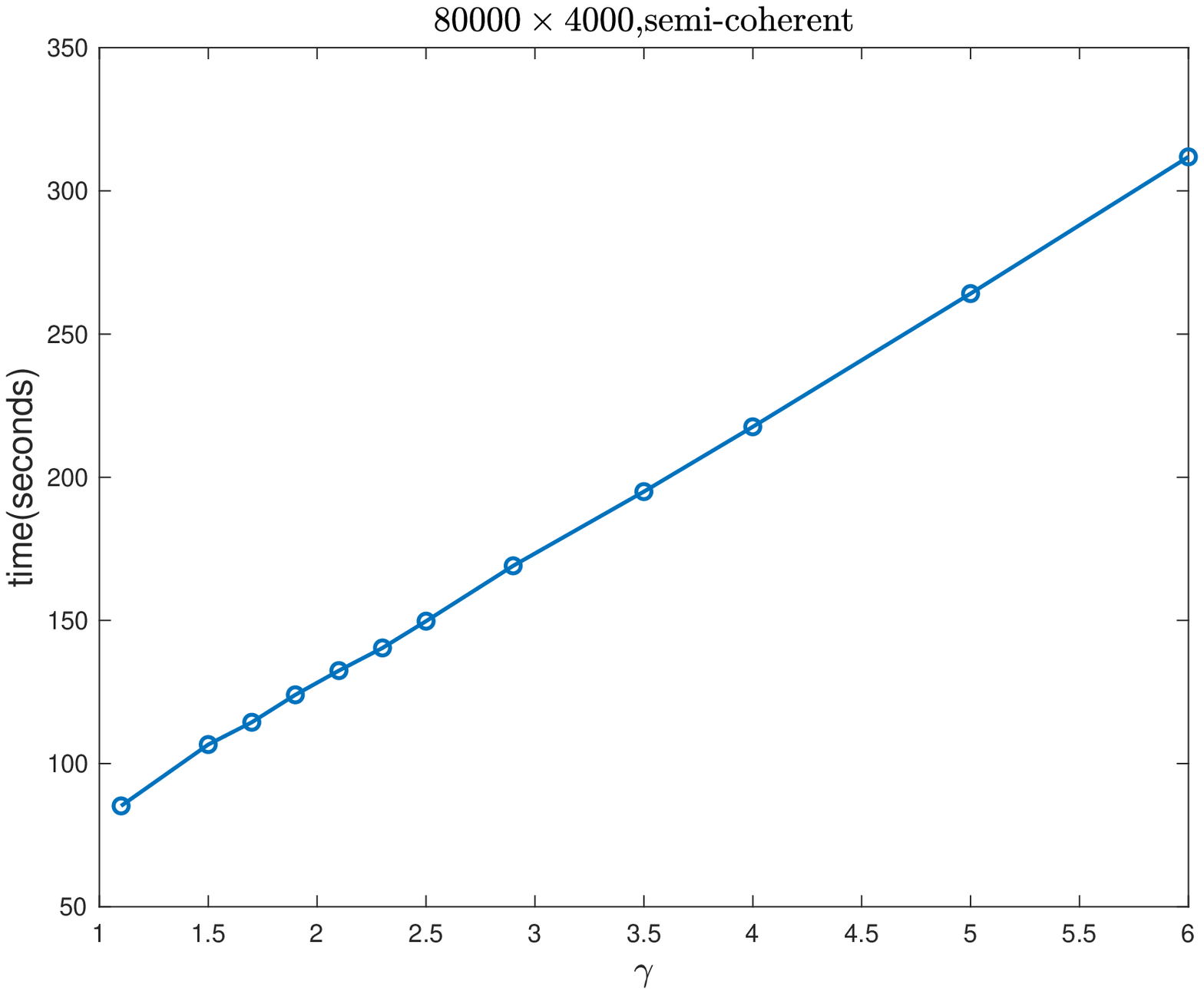}
{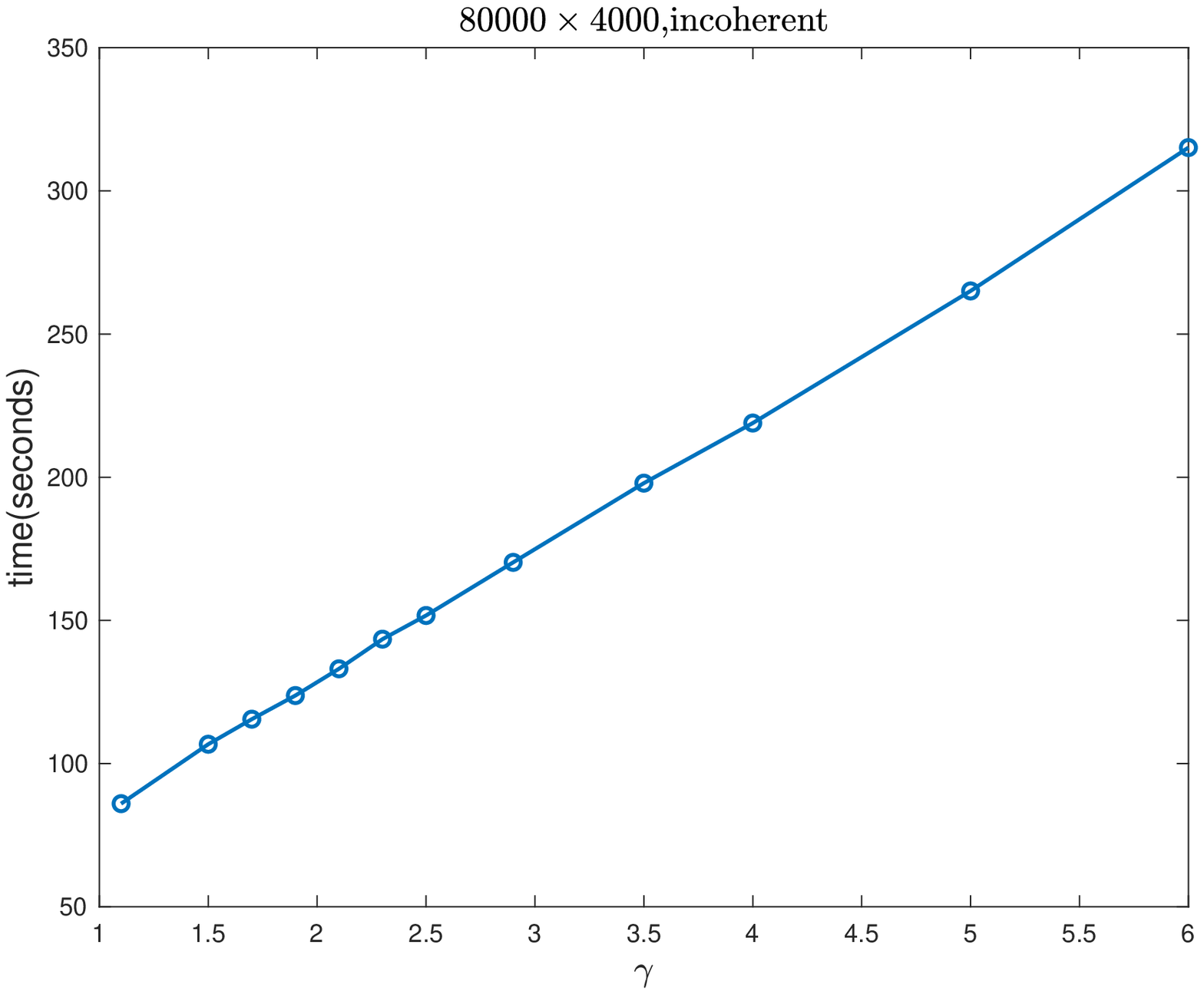}
{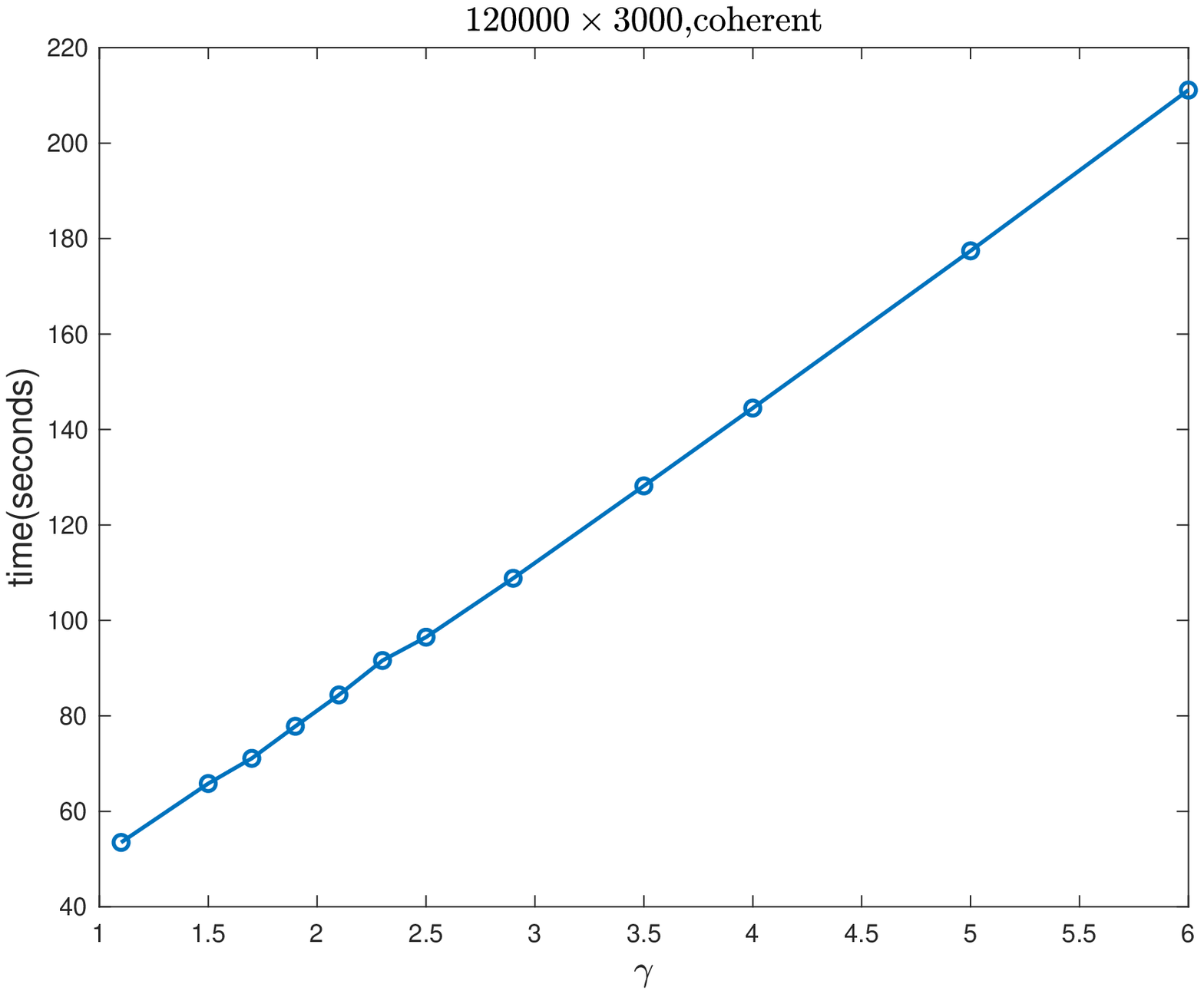}
{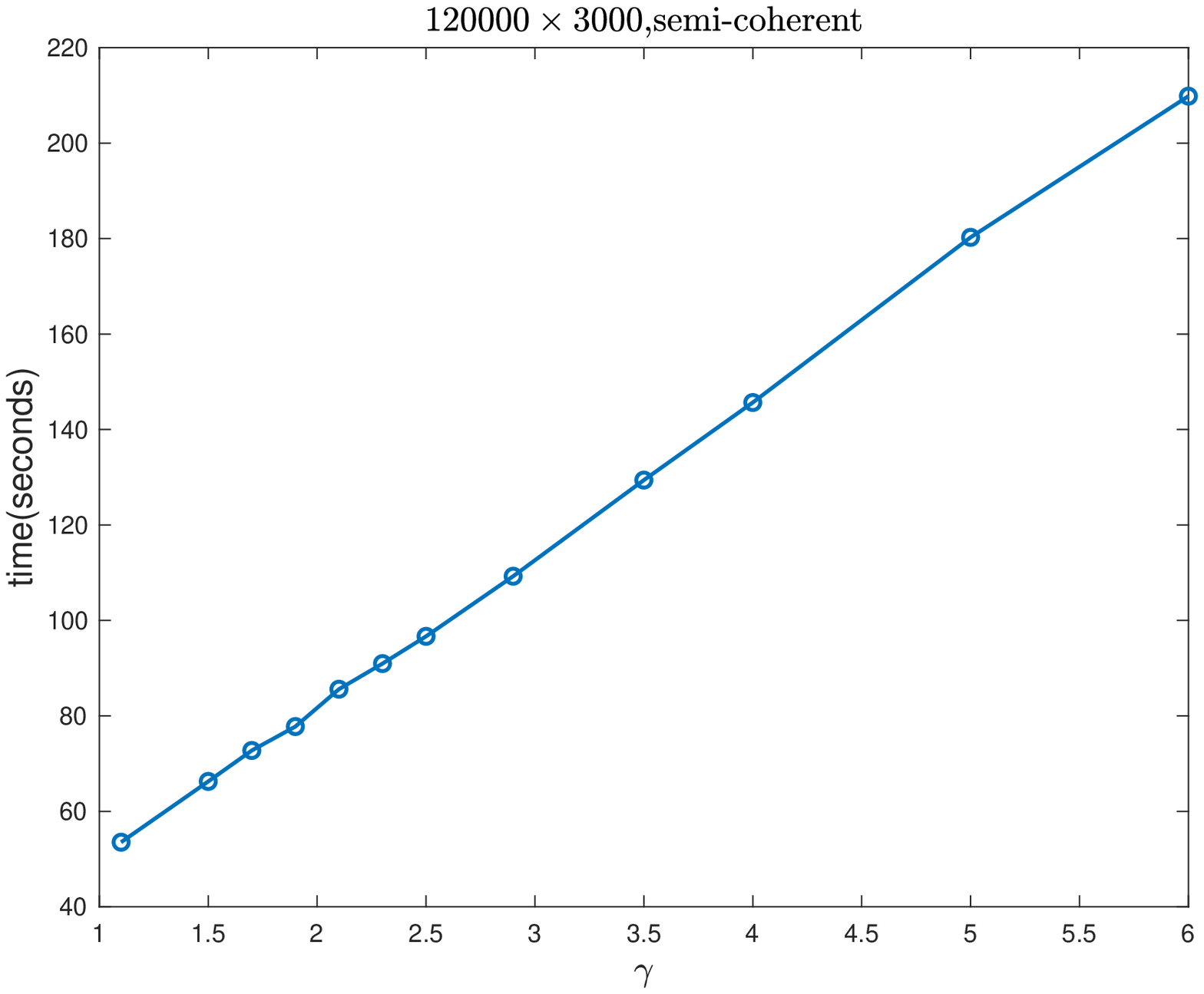}
{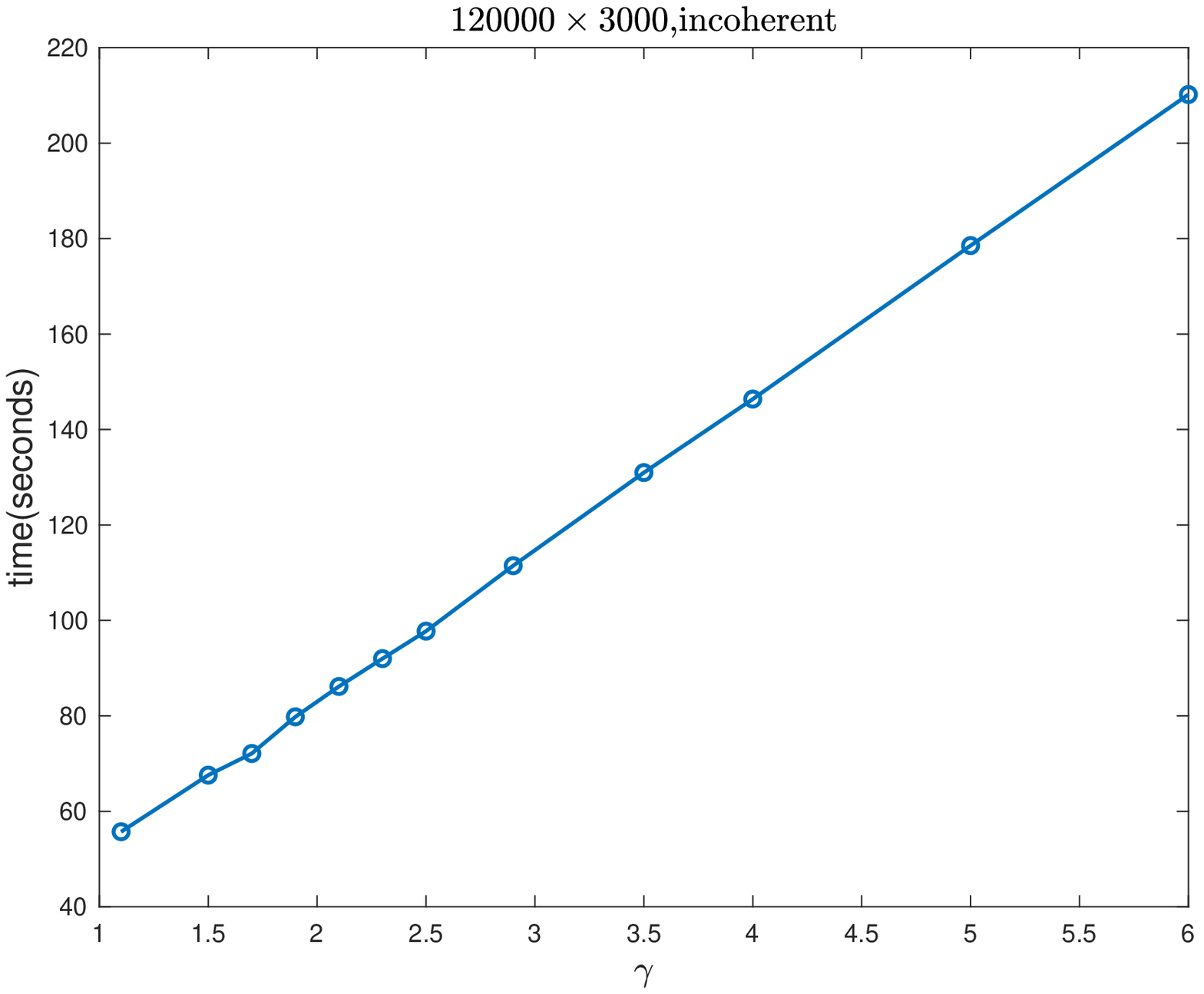}
{Runtime for LSRN  on sparse matrices $A\in \R^{m \times d}$ from Test Set 2 with $n = 80000, d=4000$ and $n = 120000, d=3000$ using different values of $\gamma = m/d$. We choose $m = 1.1d$ in consideration of the above plot and the residual accuracy in \autoref{fig::Ls_lsrn_engineering_residual} but also taking into account some experiments of \solverNameSparse{} we have done on the Florida matrix collection. }
{fig::Ls_lsrn_engineering_time}

\calibrationSixFigures{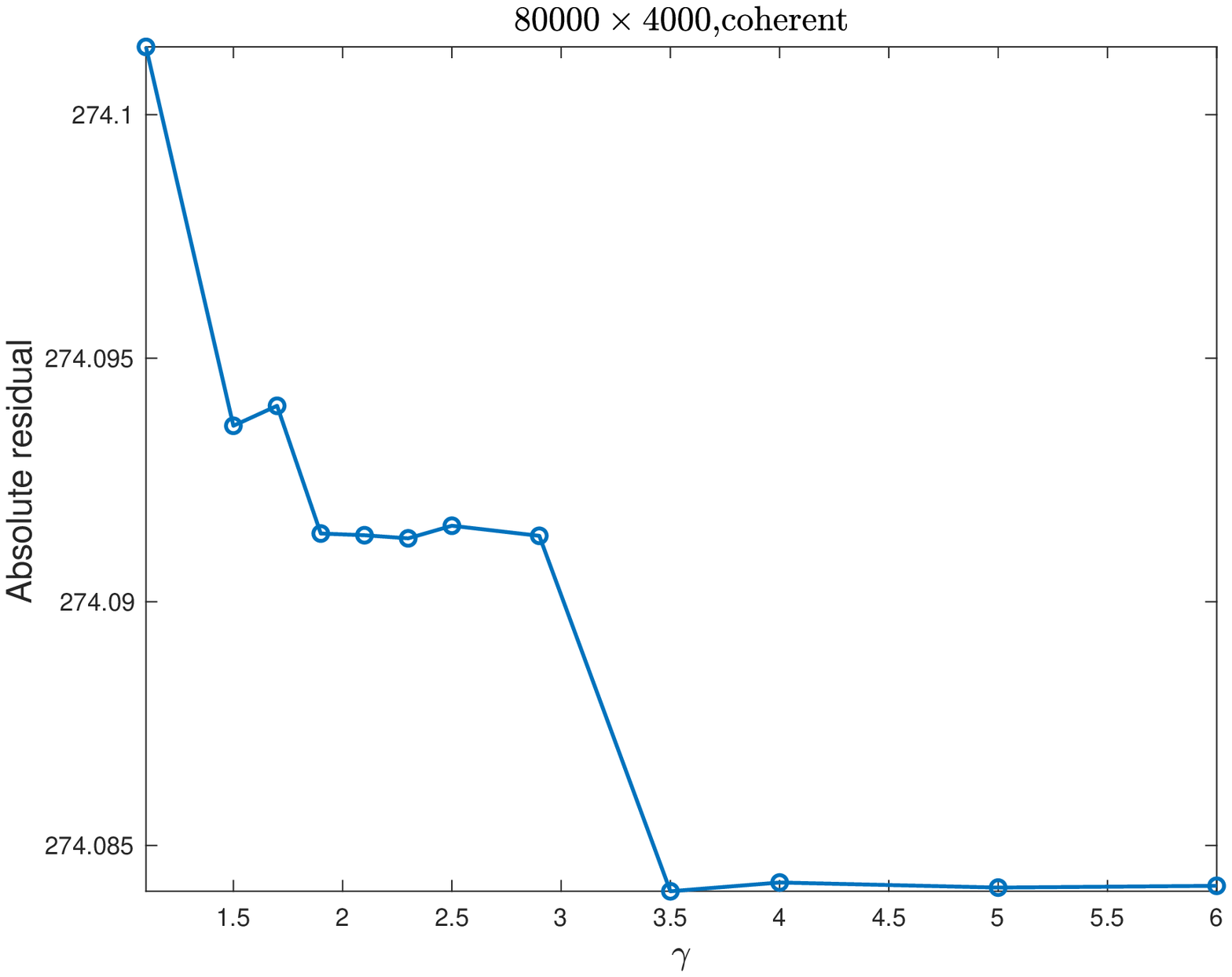}
{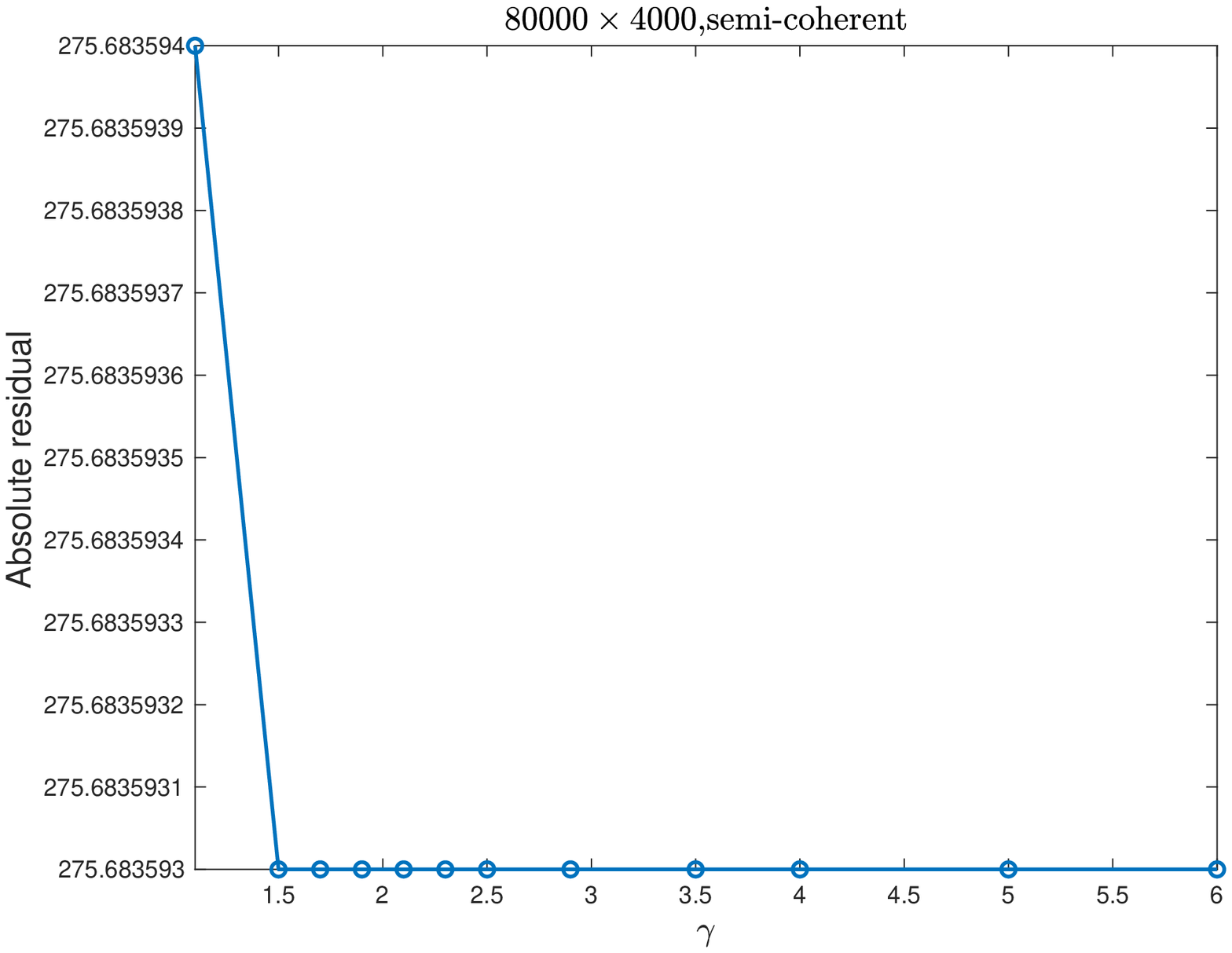}
{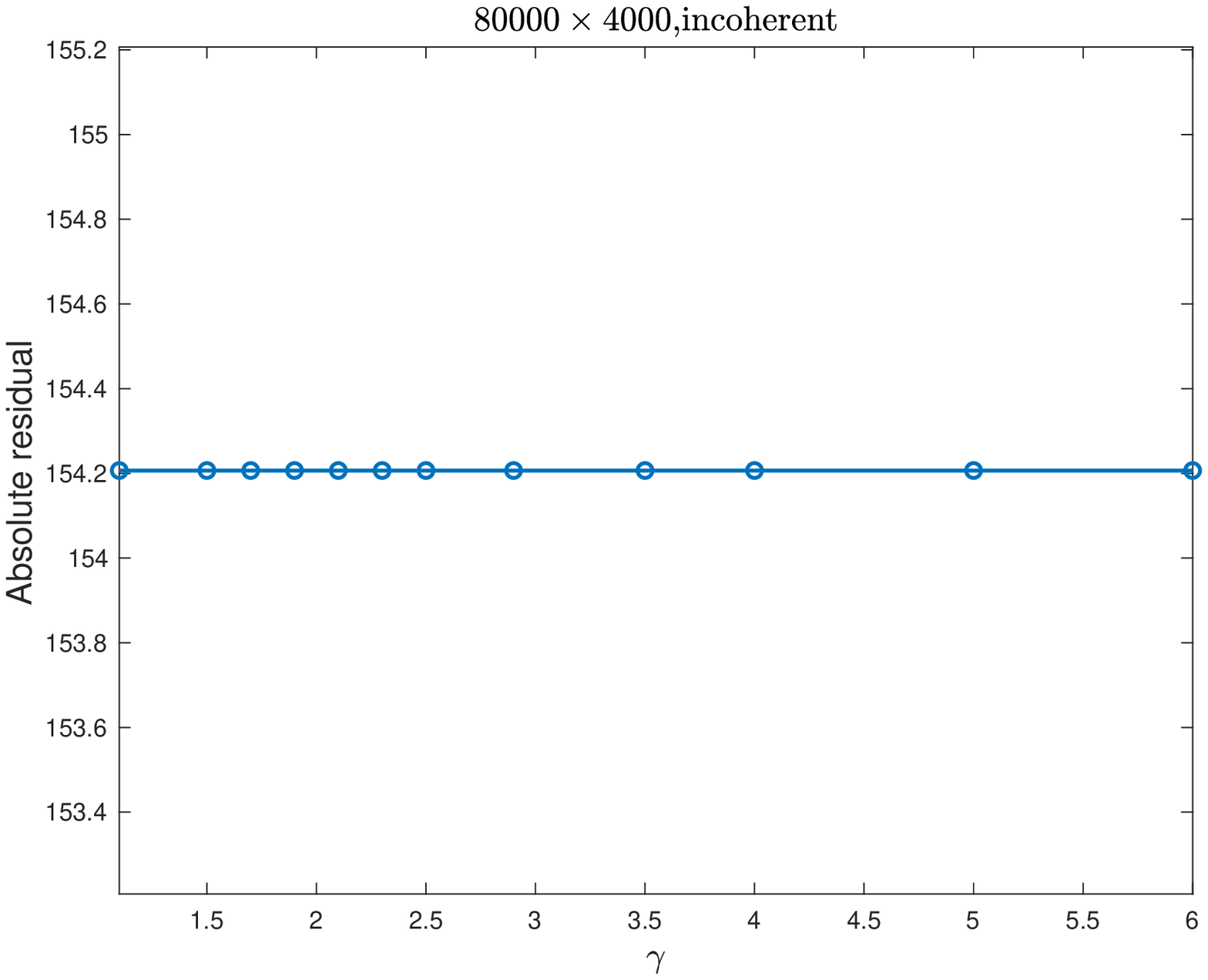}
{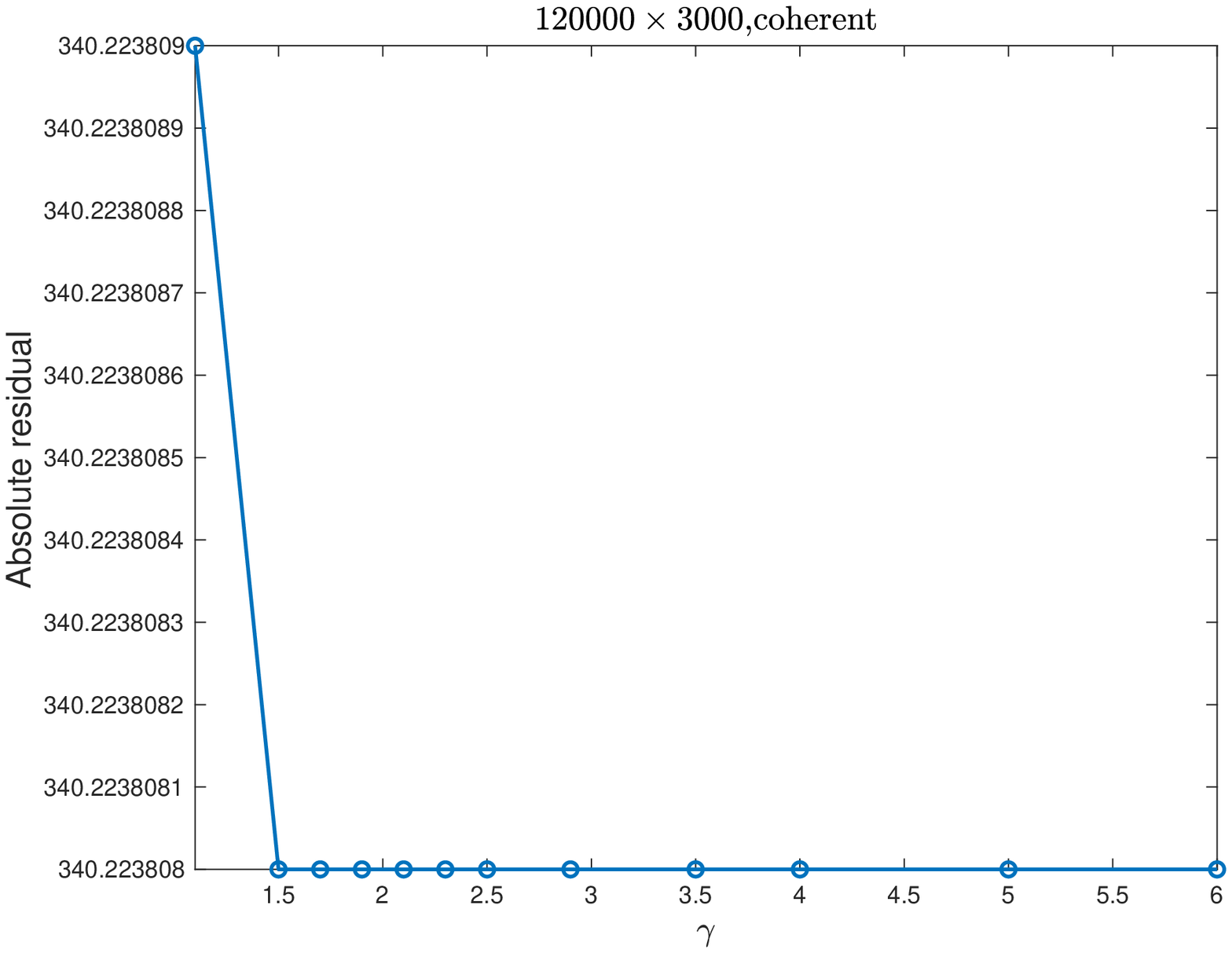}
{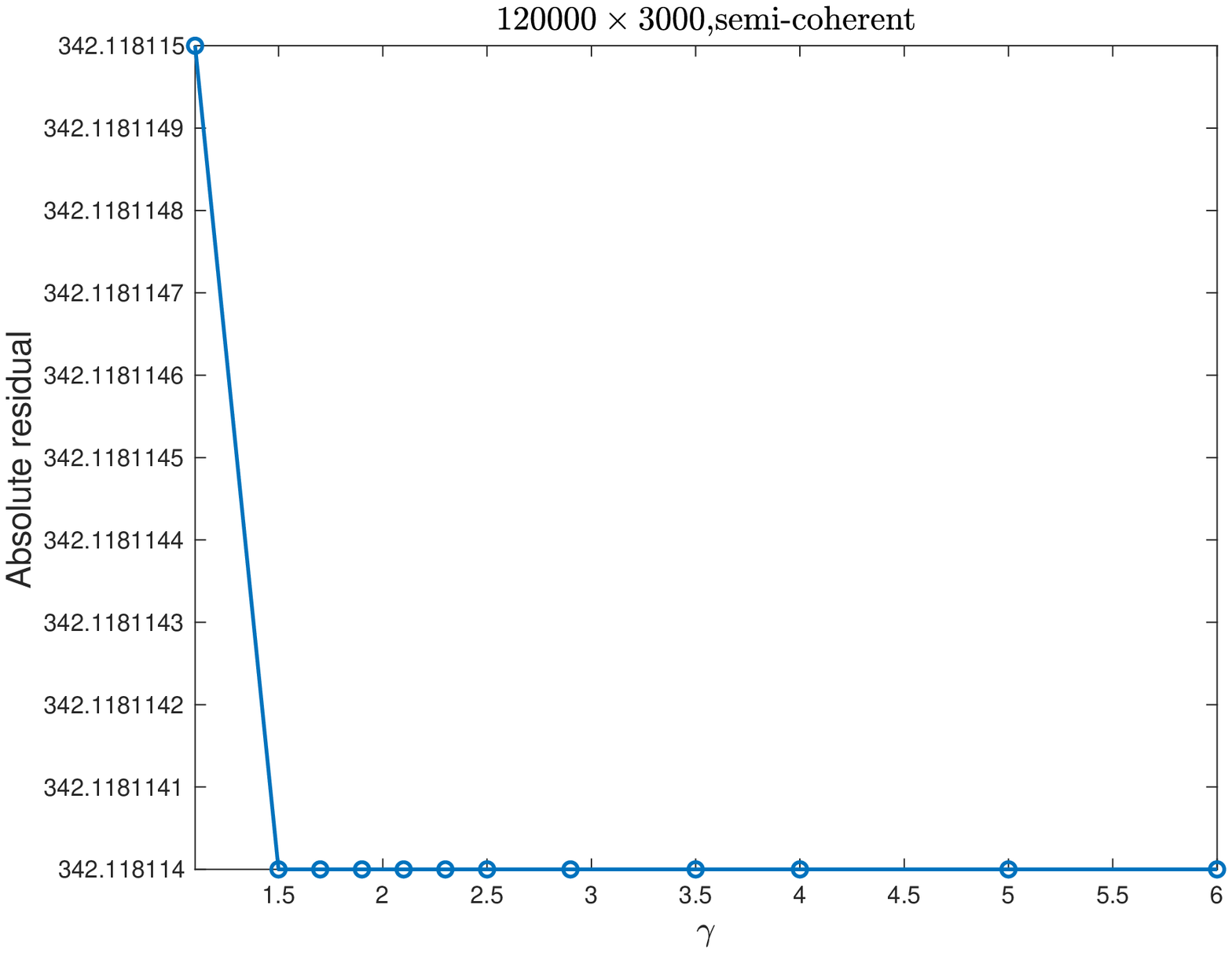}
{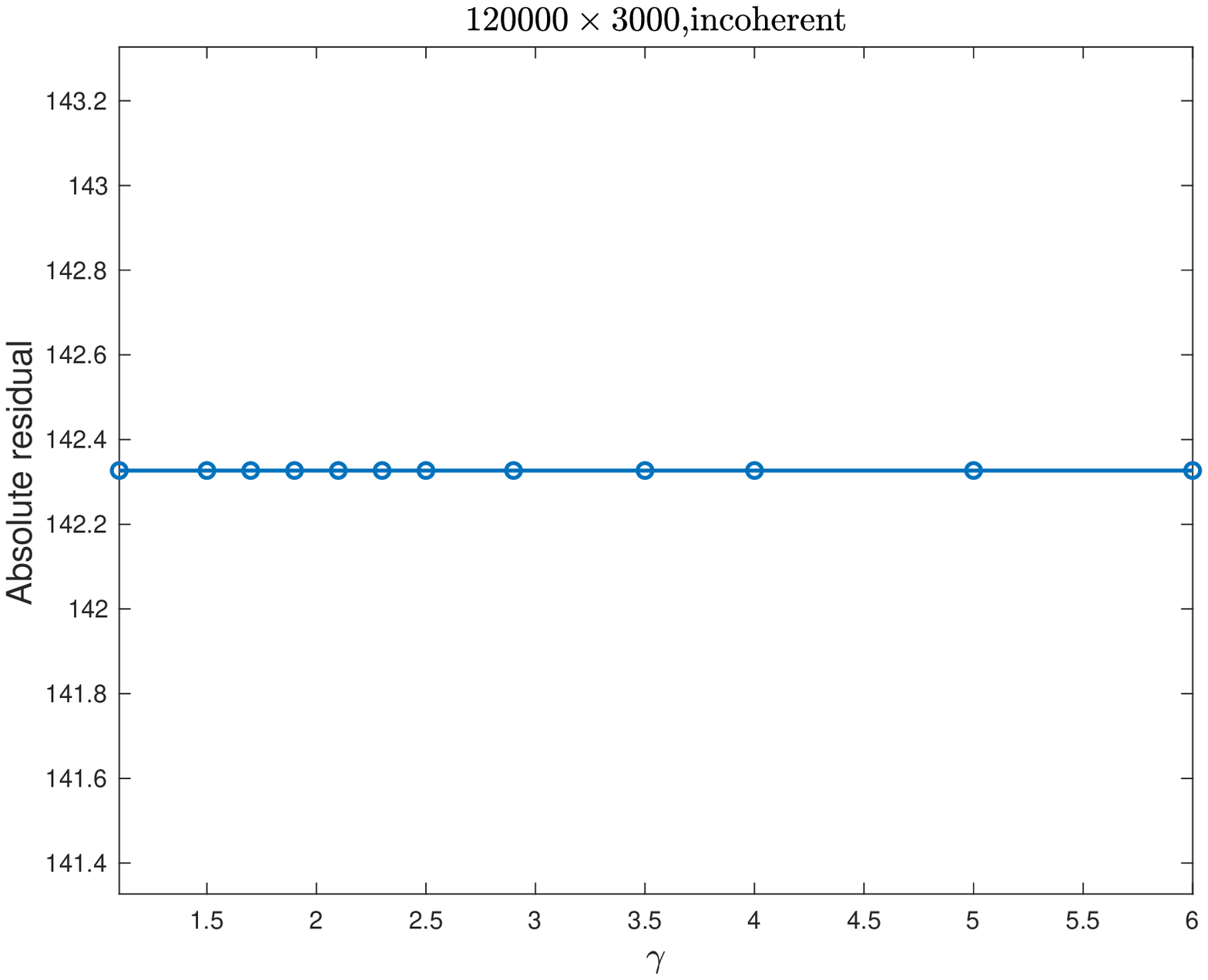}
{Residual values obtained by LSRN on
the same sparse problems as in Figure \ref{fig::Ls_lsrn_engineering_time}.} 
{fig::Ls_lsrn_engineering_residual}

\subsection{Residual accuracy of \solverName{}}
Since our theory in Section \ref{sec:algo_analysis} is only an approximation of the practical implementation as discussed in Section \ref{subsection:alg_implementation_discussion}, we numerically test \solverName{}'s accuracy of solving \eqref{LLS-statement}. We choose 14 matrices $A$ in the Florida matrix collection with different dimensions and rank-deficiencies (Table \ref{tab::rank_def_accuracy_dim}). We use LAPACK's SVD-based linear least squares solver (SVD), LSRN, Blendenpik, \solverNameDense{} and \solverNameSparse{} on these problems with the residual shown in Table \ref{tab::rank_def_accuracy}.\footnote{These problems are given in a sparse format. We convert them to a dense format for dense solvers. Thus the dense solvers cannot assume any entry is a priori zero. }

We see both of \solverNameDense{} and \solverNameSparse{} have excellent residual accuracy comparing to SVD-based LAPACK solver. The result also shows that Blendenpik fails to accurately solve rank-deficient \eqref{LLS-statement}.  

In our large scale numerical study with the Florida matrix collection, the residuals are also compared and the solution of \solverNameSparse{} is no-less accurate than the state-of-the-art sparse solvers LS\_SPQR and LS\_HSL(see later sections).

\begin{table}
\scriptsize
\centering
\begin{tabular}{l|rrrrrrr}
               & lp\_ship12l                          & Franz1               & GL7d26                                & cis-n4c6-b2          & lp\_modszk1                           & rel5                                  & ch5-5-b1              \\ 
\hline
SVD            & 18.336                               & 26.503               & 50.875                                & 6.1E-14              & 33.236                                & 14.020                                & 7.3194                \\
LSRN           & 18.336                               & 26.503               & 50.875                                & 3.2E-14              & 33.236                                & 14.020                                & 7.3194                \\
Blendenpk      & \multicolumn{1}{l}{~~~~~~~~~NaN~~~~} & 9730.700             & \multicolumn{1}{l}{~~~~~~~~~~NaN~~~~} & 3.0E+02              & \multicolumn{1}{l}{~~~~~~~~~~NaN~~~~} & \multicolumn{1}{l}{~~~~~~~~~~NaN~~~~} & 340.9200              \\
Ski-LLS-dense  & 18.336                               & 26.503               & 50.875                                & 5.3E-14              & 33.236                                & 14.020                                & 7.3194                \\
Ski-LLS-sparse & 18.336                               & 26.503               & 50.875                                & 6.8E-14              & 33.236                                & 14.020                                & 7.3194                \\
               & \multicolumn{1}{l}{}                 & \multicolumn{1}{l}{} & \multicolumn{1}{l}{}                  & \multicolumn{1}{l}{} & \multicolumn{1}{l}{}                  & \multicolumn{1}{l}{}                  & \multicolumn{1}{l}{}  \\
               & n3c5-b2                              & ch4-4-b1             & n3c5-b1                               & n3c4-b1              & connectus                             & landmark                              & cis-n4c6-b3           \\ 
\hline
SVD            & 9.0E-15                              & 4.2328               & 3.4641                                & 1.8257               & 282.67                                & 1.1E-05                               & 30.996                \\
LSRN           & 6.7E-15                              & 4.2328               & 3.4641                                & 1.8257               & 282.67                                & 1.1E-05                               & 30.996                \\
Blendenpk      & 1.3E+02                              & 66.9330              & 409.8000                              & 8.9443               & \multicolumn{1}{l}{~~~~~~~~~~NaN~~~~} & \multicolumn{1}{l}{~~~~~~~~~~NaN~~~~} & 3756.200              \\
Ski-LLS-dense  & 5.2E-15                              & 4.2328               & 3.4641                                & 1.8257               & 282.67                                & 1.1E-05                               & 30.996                \\
Ski-LLS-sparse & 6.9E-15                              & 4.2328               & 3.4641                                & 1.8257               & 282.67                                & 1.1E-05                               & 30.996               
\end{tabular}
\caption{Residuals of solvers for a range of rank-deficient problems taken from the Florida matrix collection \cite{10.1145/2049662.2049663}. The matrices are all sparse but we convert them into dense format before applying a dense solver such as Blendenpik. We see both \solverNameDense{} and \solverNameSparse{} achieve excellent residual accuracy for rank-deficient problems, as well as LSRN. Blendenpik is not designed for rank-deficient problems and either returns a large residual or encounters numerical issues, returning NaN. }
\label{tab::rank_def_accuracy}
\end{table}

\begin{table}
\centering
\scriptsize
\begin{tabular}{l|rrr}
\multicolumn{1}{r|}{} & nrow~  & ncol & rank            \\ 
\hline
lp\_ship12l           & 5533   & 1151 & 1042            \\
Franz1                & 2240   & 768  & 755             \\
GL7d26                & 2798   & 305  & 273             \\
cis-n4c6-b2           & 1330   & 210  & 190             \\
lp\_modszk1           & 1620   & 687  & 686             \\
rel5                  & 240    & 35   & 24              \\
ch5-5-b1              & 200    & 25   & 24              \\
n3c5-b2               & 120    & 45   & 36              \\
ch4-4-b1              & 72     & 16   & 15              \\
n3c5-b1               & 45     & 10   & 9               \\
n3c4-b1               & 15     & 6    & 5               \\
connectus             & 394792 & 512  & \textless{}458  \\
landmark              & 71952  & 2704 & 2671            \\
cis-n4c6-b3           & 5940   & 1330 & 1140           
\end{tabular}
\caption{Dimensions for the problems tested in \autoref{tab::rank_def_accuracy}}
\label{tab::rank_def_accuracy_dim}
\end{table}
    
    \section{Numerical performance}

\subsection{Solvers compared and their parameters}

Recall \solverName{} treats dense and sparse $A$ differently in \eqref{LLS-statement}. For dense $A$, we compare to the state-of-the-art sketching solver Blendenpik, that has been shown to be four times faster than LAPACK on dense, large scale and moderately over-determined full rank problems \cite{doi:10.1137/090767911} \footnote{Available at https://github.com/haimav/Blendenpik. For the sake of fair comparison, we wrote a C interface and uses the same LSQR routine as \solverName{}.}. The parameters for Blendenpik are $m = 2.2d$ \footnote{Chosen by calibration, see Appendix B.}, $\tau_r = 10^{-6}$, $it_{max} = 10^4$ and $wisdom=1$. The same wisdom data file as \solverName{} is used. 

For sparse $A$, we compare to the following solvers
\begin{enumerate}
	\item HSL\_MI35 (LS\_HSL), that uses an incomplete Cholesky factorization of $A$ to compute a preconditioner of the problem \eqref{LLS-statement}, before using LSQR. \footnote{See http://www.hsl.rl.ac.uk/specs/hsl_mi35.pdf for a full specification. For the sake of fair comparison, we wrote a C interface and uses the same LSQR routine as \solverName{}. We also disable the pre-processing of the data as it was not done for the other solvers. We then found using no scaling and no ordering was more effective than the default scaling and ordering. Hence we chose no scaling and no ordering in the comparisons. As a result, the performance of HSL may improve, however \cite{10.1145/3014057} experimented with the use of different scaling and ordering, providing some evidence that the improvement will not be significant.} The solver has been shown to be competitive for sparse problems $\eqref{LLS-statement}$ \cite{10.1145/3014057,Gould:2016vg}. We use $\tau_r = 10^{-6}$ and $it_{max} = 10^4$. 

	\item SPQR\_SOLVE (LS\_SPQR), that uses SPQR from Suitesparse to compute a sparse QR factorization of $A$, which is exploited to solve \eqref{LLS-statement} directly. \footnote{Available at https://people.engr.tamu.edu/davis/suitesparse.html.} The solver has been shown to be competitive for sparse problems \cite{10.1145/2049662.2049670}. 

	\item LSRN, that uses the framework of \refAlgOne, with $S$ having i.i.d. $N\bracket{0, 1/\sqrt{m}}$ entries in Step 1; SVD factorization from Intel LAPACK of the matrix $SA$ in Step 2; the same LSQR routine as \solverName{} in Step 4. \footnote{Note that LSRN does not contain the Step 3.} LSRN has been shown to be an effective solver for possibly rank-deficient dense and sparse \eqref{LLS-statement} under parallel computing environment \cite{Meng:2014ib}. However, parallel computing is outside the scope of this study and we therefore run LSRN in a serial environment. Hence the performance of LSRN may improve under parallelism. The default parameters are chosen to be $m = 1.1d$, $\tau_r = 10^{-6}$, $it_{max}=10^4$. 
\end{enumerate}

\subsection{Running time performance on randomly generated full rank dense A}
Our first experiment compares of \solverName{} for dense \eqref{LLS-statement} with Blendenpik. For each matrix of different size shown in the x-axis of Figure \ref{fig::compare_blen_coherent}, \ref{fig::compare_blen_semi-coherent}, \ref{fig::compare_blen_incoherent},  we generate a coherent, semi-coherent and incoherent dense matrix as defined in \defDenseTestMatrices. Blendenpik, \solverName{} (dense version) and \solverName{} without R-CPQR are to solve \eqref{LLS-statement} with $b$ being a vector of all ones. The running time $t$ with the residual $\|Ax-b\|_2$ are recorded, where $x$ is the solution returned by the solvers. The residuals are all the same up to six significant figures, indicating all three solvers give an accurate solution of \eqref{LLS-statement}. 

We see that using hashing instead of sampling yields faster solvers by comparing \solverName{} without R-CPQR to Blendenpik, especially when the matrix $A$ is of the form \eqref{eq::A_co_dense}. We also see that \solverName{} with R-CPQR is as fast as Blendenpik on full rank dense problems while being able to solve rank-deficient problems (Table \ref{tab::rank_def_accuracy}). 

The default parameters for \solverName{} is used, and the parameters for Blendenpik is mentioned before. 
\renewcommand{\mysize}{0.45}
\threeFigures
{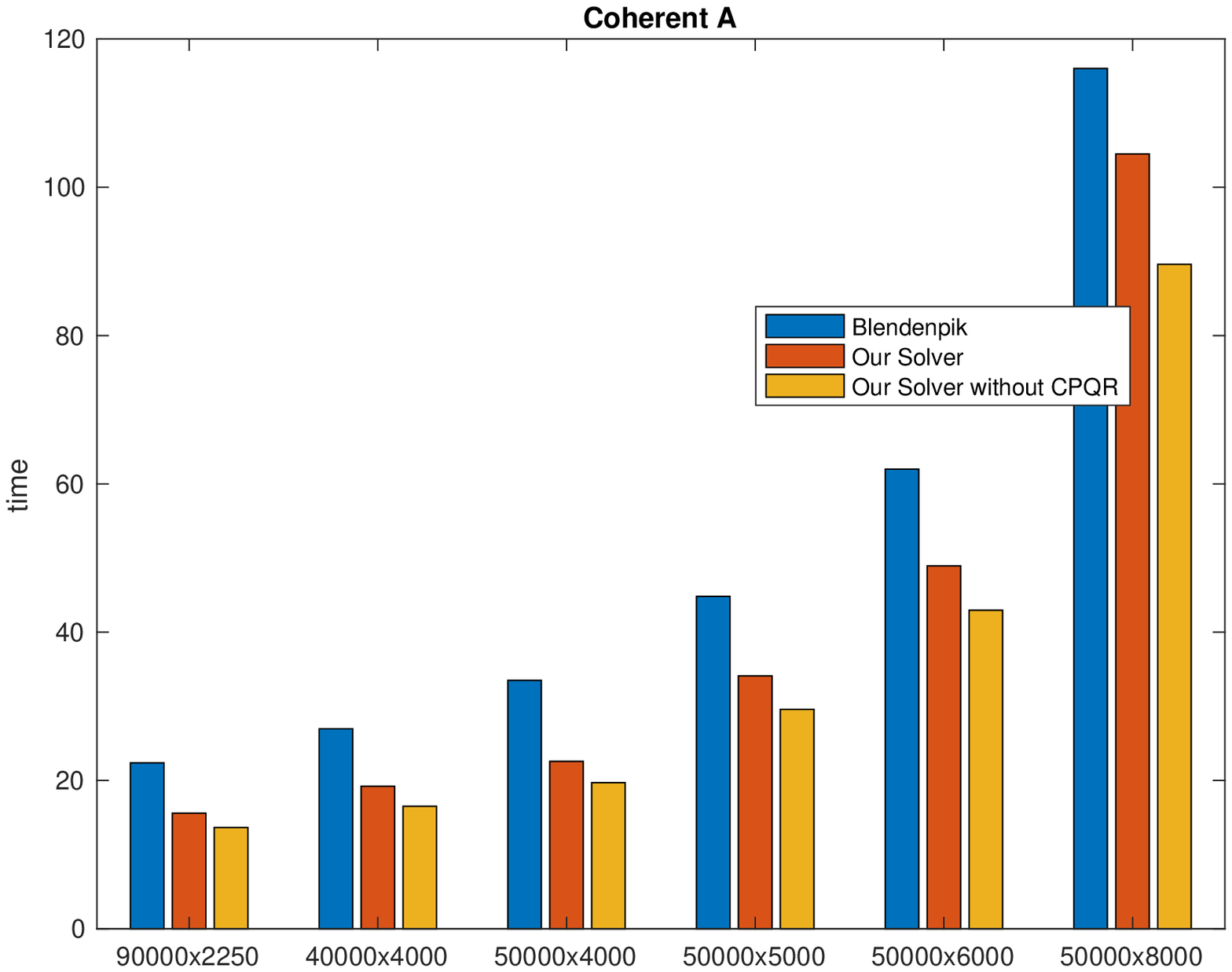}
{Time taken by solvers to compute the solution of problem (\ref{LLS-statement}) for $A$ being coherent dense matrices of various sizes (x-axis)}
{fig::compare_blen_coherent}
{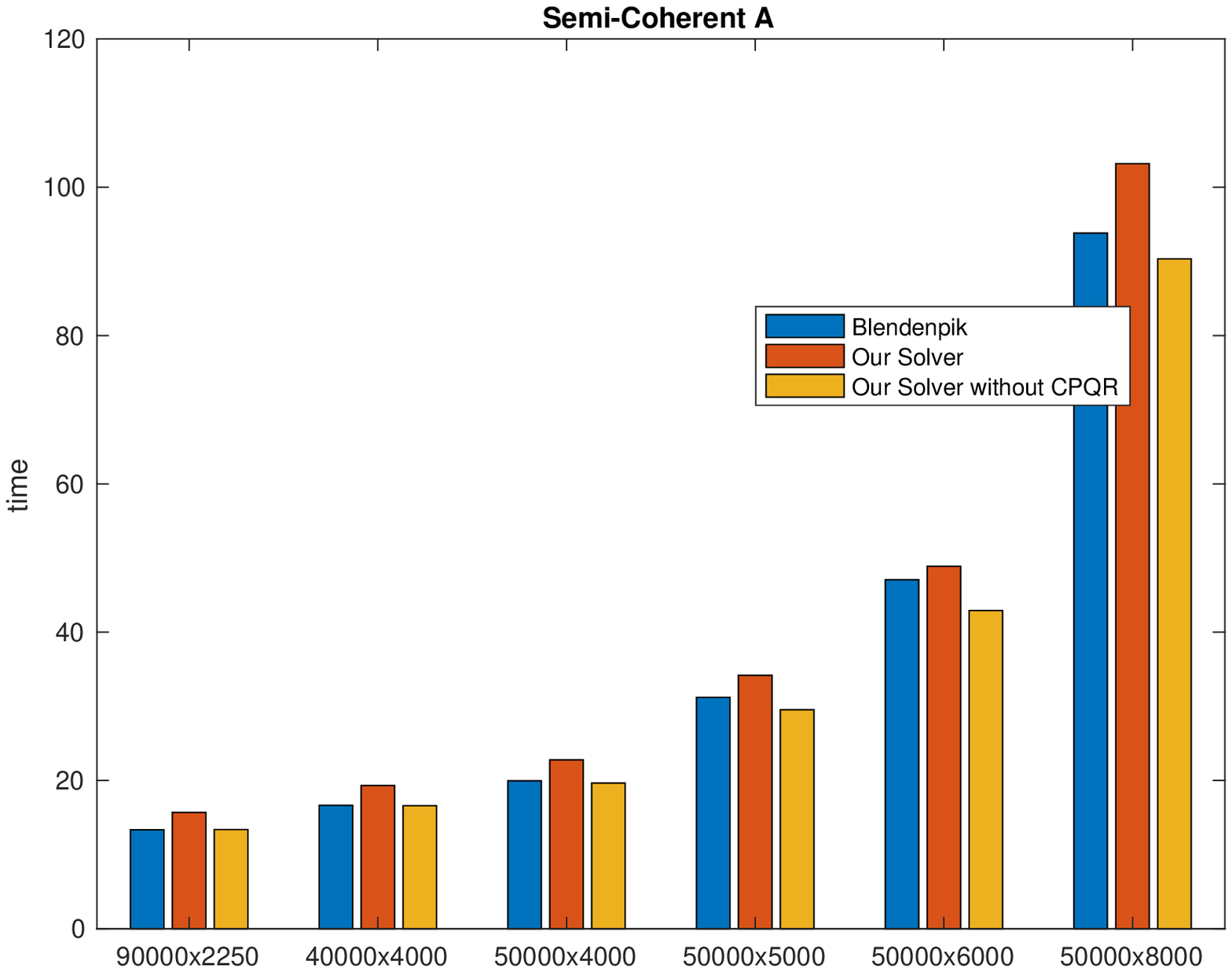}
{Time taken by solvers to compute the solution of problem (\ref{LLS-statement}) for $A$ being semi-coherent dense matrices of various sizes (x-axis)}
{fig::compare_blen_semi-coherent}
{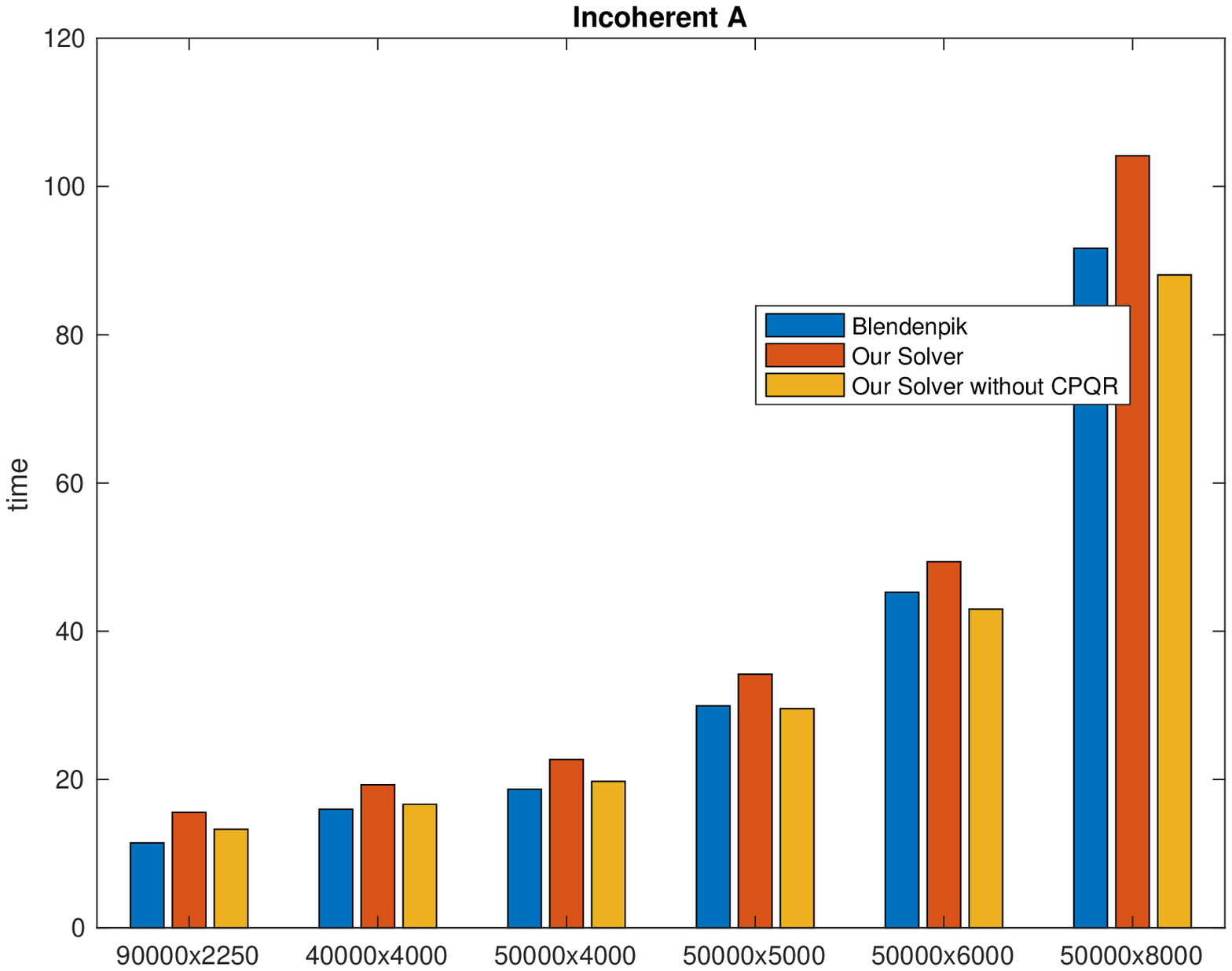}
{Time taken by solvers to compute the solution of problem (\ref{LLS-statement}) for $A$ being incoherent dense matrices of various sizes (x-axis)}
{fig::compare_blen_incoherent}

\subsection{Running time performance on randomly generated full rank sparse A}
\paragraph{Results, Sparse random matrices}
\autoref{fig::sparse_rand_inco}, \ref{fig::sparse_rand_semi_co}, \ref{fig::sparse_rand_co} show the performance of \solverName{} compared to LS\_HSL and LS\_SPQR on sparse random matrices of different types and sizes. We see \solverName{} can be up to 10 times faster on this class of data matrix $A$. We also tested on LSRN but do not report the result because LSRN takes much longer than the other solvers for this class of data in the serial running environment.

Note that in this experiment, the solvers are compiled and run in a different machine then mentioned in Section \ref{subsection:alg_implementation_discussion}, but all solvers are run on this machine in this experiment. The machine has 2.9 GHz Quad-Core Intel Core i7 CPU and 16MB RAM. Moreover, the parameter $s=3$ is chosen for \solverName{} \footnote{According to Appendix C, the running time of \solverName{} with $s=2$ and $s=3$ is similar.}. Furthermore, our solver was an old version without Step 3 implemented and uses the SPQR default ordering. Otherwise the settings are the same as the default settings for \solverName{}, LS\_HSL, LS\_SPQR and LSRN.

\newcommand{\runningTimeSparseRandomCaption}[1]{
Running time comparison of \solverName{} with LS\_HSL and LS\_SPQR for randomly generated #1 sparse matrices of different sizes.}

\threeFigures{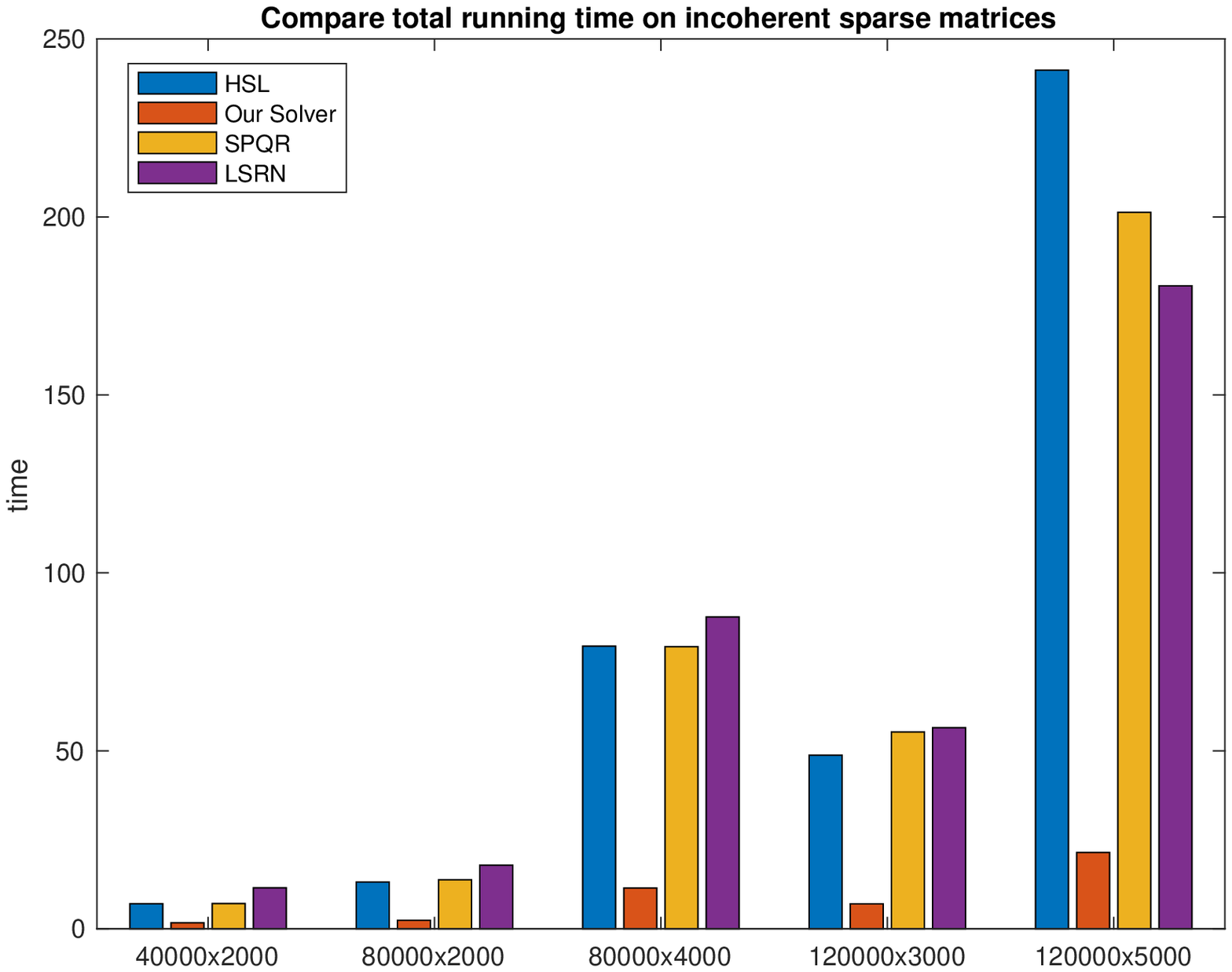}{\runningTimeSparseRandomCaption{incoherent}}{fig::sparse_rand_inco}
{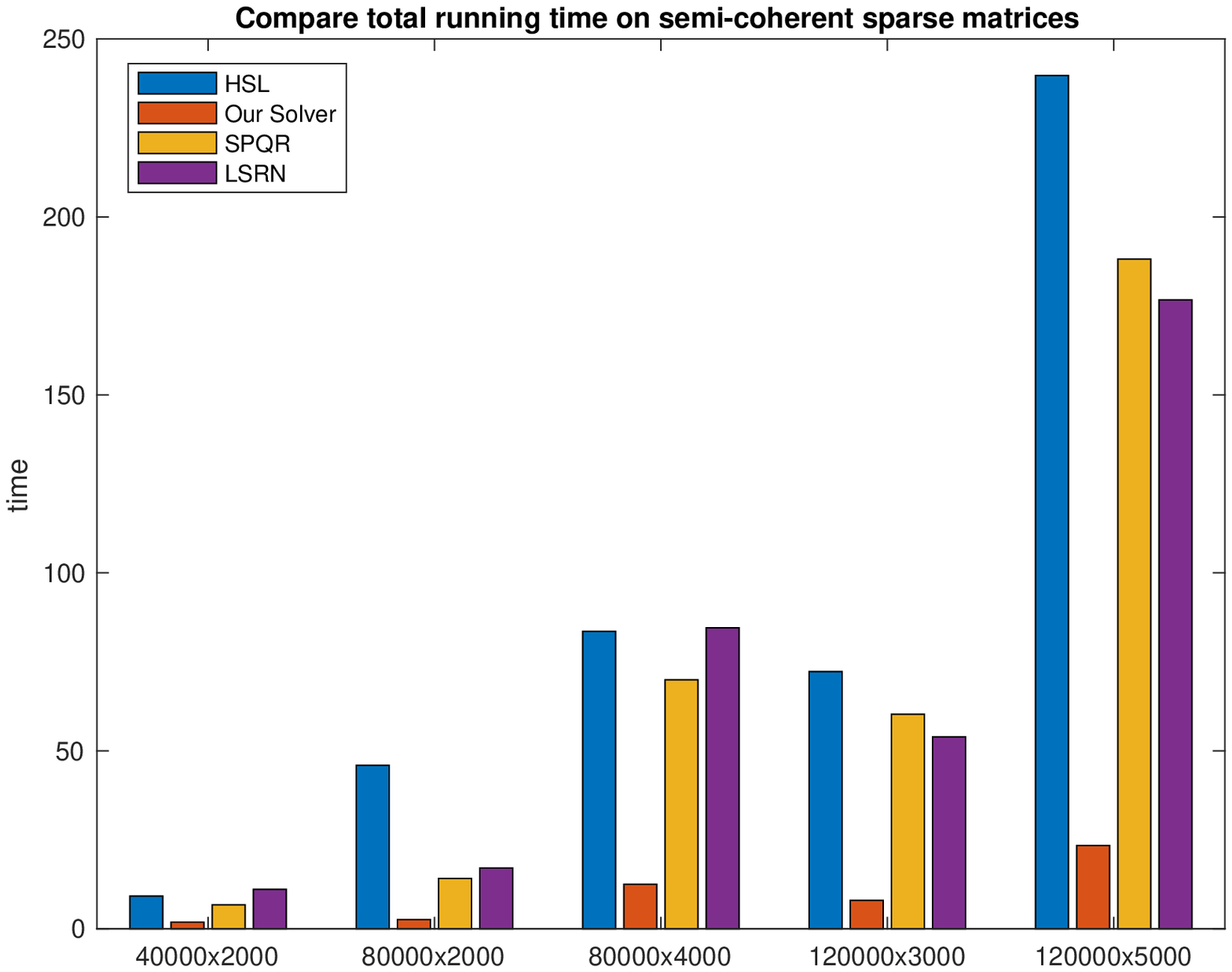}{\runningTimeSparseRandomCaption{semi-coherent}}{fig::sparse_rand_semi_co}
{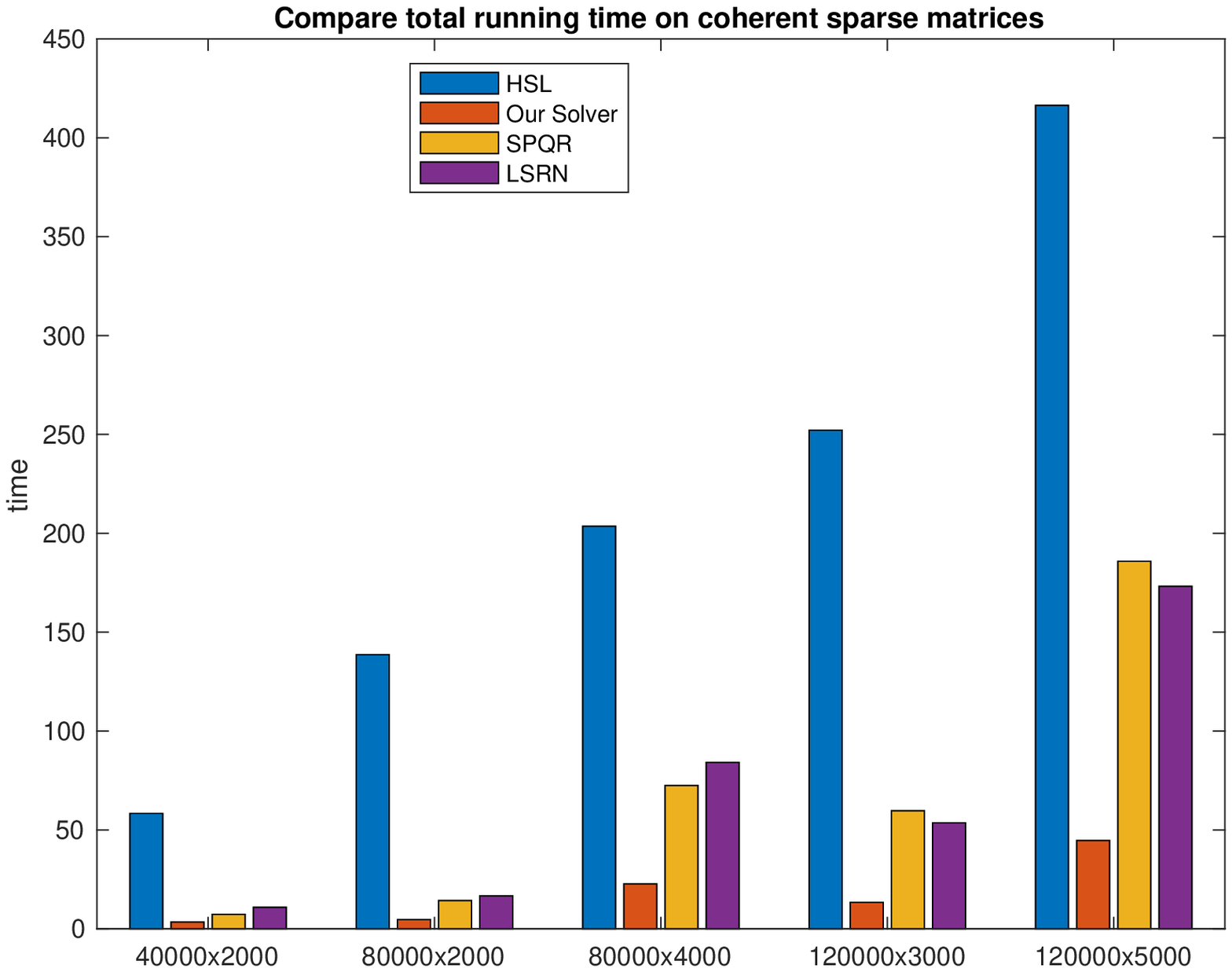}{\runningTimeSparseRandomCaption{coherent}}{fig::sparse_rand_co}

\subsection{Large scale benchmark of \solverNameSparse{} on the Florida Matrix Collection}

\paragraph{Performance profile}

Performance profile \cite{dolan2002benchmarking} has in recent years have become a popular and widely used tool for providing objective information when benchmarking software. In a typical performance profile here, we have the running time ratio against the fastest solver on the x-axis, reported in $log2$ scale. For each running time ratio $a$, we have the ratio of problems in the test set $b$ on the y-axis such that for a particular solver, the running time ratio against the best solver is within $a$ for $b$ percent of the problems in the test set. For example, the intersect between the performance curve and the y-axis gives the ratio of the problems in the test set such that a particular solver is the fastest.

\paragraph{Running and testing environment specific to the benchmark experiment}
Given a problem $A$ from the test set, let $(r_1, r_2, r_3, r_4)$ be the residuals of solutions computed by the four solvers compared. And let $r = \min r_i$. A solver is declared as failed on this particular problem if one of the following two conditions holds

\begin{enumerate}
    \item $r_i > (1+\tau_r)r$ and $r_i > r + \tau_a$. So that the residual of the solution computed is neither relatively close nor close in absolute value to the residual of the best solution
    \footnote{Note that the residual of the best solution is in general only an upper bound of the minimal residual. However since it is too computational intensive to compute the minimal residual solution of all the problems in the Florida matrix collection, we use the residual of the best solution as a proxy.}.
    
    \item The solver takes more than 800 wall clock seconds to compute a solution. 
\end{enumerate}
In the case that a solver is declared as failed, we set the running time of the solver to be 9999 seconds on the corresponding problem. This is because we want to include all the problems such that at least one of the solvers compared succeeded. As a result, a very large running time (ratio) could be due to either an inaccuracy of the solver or an inefficiency of the solver. We note that for all successful solvers, the running time is bounded above by 800 seconds so that there will be no confusion of whether a solver is successful or not. 

The default parameters (as described in Section \ref{subsection:alg_implementation_discussion}) for all solvers are used.

\paragraph{Results, highly over-determined matrices in the Florida Matrix Collection}

\autoref{fig::all_solver_30} shows \solverName{} is the fastest in 75\% of problems in the Florida matrix collection with $n\geq 30d$.

\paragraph{What happens when the problem is moderately over-determined?}
\autoref{fig::all_solver_10} shows LS\_HSL is the fastest for the largest percentage of problems in the Florida matrix collection with $n\geq 10d$. \solverName{} is still competitive and noticeably faster than LSRN.

\paragraph{Effect of condition number}
Many of the matrices in the Florida matrix condition has low condition numbers so that unpreconditioned LSQR converges in a few iterations. In those cases, it is disadvantageous to compare \solverName{} to LS\_HSL because we compute a better quality preconditioner through an complete orthogonal factorization. 

\autoref{fig::all_solver_10_LSQR_5} show \solverName{} is fastest in more than 50\% of the moderately over-determined ($n\geq 10d$) problems if we only consider problems such that it takes LSQR more than 5 seconds to solve.

\paragraph{Effect of sparsity}
Figure \ref{fig::density001} shows \solverName{} is extremely competitive, being the fastest in all but one moderately over-determined problems with moderate sparsity ($\text{nnz}(A) \geq 0.01nd)$.

\renewcommand{\mysize}{0.42}
\begin{figure}
    \centering
    \begin{minipage}{\mysize\textwidth}
        \centering
        \includegraphics[width=\textwidth]{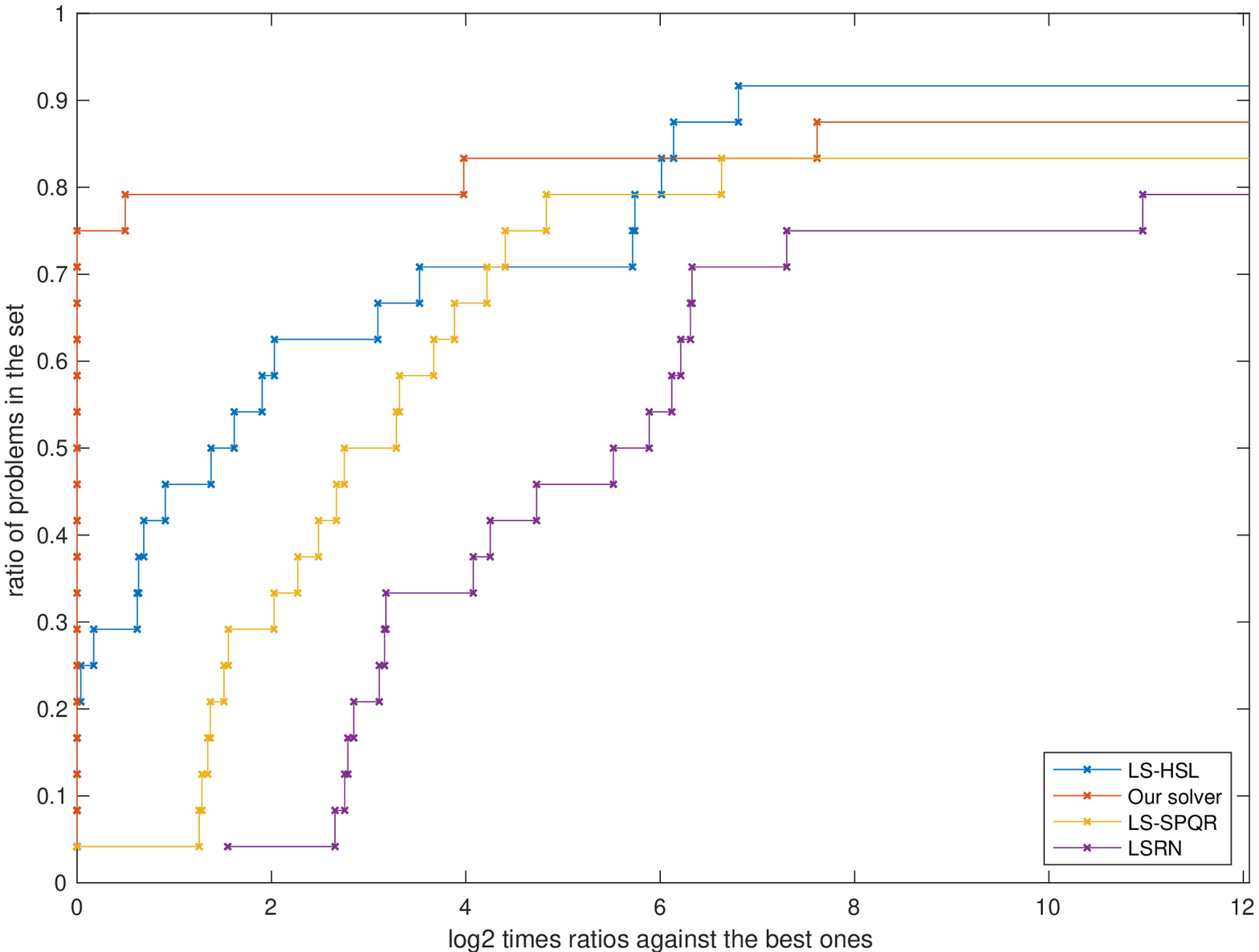} 
        \caption{\performanceProfileCaption{$n\geq 30d$}. }
        \label{fig::all_solver_30}
    \end{minipage}\hfill  
    \centering
    \begin{minipage}{\mysize\textwidth}
        \centering
        \includegraphics[width=\textwidth]{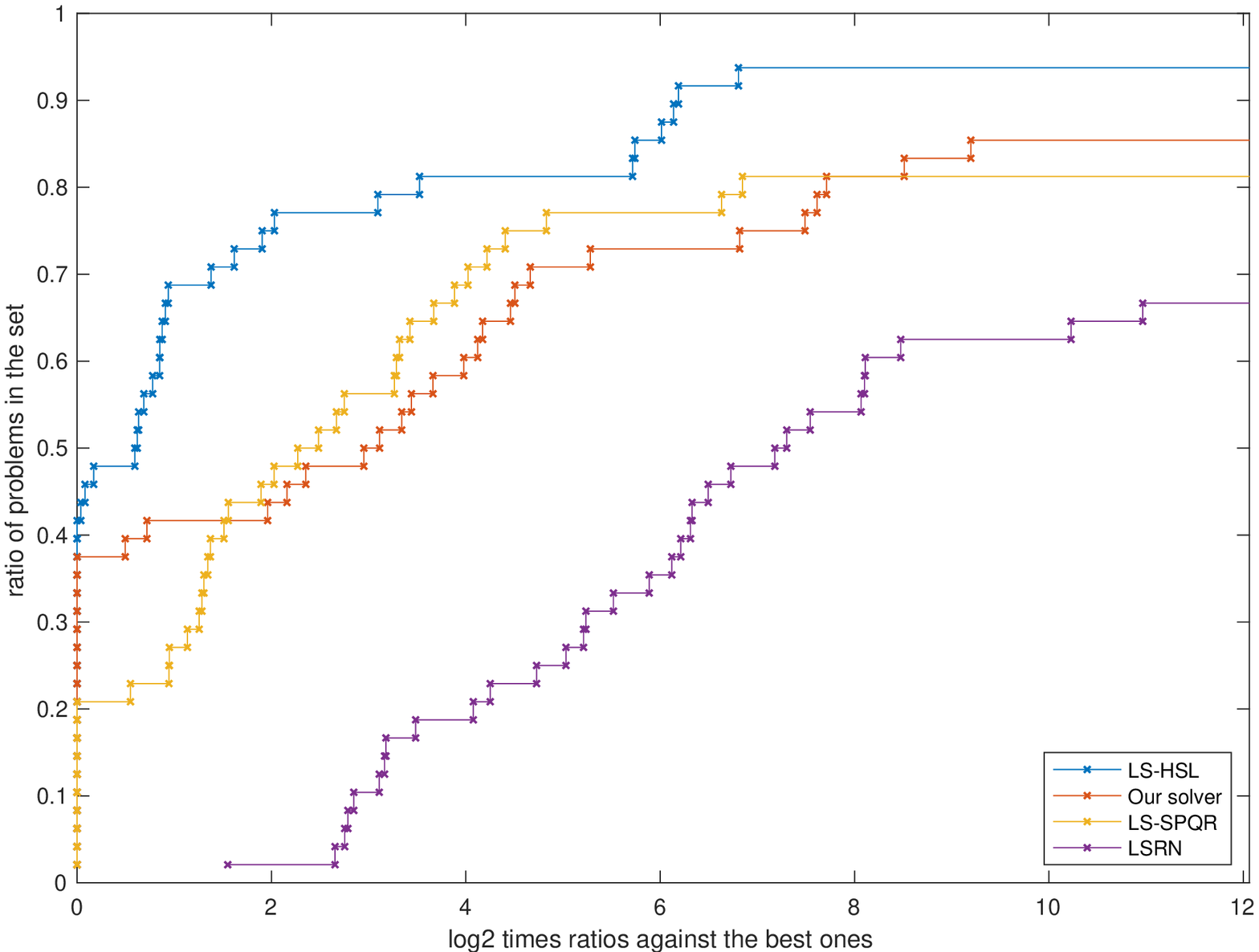} 
        \caption{\performanceProfileCaption{$n\geq 10d$}.}
        \label{fig::all_solver_10}
    \end{minipage}\hfill  
\end{figure}

\begin{figure}
    \centering
    \begin{minipage}{\mysize\textwidth}
        \centering
        \includegraphics[width=\textwidth]{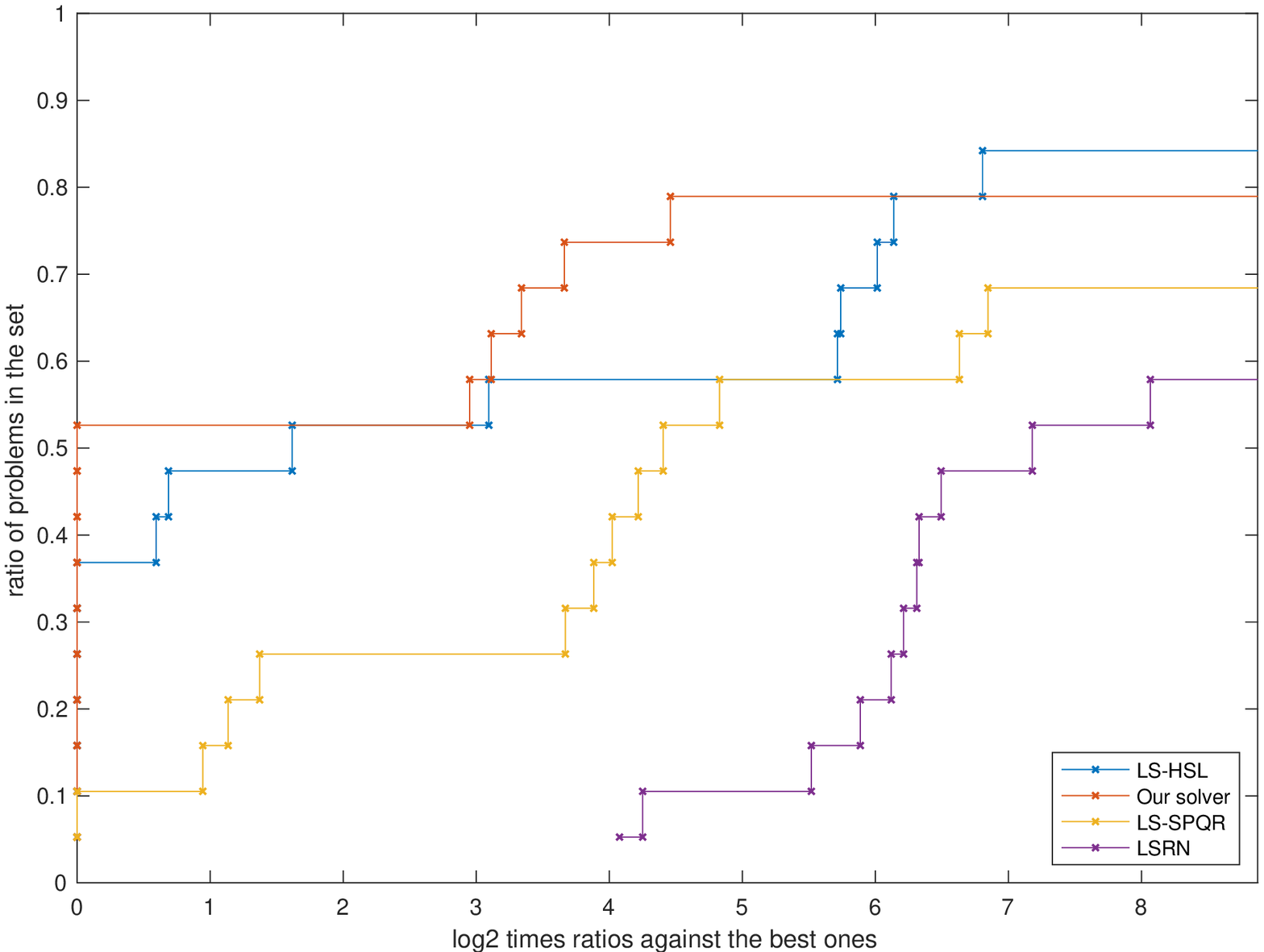} 
        \caption{\performanceProfileCaption{with $n\geq 10d$ and the unpreconditioned LSQR takes more than 5 seconds to solve}.}
        \label{fig::all_solver_10_LSQR_5}
    \end{minipage}\hfill  
    \begin{minipage}{\mysize\textwidth}
    \centering
    \includegraphics[width=\textwidth]{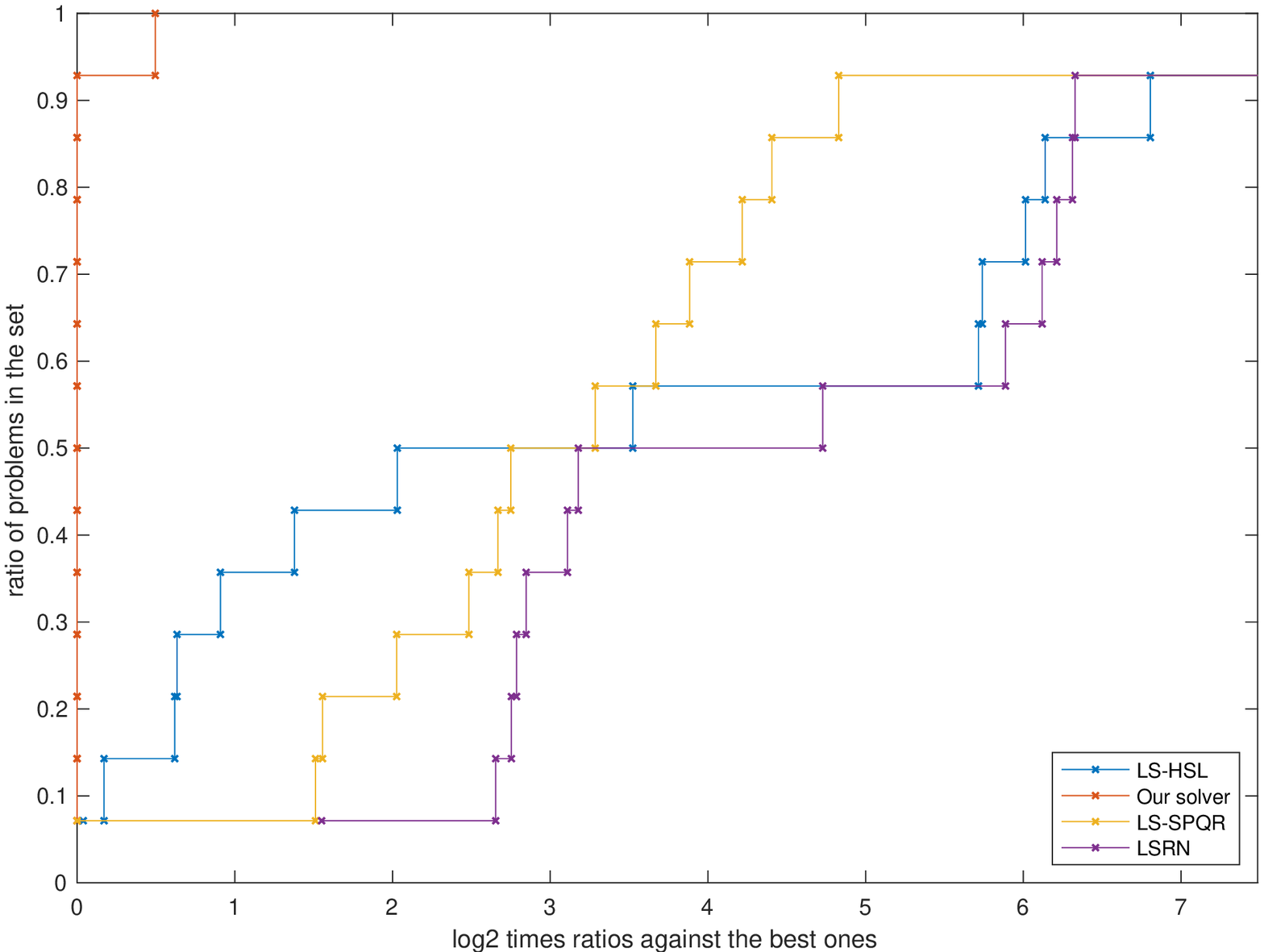}
    \caption{\performanceProfileCaption{$n \geq 10d$ and $\text{nnz}(A) \geq 0.01nd$}. }
    \label{fig::density001}
    \end{minipage}

\end{figure}

\chapter{First order subspace method for general objectives}
    
    \section{Introduction}
    This chapter expands the materials in \cite{zhen:icml_BCGN, BCGNPaper}. In Section \ref{BCGN:sec2}, \reply{we first describe} \autoref{alg:generic}, a generic algorithmic framework for solving \eqref{general_objective_statement} by taking successive steps computed from approximately minimising (random) reduced models. Our main result \autoref{thm2} provides complexity bound on the total number of iterations before \autoref{alg:generic} drives the gradient of objective below $\epsilon$, with high probability. Deducing from \autoref{thm2}, we also show that the quantity $\minGradientN$ converges to zero with high probability, and the convergence of $\expectation{\minGradientN}$. The rate of these convergences depends on a function in one of the assumptions required by the framework. 

In Section \ref{BCGN:sec2.5}, we prove \autoref{thm2}. The proof carefully counts the different types of iterations and uses a conditional form of the Chernoff bound whose proof we provide for completeness.  

In Section \ref{BCGN:sec3}, we describe \autoref{alg:sketching} that particularises \autoref{alg:generic} by using random matrices to build random reduced models. We show how using random matrices that are oblivious JL embeddings satisfies the assumptions required for the convergence result in \autoref{thm2}. 

In Section \ref{BCGN:sec4}, \autoref{alg:sketching}
\reply{is further particularised} to a quadratic-regularisation and a trust-region variant, depending on how the minimisation of the random reduced model is specified. Section \ref{BCGN:sec4} then uses \autoref{thm2} to show that \reply{both variants drive} the full objective gradient $\gradFK$ below $\epsilon$ in $\mathO{\frac{1}{\epsilon^2} }$ iterations with high probability, matching the deterministic methods' iteration complexity. 

In Section \ref{BCGN:sec5}, we introduce non-linear least squares, a particular type of non-convex optimisation problems \eqref{general_objective_statement}. We show how \autoref{alg:sketching} safe-guarded with trust-region leads to a subspace version of the well-known Gauss-Newton method, Randomised-Subspace Gauss Newton (R-SGN), with a convergence guarantee. We numerically illustrate the performance of R-SGN on non-linear least squares and logistic regression problems.  

\paragraph{Related literature}
\cite{Cartis:2017fa} proposes a generic algorithmic framework based on probabilistic models with an expected iteration complexity bound  to generate a sufficiently small  (true) gradient. Various strategies are discussed in \cite{Cartis:2017fa} to generate such models both for derivative-based and derivative-free methods; however, subspace methods cannot be easily captured within the conditions and models used in \cite{Cartis:2017fa}. In
\cite{Gratton:2017kz}, a trust-region based method with probabilistically accurate models is proposed, with an iteration complexity bound of $\mathO{\epsilon^{-2}}$  for the algorithm to drive the full gradient below $\epsilon$, with high probability. 
\cite{kozak2019stochastic} analyses a random subspace method with constant step size, where the sketching matrix $S_k$ satisfies $\expectation{S_k^T S_k} = \mathbb{I}_d$ and $S_k S_k^T = \frac{d}{l} \mathbb{I}_d$. However, their convergence result \reply{requires the objective to be convex, or to satisfy} the Polyak-Lojasiewicz inequality. Independently from our work/at the same time, \cite{RePEc:spr:coopap:v:79:y:2021:i:2:d:10.1007_s10589-021-00271-w} proposes a random subspace gradient descent method with linesearch; it uses Johnson-Lindenstrauss embedding properties  in the analysis, similarly to our framework, but fewer ensembles are considered. However, their analysis only applies under various convexity assumptions of the objective.

    \section{General algorithmic framework and its convergence result}
    \label{BCGN:sec2}
    We consider the unconstrained optimisation problem
\begin{equation}
    f^* = \min_{x \in \R^d} f(x).\label{eqn::fStar} 
\end{equation}

\subsection{Generic algorithmic framework and assumptions}

We first describe a generic algorithmic framework that encompasses the
main components of the unconstrained optimization schemes we analyse
in this chapter. The scheme relies on building a local, reduced model of the objective
function at each iteration, 
minimizing this model or reducing it in a sufficient manner and
considering the step which is dependent on a stepsize parameter and
which provides the model reduction (the stepsize parameter may be
present in the model or independent of it). 
This step determines a new candidate point. The function value is then
computed (accurately) at the new candidate point. 
If the function reduction provided by the candidate point is deemed
sufficient, then the iteration is declared successful, the 
candidate point becomes the new iterate and the step size parameter is
increased. Otherwise, the iteration is 
unsuccessful, the iterate is not updated and the step size parameter is reduced.

We summarize the main steps of the generic framework below.

\begin{algorithm}[H]
\begin{description}
\item[Initialization] \ \\
Choose a class of  (possibly random) models $m_k\left(w_k(\sHat)\right) = \mKHat{\sHat}$, where $\sHat \in \R^l$ with $l\leq d$ is the step parameter and $w_k: \R^l \to \R^d$ is the prolongation function. 
Choose constants $\gamma_1\in (0,1)$, $\gamma_2 = \gammaOne^{-c}$, for some \reply{$c \in \N^+$ ($\N^+$ refers to the set of positive natural numbers)}, $\theta \in (0,1)$ and $\alpha_{\max}>0$.
Initialize the algorithm by setting $x_0 \in \R^d$, $\alpha_0 = \alphaMax \gamma_1^p$ for some $p \in \N^+$ and $k=0$.

 \item[1. Compute a reduced model and a step] \ \\
Compute a local (possibly random) reduced model $\mKHat{\sHat}$ of $f$ around $x_k$ with $\mKHat{0} = f(x_k)$. \\
Compute a step parameter $\sHat_k(\alpha_k)$, where the parameter $\alpha_k$ is present in the reduced model or the step parameter computation.\\
Compute a potential step $s_k = w_k(\sHat_k)$.

\item[2. Check sufficient decrease]\ \\  
Compute $f(x_k + s_k)$ and check if sufficient decrease (parameterized by $\theta$) is achieved in $f$ with respect to $\mKHat{0} - \mKHat{\hat{\sK}(\alpha_k)}$.

\item[3, Update the parameter $\alphaK$ and possibly take the potential step $\sK$]\ \\
If sufficient decrease is achieved, set $\xKPlusOne = \xK + \sK$ and $\alphaKPlusOne = \min \set{\alphaMax, \gammaTwo\alphaK}$ [this is referred to as a successful iteration].\\
Otherwise set $\xKPlusOne = \xK$ and $\alphaKPlusOne = \gammaOne \alphaK$ [this is an unsuccessful iteration].\\
Increase the iteration count by setting $k=k+1$ in both cases. 

\caption{\bf{Generic optimization framework based on  randomly generated reduced models}} \label{alg:generic} 

\end{description}
\end{algorithm}

The generic framework and its assumptions we present here is similar to the framework presented in \cite{Cartis:2017fa}. We extended their framework so that the proportionality constants for increase/decrease of the step size parameter are not required to be reciprocal, but reciprocal up to an integer power (see \autoref{AA3}). 
Even though the framework and its assumptions are similar, our analysis and result are different and qualitatively improve upon their result (in the way that \autoref{thm2} implies their main result Theorem 2.1).
Moreover, we show how to use random-embedding based sketching to build the reduced model in Section \ref{BCGN:sec3}.

The connection between the generic framework and classical optimisation literature is detailed in \cite{Cartis:2017fa}. Here we give a simple example. If we let $l=d$ and $w_k$ be the identity function in \autoref{alg:generic} so that $m_k(s)=\mKHat{\sHat}$; and if we let 
\begin{equation}
    m_k(s) = f(x_k) + \gradFK^T s + \frac{1}{2}s^T B_k s, \notag
\end{equation}
where $B_k$ is a Hessian approximation, and compute the step parameter (or in this case, since $w_k$ is the identify function, the step) by seeking a solution of the problem
\begin{equation}
    \min_{s\in\R^d} m_k(s) \texteq{such that $\normTwo{s}\leq \alphaK$}; \notag
\end{equation}
and if we define the sufficient decrease by 
\begin{equation}
    f(x_k) - f(x_k + s_k) \geq \theta \squareBracket{\mK{0}-\mK{\sK}}, \notag
\end{equation}
then \autoref{alg:generic} reduces to the (deterministic) trust-region method, see \cite{Nocedal:2006uv}.

Because the model at each iteration is (possibly) random, $\xK, \sK, \alphaK$ are in general random variables. We will use $\barXK, \bar{s}_k, \bar{\alpha}_k$ to denote their realizations. We define convergence in terms of a random variable $\nEps$, that can be a function of a positive scalar(s) $\epsilon$, as well as the sequences $\set{f(x_k)}, \set{\gradFK}, \set{\hessFK}$. For example, 

\begin{equation}
    \nEps = \min \set{k: \normTwo{\gradFK}\leq \epsilon} \label{eqn::nEps}
\end{equation}

will be used to represent convergence to a first-order local stationary point, as in \cite{Cartis:2017fa}. We say that \autoref{alg:generic} has not converged if $k \leq \nEps$, and has converged otherwise. Furthermore, let us suppose that there is a class of iterations, hereafter will be referred to as \textbf{true iterations} such that \autoref{alg:generic} satisfies the some conditions. 

The convergence of \autoref{alg:generic} relies on the following four assumptions. 
The first assumption states that given the current iterate (at any value), an iteration $k$ is true at least with a fixed probability, and is independent of the truth values of all previous iterations.

\begin{assumption}\label{AA2}
There exists $\delta_S \in (0,1)$ such that for any $\barXK \in \R^d$ and $k=1, 2, \dots$
\begin{equation}
    \probabilityGivenXK{T_k} \geq 1-\delta_S, \notag
\end{equation}
where $T_k$ is defined as
        \begin{equation}
            T_k = 
        \twoCases{1}{\text{if iteration $k$ is true}}{0}{\text{otherwise}}
        \label{eqn::t_k}
        \end{equation}

Moreover, $\probability{T_0} \geq 1-\delta_S$; and $T_k$ is conditionally independent of $T_0, T_1, \dots, T_{k-1}$ given $x_k = \barXK$. 
\end{assumption}

The next assumption says that for $\alphaK$ small enough, any true iteration before convergence is guaranteed to be successful.

\begin{assumption}\label{AA3}
For any $\epsilon >0$, there exists an iteration-independent constant $\alphaLow>0$ (that may depend on $\epsilon$ and the problem and algorithm parameters) such that 
if iteration $k$ is true, $k< \nEps$, and $\alpha_k \leq \alphaLow$ then iteration $k$ is successful. 

\end{assumption}

The next assumption says that before convergence, true and successful iterations result in an objective decrease lower bounded by an (iteration-independent) function $h$, which is monotonically increasing in its two arguments, $\epsilon$ and $\alphaK$. 

\begin{assumption}\label{AA4}
There exists a non-negative, non-decreasing function $h(z_1,z_2)$ such that,
for any $\epsilon>0$, 
if iteration $k$ is true and successful with $k < \nEps$, then 

	\begin{equation}
	f(\xK) - f(\xK + \sK) \geq h(\epsilon, \alpha_k), \label{eqn::generic_model_decrease}
	\end{equation}
where $s_k$ is computed in step 1 of \autoref{alg:generic}. Moreover, $h(z_1, z_2)>0$ if both $z_1>0$ and $z_2>0$.
\end{assumption}

The final assumption requires that the function values at successive iterations must form a non-increasing sequence throughout the algorithm. 

\begin{assumption}\label{AA5}
For any $k \in \N$, we have 
\begin{equation}
    f(\xK) \geq f(\xKPlusOne). \label{eqn::fKNonIncreasing}
\end{equation}
\end{assumption}

The following Lemma is a simple consequence of \autoref{AA3}. 

\begin{lemma}
\label{lem::alphaMin}
Let $\epsilon>0$ and \autoref{AA3} hold with $\alphaLow>0$. Then there exists $\newL \in \N^+$, and $\alphaMin >0$ such that 
\begin{align}
&\alphaMin = \alphaZero \gammaOne^\newL, \label{eqn::alphaMin}\\
    &\alphaMin \leq \alphaLow, \notag \\
    &\alphaMin \leq \frac{\alphaZero}{\gammaTwo}, \label{eqn::alphaMinUpperByGammaTwoOverAlphaZero}
\end{align}
where $\gammaOne, \gammaTwo, \alphaZero$ are defined in \autoref{alg:generic}. 
\end{lemma}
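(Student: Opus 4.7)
The plan is to exhibit an explicit integer $\newL$ for which the required $\alphaMin = \alphaZero\gammaOne^{\newL}$ satisfies all three conditions. Since $\gammaOne \in (0,1)$, the sequence $\{\alphaZero\gammaOne^{j}\}_{j\in\N}$ decreases to $0$, so both upper bound requirements $\alphaMin \leq \alphaLow$ and $\alphaMin \leq \alphaZero/\gammaTwo$ will hold once $\newL$ is taken large enough.

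First I would isolate the two conditions on $\newL$. The bound $\alphaZero\gammaOne^{\newL} \leq \alphaLow$ is equivalent (upon taking $\log_{\gammaOne}$ and using $\log\gammaOne<0$) to
\[
\newL \geq \log_{\gammaOne}\!\bracket{\alphaLow/\alphaZero}.
\]
The bound $\alphaZero\gammaOne^{\newL} \leq \alphaZero/\gammaTwo$ simplifies, using $\gammaTwo = \gammaOne^{-c}$ from \autoref{alg:generic}, to $\gammaOne^{\newL} \leq \gammaOne^{c}$, i.e., $\newL \geq c$.

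Then I would set
\[
\newL = \max\!\set{\,c,\ \ceil{\log_{\gammaOne}\!\bracket{\alphaLow/\alphaZero}}\,},
\]
and, if necessary, replace $\newL$ by $\max\{\newL,1\}$ to guarantee $\newL \in \N^{+}$ (this only strengthens the two inequalities). With this choice, both displayed bounds hold, and defining $\alphaMin = \alphaZero\gammaOne^{\newL}$ gives \eqref{eqn::alphaMin}, $\alphaMin \leq \alphaLow$, and \eqref{eqn::alphaMinUpperByGammaTwoOverAlphaZero} simultaneously.

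There is essentially no obstacle: the argument is a one-line observation about the decay of $\gammaOne^{j}$, and \autoref{AA3} is used only to guarantee that a strictly positive $\alphaLow$ exists so that $\log_{\gammaOne}(\alphaLow/\alphaZero)$ is well-defined and finite. The only mild point to be careful about is that $\alphaLow$ may a priori exceed $\alphaZero$, in which case the ceiling above is non-positive; the explicit $\max\{\cdot,1\}$ safeguard handles this edge case and ensures $\newL$ is a positive integer as required.
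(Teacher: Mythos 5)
Your proposal is correct and follows essentially the same route as the paper: the paper picks $\newL = \ceil{\logBaseGammaOne{\minMe{\alphaLow/\alphaZero}{1/\gammaTwo}}}$, which (since $\log_{\gammaOne}$ is decreasing and $\logBaseGammaOne{1/\gammaTwo}=c$) is exactly your $\max\set{c,\ceil{\logBaseGammaOne{\alphaLow/\alphaZero}}}$, and then verifies the two bounds and deduces $\newL\geq c\geq 1$, making your extra $\max\{\cdot,1\}$ safeguard redundant but harmless.
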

\begin{proof}
Let 
\begin{align}
    &\newL = \ceil{\logBaseGammaOne{ \minMe{\frac{\alphaLow}{\alphaZero}}{\frac{1}{\gammaTwo}}}}, \label{eqn:tauLDef}\\
    & \alphaMin = \alphaZero \gammaOne^\newL. \label{eqn:alphaMinDef}
\end{align}
We have that $\alphaMin \leq \alphaZero \gammaOne^{\logBaseGammaOne{ \frac{\alphaLow}{\alphaZero}}} = \alphaLow$. Therefore by \autoref{AA3}, if iteration $k$ is true, $k< \nEps$, and $\alpha_k \leq \alphaMin$ then iteration $k$ is successful. Moreover, $\alphaMin \leq \alphaZero \gammaOne^{\logBaseGammaOne{\frac{1}{\gammaTwo}}} = \frac{\alphaZero}{\gammaTwo} = \alphaZero \gammaOneC$. It follows from $\alphaMin = \alphaZero \gammaOne^\newL$ that $\newL \geq c$. Since $c \in \N^+$, we have $\newL \in \N^+$ as well.

\end{proof}

\subsection{A probabilistic convergence result}

\autoref{thm2} is our main result for \autoref{alg:generic}. It states a probabilistic bound on the total number of iterations $\nEps$ needed to converge to $\epsilon$-accuracy for the generic framework.

\begin{theorem}
\label{thm2}
Let \autoref{AA2}, \autoref{AA3}, \autoref{AA4} and \autoref{AA5} hold with $\delta_S \in (0,1), \alphaLow>0$, $h: \R^2 \to \R$ and $\alphaMin = \alphaZero \gammaOne^\newL$ associated with $\alphaLow$, for some $\newL \in \N^+$. Let $\epsilon>0$, $f^*$ defined in \eqref{eqn::fStar}. Run \autoref{alg:generic} for $N$ iterations.
Suppose 
\begin{equation}
    \deltaS < \frac{c}{(c+1)^2} \label{eqn::deltaSConditionThmTwo}
\end{equation}
Then for any $\delta_1 \in (0,1)$ such that 
\begin{equation}
    \gDeltaSDeltaOne >0, \label{eqn:tmp32}
\end{equation}
where 
\begin{equation}
    g(\deltaS, \deltaOne) = \nPreFactorTR. \label{eqn:gDeltaSDeltaOneDef}
\end{equation}
If $N$ satisfies 
\begin{equation}
    N \geq \gDeltaSDeltaOne \squareBracket{
         \fZeroMinusfStarOverH
         + \frac{\newL}{1+c}}, \label{eqn::n_upper_2}
\end{equation}
we have that
\begin{equation}
    \probability{N \geq \nEps} \geq 1 - \chernoffLowerExponential. \footnote{For the sake of clarity, we stress that $N$ is a deterministic constant, namely, the total number of iterations that we run \autoref{alg:generic}. $\nEps$, the number of iterations needed before convergence, is a random variable.} \label{eqn:tmp33}
\end{equation}
\end{theorem}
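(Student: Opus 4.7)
The plan is to proceed by contradiction, combining a Chernoff concentration bound on the count of true iterations (using \autoref{AA2}) with counting arguments based on \autoref{AA3} (which forbids true-unsuccessful iterations once $\alphaK$ is sufficiently small) and \autoref{AA4} (which converts true-successful iterations at large enough $\alphaK$ into quantifiable function decrease).

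First, I would apply a multiplicative Chernoff inequality to $\sum_{k=0}^{N-1} T_k$. Since \autoref{AA2} ensures $\conditionalE{T_k}{x_k = \barXK} \geq 1 - \deltaS$ with $T_k$ conditionally independent of $T_0, \dots, T_{k-1}$, the standard exponential-moment argument yields
\begin{equation*}
\P\bracket{\sum_{k=0}^{N-1} T_k \leq (1-\deltaOne)(1-\deltaS) N} \leq \chernoffLowerExponential.
\end{equation*}
Call the complementary ``good event'' $E = \set{N_T \geq (1-\deltaS)(1-\deltaOne) N}$, where $N_T := \sum_{k=0}^{N-1} T_k$. It then suffices to show that on $E$, the hypothesis \eqref{eqn::n_upper_2} forces $N \geq \nEps$, so that \eqref{eqn:tmp33} follows.

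Assume toward contradiction that $E$ holds but $N < \nEps$, so every iteration in $\set{0, \dots, N-1}$ is pre-convergence. Summing \autoref{AA4} over all true-and-successful iterations with $\alphaK \geq \alphaZero\gammaOne^{c+\newL}$, using monotonicity of $h$ and \autoref{AA5} on the remaining iterations, telescoping gives
\begin{equation*}
f(x_0) - f^* \geq \ntsAlphaZeroGammaOneCL \cdot h\bracket{\epsilon, \alphaZero\gammaOne^{c+\newL}},
\end{equation*}
hence $\ntsAlphaZeroGammaOneCL \leq \fZeroMinusfStarOverH =: A$. I would then track the step-size potential $\beta_k := \log_{\gammaOne}(\alphaK/\alphaZero)$, which decreases by at most $c$ on successful iterations (capping at $\alphaMax$ only pushes $\beta_k$ upward) and increases by exactly $1$ on unsuccessful iterations, subject to $\beta_k \geq -p$ throughout by $\alphaK \leq \alphaMax$. \autoref{AA3} combined with \autoref{lem::alphaMin} rules out true-unsuccessful iterations with $\alphaK \leq \alphaMin = \alphaZero\gammaOne^\newL$, so every true-unsuccessful iteration occurs in the ``high-$\alpha$'' regime $\alphaK > \alphaMin$. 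Classifying the four iteration types (true/false $\times$ successful/unsuccessful) by the $\alphaK$-state and exploiting the $\beta_k$-dynamics should yield a counting inequality of the form
\begin{equation*}
N_T \leq A + \frac{\newL}{1+c} + \bracket{1 - \frac{c}{(c+1)^2}} N,
\end{equation*}
which, combined with $N_T \geq (1-\deltaS)(1-\deltaOne) N$ on $E$ and a rearrangement, contradicts \eqref{eqn::n_upper_2} and completes the argument.

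The hardest step is establishing the counting inequality above. The factor $c/(c+1)^2$ reflects a worst-case cycle of length $c+1$ in which each successful iteration (pulling $\beta_k$ down by $c$) is balanced by $c$ unsuccessful iterations (each pushing $\beta_k$ up by $1$), so true-successful iterations at high $\alpha$ may account for as little as a $1/(c+1)$ fraction of $N$; false iterations, capping at $\alphaMax$, and true-successful iterations at low $\alpha$ each introduce extra slack that must be absorbed into the additive $\newL/(1+c)$ term. This bookkeeping is standard but intricate, paralleling Cartis--Scheinberg-style analyses of stochastic trust-region schemes.
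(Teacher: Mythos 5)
Your outline follows the same route as the paper's proof: a Chernoff/exponential-moment lower-tail bound on $\sum_{k=0}^{N-1}T_k$ exploiting the conditional independence structure of \autoref{AA2}; a telescoping argument from \autoref{AA4} and \autoref{AA5} showing that the number of true and successful iterations with $\alpha_k\ge\alpha_0\gamma_1^{c+\tau_{\alpha}}$ is at most $A:=(f(x_0)-f^*)/h(\epsilon,\alpha_0\gamma_1^{c+\tau_{\alpha}})$; and a deterministic counting argument on the step-size dynamics in which \autoref{AA3} together with \autoref{lem::alphaMin} forces every true unsuccessful iteration to have $\alpha_k>\alpha_{\min}$. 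The inequality you posit, $N_T\le A+\tau_{\alpha}/(1+c)+\bigl(1-c/(c+1)^2\bigr)N$, is also the right target: it is equivalent to what the paper's sequence of counting lemmas delivers before combination with the Chernoff event. The genuine gap is that this counting inequality --- the technical core of the proof, which the paper obtains by a careful interval-by-interval bookkeeping of $\beta_k=\log_{\gamma_1}(\alpha_k/\alpha_0)$ between crossings of the thresholds $\alpha_{\min}$ and $\alpha_0\gamma_1^{c+\tau_{\alpha}}$ --- is only asserted (``should yield''), and your heuristic is not yet a mechanism: the additive $\tau_{\alpha}/(1+c)$ reflects only the initial budget $\beta_0=0$, whereas true successful iterations at low $\alpha$ are controlled because descending below $\alpha_{\min}$, and remaining there, requires unsuccessful steps that \autoref{AA3} forces to be false, so these iterations are charged to $N_F$ rather than to the additive constant; your sketch attributes them to the wrong term. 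A smaller point: as written, the contradiction only yields $N\le g(\delta_S,\delta_1)\bigl[A+\tau_{\alpha}/(1+c)\bigr]$, which is consistent with the non-strict hypothesis \eqref{eqn::n_upper_2}, so a strict inequality is needed somewhere in the bookkeeping to conclude \eqref{eqn:tmp33}.

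The gap is closable along the lines you set up. Since $\beta_k$ is integer valued, increases by exactly $1$ on unsuccessful iterations and never increases on successful ones: (i) by \autoref{AA3} every unsuccessful iteration with $\beta_k\ge\tau_{\alpha}$ is false; (ii) telescoping $\min(\beta_k,\tau_{\alpha})$ shows that the number of unsuccessful iterations with $\beta_k<\tau_{\alpha}$ is at most $\tau_{\alpha}$ plus $c$ times the number of successful iterations with $\beta_k\le c+\tau_{\alpha}$ (and the latter number at most $A$ plus the false successful count); (iii) telescoping $\max(\beta_k,\tau_{\alpha})$ shows that the number of successful iterations with $\beta_k> c+\tau_{\alpha}$ is at most $1/c$ times the number of false unsuccessful iterations. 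Summing the four categories and using $1+1/c\le 1+c$ gives, whenever $N<\nEps$, the bound $N\le(1+c)A+\tau_{\alpha}+(1+c)N_F$, which after dividing by $1+c$ and using $c/(c+1)^2\le 1/(1+c)$ implies exactly your counting inequality. Until some such argument is written out, however, the proposal is a correct plan rather than a proof: the omitted step is precisely where the paper spends the bulk of its effort.
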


\begin{remark}
Note that $\frac{c}{(c+1)^2} \in (0, \frac{1}{4}]$ for $c \in \N^+$. Therefore \eqref{eqn::deltaSConditionThmTwo} and \eqref{eqn:tmp32} can only be satisfied for some $c, \deltaOne$ given that $\deltaS < \frac{1}{4}$. Thus our theory requires an iteration is true with probability at least $\frac{3}{4}$. Compared to the analysis in \cite{Cartis:2017fa}, which requires an iteration is true with probability at least $\frac{1}{2}$, our condition is stronger. This is due to the high probability nature of our result, while their convergence result is in expectation. Furthermore, we will see in \autoref{lem::deduceAA2} that we are able to impose arbitrarily small value of $\delta_S$, thus satisfying the requirement, by choosing an appropriate dimension of the local reduced model $\mKHat{\sHat}$.
\end{remark}

We show how our result leads to Theorem 2.1 in \cite{Cartis:2017fa}, which concerns $\expectation{\nEps}$. We have, with $N_0$ defined as the RHS of \eqref{eqn::n_upper_2}, 
\begin{align}
    \expectation{\nEps} &= \integral{0}{\infty}{\probNEpsGrN} \notag \\
    &= \integral{0}{N_0}{\probNEpsGrN} + \integral{N_0}{\infty}{\probNEpsGrN} \notag \\
    & \leq N_0 + \integral{N_0}{\infty}{\probNEpsGrN} \notag \\
    & \leq N_0 + \integral{N_0}{\infty}{e^{-\frac{\delta_1^2}{2} (1-\delta_S) M} dM} \notag \\
    & = N_0 + \frac{2}{\delta_1^2 (1-\deltaS)} e^{-\frac{\delta_1^2}{2} (1-\delta_S) N_0}, \notag
\end{align}
where we used \autoref{thm2} to derive the last inequality. The result in \cite{Cartis:2017fa} is in the form of $\expectation{\nEps} \leq N_0$. Note that the discrepancy term is exponentially small in terms of $N_0$ and therefore the implication of our result is asymptotically the same as that in \cite{Cartis:2017fa}.

\subsection{Corollaries of \autoref{thm2}}
Before we begin the proof, we state and prove three implications of \autoref{thm2}, provided some mild assumptions on $h$ with $\nEps$. These results show different flavours of \autoref{thm2}.

The following expressions will be used, along with $\gDeltaSDeltaOne$ is defined in \eqref{eqn:tmp32}
\begin{align}
    &q(\epsilon) = h(\epsilon, \gammaOneC\alphaMin), \label{eqn:qEps} \\
    &D_1 = \gDeltaSDeltaOne (f(x_0) - f^*), \label{eqn::D_1} \\
    &D_2 = \gDeltaSDeltaOne \frac{\newL}{1+c}, \label{eqn::D_2}\\
    &D_3 = \frac{\delta_1^2}{2}(1-\delta_S). \label{eqn::D_3}
\end{align}. 

From \eqref{eqn:qEps}, \eqref{eqn::D_1}, \eqref{eqn::D_2}, \eqref{eqn::D_3}, a sufficient condition for \eqref{eqn::n_upper_2} to hold is
\begin{align}
    N &\geq \gDeltaSDeltaOne \squareBracket{\frac{f(x_0)-f^*}{h(\epsilon, \gammaOneC\alphaMin)} + \frac{\tCOne}{1+\tCTwo}} \notag \\
    & = \frac{D_1}{q(\epsilon)} + D_2; \notag
\end{align}
and \eqref{eqn:tmp33} can be restated as 
\begin{equation}
    \probability{N > \nEps} \geq 1 - e^{-D_3 N}. \notag
\end{equation}

The first corollary gives the rate of change of $\minGradientN$ as $N \to \infty$. It will yield a rate of convergence by substituting in a specific expression of $h$ (and hence $q^{-1}$).
\begin{corollary}
\label{Cor_1}
Let \autoref{AA2}, \autoref{AA3}, \autoref{AA4}, \autoref{AA5} hold. \\ Let $f^*, q, D_1, D_2, D_3$ be defined in \eqref{eqn::fStar}, \eqref{eqn:qEps}, \eqref{eqn::D_1}, \eqref{eqn::D_2} and \eqref{eqn::D_3}. \CorTwoDeltaOne. 
Then for any $N \in \N$ such that $\qInverseOfDOneOverNMinusDTwo$ exists, we have

\begin{equation}
    \probability{\minGradient{N} \leq \qInverseOfDOneOverNMinusDTwo} \geq 1 - e^{-D_3 N}. \label{eqn:tmp34}
\end{equation}
\end{corollary}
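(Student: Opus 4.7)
My plan is to derive Corollary \ref{Cor_1} as a direct reformulation of Theorem \ref{thm2}, by cleverly choosing the accuracy parameter $\epsilon$ so that the iteration budget $N$ in the theorem matches the given $N$ in the corollary. The key observation is that the sufficient condition \eqref{eqn::n_upper_2} for Theorem \ref{thm2}, after substituting in the definitions of $D_1$, $D_2$ and $q$, reads
\begin{equation}
N \;\geq\; \frac{D_1}{q(\epsilon)} + D_2,
\end{equation}
which can be rearranged into an explicit bound on $\epsilon$ via $q^{-1}$.

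First I would fix $N$ as in the statement of the corollary and set
\begin{equation}
\epsilon^{*} \;=\; q^{-1}\!\left( \frac{D_1}{N-D_2} \right),
\end{equation}
which is well-defined precisely under the hypothesis stated. By construction $q(\epsilon^{*}) = D_1/(N-D_2)$, and rearranging yields $N = D_1/q(\epsilon^{*}) + D_2$, so the sufficient condition \eqref{eqn::n_upper_2} holds at $\epsilon = \epsilon^{*}$ (with equality). Theorem \ref{thm2}, whose assumptions are inherited verbatim from the hypotheses of the corollary (Assumptions \ref{AA2}--\ref{AA5}, together with \eqref{eqn::deltaSConditionThmTwo} and \eqref{eqn:tmp32}), then directly provides
\begin{equation}
\probability{N \geq N_{\epsilon^{*}}} \;\geq\; 1 - e^{-D_3 N}.
\end{equation}

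Finally, I would translate the event $\{N \geq N_{\epsilon^{*}}\}$ into the desired gradient statement using the definition \eqref{eqn::nEps} of $N_{\epsilon}$. On this event, there exists some index $k^{*} = N_{\epsilon^{*}} \leq N$ for which $\normTwo{\grad f(x_{k^{*}})} \leq \epsilon^{*}$, and therefore $\minGradient{N} \leq \epsilon^{*} = q^{-1}(D_1/(N-D_2))$. Combining with the probability bound above yields \eqref{eqn:tmp34}.

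The proof is essentially an unpacking of Theorem \ref{thm2}; the only subtle point is the existence and monotonicity of $q^{-1}$, which is guaranteed by the assumption in the corollary statement (existence) and by the monotonicity of $h$ from \autoref{AA4} (so that $q$ is monotone in $\epsilon$ and $q^{-1}$ is unambiguous). No new probabilistic machinery is required.
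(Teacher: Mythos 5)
Your proposal is correct and follows essentially the same route as the paper's own proof: set $\epsilon = q^{-1}\!\left(\frac{D_1}{N-D_2}\right)$, observe that then $N = \frac{D_1}{q(\epsilon)}+D_2$ so \eqref{eqn::n_upper_2} holds, apply \autoref{thm2}, and use that the event $\{N \geq \nEps\}$ implies $\minGradientN \leq \epsilon$ by \eqref{eqn::nEps}. The only difference is your explicit remark on monotonicity of $q$, which the paper sidesteps by simply assuming $q^{-1}\!\left(\frac{D_1}{N-D_2}\right)$ exists.
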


\begin{proof}
Let $N \in \N$ such that $\qInverseOfDOneOverNMinusDTwo$ exists and let $\epsilon = \qInverseOfDOneOverNMinusDTwo$. Then we have
\begin{equation}
    \probability{\minGradientN \leq \qInverseOfDOneOverNMinusDTwo}
    = \probability{\minGradientN \leq \epsilon}
    \geq \probability{N > \nEps}, \label{eq::tmp9}
\end{equation}
where the inequality follows from the fact that $N \geq \nEps$ implies $\minGradientN \leq \epsilon$. 
On the other hand, we have
\begin{align}
    N &= \frac{D_1}{\frac{D_1}{N-D_2}} + D_2 \notag \\
    &= \frac{D_1}{q(\epsilon)}+D_2. \notag
\end{align}
Therefore \eqref{eqn::n_upper_2} holds; and applying \autoref{thm2}, we have that $\probability{N \geq \nEps} \geq 1 - e^{-D_3 N}$. Hence \eqref{eq::tmp9} gives the desired result. 

\end{proof}

The next Corollary restates \autoref{thm2} for a fixed arbitrarily high success probability. 
\begin{corollary}
\label{Cor_2}
Let \autoref{AA2}, \autoref{AA3}, \autoref{AA4}, \autoref{AA5} hold.
\CorTwoDeltaOne. Then for any $\delta \in (0,1)$, suppose
\begin{equation}
    N \geq \max \set{\DOneOverQPlusDTwo, \frac{\logOneOverDelta}{D_3}}, \label{eqn::tmp10}
\end{equation}
where $D_1, D_2, D_3, q$ are defined in \eqref{eqn::D_1}, \eqref{eqn::D_2}, \eqref{eqn::D_3} and \eqref{eqn:qEps}. Then
\begin{equation}
    \probability{\minGradientN < \epsilon} \geq 1-\delta.  \notag
\end{equation}

\end{corollary}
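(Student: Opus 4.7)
The plan is straightforward, since \autoref{Cor_2} is a repackaging of \autoref{thm2} into a fixed-confidence form. First I would verify that the first branch of the $\max$ in \eqref{eqn::tmp10}, $N \geq \DOneOverQPlusDTwo$, matches condition \eqref{eqn::n_upper_2} after unpacking definitions: using \eqref{eqn:qEps}, \eqref{eqn::D_1}, \eqref{eqn::D_2} together with the identity $\gammaOneC \alphaMin = \alphaZero \gammaOne^{c+\newL}$ from \eqref{eqn::alphaMin}, one sees that $D_1/q(\epsilon) + D_2$ equals the right-hand side of \eqref{eqn::n_upper_2}. All other hypotheses of \autoref{thm2}, namely \eqref{eqn::deltaSConditionThmTwo} and \eqref{eqn:tmp32} on $\deltaS$ and $\deltaOne$, are assumed in the statement of \autoref{Cor_2}, so \autoref{thm2} applies and gives $\probability{N \geq \nEps} \geq 1 - e^{-D_3 N}$.

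Next I would translate this into a statement about $\minGradientN$: by the definition of $\nEps$ in \eqref{eqn::nEps} as the first index $k$ at which $\normTwo{\gradFK}\leq\epsilon$, the event $\{N \geq \nEps\}$ guarantees the existence of some iterate $k \leq N$ with $\normTwo{\gradFK} \leq \epsilon$, so it is contained in $\{\minGradientN \leq \epsilon\}$, and the preceding probability bound transfers to the latter event without loss.

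Finally I would use the second branch of the $\max$, $N \geq \logOneOverDelta/D_3$, which rearranges to $e^{-D_3 N} \leq \delta$, so $1 - e^{-D_3 N} \geq 1 - \delta$. Chaining the two inequalities yields $\probability{\minGradientN \leq \epsilon} \geq 1 - \delta$, which is the desired conclusion. There is no genuine obstacle here; the proof is essentially a change of parameterisation from the ``exponentially decaying failure in $N$'' flavour of \autoref{thm2} to the ``fixed failure probability $\delta$'' flavour of \autoref{Cor_2}. The only bookkeeping point is that \autoref{Cor_2} is stated with strict inequality $<\epsilon$ whereas \autoref{thm2} delivers $\leq \epsilon$, a distinction I would resolve either by stating the conclusion with $\leq$ or by noting it is immaterial.
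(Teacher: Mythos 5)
Your proposal is correct and follows essentially the same route as the paper: the paper's proof is exactly the chain $\probability{\minGradientN \leq \epsilon}\geq \probability{N \geq \nEps}\geq 1-e^{-D_3 N}\geq 1-\delta$, using that \eqref{eqn::tmp10} implies \eqref{eqn::n_upper_2} so \autoref{thm2} applies, and that the second branch of the max gives $e^{-D_3 N}\leq\delta$. Your remark about the strict versus non-strict inequality in the conclusion is a fair bookkeeping observation that the paper silently glosses over, but it does not change the argument.
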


\begin{proof}
We have 
\begin{equation}
    \probability{\minGradientN \leq \epsilon}
    \geq \probability{N \geq \nEps}
    \geq 1- e^{-D_3 N}
    \geq 1-\delta, \notag
\end{equation}
where the first inequality follows from definition of $\nEps$ in \eqref{eqn::nEps}, the second inequality follows from \autoref{thm2} (note that \eqref{eqn::tmp10} implies \eqref{eqn::n_upper_2}) and the last inequality follows from \eqref{eqn::tmp10}.
\end{proof}

The next Corollary gives the rate of change of the expected value of $\minGradientN$ as $N$ increases.

\begin{corollary}
\label{Cor_3}
Let \autoref{AA2}, \autoref{AA3}, \autoref{AA4}, \autoref{AA5} hold.
\CorTwoDeltaOne. Then for any $N \in \N$ such that $\qInverseOfDOneOverNMinusDTwo$ exists, where $q, D_1, D_2$ are defined in \eqref{eqn:qEps}, \eqref{eqn::D_1}, \eqref{eqn::D_2}, we have
\begin{equation}
    \expectation{\minGradientN} \leq \qInverseOfDOneOverNMinusDTwo + \normTwo{\grad f(x_0)} \eToMinusDThreeN, \notag
\end{equation}
where $D_3$ is defined in \eqref{eqn::D_3} and $x_0$ is chosen in \autoref{alg:generic}.
\end{corollary}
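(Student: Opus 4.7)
The plan is to decompose the expectation by conditioning on the high-probability event guaranteed by \autoref{Cor_1}, and to bound the remaining contribution deterministically using the minimum structure of the quantity $\minGradient{N}$.

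Concretely, fix $N \in \N$ such that $\qInverseOfDOneOverNMinusDTwo$ exists, and set $\epsilon := \qInverseOfDOneOverNMinusDTwo$. Define the event
\[
A = \set{\minGradient{N} \leq \epsilon}.
\]
By \autoref{Cor_1}, $\probability{A} \geq 1 - \eToMinusDThreeN$, which gives $\probability{A^c} \leq \eToMinusDThreeN$. I would then split
\[
\expectation{\minGradient{N}} = \expectation{\minGradient{N} \mathbb{1}_A} + \expectation{\minGradient{N} \mathbb{1}_{A^c}},
\]
and bound the two terms separately. On $A$, the definition of the event yields $\minGradient{N} \leq \epsilon$, so the first term is at most $\epsilon \cdot \probability{A} \leq \epsilon = \qInverseOfDOneOverNMinusDTwo$.

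For the second term, I would use the deterministic bound $\minGradient{N} \leq \normTwo{\grad f(x_0)}$, which holds since $x_0$ is one of the iterates included in the minimum. Hence the second term is at most $\normTwo{\grad f(x_0)} \cdot \probability{A^c} \leq \normTwo{\grad f(x_0)} \eToMinusDThreeN$. Adding the two bounds produces exactly
\[
\expectation{\minGradient{N}} \leq \qInverseOfDOneOverNMinusDTwo + \normTwo{\grad f(x_0)} \eToMinusDThreeN,
\]
as desired. There is no substantive obstacle here: the argument is a routine layer on top of \autoref{Cor_1}, and the only mild care needed is that the event $A$ is measurable (which is immediate, as it is defined through the random iterates of \autoref{alg:generic}), and that the deterministic upper bound $\normTwo{\grad f(x_0)}$ is available (which it is by taking $k=0$ in the minimum).
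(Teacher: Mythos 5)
Your proposal is correct and follows essentially the same route as the paper's own proof: both split the expectation according to whether $\minGradientN$ exceeds $\qInverseOfDOneOverNMinusDTwo$, bound the bad-event probability by $\eToMinusDThreeN$ via \autoref{Cor_1}, and use the deterministic bound $\minGradientN \leq \normTwo{\grad f(x_0)}$ (from the $k=0$ term in the minimum) on the complement. No gaps to report.
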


\begin{proof}
We have
\begin{equation}
    \begin{split}
    & \expectation{\minGradientN} \notag \\
     &\leq \probability{\minGradientN \leq \qInverseOfDOneOverNMinusDTwo} \qInverseOfDOneOverNMinusDTwo \notag \\
    &+ \probability{\minGradientN > \qInverseOfDOneOverNMinusDTwo}\normTwo{\grad f(x_0)} \notag \\ 
    & \leq \qInverseOfDOneOverNMinusDTwo +  \eToMinusDThreeN \normTwo{\grad f(x_0)},  \notag
    \end{split}
\end{equation}
where for the first inequality, we split the integral in the definition of expectation
\begin{align*}
 & \expectation{ \minGradientN}
 = \integral{0}{\infty}{\probability{\minGradientN = x} x dx} \\
 & = \integral{0}{\qInverseOfDOneOverNMinusDTwo}{\probability{\minGradientN = x} x dx} + \integral{\qInverseOfDOneOverNMinusDTwo}{\infty}{\probability{\minGradientN = x} x dx}   
\end{align*}
; and used $\probability{\minGradientN = x} = 0$ for $x> \normTwo{\grad f(x_0)}$ which follows from $\minGradientN \leq \normTwo{\grad f(x_0)}$. For the second inequality,  \\
we used $\probability{\minGradientN \leq \qInverseOfDOneOverNMinusDTwo}\leq 1$ and $\probability{\minGradientN >\qInverseOfDOneOverNMinusDTwo} \leq \eToMinusDThreeN$ by \eqref{eqn:tmp34}.
\end{proof}

    \section{Proof of \autoref{thm2}} \label{BCGN:sec2.5}
    
The proof of \autoref{thm2} involves a technical analysis of different types of iterations. An iteration can be true/false using \autoref{def::true_iters}, successful/unsuccessful (Step 3 of \autoref{alg:generic}) and with an $\alphaK$ above/below a certain value. The parameter $\alphaK$ is important due to \autoref{AA3} and \autoref{AA4} (that is, it influences the success of an iteration; and also the objective decrease in true and successful iterations).

Given that \autoref{alg:generic} runs for $N$ iterations, we use $N$ with different subscripts to denote the total number of different types of iterations, detailed in \autoref{tab:it::count}.
We note that they are all random variables because $\alphaK$, and whether an iteration is true/false, successful/unsuccessful all depend on the random model in Step 1 of \autoref{alg:generic} and the previous (random) steps.


\begin{table}[ht]
\begin{tabular}{ll}
Symbol            & Definition            \\                                    
$\nt$             & Number of true iterations                                         \\
$\nf$             & Number of false iterations                                                                \\
$\nts$            & Number of true and successful iterations                                                  \\
$\ns$             & Number of successful iterations                                                           \\
$\nuMe$             & Number of unsuccessful iterations                                                         \\
$\ntu$            & Number of true and unsuccessful iterations                                                \\
$\ntAlphaUpper$   & Number of true iterations such that $\alpha_k \leq \alphaLowOne$                             \\
$\nsAlphaUpper$   & Number of successful iterations such that $\alpha_k \leq \alphaLowOne$                       \\
$\ntAlphaLower$   & Number of true iterations such that $\alpha_k > \alphaLowOne$                             \\
$\ntsAlphaLower$  & Number of true and successful iterations such that $\alpha_k > \alphaLowOne$              \\
$\ntuAlphaLower$  & Number of true and unsuccessful iterations such that $\alpha_k > \alphaLowOne$            \\
$\nuAlphaLower$   & Number of unsuccessful iterations such that $\alpha_k > \alphaLowOne$                     \\
$\nsGammaAlpha$   & Number of successful iterations such that $\alpha_k > \gamma_1^{c}\alphaLowOne$           \\
$\ntsGammaCAlpha$ & Number of true and successful iterations such that $\alpha_k > \gamma_1^{c}\alphaLowOne$  \\
$\nfsGammaAlpha$  & Number of false and successful iterations such that $\alpha_k > \gamma_1^{c}\alphaLowOne$
\end{tabular}
\caption{List of random variables representing iteration counts given that \autoref{alg:generic} has run for $N$ iterations}
\label{tab:it::count}
\end{table}

The proof of \autoref{thm2} relies on the following three results relating the total number of different types of iterations. 

\paragraph{The relationship between the total number of true iterations and the total number of iterations}

\autoref{lem::Chernoff} shows that with high probability, a constant fraction of iterations of \autoref{alg:generic} are true. This result is a conditional variant of the Chernoff bound \cite{MR57518}.  

\begin{lemma}
\label{lem::Chernoff}
Let \autoref{AA2} hold with $\delta_S \in (0,1)$. Let \autoref{alg:generic} run for $N$ iterations. Then for any given $\delta_1 \in (0,1)$,
\begin{equation}
\P \left( \nt \leq (1-\delta_S)(1-\delta_1)N \right) \leq \chernoffLowerExponential, \label{Markov}
\end{equation}
where $N_T$ is defined in \autoref{tab:it::count}.
\end{lemma}

The proof of \autoref{lem::Chernoff} relies on the below technical result.

\begin{lemma}
    \label{lem::ChernoffHelper}
    Let \autoref{AA2} hold with $\delta_S \in (0,1)$. Let $T_k$ be defined in \eqref{eqn::t_k}.
    Then for any $\lambda>0$ and $N\in \N$, we have
        \begin{equation}
            \expectation{\eToMinusLambdaSumTk} \leq \exponentialMomentUpperTotal. \notag
        \end{equation}
\end{lemma}

\begin{proof}
Let $\lambda >0$. We use induction on $N$. For $N=1$, we want to show
    \begin{equation}
        \expectation{e^{-\lambda T_0}} \leq \exponentialMomentUpper. \notag
    \end{equation}
    
    Let $g(x) = e^{-\lambda x}$. Note that 
    \begin{equation}
        g(x) \leq g(0) + \left[ g(1) - g(0) \right]x, \text{ for any $x\in [0,1]$}, \label{eqn::convex}
    \end{equation}
    because $g(x)$ is convex. Substituting $x=T_0$, we have
    \begin{equation}
        \eToMinusLambda{T_0} \leq 1 + (\eToMinusLambdaMinusOne) T_0. \notag
    \end{equation}
    
    Taking expectation, we have that
    \begin{equation}
        \expectation{\eToMinusLambda{T_0}}  
        \leq 1 + (\eToMinusLambdaMinusOne)\expectation{T_0} . \label{eqn::inductionZeroUpper}
    \end{equation}
    
    Moreover, we have
    \begin{equation}
        \expectation{T_0} \geq \probability{T_0=1} \geq 1-\delta_S, \notag
    \end{equation}
    where the first inequality comes from $T_0\geq 0$ and the second inequality comes from \autoref{AA2}.
    
    Therefore, noting that $\eToMinusLambdaMinusOne <0$, \eqref{eqn::inductionZeroUpper} gives
    \begin{equation}
        \label{eqn::inductionFirstConclusion}
        \expectation{\eToMinusLambda{T_0}} 
        \leq 1 + (\eToMinusLambdaMinusOne) (1-\delta_S) \leq \exponentialMomentUpper, 
    \end{equation}
    where the last inequality comes from $1+y \leq e^y$ for $y \in \R$.
    
    Having completed the initial step for the induction, let us assume
    \begin{equation}
        \label{eqn::inductionAssumption}
        \expectation{  \eToMinusLambdaSumTkNMinusTwo } \leq \left[ \exponentialMomentUpper\right]^{N-1}.
    \end{equation}
    
    We have 
    \begin{align}
        & \expectation{\eToMinusLambdaSumTk} \notag \\
        &= \expectation{\conditionalE{\eToMinusLambdaSumTk}{\SZeroDotsToXNMinusOne}}\notag \\
        &= \expectation{\eToMinusLambda{\sumTkToNMinusTwo} \conditionalE{\eToMinusLambda{T_{N-1}}}{\SZeroDotsToXNMinusOne}} \notag \\
        &= \expectation{ \eToMinusLambda{ \sumTkToNMinusTwo } \conditionalE{ \eToMinusLambda{ T_{N-1} }}{ x_{N-1}}}, \label{eqn::inductionTotal}
    \end{align}
    where the first equality comes from the Tower property and the last equality follows from $T_{N-1}$ is conditionally independent of $T_0, T_1, \dots, T_{N-2}$ given $x_{N-1}$ (see \autoref{AA2}).
    
    Substituting $x=T_{N-1}$ in \eqref{eqn::convex}, and taking conditional expectation, we have that
    \begin{equation}
        \conditionalE{\eToMinusLambda{T_{N-1}}}{x_{N-1}}
        \leq 1 + (\eToMinusLambdaMinusOne) \conditionalE{T_{N-1}}{x_{N-1}}.\notag
    \end{equation}
    
    On the other hand, we have that $\conditionalE{T_{N-1}}{x_{N-1}} \geq \conditionalP{T_{N-1}=1}{x_{N-1}} \geq 1-\delta_S$, where we used $T_{N-1} \geq 0$ to derive the first inequality and $\conditionalP{T_{N-1}=1}{x_{N-1} = \bar{x}_{N-1}} \geq 1 - \delta$ for any $\bar{x}_{N-1}$ (see \autoref{AA2}) to derive the second inequality. 
    Hence
    \begin{equation}
        \conditionalE{\eToMinusLambda{T_{N-1}}}{x_{N-1}}\leq \exponentialMomentUpper, \notag
    \end{equation}
    as in \eqref{eqn::inductionFirstConclusion}.
    
    It then follows from \eqref{eqn::inductionTotal} that
    \begin{equation}
        \expectation{\eToMinusLambdaSumTk} 
        \leq \exponentialMomentUpper \expectation{ \eToMinusLambda{ \sumTkToNMinusTwo }} 
        \leq \exponentialMomentUpperTotal, \notag
    \end{equation}
    where we used \eqref{eqn::inductionAssumption} for the last inequality.
\end{proof}

\begin{proof}[Proof of \autoref{lem::Chernoff}]
Note that with $N$ being the total number of iterations, we have $N_T = \sumTk$, where $T_k$ is defined in \eqref{eqn::t_k}. Applying Markov inequality, we have that for any $\lambda >0$,
\begin{align}
    &\probability{N_T \leq (1-\delta_S)(1-\delta_1) N}
    = \probability{\eToMinusLambda{N_T} \geq \eToMinusLambda{ (1-\delta_S) (1-\delta_1) N }}  \notag \\
    & \leq \expectation{\eToMinusLambda{N_T}} e^{\lambda(1-\delta_S) (1-\delta_1) N}\notag\\
    &= \expectation{ \eToMinusLambdaSumTk} e^{\lambda(1-\delta_S) (1-\delta_1) N}
    \notag \\
    & \leq e^{ N(\eToMinusLambdaMinusOne)(1-\delta_S)+\lambda(1-\delta_S) (1-\delta_1) N }, \label{eqn::Chernoff_proof1}
\end{align}
where we used Lemma \ref{lem::ChernoffHelper} to derive the last inequality. 

Choosing $\lambda = -\log(1-\delta_1) >0$, we have from \eqref{eqn::Chernoff_proof1}
\begin{align}
    \probability{N_T \leq (1-\delta_S)(1-\delta_1) N } 
    &\leq e^{N (1-\delta_S) \left[ \deltaOneComplicated \right]} \notag \\
    &\leq \chernoffLowerExponential, \notag
\end{align}
where we used $\deltaOneComplicated \leq -\delta_1^2/2$ for $\delta_1 \in (0,1)$. 
\end{proof}



\paragraph{The relationship between the total number of true iterations with $\alphaK \leq \alphaMin$ and the total number of iterations}

The next Lemma shows that we can have at most a constant fraction of iterations of \autoref{alg:generic} that are true with $\alphaK \leq \alphaMin$.

\begin{lemma} \label{lm::Gratton}
    Let \autoref{AA3} hold with $\alphaLow>0$ and $c\in \N^+$ and let $\alphaMin$ associated with $\alphaLow$ be defined in \eqref{eqn::alphaMin} with $\newL \in \N^+$. Let $\epsilon>0$, $N \in \N$ be the total number of iterations; and $\ntAlphaUpper$ be defined in \autoref{tab:it::count}. Suppose $N\leq \nEps$. Then 
	\begin{equation}
	    \ntAlphaUpper \leq \frac{N}{c+1}. \label{eqn::tmp8}
	\end{equation}
\end{lemma}

\begin{proof}
Let $k\leq N-1$
\footnote{Note that $k=0,1, \dots, N-1$ if the total number of iterations is $N$.}.
It follows from $N\leq \nEps$ that $k < \nEps$ and by definition of
$\alphaMin$ (\autoref{lem::alphaMin}), iteration $k$ is true with $\alpha_k \leq \alphaMin   $ implies that iteration $k$ is successful (with $\alpha_k \leq \alphaMin$). 

Therefore we have 
\begin{equation}
    \ntAlphaUpper \leq \nsAlphaUpper. \label{eqn::tmp6}
\end{equation}
If $\nsAlphaUpper =0$, then $\ntAlphaUpper=0$ and \eqref{eqn::tmp8} holds. Otherwise let 
\begin{equation}
    \kBar = \max \set{k\leq N-1: \text{iteration $k$ is successful and $\alpha_k \leq \alphaLowOne$}}.\label{eqn::tmp5}
\end{equation}
Then for each $k \in \set{0, 1, \dots, \kBar}$, we have that either iteration $k$ is successful and $\alpha_k \leq \alphaLowOne$, in which case $\alpha_{k+1} = \gamma_2 \alpha_k$ (note that \eqref{eqn::alphaMinUpperByGammaTwoOverAlphaZero} and $\alphaK\leq \alphaMin$ ensure $\max \set{\gammaTwo \alphaK, \alphaMax} = \gammaTwo\alphaK$); or otherwise $\alpha_{k+1} \geq \gamma_1 \alpha_k$ (which is true for any iteration of $\autoref{alg:generic}$). Hence after $\kBar+1$ iterations, we have
\begin{align}
    \alpha_{\kBar+1} \geq \alpha_0 \gamma_2^{\nsAlphaUpper} \gamma_1^{\kBar+1 -\nsAlphaUpper} 
    &= \alpha_0 \left( \frac{\gamma_2}{\gamma_1}\right)^{\nsAlphaUpper}\gamma_1^{\kBar+1} \notag \\
    &\geq \alpha_0 \left( \frac{\gamma_2}{\gamma_1}\right)^{\nsAlphaUpper}\gamma_1^{N}, \label{eqn:tmp29}
\end{align}
where we used $\kBar+1\leq N$ for the last inequality. 
On the other hand, we have 
\begin{equation}
    \alpha_{\kBar+1} = \gamma_2 \alpha_{\kBar} \leq \gamma_2 \alphaLowOne, \notag
\end{equation}
where we used iteration $\kBar$ is successful and $\alpha_{\kBar}\leq \alphaLowOne$ from \eqref{eqn::tmp5}.
Therefore, combining the last displayed equation with \eqref{eqn:tmp29}, we have $\gamma_2\alphaLowOne \geq \alpha_{\kBar+1} \geq \alpha_0 \left( \frac{\gamma_2}{\gamma_1}\right)^{\nsAlphaUpper}\gamma_1^{N}$. Taking logarithm on both sides, we have
\begin{equation}
    \log(\gamma_2 \alphaLowOne) \geq \log(\alpha_0) + \nsAlphaUpper\log(\frac{\gamma_2}{\gamma_1}) + N \log(\gamma_1). \notag
\end{equation}
Rearranging, we have
\begin{equation}
    \nsAlphaUpper \leq p_0 N + p_1, \notag
\end{equation}
with $p_0 = \frac{\logOneOverGammaOne}{\logGammaTwoOverGammaOne} = \frac{1}{c+1}$ and $p_1 = \pOneDef = \frac{c-\newL}{c+1} \leq 0$ as $\newL \geq c>0$. Therefore we have $\nsAlphaUpper \leq \frac{N}{c+1}$ and \eqref{eqn::tmp6} then gives the desired result. 
\end{proof}


\paragraph{The relationship between the number of unsuccessful iterations and the number of successful iterations}

The next Lemma formalises the intuition that one cannot have too many unsuccessful iterations with $\alphaK > \alphaMin$ compared to successful iterations with $\alphaK > \gammaOneC \alphaMin$, because unsuccessful iterations reduce $\alphaK$ and only successful iterations with $\alphaK > \gammaOneC \alphaMin$ may compensate for these decreases. The conditions that $\alphaMin = \alphaZero\gammaOne^\newL$, $\gammaTwo = \frac{1}{\gammaOneC}$ and $\alphaMax = \alphaZero \gammaOne^p$ for some $\newL, c, p \in \N^+$ are crucial in the (technical) proof. 

\begin{lemma}\label{lm::Katya}
	Let \autoref{AA3} hold with $\alphaLow>0$. Let $\alphaMin$ associated with $\alphaLow$ be defined in \eqref{eqn::alphaMin} with $\newL\in \N^+$. Let $N \in \N$ be the total number of iterations of \autoref{alg:generic} and $\nuAlphaLower$, $\nsGammaAlpha$ be defined in \autoref{tab:it::count}. Then
	\begin{equation}
	    \nuAlphaLower \leq \newL + c\nsGammaAlpha. \notag
	\end{equation}

\end{lemma}

\begin{proof}
    Define 
    \begin{equation}
    \betaK = \logBaseGammaOne{\frac{\alphaK}{\alphaZero}}. \label{eqn::betaKDef}
    \end{equation}
    Note that since $\alpha_{k+1} = \gamma_1 \alpha_k$ if iteration $k$ is successful and $\alpha_{k+1} = \minMe{\alphaMax}{\gamma_2 \alphaK}$ otherwise, $\gamma_2 = \gammaTwoExpression$ and $\alpha_{max} = \alpha_0 \gamma_1^p$ with $c,p \in \N^+$, we have that $\betaK \in \Z$. Moreover, we have that $\alphaK=\alphaZero$ corresponds to $\betaK = 0$, $\alphaK=\alphaMin$ corresponds to $\betaK = \newL$ and $\alphaK = \gamma^c \alphaMin$ corresponds to $\betaK= \newL+c$. Note also that on successful iterations, we have $\alphaKPlusOne \leq \gammaTwo \alphaK = \gammaOne^{-c} \alphaK$ (as $\alphaKPlusOne = \minMe{\alphaMax}{\gammaTwo\alphaK}$) so that $\betaKPlusOne \geq \betaK -c$; and on unsuccessful iterations, we have $\betaKPlusOne = \betaK +1$.
	
	Let $\kStartOne=-1$; and define the following sets.
	\begin{align}
	    &\aOne = \set{k \in  \openClosedInterval{\kStartOne}{N-1}\intersect \N: \betaK=\newL}. \label{eqn::tmp11} \\
	    & \kEndOne = \twoCases{\inf \aOne}{\text{if $\aOne \neq \emptyset$}}{N}{\text{otherwise}}  \notag \\
	    & \mOneOne = \set{k \in \openInterval{\kStartOne}{\kEndOne}:  \text{iteration $k$ is unsuccessful with $\betaK < \newL$}} \notag \\
	    & \mTwoOne = \set{k \in \openInterval{\kStartOne}{\kEndOne}:  \text{iteration $k$ is successful with $\betaK < \newL+c $ }} \label{eqn::tmp12}.
	\end{align}
	Let $\nOneOne = |\mOneOne|$ and $\nTwoOne=| \mTwoOne|$, where $|.|$ denotes the cardinality of a set. 
	
	If $\kEndOne <N$, we have that $\kEndOne$ is the first time $\betaK$ reaches $\newL$. Because $\betaK$ starts at $0<\newL$ when $k=0$; $\betaK$ increases by one on unsuccessful iterations and decreases by an integer on successful iterations (so that $\betaK$ remains an integer). So for $\kInStartEndOne$, all iterates have $\betaK < \newL < \newL+c$. It follows then the number of successful/unsuccessful iterations for $\kInStartEndOne$ are precisely $\nOneOne$ and $\nOneTwo$ respectively. Because $\betaK$ decreases by at most $c$ on successful iterations, increases by one on unsuccessful iterations, starts at zero and $\beta_{\kEndOne}\leq \newL$, we have $0 + \nOneOne - c\nTwoOne \leq \newL$ (using $\beta_{\kEndOne} \geq \beta_{\kStartI+1} + \nOneOne - c\nTwoOne$). Rearranging gives
	\begin{equation}
	    \nOneOne \leq c \nTwoOne + \newL. \label{eqn::tmp3}
	\end{equation}
	
	If $\kEndOne = N$, then we have that $\betaK <\newL$ for all $k \leq N-1$ and so $\beta_{\kEndOne} \leq \newL$. In this case we can derive \eqref{eqn::tmp3} using the same argument. Moreover, since $\kEndOne=N$, we have that 
	\begin{align}
	    \nOneOne &= \nuAlphaLower, \label{eqn:tmp30} \\
	    \nOneTwo &= \nsGammaAlpha. \label{eqn:tmp31}
	\end{align}
	The desired result then follows. 
	
	Hence we only need to continue in the case where $\kEndOne < N$, in which case
	let
	\begin{align}
	    \bOne & = \set{k \in \closedInterval{\kEndOne}{N-1}: \text{iteration $k$ is successful with $\betaK < \newL+c $ } } \notag \\
	    \kStartTwo &= \twoCases{\inf \bOne}{\text{if $\bOne \neq \emptyset$}}{N}{\text{otherwise}}. \notag
	\end{align}

	Note that there is no contribution to $\nsGammaAlpha$ or $\nuAlphaLower$ for $k \in \closedOpenInterval{\kEndOne}{\kStartTwo}$. 
	There is no contribution to $\nsGammaAlpha$ because $\kStartTwo$ is the first iteration (if any) that would make this contribution. 
	Moreover, since $\beta_{\kEndOne}=\newL$ by definition of $\kEndOne$, the first iteration with $\betaK<\newL$ for $k\geq \kEndOne$ must be proceeded by a successful iteration with $\betaK< \newL+c$ (note that in particular, since $\kStartTwo$ is the first such iteration, we have $\beta_{\kStartTwo}\geq \newL$). 
	Therefore there is no contribution to $\nuAlphaLower$ either for $k \in \closedOpenInterval{\kEndOne}{\kStartTwo}$. Hence if $\kStartTwo = N$, we have \eqref{eqn:tmp30}, \eqref{eqn:tmp31} and \eqref{eqn::tmp3} gives the desired result. 
	
	Otherwise similarly to \eqref{eqn::tmp11}--\eqref{eqn::tmp12},
	let 
	\begin{align}
	    &\aTwo = \set{k \in \openClosedInterval{\kStartTwo}{N-1} \intersect \N: \betaK=\newL}. \notag \\
	    & \kEndTwo = \twoCases{\inf \aTwo }{\text{if $\aTwo \neq \emptyset$}}{N}{\text{otherwise}} \notag \\
	    & \mOneTwo = \set{k \in \openInterval{\kStartTwo}{\kEndTwo}: \text{iteration $k$ is unsuccessful with $\betaK < \newL$}} \notag \\
	    & \mTwoTwo = \set{k \in \openInterval{\kStartTwo}{\kEndTwo}: \text{iteration $k$ is successful with $\betaK < \newL+c $ }}. \notag
	\end{align}	
	And let $\nOneTwo = |\mOneTwo|$ and $\nTwoTwo=| \mTwoTwo|$. Note that for $k\in \openInterval{\kStartTwo}{\kEndTwo}$, we have $\newL-c \leq \beta_{\kStartTwo+1}$ and $\beta_{\kEndTwo} \leq \newL$ (the former is true as $\beta_{\kStartTwo\geq l}$ and iteration $\kStartTwo$ is successful). Therefore we have
	\begin{equation}
	    \newL-c + \nOneTwo-c\nTwoTwo 
	    \leq \beta_{\kStartTwo+1} + \nOneTwo - c\nTwoTwo \leq \beta_{\kEndTwo} \leq \newL.  \notag
	\end{equation}
	Rearranging gives
	\begin{equation}
	    \nOneTwo \leq c \nTwoTwo+ \newL - [\newL-c] = c\nTwoTwo +c, \label{eqn::tmp4}
	\end{equation}
	
	Let $\hatNOneOne$ be the total number of iterations contributing to $\nuAlphaLower$ with $k \in \closedInterval{\kEndOne}{\kStartTwo}$; and $\hatNOneTwo$ be the total number of iterations contributing to $\nsGammaAlpha$ with $k \in \closedInterval{\kEndOne}{\kStartTwo}$. Since there is no contribution to either for $k\in \closedOpenInterval{\kEndOne}{\kStartTwo}$ as argued before, and iteration $\kStartTwo$ by definition contributes to $\nsGammaAlpha$ by one, we have
	\begin{align}
	    \hatNOneOne = 0, \label{eqn::tmp13}\\
	    \hatNOneTwo = 1.\label{eqn::tmp14}
	\end{align}
	
	Using \eqref{eqn::tmp3}, \eqref{eqn::tmp4}, \eqref{eqn::tmp13} and \eqref{eqn::tmp14},we have
	\begin{equation}
	    \nOneOne+ \hatNOneOne + \nOneTwo \leq c \left( \nTwoOne + \hatNOneTwo + \nTwoTwo \right) +\newL. \label{eqn::tmp15}
	\end{equation}
	If $\kEndTwo = N$ the desired result follows. 
	Otherwise define $\bTwo$ in terms of $\kEndTwo$, 
	and $\kStartThree$ in terms of $\bTwo$ similarly as before. 
	If $\kStartThree=N$, 
	then we have the desired result as before. 
	Otherwise repeat what we have done (define $A^{(3)}$,
	$k_{end}^{(3)}$, $M_1^{(3)}$, $M_2^{(3)}$ etc). 
	Note that we will reach either $\kEndI = N$ 
	for some $i\in \N$ 
	or $\kStartI = N$ for some $i \in \N$, 
	because if $\kEndI<N$ and $\kStartI <N$ for all $i$, 
	we have that $\kStartI < \kEndI \leq \kStartIPlusOne$ 
	by definitions. So $\kStartI$ is strictly increasing,
	contradicting $\kStartI <N$ for all $i$. 
	In the case wither $\kEndI=N$ or $\kStartI=N$, 
	the desired result will follow using our previous argument.
\end{proof}

\paragraph{An intermediate result bounding the total number of iterations}
With \autoref{lem::Chernoff}, \autoref{lm::Gratton}, \autoref{lm::Katya}, we show a bound on the total number of iterations of \autoref{alg:generic} in terms of the number of true and successful iterations with $\alphaK$ above a certain constant.

\begin{lemma}\label{lem:AssOneTwo}
    Let \autoref{AA2} and \autoref{AA3} hold with 
    $ \deltaS \in (0,1)$, $c, \newL \in \N^+$. 
    Let $N$ be the total number of iterations. 
    Then for any $\deltaOne \in (0,1)$ such that
    $\gDeltaSDeltaOne >0 $, we have that 
    $
        \probability{N < \gDeltaSDeltaOne \squareBracket{
         \ntsAlphaZeroGammaOneCL
         + \frac{\newL}{1+c}}} \geq 
         1- \chernoffLowerExponential
    $
    where $\gDeltaSDeltaOne$ is defined in \eqref{eqn:gDeltaSDeltaOneDef}.
\end{lemma}

\begin{proof}
We decompose the number of true iterations as 
\begin{equation}
    \nt = \ntAlphaUpper + \ntAlphaLower = \ntAlphaUpper + \ntsAlphaLower + \ntuAlphaLower \leq \ntAlphaUpper + \ntsAlphaLower + \nuAlphaLower, \label{eqn::tmp7}
\end{equation}
where $\nt, \ntAlphaUpper, \ntAlphaLower, \ntsAlphaLower, \ntuAlphaLower, \nuAlphaLower$ are defined in \autoref{tab:it::count}.

From \autoref{lm::Katya}, we have
\begin{align}
    \nuAlphaLower & \leq \tCOne + \tCTwo \nsGammaAlpha \notag \\
    & = \tCOne + \tCTwo\ntsGammaCAlpha + \tCTwo\nfsGammaAlpha \notag \\
    & \leq \tCOne +\tCTwo \ntsGammaCAlpha + \tCTwo\nf \notag \\
    & \leq \tCOne + \tCTwo \ntsGammaCAlpha + \tCTwo(N-\nt), \notag
\end{align}
It then follows from \eqref{eqn::tmp7} that
\begin{equation}
    \nt \leq \ntAlphaUpper + \ntsAlphaLower + \tCOne + \tCTwo \ntsGammaCAlpha + \tCTwo(N-\nt). \notag
\end{equation}
Rearranging, we have
\begin{equation}
    \nt \leq \frac{\ntAlphaUpper}{\onePlusTCTwo}+ \frac{1}{\onePlusTCTwo}\squareBracket{\ntsAlphaLower + \tCTwo\ntsGammaCAlpha} + \frac{\tCOne+\tCTwo N}{\onePlusTCTwo}. \notag
\end{equation}
Using \autoref{lm::Gratton} to bound $\ntAlphaUpper$;  $\ntsAlphaLower \leq \ntsGammaCAlpha$; and $ \alphaMin = \alphaZero \gammaOne^\newL$ gives 

\begin{equation}
    N_T \leq 
    \squareBracket{1 - \frac{c}{(c+1)^2}}N
    + N_{TS, \underline{\alphaZero \gammaOne^{c+\newL}}}
    + \frac{\newL}{1+c}. \label{tmp:2021-12-31-2}
\end{equation}

Combining with \autoref{lem::Chernoff}; and
rearranging gives the result. 
\end{proof}



\paragraph{The bound on true and successful iterations}

The next lemma bounds the total number of true and successful iterations with $\alphaK > \alphaZero \gammaOne^{c+\newL}$.

\begin{lemma} \label{lm::bound_T_S_with_artificial_alpha_low}
Let \autoref{AA4} and \autoref{AA5} hold.
Let $\epsilon>0$ and $N \in \N$ 
be defined in \autoref{tab:it::count}. Suppose $N \leq \nEps$. Then 

\begin{align}
 N_{TS, \underline{\alphaZero \gammaOne^{c+\newL}}} \leq \frac{f(x_0) - f^*}{
h(\epsilon, \alphaZero \gammaOne^{c+\newL})} \notag
\end{align}
where $f^*$ is defined in \eqref{eqn::fStar}, and $x_0$ is chosen in the initialization of \autoref{alg:generic}.

\end{lemma}

\begin{proof}
We have, using \autoref{AA5} and \autoref{AA4} respectively for the two inequalities
\begin{align}
    f(x_0) - f(x_{N})
    &= \sum_{k=0}^{N-1} f(\xK) - f(\xKPlusOne) \notag \\
    &\geq \sum_{\IterKTrueandSuccssfulWithAlphaKGeqAlphaMin} f(x_k) - f(\xKPlusOne)  \notag \\
    &\geq \sum_{\IterKTrueandSuccssfulWithAlphaKGeqAlphaMin} h(\epsilon, \alphaZero \gammaOne^{c+\newL}) \notag \\
    &= N_{TS, \underline{\alphaZero \gammaOne^{c+\newL}}} h(\epsilon, \alphaZero \gammaOne^{c+\newL}).\label{eqn::tmp1}
\end{align}
Noting $f(x_{N}) \geq f^*$ and $h(\epsilon, \alphaZero \gammaOne^{c+\newL})>0$ 
by \autoref{AA4}, rearranging \eqref{eqn::tmp1} gives the result.
\end{proof}

\paragraph{The final proof}

We are ready to prove \autoref{thm2} using \autoref{lem:AssOneTwo} and \autoref{lm::bound_T_S_with_artificial_alpha_low}.

\begin{proof}[Proof of \autoref{thm2}]
We have
\begin{align}
    \nEps \geq N & 
    \implies
    \fZeroMinusfStarOverH
    \geq \ntsAlphaZeroGammaOneCL
    \texteq{by 
    \autoref{lm::bound_T_S_with_artificial_alpha_low} }\\
    & \impliesSince{\eqref{eqn::n_upper_2}}
    N \geq \gDeltaSDeltaOne \squareBracket{
         \ntsAlphaZeroGammaOneCL
         + \frac{\newL}{1+c}}.
\end{align}
Therefore by \autoref{lem:AssOneTwo}, we have
$\probability{\nEps \geq N} \leq 
\probability{N \geq \gDeltaSDeltaOne \squareBracket{
         \ntsAlphaZeroGammaOneCL
         + \frac{\newL}{1+c}}}
         \leq \chernoffLowerExponential$.

\end{proof}

    \section{An algorithmic framework based on sketching}
    \label{BCGN:sec3}
    
\subsection{A generic random subspace method based on sketching}
\autoref{alg:sketching} particularises \autoref{alg:generic} by specifying the local reduced model as one generated by sketching using a random matrix; the step transformation function; and the criterion for sufficient decrease. We leave specification of the computation of the step parameter to the next section. 

\begin{algorithm}[H]
\begin{description}

 \item[Initialization] \ \\
 Choose a matrix distribution $\cal{S}$ of matrices $S\in \rLTimesD$. Let $\gamma_1, \gamma_2, \theta, \alphaMax, x_0, \alpha_0$ be defined in \autoref{alg:generic} with $\mKHat{\sHat}$ and $w_k$ specified below in \eqref{eqn::mKHatSpec} and \eqref{eqn::wKSpec}. 

 \item[1. Compute a reduced model and a step] \ \\
 In Step 1 of \autoref{alg:generic}, draw a random matrix $S_k \in \R^{l \times d}$ from $\cal{S}$, and let
 \begin{align}
     &\mKHat{\sHat} = \fK + \innerProduct{\sKGradFK}{\sHat} + \frac{1}{2} \innerProduct{\sHat}{\sKBKSKT \sHat}; \label{eqn::mKHatSpec} \\
     &w_k(\sHat_k) = S_k^T \sHat_k, \label{eqn::wKSpec}
 \end{align}
 where $B_k \in \R^{d\times d}$ is a user provided matrix. 
 
 Compute $\sKHat$ by approximately minimising $\mKHat{\sHat}$ such that at least $\mKHat{\sKHat} \leq \mKHat{0}$\footnote{what exactly this approximate minimisation entails will be the subject of the next section} where $\alphaK$ appears as a parameter,
 and set $\sK = w_k(\sKHat)$ as in \autoref{alg:generic}.

\item[2. Check sufficient decrease]\ \\  
In Step 2 of \autoref{alg:generic}, let sufficient decrease be defined by the condition
\begin{equation}
    \fK - \fKPlusOne \geq \theta \squareBracket{\mKHat{0} - \mKHat{\hat{\sK}(\alpha_k)}}. \label{eqn::sufDecreaseSpec}
\end{equation}

\item[3. Update the parameter $\alphaK$ and possibly take the potential step $\sK$]\ \\
Follow Step 3 of \autoref{alg:generic}.

\caption{\bf{A generic random subspace method based on sketching}} \label{alg:sketching} 
\end{description}
\end{algorithm}

With the concrete criterion for sufficient decrease, we have that \autoref{AA5} is satisfied by \autoref{alg:sketching}.

\begin{lemma} \label{lem:AA8impliesAA5}
\autoref{alg:sketching} satisfies \autoref{AA5}.
\end{lemma}
\begin{proof}
If iteration $k$ is successful, \eqref{eqn::sufDecreaseSpec} with $\theta \geq 0$ and $\mKHat{\sKHat} \leq \mKHat{0}$ (specified in \autoref{alg:sketching}) give $\fK-\fKPlusOne \geq 0$. If iteration $k$ is unsuccessful, we have $s_k=0$ and therefore $\fK - \fKPlusOne = 0$. 
\end{proof}

Next, we define what a true iteration is for \autoref{alg:sketching} and show \autoref{AA2} is satisfied with $\cal{S}$ being a variety of random ensembles. 


\begin{definition} \label{def::true_iters}
Iteration $k$ is a true iteration if
\begin{align}
    &\normTwo{\sKGradFK}^2 \geq (1- \epS)\normTwo{\gradFK}^2, \label{eqn::JL} \\
    & \normTwo{S_k}\leq \sMax, \label{eqn::sMax}
\end{align}
where $S_k \in \R^{l \times d}$ is the random matrix drawn in Step 1 of \autoref{alg:sketching}, and $\epS \in (0,1), \sMax>0$ are iteration-independent constants.
\end{definition}

\begin{remark}
In \cite{Cartis:2017fa}, true iterations are required to satisfy 
\begin{equation}
    \normTwo{\grad m_k(0) - \gradFK} \leq \kappa \alphaK \normTwo{\grad m_k(0)}, \notag
\end{equation}
where $\kappa>0$ is a constant and $\alphaK$ in their algorithm is bounded by $\alphaMax$. The above equation implies
\begin{equation}
    \normTwo{\grad m_k(0) } \geq \frac{\normTwo{\gradFK}}{1+\kappa \alphaMax}, \notag
\end{equation}
which implies \eqref{eqn::JL} with $1-\epS = \frac{1}{1 + \kappa\alphaMax}$ and $\deltaSOne = p$. Since \autoref{AA7} is easily satisfied for a variety of random matrix distributions $\cal{S}$ we see that their requirement is stronger than our (main) requirement for true iterations.
\end{remark}

We first show that with this definition of the true iterations, \autoref{AA2} holds if the following two conditions on the random matrix distribution $\cal{S}$ are met. 

\begin{assumption} \label{AA6}
There exists $\epS, \deltaSOne \in (0,1)$ such that for a(ny) fixed
$y \in \set{\grad f(x): x \in \R^d}$, $S_k$ drawn from $\cal{S}$ satisfies
\begin{equation}
    \probability{\normTwo{S_k y}^2 \geq (1-\epS)\normTwo{y}^2} 
    \geq 1-\deltaSOne. \label{eqn:tmp36}
\end{equation}
\end{assumption}

\begin{assumption} \label{AA7}
There exists $\deltaSTwo\in [0,1), \sMax>0$ such that for $S_k$ randomly 
drawn from $\cal{S}$, we have
\begin{equation}
    \probability{\normTwo{S_k} \leq \sMax} \geq 1-\deltaSTwo. \notag
\end{equation}
\end{assumption}

\begin{lemma} \label{lem::deduceAA2}
Let \autoref{AA6} and \autoref{AA7} hold with $\epS,\deltaSTwo \in (0,1), \deltaSOne\in [0,1), \sMax >0$. Suppose that $\deltaSOne+\deltaSTwo<1$. Let true iterations be defined in \autoref{def::true_iters}. Then \autoref{alg:sketching} satisfies \autoref{AA2} with $\delta_S = \deltaSOne + \deltaSTwo$.
\end{lemma}

The proof of \autoref{lem::deduceAA2} makes use of the following elementary result in probability theory, whose proof is included for completeness.

\begin{lemma}
\label{lem:union_bound}
Let $ n \in \N^+$ and $A_1, A_2 \dots, A_n$ be events. Then we have 
\begin{equation}
    \probability{A_1 \intersect A_2 \dots \intersect A_n} = 1- \probability{A_1^c} - \probability{ A_2^c - \dots - \probability{A_n^c}}. \notag
\end{equation}
\end{lemma}

\begin{proof}
We have 
\begin{align*}
    \probability{A_1 \intersect A_2 \dots \intersect A_n} 
    &\reply{ = } 1 - \probability{ \complement{A_1 \intersect \dots \intersect A_n}} \\
    &= 1 - \probability{A_1^c \union{} \dots \union{} A_n^c} \\
    & \geq 1 - \sum_{k=1}^n \probability{A_k^c}.
\end{align*}
\end{proof}

\begin{proof}[Proof of \autoref{lem::deduceAA2}]
Let $\barXK \in \R^d$ be given. Note that this determines $\grad f(\barXK) \in \R^d$. Let $\aKOne$ be the event that \eqref{eqn::JL} hold and $\aKTwo$ be the event that \eqref{eqn::sMax} hold. Thus $T_k = \aKOne \intersect \aKTwo$. Note that given $x_k = \barXK$, $T_k$ only depends on $S_k$, which is independent of all previous iterations. Hence $T_k$ is conditionally independent of $T_0, T_1 \dots, T_{k-1}$ given $x_k = \barXK$. 

Next, we have for $k \geq 1$, 
\begin{equation}
    \probabilityGivenXK{\aKOne \intersect \aKTwo } \geq 1 - \probabilityGivenXK{\complement{\aKOne}} - \probabilityGivenXK{\complement{\aKTwo}}, \label{eqn::tmp17}
\end{equation}
by \autoref{lem:union_bound}.

Note that 
\begin{align}
    \probabilityGivenXK{\aKOne} &= \conditionalP{\aKOne}{x_k = \barXK, \gradFK = \grad f(\barXK)} \notag \\
    & = \conditionalP{\aKOne}{\gradFK = \grad f(\barXK)} \notag \\
    & \geq 1-\deltaSOne,\label{eqn::tmp18}
\end{align}
where the first equality follows from the fact that $\xK = \barXK$ implies $\gradFK = \grad f(\barXK)$; the second equality follows from the fact that given $\gradFK = \grad f(\barXK)$, $\aKOne$ is independent of $\xK$; and the inequality follows from applying \autoref{AA6} with $y = \grad f(\barXK)$.

On the other hand, because $\aKTwo$ is independent of $x_k$, we have that
\begin{equation}
    \probabilityGivenXK{\aKTwo} = \probability{\aKTwo} \geq 1-\deltaSTwo,\label{eqn::tmp19}
\end{equation}
where the inequality follows from \autoref{AA7}.
It follows from \eqref{eqn::tmp17} using \eqref{eqn::tmp18} and \eqref{eqn::tmp19} that for $k\geq 1$, 
\begin{equation}
    \probabilityGivenXK{\aKOneIntersectaKTwo} \geq 1-\deltaSOne - \deltaSTwo = 1-\delta_S. \notag
\end{equation}
For $k=0$, we have $\probability{\aZeroOne} \geq 1-\deltaSOne$ by \autoref{AA6} with $y=\grad f(x_0)$ and $ \probability{\aZeroTwo} \geq 1-\deltaSTwo $ by \autoref{AA7}. So $\probability{\aZeroOne \intersect \aZeroTwo} \geq 1-\delta_S$ by \autoref{lem:union_bound}. 

\end{proof}

Next, we give four distributions $\cal{S}$ that satisfy \autoref{AA6} and \autoref{AA7}, thus
satisfying \autoref{AA2} and can be used in \autoref{alg:sketching}. Other random ensembles are possible, for example, Subsampled Randomised Hadamard Transform (\autoref{def::SRHT}), Hashed Randomised Hadamard Transform (\autoref{def::HRHT}), and many more (see discussion of random ensembles in Chapter 2).

\subsection{The random matrix distribution $\cal{S}$ in \autoref{alg:sketching}}\label{BCGN:randomMatrixDistr}

\subsubsection{Gaussian sketching matrices}

(Scaled) Gaussian matrices have independent and identically distributed normal entries (see \autoref{def:Gaussian}). The next result, which is a consequence of the scaled Gaussian matrices being an oblivious JL embedding (\autoref{Oblivious_embedding}), shows that using scaled Gaussian matrices with \autoref{alg:sketching} satisfies \autoref{AA6}. The proof is included for completeness but can also be found in \cite{MR1943859}. 

\begin{lemma} \label{lem:GaussJLEmbedding}
Let $S\in \R^{l \times d}$ be a scaled Gaussian matrix so that each entry is $N(0, l^{-1})$. Then $S$ satisfies \autoref{AA6} with any $\epS \in (0,1) $ and $\deltaSOne = e^{-\epS^2 l /4}$. 
\end{lemma}

\begin{proof}

Since \eqref{eqn:tmp36} is invariant to the scaling of $y$ and is trivial for $y=0$, 
we may assume without loss of generality that $\normTwo{y}=1$. 

Let $R = \sqrt{l}S$, so that each entry of $R$ is distributed independently as $N(0,1)$. 
Then because the sum of independent Gaussian random variables is distributed as a Gaussian random variable; $\normTwo{y}=1$; and the fact that rows of $S$ are independent; we have that the entries of $Ry$, denoted by $z_i$ for $i\in [l]$, are independent $N(0,1)$ random variables. Therefore, for any $-\infty < q < \frac{1}{2}$, we have that
\begin{equation}
    \expectation{e^{q \normTwo{Ry}^2}} = \expectation{e^{q \sum_{i=1}^l z_i^2}} = \prod_{i=1}^l \expectation{e^{qz_i^2}} = (1-2q)^{-l/2}, \label{eqn:tmp21}
\end{equation}
where we used $\expectation{e^{qz_i^2}} = \frac{1}{1-2q}$ for $z_i \in N(0,1)$ and $-\infty < q < \frac{1}{2}$.

Hence, by Markov inequality, we have that, for $q <0$, 
\begin{equation}
    \probability{\normTwo{Ry}^2 \leq l (1-\epS)} = \probability{e^{q \normTwo{Ry}^2}\geq e^{ql(1-\epS)}} 
    \leq \frac{\expectation{\eToQRySqaured}}{\eToQLOneMinusEps} = (1-2q)^{-l/2} \eToMinusQLOneMinusEps, \label{eqn:tmp23}
\end{equation}
where the last inequality comes from \eqref{eqn:tmp21}.

Noting that 
\begin{equation}
    (1-2q)^{-l/2} \eToMinusQLOneMinusEps = \exp \squareBracket{-l \bracket{\frac{1}{2} \log(1-2q)+ q(1-\epS)}}, \label{eqn:tmp22}
\end{equation}
which is minimised at $q_0 = -\frac{\epS}{2(1-\epS)}<0$, we choose $q=q_0$ and 
the right hand side of \eqref{eqn:tmp23} becomes 
\begin{equation}
    e^{\frac{1}{2}l\squareBracket{\epS + \log(1-\epS)}} \leq e^{-\frac{1}{4}l\epS^2}, \label{eqn:tmp24}
\end{equation}
where we used $\log(1-x) \leq -x -x^2/2$, valid for all $x \in [0,1)$.

Hence we have
\begin{align}
      &\probability{\normTwo{Sy}^2 
    \leq (1-\epS) \normTwo{y}^2} \notag \\
    &= \probability{\normTwo{Sy}^2 \leq (1-\epS)} \texteq{by $\normTwo{y}=1$} \notag \\
    &= \probability{\normTwo{Ry}^2 \leq l(1-\epS)} \texteq{by $S = \frac{1}{\sqrt{l}}R$} \notag \\
    & \leq e^{-\frac{l\epS^2}{4}} \texteq{by \eqref{eqn:tmp24} and \eqref{eqn:tmp23}}. \notag
\end{align}

\end{proof}

In order to show using scaled Gaussian matrices satisfies \autoref{AA7}, we make use of the following bound on the maximal singular value of scaled Gaussian matrices.




\begin{lemma}[\cite{MR1863696} Theorem 2.13] \label{lem:Davidson}
Given $l,d \in \N$ with $l \leq d$, consider the $d\times l$ matrix $\Gamma$ whose entries are independent $N(0, {d}^{-1})$. Then for any $\delta >0$,\footnote{We set $t = \sqrt{\frac{2 \logOneOverDelta}{d} }$ in the original theorem statement.}
\begin{equation}
    \probability{\sigma_{max} \bracket{\Gamma} \geq 1 + \sqrt{\frac{l}{d}} + \sqrt{\frac{2 \logOneOverDelta}{d}} } < \delta, \label{eqn:Davidson_upper_Gaussian}
\end{equation}
where $\sigma_{max}(.)$ denotes the largest singular value of its matrix argument.
\end{lemma}

The next lemma shows that using scaled Gaussian matrices satisfies \autoref{AA7}.
\begin{lemma}\label{Lem:GaussSMax}
Let $S \in \R^{l\times d}$ be a scaled Gaussian matrix. Then $S$ satisfies \autoref{AA7} with any $\deltaSTwo \in (0,1)$ and 
\begin{equation}
    \sMax = 1 + \sqrt{\frac{d}{l}} + \sqrt{\frac{2\logOneOverDeltaSTwo}{l}}. \notag
\end{equation}
\end{lemma}

\begin{proof}
We have $\normTwo{S} = \normTwo{S^T} = \sqrt{\frac{d}{l}} \normTwo{\sqrt{\frac{l}{d}}S^T}$. Applying \autoref{lem:Davidson} with $\Gamma = \sqrt{\frac{l}{d}}S^T$, we have that 
\begin{equation*}
    \probability{\sigma_{max} \bracket{ \sqrt{\frac{l}{d}}S^T} \geq 1 + \sqrt{\frac{l}{d}} + \sqrt{\frac{2 \logOneOverDeltaSTwo}{d}} }
    < \deltaSTwo.
\end{equation*}
Noting that $\normTwo{S} = \sqrtFrac{d}{l} \sigma_{max} \bracket{\Gamma}$, and taking the event complement gives the result.
\end{proof}


\subsubsection{\texorpdfstring{$s$-hashing}{TEXT} matrices}
Comparing to Gaussian matrices, $s$-hashing matrices, including the $s=1$ case, (defined in \autoref{def::sampling_and_hashing}) 
are sparse so that it preserves the sparsity (if any) of the vector/matrix it acts on; 
and the corresponding linear algebra computation is faster.
The next two lemmas show that using $s$-hashing matrices satisfies \autoref{AA6} and \autoref{AA7}.



\begin{lemma}[\cite{MR3167920} Theorem 13, also see \cite{MR3773205} Theorem 5 for a simpler proof] \label{thm:s_hashing}
Let $S \in \rLTimesD$ be an $s$-hashing matrix. 
Then $S$ satisfies \autoref{AA6} for any $\epS \in (0,1)$ 
and 
$\deltaSOne = e^{-\frac{l\epS^2}{C_1}}$ given that $s = C_2 \epS l$.
where $C_1, C_2$ are problem-independent constants. 
\end{lemma}

\begin{lemma}
Let $S \in \rLTimesD$ be an $s$-hashing matrix. Then $S$ satisfies \autoref{AA7} with $\deltaSTwo = 0$
and $\sMax = \sqrtFrac{d}{s}$.
\end{lemma}
\begin{proof}
Note that for any matrix $A\in\rLTimesD$, $\normTwo{A} \leq \sqrt{d} \normInf{A}$; and $\normInf{S} = \frac{1}{\sqrt{s}}$. The result follows from combining these two facts. 
\end{proof}



\subsubsection{(Stable) $1$-hashing matrices}
In \cite{CHEN2020105639}, a variant of $1$-hashing matrix is proposed that satisfies \autoref{AA6} but with better $\sMax$ bound. The construction is given as follows.

\begin{definition}\label{def::stable-1-hashing}
\reply{Let $l<d \in \N^+$. A stable $1$-hashing matrix $S \in \R^{l \times d}$ has one non-zero per column, whose value is $\pm 1$ with equal probability, with the row indices of the non-zeros given by the sequence $I$ constructed as the following.  Repeat $[l]$ (that is, the set $\set{1,2, \dots, l}$) for $\ceil{d/l}$ times to obtain a set $D$. Then randomly sample $d$ elements from $D$ without replacement to construct sequence $I$. \footnote{One may also conceptually think $S$ as being constructed from taking the first $d$ columns of a random column permutation of the matrix $T = \squareBracket{I_{l\times l}, I_{l \times l}, \dots, I_{l \times l}}$ where the identify matrix $I_{l \times l}$ is concatenated by columns $\ceil{d/l}$ times.}}
\end{definition}
\begin{remark}
Comparing to a $1$-hashing matrix, a stable $1$-hashing matrix still has $1$ non-zero per column. However its construction guarantees that each row has at most $\ceil{d/l}$ non-zeros because the set $D$ has at most $\ceil{d/l}$ repeated row indices and the sampling is done without replacement. 
\end{remark}

In order to show using stable $1$-hashing matrices satisfies \autoref{AA6}, we need to following result from \cite{CHEN2020105639}.
\begin{lemma}[Theorem 5.3 in \cite{CHEN2020105639}] \label{tmp-2021-12-31-3}
The matrix $S \in \R^{l\times d}$ defined in \autoref{def::stable-1-hashing} satisfies the following: given 
$0< \epsilon, \delta < 1/2$, there exists 
$l = \mathO{\frac{\log(1/\delta)}{\epsilon^2}}$ 
such that for any $x\in \R^d$, we have that 
$\probability{ \|Sx\|_2 \geq (1 - \epsilon) \|x\|_2} > 1-\delta$.
\end{lemma}

\begin{lemma}\label{lem:tmp:2022-1-13-1}
Let $S \in \R^{l \times d}$ be a stable $1$-hashing matrix. Let $\epS \in (0,3/4)$ and suppose that $e^{-\frac{l(\epS-1/4)^2}{C_3}} \in (0, 1/2)$, where $C_3$ is a problem-independent constant. Then $S$ satisfies \autoref{AA6} with $
\deltaSOne = e^{-\frac{l(\epS-1/4)^2}{C_3}}$. 
\end{lemma}
\begin{proof}
Let $\Bepsilon = \epS - 1/4 \in (0, 1/2)$.
From \autoref{tmp-2021-12-31-3}, we have that there exists $C_3>0$ such that with $\deltaSOne = e^{-\frac{l(\epS-1/4)^2}{C_3}}$, $S$ satisfies
$\probability{ \|Sx\|_2 \geq (1 - \Bepsilon) \|x\|_2} > 1-\deltaSOne.$ 
Note that $\|Sx\|_2 \geq (1-\Bepsilon) \|x\|_2$ implies 
$\|Sx\|_2^2 \geq (1 - 2\Bepsilon + \Bepsilon^2) \|x\|_2$, which implies
$\|Sx\|_2^2 \geq (1-\Bepsilon-1/4)\|x\|_2^2$ because $\Bepsilon^2-\Bepsilon \geq -1/4$ for $\Bepsilon \in (0,1/2)$. The desired result follows.
\end{proof}



The next lemma shows that using stable $1$-hashing matrices satisfies \autoref{AA7}. Note that the bound $\sMax$ is smaller than that for $1$-hashing matrices; and, assuming $l>s$, smaller than that for $s$-hashing matrices as well. 
\begin{lemma}\label{lem:stable-1-hashing-SMax}
Let $S \in \R^{l\times d}$ be a stable $1$-hashing matrix. Then $S$ satisfies \autoref{AA7} with $\deltaSTwo=0$ and $\sMax = \sqrt{\ceil{d/l}}$.
\end{lemma}
\begin{proof}
Let $D$ be defined in \autoref{def::stable-1-hashing}. we have that

\begin{align}
\normTwo{Sx}^2 
&= (\sum_{1\leq j \leq d, I(j)=1} \pm x_j)^2 
+ (\sum_{1\leq j \leq d, I(j)=2} \pm x_j)^2
+ \dots + (\sum_{1\leq j \leq d, I(j)=l} \pm x_j)^2 \\
& \leq 
(\sum_{1\leq j \leq d, I(j)=1}  |x_j|)^2 
+ (\sum_{1\leq j \leq d, I(j)=2} |x_j|)^2
+ \dots + (\sum_{1\leq j \leq d, I(j)=l} |x_j|)^2 \\
&\leq 
\ceil{d/l} \bracket{
\sum_{1\leq j \leq d, I(j)=1}  x_j^2
+ \sum_{1\leq j \leq d, I(j)=2}  x_j^2
+ \dots + \sum_{1\leq j \leq d, I(j)=l}  x_j^2
}  \\
& = \ceil{d/l} \|x\|_2,
\end{align}
where the $\pm$ on the first line results from the non-zero entries of $S$ having random signs, and the last inequality is because for any vector $v \in \R^n$, 
$\|v\|_1^2\leq  n \|v\|_2^2$; and $I(j) = k$ is true for at most $\ceil{d/l}$ indices $j$.
\end{proof}

\subsubsection{Sampling matrices}\label{sampling_mat_paragraph}

(Scaled) Sampling matrices $S\in\R^{l\times d}$ (defined in \autoref{def:sampling}) randomly select rows of vector/matrix it acts on (and scale it). 
Next we show that sampling matrices satisfy \autoref{AA6}. The following expression that represents the maximum non-uniformity (see \autoref{def:non-uniform-vector}) of the objective gradient will be used
\begin{equation}
    \nu = \max \set{\frac{\normInf{y}}{\normTwo{y}}, y=\grad f(x) \text{ for some } x\in \R^d}. \label{eq:nu_def}
\end{equation}

The following concentration result will be useful.

\begin{lemma}[\cite{MR2946459}]\label{lem:Tropp_matrix_chernoff}
Consider a finite sequence of independent random numbers $\set{X_k}$ that 
satisfies $X_k \geq 0$ and $\abs{X_k} \leq P$ almost surely. 
Let $\mu = \sum_k \expectation{X_k}  $, then $\probability{\sum_k X_k \leq (1-\epsilon) \mu} \leq 
e^{-\frac{\epsilon^2 \mu}{2P}}$. 
\end{lemma}

\begin{lemma} \label{lem:sampling:non-uniformity-BCGN}
Let $S \in \rLTimesD$ be a scaled sampling matrix. 
Let $\nu$ be defined in \eqref{eq:nu_def}.
Then $S$ satisfies \autoref{AA6} for any $\epS \in (0,1)$
with $\deltaSOne = e^{- \frac{\epS^2 l}{2d\nu^2}}$.
\end{lemma}

\begin{proof}
Note that \eqref{eqn:tmp36} is invariant to scaling of $y$
and trivial for $y=0$. Therefore we may assume $\normTwo{y}=1$ 
without loss of generality. 

We have $\normTwo{Sy} = \frac{l}{d} \sum_{k=1}^l
\squareBracket{\bracket{Ry}_k}^2 $
where $R \in \rLTimesD$ is an (un-scaled) sampling matrix \footnote{I.e. each row of $R$ has a one at a random column.} and $(Ry)_k$ denotes 
the $k^{th}$ entry of $Ry$.
Let $X_k = \squareBracket{\bracket{Ry}_k}^2$.
Note that because the rows of $R$ are independent, 
$X_k$ are independent. 
Moreover, because $\bracket{Ry}_k$ equals to some entry of $y$,
and $\normInf{y}\leq \nu$ by definition of $\nu$ and 
$\normTwo{y} = 1$; we have $\squareBracket{\bracket{Ry}_k}^2 \leq \nu^2$.
Finally, note that $\expectation{X_k} = \frac{1}{d}\normTwo{y}^2 = 
\frac{1}{d}$; so that $\sum_k \expectation{X_k} = \frac{l}{d}$.

Therefore applying \autoref{lem:Tropp_matrix_chernoff} 
with $\epsilon = \epS$ we have
\begin{equation}
    \probability{\sum_{k=1}^l \squareBracket{\bracket{Ry}_k}^2
    \leq (1-\epS) \frac{l}{d}} \geq e^{- \frac{\epS^2 l}{2d\nu^2}}. \notag
\end{equation}
Using $\normTwo{Sy}^2 =\frac{l}{d} \sum_{k=1}^l 
\squareBracket{\bracket{Ry}_k}^2$ gives the result. 
\end{proof}


We note that the theoretical property for scaled sampling matrices is different to Gaussian/$s$-hashing matrices in the sense that the required value of $l$ depends on $\nu$. Note that $\frac{1}{d} \leq \nu^2 \leq 1$ with both bounds attainable. Therefore in the worst case, for fixed value of $\epS, \deltaSOne$, $l$ is required to be $\mathO{d}$ and no dimensionality reduction is achieved by sketching. This is not surprising given that sampling based random methods often require adaptively increasing the sampling size for convergence (reference). However note that for \singleQuote{nice} objective functions such that $\nu^2 = \mathO{\frac{1}{d}}$, sampling matrices have the same theoretical property as Gaussian/$s$-hashing matrices. The attractiveness of sampling lies in the fact that only a subset of entries of the gradient need to be evaluated.


Sampling matrices also have bounded 2-norms, thus \autoref{AA7} is satisfied.  
\begin{lemma}\label{tmp-2022-1-14-12}
Let $S \in \rLTimesD$ be a scaled sampling matrix. Then \autoref{AA7} is satisfied with $\deltaSTwo=0$ and $\sMax=\sqrt{\frac{d}{l}}$.
\end{lemma}
\begin{proof}
We have that $\normTwo{Sx}^2 \leq \frac{d}{l}\normTwo{x}^2$ for any $x\in \R^d$.
\end{proof}

We summarises this section in \autoref{tab:alg:sketching}, where we also give $l$ in terms of $\epS$ and $\deltaSOne$ by rearranging the expressions for $\deltaSOne$. Note that for $s$-hashing matrices, $s$ is required to be $C_2 \epS l$ (see \autoref{thm:s_hashing}), while for scaled sampling matrices, $\nu$ is defined in \eqref{eq:nu_def}. \reply{One may be concerned about the exponential increase of the embedding dimension $l$ as $\epS$ goes to zero. However, $\epS$ may in fact be taken as some $\mathO{1}$ constant that is smaller than $1$ (or $3/4$ in the case of stable 1-hashing). The reason being that the iterative nature of \autoref{alg:sketching} mitigates the inaccuracies of the embedding. See, e.g., the complexity bound in \autoref{thm:complexity-QR-Gaussian}. }

\begin{table}[h]
\small
\begin{tabular}{|l|l|l|l|l|l|}
\hline
                                         & $\epS$                              & $\deltaSOne$                                      & $l$          & $\deltaSTwo$ & $\sMax$              \\ \hline
Scaled Gaussian  & $(0,1)$               & $e^{-\frac{\epS^2 l}{4}}$           & $4\epS^{-2} \log(\frac{1}{\deltaSOne})$           & $(0,1)$      &  $1 + \sqrt{\frac{d}{l}} + \sqrt{\frac{2\logOneOverDeltaSTwo}{l}}$ \\ \hline
$s$-hashing         & $(0,1)$            & $e^{-\frac{\epS^2 l}{C_1}}$         & $C_1 \epS^{-2} \log(\frac{1}{\deltaSOne})$        & $0$          & $\sqrtFrac{d}{s}$    \\ \hline
Stable $1$-hashing  & $(0,\frac{3}{4})$ & $e^{-\frac{l(\epS-1/4)^2}{C_3}}$     & $C_3 (\epS-1/4)^{-2} \log(\frac{1}{\deltaSOne}) $ & $0$          & $\sqrt{\ceil{\frac{d}{l}}}$  \\ \hline
Scaled sampling    & $(0,1)$            & $e^{- \frac{\epS^2 l}{2d\nu^2}}$ & $2d\nu^2 \epS^{-2} \log(\frac{1}{\deltaSOne})$    & $0$          & $\sqrt{\frac{d}{l}}$ \\ \hline
\end{tabular}
\caption{Summary of theoretical properties of using different random ensembles with \autoref{alg:sketching}.}
\label{tab:alg:sketching}
\end{table}

    \section{Random subspace quadratic regularisation and 
    subspace trust region methods}
    \label{BCGN:sec4}
    In this section, we analyse two methods for computing the trial step $\sKHat$ given the sketching based model in \autoref{alg:sketching}. We show that using both methods: quadratic regularisation and trust-region, satisfy \autoref{AA3} and \autoref{AA4}. Using \autoref{thm2}, we show that the iteration complexity for both methods is $\mathO{\epsilon^{-2}}$ to bring the objective's gradient below $\epsilon$.


First we show that \autoref{AA4} holds for \autoref{alg:sketching} if the following model reduction condition is met.

\begin{assumption} \label{AA8}
There exists a non-negative, non-decreasing function $\hBar: \R^2 \to \R$ such that on each true iteration $k$ of \autoref{alg:sketching} we have
\begin{equation}
    \mKHat{0} - \mKHat{\sHat_k(\alpha_k)} \geq \hBar\bracket{\normTwo{\sKGradFK}, \alphaK}, \notag
\end{equation}
where $S_k, \hat{m}_k, \alphaK, \sHat_k$ are defined in \autoref{alg:sketching}.
\end{assumption}

\begin{lemma} \label{lem:AA8impliesAA4}
Let \autoref{AA8} hold with $\hBar$ and true iterations defined in \autoref{def::true_iters}. Then \autoref{alg:sketching} satisfies \autoref{AA4} with $h(\epsilon, \alphaK) = \theta \hBar \bracket{ (1-\epS)^{1/2} \epsilon, \alphaK}$, where $\epS$ is defined in \eqref{eqn::JL}.
\end{lemma}

\begin{proof}
Let $k$ be a true and successful iteration with $k < \nEps$ for some $\epsilon>0$ where $\nEps$ is defined in \eqref{eqn::nEps}. Then, using the fact that the iteration is true, successful, \autoref{AA8} and $k<\nEps$, we have
\begin{align}
    \fK-\fKPlusOne 
    & \geq \theta \squareBracket{\mKHat{0} - \mKHat{\sHat_k(\alphaK)}}\notag\\
    &\geq \theta \hBar( \normTwo{\sKGradFK}, \alphaK) \notag\\
    & \geq \theta \hBar( \oneMiusEpsSToHalf \normTwo{\gradFK}, \alphaK) \notag\\
    & \geq \theta \hBar( \oneMiusEpsSToHalf \epsilon, \alphaK). \notag
\end{align}
\end{proof}

The next Lemma is a standard result and we include its proof for completeness. It is needed to show random subspace quadratic regularisation and trust region methods satisfy \autoref{AA3}.

\begin{lemma} \label{lem:Taylor}
In \autoref{alg:sketching}, suppose that $\normTwo{B_k} \leq \BMax$ for all $k$ where $\BMax$ is independent of $k$, and $f$ is continuously differentiable with $L$-Lipschitz continuous gradient. Then for any $\sKHat \in \R^l$ and $S_k\in \R^{l \times d}$, let $s_k = S_k^T \sKHat \in \R^d$. We have that

\begin{equation}
    | f(x_k + s_k) - \mkHatSkHat| \leq \bracket{\frac{L+\BMax}{2}} \normTwo{S_k^T \hat{\sK}}^2. \label{eqn::Alg2Lipschitz}
\end{equation}
\end{lemma}

\begin{proof}
As $f$ is continuously differentiable with L-Lipschitz gradient, we have from Corollary 8.4 in \cite{CoraBook} that
\begin{equation}
    \abs{f(x_k + S_k \sKHat) - \innerProduct{S_k\gradFK}{\sKHat}}
    \leq \frac{L}{2}\normTwo{S_k^T \sKHat}^2.
\end{equation}
The above equation and triangle inequality implies
\begin{align}
     \abs{f(x_k + s_k) - \mkHatSkHat} 
     = & \abs{f(x_k + s_k) - f(x_k) - \innerProduct{\sKGradFK}{\sKHat}
    - \frac{1}{2}\innerProduct{\SKTransposedsKHat}{B_k \SKTransposedsKHat}}
    \notag\\
    & \leq \bracket{\frac{L}{2} + \frac{1}{2}\normTwo{B_k}}
        \normTwo{S_k^T \sKHat}^2 \notag \\
    & \leq \frac{L + \BMax}{2} \normTwo{S_k^2 \sKHat},
\end{align}
where we used $\normTwo{B_k} \leq \BMax$ to derive the last inequality.

\end{proof}


\subsection{Random subspace quadratic regularisation with sketching}
Here we present \autoref{alg:sketching_QR}, a generic random subspace quadratic regularisation method with sketching, which is a particular form of \autoref{alg:sketching} where the step is computed using a quadratic regularisation framework (see Page \pageref{Intro_QR} in Chapter \ref{Ch1}). We show that in addition to \autoref{AA5} which is satisfied by \autoref{alg:sketching}, \autoref{alg:sketching_QR} satisfies \autoref{AA3} and \autoref{AA4}.

\begin{algorithm}[H]
\begin{description}

 \item[Initialization] \ \\
 Choose a matrix distribution $\cal{S}$ of matrices $S \in \R^{l\times d}$. 
 Choose constants $\gamma_1\in (0,1)$, $\gamma_2 = \gammaOne^{-c}$, for some $c \in \N^+$, $l \in \N^+$, $\theta \in (0,1)$ and $\alpha_{\max}, \BMax>0$.
 Initialize the algorithm by setting $x_0 \in \R^d$, $\alpha_0 = \alphaMax \gamma_1^p$ for some $p \in \N^+$ and $k=0$.

 \item[1. Compute a reduced model and a step] \ \\
 Draw a random matrix $S_k \in \R^{l \times d}$ from $\cal{S}$, and let
 \begin{align}
     \mkHatSHat = \fK + \innerProduct{\sKGradFK}{\sHat} + \frac{1}{2} \innerProduct{\sHat}{\sKBKSKT \sHat}
 \end{align}
 where $B_k \in \R^{d\times d}$ is a positive-semi-definite user provided matrix with $\normTwo{B_k} \leq \BMax$.
 
 Compute $\sKHat$ by approximately minimising $\lkHatSHat = \mkHatSHat + \frac{1}{2\alphaK}\normTwo{S_k^T\sK}^2$ 
 such that the following two conditions hold
\begin{align}
    \normTwo{\grad \lkHat{\sKHat}} \leq \kappaT \normTwo{\SKTransposed \sKHat}, \label{eqn:QRskHatCond1}\\
    \lkHat{\sKHat} \leq \lkHat{0}, \label{eqn:QRsKHatCond2}
\end{align}
where $\kappaT\geq 0$ is a user chosen constant.
 And set $\sK = S_k^T \sKHat$.

\item[2. Check sufficient decrease]\ \\  
Let sufficient decrease be defined by the condition
\begin{equation}
    \fK - \fKPlusOne \geq \theta \squareBracket{\mKHat{0} - \mKHat{\hat{\sK}(\alpha_k)}}. \notag
\end{equation}

\item[3, Update the parameter $\alphaK$ and possibly take the potential step $\sK$]\ \\
If sufficient decrease is achieved, set $\xKPlusOne = \xK + \sK$ and $\alphaKPlusOne = \min \set{\alphaMax, \gammaTwo\alphaK}$ [a successful iteration]. \\
Otherwise set $\xKPlusOne = \xK$ and $\alphaKPlusOne = \gammaOne \alphaK$ [an unsuccesful iteration].\\
Increase the iteration count by setting $k=k+1$ in both cases.

\caption{\bf{A generic random subspace quadratic regularisation method with sketching}} \label{alg:sketching_QR} 
\end{description}
\end{algorithm}

We note that
\begin{equation}
    \mKHat{0} - \mKHat{\sKHat} = \lkHat{0} - \lkHat{\sKHat} + \oneOverTwoAlphaK \normTwo{\SKTransposed\sKHat}^2 \geq \oneOverTwoAlphaK \normTwo{\SKTransposed\sKHat}^2, \label{eqn:QRModelDecreaseLowerByStep}
\end{equation}
where we have used \eqref{eqn:QRsKHatCond2}.
\autoref{lem:QRAlpahLow} shows \autoref{alg:sketching_QR} satisfies \autoref{AA3}. 

\begin{lemma} \label{lem:QRAlpahLow}
Let $f$ be continuously differentiable with $L$-Lipschitz continuous gradient. 
Then \autoref{alg:sketching_QR} satisfies \autoref{AA3} with 
\begin{equation}
    \alphaLow = \frac{1-\theta}{L + \BMax}. \notag
\end{equation}
\end{lemma}

\begin{proof}
Let $\epsilon>0$ and $k < \nEps$, and assume iteration $k$ is true with $\alphaK \leq \alphaLow$, define
\begin{equation}
    \rho_k = \frac{f(x_k) - f(x_k + s_k)}{\mKHat{0} - \mKHat{\sKHat}}. \notag
\end{equation}


We have
\begin{align}
    \abs{1- \rho_k} 
    &= \frac{\abs{f(\xK + \sK) - \mKHat{\sK}}}{\abs{\mKHat{0}-\mKHat{\sKHat}}} \texteq{by $\mKHat{0} = \fK$} \notag\\
    &\leq \frac{\bracket{\frac{L+\BMax}{2}} \normTwo{\SKTransposedsKHat}^2}{\frac{1}{2\alpha_k} \normTwo{\SKTransposedsKHat}}
    \notag\\
    & \leq 1-\theta, \notag
\end{align}
where the first inequality follows from \autoref{lem:Taylor} and \eqref{eqn:QRModelDecreaseLowerByStep}.
The above equation implies that $\rho_k \geq \theta$ and therefore iteration $k$ is successful.\footnote{For $\rho_k$ to be well-defined, we need the denominator to be strictly positive. But this is shown in \eqref{eqn:QRModelDecreaseLowerBySketchedGradient}.} 
\end{proof}

The next Lemma shows \autoref{alg:sketching_QR} satisfies \autoref{AA8}, thus satisfying \autoref{AA4} by \autoref{lem:AA8impliesAA4}.

\begin{lemma} \label{lem:qr:hbar_bound}
\autoref{alg:sketching_QR} satisfies \autoref{AA8} with 
\begin{equation}
    \barH{z_1}{z_2} = \frac{z_1^2}{2\alphaMax \bracket{\sMax \bracket{\BMax + z_2^{-1}} + \kappaT}^2}, \label{eqn:QRHBarEq}
\end{equation}
where $\sMax$ is defined in \eqref{eqn::sMax}.

\end{lemma}

\begin{proof}
Let iteration $k$ be true. Using the definition of $\hat{l}_k$, we have 
\begin{equation}
    \grad \lkHat{\sKHat} = S_k \gradFK + S_k B_k S_k^T \sKHat + \oneOverAlphaK S_k S_k^T \sKHat, \notag
\end{equation}
It follows that
\begin{align}
    \normTwo{S_k \gradFK} &= \normTwo{- S_k\bracket{ B_k + \oneOverAlphaK} S_k^T \sKHat + \grad \lkHat{\sKHat}} \notag\\
    & \leq \bracket{\sMax \bracket{\BMax + \oneOverAlphaK}}\normTwo{S_k^T \sKHat} + \normTwo{\grad \lkHat{\sKHat}} \notag\\
    & \leq \bracket{\sMax \bracket{\BMax + \oneOverAlphaK} + \kappaT} \normTwo{S_k^T \sKHat}, \label{eqn:QRSketchedGradientUpperBySketchedStep}
\end{align}
where we used $\normTwo{S_k}\leq \sMax$ on true iterations and $\normTwo{B_k} \leq \BMax$ to derive the first inequality and \eqref{eqn:QRskHatCond1} to derive the last inequality.

Therefore, using \eqref{eqn:QRModelDecreaseLowerByStep} and \eqref{eqn:QRSketchedGradientUpperBySketchedStep}, we have 
\begin{align}
     \mKHat{0} - \mKHat{\sKHat} &\geq \oneOverTwoAlphaK \normTwo{\SKTransposed\sKHat}^2 \notag\\
     &\geq \oneOverTwoAlphaK \bracket{\frac{1}{\sMax \bracket{\BMax + \oneOverAlphaK} + \kappaT}}^2 \normTwo{S_k \gradFK}^2 \notag\\
     &\geq \frac{1}{2\alphaMax} \bracket{\frac{1}{\sMax \bracket{\BMax + \oneOverAlphaK} + \kappaT}}^2 \normTwo{S_k \gradFK}^2,
     \label{eqn:QRModelDecreaseLowerBySketchedGradient}
\end{align}
satisfying \autoref{AA8}.

\end{proof}

\subsection{Iteration complexity of random subspace quadratic regularisation methods}\label{iter_complexity_QR}
Here we derive complexity results for three concrete implementations of \autoref{alg:sketching_QR} that use different random ensembles. Many other random ensembles are possible. 
As a reminder, the below expression, introduced earlier in this chapter, will be needed. 
\begin{align}
    & \newL = \ceil{\logBaseGammaOne{ \minMe{\frac{\alphaLow}{\alphaZero}}{\frac{1}{\gammaTwo}}}} \label{tmp-2021-12-31-8} 
\end{align}
Applying \autoref{lem:AA8impliesAA4}, \autoref{lem:QRAlpahLow}, \autoref{lem:qr:hbar_bound} for \autoref{alg:sketching_QR}, we have that \autoref{AA3} and \autoref{AA4} are satisfied with
\begin{align}
    & \alphaLow = \frac{1-\theta}{L + \BMax} \notag \\
    & h(\epsilon, \alphaZero\gammaOne^{c+\newL}) = \theta \barH{(1 - \epS)^{1/2}\epsilon}{\alphaZero\gammaOne^{c+\newL}} \notag \\
    & = \frac{\theta   (1-\epS)\epsilon^2}{2\alphaMax \bracket{\sMax \bracket{\BMax + \alphaZero^{-1}\gammaOne^{-c-\newL}} + \kappaT}^2} 
    \label{tmp-2022-1-13-2} 
\end{align}
Moreover, \autoref{AA5} for \autoref{alg:sketching_QR} is satisfied by applying \autoref{lem:AA8impliesAA5}.
The following three subsections give complexity results of \autoref{alg:sketching_QR} with different random ensembles. We suggest the reader to refer back to \autoref{tab:alg:sketching} for a summary of their theoretical properties. 

\subsubsection{Using scaled Gaussian matrices}
\autoref{alg:sketching_QR} with scaled Gaussian matrices have a (high-probability) iteration complexity of $\mathO{\frac{d}{l}\epsilon^{-2}}$ to drive $\gradFK$ below $\epsilon$ and $l$ can be chosen as a (problem dimension-independent) constant (see \autoref{tab:alg:sketching}). 
\begin{theorem}\label{thm:complexity-QR-Gaussian}
Suppose $f$ is continuously differentiable with $L$-Lipschitz continuous gradient.
Let $\deltaSTwo, \epS, \deltaOne>0$, $l\in \N^+$ such that
\begin{equation*}
    \deltaS < \frac{c}{(c+1)^2}, \quad \nPreFactorTR >0,
\end{equation*}
where $\deltaS = e^{-l\epS^2/4} + \deltaSTwo$.
Run \autoref{alg:sketching_QR} with $\cal{S}$ being the distribution of scaled Gaussian matrices, for $N$ iterations with
\begin{equation}
    N \geq  \nPreFactorTR \squareBracket{
         \fZeroMinusfStarOverH
         + \frac{\newL}{1+c}}, \notag
\end{equation}
where 
\begin{equation}
    h(\epsilon, \alphaZero\gammaOne^{c+\newL}) = \frac{\theta   (1-\epS)\epsilon^2}{ 2\alphaMax \bracket{\squareBracket{1 + \sqrt{\frac{d}{l}} + \sqrt{\frac{2\logOneOverDeltaSTwo}{l}}}\bracket{\BMax + \alphaZero^{-1}\gammaOne^{-c-\newL}} + \kappaT}^2 } \notag
\end{equation}
and $\newL$ is given in \eqref{tmp-2021-12-31-8}.
Then, we have 
\begin{equation}
    \probability{N \geq \nEps} \geq 1 - \chernoffLowerExponential, \notag
\end{equation}
where $\nEps$ is defined in \eqref{eqn::nEps}.
\end{theorem}

\begin{proof}
We note that \autoref{alg:sketching_QR} is a particular form of \autoref{alg:generic}, therefore \autoref{thm2} applies. Moreover \autoref{AA3}, \autoref{AA4} and \autoref{AA5} are satisfied. Applying \autoref{lem::deduceAA2}, \autoref{lem:GaussJLEmbedding} and \autoref{Lem:GaussSMax} for scaled Gaussian matrices, \autoref{AA2} is satisfied with 
\begin{align*}
    & \sMax = 1 + \sqrt{\frac{d}{l}} + \sqrt{\frac{2\logOneOverDeltaSTwo}{l}} \\
    & \deltaS =  e^{-\epS^2 l/ 4} + \deltaSTwo.
\end{align*}
Applying \autoref{thm2} and substituting the expression of $\sMax$ above in \eqref{tmp-2022-1-13-2} gives the desired result.
\end{proof}

\subsubsection{Using stable $1$-hashing matrices}
\autoref{alg:sketching_QR} with stable $1$-hashing matrices have a (high-probability) iteration complexity of $\mathO{\frac{d}{l}\epsilon^{-2}}$ to drive $\gradFK$ below $\epsilon$ and $l$ can be chosen as a (problem dimension-independent) constant (see \autoref{tab:alg:sketching}). 

\begin{theorem}\label{thm:complexity-QR-stable-1-hashing}
Suppose $f$ is continuously differentiable with $L$-Lipschitz continuous gradient.
Let $\deltaOne>0$, $\epS \in (0,3/4)$, $l\in \N^+$ such that
\begin{equation*}
    \deltaS < \frac{c}{(c+1)^2}, \quad \nPreFactorTR >0,
\end{equation*}
where $\deltaS = e^{-\frac{l(\epS-1/4)^2}{C_3}}$ and $C_3$ is defined in \autoref{lem:tmp:2022-1-13-1}.
Run \autoref{alg:sketching_QR} with $\cal{S}$ being the distribution of stable 1-hashing matrices, for $N$ iterations with
\begin{equation}
    N \geq  \nPreFactorTR \squareBracket{
         \fZeroMinusfStarOverH
         + \frac{\newL}{1+c}}, \notag
\end{equation}
where 
\begin{equation}
    h(\epsilon, \alphaZero\gammaOne^{c+\newL}) = \frac{\theta   (1-\epS)\epsilon^2}{ 2\alphaMax \bracket{\sqrt{\ceil{d/l}}\bracket{\BMax + \alphaZero^{-1}\gammaOne^{-c-\newL}} + \kappaT}^2 } \notag
\end{equation}
and $\newL$ is given in \eqref{tmp-2021-12-31-8}.
Then, we have 
\begin{equation}
    \probability{N \geq \nEps} \geq 1 - \chernoffLowerExponential, \notag
\end{equation}
where $\nEps$ is defined in \eqref{eqn::nEps}.
\end{theorem}

\begin{proof}
Applying \autoref{lem::deduceAA2}, \autoref{lem:tmp:2022-1-13-1} and \autoref{lem:stable-1-hashing-SMax} for stable 1-hashing matrices, \autoref{AA2} is satisfied with 
\begin{align*}
    & \sMax = \sqrt{\ceil{d/l}} \\
    & \deltaS =  e^{-\frac{l(\epS-1/4)^2}{C_3}}.
\end{align*}
Applying \autoref{thm2} and substituting the expression of $\sMax$ above in \eqref{tmp-2022-1-13-2} gives the desired result. 
\end{proof}

\subsubsection{Using sampling matrices}
\autoref{alg:sketching_QR} with scaled sampling matrices have a (high-probability) iteration complexity of $\mathO{\frac{d}{l}\epsilon^{-2}}$ to drive $\gradFK$ below $\epsilon$. However, unlike in the previous two cases, here $l$ depends on the problem dimension $d$ and a problem specific constant $\nu$ (see \autoref{tab:alg:sketching}). If $\nu = \mathO{1/d}$, then $l$ can be chosen as a problem dimension-independent constant.

\begin{theorem}\label{thm:complexity-QR-sampling}
Suppose $f$ is continuously differentiable with $L$-Lipschitz continuous gradient.
Let $\deltaOne>0$, $\epS \in (0,1)$, $l\in \N^+$ such that
\begin{equation*}
    \deltaS < \frac{c}{(c+1)^2}, \quad \nPreFactorTR >0,
\end{equation*}
where $\deltaS =e^{- \frac{\epS^2 l}{2d\nu^2}}$ and $\nu$ is defined in \eqref{eq:nu_def}.
Run \autoref{alg:sketching_QR} with $\cal{S}$ being the distribution of scaled sampling matrices, for $N$ iterations with
\begin{equation}
    N \geq  \nPreFactorTR \squareBracket{
         \fZeroMinusfStarOverH
         + \frac{\newL}{1+c}}, \notag
\end{equation}
where 
\begin{equation}
    h(\epsilon, \alphaZero\gammaOne^{c+\newL}) = \frac{\theta   (1-\epS)\epsilon^2}{ 2\alphaMax \bracket{\sqrt{d/l}\bracket{\BMax + \alphaZero^{-1}\gammaOne^{-c-\newL}} + \kappaT}^2 } \notag
\end{equation}
and $\newL$ is given in \eqref{tmp-2021-12-31-8}.
Then, we have 
\begin{equation}
    \probability{N \geq \nEps} \geq 1 - \chernoffLowerExponential, \notag
\end{equation}
where $\nEps$ is defined in \eqref{eqn::nEps}.
\end{theorem}

\begin{proof}
Applying \autoref{lem::deduceAA2}, \autoref{lem:sampling:non-uniformity-BCGN} and \autoref{tmp-2022-1-14-12} for scaled sampling matrices, \autoref{AA2} is satisfied with 
\begin{align*}
    & \sMax = \sqrt{d/l} \\
    & \deltaS = e^{- \frac{\epsilon^2 l}{2d\nu^2}}.
\end{align*}
Applying \autoref{thm2} and substituting the expression of $\sMax$ above in \eqref{tmp-2022-1-13-2} gives the desired result.
\end{proof}

\begin{remark}
The dependency on $\epsilon$ in the iteration complexity matches that for the full-space quadratic regularisation method (Page \pageref{Intro_QR}). Note that for each ensemble considered, there is dimension-dependence in the bound of the form $\frac{d}{l}$. 
We may eliminate the dependence on $d$ in the iteration complexity by fixing the ratio $\frac{d}{l}$ to be a constant. 
\end{remark}

\subsection{Random subspace trust region methods with sketching}
Here we present a generic random subspace trust region method with sketching, \autoref{alg:sketching_TR}, which is a particular form of \autoref{alg:sketching} where the step is computed using a trust region framework (see Page \pageref{Intro_trust_region} in Chapter \ref{Ch1}). 

\begin{algorithm}[H]
\begin{description}

 \item[Initialization] \ \\
 Choose a matrix distribution $\cal{S}$ of matrices $S \in \R^{l\times d}$. 
 Choose constants $\gamma_1\in (0,1)$, $\gamma_2 = \gammaOne^{-c}$, for some $c \in \N^+$, $l \in \N^+$, $\theta \in (0,1)$ and $\alpha_{\max}, \BMax>0$.
 Initialize the algorithm by setting $x_0 \in \R^d$, $\alpha_0 = \alphaMax \gamma_1^p$ for some $p \in \N^+$ and $k=0$.

 \item[1. Compute a reduced model and a step] \ \\
 Draw a random matrix $S_k \in \R^{l \times d}$ from $\cal{S}$, and let
 \begin{align}
     \mkHatSHat =\fK + \innerProduct{\sKGradFK}{\sHat} + \frac{1}{2} \innerProduct{\sHat}{\sKBKSKT \sHat}
 \end{align}
 where $B_k \in \R^{d\times d}$ is a user provided matrix with $\normTwo{B_k} \leq \BMax$.
 
 Compute $\sKHat$ by approximately minimising $\mkHatSHat$ such that for some $C_7>0$,
\begin{align}
    &\normTwo{\sKHat} \leq \alphaK \label{tmp-2021-12-31-4} \\
    &\mKHat{0} - \mKHat{\sKHat} \geq C_7 \normTwo{S_k \gradFK} \min \set{\alphaK, \frac{\normTwo{S_k \gradFK}}{\normTwo{B_k}}}. \label{eqn::TRStepComputationCOndition}
\end{align}

\item[2. Check sufficient decrease]\ \\  
Let sufficient decrease be defined by the condition
\begin{equation}
    \fK - \fKPlusOne \geq \theta \squareBracket{\mKHat{0} - \mKHat{\hat{\sK}(\alpha_k)}}. \notag
\end{equation}

\item[3, Update the parameter $\alphaK$ and possibly take the potential step $\sK$]\ \\
If sufficient decrease is achieved, set $\xKPlusOne = \xK + \sK$ and $\alphaKPlusOne = \min \set{\alphaMax, \gammaTwo\alphaK}$ [successful iteration].\\
Otherwise set $\xKPlusOne = \xK$ and $\alphaKPlusOne = \gammaOne \alphaK$. [unsuccessful iteration].\\

Increase the iteration count by setting $k=k+1$ in both cases.

\caption{\bf{A generic random subspace trust region method with sketching}} \label{alg:sketching_TR} 
\end{description}
\end{algorithm}

\begin{remark}
Lemma 4.3 in \cite{Nocedal:2006uv} shows there always exists $\hat{s_k} \in \R^l$ such that \eqref{eqn::TRStepComputationCOndition} holds. Specifically, define $\gK = \sKGradFK$. 
If $\gK=0$, one may take $\hat{s_k} = 0$; and otherwise one may take $\sKHat$ to be the Cauchy point (that is, the point where the model $\hat{m}_k$ is minimised in the negative model gradient direction within the trust region), which can be computed by $\hat{s}_k^c = -\tau_k \frac{\alphaK}{\normTwo{\gK}} \gK$, where $\tau_k=1$ if $\gK^T B_k \gK \leq 0$; and $\tau_k = \min \bracket{ \frac{\normTwo{\gK}^3}{ \gK^T B_k \gK \alphaK}, 1}$ otherwise. 
\end{remark}

\autoref{tmp-2021-12-31-5} shows that \autoref{alg:sketching_TR} satisfies \autoref{AA3}.

\begin{lemma} \label{tmp-2021-12-31-5}
Suppose $f$ is continuously differentiable with $L$-Lipschitz continuous gradient. Then \autoref{alg:sketching_TR} satisfies \autoref{AA3} with
\begin{equation}
    \alphaLow = \oneMiusEpsSToHalf \epsilon \min \bracket{ \frac{C_7 (1-\theta)}{(\lPlusHalfBmax)\sMax^2}, \frac{1}{\BMax} }.\label{eqn::TRalphaLowExpression}
\end{equation}
\end{lemma}

\begin{proof}
Let $\epsilon>0$ and $k < \nEps$, and assume iteration $k$ is true with $\alphaK \leq \alphaLow$, define
\begin{equation}
    \rho_k = \frac{f(x_k) - f(x_k + s_k)}{\mKHat{0} - \mKHat{\sKHat}}. \notag
\end{equation} 

Then we have
\begin{align}
    \abs{1-\rho_k} &= \abs{\frac{f(x_k + s_k) - \mkHatSkHat}{\mKHat{0}-\mkHatSkHat}} \notag \\
    & \leq \frac{(\lPlusHalfBmax)\normTwo{\SKTransposedsKHat}^2}{C_7 \normTwo{\sKGradFK}\min \bracket{\alphaK, \frac{\normTwo{\sKGradFK}}{\normTwo{B_k}}}}
    \notag\\
    & \leq \frac{(\lPlusHalfBmax) \sMax^2 \alphaK^2}{C_7 \normTwo{\sKGradFK}\min \bracket{\alphaK, \frac{\normTwo{\sKGradFK}}{\normTwo{B_k}}}} 
    \notag\\
    & \leq \frac{(\lPlusHalfBmax)\sMax^2\alphaK^2}{C_7 \oneMiusEpsSToHalf\epsilon \min \bracket{\alphaK, \frac{\oneMiusEpsSToHalf\epsilon}{\BMax}}} 
    \notag\\
    & \leq 1 - \theta,\notag
\end{align}
where the first inequality follows from \eqref{eqn::TRStepComputationCOndition} and \autoref{lem:Taylor}, the second inequality follows from \eqref{eqn::sMax} and $\normTwo{\sKHat} \leq \alphaK$, the third inequality follows from \eqref{eqn::JL} and the fact that $\gradFK>\epsilon$ for $k<\nEps$, the last inequality follows from $\alphaK \leq \alphaLow$ and \eqref{eqn::TRalphaLowExpression}. It follows then $\rho_k \geq \theta$ and iteration $k$ is successful. \footnote{Note that for $k$ being a true iteration with $k<\nEps$, \eqref{eqn::TRStepComputationCOndition} along with \eqref{eqn::JL}, $\alphaK>0$ gives $\mKHat{0}-\mKHat{\sKHat}>0$ so that $\rho_k$ is well defined.} 
\end{proof}

The next lemma shows that \autoref{alg:sketching_TR} satisfies \autoref{AA8}, thus satisfying \autoref{AA4}.

\begin{lemma}\label{tmp-2022-1-13-5}
\autoref{alg:sketching_TR} satisfies \autoref{AA8} with 
\begin{equation}
    \barH{z_1}{z_2} = C_7\min \bracket{z_1 z_2, z_1^2/\BMax}. \notag
\end{equation}
\end{lemma}
\begin{proof}
Use \eqref{eqn::TRStepComputationCOndition} with $\normTwo{B_k} \leq \BMax$. 
\end{proof}

\subsection{Iteration complexity of random subspace trust region methods}

Here we derive complexity results for three concrete implementations of \autoref{alg:sketching_TR} that use different random ensembles. The exposition follows closely Section \ref{iter_complexity_QR}. And the complexity results are in the same order in $\epsilon, \frac{d}{l}$ but have different constants. 

Applying \autoref{lem:AA8impliesAA4}, \autoref{tmp-2021-12-31-5}, \autoref{tmp-2022-1-13-5} for \autoref{alg:sketching_TR}, we have that \autoref{AA3} and \autoref{AA4} are satisfied with
\begin{align}
    & \alphaLow = \oneMiusEpsSToHalf \epsilon \min \bracket{ \frac{C_7 (1-\theta)}{(\lPlusHalfBmax)\sMax^2}, \frac{1}{\BMax} } \notag \\
    & h(\epsilon, \alphaZero\gammaOne^{c+\newL}) = \theta \barH{(1 - \epS)^{1/2}\epsilon}{\alphaZero\gammaOne^{c+\newL}} \notag \\
    & = \theta C_7  \minMe{\bracket{1-\epS}^{1/2} \epsilon \alphaZero \gammaOne^{c + \newL}}{\bracket{1-\epS} \epsilon^2/\BMax }
    \label{tmp-2022-1-13-6} 
\end{align}
Here, unlike in the analysis of \autoref{alg:sketching_QR}, $\alphaLow$ (and consequently $\newL$) depends on $\epsilon$. We make this dependency on $\epsilon$ explicit. 
Using the definition of $\newL$ in \eqref{eqn:tauLDef} and substituting in the expression for $\alphaLow$, we have
\begin{align*}
    \alphaZero \gammaOne^{c+\newL} 
    & = \alphaZero\gammaOne^c
        \gammaOne^{
            \ceil{
                \log_\gammaOne \bracket{
                    \minMe{
                        \oneMiusEpsSToHalf \epsilon \min \bracket{ \frac{C_7 (1-\theta)}{(\lPlusHalfBmax)\sMax^2}, \frac{1}{\BMax} } \alphaZero^{-1}
                    }
                    {
                        \gammaTwo^{-1}
                    }
                }
            }
        } \notag \\
    & \geq \alphaZero\gammaOne^c \gammaOne
        \minMe{
                \oneMiusEpsSToHalf \epsilon \min \bracket{ \frac{C_7 (1-\theta)}{(\lPlusHalfBmax)\sMax^2}, \frac{1}{\BMax} } \alphaZero^{-1}
                }
                {
                    \gammaTwo^{-1}
                } \notag \\
    & = \gammaOne^{c+1}
        \minMe{
                \oneMiusEpsSToHalf \epsilon \min \bracket{ \frac{C_7 (1-\theta)}{(\lPlusHalfBmax)\sMax^2}, \frac{1}{\BMax} }
                }
                {
                    \alphaZero \gammaTwo^{-1}
                }, \notag 
\end{align*}
where we used $\ceil{y} \leq y+1$ to derive the inequality.
Therefore, \eqref{tmp-2022-1-13-6} implies 
\begin{align}
    &h(\epsilon, \alphaZero\gammaOne^{c+\newL}) \notag\\
    &\geq 
    \theta C_7 
    \minMe{
        \gammaOne^{c+1}
        \minMe{
                \bracket{1-\epS} \epsilon^2 \min \bracket{ \frac{C_7 (1-\theta)}{(\lPlusHalfBmax)\sMax^2}, \frac{1}{\BMax} }
                }
                {
                    \bracket{1-\epS}^{1/2} \epsilon \alphaZero \gammaTwo^{-1}
                }        
    }
    {
        \frac{\bracket{1-\epS}\epsilon^2}{\BMax}
    } \notag\\ 
    & = 
    \theta C_7 \bracket{1-\epS}\epsilon^2
    \minMe{
        \gammaOne^{c+1}
        \minMe{
                \min \bracket{ \frac{C_7 (1-\theta)}{(\lPlusHalfBmax)\sMax^2}, \frac{1}{\BMax} }
                }
                {
                    \frac{\alphaZero}{\bracket{1-\epS}^{1/2}\epsilon \gammaTwo}
                }        
    }
    {
        \frac{1}{\BMax}
    } \notag\\
    & = 
    \theta C_7 \bracket{1-\epS}\epsilon^2
        \gammaOne^{c+1}
        \minMe{
                \min \bracket{ \frac{C_7 (1-\theta)}{(\lPlusHalfBmax)\sMax^2}, \frac{1}{\BMax} }
                }
                {
                    \frac{\alphaZero}{\bracket{1-\epS}^{1/2}\epsilon \gammaTwo}
                }        
    \label{tmp-2022-1-13-9}.
\end{align}
where the last equality follows from $\gammaOne^{c+1} < 1$.
Moreover, \autoref{AA5} for \autoref{alg:sketching_TR} is satisfied by applying \autoref{lem:AA8impliesAA5}.
The following three subsections give complexity results of \autoref{alg:sketching_TR} using different random ensembles with \autoref{alg:sketching_TR}. Again, we suggest the reader to refer back to \autoref{tab:alg:sketching} for a summary of their theoretical properties. 

\subsubsection{Using scaled Gaussian matrices}
\autoref{alg:sketching_TR} with scaled Gaussian matrices have a (high-probability) iteration complexity of $\mathO{\frac{d}{l}\epsilon^{-2}}$ to drive $\gradFK$ below $\epsilon$ and $l$ can be chosen as a (problem dimension-independent) constant (see \autoref{tab:alg:sketching}). 
\begin{theorem}\label{thm:complexity-TR-Gaussian}
Suppose $f$ is continuously differentiable with $L$-Lipschitz continuous gradient.
Let $\deltaSTwo, \epS, \deltaOne>0$, $l\in \N^+$ such that
\begin{equation*}
    \deltaS < \frac{c}{(c+1)^2}, \quad \nPreFactorTR >0,
\end{equation*}
where $\deltaS = e^{-l\epS^2/4} + \deltaSTwo$.
Run \autoref{alg:sketching_TR} with $\cal{S}$ being the distribution of scaled Gaussian matrices, for $N$ iterations with
\begin{equation}
    N \geq  \nPreFactorTR \squareBracket{
         \fZeroMinusfStarOverH
         + \frac{\newL}{1+c}}, \notag
\end{equation}
where 
\begin{align}
    &h(\epsilon, \alphaZero\gammaOne^{c+\newL}) \\
    & = \theta C_7 \bracket{1-\epS}\epsilon^2
        \gammaOne^{c+1}
        \minMe{
                \min \bracket{ \frac{C_7 (1-\theta)}{(\lPlusHalfBmax)
                    \squareBracket{1 + \sqrt{\frac{d}{l}} + \sqrt{\frac{2\logOneOverDeltaSTwo}{l}}}^2
                    }, \frac{1}{\BMax} }
                }
                {
                    \frac{\alphaZero}{\bracket{1-\epS}^{1/2}\epsilon \gammaTwo}
                }  
\end{align}
Then, we have 
\begin{equation}
    \probability{N \geq \nEps} \geq 1 - \chernoffLowerExponential, \notag
\end{equation}
where $\nEps$ is defined in \eqref{eqn::nEps}.
\end{theorem}

\begin{proof}
We note that \autoref{alg:sketching_TR} is a particular version of \autoref{alg:generic} therefore \autoref{thm2} applies. Applying \autoref{lem::deduceAA2}, \autoref{lem:GaussJLEmbedding} and \autoref{Lem:GaussSMax} for scaled Gaussian matrices, \autoref{AA2} is satisfied with 
\begin{align*}
    & \sMax = 1 + \sqrt{\frac{d}{l}} + \sqrt{\frac{2\logOneOverDeltaSTwo}{l}} \\
    & \deltaS =  e^{-\epS^2 l/ 4} + \deltaSTwo.
\end{align*}
Applying \autoref{thm2} and substituting the expression of $\sMax$ in \eqref{tmp-2022-1-13-9} gives the desired result.
\end{proof}

\subsubsection{Using stable $1$-hashing matrices}
\autoref{alg:sketching_TR} with stable $1$-hashing matrices have a (high-probability) iteration complexity of $\mathO{\frac{d}{l}\epsilon^{-2}}$ to drive $\gradFK$ below $\epsilon$ and $l$ can be chosen as a (problem dimension-independent) constant (see \autoref{tab:alg:sketching}). 

\begin{theorem}\label{thm:complexity-TR-stable-1-hashing}
Suppose $f$ is continuously differentiable with $L$-Lipschitz continuous gradient.
Let $\deltaOne>0$, $\epS \in (0,3/4)$, $l\in \N^+$ such that
\begin{equation*}
    \deltaS < \frac{c}{(c+1)^2}, \quad \nPreFactorTR >0,
\end{equation*}
where $\deltaS = e^{-\frac{l(\epS-1/4)^2}{C_3}}$ and $C_3$ is defined in \autoref{lem:tmp:2022-1-13-1}.
Run \autoref{alg:sketching_TR} with $\cal{S}$ being the distribution of stable 1-hashing matrices, for $N$ iterations with
\begin{equation}
    N \geq  \nPreFactorTR \squareBracket{
         \fZeroMinusfStarOverH
         + \frac{\newL}{1+c}}, \notag
\end{equation}
where 
\begin{equation}
    h(\epsilon, \alphaZero\gammaOne^{c+\newL}) = 
        \theta C_7 \bracket{1-\epS}\epsilon^2
        \gammaOne^{c+1}
        \minMe{
                \min \bracket{ \frac{C_7 (1-\theta)}{(\lPlusHalfBmax)\ceil{d/l}}, \frac{1}{\BMax} }
                }
                {
                    \frac{\alphaZero}{\bracket{1-\epS}^{1/2}\epsilon \gammaTwo}
                }\notag
\end{equation}
Then, we have 
\begin{equation}
    \probability{N \geq \nEps} \geq 1 - \chernoffLowerExponential, \notag
\end{equation}
where $\nEps$ is defined in \eqref{eqn::nEps}.
\end{theorem}

\begin{proof}
Applying \autoref{lem::deduceAA2}, \autoref{lem:tmp:2022-1-13-1} and \autoref{lem:stable-1-hashing-SMax} for stable 1-hashing matrices, \autoref{AA2} is satisfied with 
\begin{align*}
    & \sMax = \sqrt{\ceil{d/l}} \\
    & \deltaS =  e^{-\frac{l(\epS-1/4)^2}{C_3}}.
\end{align*}
Applying \autoref{thm2} and substituting the expression of $\sMax$ in \eqref{tmp-2022-1-13-9} gives the desired result.
\end{proof}

\subsubsection{Using sampling matrices}
\autoref{alg:sketching_TR} with scaled sampling matrices have a (high-probability) iteration complexity of $\mathO{\frac{d}{l}\epsilon^{-2}}$ to drive $\gradFK$ below $\epsilon$. Similar to \autoref{alg:sketching_QR} with scaled sampling matrices, here $l$ depends on the problem dimension $d$ and a problem specific constant $\nu$ (see \autoref{tab:alg:sketching}). If $\nu = \mathO{1/d}$, then $l$ can be chosen as a problem dimension-independent constant.

\begin{theorem}\label{thm:complexity-TR-sampling}
Suppose $f$ is continuously differentiable with $L$-Lipschitz continuous gradient.
Let $\deltaOne>0$, $\epS \in (0,1)$, $l\in \N^+$ such that
\begin{equation*}
    \deltaS < \frac{c}{(c+1)^2}, \quad \nPreFactorTR >0,
\end{equation*}
where $\deltaS =e^{- \frac{\epsilon^2 l}{2d\nu^2}}$ and $\nu$ is defined in \eqref{eq:nu_def}.
Run \autoref{alg:sketching_TR} with $\cal{S}$ being the distribution of scaled sampling matrices, for $N$ iterations with
\begin{equation}
    N \geq  \nPreFactorTR \squareBracket{
         \fZeroMinusfStarOverH
         + \frac{\newL}{1+c}}, \notag
\end{equation}
where 
\begin{equation}
    h(\epsilon, \alphaZero\gammaOne^{c+\newL}) = 
        \theta C_7 \bracket{1-\epS}\epsilon^2
        \gammaOne^{c+1}
        \minMe{
                \min \bracket{ \frac{C_7 (1-\theta)}{(\lPlusHalfBmax)d/l}, \frac{1}{\BMax} }
                }
                {
                    \frac{\alphaZero}{\bracket{1-\epS}^{1/2}\epsilon \gammaTwo}
                }\notag
\end{equation}
Then, we have 
\begin{equation}
    \probability{N \geq \nEps} \geq 1 - \chernoffLowerExponential, \notag
\end{equation}
where $\nEps$ is defined in \eqref{eqn::nEps}.
\end{theorem}

\begin{proof}
Applying \autoref{lem::deduceAA2}, \autoref{lem:sampling:non-uniformity-BCGN} and \autoref{tmp-2022-1-14-12} for scaled sampling matrices, \autoref{AA2} is satisfied with 
\begin{align*}
    & \sMax = \sqrt{d/l} \\
    & \deltaS = e^{- \frac{\epsilon^2 l}{2d\nu^2}}.
\end{align*}
Applying \autoref{thm2} and substituting the expression of $\sMax$ in \eqref{tmp-2022-1-13-9} gives the desired result.
\end{proof}

\begin{remark}
Similar to \autoref{alg:sketching_QR}, \autoref{alg:sketching_TR} matches the iteration complexity of the corresponding (full-space) trust region method; and the $l/d$ dependency can be eliminated by setting $l$ to be a constant fraction of $d$.
\end{remark}

\begin{remark}
Although \autoref{alg:sketching_QR} and \autoref{alg:sketching_TR} with a(any) of the above three random ensembles only require $l$ directional derivative evaluations of $f$ per iteration, instead of $d$ derivative evaluations required by the (full-space) methods, the iteration complexities are increased by a factor of $d/l$. Therefore, theoretically, \autoref{alg:sketching_QR} and \autoref{alg:sketching_TR} do not reduce the total number of gradient evaluations. However, the computational cost of the step $\sKHat$ is typically reduced from being proportional to $d^2$ to being proportional to $l^2$ (for example, if we are solving a non-linear least squares problem and choose $B_k = J_k^T J_k$) thus we still gain in having a smaller computational complexity. In practice, our theoretical analysis may not be tight and therefore we could gain in having both a smaller gradient evaluation complexity and a smaller computational complexity. See numerical illustrations in Section \ref{BCGN:sec5}.
\reply{In addition, by reducing the number of variables from $d$ (which can be arbitrarily large) to $l$ (which can be set as a constant, see \autoref{tab:alg:sketching}), \autoref{alg:sketching_QR} and \autoref{alg:sketching_TR} reduce the memory requirement of the computation of the sub-problem at each iteration, comparing to the corresponding full-space methods.  }
\end{remark}

    \section{Randomised Subspace Gauss-Newton (R-SGN) for non-linear least squares}
    \label{BCGN:sec5}
    We consider the nonlinear least-squares problem (defined in \eqref{NLS})
\begin{align}
\min_{x \in \R^d} f(x) = \frac{1}{2}\sum_{i=1}^n \norm{ r_i(x) }_2^2=\frac{1}{2}\norm{r(x)}_2^2  \notag
\end{align}
where $r = (r_1, \dots, r_n): \R^d \to \R^n$ is a smooth vector of nonlinear (possibly nonconvex) residual functions. We define the Jacobian (matrix of first order derivatives) as
\begin{align*}
J(x) = \left(\pdv{r_i(x)}{x_j}\right)_{ij} \in \R^{n \times d}
\end{align*}
and can then compactly write the gradient as $\grad f(x) = J(x)^Tr(x)$.
It can be shown e.g. in \cite{Nocedal:2006uv}, that the gradient and Hessian of $f(x)$ is then given by
\begin{align*}
\grad f(x) &= J(x)^Tr(x), \\
\grad^2 f(x) &= J(x)^TJ(x) + \sum_{i=1}^n r_i(x) \grad^2 r_i(x)  
\end{align*}
The classical Gauss-Newton (GN) algorithm applies Newton's method to minimising $f$ with only the first-order $J(x)^TJ(x)$ term in the Hessian, dropping the second-order terms involving the Hessians of the residuals $r_i$. 
This is equivalent to linearising the residuals in \eqref{NLS} so that 
\begin{align*}
r(x+s) \approx r(x) + J(x) s,
\end{align*}
and minimising the resulting model in the step $s \in \R^d$. 
Thus, at every iterate $x_k$, Gauss-Newton approximately minimises the following convex quadratic local model
\begin{align*}
f(x_k) + \dotp{ J(x_k)^Tr(x_k)}{ s } + \frac{1}{2} \dotp{ s }{ J(x_k)^T J(x_k) s }  
\end{align*}
over $s \in \R^d$. In our approach, which we call Random Subspace Gauss-Newton (R-SGN), we reduce the dimensionality of this model by minimising in an $l$-dimensional randomised subspace $\mathcal{L} \subset \R^d$, with $l\ll d$, by approximately minimising the following reduced model 
\begin{align}
f(x_k) + \dotp{ J_{\mathcal{S}}(x_k)^Tr(x_k) }{ \sHat } + \frac{1}{2} \dotp{ \sHat }{ J_{\mathcal{S}}(x_k)^T J_{\mathcal{S}}(x_k) \sHat } 
\label{m_k_definition_TR}
\end{align}
over $\sHat \in \R^l$, where  $J_{\mathcal{S}}(x_k) = J(x_k)S_k^T \in \R^{n\times l}$ denotes the reduced Jacobian for $S_k \in \R^{l\times d}$ being a randomly generated sketching matrix. Note that with $B_k = J_k^T J_k$, \autoref{alg:sketching} framework can be applied directly to 
this subspace Gauss-Newton method; guaranteeing its convergence 
under assumptions of model minimisation and sketching matrices.
Compared to the classical Gauss-Newton model, in addition to the speed-up gained due to the model dimension being reduced from $d$ to $l$, this reduced model also offers the computational advantage that it only needs to evaluate $l$ Jacobian actions, giving $J_{\mathcal{S}}(x_k)$, instead of the full Jacobian matrix $J(x_k)$. 

In its simplest form, when $S_k$ is a scaled sampling matrix, $J_{\mathcal{S}}$ can be thought of as a random subselection of columns of the full Jacobian $J$, which leads to variants of our framework that are Block-Coordinate Gauss-Newton (BC-GN) methods. In this case, for example, if the Jacobian were being calculated by finite-differences of the residual $r$, only a small number of evaluations of $r$ along coordinate directions would be needed;  such a BC-GN variant has already been used for parameter estimation in climate modelling \cite{tett2017calibrating}. Note that theoretically, the convergence of BC-GN method requires an upper bound on $\frac{\normInf{\gradFK}}{\normTwo{\gradFK}}$ for all $k\in \N$ (for more details, see the discussion of sampling matrices on page \pageref{sampling_mat_paragraph}) and \autoref{thm:complexity-TR-sampling}.

More generally, $S_k$ can be generated from any matrix distribution that satisfies \autoref{AA6}, \autoref{AA7}, e.g/ scaled Gaussian matrices or $s$-hashing matrices. In our work jointly done with Jaroslav Fowkes \cite{zhen:icml_BCGN, BCGNPaper}, we showcase the numerical performance of R-SGN methods with different sketching matrices. In this thesis we provide some numerical illustrations; the code used to produce these illustrations is written by Jaroslav Fowkes, and the results below appear in \cite{zhen:icml_BCGN, BCGNPaper}.
    \paragraph{Large-scale CUTEst problems}
We look at the behaviour of R-SGN on three large-scale ($d\approx5,000$ to $10,000$) non-linear least squares problems from the CUTEst collection \cite{gould2015cutest}. The three problems are given in \autoref{tab:cutestind}. we run R-SGN five times (and take the average performance) on each problem until we achieve a $10^{-1}$ decrease in the objective, or failing that, for a maximum of 20 iterations. Furthermore, we plot the objective decrease against cumulative Jacobian action evaluations \footnote{\reply{The total number of evaluations of Jacobian-vector product used by the algorithm.}} for each random run with subspace-sizes of 1\%, 5\%, 10\%, 50\%, 100\% of the full-space-sizes. 

\begin{table}[!h]
\centering
\begin{tabular}{lrrlrrlrr}  
\toprule
Name & $d$ & $n$ & Name & $d$ & $n$ & Name & $d$ & $n$ \\
\midrule
ARTIF & 5,000 & 5,000 & BRATU2D & 4,900 & 4,900 & OSCIGRNE & 10,000 & 10,000 \\
\bottomrule
\end{tabular}
\caption{The 3 large-scale CUTEst test problems.}\label{tab:cutestind}
\end{table}

Let us start by looking at the performance of R-SGN with scaled sampling matrices $S_k$. In \autoref{fig:large_coordinate}, we see that the objective decrease against cumulative Jacobian action evaluations for ARTIF, BRATU2D and OSCIGRNE. On ARTIF, we see that while R-SGN exhibits comparable performance, Gauss-Newton is clearly superior from a Jacobian action budget perspective. On BRATU2D we see that R-SGN really struggles to achieve any meaningful decrease, as does Gauss-Newton initially but then switches to a quadratic regime and quickly converges. On OSCIGRNE, we see that R-SGN with subspace sizes of $0.05d,0.1d,0.5d$ sometimes performs very well (outperforming Gauss-Newton) but sometimes struggles, and on averages Gauss-Newton performs better.

Next, we compare the performance of R-SGN with scaled Gaussian sketching matrices $S_k$. In \autoref{fig:large_gaussian}, we can see the objective decrease against cumulative Jacobian action evaluations for ARTIF, BRATU2D and OSCIGRNE. On ARTIF, we see that R-SGN with a subspace size of $0.5d$ outperforms Gauss-Newton initially before stagnating. On BRATU2D we once again see that R-SGN struggles to achieve any meaningful decrease, as does Gauss-Newton initially but then switches to a quadratic regime and quickly converges. However, on OSCIGRNE we see that R-SGN with a subspace size of $0.5d$ consistently outperforms Gauss-Newton.

Finally, we compare the performance of R-SGN with 3-hashing sketching matrices $S_k$. In \autoref{fig:large_hashing}, we can see the objective decrease against cumulative Jacobian action evaluations for ARTIF, BRATU2D and OSCIGRNE. On ARTIF, we again see that R-SGN with a subspace size of $0.5d$ outperforms Gauss-Newton initially before stagnating. On BRATU2D we once again see that R-SGN struggles to achieve any meaningful decrease, as does Gauss-Newton initially but then switches to a quadratic regime and quickly converges. However, on OSCIGRNE we again see that R-SGN with a subspace size of $0.5d$ consistently outperforms Gauss-Newton. 

\begin{figure}[!h]
\begin{subfigure}{0.32\textwidth}
\includegraphics[width=\textwidth]{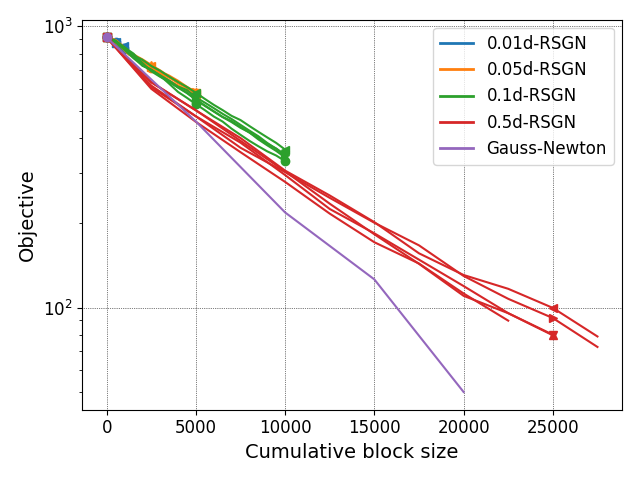}
\end{subfigure}
\begin{subfigure}{0.32\textwidth}
\includegraphics[width=\textwidth]{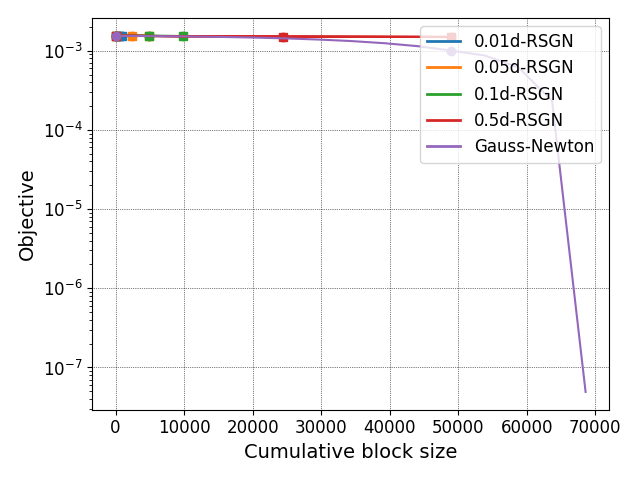}
\end{subfigure}
\begin{subfigure}{0.32\textwidth}
\includegraphics[width=\textwidth]{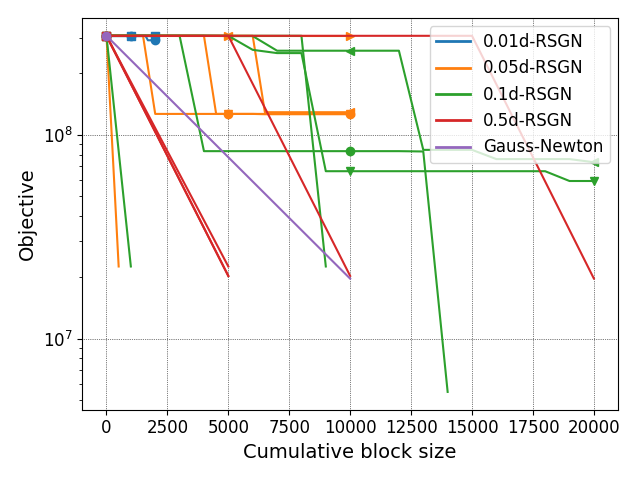}
\end{subfigure}
\caption{ARTIF (left), BRATU2D (middle) and OSCIGRNE (right) objective value against cumulative Jacobian action size for R-SGN with coordinate sampling.}
\label{fig:large_coordinate}
\end{figure}

\begin{figure}[!h]
\begin{subfigure}{0.32\textwidth}
\includegraphics[width=\textwidth]{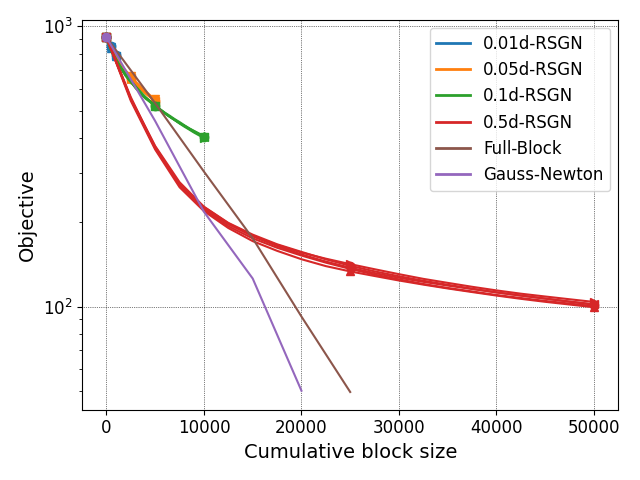}
\end{subfigure}
\begin{subfigure}{0.32\textwidth}
\includegraphics[width=\textwidth]{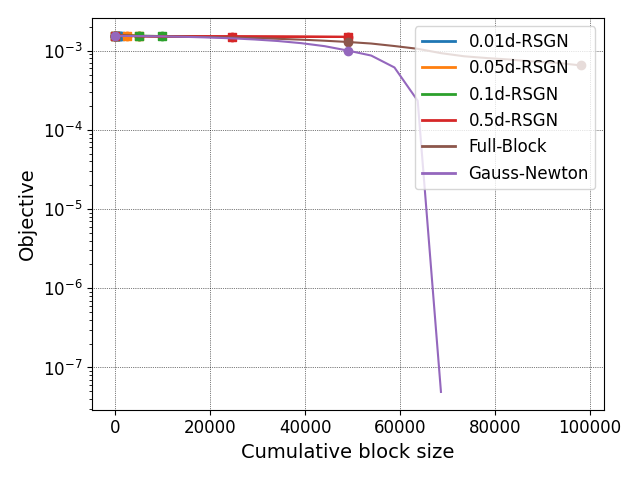}
\end{subfigure}
\begin{subfigure}{0.32\textwidth}
\includegraphics[width=\textwidth]{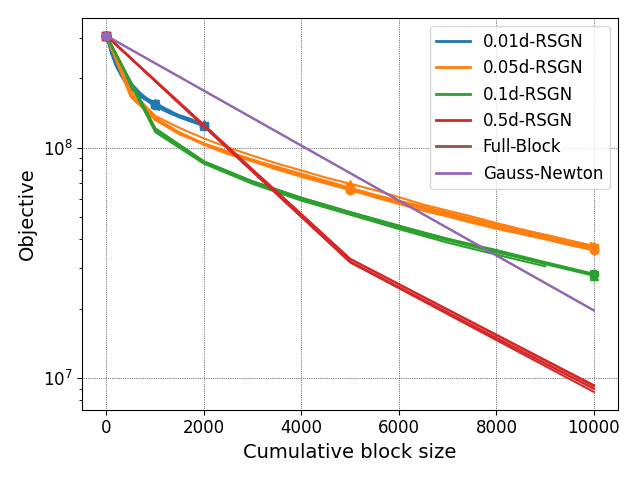}
\end{subfigure}
\caption{ARTIF (left), BRATU2D (middle) and OSCIGRNE (right) objective value against cumulative Jacobian action size for R-SGN with Gaussian sketching.}
\label{fig:large_gaussian}
\end{figure}

\begin{figure}[!h]
\begin{subfigure}{0.32\textwidth}
\includegraphics[width=\textwidth]{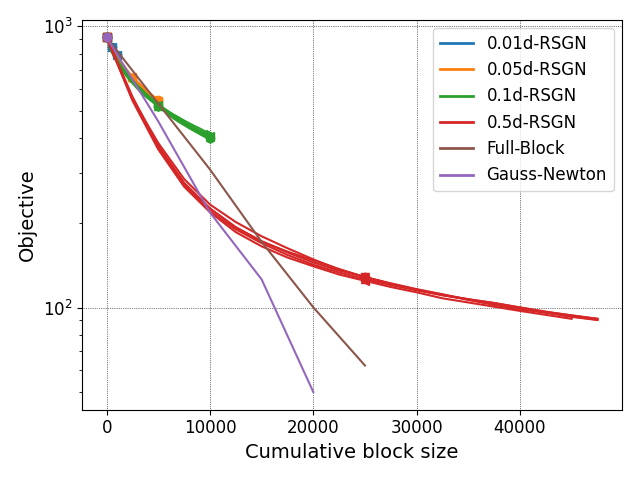}
\end{subfigure}
\begin{subfigure}{0.32\textwidth}
\includegraphics[width=\textwidth]{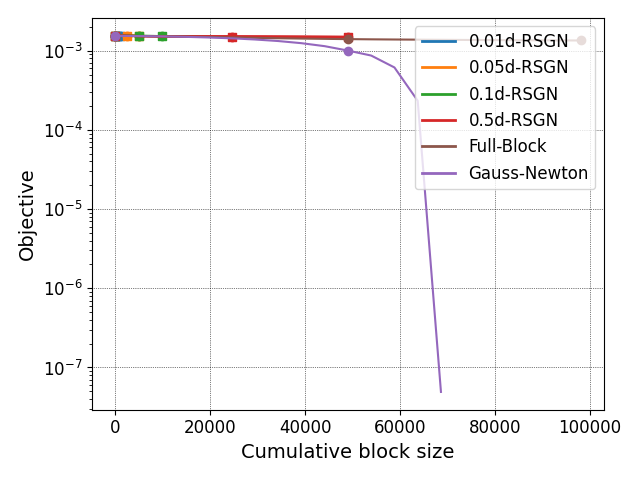}
\end{subfigure}
\begin{subfigure}{0.32\textwidth}
\includegraphics[width=\textwidth]{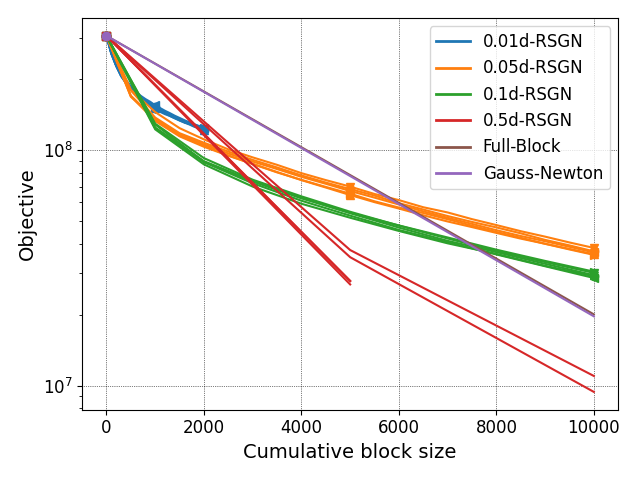}
\end{subfigure}
\caption{ARTIF (left), BRATU2D (middle) and OSCIGRNE (right) objective value against cumulative Jacobian action size for R-SGN with $3$-hashing sketching.}
\label{fig:large_hashing}
\end{figure}

\paragraph{Large scale machine learning problems}
Here we only use scaled sampling sketching matrices.
We consider logistic regressions \footnote{Here in order to fit in the non-linear least squares framework, we square the logistic losses $r_i$ in the objective}, written in the form \eqref{NLS}, by letting
$r_i(x) = \ln(1 + \exp(-y_i a_i^T x))$,
where $a_i \in \R^d$ are the observations and $y_i \in \{-1,1\}$ are the class labels; we also include a quadratic regularization term $\lambda\|x\|_2^2$ by treating it as
an additional residual.

\begin{figure}[!t]
\begin{subfigure}{\columnwidth}
\includegraphics[width=\columnwidth]{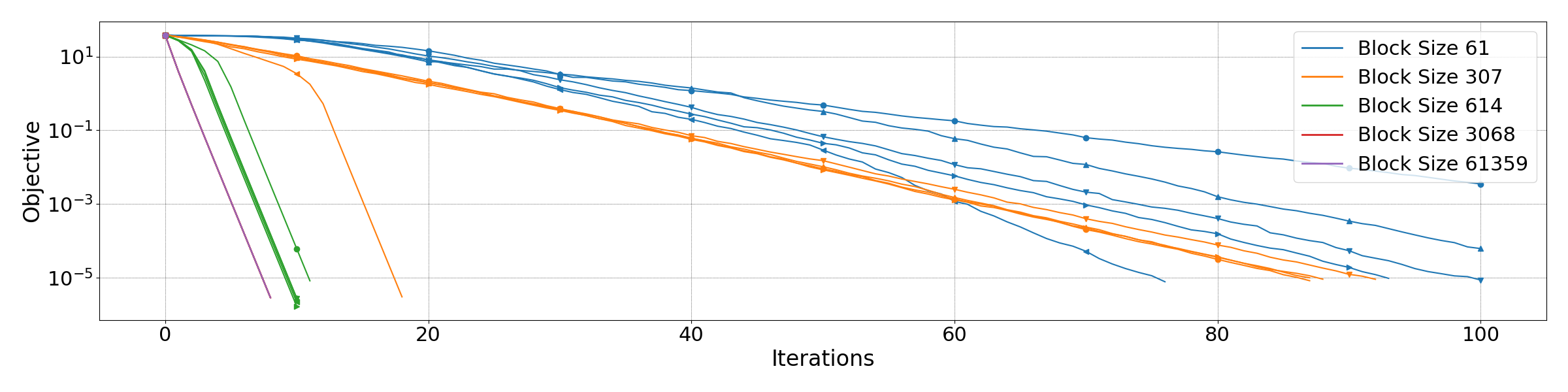}
\end{subfigure}
\begin{subfigure}{\columnwidth}
\includegraphics[width=\columnwidth]{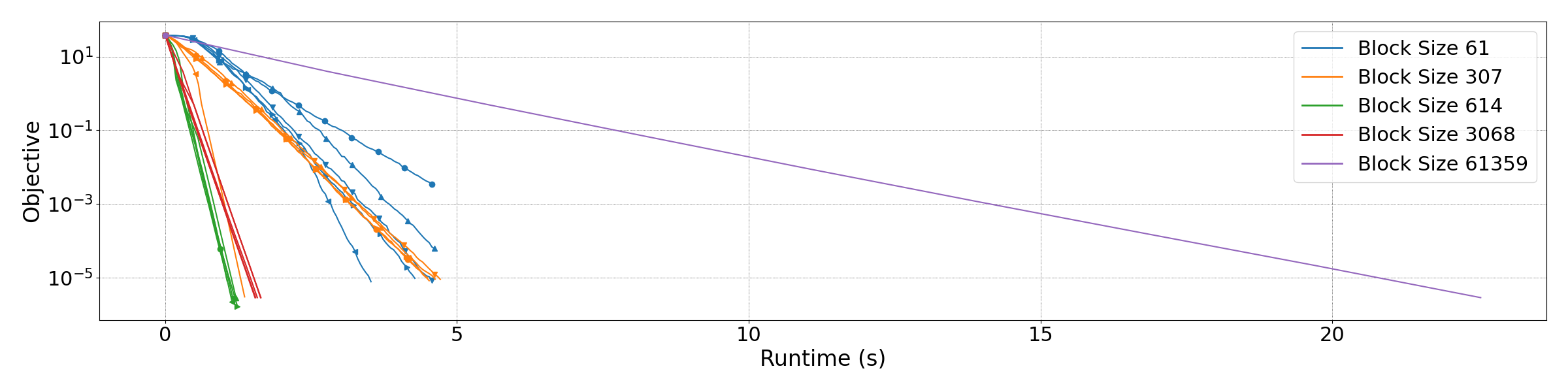}
\end{subfigure}
\caption{R-SGN on the \textsc{chemotherapy} dataset}\label{fig:chemo}
\end{figure}

\begin{figure}[!t]
\begin{subfigure}{\columnwidth}
\includegraphics[width=\columnwidth]{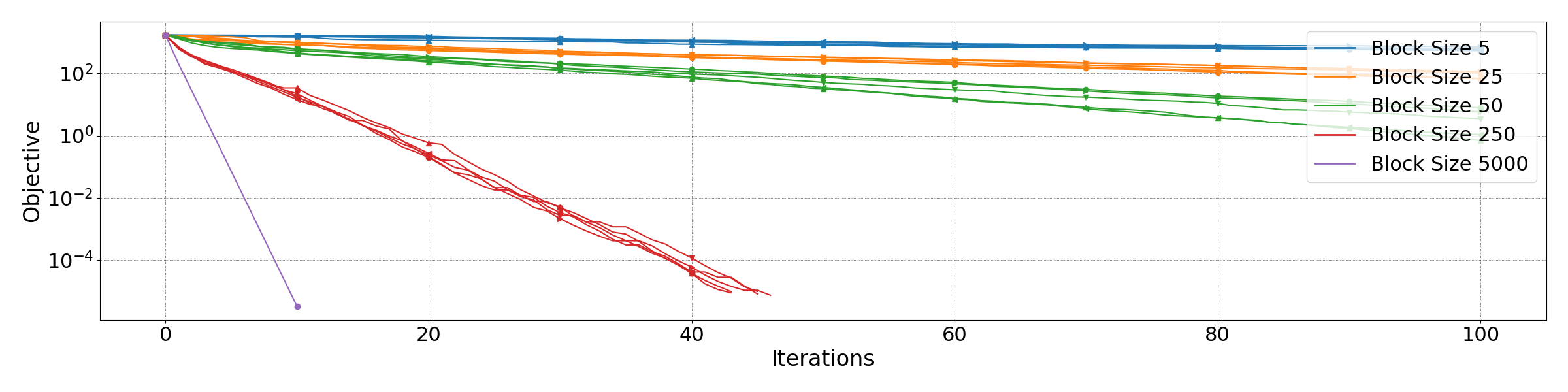}
\end{subfigure}
\begin{subfigure}{\columnwidth}
\includegraphics[width=\columnwidth]{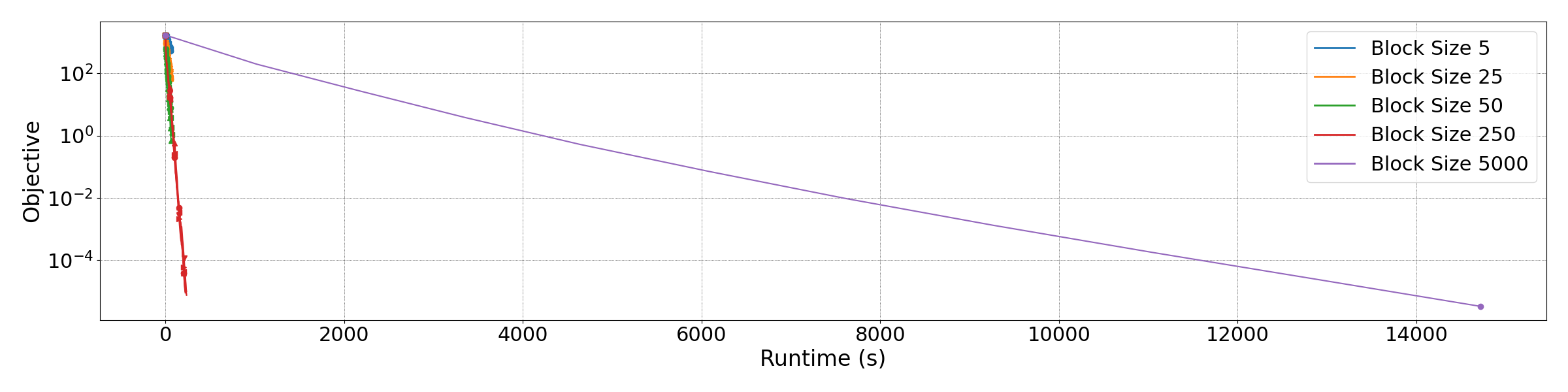}
\end{subfigure}
\caption{R-SGN on the \textsc{gisette} dataset}\label{fig:gisette}
\end{figure}

We test on the \textsc{chemotherapy} and \textsc{gisette} datasets from OpenML \cite{OpenML2013} for $100$ iterations with $\lambda=10^{-10}$, using subspace-sizes (or block-sizes, as here we are using the BC-GN variant by using the sampling sketching) of 0.1\%, 0.5\%, 1\%, 5\% and 100\% of the full-space-sizes for the $61,359$ dimensional \textsc{chemotherapy} dataset and the $5,000$ dimensional \textsc{gisette} dataset; in a similar testing setup to \cite{gower2019rsn}. We perform five runs of the algorithm for each block size starting at $x_0=0$ (and take the average performance). We terminate once the objective $f(x_k)$ goes below $10^{-5}$ and plot $f(x_k)$ against iterations and runtime in each Figure. On the \textsc{chemotherapy} dataset, we see from \autoref{fig:chemo} that we are able to get comparable performance to full Gauss-Newton ($d=61,359$ in purple) using only $1\%$ of the original block size ($l=614$ in green) at $1/20$th of the runtime.  For the \textsc{gisette} dataset, we see from \autoref{fig:gisette} that similarly, we are able to get good performance compared to GN ($d=5,000$ in purple) using $5\%$ of the original block size ($l=250$ in red) at $1/60$th of the runtime.



\renewcommand{\zK}{z_k} 
\chapter{Second order subspace methods for general objectives}
    \section{Introduction}
In this chapter we continue our investigation on subspace methods for the minimisation of general objectives. In the last chapter we saw that if the sketching matrix $S_k$ stays bounded and is sufficiently accurate to capture the gradient of the objective at the current iterate with positive probability, convergence of the subspace methods occurs at essentially the same rate as classical full-space first order methods.
It is known that if second order information of the objective function is available, cubic regularisation full-space methods achieve faster convergence rates for general non-convex objective \cite{CoraBook}. In this chapter, we first show that the same can be obtained with subspace methods. Namely, when the sketching matrix $S_k$ captures sufficiently accurate second order information, essentially the same faster rate of convergence can be achieved. 
We then show that this faster rate of convergence can be achieved also in the case of sparse second derivatives, without requiring the low rank/subspace embedding condition. 
Next, we show that a class of subspace methods converge to an approximate second order minimum in the subspace, in the sense that the subspace Hessian at the limit point is almost positive-semi-definite. 
Finally, we show that the second order subspace method with Gaussian sketching converges to an approximate second order minimum in the full space.

\section{R-ARC: random subspace adaptive cubic regularisation method}
First, we describe the random  subspace cubic regularisation algorithm (R-ARC). 

\begin{algorithm}[H]
\begin{description}

\item[Initialization] \ \\
 Choose a matrix distribution $\cal{S}$ of matrices $S\in \rLTimesD$. 
 Choose constants $\gamma_1\in (0,1)$, $\gamma_2 >1$, $\theta \in (0,1)$, $\kappaT, \kappaS \geq 0$
 and $\alpha_{\max}>0$ such that $
        \gamma_2 = \oneOverGammaOneC,
        $
    for some $c\in \N^+$.
 Initialize the algorithm by setting 
 $x_0 \in \R^d$, 
 $\alpha_0 = \alphaMax \gamma^p$
 for some $p\in \N^+$ and $k=0$.

\item[1. Compute a reduced model and a trial step] \ \\
 In Step 1 of \autoref{alg:generic}, draw a random matrix $S_k \in \R^{l \times d}$ from $\cal{S}$, and let
 \begin{align}
     \mKHat{\sHat} 
     & = \fK + \innerProduct{S_k\gradFK}{ \sHat} + 
     \frac{1}{2} \innerProduct{\sHat}{S_k\hK S_k^T\sHat}
     + \frac{1}{3\alphaK}\normTwo{S_k^T \sHat}^3 \notag \\
     & = \hat{q}_k(\hat{s}) + \frac{1}{3\alphaK}\normTwo{S_k^T \sHat}^3, \label{eqn::mKHatSpec_CB} 
 \end{align}
 where $\hat{q}_k(\hat{s})$ is the second order Taylor series of $f(x_k + S_k^T \sKHat)$ around $x_k$;
 
 Compute $\sKHat$ by approximately minimising \eqref{eqn::mKHatSpec_CB} such that
 \begin{align}
     \mKHat{\sKHat} \leq \mKHat{0} \label{tmp:CBGN:3}\\
     \normTwo{\grad \mKHat{\sKHat}} \leq 
     \kappa_T \normTwo{S_k^T \sKHat}^2 \label{tmp:CBGN:5} \\
     \grad^2 \mKHat{\sKHat} \succeq -\kappaS \normTwo{S_k^T \sKHat}, \label{hessMKGeq0}
 \end{align}
 where we may drop \eqref{hessMKGeq0} if only convergence to a first order critical point is desired. 
 
 Compute a trial step 
 \begin{align}
     s_k = w_k(\sHat_k) = S_k^T \sHat_k, \label{eqn::wKSpec_CB}
 \end{align}

\item[2. Check sufficient decrease]\ \\  
In Step 2 of \autoref{alg:generic}, check sufficient decrease as defined by the condition
\begin{equation}
    \fK - \fKPlusOne \geq \theta \squareBracket{\hat{q}_k(0) - 
    \hat{q}_k(\hat{s})
    }, \label{eq:cubic:sufficient_decrease}
\end{equation}

\item[3, Update the parameter $\alphaK$ and possibly take the trial step $\sK$]\ \\
If \eqref{eq:cubic:sufficient_decrease} holds, set $\xKPlusOne = \xK + \sK$ and $\alphaKPlusOne = \min \set{\alphaMax, \gammaTwo\alphaK}$ [successful iteration]. 

Otherwise set $\xKPlusOne = \xK$ and $\alphaKPlusOne = \gammaOne \alphaK$ [unsuccessful iteration]

Increase the iteration count by setting $k=k+1$ in both cases.

\caption{\bf{Random subspace cubic regularisation algorithm (R-ARC) }} \label{alg:CBGN} 
\end{description}
\end{algorithm}

Here note that \autoref{alg:CBGN} is a specific form of \autoref{alg:generic}. Therefore the convergence result in \autoref{thm2} can be applied, provided that the four assumptions of the theorem can be shown to hold here. In the remaining sections of this chapter, we give different definitions of the two key terms in the convergence result \autoref{thm2}: $\nEps$ and true iterations. These lead to different requirements for the matrix distribution $\cal{S}$, and iteration complexities to drive $\normTwo{\gradFK} < \epsilon$, $\lambdaMin{S_k \hessFK S_k^T} > -\epH $ and/or $\lambdaMin{\hessFK} > -\epH$

Compared to \autoref{alg:sketching}, \autoref{alg:CBGN} lets $B_k$ be the Hessian at the iterate $\hessFK$, although we only need it in the form of $S_k \hessFK S_k^T$ so that the full Hessian never needs to be computed. 
Furthermore, we let $\sKHat$, the reduced step, be computed by minimising a cubically regularised subspace model, corresponding to the classical approaches in \cite{Cartis:2009fq}, also see Section 1.4.2 on page \pageref{page:second:order:cubic:classical}.
As in the corresponding full-space method, the combination of the availability of second order information and the cubic regularisation term leads to improved iteration complexity for our subspace methods to drive $\gradFK < \epsilon$ and convergence to a second order critical point. 

\reply{
\begin{remark}
Two strategies for computing $\sKHat$ by minimising \eqref{eqn::mKHatSpec_CB} are given in \cite{Cartis:2009fq}, either requiring a factorisation of $S_k \hessFK S_k^T$ (in a Newton-like algorithm), or repeated matrix-vector products involving $S_k \hessFK S_k^T$ (in a Lanczos-based algorithm). Although we note that the iteration complexity, and the evaluation complexity of $f$ and its derivatives (which are the focuses of this chapter) are unaffected by the computation complexity of calculating $\sKHat$, \autoref{alg:CBGN} significantly reduces the computation of this inner problem by reducing the dimension of the Hessian from $d \times d$ to $l \times l$ comparing to the full-space counterpart. (In addition to reducing the gradient and Hessian evaluation complexity per iteration.) 
\end{remark}
}

    \section{Fast convergence rate assuming subspace embedding of the Hessian matrix} \label{CBGN:sec2}
    Our first convergence result shows \autoref{alg:CBGN} drives $\normTwo{\gradFK}$ below $
\epsilon$ in $\mathO{\epsilon^{-3/2}}$ iterations, given that $\cal{S}$ has an embedding property (a necessary condition of which is that $\cal{S}$ is an oblivious subspace embedding for matrices of rank $r+1$, where $r$ is the maximum rank of $\hessFK$ across all iterations).

\paragraph{Define $\nEps$ and true iterations based on (one-sided) subspace embedding}
In order to prove convergence of \autoref{alg:CBGN}, we show that \autoref{AA2}, \autoref{AA3}, \autoref{AA4}, \autoref{AA5} that are needed for \autoref{thm2} to hold are satisfied. To this end, we first define $\nEps$, the criterion for convergence, as $\min \{k: \normTwo{\grad f(x_{k+1})} \leq \epsilon \} $.
Then, we define the true iterations based on achieving an embedding of the Hessian and the gradient.  

\begin{definition} \label{def:true:CBGN}
Let $\epSTwo \in (0,1)$, $\sMax >0$. Iteration $k$ is ($\epSTwo, \sMax)$-true if
\begin{align}
    &\normTwo{S_k \MK \zK}^2 \geq (1-\epSTwo) \normTwo{\MK \zK}^2, \texteq{for all $z_k \in \R^{d+1}$} \label{tmp:CBGN:1}\\
    &\normTwo{S_k} \leq \sMax, \label{tmp:CBGN:10}
\end{align}

where $\MK = \squareBracket{\gradFK \quad \hessFK} \in \R^{d \times (d+1)}$. Note that all vectors are column vectors.
\end{definition}

\begin{remark}
\eqref{tmp:CBGN:1} implies 
\begin{equation}
    \normTwo{S_k \gradFK}^2 \geq (1-\epSTwo) \normTwo{\gradFK}^2, \label{tmp:CBGN:2}
\end{equation}

by taking $\zK = [1, 0, \dots, 0]^T$. Thus \autoref{def:true:CBGN} is a stronger condition than \autoref{def::true_iters}, the definition of true iterations for our convergence result of first order subspace methods.
\end{remark}

\subsection{Auxiliary results}

In this subsection we provide some useful results needed to prove our assumptions in \autoref{thm2}.


\begin{lemma} \label{succStepDecrease}
In \autoref{alg:CBGN}, if iteration $k$ is successful, then
\[f(x_{k+1}) \leq \fun{f}{x_k} -\frac{\theta}{3\alphaK} \normTwo{S_k^T \sKHat}^3 \].
\end{lemma}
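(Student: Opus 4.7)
The plan is to chain together the sufficient decrease condition \eqref{eq:cubic:sufficient_decrease} with the model-reduction bound \eqref{tmp:CBGN:3} to obtain a lower bound on $\hat{q}_k(0) - \hat{q}_k(\hat{s}_k)$ in terms of $\|S_k^T\hat{s}_k\|_2^3$, and then substitute back in. No hard estimates are needed — this is essentially a rearrangement exercise.

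First I would observe the two elementary identities $\hat{m}_k(0) = f(x_k) = \hat{q}_k(0)$, which follow because the model \eqref{eqn::mKHatSpec_CB} is a Taylor expansion around $x_k$ and the cubic regulariser vanishes at $\hat{s}=0$. Next I would apply the model-decrease condition \eqref{tmp:CBGN:3}, namely $\hat{m}_k(\hat{s}_k) \leq \hat{m}_k(0) = f(x_k)$, and expand the left-hand side using \eqref{eqn::mKHatSpec_CB} to get
\[
\hat{q}_k(\hat{s}_k) + \frac{1}{3\alpha_k}\|S_k^T \hat{s}_k\|_2^3 \leq f(x_k),
\]
which rearranges to
\[
\hat{q}_k(0) - \hat{q}_k(\hat{s}_k) \geq \frac{1}{3\alpha_k}\|S_k^T \hat{s}_k\|_2^3.
\]

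Finally, since iteration $k$ is successful, the sufficient-decrease inequality \eqref{eq:cubic:sufficient_decrease} gives
\[
f(x_k) - f(x_{k+1}) \geq \theta\bracket{\hat{q}_k(0) - \hat{q}_k(\hat{s}_k)} \geq \frac{\theta}{3\alpha_k}\|S_k^T \hat{s}_k\|_2^3,
\]
which rearranges to the claimed bound. I do not anticipate any real obstacle here; the only thing to be careful about is ensuring the cubic term in \eqref{eqn::mKHatSpec_CB} is correctly separated from $\hat{q}_k$ so that the substitution into \eqref{eq:cubic:sufficient_decrease} (which is stated in terms of $\hat{q}_k$, not $\hat{m}_k$) is valid — this is exactly why the lower bound on $\hat{q}_k(0) - \hat{q}_k(\hat{s}_k)$ matters rather than a bound on $\hat{m}_k(0) - \hat{m}_k(\hat{s}_k)$.
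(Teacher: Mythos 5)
Your proof is correct and follows essentially the same route as the paper's: both arguments combine the model-decrease condition \eqref{tmp:CBGN:3} with the sufficient-decrease test \eqref{eq:cubic:sufficient_decrease} via the splitting $\mKHat{\sHat} = \qKHat{\sHat} + \frac{1}{3\alphaK}\normTwo{S_k^T\sHat}^3$ and $\qKHat{0}=\mKHat{0}=f(x_k)$, the only difference being that you first convert \eqref{tmp:CBGN:3} into the lower bound $\qKHat{0}-\qKHat{\sKHat}\geq \frac{1}{3\alphaK}\normTwo{S_k^T\sKHat}^3$ while the paper expands \eqref{eq:cubic:sufficient_decrease} first and then drops the nonnegative term $\theta\squareBracket{\mKHat{0}-\mKHat{\sKHat}}$. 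No gap.
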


\begin{proof}
From the definition of successful iterations and \eqref{eq:cubic:sufficient_decrease}
\begin{align}
    f(x_{k+1}) 
    &= f(x_k + s_k) \notag  \\
    & \leq \fK - \theta \squareBracket{\mKHat{0} - \mKHat{\sKHat}} - \frac{\theta}{3\alphaK} \normTwo{S_k^T \sKHat}^3 \notag \\
    & \leq \fK - \frac{\theta}{3\alphaK}\normTwo{S_k^T \sKHat}^3, \label{tmp:CBGN:4}
\end{align}
where in the last inequality, we used \eqref{tmp:CBGN:3}.
\end{proof}

The gradient of the model has the expression
\begin{equation}
    \grad \mKHat{\sKHat} = S_k \gradFK + S_k \hessFK S_k^T \sKHat + 
    \frac{1}{\alphaK} S_k S_k^T \sKHat \normTwo{\SK^T \sKHat}. 
    \label{mkGradCubic}
\end{equation}

The following lemma bounds the size of the step at true iterations.
\begin{lemma} \label{stepSizeSubEmbed}
Assume $f$ is twice continuosly differentiable with $\LH$-Lipschitz Hessian $\grad^2 f$ 
and $k < \nEps$.
Suppose that iteration $k$ is ($\epSTwo, \sMax)$-true. We have
\begin{equation}
\NsKTSKHat^2 \geq \frac{\epsilon}{2}
\min\set{\frac{2}{\LH}, 
\bracket{\frac{1}{\alphaK}\sMax + \kappaT}^{-1}
\sqrt{1 - \epSTwo}} \label{Lemma5.2.2_eqn}
\end{equation}

\end{lemma}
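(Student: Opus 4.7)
The plan is to combine four standard ingredients: (i) the termination criterion \eqref{tmp:CBGN:5}, which bounds $\|\nabla \hat{m}_k(\hat{s}_k)\|$ by $\kappa_T \|S_k^T \hat{s}_k\|^2$; (ii) the closed-form formula \eqref{mkGradCubic} for $\nabla \hat{m}_k(\hat{s}_k)$; (iii) the one-sided embedding \eqref{tmp:CBGN:1}, applied to a carefully chosen vector in $\R^{d+1}$; and (iv) the Lipschitz-Hessian Taylor remainder $\|\nabla f(x_k + s_k) - \nabla f(x_k) - \nabla^2 f(x_k) s_k\| \leq \tfrac{L_H}{2}\|s_k\|^2$. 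The goal is to produce a chain of inequalities in which $\epsilon$ (coming from $k < N_\epsilon$) is bounded above by a constant times $\|S_k^T \hat{s}_k\|^2$, so that \eqref{Lemma5.2.2_eqn} follows by rearrangement.

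Concretely, I would first use \eqref{mkGradCubic} to isolate $S_k\nabla f(x_k) + S_k\nabla^2 f(x_k) S_k^T \hat{s}_k$, apply the triangle inequality, and invoke \eqref{tmp:CBGN:5} together with the spectral bound \eqref{tmp:CBGN:10} (through $\|S_k S_k^T \hat{s}_k\| \leq S_{max}\|S_k^T \hat{s}_k\|$) to obtain
\begin{equation*}
\|S_k \nabla f(x_k) + S_k \nabla^2 f(x_k) S_k^T \hat{s}_k\| \leq \left(\kappa_T + \frac{S_{max}}{\alpha_k}\right)\|S_k^T \hat{s}_k\|^2.
\end{equation*}
The key observation is that, since $s_k = S_k^T \hat{s}_k$ and $M_k = [\nabla f(x_k)\;\; \nabla^2 f(x_k)]$, the left-hand side equals $\|S_k M_k z_k\|$ with the augmented vector $z_k := (1, s_k^T)^T \in \R^{d+1}$. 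Applying \eqref{tmp:CBGN:1} at this $z_k$ gives
\begin{equation*}
\sqrt{1-\epsilon_S^{(2)}}\,\|\nabla f(x_k) + \nabla^2 f(x_k) s_k\| \leq \left(\kappa_T + \frac{S_{max}}{\alpha_k}\right)\|S_k^T \hat{s}_k\|^2,
\end{equation*}
thus converting a sketched quantity into the un-sketched one on which the Lipschitz-Hessian estimate acts.

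I would finish by combining the Taylor remainder with the above via the triangle inequality and using $\epsilon < \|\nabla f(x_{k+1})\| = \|\nabla f(x_k + s_k)\|$, which holds for successful iterations under $k < N_\epsilon$, to obtain
\begin{equation*}
\epsilon < \left[\frac{1}{\sqrt{1-\epsilon_S^{(2)}}}\left(\kappa_T + \frac{S_{max}}{\alpha_k}\right) + \frac{L_H}{2}\right]\|S_k^T \hat{s}_k\|^2,
\end{equation*}
from which \eqref{Lemma5.2.2_eqn} follows by solving for $\|S_k^T \hat{s}_k\|^2$ and using $1/(a+b) \geq \tfrac{1}{2}\min\{1/a,\,1/b\}$. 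The main obstacle—and the reason the augmented matrix $M_k$ appears in \autoref{def:true:CBGN}—is the embedding step: the termination criterion only controls sketched quantities, but the Taylor remainder involves un-sketched ones, and the one-sided embedding on the augmented column space is exactly the bridge. The unsuccessful-iteration case, where $x_{k+1} = x_k$ so $k < N_\epsilon$ only yields $\|\nabla f(x_k)\| > \epsilon$, can be handled by the analogous chain with the embedding applied at $z_k = (1, 0, \dots, 0)^T$ to lower-bound $\|S_k \nabla f(x_k)\|$, and then absorbing the $S_k \nabla^2 f(x_k) S_k^T \hat{s}_k$ term on the right via the same triangle-inequality manipulation.
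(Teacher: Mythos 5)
Your proposal is correct and follows the paper's own proof essentially step for step: the same upper bound $\bigl(\kappa_T + S_{max}/\alpha_k\bigr)\|S_k^T\hat{s}_k\|^2$ obtained from \eqref{mkGradCubic}, \eqref{tmp:CBGN:5} and \eqref{tmp:CBGN:10}, the same application of \eqref{tmp:CBGN:1} at the augmented vector $z_k=[1,(S_k^T\hat{s}_k)^T]^T$, and the same Lipschitz--Hessian Taylor remainder; the only difference is cosmetic, since the paper case-splits on whether $L_H\|s_k\|^2>\epsilon$ while you fold both terms into one inequality and use $1/(a+b)\geq \tfrac{1}{2}\min\{1/a,1/b\}$, arriving at identical constants in \eqref{Lemma5.2.2_eqn}. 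One caution on your closing remark: the paper does not treat unsuccessful iterations separately (its proof simply identifies $\nabla f(x_{k+1})$ with $\nabla f(x_k+s_k)$, i.e.\ reads the criterion at the trial point), and your sketched alternative for that case --- embedding at $z_k=e_1$ and ``absorbing'' the term $S_k\nabla^2 f(x_k)S_k^T\hat{s}_k$ --- would not recover the stated bound, because that term is only linear in $\|S_k^T\hat{s}_k\|$ and the lemma assumes no bound on $\|\nabla^2 f(x_k)\|$, so the main chain you wrote is the right (and the paper's) argument.
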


\begin{proof}
\eqref{mkGradCubic} and the triangle inequality give
\begin{align}
    \normTwo{\SK \gradFK + \SK \hessFK \SK^T \sKHat}  \notag
    &= \normTwo{\frac{1}{\alphaK}\SK \SK^T \sKHat \normTwo{\SK^T \sKHat} - \grad\mKHat{\sKHat}} \notag \\
    & \leq \frac{1}{\alphaK}\normTwo{\SK}\normTwo{\SK^T\sKHat}^2 + \normTwo{\grad \mKHat{\sKHat}}\notag \\
    & \leq \bracket{\frac{1}{\alphaK}\normTwo{\SK}+\kappaT} \normTwo{\SK^T \sKHat^2}^2 \texteq{by \eqref{tmp:CBGN:5}} \\
    & \leq \bracket{\frac{1}{\alphaK}\sMax+\kappaT} \normTwo{\SK^T \sKHat^2}^2,
    \label{tmp:CBGN:9}
\end{align}
where we used \eqref{tmp:CBGN:10}.
On the other hand, we have that
\begin{align}
    & \normTwo{\SK \gradFK + \SK \hessFK \SK^T \sKHat} \notag \\
    & = \normTwo{\SK\MK \squareBracket{1, 
    (\SK^T \sKHat)^T}^T} \notag \\
    & \geq \sqrt{\oneMinusEpSTwo} 
    \normTwo{\gradFK + \hessFK s_k} 
    \texteq{by \eqref{tmp:CBGN:1} with $z_k
    = \squareBracket{1, (\SK^T \sKHat)^T}^T$} 
    \notag\\
    & = \sqrt{\oneMinusEpSTwo} \normTwo{ \grad f(x_{k+1}) - \squareBracket{\grad f(x_{k+1}) - \gradFK - \hessFK s_k}} \\
    & \geq \sqrt{\oneMinusEpSTwo} 
        \left| 
            \normTwo{ \grad f(x_{k+1})} - 
            \normTwo{\squareBracket{\grad f(x_{k+1}) -     \gradFK - \hessFK s_k}} 
        \right|
    \label{tmp:CBGN:8}
\end{align}
Note that by Taylor's Theorem, because $f$ is twice continuously differentiable with $\LH$-Lipschitz $\grad^2 f$, we have that $\grad f (x_k + s_k) = \gradFK + \int_0^1 \grad^2 f(x_k + ts_k) s_k dt$. 
Therefore, we have 
\begin{align}
    \normTwo{\grad f(x_{k+1}) - \gradFK - \hessFK \sK} & 
    = \normTwo{\int_0^1 \squareBracket{\grad^2 f(x_k + ts_k) - \hessFK}s_k dt} \\
    & \leq \int_0^1 \normTwo{s_k} \normTwo{\grad^2 f(x_k + ts_k) - \hessFK} dt \\
    & \leq \normTwo{s_k} \int_0^1 \LH t \normTwo{s_k} dt \\
    & = \frac{1}{2}\LH\normTwo{s_k}^2
\end{align}
by Lipschitz continuity of $\grad^2 f$.
Next we discuss two cases,
\begin{enumerate}
    \item If $\LH \normTwo{\sK}^2 > \epsilon$, 
    then we have the desired result in \eqref{Lemma5.2.2_eqn}.
     
    \item If ${\LH} \normTwo{\sK}^2 \leq \epsilon$, then \eqref{tmp:CBGN:8}, 
    and the fact that $\normTwo{\grad f(x_{k+1})} \geq \epsilon$ 
    by $k < \nEps$, imply that 
        \[
            \normTwo{\SK\gradFK + \SK \hessFK \SK^T \sKHat} 
            \geq \sqrt{\oneMinusEpSTwo}\frac{\epsilon}{2}.
        \]
    Then \eqref{tmp:CBGN:9} implies
        \begin{equation}
            \normTwo{\SK^T \sK}^2 \geq \bracket{\frac{1}{\alphaK}\sMax + \kappaT}^{-1} 
            \sqrt{1 - \epSTwo}\frac{\epsilon}{2}.
            \notag
        \end{equation}
    This again gives the desired result.
\end{enumerate}

\end{proof}

\subsection{Satisfying the assumptions of \autoref{thm2}}
Here we only address the case where $\cal{S}$ is the distribution of scaled Gaussian matrices. But $\cal{S}$ could also be the distribution of scaled sampling matrices, $s$-hashing matrices, SRHT matrices and HRHT matrices because those distributions also satisfy similar properties detailed below, namely, having a bounded two-norm with high probability (\autoref{lem:GaussSMax:CubicSubspace}), and having a one-sided subspace embedding property (\autoref{lem:Gauss_embedding}).

Concerning scaled Gaussian matrices, we have the following results.
\begin{lemma}[\autoref{Lem:GaussSMax}] \label{lem:GaussSMax:CubicSubspace}
Let $S \in \R^{l\times d}$ be a scaled Gaussian matrix (\autoref{def:Gaussian}). Then for any $\deltaSTwo > 0$, 
$S$ satisfies \eqref{tmp:CBGN:10} with probability $1-\deltaSTwo$ and 

\begin{equation}
    \sMax = 1 + \sqrt{\frac{d}{l}} + \sqrt{\frac{2\logOneOverDeltaSTwo}{l}}. \notag
\end{equation}

\end{lemma}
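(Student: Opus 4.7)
The result is the standard concentration bound for the operator norm of a scaled Gaussian matrix, and in fact the statement explicitly points back to an earlier lemma (\autoref{Lem:GaussSMax}) that presumably established the same bound in a different context. So the plan is essentially to cite/replay that argument rather than reprove from scratch.

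Concretely, I would first recall the definition of a scaled Gaussian matrix: $S = \tfrac{1}{\sqrt{l}}\, G$, where $G\in\R^{l\times d}$ has i.i.d.\ $\cal{N}(0,1)$ entries. Since $\|S\|_2 = \tfrac{1}{\sqrt l}\|G\|_2 = \tfrac{1}{\sqrt l}\sigma_{\max}(G)$, bounding $\|S\|_2$ reduces to controlling the largest singular value of $G$. For this I would invoke the classical Davidson--Szarek (Gordon) bound together with Gaussian concentration of the singular value as a $1$-Lipschitz function of the entries, which yields
\[
\probability{\sigma_{\max}(G) \;\leq\; \sqrt{l} + \sqrt{d} + t} \;\geq\; 1 - e^{-t^2/2}
\quad\text{for all } t \geq 0.
\]
This is exactly the content of \autoref{Lem:GaussSMax} as referenced, so the plan is simply to apply that lemma to the unscaled $G$.

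Next I would choose $t$ to match the prescribed failure probability: setting $e^{-t^2/2} = \deltaSTwo$ gives $t = \sqrt{2\logOneOverDeltaSTwo}$. Dividing through by $\sqrt{l}$ yields
\[
\|S\|_2 \;\leq\; \frac{\sqrt l + \sqrt d + \sqrt{2\logOneOverDeltaSTwo}}{\sqrt l} \;=\; 1 + \sqrt{\frac{d}{l}} + \sqrt{\frac{2\logOneOverDeltaSTwo}{l}}
\]
with probability at least $1-\deltaSTwo$, which is precisely the claimed $\sMax$ and hence verifies \eqref{tmp:CBGN:10}.

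There is no real obstacle here: everything reduces to a single invocation of the Davidson--Szarek/Gordon bound followed by the obvious scaling, and the constants line up to give exactly the stated expression. The only thing to be mindful of is using the one-sided (upper) tail bound with constant $1$ in front of $e^{-t^2/2}$ rather than the two-sided version with constant $2$; since we only need to control $\sigma_{\max}$ and not $\sigma_{\min}$ for this lemma, the sharper one-sided form is available and produces the stated $\sqrt{2\logOneOverDeltaSTwo}$ factor without an extra $\log 2$ term.
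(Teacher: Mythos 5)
Your proposal is correct and matches the paper's treatment: the lemma is simply an invocation of the earlier result (\autoref{Lem:GaussSMax}), whose content is exactly the Davidson--Szarek bound $\probability{\sigma_{\max}(G) \geq \sqrt{l}+\sqrt{d}+t} \leq e^{-t^2/2}$ applied to the unscaled Gaussian matrix and then rescaled by $1/\sqrt{l}$ with $t=\sqrt{2\logOneOverDeltaSTwo}$. The scaling step and the choice of the one-sided tail are handled exactly as in the paper, so there is nothing to add.
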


\begin{lemma}[Theorem 2.3 in \cite{10.1561/0400000060}] \label{lem:Gauss_embedding}
Let $\epSTwo \in (0,1)$ and $S \in \R^{l \times d}$ 
be a scaled Gaussian matrix. 
Then for any fixed $d \times (d+1)$ matrix $M$ with rank at most $r+1$, 
with probability $1-\deltaSThree$ we have that simultaneously for all 
$z \in \R^{d+1}$, $\normTwo{SMz}^2 \geq (1-\epSTwo) \normTwo{Mz}^2$, 
where

\begin{equation}
    \deltaSThree = \deltaSThreeExpression \label{deltaSThree:eq}
\end{equation}

and $C_l$ is an absolute constant. 
\end{lemma}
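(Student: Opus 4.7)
The plan is to prove this as a standard one-sided oblivious subspace embedding result for scaled Gaussian matrices, using a net argument combined with Gaussian (chi-squared) concentration. The critical observation is that although $z$ ranges over $\R^{d+1}$, the vector $Mz$ is confined to the column space $V$ of $M$, which has dimension at most $r+1$. So it suffices to show that, with probability at least $1-\delta_S^{(3)}$, the inequality $\|Sy\|_2^2 \ge (1-\epsilon_S^{(2)})\|y\|_2^2$ holds simultaneously for all $y\in V$. By homogeneity this reduces to establishing the lower bound uniformly on the unit sphere $\Sigma_V := \{y\in V:\|y\|_2=1\}$.

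First I would handle a single fixed unit vector $y\in V$. For such $y$, each coordinate of $Sy$ is an independent $\mathcal{N}(0,1/l)$ random variable (by definition of a scaled Gaussian matrix, cf.\ \autoref{def:Gaussian}), so $l\,\|Sy\|_2^2$ is a $\chi^2_l$ variable. A standard Laurent--Massart-type bound yields
\begin{equation*}
\P\!\left[\|Sy\|_2^2 < (1-\tfrac{1}{2}\epsilon_S^{(2)})\|y\|_2^2\right] \;\le\; e^{-c\, l\, (\epsilon_S^{(2)})^2}
\end{equation*}
for an absolute constant $c>0$. Next I would construct a $\gamma$-net $N\subseteq \Sigma_V$ for a suitable $\gamma\in(0,1)$ (to be optimised at the end); by the standard volumetric argument used in \autoref{Gamma-cover-existance}, we may take $|N|\le(1+2/\gamma)^{r+1}$. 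A union bound over $N$ gives
\begin{equation*}
\P\!\left[\exists\,y\in N:\ \|Sy\|_2^2 < (1-\tfrac{1}{2}\epsilon_S^{(2)})\right] \;\le\; (1+2/\gamma)^{r+1}\, e^{-c\, l\, (\epsilon_S^{(2)})^2}.
\end{equation*}

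The main obstacle is passing from the discrete net $N$ to all of $\Sigma_V$ while keeping only a constant-factor loss in $\epsilon_S^{(2)}$. I would do this via the standard chaining/telescoping argument already used in the proof of \autoref{approximation-of-net}: any $y\in\Sigma_V$ admits a representation $y=\sum_{i\ge 0}\alpha_i y_i$ with $y_i\in N$ and $|\alpha_i|\le\gamma^i$ (Lemma \ref{approximation_unit_ball}), so a lower bound for $\|Sy_i\|_2$ on the net propagates to a lower bound for $\|Sy\|_2$ up to a factor depending only on $\gamma$. Choosing $\gamma$ and the constant inside the net-level bound so that this factor absorbs into $(1-\epsilon_S^{(2)})$, and absorbing $\log(1+2/\gamma)$ into the absolute constant $C_l$, the failure probability becomes
\begin{equation*}
(1+2/\gamma)^{r+1}e^{-cl(\epsilon_S^{(2)})^2}\;\le\; \exp\!\Big(-\tfrac{l(\epsilon_S^{(2)})^2}{C_l}+r+1\Big) \;=\; \delta_S^{(3)},
\end{equation*}
matching \eqref{deltaSThree:eq}. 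The only subtlety is ensuring that the chaining argument gives a one-sided (lower) bound alone; this is immediate because the lower bound survives under the reverse triangle inequality used in the telescoping sum, so no matching upper-bound concentration event is needed.
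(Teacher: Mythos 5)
The thesis itself offers no proof of this lemma --- it is imported verbatim as Theorem 2.3 of Woodruff \cite{10.1561/0400000060} --- so your proposal has to stand on its own as an argument. Its overall architecture is a standard and legitimate route: restrict attention to the column space $V$ of $M$, which has dimension at most $r+1$, reduce by homogeneity to the unit sphere of $V$, apply chi-squared concentration at the points of a $\gamma$-net of cardinality at most $(1+2/\gamma)^{r+1}$, and then extend to the whole sphere. Up to constant bookkeeping this would indeed deliver a failure probability of the form $\pow{e}{-l(\epSTwo)^2/C_l + r + 1}$.

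The gap is in your last step, where you assert that the passage from the net to the sphere needs only the \emph{lower} bound at net points because ``the lower bound survives under the reverse triangle inequality.'' It does not: writing $y=\sum_{i\ge 0}\alpha_i y_i$ with $|\alpha_i|\le\gamma^i$, the reverse triangle inequality gives $\normTwo{Sy}\;\ge\;|\alpha_0|\,\normTwo{Sy_0}-\sum_{i\ge 1}|\alpha_i|\,\normTwo{Sy_i}$, and the subtracted tail is useless unless you also have \emph{upper} bounds on $\normTwo{Sy_i}$ (equivalently, a bound on the operator norm of $S$ restricted to $V$). A lower bound on the net alone leaves those terms uncontrolled. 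This is visible in the thesis's own machinery that you invoke: Lemma \ref{approximation-of-net} requires $S$ to be a \emph{generalised} (two-sided) $\epsilon_1$-JL embedding of the net, not a one-sided one. The repair is routine --- the chi-squared upper tail holds at the same exponential rate, so conditioning on the two-sided event on the net costs only a factor $2$ --- but as written the argument is incomplete. A secondary point worth a sentence in a full write-up: the net-size term contributes $(r+1)\log(1+2/\gamma)$ to the exponent, which multiplies $r+1$ rather than $l(\epSTwo)^2$, so ``absorbing $\log(1+2/\gamma)$ into $C_l$'' is only valid after observing that the statement is vacuous unless $l(\epSTwo)^2 > C_l(r+1)$, in which regime the excess $(r+1)\left(\log(1+2/\gamma)-1\right)$ can be dominated by enlarging the absolute constant $C_l$.
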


\paragraph{Satisfying \autoref{AA2} (page \pageref{AA2})}
\begin{lemma}
    Suppose that $\hessFK$ has rank at most $r\leq d$ for all $k$; $S \in \R^{l \times d}$ is drawn as a scaled Gaussian matrix. Let $\epSTwo, \deltaSTwo \in (0,1)$ such that $\deltaSTwo + \deltaSThree < 1$ where $\deltaSThree$ is defined in \eqref{deltaSThree:eq}.
    Then \autoref{alg:CBGN} satisfies \autoref{AA2} with $\deltaS = \deltaSTwo + \deltaSThree$ and $S_{max} = 1 + \sqrt{\frac{d}{l}} + \sqrt{\frac{2\logOneOverDeltaSTwo}{l}}$, with true iterations defined in \autoref{def:true:CBGN}.
\end{lemma}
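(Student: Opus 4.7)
The plan is to decompose the truth condition in \autoref{def:true:CBGN} into its two constituent clauses — the two-norm bound \eqref{tmp:CBGN:10} and the one-sided embedding \eqref{tmp:CBGN:1} applied to $M_k = [\nabla f(x_k)\;\; \nabla^2 f(x_k)] \in \R^{d \times (d+1)}$ — handle each independently via the two supporting lemmas, combine via union bound, and then dispatch the conditional independence clause using the fact that $S_k$ is drawn independently of the history.

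First, fix a realisation $x_k = \bar x_k$. Conditional on this event, $M_k$ is deterministic, and since $\nabla^2 f(\bar x_k)$ is assumed to have rank at most $r$, appending the single gradient column $\nabla f(\bar x_k)$ raises the rank to at most $r+1$. Therefore \autoref{lem:Gauss_embedding} applies to $M_k$ with this rank bound, yielding that the event \eqref{tmp:CBGN:1} holds with probability at least $1 - \deltaSThree$, with $\deltaSThree$ given by \eqref{deltaSThree:eq}. Simultaneously, \autoref{lem:GaussSMax:CubicSubspace} (which is a distributional statement about $S_k$, unconditional on $x_k$) guarantees that \eqref{tmp:CBGN:10} holds with probability at least $1 - \deltaSTwo$, with the stated value of $\sMax$.

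A union bound on the complements of these two events gives that both \eqref{tmp:CBGN:1} and \eqref{tmp:CBGN:10} hold simultaneously, conditional on $x_k = \bar x_k$, with probability at least $1 - \deltaSTwo - \deltaSThree = 1 - \deltaS$. This is precisely the statement that $\probabilityGivenXK{T_k} \geq 1 - \deltaS$ required by \autoref{AA2}. The same bound applies to the base case $k=0$, since $x_0$ is deterministic. For the conditional independence clause, I would observe that in \autoref{alg:CBGN} the sketching matrix $S_k$ is drawn freshly and independently at each iteration, so given $x_k = \bar x_k$ the randomness determining $T_k$ (namely $S_k$) is independent of $S_0, \dots, S_{k-1}$ and hence of $T_0, \dots, T_{k-1}$.

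The only mildly delicate point is the rank bound on $M_k$: one must explicitly invoke that appending a column to a rank-$r$ matrix yields rank at most $r+1$, so that \autoref{lem:Gauss_embedding}'s hypothesis (``rank at most $r+1$'') is met with the quoted expression for $\deltaSThree$. Everything else is a routine union bound plus appeal to the independence of the draws of $S_k$, so I do not expect any serious obstacle.
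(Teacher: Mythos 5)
Your proposal is correct and follows essentially the same route as the paper's proof: fix $x_k=\barXK$, note $M_k$ has rank at most $r+1$, apply \autoref{lem:Gauss_embedding} and \autoref{lem:GaussSMax:CubicSubspace} to the two defining conditions of a true iteration, combine with a union bound to get $1-\deltaSTwo-\deltaSThree$, treat $k=0$ via the fixed $x_0$, and obtain conditional independence from the fresh, independent draw of $S_k$ at each iteration. No gaps.
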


\begin{proof}
Let $x_k = \barXK \in \R^d$ be given. This determines $\gradFK, \hessFK$ and hence $M_k$. As $\hessFK$ has rank at most $r$, $M_k$ has rank at most $r+1$. 
Consider the events
\begin{align*}
    &\aKOne = \set{\normTwo{S_k M_k z}^2 \geq (1-\epSTwo) \normTwo{\MK z}^2, \quad \forall z\in \R^{d+1}} \\
    &\aKTwo = \set{\normTwo{S_k} \leq S_{max}}.
\end{align*}
Note that iteration $k$ is true if and only if $\aKOne$ and $\aKTwo$ occur. It follows from \autoref{lem:Gauss_embedding} that $\probabilityGivenXK{\aKOne} \geq 1-\deltaSThree$; and from \autoref{lem:GaussSMax:CubicSubspace} that $\probability{\aKTwo} \geq 1-\deltaSTwo$. Since $\aKTwo$ is independent of $x_k$, we have $\probabilityGivenXK{\aKTwo} = \probability{\aKTwo} \geq 1-\deltaSTwo$. 

Hence, we have $\probabilityGivenXK{{\aKOne}\intersect{\aKTwo}} \geq 1 - \probabilityGivenXK{\complement{\aKOne}} - \probabilityGivenXK{\complement{\aKTwo}} \geq 1- \deltaSTwo - \deltaSThree$. A similar argument shows that $\probability{{\aZeroOne}\intersect{\aZeroTwo}} \geq 1 - \deltaSTwo - \deltaSThree$, as $x_0$ is fixed. 

Moreover, given $x_k = \barXK$, $\aKOne$ and $\aKTwo$ only depend on $S_k$, which is drawn randomly at iteration $k$. Hence given $x_k = \barXK$, ${\aKOne}\intersect{\aKTwo}$ is independent of whether the previous iterations are true or not. Hence \autoref{AA2} is true.
\end{proof}

\paragraph{Satisfying \autoref{AA3} (page \pageref{AA3})}

\begin{lemma}\label{lem:cubic:sub:A2}
Let $f$ be twice continuously differentiable with $\LH$-Lipshitz continuous Hessian $\grad^2 f$. \autoref{alg:CBGN} satisfies \autoref{AA3} with
    \begin{equation}
        \alphaLow = \frac{2(1-\theta)}{\LH} \label{eq:alphaLow:CBGN}
    \end{equation}
\end{lemma}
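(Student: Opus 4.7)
The plan is a local, ARC-style argument: once the regularisation parameter $\alpha_k$ is small enough, the cubic term in $\mKHat{\cdot}$ dominates the Lipschitz-Hessian remainder from Taylor expansion, so the sufficient-decrease test \eqref{eq:cubic:sufficient_decrease} must hold. Notably, neither the ``true iteration'' hypothesis nor $k<\nEps$ is actually needed: as in classical ARC, $\alphaLow$ is a pure smoothness threshold. So the proof will not invoke \autoref{def:true:CBGN}, \eqref{tmp:CBGN:1}, \eqref{tmp:CBGN:10}, or any embedding property.

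First I would apply Taylor's theorem at $x_k$ with $\LH$-Lipschitz $\grad^2 f$ to get
\[
f(x_k+s_k)\le f(x_k)+\innerProduct{\gradFK}{s_k}+\frac{1}{2}\innerProduct{s_k}{\hessFK s_k}+\frac{\LH}{6}\normTwo{s_k}^3.
\]
Since $s_k=S_k^T\sKHat$ by \eqref{eqn::wKSpec_CB}, the first three terms on the right collapse to $\qKHat{\sKHat}$, giving
\[
f(x_k+s_k)-\qKHat{\sKHat}\le \frac{\LH}{6}\normTwo{S_k^T\sKHat}^3.
\]

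Second, I would lower bound $f(x_k)-\qKHat{\sKHat}$ using the model-decrease property \eqref{tmp:CBGN:3}: since $\mKHat{\sKHat}\le \mKHat{0}=f(x_k)$ and $\mKHat{\sKHat}=\qKHat{\sKHat}+\frac{1}{3\alpha_k}\normTwo{S_k^T\sKHat}^3$ by \eqref{eqn::mKHatSpec_CB}, rearranging yields
\[
f(x_k)-\qKHat{\sKHat}\ge \frac{1}{3\alpha_k}\normTwo{S_k^T\sKHat}^3.
\]
Subtracting $\theta[\qKHat{0}-\qKHat{\sKHat}]=\theta[f(x_k)-\qKHat{\sKHat}]$ from the Taylor bound then gives
\[
f(x_k)-f(x_k+s_k)-\theta\bigl[\qKHat{0}-\qKHat{\sKHat}\bigr]\ge \left[\frac{1-\theta}{3\alpha_k}-\frac{\LH}{6}\right]\normTwo{S_k^T\sKHat}^3,
\]
which is non-negative precisely when $\alpha_k\le \frac{2(1-\theta)}{\LH}$, i.e.\ when $\alpha_k\le \alphaLow$ as in \eqref{eq:alphaLow:CBGN}. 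Thus \eqref{eq:cubic:sufficient_decrease} holds and iteration $k$ is successful, establishing \autoref{AA3}.

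There is no real obstacle; the one subtlety is the degenerate case $S_k^T\sKHat=0$, which I would dispose of in a line: the cubic and quadratic parts of $\qKHat{}$ then vanish, so both sides of \eqref{eq:cubic:sufficient_decrease} are zero and the iteration is trivially successful. The argument only uses $C^2$-smoothness of $f$ with Lipschitz Hessian, the cubic form of the model \eqref{eqn::mKHatSpec_CB}, the choice $s_k=S_k^T\sKHat$ in \eqref{eqn::wKSpec_CB}, and the mandatory model-decrease condition \eqref{tmp:CBGN:3}; none of the random-embedding structure of $S_k$ enters here.
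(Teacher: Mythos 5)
Your proof is correct, and it takes a slightly different route from the paper's. The paper also starts from the model decrease \eqref{tmp:CBGN:3}, which gives $f(x_k)-\qKHat{\sKHat}\geq \frac{1}{3\alphaK}\normTwo{S_k^T\sKHat}^3$, but then forms the classical ARC ratio $\rho_k=\frac{f(x_k)-f(x_k+s_k)}{f(x_k)-\qKHat{\sKHat}}$ and bounds $\abs{1-\rho_k}\leq \frac{1}{2}\alphaK\LH\leq 1-\theta$ using the Lipschitz-Hessian Taylor bound $\abs{f(x_k+s_k)-\qKHat{\sKHat}}\leq \frac{\LH}{6}\normTwo{s_k}^3$; to make $\rho_k$ well defined it must first invoke Lemma \ref{stepSizeSubEmbed} to guarantee $\normTwo{S_k^T\sKHat}>0$, which is exactly where the ``true iteration'' and $k<\nEps$ hypotheses of \autoref{AA3} enter. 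You instead subtract the two inequalities directly, obtaining $f(x_k)-f(x_k+s_k)-\theta\bigl[\qKHat{0}-\qKHat{\sKHat}\bigr]\geq \bigl[\frac{1-\theta}{3\alphaK}-\frac{\LH}{6}\bigr]\normTwo{S_k^T\sKHat}^3$, which needs no division and hence no positivity of the step; the degenerate case $S_k^T\sKHat=0$ is handled trivially as you note. The ingredients (Taylor/Lipschitz remainder and the cubic term of the model) are the same, but your version is marginally more general, showing every iteration with $\alphaK\leq\alphaLow$ is successful rather than only true iterations before convergence — which of course still verifies \autoref{AA3} — and it sidesteps the well-definedness issue that the paper itself flags in its footnote about why the algorithm tests \eqref{eq:cubic:sufficient_decrease} rather than $\rho_k$.
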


\begin{proof}
From \eqref{tmp:CBGN:3}, we have that 
    \begin{equation}
        f(x_k) - \qKHat{\sKHat} \geq \frac{1}{3\alphaK} 
        \normTwo{\SK^T \sKHat}^3.
        \notag
    \end{equation}
Using \autoref{stepSizeSubEmbed}, in true iterations with $k < \nEps$, we have that 
$
    \normTwo{\SK^T \sKHat} >0.\notag
$
Therefore we can define \footnote{Note that \autoref{alg:CBGN} does not use the ratio $\rho_k$ in \eqref{eqn:rho_k:CBGN}, but uses \eqref{eq:cubic:sufficient_decrease}. This is because the denominator of \eqref{eqn:rho_k:CBGN} may be zero before termination, on account of sketching/subspace techniques being used.}
    \begin{equation}
        \rho_k = \frac{f(x_k) - f(x_k + s_k)}{f(x_k) - \qKHat{\sKHat}},
        \label{eqn:rho_k:CBGN}
    \end{equation}
with 
    \begin{equation}
        \abs{1 - \rho_k} = \frac{\abs{f(x_k + s_k) - \qKHat{\sKHat}}}
        {\abs{f(x_k) - \qKHat{\sKHat}}}. \notag
    \end{equation}
The numerator can be bounded by 
    \begin{align}
        \abs{f(x_k + s_k) - \qKHat{\sKHat}}  \leq \frac{1}{6}\LH \normTwo{\sK}^2,\notag
    \end{align}
    by Corollary A.8.4 in \cite{CoraBook}.
Therefore, we have 
\begin{equation}
    \abs{1 - \rho_k} \leq \frac{\frac{1}{6}\LH \normTwo{\sK}^3}{\frac{1}{3\alphaK}
    \normTwo{\sK}^3} = \frac{1}{2}\alphaK \LH \leq 1-\theta 
    \texteq{by \eqref{eq:alphaLow:CBGN} and $\alphaK \leq \alphaLow$}. 
\end{equation}

Thus $1-\rho_k \leq \abs{1-\rho_k} \leq 1-\theta$ so $\rho_k \geq \theta $ and iteration $k$ is successful.
\end{proof}

\paragraph{Satisfying \autoref{AA4} (page \pageref{AA4})}

\begin{lemma}
    Let $f$ be twice continuously differentiable with $\LH$-Lipschitz continuous Hessian. 
    \autoref{alg:CBGN} with true iterations defined in \autoref{def:true:CBGN} satisfies \autoref{AA4} with
    
    \begin{equation}
        h(\epsilon, \alphaK) = \frac{\theta}{3\alphaMax} 
        \bracket{\frac{\epsilon}{2}}^{3/2} \min \set{ \frac{2^{3/2}}{\LH^{3/2}}, \bracket{\frac{\sqrt{\oneMinusEpSTwo}}{\frac{1}{\alphaK}\sMax +\kappaT}}^{3/2}}.
        \label{eq:hEpsAlphaCB}
    \end{equation}
    
\end{lemma}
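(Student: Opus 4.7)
The plan is to combine the two previously established lemmas to lower-bound the function decrease on true and successful iterations. First I would invoke \autoref{succStepDecrease} (which applies because iteration $k$ is successful) to obtain
\[
f(x_k) - f(x_{k+1}) \;\geq\; \frac{\theta}{3\alpha_k}\,\NsKTSKHat^3 \;=\; \frac{\theta}{3\alpha_k}\bigl(\NsKTSKHat^{2}\bigr)^{3/2}.
\]
Then, because iteration $k$ is true and $k<\nEps$, I would apply \autoref{stepSizeSubEmbed} to bound $\NsKTSKHat^{2}$ from below by $(\epsilon/2)\min\bigl\{2/\LH,\,(\sMax/\alphaK+\kappaT)^{-1}\sqrt{\oneMinusEpSTwo}\bigr\}$, and raise the resulting inequality to the $3/2$ power (valid since both sides are non-negative), which produces
\[
f(x_k) - f(x_{k+1}) \;\geq\; \frac{\theta}{3\alpha_k}\Bigl(\frac{\epsilon}{2}\Bigr)^{3/2}\min\!\Bigl\{\frac{2^{3/2}}{\LH^{3/2}},\,\Bigl(\frac{\sqrt{\oneMinusEpSTwo}}{\tfrac{1}{\alphaK}\sMax+\kappaT}\Bigr)^{3/2}\Bigr\}.
\]

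Next, to match the stated form of $h(\epsilon,\alphaK)$ in \eqref{eq:hEpsAlphaCB}, I would simply bound the prefactor by using $\alphaK\leq\alphaMax$, so that $\theta/(3\alphaK)\geq\theta/(3\alphaMax)$. This yields the claimed inequality $f(\xK)-f(\xKPlusOne)\geq h(\epsilon,\alphaK)$.

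Finally, I would verify the two structural properties required by \autoref{AA4}: positivity when both arguments are positive, and monotonicity in each argument. Positivity is clear since $\theta,\epsilon,\alphaMax,\LH,\kappaT,\sMax>0$ and $\epSTwo\in(0,1)$. Monotonicity in $\epsilon$ is immediate from the factor $(\epsilon/2)^{3/2}$ while the minimum is independent of $\epsilon$. Monotonicity in $\alphaK$ follows because the only $\alphaK$-dependence appears inside $\bigl(\sMax/\alphaK+\kappaT\bigr)^{-3/2}$, which is increasing in $\alphaK$; the other branch of the minimum is constant in $\alphaK$, so the minimum is non-decreasing in $\alphaK$. No step here is a real obstacle -- the work is essentially just chaining \autoref{succStepDecrease} with \autoref{stepSizeSubEmbed} and tidying the prefactor with $\alphaK\leq\alphaMax$.
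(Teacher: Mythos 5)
Your proposal is correct and matches the paper's proof, which likewise chains \autoref{succStepDecrease} with \autoref{stepSizeSubEmbed} and then uses $\alphaK \leq \alphaMax$ to replace the $1/\alphaK$ prefactor. The extra verification of the monotonicity and positivity requirements of \autoref{AA4} is a harmless addition the paper leaves implicit.
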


\begin{proof}
For true and successful iterations with $k < \nEps$, use \autoref{stepSizeSubEmbed} with \autoref{succStepDecrease} and $\alphaK \leq \alphaMax$.
\end{proof}

\paragraph{Satisfying \autoref{AA5} (page \pageref{AA5})}
The next lemma shows that the function value following \autoref{alg:CBGN} is non-increasing.

\begin{lemma}\label{tmp-2022-1-15-1}
\autoref{alg:CBGN} satisfies \autoref{AA5}.
\end{lemma}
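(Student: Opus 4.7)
The plan is to split the argument according to whether iteration $k$ is successful or unsuccessful. For unsuccessful iterations, Step 3 of \autoref{alg:CBGN} sets $x_{k+1} = x_k$, so $f(x_{k+1}) = f(x_k)$ holds trivially, and there is nothing to prove. Thus the entire content of the proof lies in ruling out the possibility that, on a successful iteration, the accepted step $s_k$ increases the objective.

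For successful iterations I would use the sufficient decrease test \eqref{eq:cubic:sufficient_decrease}, which yields
\[
f(x_k) - f(x_{k+1}) \geq \theta \bigl[\hat{q}_k(0) - \hat{q}_k(\hat{s}_k)\bigr],
\]
with $\theta \in (0,1)$. Since $\theta > 0$, it therefore suffices to show that $\hat{q}_k(0) - \hat{q}_k(\hat{s}_k) \geq 0$, i.e.\ that the second-order Taylor part of the model does not increase at $\hat{s}_k$.

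To extract this, I would exploit the explicit relationship between $\hat{q}_k$ and the cubically regularised model, read off from \eqref{eqn::mKHatSpec_CB}, namely
\[
\hat{m}_k(\hat{s}) = \hat{q}_k(\hat{s}) + \tfrac{1}{3\alpha_k}\NsKTSKHat^3,
\]
together with the fact that $\hat{m}_k(0) = \hat{q}_k(0) = f(x_k)$ and the condition \eqref{tmp:CBGN:3} that the approximate minimiser satisfies $\hat{m}_k(\hat{s}_k) \leq \hat{m}_k(0)$. Rearranging these three facts gives
\[
\hat{q}_k(0) - \hat{q}_k(\hat{s}_k) \;\geq\; \tfrac{1}{3\alpha_k}\NsKTSKHat^3 \;\geq\; 0,
\]
since $\alpha_k > 0$. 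Combined with the previous display this yields $f(x_{k+1}) \leq f(x_k)$, as required by \autoref{AA5}.

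There is no real obstacle here: the statement is essentially a book-keeping consequence of how sufficient decrease is defined together with \eqref{tmp:CBGN:3}. The only subtlety is to keep careful track of the distinction between $\hat{q}_k$, which appears inside the sufficient decrease criterion, and $\hat{m}_k$, which is the cubic model that $\hat{s}_k$ approximately minimises; this is reconciled by the non-negativity of the cubic regularisation term $\tfrac{1}{3\alpha_k}\NsKTSKHat^3$.
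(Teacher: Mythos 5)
Your proof is correct and follows essentially the same route as the paper: the paper also splits into unsuccessful iterations (where $x_{k+1}=x_k$ trivially) and successful ones, handling the latter via \autoref{succStepDecrease}, whose proof is exactly your combination of \eqref{eq:cubic:sufficient_decrease}, \eqref{tmp:CBGN:3} and the non-negativity of the cubic term $\tfrac{1}{3\alpha_k}\normTwo{S_k^T\sKHat}^3$. You have simply inlined that lemma rather than citing it.
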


\begin{proof}
In \autoref{alg:CBGN}, we either have $x_{k+1} = x_k$ when the step is unsuccessful, 
in which case $f(x_k) = f(x_{k+1})$; or the step is successful,
in which case we have $\fun{f}{x_{k+1}} - \fun{f}{x_k} \leq 0$ 
by \autoref{succStepDecrease}.
\end{proof}

\subsection{Iteration complexity of \autoref{alg:CBGN} to decrease $\gradFK$ below $\epsilon$}
We have shown that \autoref{alg:CBGN} satisfies \autoref{AA2}, \autoref{AA3}, \autoref{AA4} and \autoref{AA5}. Noting that \autoref{alg:CBGN} is a particular case of \autoref{alg:generic}, we apply \autoref{thm2} to arrive at the main result of this section.

\begin{theorem} \label{thm:CBGN_subspace_first}
    Let $\cal{S}$ be the distribution of scaled Gaussian matrices $S \in \R^{l \times d}$ defined in \autoref{def:Gaussian}. Suppose that $f$ is bounded below by $f^*$, twice continuously differentiable with $\LH$-Lipschitz $\grad^2 f$, $\hessFK$ has rank at most $r$ for all $k$ and let $\epsilon>0$.
    Choose $l = 4 C_l (\log16 + r + 1); 
    \epSTwo=\frac{1}{2}; 
    \deltaSTwo = \frac{1}{16};
    $ so that
    $\deltaSThree = \deltaSThreeExpression = \frac{1}{16};
    \deltaS = \frac{1}{8}; 
    \sMax = 1 + 
    \frac{ \sqrt{d} + \sqrt{2\log16} }
    {\sqrt{4 C_l \bracket{ \log 16 + r + 1 } }}$, where $C_l$ is defined in \eqref{deltaSThree:eq}. Run \autoref{alg:CBGN} for $N$ iterations. 
	Suppose that $ \delta_S <\frac{c}{(c+1)^2}$ (i.e. 
	$\frac{c}{(c+1)^2} > \frac{1}{8}$). 
	Then for any $\delta_1 \in (0,1)$ with 
    \begin{equation}
        g(\deltaOne) >0, \nonumber
    \end{equation}
    where 
    \begin{equation}
        g(\deltaOne) = \nPreFactorTRCubic, \nonumber
    \end{equation}
    if $N\in \N$ satisfies 
    \begin{equation}
        N \geq g(\deltaOne) \squareBracket{
             \fZeroMinusfStarOverH
             + \frac{4 C_l (\log16 + r + 1)}{1+c}}, \nonumber
    \end{equation}
    where $h(\epsilon, \alpha_k)$ is defined in \eqref{eq:hEpsAlphaCB} with $\epSTwo, \sMax$ defined in the theorem statement, $\alphaLow$ is given in \eqref{eq:alphaLow:CBGN} and
     $\alphaMin = \alphaZero \gammaOne^\newL$ associated with $\alphaLow$,  for some $\newL \in \N^+$.
    Then we have that
    \begin{equation}
        \probability{\min_{k\leq N} \{\normTwo{\grad f(x_{k+1})}\} \leq \epsilon } \geq 1 - e^{-\frac{7\delta_1^2}{16} N}. \nonumber
    \end{equation}
\end{theorem}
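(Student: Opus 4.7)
The plan is to invoke \autoref{thm2} directly, since the preceding lemmas have already shown that \autoref{alg:CBGN} satisfies all four of its hypotheses \autoref{AA2}--\autoref{AA5}. The main task is therefore purely to verify that the explicit parameter choices stated in the theorem reduce the generic bound of \autoref{thm2} to the one advertised, and in particular to check the bookkeeping for $\delta_S$, $S_{\max}$, $\alphaLow$ and $h(\epsilon,\alpha_k)$.

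First I would verify \autoref{AA2} under the prescribed choices. With $\epSTwo = 1/2$ and $l = 4 C_l(\log 16 + r + 1)$, plugging into the formula \eqref{deltaSThree:eq} gives
\[
\deltaSThree = \exp\!\left(-\tfrac{l(\epSTwo)^2}{C_l} + r + 1\right) = \exp(-(\log 16 + r + 1) + r + 1) = \tfrac{1}{16}.
\]
Together with $\deltaSTwo = 1/16$, the argument already given in the subsection ``Satisfying \autoref{AA2}'' yields $\deltaS = \deltaSTwo + \deltaSThree = 1/8$, and the $S_{\max}$ expression follows by substituting $l$ and $\deltaSTwo$ into \autoref{lem:GaussSMax:CubicSubspace}. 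Since $f$ is twice continuously differentiable with $\LH$-Lipschitz Hessian, \autoref{lem:cubic:sub:A2} gives \autoref{AA3} with $\alphaLow = 2(1-\theta)/\LH$, and the remaining two lemmas deliver \autoref{AA4} with the function $h(\epsilon,\alpha_k)$ stated in \eqref{eq:hEpsAlphaCB} and \autoref{AA5} unconditionally.

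Having established the four hypotheses with the constants above, I would then apply \autoref{thm2} with $\epsilon$ replaced by the given accuracy, $\deltaS = 1/8$, the corresponding $\alphaMin = \alpha_0 \gamma_1^{\tCOne}$, and $h$ as in \eqref{eq:hEpsAlphaCB}. With $\deltaS = 1/8$, the function $g(\deltaS,\deltaOne)$ of \eqref{eqn:gDeltaSDeltaOneDef} specialises to
\[
g(\deltaOne) = \left[\tfrac{7}{8}(1-\deltaOne) - 1 + \tfrac{c}{(c+1)^2}\right]^{-1},
\]
which is positive precisely when $\tfrac{c}{(c+1)^2} > 1/8$ (guaranteed by the assumption $\deltaS < c/(c+1)^2$) and $\deltaOne$ satisfies the stated positivity constraint. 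The hypothesis $N \ge g(\deltaOne)\bigl[(f(x_0)-f^*)/h(\epsilon,\alphaZero\gammaOne^{c+\newL}) + \newL/(1+c)\bigr]$ is exactly \eqref{eqn::n_upper_2} rewritten with the current $c$ and $\newL$, and substituting $\newL$ is bounded above in terms of $l$ via \eqref{eqn:tauLDef}.

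Finally, the conclusion of \autoref{thm2} gives
\[
\probability{N \geq \nEps} \;\geq\; 1 - e^{-D_3 N}, \qquad D_3 = \tfrac{\deltaOne^2}{2}(1-\deltaS) = \tfrac{7\deltaOne^2}{16},
\]
and because $\nEps$ was defined in this section as $\min\{k : \normTwo{\grad f(x_{k+1})} \le \epsilon\}$, the event $\{N \ge \nEps\}$ coincides with $\{\min_{k \le N}\normTwo{\grad f(x_{k+1})} \le \epsilon\}$, yielding the claimed probability bound. The only substantive step is the arithmetic verifying $\deltaS = 1/8$ and the matching exponent $7\deltaOne^2/16$; I do not expect a genuine obstacle, as the work has been done in preparing the four assumptions.
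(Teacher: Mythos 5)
Your proposal coincides with the paper's own (implicit) proof: the paper simply observes that the preceding lemmas establish \autoref{AA2}--\autoref{AA5} for \autoref{alg:CBGN} and then applies \autoref{thm2} with the stated parameter choices, exactly as you do, and your arithmetic checks out ($\deltaSThree = e^{-(\log 16 + r+1)+r+1} = \tfrac{1}{16}$, $\deltaS = \tfrac18$, $g(\deltaOne)$ specialising to $\bigl[\tfrac78(1-\deltaOne)-1+\tfrac{c}{(c+1)^2}\bigr]^{-1}$, and $D_3 = \tfrac{\deltaOne^2}{2}(1-\deltaS) = \tfrac{7\deltaOne^2}{16}$). The one shaky remark — that $\newL$ is ``bounded above in terms of $l$'' via \eqref{eqn:tauLDef} so as to reconcile the theorem's term $4C_l(\log 16 + r + 1)/(1+c)$ with the term $\newL/(1+c)$ in \eqref{eqn::n_upper_2} — is not actually justified ($\newL$ depends on $\alphaLow$, $\alphaZero$, $\gammaOne$, $\gammaTwo$ and has no connection to the sketch size $l$), but this discrepancy originates in the paper's own theorem statement rather than in your argument, which otherwise follows the paper's route verbatim.
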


\subsubsection{Discussion}

\paragraph{Use other random ensembles than the scaled Gaussian matrices in \autoref{alg:CBGN}}
Although \autoref{thm:CBGN_subspace_first} requires $\cal{S}$ to be the distribution of scaled Gaussian matrices, qualitatively similar result, namely, convergence with a rate of $\mathO{\epsilon^{-3/2}}$ with exponentially high probability, can be established for $s$-hashing matrices (defined in \autoref{def::sampling_and_hashing}), Subsampled Randomised Hadamard Transforms (defined in \autoref{def::SRHT}) and Hashed Randomised Hadamard Transforms (defined in \autoref{def::HRHT}). The proof for satisfying \autoref{AA2} needs to be modified, using the upper bounds for $S_{max}$ and the subspace embedding properties of these ensembles instead. Consequently, the constants in \autoref{thm:CBGN_subspace_first} will change, but the convergence rate and the form of the result stays the same (as the results in Section \ref{iter_complexity_QR}).

\paragraph{Comparison with the adaptive cubic regularisation method with random models in \cite{Cartis:2017fa}}

We achieve the same $\mathO{\epsilon^{-3/2}}$ convergence rate as 
\cite{Cartis:2017fa}, which is optimal for non-convex optimisations using second order models \cite{CoraBook}, and the same for deterministic adaptive cubic regularisation method. 
One main difference between our work and \cite{Cartis:2017fa} is the definition of true iterations. Instead of \autoref{def:true:CBGN}, they define true iterations \reply{as those iterations that satisfy}
    \begin{align}
    & \normTwo{\gradFK - \grad m_k(\sK)} \leq \kappa_g \normTwo{\sK}^2 \label{Katya_second_order_cond1} \\
    & \normTwo{\hessFK - \grad^2 m_k(\sK)} \leq \kappa_H \normTwo{\sK},
    \end{align}
where $\kappa_g, \kappa_H >0$ are constants. 

This difference leads to different potential applications of the two frameworks. In their work, they proposed to use sampling with adaptive sample sizes for problems having the finite sum structure ($f = \sum_i f_i)$ to construct the model $m_k$, or to use finite differences in the context of derivative free optimisation to construct the model $m_k$. However, without other assumptions, even just in order to obtain condition \eqref{Katya_second_order_cond1}, one may need a sample size that may be impractically large. 
In contrast, in our framework, the sketching size is fixed and even then, true iterations happen sufficiently frequently for scaled Gaussian matrices (and indeed for other random embeddings, see remark above). However, since the subspace dimension $l$ is proportional to the rank of the Hessian matrix $r$, the Hessian matrix $\grad^2 f$ is assumed to have a lower rank $r$ than the full space dimension $l$, as otherwise \autoref{alg:CBGN} does not save computation/gradient/Hessian evaluations compared to the deterministic version. 
Another difference is that our convergence result \autoref{thm:CBGN_subspace_first} is expressed in the high probability form, while the result in \cite{Cartis:2017fa} is in expectation. Our result is stronger because it leads to an equivalent expectation result in \cite{Cartis:2017fa}, see \autoref{Cor_3} on Page \pageref{Cor_3}.

Inexact local models constructed by subsampling for sums of functions have also been proposed for cubic regularization and other Newton-type methods in \cite{KohlLucc17, XuRoosMaho17, XuRoosMaho18, YaoXuRoosMaho18}. Our emphasis here is related to reducing specifically the dimension of the variable domain (rather than the observational space).

    \section{Fast convergence rate assuming the sparsity of the Hessian matrix}
    This section is mostly conceptual and is an attempt to show the fast convergence rate of \autoref{alg:CBGN} can be achieved without assuming subspace embedding of the Hessian matrix. 
Here, we maintain $\nEps$ as $\min \{k: \normTwo{\grad f(x_{k+1})} \leq \epsilon \} $, similarly to the last section. 
However, in the definition of true iterations, we replace the condition \eqref{tmp:CBGN:1} on subspace embedding of the Hessian with the condition that the sketched Hessian $S_k \hessFK$ \reply{has a small norm}. This may be achieved when the Hessian matrix has sparse rows and we choose $S_k$ to be a scaled sampling matrix.
We show that this new definition of true iterations still allows the same $\mathO{\epsilon^{-3/2}}$ iteration complexity to drive the norm of the objective's gradient norm below $\epsilon$.
Specifically, true iterations are defined as follows.

\begin{definition} \label{def:true:CBGN:SparseHess}
Let $\epS \in (0,1)$, $\sMax >0$. Iteration $k$ is ($\epS, \sMax)$-true if 
\begin{align}
    &\normTwo{S_k \hessFK} \leq c_k \epsHalf, 
    \label{smallSketchedHessian} \\
    &\normTwo{S_k \gradFK}^2 \geq \bracket{1-\epS} \epsilon^2, 
    \label{skGradLowerBound_sparse_Hess} \\
    &\normTwo{S_k} \leq \sMax \label{S_k_norm_bound_Sparse_hess},
\end{align}
where $c_k = \sqrt{\frac{4\bracket{1-\epS}^{1/2} \sMax}{3\alphaMax}}$ and $\alphaMax$ is a user-chosen constant in \autoref{alg:CBGN}. 
\end{definition}

Note that the desired accuracy $\epsilon$ appears in this particular definition of true iterations. 

Consequently, the requirements on the objective and the sketching dimension $l$ may be stronger for smaller $\epsilon$. For simplicity, we assume $\kappa_T = 0$ (where $\kappaT$ is a user chosen parameter in \eqref{tmp:CBGN:5} in \autoref{alg:CBGN}) in this section, namely $\grad \mKHat{\sKHat} = 0$, and it follows from \eqref{tmp:CBGN:5} that
\begin{equation}
    S_k \gradFK = \frac{1}{\alphaK} S_k S_k^T 
    \sKHat \normTwo{S_k^T \sKHat} - S_k\hessFK S_k^T \sKHat.
    \label{exactModelGradZero}
\end{equation}


The proofs that \autoref{AA3} and \autoref{AA5} are satisfied are identical to the previous section, while the following technical lemma helps us to 
satisfy \autoref{AA4}. 

\begin{lemma}\label{spHessLem}
Let $\epsilon>0$.
Let $\epS \in (0,1)$, $\kappaT=0$. 
Suppose we have \eqref{smallSketchedHessian}, \eqref{skGradLowerBound_sparse_Hess} and
\eqref{S_k_norm_bound_Sparse_hess}.
Then 
\begin{equation}
    \normTwo{S_k^T \sKHat} \geq \alphaK
    \sqrtFrac{\bracket{1-\epS}^{1/2}\epsilon}{3\sMax\alphaMax}. \notag
\end{equation}
\end{lemma}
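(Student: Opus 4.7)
The plan is to exploit the first-order optimality-like condition \eqref{exactModelGradZero} (which holds because $\kappaT=0$ forces $\grad\mKHat{\sKHat}=0$) to relate $S_k\gradFK$ to $\sKHat$, then combine it with the three hypotheses \eqref{smallSketchedHessian}, \eqref{skGradLowerBound_sparse_Hess}, \eqref{S_k_norm_bound_Sparse_hess} to obtain a quadratic inequality in $x:=\normTwo{S_k^T \sKHat}$ whose positive root is the claimed bound.

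Concretely, I will first take norms in \eqref{exactModelGradZero} and apply the triangle inequality to get
\begin{equation*}
\normTwo{S_k\gradFK} \;\leq\; \frac{1}{\alphaK}\normTwo{S_k S_k^T\sKHat}\,\normTwo{S_k^T\sKHat} \;+\; \normTwo{S_k\hessFK}\,\normTwo{S_k^T\sKHat}.
\end{equation*}
Bounding $\normTwo{S_kS_k^T\sKHat}\leq \normTwo{S_k}\normTwo{S_k^T\sKHat}\leq \sMax\normTwo{S_k^T\sKHat}$ with \eqref{S_k_norm_bound_Sparse_hess}, and using \eqref{smallSketchedHessian} for $\normTwo{S_k\hessFK}\leq c_k\epsHalf$, yields
\begin{equation*}
\normTwo{S_k\gradFK} \;\leq\; \frac{\sMax}{\alphaK}\,x^{2} + c_k\epsHalf\,x.
\end{equation*}
Combined with the lower bound $\normTwo{S_k\gradFK}\geq\sqrt{1-\epS}\,\epsilon$ from \eqref{skGradLowerBound_sparse_Hess}, this gives the quadratic inequality
\begin{equation*}
\frac{\sMax}{\alphaK}\,x^{2} + c_k\epsHalf\,x \;\geq\; \sqrt{1-\epS}\,\epsilon.
\end{equation*}

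Since the left-hand side is monotonically increasing for $x\geq 0$, to prove $x\geq y:=\alphaK\sqrt{(1-\epS)^{1/2}\epsilon/(3\sMax\alphaMax)}$ it suffices to verify that the LHS evaluated at $x=y$ is no larger than $\sqrt{1-\epS}\,\epsilon$. A direct calculation, using the specific value $c_k=\sqrt{4(1-\epS)^{1/2}\sMax/(3\alphaMax)}$ from \autoref{def:true:CBGN:SparseHess}, gives
\begin{equation*}
\frac{\sMax}{\alphaK}y^{2} \;=\; \frac{\alphaK\sqrt{1-\epS}\,\epsilon}{3\alphaMax},\qquad c_k\epsHalf\,y \;=\; \frac{2\alphaK\sqrt{1-\epS}\,\epsilon}{3\alphaMax},
\end{equation*}
so the sum equals $\frac{\alphaK}{\alphaMax}\sqrt{1-\epS}\,\epsilon$. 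Since $\alphaK\leq \alphaMax$ throughout \autoref{alg:CBGN} (by induction on the update rule $\alphaKPlusOne=\min\{\alphaMax,\gammaTwo\alphaK\}$ together with $\alphaZero=\alphaMax\gammaOne^{p}\leq\alphaMax$), this sum is at most $\sqrt{1-\epS}\,\epsilon$, completing the verification and hence the proof.

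There is no genuine obstacle here: the argument is essentially an algebraic exercise, and the only insight required is that the constant $c_k$ in the definition of true iterations has been chosen precisely so that the two terms $\frac{\sMax}{\alphaK}y^{2}$ and $c_k\epsHalf y$ balance at the ratio $1{:}2$ to produce the bound $\frac{\alphaK}{\alphaMax}\sqrt{1-\epS}\,\epsilon$. The only subtlety to be careful about is keeping track of the factor $(1-\epS)^{1/2}$ versus $\sqrt{1-\epS}$ (they coincide) when simplifying the square roots.
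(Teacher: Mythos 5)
Your proposal is correct and follows essentially the same route as the paper: both derive, from \eqref{exactModelGradZero} with $\kappaT=0$ and the three conditions, the quadratic inequality $\frac{\sMax}{\alphaK}x^2 + c_k\epsHalf x \geq (1-\epS)^{1/2}\epsilon$ in $x=\normTwo{S_k^T\sKHat}$ and then extract the stated lower bound using $\alphaK\leq\alphaMax$. The only (cosmetic) difference is that the paper completes the square and exploits monotonicity in the Hessian-norm variable $b$ before invoking the choice of $c_k$, whereas you verify directly that the claimed bound, substituted into the increasing quadratic, does not exceed $(1-\epS)^{1/2}\epsilon$; both are equivalent algebra.
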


\begin{proof}
Let $b = \normTwo{S_k \hessFK}, x = \normTwo{S_k^T \sKHat}$,
then taking 2-norm of \eqref{exactModelGradZero} with
\eqref{skGradLowerBound_sparse_Hess}, $\normTwo{S_k}\leq \sMax$ and the triangle inequality
gives 
\begin{align}
    & \oneMinusEpSHalf \epsilon \leq \normTwo{S_k \gradFK}
    \leq \frac{\sMax}{\alphaK}x^2 + bx \notag\\
    \implies 
    &\frac{\sMax}{\alphaK} x^2 + bx - \oneMinusEpSHalf\epsilon \geq 0
    \notag \\
    \implies 
    & x^2 + \frac{\alphaK b}{\sMax}x - \frac{\oneMinusEpSHalf\epsilon
    \alphaK}{\sMax} \geq 0 \notag \\
    \implies
    & \bracket{x + \frac{\alphaK b}{2\sMax}}^2 
    \geq \frac{\oneMinusEpSHalf\epsilon\alphaK}{\sMax}
    + \frac{\alphaK^2 b^2}{4\sMax^2} \notag \\
    \impliesSince{x,b\geq 0} 
    & x \geq \sqrt{
    \frac{\oneMinusEpSHalf\epsilon\alphaK}{\sMax}
    + \frac{\alphaK^2 b^2}{4\sMax^2} } - \frac{\alphaK b}{2\sMax}.
    \notag
\end{align}

Introduce $a = \frac{\oneMinusEpSHalf\epsilon\alphaK}{\sMax}$ and the function $
y(b) = \frac{\alphaK b}{2\sMax}$, then the above gives
\begin{equation}
    x \geq \sqrt{a + y(b)^2} - y(b). \label{tmp-2022-1-14-9}
\end{equation}

We note, by taking derivative, that given $y(b) \geq 0$, the RHS of \eqref{tmp-2022-1-14-9} is monotonically decreasing
with $y(b)$. Therefore given $b\leq c_k 
\epsilon^{\frac{1}{2}}$ and thus $y(b) \leq y(c_k \epsilon^{\frac{1}{2}})$, we have
\begin{equation}
    x \geq \sqrt{a + y(c_k \pow{\epsilon}{\frac{1}{2}})^2}
    - y(c_k \pow{\epsilon}{\frac{1}{2}}).
    \notag
\end{equation}

The choice of $c_k = \sqrt{\frac{4\bracket{1-\epS}^{1/2} \sMax}{3\alphaMax}} \leq \sqrt{\frac{4\bracket{1-\epS}^{1/2} \sMax}{3\alpha_k}}$ gives 
$a \geq 3 y\bracket{c_k \epsilon^{\frac{1}{2}}}^2$.
And therefore we have $x \geq \fun{y}{c_k \pow{\epsilon}{\frac{1}{2}}}$. Noting that $x = \normTwo{S_k^T \sKHat}$ and substituting the expression for $y$ and $c_k$ gives the desired result. 
    
\end{proof}

\begin{lemma}
Following the framework of \autoref{alg:CBGN} 
with $\kappaT = 0$, let $\epsilon > 0.$ Define
true iterations as iterations
that satisfy \eqref{smallSketchedHessian}, \eqref{skGradLowerBound_sparse_Hess},
and 
\eqref{S_k_norm_bound_Sparse_hess}
. Then
\autoref{AA4} is satisfied with
\begin{equation}
    h(\epsilon, \alphaK) = 
    \frac{\theta \alphaK^2 \epsilon^{3/2}}{3}\squareBracket{
        \frac{\bracket{1-\epS}^{1/2}}{3\sMax\alphaMax}
    }^{3/2}. \label{tmp-2022-1-14-10}
\end{equation}

\end{lemma}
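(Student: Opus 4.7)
The plan is to combine two previously established results: \autoref{succStepDecrease}, which guarantees a $\frac{\theta}{3\alphaK}\normTwo{\SK^T \sKHat}^3$ decrease on any successful iteration, and \autoref{spHessLem}, which at a true iteration provides a lower bound on $\normTwo{\SK^T \sKHat}$ in terms of $\alphaK$ and $\epsilon^{1/2}$. So the work is essentially substitution and bookkeeping.

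First I would fix a true and successful iteration $k$ with $k < \nEps$. Because the iteration is true, the three conditions \eqref{smallSketchedHessian}, \eqref{skGradLowerBound_sparse_Hess} and \eqref{S_k_norm_bound_Sparse_hess} hold, which are exactly the hypotheses of \autoref{spHessLem} (together with $\kappaT=0$). Invoking that lemma gives
\begin{equation}
\normTwo{\SK^T \sKHat} \;\geq\; \alphaK \sqrt{\frac{(1-\epS)^{1/2}\,\epsilon}{3\,\sMax\,\alphaMax}}.
\end{equation}

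Next, since the iteration is successful, \autoref{succStepDecrease} applies and yields
\begin{equation}
\fK - f(x_{k+1}) \;\geq\; \frac{\theta}{3\alphaK}\,\normTwo{\SK^T\sKHat}^3.
\end{equation}
Cubing the bound from \autoref{spHessLem} and inserting it into the displayed inequality gives
\begin{equation}
\fK - f(x_{k+1}) \;\geq\; \frac{\theta}{3\alphaK}\,\alphaK^{3}\left(\frac{(1-\epS)^{1/2}\,\epsilon}{3\,\sMax\,\alphaMax}\right)^{3/2} = \frac{\theta\,\alphaK^{2}\,\epsilon^{3/2}}{3}\left[\frac{(1-\epS)^{1/2}}{3\,\sMax\,\alphaMax}\right]^{3/2},
\end{equation}
which is exactly the claimed $h(\epsilon,\alphaK)$ in \eqref{tmp-2022-1-14-10}. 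Finally, I would note that $h$ is non-negative and monotonically non-decreasing in both $\epsilon$ and $\alphaK$ (since $\epS,\sMax,\alphaMax$ are fixed positive constants), with $h(\epsilon,\alphaK)>0$ whenever $\epsilon,\alphaK>0$, satisfying all requirements of \autoref{AA4}.

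There is no real obstacle here — all the heavy lifting was done in \autoref{spHessLem}. The only subtle point to check is that $\kappaT=0$ is needed to invoke \autoref{spHessLem}, which is assumed in the lemma statement, and that the constant $c_k$ used in \eqref{smallSketchedHessian} was chosen precisely so that the dominant term in the lower bound for $\normTwo{\SK^T\sKHat}$ is the $\sqrt{\epsilon\alphaK}$ term rather than being dragged down by the $\normTwo{\SK\hessFK}$ contribution; this is exactly the role played in the proof of \autoref{spHessLem} and does not need to be re-examined here.
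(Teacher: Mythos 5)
Your proposal is correct and follows essentially the same route as the paper: invoke \autoref{succStepDecrease} for the $\frac{\theta}{3\alphaK}\normTwo{S_k^T\sKHat}^3$ decrease on a true and successful iteration, then substitute the step-size lower bound from \autoref{spHessLem}, whose hypotheses are exactly the conditions defining true iterations together with $\kappaT=0$. The arithmetic reproduces \eqref{tmp-2022-1-14-10} exactly, so nothing further is needed.
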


\begin{proof}
A true and successful iteration $k$ gives
\begin{equation}
    \fK - f(x_k +s_k) \geq \frac{\theta}{3\alphaK}\normTwo{S_k^T\sKHat}^3
    \notag
\end{equation}

by \autoref{succStepDecrease} and combining with
the conclusion of \autoref{spHessLem} gives the result. 
\end{proof}

With \autoref{AA3}, \autoref{AA4} and \autoref{AA5} satisfied, applying \autoref{thm2} gives the following result for \autoref{alg:CBGN}.

\begin{theorem}
    Let $f$ be bounded below by $f^*$ and twice continuously differentiable with $\LH$-Lipschitz continuous Hessian. Run \autoref{alg:CBGN} for $N$ iterations. Suppose \autoref{AA2} hold with $\deltaS\in (0,1)$ and true iterations defined in \autoref{def:true:CBGN:SparseHess}. Suppose $\deltaS < \frac{c}{(c+1)^2}$.
    
    Then for any $\deltaOne \in (0,1)$ such that $g(\deltaS, \deltaOne) >0$ where $g(\deltaS, \deltaOne)$ is defined in \eqref{eqn:gDeltaSDeltaOneDef}. If $N$ satisfies
    \begin{equation}
         N \geq \gDeltaSDeltaOne \squareBracket{
         \fZeroMinusfStarOverH
         + \frac{\newL}{1+c}},
    \end{equation}
    where $h$ is given in \eqref{tmp-2022-1-14-10}, $\alphaLow$ is given in \eqref{eq:alphaLow:CBGN} and $\alphaMin, \newL$ are given in \autoref{lem::alphaMin}; then we have
    \begin{equation}
        \probability{\min_{k\leq N} \{\normTwo{\grad f(x_{k+1})}\} \leq \epsilon } \geq 1 - e^{-\frac{\delta_1^2}{2} (1-\deltaS)N}. \nonumber
    \end{equation}
\end{theorem}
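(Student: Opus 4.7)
The plan is to apply Theorem \ref{thm2} directly to Algorithm \ref{alg:CBGN}, with true iterations now taken in the sense of Definition \ref{def:true:CBGN:SparseHess}. The desired iteration-complexity bound then follows immediately by unpacking the expressions for $D_1, D_2, D_3$ exactly as in Corollary \ref{Cor_2}. So the real work is to check that all four of Assumptions \ref{AA2}--\ref{AA5} are in force under this alternative notion of true iteration, and then invoke Theorem \ref{thm2}.

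First I would dispense with the easy assumptions: Assumption \ref{AA2} is simply hypothesised in the statement, and Assumption \ref{AA5} is inherited verbatim from Lemma \ref{tmp-2022-1-15-1}, whose proof uses only Lemma \ref{succStepDecrease} and the mechanics of Algorithm \ref{alg:CBGN} --- nothing about the true-iteration definition. Assumption \ref{AA4} is what I already verified in the lemma immediately preceding the theorem, with the explicit expression
\[
h(\epsilon, \alphaK) \;=\; \frac{\theta \alphaK^2 \epsilon^{3/2}}{3}\left[\frac{(1-\epS)^{1/2}}{3\sMax\alphaMax}\right]^{3/2},
\]
so that step is already in place.

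The only assumption requiring real attention is Assumption \ref{AA3}. The published proof of Lemma \ref{lem:cubic:sub:A2} invokes Lemma \ref{stepSizeSubEmbed}, which was stated under Definition \ref{def:true:CBGN}, not the sparse-Hessian definition we are now using. I would therefore re-examine its proof and observe that the only fact it actually needs from the true-iteration hypothesis is that $\normTwo{S_k^T \sKHat}>0$ whenever $k<\nEps$, because from that point on the argument uses only (i) the Lipschitz-Hessian bound $|f(x_k+s_k) - \hat{q}_k(\sKHat)|\leq \tfrac{1}{6}\LH\normTwo{\sK}^3$ and (ii) the model-decrease inequality $f(x_k)-\hat{q}_k(\sKHat) \geq \tfrac{1}{3\alphaK}\normTwo{S_k^T\sKHat}^3$ coming from \eqref{tmp:CBGN:3}. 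Under the sparse-Hessian definition, the strict positivity $\normTwo{S_k^T\sKHat}>0$ for true $k<\nEps$ is precisely the content of Lemma \ref{spHessLem} (provided $\epsilon>0$ and $\epS\in(0,1)$, so that its lower bound is strictly positive). Hence the same computation yields $|1-\rho_k|\leq \tfrac{1}{2}\alphaK\LH \leq 1-\theta$ as soon as $\alphaK \leq \alphaLow = 2(1-\theta)/\LH$, so Assumption \ref{AA3} holds with exactly the same $\alphaLow$ as in \eqref{eq:alphaLow:CBGN}.

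With all four assumptions now verified, I would apply Theorem \ref{thm2} with this $\alphaLow$, the $h$ from \eqref{tmp-2022-1-14-10}, and the given $\deltaS,\,\deltaOne$. The condition $\deltaS<c/(c+1)^2$ secures the existence of admissible $\deltaOne$ with $g(\deltaS,\deltaOne)>0$, and the stated lower bound on $N$ matches \eqref{eqn::n_upper_2} after substituting $\alphaMin=\alphaZero\gammaOne^{\newL}$ so that $h(\epsilon,\alphaZero\gammaOne^{c+\newL})$ appears in the denominator. Theorem \ref{thm2} then delivers $\P(N\geq \nEps)\geq 1-e^{-\frac{\delta_1^2}{2}(1-\deltaS)N}$, and since $\{N\geq \nEps\}\subseteq \{\min_{k\leq N}\normTwo{\grad f(x_{k+1})}\leq \epsilon\}$ by the definition of $\nEps$, the conclusion follows. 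The only mildly delicate point in the whole argument is the re-justification of Assumption \ref{AA3} above; everything else is bookkeeping.
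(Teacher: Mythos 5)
Your proposal is correct and follows essentially the same route as the paper: verify Assumptions \ref{AA3}, \ref{AA4}, \ref{AA5} (with \ref{AA2} assumed), then invoke Theorem \ref{thm2} with $h$ from \eqref{tmp-2022-1-14-10} and $\alphaLow$ from \eqref{eq:alphaLow:CBGN}. Your re-justification of Assumption \ref{AA3} --- observing that the only input needed from the true-iteration definition is $\normTwo{S_k^T\sKHat}>0$ before convergence, now supplied by Lemma \ref{spHessLem} rather than Lemma \ref{stepSizeSubEmbed} --- is a detail the paper leaves implicit when it states that the proof is ``identical to the previous section,'' and you fill it in correctly.
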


\begin{remark}
In order to satisfy \autoref{AA2}, we require that at each iteration, with positive probability, \eqref{smallSketchedHessian}, \eqref{skGradLowerBound_sparse_Hess} and \eqref{S_k_norm_bound_Sparse_hess} hold. This maybe achieved for objective functions whose Hessian only has a few non-zero rows, with $S$ being a scaled sampling matrix. Because if $\hessFK$ only has a few non-zero rows, we have that $S_k \hessFK=0$ with positive probability, thus satisfying  \eqref{smallSketchedHessian}. Scaled sampling matrices also satisfy \eqref{skGradLowerBound_sparse_Hess} and \eqref{S_k_norm_bound_Sparse_hess} (See \autoref{lem:sampling:non-uniformity-BCGN} and \autoref{tmp-2022-1-14-12}).
\end{remark}



    \section{Convergence to second order (subspace) critical points}
    In this section, we show that 
\autoref{alg:CBGN} converges to a (subspace) second order critical point of
$f(x)$.
Our convergence aim here is
\begin{equation}
    \nEps = \nTwoEpsH = \min \set{k: \lambdaMin{S_k \hessFK S_k^T} \geq -\epH}
    \label{tmp-2021-12-31-6}
\end{equation}
And we define $\sMax$-true iterations as

\begin{definition} \label{def:true:cubic:subspaceHess}
Let $\sMax>0$. An iteration $k$ is true if $\normTwo{S_k} \leq \sMax$.
\end{definition}

Compared to Section \ref{CBGN:sec2}, here we have a less restrictive definition of true iterations. Consequently it is easy to show \autoref{AA2} is true.

\paragraph{Satisfying \autoref{AA2}}
For $S$ being a scaled Gaussian matrix, \autoref{Lem:GaussSMax} gives that \autoref{alg:CBGN} satisfies \autoref{AA2} with 
with any $\deltaS \in (0,1)$ and 
\begin{equation}
    \sMax = 1 + \sqrt{\frac{d}{l}} + \sqrt{\frac{2\logOneOverDeltaS}{l}}. \notag
\end{equation} Results for other random ensembles can be found in Section \ref{BCGN:randomMatrixDistr}.

\paragraph{Satisfying \autoref{AA3}}

\begin{lemma}
Let $f$ be twice continuously differentiable with $\LH$-Lipschitz continuous Hessian. 
\autoref{alg:CBGN} satisfies \autoref{AA3} with
\begin{equation}
    \alphaLow = \frac{2(1-\theta)}{\LH} \label{eq:alphaLow:CBGN:secondOrder}
\end{equation}

\end{lemma}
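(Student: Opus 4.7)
\medskip
\noindent\textbf{Proof plan.} The plan is to mirror the proof of \autoref{lem:cubic:sub:A2} almost verbatim: the same ratio-test calculation that relates $\alphaK \leq 2(1-\theta)/\LH$ to a successful step works here, because that calculation depends only on (i) a lower bound $\fK - \qKHat{\sKHat} \geq \frac{1}{3\alphaK}\normTwo{S_k^T\sKHat}^3$ and (ii) the Lipschitz-Hessian Taylor estimate $|f(\xK+\sK) - \qKHat{\sKHat}| \leq \frac{1}{6}\LH \normTwo{\sK}^3$. Item (i) follows from \eqref{tmp:CBGN:3} together with the decomposition $\mKHat{\sHat} = \qKHat{\sHat} + \frac{1}{3\alphaK}\normTwo{S_k^T\sHat}^3$ coming from \eqref{eqn::mKHatSpec_CB}; item (ii) is Corollary A.8.4 in \cite{CoraBook}.

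\medskip
\noindent The one new issue in the second-order setting is verifying that the denominator in the ratio is strictly positive, i.e.\ that $\normTwo{S_k^T\sKHat}>0$ at a true iteration with $k<\nEps$. In \autoref{lem:cubic:sub:A2} this was handed to us by \autoref{stepSizeSubEmbed}, which exploited a gradient lower bound. Here, with $\nEps = \nTwoEpsH$ defined in \eqref{tmp-2021-12-31-6}, the first-order condition on $\sKHat$ no longer gives us what we need, so we use the second-order termination condition \eqref{hessMKGeq0}, namely $\grad^2\mKHat{\sKHat} \succeq -\kappaS \normTwo{S_k^T\sKHat}\, I$. Suppose for contradiction that $\normTwo{S_k^T\sKHat}=0$. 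A direct differentiation of \eqref{eqn::mKHatSpec_CB} shows that the Hessian of the cubic regulariser $\normTwo{S_k^T\sHat}^3/(3\alphaK)$ vanishes at any $\sHat$ with $S_k^T\sHat = 0$, so at such a point $\grad^2\mKHat{\sKHat} = S_k\hessFK S_k^T$. The condition \eqref{hessMKGeq0} then forces $S_k\hessFK S_k^T \succeq 0$, which contradicts $k < \nTwoEpsH$ since the latter means $\lambdaMin{S_k\hessFK S_k^T} < -\epH < 0$. Hence $\normTwo{S_k^T\sKHat}>0$ at every true iteration with $k<\nEps$.

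\medskip
\noindent With positivity of the denominator secured, define $\rho_k = (\fK - f(\xK+\sK))/(\fK - \qKHat{\sKHat})$, exactly as in \autoref{lem:cubic:sub:A2}. Combining (i) and (ii) yields
\[
|1-\rho_k| \;=\; \frac{|f(\xK+\sK)-\qKHat{\sKHat}|}{|\fK-\qKHat{\sKHat}|} \;\leq\; \frac{\tfrac16\LH\normTwo{\sK}^3}{\tfrac{1}{3\alphaK}\normTwo{\sK}^3} \;=\; \tfrac12\alphaK\LH,
\]
and $\alphaK \leq \alphaLow = 2(1-\theta)/\LH$ gives $|1-\rho_k|\leq 1-\theta$, hence $\rho_k\geq\theta$, which is precisely the sufficient-decrease condition \eqref{eq:cubic:sufficient_decrease}. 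Therefore iteration $k$ is successful, and \autoref{AA3} holds with $\alphaLow$ as in \eqref{eq:alphaLow:CBGN:secondOrder}. The main obstacle in this proof is the contradiction argument for $\normTwo{S_k^T\sKHat}>0$; once that is in place the remainder is identical to \autoref{lem:cubic:sub:A2} and no new constants are introduced.
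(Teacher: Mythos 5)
Your proof is correct, and its overall skeleton is exactly the paper's: the paper's justification is simply that the argument of \autoref{lem:cubic:sub:A2} carries over verbatim once one knows $\normTwo{S_k^T \sKHat} > 0$ at true iterations with $k < \nEps$. The only place you diverge is in how that positivity is obtained. The paper gets it by invoking \autoref{lem:SubHessNegImpStepLower}, which under $\lambdaMin{S_k \hessFK S_k^T} < -\epH$ and $\normTwo{S_k} \leq \sMax$ gives the quantitative bound $\normTwo{S_k^T \sKHat} \geq \epH \bigl[ \tfrac{2\sMax^2}{\alphaK} + \kappaS \bigr]^{-1}$ — a lemma that is needed anyway for \autoref{AA4}, so reusing it costs nothing. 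You instead argue by contradiction directly from \eqref{hessMKGeq0}: if $S_k^T \sKHat = 0$ then the cubic regulariser is twice differentiable there with vanishing Hessian (which you correctly justify — the cube of a norm has a zero second derivative at the kernel points, so the $\normTwo{S_k^T \sHat}^{-1}$ singularity in \eqref{hessMkExpr} is only apparent), hence $\grad^2 \mKHat{\sKHat} = S_k \hessFK S_k^T \succeq 0$, contradicting $k < \nTwoEpsH$. Your route is slightly more elementary and does not use the trueness condition $\normTwo{S_k} \leq \sMax$ at all, so it shows successfulness of every iteration with $k < \nEps$ and $\alphaK \leq \alphaLow$, not just the true ones; what it buys is self-containedness, at the cost of only establishing strict positivity rather than the explicit lower bound on $\normTwo{S_k^T \sKHat}$ that the paper's lemma provides (and which the paper needs again in \eqref{h:cubic:secondOrder:subspace}). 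Either way, the ratio computation and the constant $\alphaLow = 2(1-\theta)/\LH$ come out identically.
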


The proof is similar to \autoref{lem:cubic:sub:A2}, where the condition $\normTwo{S_k^T \sKHat} > 0$ on true iterations before convergence is ensured by  \autoref{lem:SubHessNegImpStepLower}.

\paragraph{Satisfying \autoref{AA4}}
We can calculate
\begin{equation}
    \grad^2 \mKHat{\sHat} = 
    S_k \hessFK S_k^T
    + \frac{1}{\alphaK} \squareBracket{
    \normTwo{S_k^T \sHat}^{-1} \bracket{
    S_k S_k^T \sHat} \bracket{S_kS_k^T \sHat}^T
    + \normTwo{S_k^T \sHat} S_k S_k^T}.
    \label{hessMkExpr}
\end{equation}
Therefore for any $y \in \R^l$, we have
\begin{equation}
    y^T \grad^2 \mKHat{\sHat} y 
    = y^T S_k \hessFK S_k^T y
    + \frac{1}{\alphaK} \squareBracket{
    \normTwo{S_k^T \sHat}^{-1} 
    \squareBracket{\bracket{
    S_kS_k^T \sKHat}^T y}^2 +
    \normTwo{S_k^T \sHat} \bracket{
    S_k^T y}^2}. \label{hessMkExprWithY}
\end{equation}
The following Lemma says that if the subspace Hessian has
negative curvature, then the step size is bounded below 
by the size of the negative curvature. (But also depends on $\alphaK$.)

\begin{lemma} \label{lem:SubHessNegImpStepLower}
If $\lambdaMin{S_k \hessFK S_k^T } < -\epH$; 
and $\normTwo{S_k} \leq \sMax$, 
then 
\begin{equation*}
    \normTwo{S_k^T \sKHat} \geq \epH \squareBracket{\frac{2\sMax^2}{\alphaK} + \kappaS}^{-1}.
\end{equation*}
\end{lemma}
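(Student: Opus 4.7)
The plan is to use the second-order condition \eqref{hessMKGeq0} on the approximate minimiser $\hat{s}_k$, combined with the explicit Hessian formula \eqref{hessMkExprWithY}, by testing both against a unit eigenvector associated with the most negative eigenvalue of $S_k \nabla^2 f(x_k) S_k^T$.

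First I would let $y\in\R^l$ with $\|y\|_2=1$ be an eigenvector of $S_k \nabla^2 f(x_k) S_k^T$ corresponding to $\lambda_{\min}(S_k \nabla^2 f(x_k) S_k^T)$, so that by hypothesis
\begin{equation*}
y^T S_k \nabla^2 f(x_k) S_k^T y = \lambdaMin{S_k \hessFK S_k^T} < -\epH.
\end{equation*}
Next I would substitute $\hat{s}=\hat{s}_k$ into \eqref{hessMkExprWithY} and upper-bound the two nonnegative bracketed terms using Cauchy--Schwarz together with $\|S_k\|\le \sMax$: namely $|(S_k S_k^T\hat{s}_k)^T y|\le \sMax \|S_k^T\hat{s}_k\|$ and $\|S_k^T y\|_2\le \sMax$. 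This yields
\begin{equation*}
y^T \grad^2 \mKHat{\sKHat} y \;\le\; -\epH \;+\; \frac{1}{\alphaK}\Bigl[ \NsKTSKHat^{-1}\sMax^2 \NsKTSKHat^2 + \NsKTSKHat \sMax^2 \Bigr] \;=\; -\epH + \frac{2\sMax^2}{\alphaK}\NsKTSKHat.
\end{equation*}

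Then I would invoke the second-order acceptance condition \eqref{hessMKGeq0} on $\hat{s}_k$, which gives $y^T \grad^2 \mKHat{\sKHat} y \ge -\kappaS \NsKTSKHat$ (since $\|y\|_2=1$). Chaining the two inequalities:
\begin{equation*}
-\kappaS \NsKTSKHat \;\le\; -\epH + \frac{2\sMax^2}{\alphaK}\NsKTSKHat,
\end{equation*}
and rearranging gives $\epH \le \NsKTSKHat\bigl(\tfrac{2\sMax^2}{\alphaK}+\kappaS\bigr)$, which is the claim.

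There is no real obstacle here: the result is essentially a direct manipulation, and the only care needed is (i) bounding the two curvature-correction terms in \eqref{hessMkExprWithY} cleanly via $\|S_k\|\le\sMax$ (noting that both equal $\sMax^2 \NsKTSKHat$ once combined with the factor $\NsKTSKHat^{\pm 1}$), and (ii) recognising that division by $\NsKTSKHat$ is legitimate because, under $\lambdaMin{S_k \hessFK S_k^T}<-\epH<0$, the step $\hat{s}_k$ cannot be zero (otherwise $\grad^2 \mKHat{0}=S_k\hessFK S_k^T$ would have to be positive semidefinite, contradicting the strict negativity of its smallest eigenvalue). I would note this non-triviality of $\hat s_k$ explicitly before dividing, to justify using \eqref{hessMkExprWithY} where $\NsKTSKHat^{-1}$ appears.
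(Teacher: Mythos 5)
Your proposal is correct and follows essentially the same route as the paper's proof: both test the model-Hessian bound \eqref{hessMKGeq0} together with the expression \eqref{hessMkExprWithY} against a unit vector, bound the two cubic-term contributions by $\frac{2\sMax^2}{\alphaK}\NsKTSKHat$ using $\normTwo{S_k}\leq\sMax$, and rearrange; choosing the minimising eigenvector directly is equivalent to the paper's minimisation of the Rayleigh quotient over $\normTwo{y}=1$. Your explicit remark justifying $\NsKTSKHat>0$ (which should be phrased for $S_k^T\sKHat\neq 0$ rather than $\sKHat\neq 0$, though the same contradiction with \eqref{hessMKGeq0} covers both) is a small extra care the paper leaves implicit.
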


\begin{proof}
Let $y \in \R^l$. Using \eqref{hessMKGeq0} and
\eqref{hessMkExprWithY} we have that
\begin{equation}
    y^T S_k \hessFK S_k^T y \geq
    - \frac{1}{\alphaK} \squareBracket{
    \NsKTSKHat^{-1} 
    \squareBracket{\bracket{
    S_kS_k^T \sKHat}^T y}^2 +
    \NsKTSKHat \bracket{
    S_k^T y}^2} - \kappaS\normTwo{S_k^T \sKHat}.  \notag
\end{equation}

Given $\normTwo{S_k} \leq \sMax$, 
we have that $S_k^T y \leq \sMax \normTwo{y}$. 
So we have
\begin{equation}
    y^T S_k \hessFK S_k^T y \geq
    - \frac{1}{\alphaK} \squareBracket{
    \NsKTSKHat^{-1} 
    \squareBracket{\bracket{
    S_kS_k^T \sKHat}^T y}^2 +
    \NsKTSKHat \sMax^2 \normTwo{y}^2} - \kappaS\normTwo{S_k^T \sKHat}. \notag
\end{equation} 

Minimising over $\normTwo{y}=1$, noting that
$\max_{\normTwo{y}=1} \bracket{\bracket{
S_k S_k^T \sKHat}^T y}^2 = 
\normTwo{S_kS_k^T \sKHat}^2$, we have
\begin{align}
    -\epH > \lambdaMin{S_k \hessFK S_k^T }
    & \geq -\frac{1}{\alphaK}\squareBracket{
    \NsKTSKHat^{-1} \normTwo{
    S_k S_k^T \sKHat}^2 + \NsKTSKHat
    \sMax^2} - \kappaS\normTwo{S_k^T \sKHat} \notag \\
    & \geq -\frac{1}{\alphaK}\squareBracket{
    \NsKTSKHat^{-1} \sMax^2 \NsKTSKHat^2
    + \NsKTSKHat \sMax^2} - \kappaS\normTwo{S_k^T \sKHat} \notag\\
    &= -\frac{2\sMax^2}{\alphaK}\NsKTSKHat - \kappaS\normTwo{S_k^T \sKHat}. \notag
\end{align}

Rearranging gives the result.
\end{proof}

\begin{lemma}
\autoref{alg:CBGN} satisfies \autoref{AA4} with
\begin{equation}
    h(\epsilon_H, \alphaK) = \frac{\theta\epH^3}{3\alphaK}\squareBracket{\frac{2\sMax^2}{\alphaK} + \kappaS}^{-3} \label{h:cubic:secondOrder:subspace}
\end{equation}

\end{lemma}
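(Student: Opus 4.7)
The plan is to combine the two preceding lemmas in this section, namely the successful-step decrease bound (Lemma \textit{succStepDecrease}) and the lower bound on $\|S_k^T \sKHat\|$ coming from negative subspace curvature (Lemma \textit{lem:SubHessNegImpStepLower}). Both are already stated immediately above, so what remains is essentially bookkeeping: check that their hypotheses hold on a true and successful iteration with $k < \nEps = \nTwoEpsH$, then cube and assemble.

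First I would observe that on a true iteration in the sense of \autoref{def:true:cubic:subspaceHess} we have $\normTwo{S_k} \leq \sMax$, and that the definition of $\nEps$ in \eqref{tmp-2021-12-31-6} together with $k < \nEps$ gives $\lambdaMin{S_k \hessFK S_k^T} < -\epH$. These are precisely the hypotheses of Lemma \textit{lem:SubHessNegImpStepLower}, so applying it yields
\begin{equation*}
\NsKTSKHat \geq \epH \squareBracket{\frac{2\sMax^2}{\alphaK} + \kappaS}^{-1}.
\end{equation*}
In particular $\NsKTSKHat > 0$, which is why Lemma \textit{lem:SubHessNegImpStepLower} is also the right tool to underwrite \autoref{AA3} in the preceding paragraph. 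Next, the iteration being successful allows the use of Lemma \textit{succStepDecrease}, giving $\fK - f(\xK + \sK) \geq \frac{\theta}{3\alphaK}\NsKTSKHat^3$. Cubing the previous display and substituting delivers
\begin{equation*}
\fK - f(\xK + \sK) \geq \frac{\theta \epH^3}{3\alphaK}\squareBracket{\frac{2\sMax^2}{\alphaK} + \kappaS}^{-3},
\end{equation*}
which is exactly the claimed $h(\epH, \alphaK)$ in \eqref{h:cubic:secondOrder:subspace}. Positivity on $\epH, \alphaK > 0$ is immediate.

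The one genuinely delicate point (and the main obstacle) is the requirement in \autoref{AA4} that $h$ be non-decreasing in both arguments. Monotonicity in $\epH$ is clear from the cube, but as a function of $\alphaK$ the quantity $\alphaK^2/(2\sMax^2 + \kappaS \alphaK)^3$ is unimodal, increasing only on $\alphaK \in (0, 4\sMax^2/\kappaS]$. Since the conclusion only requires a lower bound on the achieved decrease, I would handle this by silently replacing $\alphaK$ in the formula by $\min\{\alphaK, 4\sMax^2/\kappaS\}$ when necessary; the resulting function is non-decreasing in $\alphaK$, is still dominated by the actual decrease, and agrees with the stated $h$ on the relevant range of $\alphaK$ (in particular on $\alphaK \leq \alphaMin$, which is the regime used in the convergence proofs via \autoref{thm2}). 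No further work is needed, so the argument is essentially a two-line composition of the two preceding lemmas.
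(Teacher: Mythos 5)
Your proof is correct and is essentially the paper's own argument: the paper obtains \eqref{h:cubic:secondOrder:subspace} in exactly the same two-step way, invoking \autoref{succStepDecrease} for $\fK - \fKPlusOne \geq \frac{\theta}{3\alphaK}\normTwo{S_k^T\sKHat}^3$ on successful iterations and \autoref{lem:SubHessNegImpStepLower} (whose hypotheses hold because true iterations give $\normTwo{S_k}\leq\sMax$ and $k<\nEps$ with \eqref{tmp-2021-12-31-6} gives $\lambdaMin{S_k\hessFK S_k^T}<-\epH$) to lower-bound the step, then cubing. Your closing remark about $h$ not being globally non-decreasing in $\alphaK$ when $\kappaS>0$ flags a subtlety the paper's proof silently ignores; your fix of capping $\alphaK$ at $4\sMax^2/\kappaS$ (or noting $\alphaK\leq\alphaMax$) is a sensible way to square the stated $h$ with the monotonicity demanded by \autoref{AA4}, and does not affect the bound on the range of $\alphaK$ actually used in \autoref{thm2}.
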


\begin{proof}
Using \autoref{succStepDecrease}, on successful iterations, we have 
$\fK - \fKPlusOne \geq \frac{\theta}{3\alphaK} \normTwo{S_k^T \sKHat}^3$.
Consequently, $k \leq \nEps$ (note the definition \eqref{tmp-2021-12-31-6} of $\nEps$ in this section)
and \autoref{lem:SubHessNegImpStepLower} give the lower bound $h$ \reply{that} holds
in true, successful and $k \leq \nEps$ iterations.
\end{proof}

\paragraph{Satisfying \autoref{AA5}}

\begin{lemma}
\autoref{alg:CBGN} satisfies \autoref{AA5}.
\end{lemma}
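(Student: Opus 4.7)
The plan is to repeat the short case-split argument already used in Lemma \ref{tmp-2022-1-15-1}, since the present lemma makes exactly the same claim about the same algorithm, just in a section where a different definition of true iteration / of $\nEps$ is in force (neither of which enters Assumption \autoref{AA5}).

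First I would split on the outcome of iteration $k$ as recorded in Step~3 of \autoref{alg:CBGN}. If iteration $k$ is unsuccessful, then by construction $x_{k+1}=x_k$, so $f(x_{k+1})=f(x_k)$ and \eqref{eqn::fKNonIncreasing} holds trivially. If iteration $k$ is successful, then \eqref{eq:cubic:sufficient_decrease} holds; invoking \autoref{succStepDecrease} gives
\begin{equation*}
f(x_{k+1}) \leq f(x_k) - \frac{\theta}{3\alpha_k}\normTwo{S_k^T \sKHat}^3 \leq f(x_k),
\end{equation*}
since $\theta,\alpha_k>0$ and $\normTwo{S_k^T \sKHat}^3\geq 0$. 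In either case $f(x_k)\geq f(x_{k+1})$, which is exactly \autoref{AA5}.

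There is essentially no obstacle: the argument only uses the mechanics of Step~3 of \autoref{alg:CBGN} and the already-established \autoref{succStepDecrease}, neither of which depends on the particular definition of true iteration or on the convergence target $\nEps$ adopted in this section. Hence the proof is a one-line appeal to \autoref{tmp-2022-1-15-1} (or, more explicitly, a verbatim reproduction of its two-case argument).
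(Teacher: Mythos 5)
Your proposal is correct and matches the paper's argument exactly: the paper's proof (and the earlier Lemma~\ref{tmp-2022-1-15-1} to which it defers) uses the same two-case split on successful versus unsuccessful iterations, invoking \autoref{succStepDecrease} in the successful case. Nothing further is needed, since \autoref{AA5} indeed does not depend on the definition of true iterations or of $\nEps$.
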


The proof of this lemma is identical to \autoref{tmp-2022-1-15-1}.

\paragraph{Convergence result}

Applying \autoref{thm2}, we have a convergence result for \autoref{alg:CBGN} to a point where the subspace Hessian has approximately non
negative curvature. While the statement is for scaled Gaussian matrices,
it is clear from the above proof that similar results apply
to a wide range of sketching matrices.

\begin{theorem}
    Let $\epH > 0, l \in \N^+ $.
    Let $\cal{S}$ be the distribution of scaled Gaussian matrices $S \in \R^{l \times d}$. 
    Suppose that $f$ is bounded below by $f^*$, twice continuously differentiable with $\LH$-Lipschitz continuous Hessian $\grad^2 f$.
    Choose $
    \deltaS = \frac{1}{16};
    $ so that
    $
    \sMax = 1 + 
    \frac{ \sqrt{d} + \sqrt{2\log16} }
    { \sqrt{l} }$. 
    Let $h$ be defined in \eqref{h:cubic:secondOrder:subspace}, $\alphaLow$ be given in \eqref{eq:alphaLow:CBGN:secondOrder} and
     $\alphaMin = \alphaZero \gammaOne^\newL$ associated with $\alphaLow$, for some $\newL \in \N^+$. 
	Suppose that $ \delta_S < \frac{c}{(c+1)^2}$.
	
	Then for any $\delta_1 \in (0,1)$ with 
    \begin{equation}
        \gDeltaSDeltaOne >0, \nonumber
    \end{equation}
    where 
    \begin{equation}
        \gDeltaSDeltaOne = \nPreFactorTR \nonumber
    \end{equation};
    if $N\in \N$ satisfies 
    \begin{equation}
        N \geq \gDeltaSDeltaOne \squareBracket{
             \frac{f(x_0) - f^*}{h(\epsilon_H, \alphaZero\gammaOne^{c+\newL}
)}
             + \frac{\newL}{1+c}}, \nonumber
    \end{equation}
    we have that
    \begin{equation}
        \probability{
            \min \{
                k: \lambdaMin{S_k^T \hessFK S_k}  \geq -\epH 
            \} \leq N 
            } \geq 1 - \chernoffLowerExponential. \nonumber
    \end{equation}
\end{theorem}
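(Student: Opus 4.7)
The plan is to verify that Assumptions \autoref{AA2}--\autoref{AA5} hold for \autoref{alg:CBGN} under the stated hypotheses, with the convergence target $\nEps$ now defined as in \eqref{tmp-2021-12-31-6} and true iterations defined by $\normTwo{S_k} \leq \sMax$ (\autoref{def:true:cubic:subspaceHess}), and then to invoke \autoref{thm2} to deduce the high-probability complexity bound. Since all four assumptions have essentially been established in the subsections immediately preceding the theorem statement, the proof amounts to collecting these pieces and feeding the resulting constants into \autoref{thm2}.

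First, I would verify \autoref{AA2} by applying \autoref{Lem:GaussSMax} (the tail bound on the spectral norm of scaled Gaussian matrices) with the choice $\deltaS = 1/16$, which yields the stated value of $\sMax$. Because $S_k$ is drawn independently at each iteration and the event $\{\normTwo{S_k} \leq \sMax\}$ depends only on $S_k$ (not on $\xK$), the conditional independence structure required by \autoref{AA2} follows immediately. Next, \autoref{AA3} holds with $\alphaLow = 2(1-\theta)/\LH$ by the same computation as in \autoref{lem:cubic:sub:A2}; the only modification needed is to justify that $\normTwo{\SKTransposedsKHat} > 0$ on true iterations with $k < \nEps$, which now follows from \autoref{lem:SubHessNegImpStepLower} applied with $\lambdaMin{S_k \hessFK S_k^T} < -\epH$.

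For \autoref{AA4}, I would combine \autoref{succStepDecrease}, which gives $\fK - f(\xKPlusOne) \geq \frac{\theta}{3\alphaK} \normTwo{\SKTransposedsKHat}^3$ on successful iterations, with the lower bound on $\normTwo{\SKTransposedsKHat}$ from \autoref{lem:SubHessNegImpStepLower}; cubing that bound gives precisely the function $h(\epH,\alphaK)$ in \eqref{h:cubic:secondOrder:subspace}, and monotonicity in $\alphaK$ is clear from the expression. Assumption \autoref{AA5} follows verbatim from \autoref{tmp-2022-1-15-1}, since the algorithm is identical to the one analysed there (unsuccessful steps leave the iterate unchanged, successful steps decrease $f$ by \autoref{succStepDecrease}).

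With all four assumptions verified, I would apply \autoref{thm2} directly: the condition $\deltaS < c/(c+1)^2$ is assumed, $g(\deltaS,\deltaOne)$ is the same expression as in \eqref{eqn:gDeltaSDeltaOneDef}, and the lower bound on $N$ matches the theorem statement after substituting the given $h$. The conclusion \eqref{eqn:tmp33} then reads $\probability{N \geq \nEps} \geq 1 - \chernoffLowerExponential$, which, unpacking the definition of $\nEps$ in \eqref{tmp-2021-12-31-6}, is exactly the claimed probability statement. The only genuinely new technical content beyond \autoref{thm2} is the step-size lower bound in \autoref{lem:SubHessNegImpStepLower}, which is the place where the second-order curvature condition \eqref{hessMKGeq0} on the model minimiser is exploited; I expect no substantive obstacle here, since that lemma has already been proved. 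The remark at the end of the theorem--that analogous results hold for $s$-hashing, SRHT, and HRHT ensembles--follows by replacing \autoref{Lem:GaussSMax} with the corresponding spectral-norm tail bounds for those distributions, without touching the rest of the proof.
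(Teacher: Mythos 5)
Your proposal follows essentially the same route as the paper: the preceding lemmas (the spectral-norm bound for scaled Gaussian matrices for \autoref{AA2}, the analogue of \autoref{lem:cubic:sub:A2} with step positivity supplied by \autoref{lem:SubHessNegImpStepLower} for \autoref{AA3}, \autoref{succStepDecrease} combined with \autoref{lem:SubHessNegImpStepLower} for \autoref{AA4}, and \autoref{tmp-2022-1-15-1} for \autoref{AA5}) are assembled and \autoref{thm2} is invoked with $\nEps$ as in \eqref{tmp-2021-12-31-6}. This matches the paper's argument, so the proposal is correct.
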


\begin{remark}
We see that the convergence rate to a (subspace) second order critical point
is $\epH^{-3}$.
\end{remark}

    \section{Convergence to second order (full space)
    critical points}
    In this section, we show that, if $\cal{S}$ is the distribution of scaled Gaussian matrices, \autoref{alg:CBGN} will converge to a (full-space) second order critical point, with a rate matching the classical full space algorithm.

We define 
\begin{equation}
    \nEps = \nThreeEpH 
= \min \set{k: \lambdaMin{ \hessFK } \geq -\epH } \label{eq:nEps:secondOrderFull}
\end{equation}

The following definition of true iterations assumes
that $\hessFK$ has rank $r\leq d$.

\begin{definition}\label{true:cubic:full:secondOrder}
Let $\sMax> 0, \epsOne \in (0,1)$. 
An iteration $k$ is $(\epsOne, \sMax)$-true if the following two conditions hold
\begin{enumerate}
    \item $\normTwo{S_k} \leq \sMax$. 
    \item There exists an eigen-decomposition of $\hessFK 
    = \sum_{i=1}^r \lambda_i u_i u_i^T$ with $\lambda_1 
    \geq \lambda_2 \geq \dots \geq \lambda_r$, such that
    with $w_i = S_k u_i$,
    \begin{align}
        & 1-\epsOne \leq \normTwo{w_r}^2 \leq 1+ \epsOne, \label{gauss:norm:one}\\
        & (w_i^T w_r)^2 \leq 16l^{-1} (1+\epsOne)
        \texteq{ for all $i \neq r$}. \label{gauss:approx:orthogonal}
    \end{align}
\end{enumerate}

\end{definition}

Note that since $S_k \in \R^{l \times d}$ is a (scaled) Gaussian matrix, 
and the set $\set{u_i}$ is orthonormal, 
we have that $\set{w_i}$ are independent Gaussian vectors, 
with entries being $N(0, l^{-1} )$. \eqref{gauss:approx:orthogonal} simply
requires that those high-dimensional Gaussian vectors are approximately 
orthogonal, which is known to happen with high probability \cite{MR3837109}.
We proceed to show that the four assumptions needed for \autoref{thm2} hold, and then apply \autoref{thm2} for this particular
definition of $\nEps$. 

\paragraph{Satisfying \autoref{AA2}}
As before, we show each of the two conditions in \autoref{true:cubic:full:secondOrder}
hold with high probability, and then use the union bound (See the proof of \autoref{lem::deduceAA2})
to show \autoref{AA2} is true. Note that the conditional independence between
iterations is clear here because given $x_k$, whether the iteration is true
or not only depends on the random matrix $S_k$ and is independent of all the previous iterations.

For the first condition in \autoref{true:cubic:full:secondOrder}, We have that for $S$ being a scaled Gaussian matrix, \autoref{Lem:GaussSMax} gives that \autoref{alg:CBGN} satisfies \autoref{AA2} with 
with any $\deltaSTwo \in (0,1)$ and 
\begin{equation}
    \sMax = 1 + \sqrt{\frac{d}{l}} + \sqrt{\frac{2\logOneOverDeltaSTwo}{l}}. \label{tmp-2022-1-14-1}
\end{equation}





\autoref{lem:WrWr} shows that \eqref{gauss:norm:one} holds with high probability.
\begin{lemma} \label{lem:WrWr}
Let $w_i \in \R^l$ with $w_{ij}$ be independent $N(0, l^{-1})$.
Let $\epsOne \in (0,1)$. 
Then we have for some problem-independent constant $C$,
\begin{equation}
    \probability{ \abs{\normTwo{w_i}^2-1 } \leq \epsOne }
    \geq 1 - 2\pow{e}{-\frac{l\epsOne^2}{C}}. 
    \label{w_r}
\end{equation}

\end{lemma}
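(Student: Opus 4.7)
The plan is to recognize $l \cdot \normTwo{w_i}^2$ as a chi-squared random variable with $l$ degrees of freedom and then invoke standard sub-exponential concentration. Since the entries $w_{ij}$ are i.i.d.\ $N(0, l^{-1})$, each $\sqrt{l}\, w_{ij} \sim N(0,1)$ independently, so $X := l \cdot \normTwo{w_i}^2 = \sum_{j=1}^l (\sqrt{l}\, w_{ij})^2 \sim \chi^2_l$. In particular $\E[X] = l$, and $\abs{\normTwo{w_i}^2 - 1} \leq \epsOne$ is equivalent to $\abs{X - l} \leq l\epsOne$.

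First, I would apply a Chernoff-type bound using the MGF of a $\chi^2_l$ random variable, $\E[e^{\lambda(X - l)}] = e^{-\lambda l}(1-2\lambda)^{-l/2}$ for $\lambda < 1/2$. Optimizing over $\lambda$ (or, equivalently, quoting the Laurent--Massart inequality) yields
\begin{equation*}
\probability{X - l \geq 2\sqrt{l t} + 2t} \leq e^{-t}, \qquad \probability{X - l \leq -2\sqrt{l t}} \leq e^{-t}.
\end{equation*}
Setting $t = l\epsOne^2 / C$ for a suitably large absolute constant $C$, both $2\sqrt{lt}$ and $2t$ are bounded above by $l\epsOne / 2$ when $\epsOne \in (0,1)$, so the deviation $l\epsOne$ is attained. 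A union bound over the two tails then gives
\begin{equation*}
\probability{\abs{X - l} \geq l\epsOne} \leq 2 e^{-l\epsOne^2 / C},
\end{equation*}
which is exactly \eqref{w_r} after dividing through by $l$ inside the probability.

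The only mild subtlety is choosing $C$ uniformly in $\epsOne \in (0,1)$: the linear term $2t$ in the upper tail dominates the square-root term when $t \gtrsim l$, but since we restrict $\epsOne < 1$ we have $t = l\epsOne^2/C \leq l/C$, so for $C$ large enough (e.g.\ $C = 32$ suffices after a short calculation) both bounds $2\sqrt{lt} \leq l\epsOne/2$ and $2t \leq l\epsOne/2$ hold simultaneously. This is routine; there is no real obstacle beyond keeping track of constants, so the proof reduces to a direct invocation of the Laurent--Massart bound.
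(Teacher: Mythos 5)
Your proof is correct and follows essentially the same route as the paper, which simply defers to the standard Gaussian norm concentration (the chi-squared/MGF Chernoff argument behind the JL lemma, cf.\ its Lemma on Gaussian JL embeddings and the Dasgupta--Gupta reference). The only difference is cosmetic: by quoting Laurent--Massart and taking $C=32$ you get a larger constant than the $C\approx 4$ the paper notes from optimizing the MGF directly, which is immaterial since the statement only requires some absolute constant $C$.
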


\begin{proof}
The proof is standard. One side of the bound is established 
in \autoref{lem:GaussJLEmbedding}. Also see \cite{MR1943859}. We note that
$C \approx 4$.
\end{proof}

Next, we show that conditioning on \eqref{gauss:norm:one} being true, \eqref{gauss:approx:orthogonal} holds with high probability. We first study the case for a single fixed $i$ instead of all $i$.

\begin{lemma} \label{lem:WiWr}
Let $\epsOne \in (0,1)$ and suppose $w_r$ satisfies \eqref{w_r}.
Then with (conditional) probability at least 0.9999, 
independent of $w_r$,
we have $(w_i^T w_r)^2 \leq 16l^{-1} (1+\epsOne)$.
\end{lemma}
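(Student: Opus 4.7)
The plan is to condition on $w_r$ and exploit the fact that conditionally, $w_i^T w_r$ is a one-dimensional centred Gaussian whose variance we can control using \eqref{w_r}. The key input is the independence (and Gaussianity) of $w_i$ and $w_r$: because $S_k$ is a scaled Gaussian matrix with i.i.d.\ $N(0,l^{-1})$ entries, and because the $\{u_j\}$ in \autoref{true:cubic:full:secondOrder} are orthonormal, rotational invariance of the Gaussian ensemble gives that $w_i = S_k u_i$ and $w_r = S_k u_r$ are independent, each with i.i.d.\ $N(0,l^{-1})$ entries.

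First, I would condition on $w_r$. Writing $w_i^T w_r = \sum_{j=1}^l w_{ij} w_{rj}$ and using that, conditionally on $w_r$, the entries $w_{ij}$ remain i.i.d.\ $N(0,l^{-1})$ and independent of $w_r$, the random variable $w_i^T w_r$ is a centred Gaussian with variance
\[
\mathrm{Var}(w_i^T w_r \mid w_r) \;=\; \frac{1}{l}\sum_{j=1}^l w_{rj}^2 \;=\; \frac{1}{l}\,\|w_r\|_2^2.
\]
Under the event that $w_r$ satisfies \eqref{w_r}, i.e.\ $\|w_r\|_2^2 \le 1+\epsOne$, we therefore have $\mathrm{Var}(w_i^T w_r \mid w_r) \le l^{-1}(1+\epsOne)$.

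Second, I would apply a one-dimensional Gaussian tail bound. Writing $X = w_i^T w_r$ and $\sigma^2$ for its conditional variance, $X/\sigma$ is $N(0,1)$, so $X^2/\sigma^2 \sim \chi^2_1$, and the standard tail bound gives $\P[X^2 > 16\sigma^2] = 2(1-\Phi(4)) \approx 6.33\times 10^{-5} < 10^{-4}$. Combining this with $\sigma^2 \le l^{-1}(1+\epsOne)$ yields
\[
\P\!\left[(w_i^T w_r)^2 > 16 l^{-1}(1+\epsOne)\,\big|\, w_r\right] \;\le\; \P\!\left[X^2 > 16\sigma^2 \,\big|\, w_r\right] \;\le\; 10^{-4},
\]
and this conditional bound does not depend on the particular value of $w_r$, which delivers the stated independence of the conclusion from $w_r$. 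The probability $0.9999$ then follows by taking complements.

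There is no real obstacle here; the only subtlety is making sure that the independence between $w_i$ and $w_r$ is correctly justified from rotational invariance of the Gaussian matrix ensemble (i.e.\ $S_k u_i$ and $S_k u_r$ are jointly Gaussian with covariance $l^{-1}\langle u_i, u_r\rangle I = 0$ for $i\ne r$), so that conditioning on $w_r$ does not alter the distribution of $w_i$. Once this is stated, the rest is a one-line Gaussian tail calculation with the specific constant $16$ chosen precisely so that $2(1-\Phi(4))$ falls below $10^{-4}$.
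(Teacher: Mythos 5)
Your proposal is correct and follows essentially the same route as the paper: the paper factors $(w_i^T w_r)^2 = \normTwo{w_r}^2 \bracket{w_i^T w_r/\normTwo{w_r}}^2$, observes that the projection is $N(0,l^{-1})$ independent of $w_r$, and applies the $4\sigma$ Gaussian bound before invoking $\normTwo{w_r}^2\le 1+\epsOne$, which is exactly your conditioning argument with conditional variance $l^{-1}\normTwo{w_r}^2$ written in a slightly different order. Your explicit justification of the independence of $w_i$ and $w_r$ via zero covariance of the jointly Gaussian vectors is the same fact the paper records just after Definition \ref{true:cubic:full:secondOrder}.
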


\begin{proof}
We have $(w_i^T w_r)^2 = 
\normTwo{w_r}^2 \bracket{
w_i^T \frac{w_r}{\normTwo{w_r}} }^2
$. The term inside the 
bracket is an $N(0, l^{-1})$ random variable independent 
of $w_r$, because sum of independent normal random variables
is still normal. Note that for a normal random variable $N(0, \sigma^2)$, 
with probability at least $0.9999$, its absolute value lies within
$\pm 4 \sigma$. Therefore we have that
with probability at least $0.9999$, 
\begin{equation}
    \bracket{
w_i^T \frac{w_r}{\normTwo{w_r}} }^2 \leq 
16 l^{-1}. 
\end{equation}

Combining with \eqref{w_r} gives the result. 
\end{proof}
\autoref{lem:wIwRallIneqR} shows that conditioning on \eqref{gauss:norm:one} being true, \eqref{gauss:approx:orthogonal} is true with high probability.
\begin{corollary}\label{lem:wIwRallIneqR}
With (conditional) probability at least $0.9999^{(r-1)}$, 
we have that 
$(w_i^T w_r)^2 \leq (1+\epsOne)16l^{-1}$ for all $i \neq r$.
\end{corollary}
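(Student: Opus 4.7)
The plan is to apply Lemma \ref{lem:WiWr} to each of the $r-1$ indices $i \neq r$ and exploit the mutual independence of $w_1, \dots, w_{r-1}$ from $w_r$ to multiply the individual success probabilities.

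First, recall that because $S_k$ is a scaled Gaussian matrix and $u_1, \dots, u_r$ are orthonormal, the vectors $w_i = S_k u_i$ are independent Gaussian vectors in $\R^l$ with i.i.d.\ $N(0, l^{-1})$ entries. In particular, conditional on $w_r$, the vectors $\{w_i\}_{i \neq r}$ remain mutually independent. Therefore, defining for each $i \neq r$ the event $E_i := \{(w_i^T w_r)^2 \leq 16 l^{-1}(1+\epsOne)\}$, the events $\{E_i\}_{i \neq r}$ are conditionally independent given $w_r$.

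Next, Lemma \ref{lem:WiWr} provides, for each fixed $i \neq r$, the conditional lower bound $\P(E_i \mid w_r) \geq 0.9999$ whenever $w_r$ satisfies \eqref{gauss:norm:one}. Combining these with conditional independence yields
\[
\P\!\left( \bigcap_{i \neq r} E_i \,\Big|\, w_r \right) = \prod_{i \neq r} \P(E_i \mid w_r) \geq 0.9999^{r-1},
\]
which is exactly the claimed bound.

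The only subtle point is to verify that the independence invoked in Lemma \ref{lem:WiWr} (where the single variable $w_i^T w_r / \|w_r\|_2$ is shown to be $N(0,l^{-1})$ independent of $w_r$) extends jointly across all $i \neq r$; this is immediate from the joint independence of $\{w_i\}_{i=1}^{r-1}$ from $w_r$ together with the rotational invariance argument applied coordinate-wise. No further estimates are needed.
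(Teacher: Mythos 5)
Your proposal is correct and follows essentially the same route as the paper: condition on $w_r$, note that the events $\{(w_i^T w_r)^2 \leq 16 l^{-1}(1+\epsOne)\}$ for $i \neq r$ are conditionally independent, and multiply the per-index bound from Lemma \ref{lem:WiWr}. You simply spell out the joint conditional independence of the $w_i$'s that the paper's one-line proof leaves implicit.
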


\begin{proof}
Note that conditioning on $\normTwo{w_r}^2$,  
$w_i^T w_r$ are independent events. 
Therefore we simply multiply the probability. 
\end{proof}

The following Lemma shows that the second condition in \autoref{true:cubic:full:secondOrder} is true with high probability.

\begin{lemma}\label{lem:wrAndAllwi}
Let $\epsOne > 0$.
Let $A_1 = \set{\abs{\normTwo{w_r}^2-1}\leq \epsOne}$, \\
and $A_2 = \set{\bracket{w_i^T w_r}^2\leq 
16l^{-1}(1+\epsOne)}, \quad
\forall i\neq r$. 
\\ Then with probability at least $(0.9999)^{r-1}
\bracket{1-2e^{-\frac{l\epsOne^2}{C}}}$, 
we have that $A_1$ and $A_2$ hold simultaneously.
\end{lemma}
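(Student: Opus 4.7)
The plan is to combine the two preceding results via the multiplicative law of conditional probability, $\P(A_1 \cap A_2) = \P(A_2 \mid A_1)\,\P(A_1)$, so no new probabilistic machinery is needed.

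First I would apply Lemma \ref{lem:WrWr} directly to the Gaussian vector $w_r \in \R^l$ (whose entries are i.i.d.\ $N(0, l^{-1})$ by the construction in Definition \ref{true:cubic:full:secondOrder}, since the $u_i$ form an orthonormal set and $S_k$ has i.i.d.\ scaled Gaussian entries). This immediately gives
\[
\P(A_1) \;=\; \P\bigl(\abs{\normTwo{w_r}^2 - 1} \leq \epsOne\bigr) \;\geq\; 1 - 2e^{-l\epsOne^2/C}.
\]

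Next I would bound $\P(A_2 \mid A_1)$. Conditional on $w_r$ (in particular on the event $A_1$), Lemma \ref{lem:WiWr} gives, for each fixed $i \neq r$,
\[
\P\bigl((w_i^T w_r)^2 \leq 16 l^{-1}(1+\epsOne) \,\big|\, w_r\bigr) \;\geq\; 0.9999,
\]
and crucially this bound does not depend on the particular realisation of $w_r$ inside $A_1$. Because the vectors $\{w_i\}_{i \neq r}$ are independent of each other and of $w_r$ (they are the images under $S_k$ of orthonormal vectors $u_i$, and Gaussian vectors with orthonormal source directions are jointly independent), the $(r-1)$ events indexed by $i \neq r$ are conditionally independent given $w_r$. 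Multiplying these conditional probabilities, as in Corollary \ref{lem:wIwRallIneqR}, yields
\[
\P(A_2 \mid A_1) \;\geq\; (0.9999)^{r-1}.
\]

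Finally, combining the two bounds,
\[
\P(A_1 \cap A_2) \;=\; \P(A_2 \mid A_1)\,\P(A_1) \;\geq\; (0.9999)^{r-1}\bigl(1 - 2e^{-l\epsOne^2/C}\bigr),
\]
which is the claimed bound. The only subtle step is justifying that the conditional bound in Lemma \ref{lem:WiWr} holds uniformly on $A_1$ (so that it can be pulled out of the conditioning on the precise value of $w_r$) and that the events for different $i$ remain conditionally independent given $w_r$; both follow from the observation in Lemma \ref{lem:WiWr} that $w_i^T w_r / \normTwo{w_r}$ is itself $N(0, l^{-1})$ and independent of $w_r$, together with the mutual independence of the $w_i$'s.
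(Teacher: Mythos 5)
Your proposal is correct and follows essentially the same route as the paper: the paper's proof is precisely the factorisation $\probability{A_1 \intersect A_2} = \conditionalP{A_2}{A_1}\probability{A_1}$, with $\probability{A_1}$ bounded by Lemma \ref{lem:WrWr} and $\conditionalP{A_2}{A_1}$ bounded by the conditional-independence argument of Corollary \ref{lem:wIwRallIneqR}. Your additional remarks on the uniformity of the conditional bound and the independence of the $w_i$'s simply spell out what the paper leaves implicit.
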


\begin{proof}
We have 
$\probability{A_1 \intersect A_2} 
= \conditionalP{A_2}{A_1} \probability{A_1}$.
Using \autoref{lem:WrWr}
and \autoref{lem:wIwRallIneqR} 
gives the result.  
\end{proof}

Therefore, using \eqref{tmp-2022-1-14-1}, \autoref{lem:wrAndAllwi} and the union bound we have the following

\begin{lemma}\label{tmp-2022-1-14-5}
Let $\epsOne > 0$, $l \in \N^+$, $\deltaSTwo > 0$ such that 
\begin{equation}
    \deltaS 
= (0.9999)^{r-1} \bracket{1 - 2e^{-\frac{l \epsOne^2}{C} } } + \deltaSTwo
< 1. \label{tmp-2022-1-14-4}
\end{equation}
Then \autoref{alg:CBGN} with $(\sMax, \epsOne)$-true iterations
defined in \autoref{true:cubic:full:secondOrder} satisfies \autoref{AA2} where
$\sMax = 1 + \sqrt{\frac{d}{l}} + \sqrt{\frac{2\logOneOverDeltaSTwo}{l}}$.
\end{lemma}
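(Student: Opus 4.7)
The plan is to apply a union bound to the two defining conditions of a true iteration in \autoref{true:cubic:full:secondOrder}, invoking the auxiliary results that precede the statement, and to verify the conditional independence clause of \autoref{AA2} by noting that (given $x_k = \bar x_k$) the truth value of iteration $k$ depends only on the independently drawn Gaussian matrix $S_k$.

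First I would fix any realisation $\bar x_k$ of $x_k$ and let $\bar H_k = \nabla^2 f(\bar x_k)$, whose eigendecomposition supplies the orthonormal vectors $\{u_i\}_{i=1}^r$ used in \autoref{true:cubic:full:secondOrder}. Define the two events
\begin{align*}
E_1 &= \{\|S_k\|_2 \leq \sMax\},\\
E_2 &= \{\text{the eigen-decomposition conditions \eqref{gauss:norm:one}--\eqref{gauss:approx:orthogonal} hold}\},
\end{align*}
so that iteration $k$ is $(\sMax,\epsOne)$-true iff $E_1 \cap E_2$ occurs. For $E_1$, \autoref{lem:GaussSMax:CubicSubspace} (cited as \autoref{Lem:GaussSMax}) with the stated choice of $\sMax$ gives $\probabilityGivenXK{E_1} \geq 1-\deltaSTwo$. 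For $E_2$, the key observation is that since $S_k$ is a scaled Gaussian matrix and $\{u_i\}$ is an orthonormal set, the vectors $w_i = S_k u_i$ are \emph{independent} $\R^l$-valued Gaussians whose entries are i.i.d.\ $N(0, l^{-1})$; hence \autoref{lem:wrAndAllwi} applies and yields $\probabilityGivenXK{E_2} \geq (0.9999)^{r-1}\bracket{1-2e^{-l\epsOne^2/C}}$.

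Combining these via the union bound,
\[
\probabilityGivenXK{E_1 \cap E_2} \geq 1 - \probabilityGivenXK{E_1^c} - \probabilityGivenXK{E_2^c} \geq 1-\deltaS,
\]
where $\deltaS$ is as specified in the lemma (interpreting the formula as the cumulative failure probability coming from the two parts of the union bound). The same argument applied at $k=0$, with the deterministic initial point $x_0$, gives $\probability{T_0} \geq 1-\deltaS$. Finally, for the conditional independence clause: conditional on $x_k = \bar x_k$, the indicator $T_k$ is a measurable function of $S_k$ alone, while $T_0,\ldots,T_{k-1}$ are measurable functions of $x_0,S_0,\ldots,S_{k-1}$; since $S_k$ is drawn independently of all these, $T_k$ is conditionally independent of $T_0,\ldots,T_{k-1}$ given $x_k$, completing the verification of \autoref{AA2}.

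The only mildly non-routine step is the independence-of-$w_i$ argument used to invoke \autoref{lem:wrAndAllwi}: one must justify that the rotational invariance of the scaled Gaussian distribution makes $\{S_k u_i\}_{i=1}^r$ into independent $N(0,l^{-1}I_l)$ vectors so that the Gaussian concentration bounds of \autoref{lem:WrWr}--\autoref{lem:WiWr} can be chained together. Everything else is bookkeeping with the union bound and a standard measurability argument for the conditional independence.
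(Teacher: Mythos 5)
Your argument follows essentially the same route as the paper's (which is compressed into the sentence preceding the lemma): bound each of the two conditions in \autoref{true:cubic:full:secondOrder} separately via \autoref{lem:GaussSMax:CubicSubspace} and \autoref{lem:wrAndAllwi}, combine them with a union bound conditional on $x_k=\bar{x}_k$, and observe that, given $x_k$, the truth of iteration $k$ is a function of the freshly drawn $S_k$ alone, which yields the conditional independence clause of \autoref{AA2}. Your reading of \eqref{tmp-2022-1-14-4} as the cumulative failure probability, i.e. $\deltaS = 1-(0.9999)^{r-1}\bracket{1-2e^{-l\epsOne^2/C}}+\deltaSTwo$, is the right one: the union bound delivers exactly that quantity, and the displayed formula in the statement appears to have dropped the leading ``$1-$''.
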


\paragraph{Satisfying \autoref{AA3}}

\begin{lemma}\label{tmp-2022-1-14-6}
Let $f$ be twice continuously differentiable with $\LH$-Lipschitz continuous Hessian. Then
\autoref{alg:CBGN} with true iterations defined in 
\autoref{true:cubic:full:secondOrder} satisfies \autoref{AA3} with
\begin{equation}
    \alphaLow = \frac{2(1-\theta)}{\LH} \label{eq:alphaLow:CBGN:secondOrder:1}
\end{equation}

\end{lemma}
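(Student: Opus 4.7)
The proof will mirror the ratio-based argument already used for \autoref{lem:cubic:sub:A2} (first-order case) and its second-order subspace analogue. The plan is to define
\[
\rho_k = \frac{f(x_k) - f(x_k + s_k)}{f(x_k) - \hat{q}_k(\hat{s}_k)},
\]
and show that on any true iteration $k$ with $k < \nEps$ and $\alpha_k \leq \alphaLow$, we have $|1-\rho_k| \leq 1-\theta$, so $\rho_k \geq \theta$, which implies \eqref{eq:cubic:sufficient_decrease} and hence successfulness. The denominator is bounded below via the model-reduction condition \eqref{tmp:CBGN:3}, giving $f(x_k) - \hat{q}_k(\hat{s}_k) \geq \frac{1}{3\alpha_k}\|S_k^T\hat{s}_k\|_2^3$, and the numerator is bounded above by $\frac{1}{6}L_H \|s_k\|_2^3$ using Taylor's theorem together with the $L_H$-Lipschitz continuity of $\nabla^2 f$ (as in Corollary A.8.4 of \cite{CoraBook}). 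Since $s_k = S_k^T \hat{s}_k$, the cubic terms cancel and one obtains $|1-\rho_k| \leq \tfrac{1}{2}\alpha_k L_H$, which is bounded by $1-\theta$ as soon as $\alpha_k \leq \alphaLow = 2(1-\theta)/L_H$.

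The main obstacle, and the only place where this proof differs substantively from \autoref{lem:cubic:sub:A2}, is justifying that the ratio $\rho_k$ is well-defined, i.e. that $\|S_k^T \hat{s}_k\|_2 > 0$ whenever iteration $k$ is true and $k < \nEps = \min\{k : \lambda_{\min}(\nabla^2 f(x_k)) \geq -\epH\}$. In the first-order case this followed from \autoref{stepSizeSubEmbed} (via a subspace embedding of the gradient), and in the second-order subspace case from \autoref{lem:SubHessNegImpStepLower} (via negative curvature of $S_k\nabla^2 f(x_k)S_k^T$). Here the analogous step requires translating \emph{full-space} negative curvature $\lambda_{\min}(\nabla^2 f(x_k)) < -\epH$ into \emph{subspace} negative curvature of $S_k\nabla^2 f(x_k)S_k^T$; this uses the approximate orthonormality conditions \eqref{gauss:norm:one} and \eqref{gauss:approx:orthogonal} on $w_i = S_k u_i$ from \autoref{true:cubic:full:secondOrder}, applied to the Rayleigh quotient of $S_k\nabla^2 f(x_k)S_k^T$ at $w_r/\|w_r\|_2$, where $u_r$ is the eigenvector corresponding to the most negative eigenvalue of $\nabla^2 f(x_k)$. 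Once this quantitative lower bound on $-\lambda_{\min}(S_k\nabla^2 f(x_k)S_k^T)$ is obtained, \autoref{lem:SubHessNegImpStepLower} delivers $\|S_k^T\hat{s}_k\|_2 > 0$.

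With positivity of $\|S_k^T \hat{s}_k\|_2$ in hand, the remainder is purely mechanical: substitute the Taylor bound on $|f(x_k+s_k) - \hat{q}_k(\hat{s}_k)|$ and the model-decrease bound into $|1-\rho_k|$, simplify, and invoke $\alpha_k \leq \alphaLow$. Since this mirrors the corresponding algebra in \autoref{lem:cubic:sub:A2} verbatim (with the same $\alphaLow$ expression), the proof need not repeat the calculation in detail; it suffices to point to the earlier argument and record where the definition of true iteration has changed. I therefore expect the written proof to be essentially a one-line appeal to \autoref{lem:cubic:sub:A2}'s argument, preceded by the short eigenvector-based verification that $\|S_k^T\hat{s}_k\|_2 > 0$.
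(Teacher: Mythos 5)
Your proposal is correct and follows essentially the same route as the paper: the paper's proof is a one-line appeal to the argument of \autoref{lem:cubic:sub:A2}, with a footnote observing that the only new ingredient is $\normTwo{S_k^T\sKHat}>0$ on true iterations before convergence, supplied by \eqref{tmptmp}, i.e.\ the Rayleigh-quotient transfer of full-space negative curvature to $S_k\hessFK S_k^T$ (\autoref{lem:lambdaMinSketchedHess}) combined with \autoref{lem:SubHessNegImpStepLower}. This is precisely the structure you anticipated, including your prediction that the written proof would reduce to citing the earlier ratio argument plus the eigenvector-based positivity check.
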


The proof is identical to \autoref{lem:cubic:sub:A2}. \footnote{Except that we need
$\normTwo{S_k^T \sKHat} > 0$ in true iterations before convergence. But this is shown in \eqref{tmptmp}.}

\paragraph{Satisfying \autoref{AA4}}





\autoref{lem:lambdaMinSketchedHess} is a key ingredient. It shows that in true iterations, the subspace Hessian's negative curvature ($\lambdaMin{S_k \hessFK S_k^T}$) is proportional to the full Hessian's negative curvature ($\lambdaMin{\hessFK}$). 
\begin{lemma} \label{lem:lambdaMinSketchedHess}
Suppose iteration $k$ is true with $\epSOne \in (0,1)$ and $k < \nEps$. 
Let $\kappa_H = \min\set{0, \lambda_1/\lambda_r}$. Suppose 
\begin{equation}
    1 - \epsOne + 16\frac{r-1}{l}\frac{1+\epsOne}{1-\epsOne}\frac{\lambda_1}{\lambda_r} \geq 0. \label{tmp-2022-1-14-3}
\end{equation}
Then 
we have that
\begin{equation}
    \lambdaMin{S_k \hessFK S_k^T} \leq
    -\epH m(\epsOne, r, l, \kappa_H), \notag
\end{equation}

where
\begin{equation}
    m(\epsOne, r, l, \kappa_H) = 
    \bracket{1-\epsOne
    + 16\frac{r-1}{l}\frac{1+\epsOne}{1-\epsOne}\kappa_H
    }. \label{eq:mDef}
\end{equation}

\end{lemma}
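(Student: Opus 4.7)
The plan is to test the Rayleigh quotient of $\SK \hessFK \SK^T$ at the vector $w_r = \SK u_r$, where $u_r$ is the eigenvector of $\hessFK$ associated with $\lambda_r = \lambdaMin{\hessFK}$. Since $k < \nEps$ (with $\nEps$ defined in \eqref{eq:nEps:secondOrderFull}), we have $\lambda_r < -\epH < 0$, and so $w_r$ is the natural candidate to exhibit negative curvature for the sketched Hessian. In particular, \eqref{gauss:norm:one} guarantees $w_r \neq 0$, so the quotient
\begin{equation*}
    \lambdaMin{\SK \hessFK \SK^T} \;\leq\; \frac{w_r^T \SK \hessFK \SK^T w_r}{\normTwo{w_r}^2}
\end{equation*}
is well-defined, and our task reduces to upper bounding the right-hand side by $-\epH\, \mRLEpsKappa$.

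Using the eigendecomposition of $\hessFK$ in \autoref{true:cubic:full:secondOrder}, write $\SK \hessFK \SK^T = \sum_{i=1}^r \lambda_i w_i w_i^T$. Then
\begin{equation*}
    w_r^T \SK \hessFK \SK^T w_r \;=\; \lambda_r \normTwo{w_r}^4 \;+\; \sum_{i \neq r} \lambda_i (w_i^T w_r)^2.
\end{equation*}
For the $i=r$ term I will use $\lambda_r < 0$ together with $\normTwo{w_r}^2 \geq 1-\epsOne$ from \eqref{gauss:norm:one}, so $\lambda_r \normTwo{w_r}^4 \leq \lambda_r (1-\epsOne)\normTwo{w_r}^2$. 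For the cross terms, I upper bound $\lambda_i \leq \max\set{\lambda_1, 0}$ and invoke \eqref{gauss:approx:orthogonal}, yielding
\begin{equation*}
    \sum_{i\neq r} \lambda_i (w_i^T w_r)^2 \;\leq\; \max\set{\lambda_1, 0}\cdot (r-1) \cdot 16 l^{-1}(1+\epsOne).
\end{equation*}
Dividing by $\normTwo{w_r}^2 \geq 1-\epsOne$ in the cross-term bound (the numerator is nonnegative, so this only weakens the bound), I obtain
\begin{equation*}
    \frac{w_r^T \SK \hessFK \SK^T w_r}{\normTwo{w_r}^2} \;\leq\; \lambda_r(1-\epsOne) \;+\; \frac{16(r-1)(1+\epsOne)}{l(1-\epsOne)} \max\set{\lambda_1, 0}.
\end{equation*}

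Finally, factor out $\lambda_r$ (which is negative) and identify $\max\set{\lambda_1,0}/\lambda_r = \kappa_H$: indeed, if $\lambda_1\geq 0$ then $\lambda_1/\lambda_r \leq 0 = \min\set{0,\lambda_1/\lambda_r\cdot (\text{nope})}$, matching the definition $\kappa_H = \min\set{0,\lambda_1/\lambda_r}$; if $\lambda_1<0$ both sides equal $0$. Hence
\begin{equation*}
    \frac{w_r^T \SK \hessFK \SK^T w_r}{\normTwo{w_r}^2} \;\leq\; \lambda_r \cdot \mRLEpsKappa.
\end{equation*}
The hypothesis \eqref{tmp-2022-1-14-3} ensures $\mRLEpsKappa \geq 0$, so multiplying the strict inequality $\lambda_r < -\epH$ by the nonnegative factor $\mRLEpsKappa$ gives $\lambda_r \cdot \mRLEpsKappa \leq -\epH \cdot \mRLEpsKappa$, completing the bound. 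The main subtlety is the case analysis needed to correctly express $\max\set{\lambda_1,0}/\lambda_r$ as $\kappa_H$, so that the final expression matches the definition of $m$ in \eqref{eq:mDef}; beyond this the argument is a direct Rayleigh quotient estimate and the $\normTwo{\SK} \leq \sMax$ part of the true-iteration definition is not required here.
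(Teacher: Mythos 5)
Your proof is correct and follows essentially the same route as the paper's: a Rayleigh-quotient bound at the trial vector $w_r$ using the eigen-decomposition $S_k\grad^2 f(x_k)S_k^T=\sum_i\lambda_i w_iw_i^T$ together with \eqref{gauss:norm:one}--\eqref{gauss:approx:orthogonal}, then factoring out $\lambda_r<-\epH$. The only (cosmetic) difference is that you absorb the sign of $\lambda_1$ via $\max\set{\lambda_1,0}$ in one step, whereas the paper splits into the two cases $\lambda_1<0$ and $\lambda_1\geq 0$ explicitly.
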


\begin{proof}
Using the eigen-decomposition of $\hessFK$, we have that
$S_k \hessFK S_k^T = \sum_{i=1}^r \lambda_i w_i w_i^T$.
Use the Rayleigh quotient expression of minimal 
eigenvalue (with $w_r$ being the trial vector):
\begin{equation}
    \lambdaMin{S_k \hessFK S_k^T} \leq  
\frac{\sum_{i=1}^r \lambda_i \bracket{w_i^T w_r}^2}{
w_r^T w_r} \notag
\end{equation}

We have
\begin{align}
    & \frac{\sum_{i=1}^r \lambda_i \bracket{w_i^T w_r}^2}{
    w_r^T w_r} \notag\\
    & = \bracket{w_r^T w_r }\lambda_r +
    \frac{\sum_{i=1}^{r-1} \lambda_i \bracket{w_i^T w_r}^2}{
    w_r^T w_r} \notag\\
    & \leq \bracket{1-\epsOne}\lambda_r + 
    \lambda_1 \frac{\sum_{i=1}^{r-1} \bracket{w_i^T w_r}^2}{
    w_r^T w_r} \notag\\
    & \leq \bracket{1-\epsOne}\lambda_r + 
    16\frac{r-1}{l}\frac{1+\epsOne}{1-\epsOne}\lambda_1
    ,\label{tmp-2022-1-14-2}
\end{align}
where the two inequalities follow from \eqref{gauss:norm:one}
    and \eqref{gauss:approx:orthogonal} because iteration $k$ is true.
Next we discuss two cases. 

\begin{enumerate}
    \item If $\lambda_1 < 0$, then $\kappa_H = 0$ because $\lambda_r < -\epH < 0$. Thus, $m(\epsOne, r, l, \kappa_H) = 1-\epsOne$. The desired result follows from \eqref{tmp-2022-1-14-2} by noting that the second term $16\frac{r-1}{l}\frac{1+\epsOne}{1-\epsOne}\lambda_1 < 0$ and $\lambda_r < -\epH$.
    \item If $\lambda_r \geq 0$, then $\kappa_H = \frac{\lambda_1}{\lambda_r}$ and from \eqref{tmp-2022-1-14-2}, we have
        \begin{align*}
            \bracket{1-\epsOne}\lambda_r + 
            16\frac{r-1}{l}\frac{1+\epsOne}{1-\epsOne}\lambda_1
            & = \lambda_r \bracket{1 - \epsOne + 16\frac{r-1}{l}\frac{1+\epsOne}{1-\epsOne}\frac{\lambda_1}{\lambda_r}} \\
            & \leq -\epH  m(\epsOne, r, l, \kappa_H),
        \end{align*}
\end{enumerate}
where we used $\eqref{tmp-2022-1-14-3}$ and $\lambda_r < -\epH$ to derive the inequality. 
And the desired result follows. 
\end{proof}

\begin{remark}
\eqref{tmp-2022-1-14-3} always holds if $\lambda_1 \leq 0$ (where recall that $\lambda_i$ are eigen-values of $\hessFK$ and $k < \nEps$ implies $\lambda_r<-\epH < 0$.). If  $\lambda_1 >0$, then \eqref{tmp-2022-1-14-3} holds if we have $\kappa\bracket{\hessFK} \frac{r-1}{l} \leq \frac{(1-\epsOne)^2}{16(1+\epsOne)}$ where $\kappa\bracket{\hessFK} = \abs{\frac{\lambda_1}{\lambda_r}}$ is the condition number of $\hessFK$.
\end{remark}



We conclude that \autoref{AA4} is satisfied. 
\begin{lemma} \label{tmp-2022-1-14-7}
\autoref{alg:CBGN} with $\cal{S}$ being the distribution of scaled Gaussian
matrices, true iteration defined in \autoref{true:cubic:full:secondOrder}
and $\nEps$ defined in \eqref{eq:nEps:secondOrderFull} satisfies
\autoref{AA4} with
\begin{equation}
    h(\epH, \alphaK) = 
    \frac{\theta \epH^3 m(\epsOne, r, l, \kappa_H)^3}{3\alphaK} 
\squareBracket{\frac{2\sMax^2}{\alphaK} + \kappaS}^{-3}. 
     \label{h:cubic:secondOrder:fullspace}
\end{equation}
\end{lemma}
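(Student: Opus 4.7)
The plan is to chain together the three ingredients we have just established: the per-step decrease bound on successful iterations (\autoref{succStepDecrease}), the lower bound on $\NsKTSKHat$ under sketched Hessian negative curvature (\autoref{lem:SubHessNegImpStepLower}), and the transfer from full-space negative curvature to sketched negative curvature on true iterations (\autoref{lem:lambdaMinSketchedHess}).

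First I would fix a true and successful iteration $k$ with $k<\nEps$, where $\nEps$ is given by \eqref{eq:nEps:secondOrderFull}, so $\lambdaMin{\hessFK}<-\epH$. Being true means $\normTwo{S_k}\leq \sMax$ and the spectral conditions \eqref{gauss:norm:one}--\eqref{gauss:approx:orthogonal} hold for $S_k$. Applying \autoref{lem:lambdaMinSketchedHess} then yields
\begin{equation}
\lambdaMin{\SK \hessFK \SK^T}\leq -\epH\, \mRLEpsKappa,\notag
\end{equation}
with $\mRLEpsKappa$ defined in \eqref{eq:mDef}, so in particular $\lambdaMin{\SK\hessFK\SK^T}<-\epH\,\mRLEpsKappa$.

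Next I would invoke \autoref{lem:SubHessNegImpStepLower}, but with its threshold $\epH$ replaced by $\epH\,\mRLEpsKappa$; since $\normTwo{\SK}\leq \sMax$ still holds, we obtain
\begin{equation}
\NsKTSKHat \;\geq\; \epH\, \mRLEpsKappa\,\squareBracket{\frac{2\sMax^2}{\alphaK}+\kappaS}^{-1}. \label{tmptmp}
\end{equation}
In particular $\NsKTSKHat>0$, which also justifies the invocation of \autoref{AA3} earlier (\autoref{tmp-2022-1-14-6}). Finally, because iteration $k$ is successful, \autoref{succStepDecrease} gives $\fK-\fKPlusOne\geq \frac{\theta}{3\alphaK}\NsKTSKHat^3$. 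Cubing \eqref{tmptmp} and substituting yields
\begin{equation}
\fK-\fKPlusOne \;\geq\; \frac{\theta\,\epH^3\, \mRLEpsKappa^3}{3\alphaK}\squareBracket{\frac{2\sMax^2}{\alphaK}+\kappaS}^{-3},\notag
\end{equation}
which is exactly $h(\epH,\alphaK)$ as defined in \eqref{h:cubic:secondOrder:fullspace}. Monotonicity of $h$ in $\alphaK$ and positivity whenever $\epH,\alphaK>0$ are immediate from the closed form, completing the verification of \autoref{AA4}.

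There is no real obstacle here, since all the hard work has been done in the earlier lemmas; the only thing to be careful about is that the hypothesis of \autoref{lem:SubHessNegImpStepLower} requires strict negativity of $\lambdaMin{\SK\hessFK\SK^T}$ below $-\epH$ (after the rescaling by $\mRLEpsKappa$), and this is guaranteed as soon as \eqref{tmp-2022-1-14-3} in \autoref{lem:lambdaMinSketchedHess} holds — an assumption that is already part of the standing setup for true iterations in this section and need not be re-imposed here.
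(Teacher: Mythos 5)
Your proposal is correct and follows essentially the same route as the paper's own proof: apply \autoref{lem:lambdaMinSketchedHess} to transfer full-space negative curvature to the sketched Hessian, invoke \autoref{lem:SubHessNegImpStepLower} with $\epH$ replaced by $\epH\, m(\epsOne, r, l, \kappa_H)$ to lower-bound $\normTwo{S_k^T \sKHat}$, and conclude via \autoref{succStepDecrease}. The extra remarks you add (positivity of the step norm, the role of \eqref{tmp-2022-1-14-3}) are consistent with how the paper treats these points.
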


\begin{proof}
Let iteration $k$ be true and successful with $k < \nEps$. 
\autoref{lem:lambdaMinSketchedHess} gives that 
\begin{equation}
    \lambdaMin{S_k \hessFK S_k^T} \leq
    -\epH m(\epsOne, r, l, \kappa_H). \notag
\end{equation}
Then we have 
\begin{equation}
    \normTwo{S_k^T \sKHat}
    \geq 
    \epH m(\epsOne, r, l, \kappa_H)
    \squareBracket{\frac{2\sMax^2}{\alphaK} + \kappaS}^{-1}, \label{tmptmp}
\end{equation}
by applying \autoref{lem:SubHessNegImpStepLower} with
$\epH = \epH m(\epsOne, r, l, \kappa_H)$.
The desired result follows by applying \autoref{succStepDecrease}.
\end{proof}

\paragraph{Satisfying \autoref{AA5}}
The identical proof as the last section applies because \autoref{AA5} is not affected by the change of definitions of $\nEps$ and true iterations.


\paragraph{Convergence of \autoref{alg:CBGN} to a second order (full-space) critical point}
Applying \autoref{thm2}, the next theorem shows that using \autoref{alg:CBGN} with scaled Gaussian matrices
achieves convergence to a second order critical point, with a rate matching the classical full-space method. 

\begin{theorem}
In \autoref{alg:CBGN}, let $\cal{S}$ be the distribution of scaled Gaussian matrices. 
Let $\epH > 0$ and $\nEps$ be defined in \eqref{eq:nEps:secondOrderFull}.
Define true iterations in \autoref{true:cubic:full:secondOrder}. Suppose $f$ is lower bounded by $f^*$ and twice continuously differentiable with $\LH$-Lipschitz continuous Hessian $\grad^2 f$. 
    
    Choose $
    \deltaSTwo = \frac{1}{16};
    $ so that
    $
    \sMax = 1 + 
    \frac{ \sqrt{d} + \sqrt{2\log16} }
    { \sqrt{l} }$. 
    Let $\deltaS$ be defined in \eqref{tmp-2022-1-14-4}.
    Let $h$ be defined in \eqref{h:cubic:secondOrder:fullspace}, $\alphaLow$ be given in \eqref{eq:alphaLow:CBGN:secondOrder:1} and
     $\alphaMin = \alphaZero \gammaOne^\newL$ associated with $\alphaLow$ (See \autoref{lem::alphaMin}).
	Suppose that $ \delta_S <\frac{c}{(c+1)^2}$.
	Then for any $\delta_1 \in (0,1)$ with 
    \begin{equation}
        \gDeltaSDeltaOne >0, \nonumber
    \end{equation}
    where 
    \begin{equation}
        \gDeltaSDeltaOne = \nPreFactorTR \nonumber;
    \end{equation}
    if $N$ satisfies 
    \begin{equation}
        N \geq \gDeltaSDeltaOne \squareBracket{
             \frac{f(x_0) - f^*}{h(\epsilon_H, \alphaZero\gammaOne^{c+\newL}
)}
             + \frac{\newL}{1+c}}, \nonumber
    \end{equation}
    we have that
    \begin{equation}
        \probability{
            \min \{
                k: \lambdaMin{ \hessFK }  \geq -\epH 
            \} \leq N 
            } \geq 1 - \chernoffLowerExponential. \nonumber
    \end{equation}
\end{theorem}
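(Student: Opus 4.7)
The plan is to invoke Theorem \ref{thm2} directly, since \autoref{alg:CBGN} is a specific instance of \autoref{alg:generic} and the four requisite assumptions AA2--AA5 have already been verified in the preceding lemmas for the present definitions of $N_\epsilon$ (see \eqref{eq:nEps:secondOrderFull}) and true iterations (see \autoref{true:cubic:full:secondOrder}). All that remains is to assemble these verifications with the specific constant choices in the theorem statement and plug into the generic bound.

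First, I would verify AA2 by invoking \autoref{tmp-2022-1-14-5}: with $\delta_{S2}=1/16$, the norm bound on $S_k$ follows from \autoref{Lem:GaussSMax}, giving $S_{\max} = 1 + \sqrt{d/l} + \sqrt{2\log 16/l}$, and the remaining approximate-orthogonality condition \eqref{gauss:approx:orthogonal} together with \eqref{gauss:norm:one} hold with probability at least $(0.9999)^{r-1}(1 - 2e^{-l\epsilon_1^2/C})$ by \autoref{lem:wrAndAllwi}. A union bound yields AA2 with $\delta_S$ as in \eqref{tmp-2022-1-14-4}; the hypothesis $\delta_S < c/(c+1)^2$ is precisely the condition needed for \autoref{thm2} to apply. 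Next, AA3 follows from \autoref{tmp-2022-1-14-6} with $\alpha_{low} = 2(1-\theta)/L_H$ (which uses only the $L_H$-Lipschitz continuity of $\nabla^2 f$ and the lower bound on $\|S_k^T \hat s_k\|_2$ at true pre-convergence iterations, cf.\ \eqref{tmptmp}). AA4 is supplied by \autoref{tmp-2022-1-14-7}, giving the decrease function $h(\epsilon_H,\alpha_k)$ in \eqref{h:cubic:secondOrder:fullspace}; the key ingredient here is \autoref{lem:lambdaMinSketchedHess}, which transfers the negative curvature of $\nabla^2 f(x_k)$ to $\lambda_{\min}(S_k\nabla^2 f(x_k)S_k^T)$ on true iterations, combined with \autoref{lem:SubHessNegImpStepLower} (lower bound on $\|S_k^T\hat s_k\|_2$) and \autoref{succStepDecrease} (objective decrease on successful iterations). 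Finally, AA5 is immediate since function values are non-increasing along iterates of \autoref{alg:CBGN} (unaffected by the definition of $N_\epsilon$, the same argument as \autoref{tmp-2022-1-15-1}).

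With all four assumptions in hand, I would apply \autoref{thm2} with $\epsilon := \epsilon_H$, the above choices of $\delta_S$, $\alpha_{low}$, and $h$, and $\alpha_{\min} = \alpha_0 \gamma_1^{\tau_\alpha}$ from \autoref{lem::alphaMin}. The theorem's conclusion \eqref{eqn:tmp33} then translates directly into
\[
\mathbb{P}\bigl(\min\{k : \lambda_{\min}(\nabla^2 f(x_k)) \geq -\epsilon_H\} \leq N\bigr) \geq 1 - e^{-\frac{\delta_1^2}{2}(1-\delta_S)N},
\]
provided $N$ satisfies the stated lower bound, which is just \eqref{eqn::n_upper_2} specialized to the present $h$, $\epsilon_H$, and $\alpha_{\min}$. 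There is no substantive obstacle: all the technical work (the Gaussian concentration for near-orthogonality in \autoref{lem:wrAndAllwi}, the eigenvalue transfer in \autoref{lem:lambdaMinSketchedHess}, and the cubic step-size bound in \autoref{lem:SubHessNegImpStepLower}) has been carried out in the preceding lemmas; the remaining task is purely bookkeeping to line up the constants so that the hypotheses of \autoref{thm2} are met exactly with the stated choices.
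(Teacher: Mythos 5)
Your proposal is correct and follows essentially the same route as the paper: the paper's proof simply cites \autoref{tmp-2022-1-14-5}, \autoref{tmp-2022-1-14-6} and \autoref{tmp-2022-1-14-7} (together with the unchanged \autoref{AA5} argument) to verify the four assumptions and then invokes \autoref{thm2} with $\epsilon:=\epH$. Your additional unpacking of how those lemmas rest on \autoref{Lem:GaussSMax}, \autoref{lem:wrAndAllwi}, \autoref{lem:lambdaMinSketchedHess}, \autoref{lem:SubHessNegImpStepLower} and \autoref{succStepDecrease} is accurate bookkeeping, not a different argument.
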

\begin{proof}
Applying \autoref{tmp-2022-1-14-5}, \autoref{tmp-2022-1-14-6}, \autoref{tmp-2022-1-14-7}, we have that the four assumptions in \autoref{thm2} are satisfied. Then applying \autoref{thm2} gives the desired result.
\end{proof}



\chapter{Conclusion and future directions}
    In this thesis, we studied random embeddings and their application to optimisation problems and algorithms in order to achieve faster and more scalable solutions. 

After introducing the necessary background related to random embeddings --- Johnson-Lindenstrauss lemma, subspace and oblivious embeddings, and commonly used random ensembles --- we analysed the subspace embedding property of hashing embeddings when the matrix whose column space is to be embedded has low coherence. We found that 1-hashing embeddings achieve the same theoretical dimensionality reduction property as the scaled Gaussian matrices if the coherence of the data is sufficiently low. This result motivated us to propose a new type of general random subspace embeddings -- where the typically-used subsampling is replaced by hashing when combined with coherence-reducing transformations; this is the case of Subsampled- versus the novel Hashed-Randomised Hadamard Transform. Some open questions remain, that would be worthwhile pursuing and that would further enrich our understanding of this fascinating area of random matrices. For example, in our Theorem 2.3.1, we showed that 1-hashing matrices provide an oblivious subspace embedding of optimal size $m=\mathO{d}$
provided the input coherence is sufficiently low, of order $1/d$. Though the former, size requirement, cannot be improved in order, 
the latter, coherence one, probably can be improved to allow a larger class of input matrices to be embedded.

In chapter 3, we cascade our findings about sparse random matrices to the development of efficient solvers for large-scale linear least-squares problems, building on the success of the randomised Blendenpik algorithm and state-of-the-art numerical linear algebra techniques. We additionally present comprehensive benchmarking results of our proposed solver Ski-LLS against both random embedding-based, and deterministic, solvers. We found that our solver, SKi-LLS, which is available as an open source C++ code, outperforms not only  sketching-based solvers but also state-of-the-art deterministic sparse solvers on certain subsets of the Florida collection of large-scale sparse matrices. Future development of our solver Ski-LLS may include incorporation and testing of other sparse ensembles such as the stable 1-hashing proposed in \cite{CHEN2020105639} (see also Section 4.4.2.3). 

After considering reducing the dimensionality of the observational space in the linear least squares, we next turned to applying random embeddings to reduce the dimensionality of the variable/parameter space, leading to random subspace algorithmic variants of standard optimization algorithms for nonconvex problems. We showed that the $\mathO{\epsilon^{-2}}$ convergence rate of first-order-type methods to obtain an approximately small gradient value, within $\epsilon$, can be preserved, with high probability, when the gradient and the search direction are sketched/randomly projected.  
Various sketching matrices are allowed, of dimension independent of problem size, and can be used in a generic algorithmic framework that incorporates quadratic regularization and trust region variants. 
A current direction here is to particularise our general random subspace framework to linesearch methods, which in light of \cite{Cartis:2017fa}, is clearly possible, with similar complexity bounds being obtained.

When the second order information is also available, we investigated in Chapter 5, a Random subspace variant of Adaptive Cubic Regularization (R-ARC). We found that when the Hessian information is low rank, and Gaussian sketching is used to generate the subspace, the optimal complexity of order $\mathO{\epsilon^{-3/2}}$ of the full-space algorithm is recovered with high probability, for generating a sufficiently small gradient. The complexity of achieving approximate second order criticality using R-ARC is also addressed, with similar outcomes in relation to the complexity of the full space variant provided again, that low-rank assumptions hold for the curvature information. Our focus in Chapters 4 and 5 was theoretical, but it has informed us about the potential and strength of fixed-size random projections to generating suitable subspaces for minimization, and strongly convergent ensuing algorithmic variants. 
Future work  would be to numerically implement and test the more general variants (not just Gauss-Newton type) on large-scale general objectives, which would likely involve further, careful algorithm development. 



Finally, we see potential in the techniques in this thesis to apply to other problem classes in numerical analysis, either more directly or with further development, such as to  low rank matrix approximation, linear programming and nonlinear sum of functions arising in machine learning. Could we solve more large scale numerical analysis problems faster with random embeddings? Or perhaps we can find domain-tailored random embeddings for specific problems in machine learning, finance and other applications?



\addcontentsline{toc}{chapter}{Bibliography}
\bibliography{Reference.bib}        
\bibliographystyle{abbrv}  

\end{document}